\newtheorem{thm}{Theorem}[section] 
\newtheorem{fact}[thm]{Fact}
\newtheorem{obs}[thm]{Observation}
\newtheorem{prop}[thm]{Proposition}
\newtheorem{lem}[thm]{Lemma} 
\newtheorem{cor}[thm]{Corollary}
\theoremstyle{definition}
\newtheorem{defn}[thm]{Definition}
\newtheorem{quest}[thm]{Question}
\newtheorem{rmk}[thm]{Remark}
\newtheorem*{claim*}{Claim}
\newtheorem{example}[thm]{Example}
\newcommand{\mc}[1]{\mathcal{#1}}
\newcommand{\mf}[1]{\mathfrak{#1}}
\newcommand{\ms}[1]{\mathscr{#1}} %requires mathrsfs
\newcommand{\Zb}{\mathbb{Z}}
\newcommand{\Nb}{\mathbb{N}}
\newcommand{\Rb}{\mathbb{R}}
\newcommand{\Qb}{\mathbb{Q}}
\newcommand{\Tb}{\mathbb{T}}
\newcommand{\U}{\mathcal{U}}
\newcommand{\Qp}{\mathbb{Q}_p}
\newcommand{\Es}{\ms{E}}
\newcommand{\tdlc}{t.d.l.c.\@\xspace}
\newcommand{\tdlcsc}{t.d.l.c.s.c.\@\xspace}
\newcommand{\lcsc}{l.c.s.c.\@\xspace}
\newcommand{\defbold}{\textbf}
\newcommand{\Td}{\vec{T}}
\newcommand{\acts}{\curvearrowright}
\newcommand{\normal}{\trianglelefteq}
\newcommand{\inv}{^{-1}}
\newcommand{\triv}{\{1\}}
\newcommand{\QZ}{\mathrm{QZ}}
\newcommand{\CC}{\mathrm{C}}
\newcommand{\N}{\mathrm{N}}
\newcommand{\Z}{\mathrm{Z}}
\newcommand{\rest}{\upharpoonright}
\newcommand{\injects}{\hookrightarrow}
\newcommand{\img}{\mathrm{img}}
\newcommand{\sym}{\mathrm{Sym}}
\newcommand{\Stab}{\mathrm{Stab}}
\newcommand{\inn}{\mathrm{inn}}
\newcommand{\afr}{\mf{a}}
\newcommand{\bfr}{\mf{b}}
\newcommand{\cfr}{\mf{c}}
\newcommand{\cgrp}[1]{\overline{\langle #1 \rangle}}
\newcommand{\ngrp}[1]{\overline{\langle\langle #1\rangle \rangle}}
\newcommand{\grp}[1]{\langle #1 \rangle}
\newcommand{\ol}[1]{\overline{#1}}
\newcommand{\Aut}{\mathop{\rm Aut}\nolimits}
\newcommand{\Inn}{\mathop{\rm Inn}\nolimits}
\newcommand{\Rad}[1]{\mathop{\rm Rad}_{#1}\nolimits}
\newcommand{\Res}[1]{\mathop{\rm Res}_{#1}\nolimits}
\newcommand{\wt}[1]{\widetilde{#1}}
\newcommand{\RadLE}{\Rad{\mc{LE}}}
\newcommand{\Sol}{\mathrm{Sol}}
\begin{document}

\title[Dense normal subgroups and chief factors]{Dense normal subgroups and chief factors in locally compact groups}
\author{Colin D. Reid}
\address{  University of Newcastle,
	School of Mathematical and Physical Sciences,
	University Drive,
	Callaghan NSW 2308, Australia}
\email{colin@reidit.net}
\thanks{The first author is an ARC DECRA fellow.  Research supported in part by ARC Discovery Project DP120100996.}

\author{Phillip R. Wesolek}
\address{ Department of Mathematical Sciences,
	Binghamton University,
	PO Box 6000,
	Binghamton, New York 13902}
\email{pwesolek@binghamton.edu}
\thanks{The second author is supported by ERC grant \#278469.}
\address{ Previous address: Universit\'{e} catholique de Louvain,
	Institut de Recherche en Math\'{e}matiques et Physique (IRMP),
	Chemin du Cyclotron 2, box L7.01.02,
	1348 Louvain-la-Neuve, Belgium}

\begin{abstract}
	In \textit{The essentially chief series of a compactly generated locally compact group}, an analogue of chief series for finite groups is discovered for compactly generated locally compact groups.  In the present article, we show that chief factors necessarily exist in all locally compact groups with sufficiently rich topological structure.  We also show that chief factors have one of seven types, and for all but one of these types, there is a decomposition into discrete groups, compact groups, and topologically simple groups.

	Our results for chief factors require exploring the theory developed in \textit{Chief factors in Polish groups} in the setting of locally compact groups. In this context, we obtain tighter restrictions on the factorization of normal compressions and the structure of quasi-products. Consequently, both (non-)amenability and elementary decomposition rank are preserved by normal compressions.
\end{abstract}

\maketitle

\tableofcontents

\addtocontents{toc}{\protect\setcounter{tocdepth}{1}}

\section{Introduction}
It is clear that a finite group $G$ admits a normal series in which each of the factors is a chief factor, that is, a normal factor $K/L$ of $G$ such that there is no normal subgroup of $G$ lying between $K$ and $L$.  It is then straightforward to show that chief factors of finite groups are products of simple groups. One thereby arrives at composition series, which give a complete decomposition of a finite group into simple groups.

The previous work \cite{RW_EC_15} establishes the existence of the essentially chief series for compactly generated locally compact groups, which is an analogue for compactly generated locally compact groups of the chief series for finite groups. For the essentially chief series, we must allow for compact and discrete factors in addition to chief factors.  A primary focus of the work at hand is an attempt to complete the analogy with finite groups by decomposing chief factors into topologically simple groups, discrete groups, and compact groups. Theorem~\ref{thm:intro_block_structure} is our high mark in this direction, but it does not quite complete the analogy; Question~\ref{qu:well-foundedness} remains an outstanding mystery.

The proofs require exploring the theory developed in \cite{RW_P_15} in the setting of \textit{locally compact} Polish (equivalently, locally compact second countable) groups. As expected, the results we obtain for locally compact Polish groups are much stronger than those for general Polish groups.

We shall use the abbreviations \lcsc for ``locally compact second countable" and \tdlcsc for ``totally disconnected locally compact second countable."

\subsection{Normal compressions}

We begin by studying normal compressions. A topological group $H$ is a \textbf{normal compression}\index{normal compression} of a topological group $G$ if there is a continuous, injective homomorphism $\psi: G \rightarrow H$ with dense normal image. As shown in \cite{RW_P_15}, when $G$ and $H$ are Polish groups, there is a Polish group $\tilde{H}$ such that $\psi$ can be decomposed as $\beta \circ \alpha$ with $\alpha: G \rightarrow \tilde{H}$ a closed embedding with normal image and $\beta: \tilde{H} \rightarrow H$ a quotient map. 

In the locally compact setting, this factorization can be refined to obtain better control over the group $\tilde{H}$.

\begin{thm}[See Theorem~\ref{thm:compression_factoring:tdlcsc}]
Suppose that $G$ and $H$ are \tdlcsc groups with $\psi: G \rightarrow H$ a normal compression map.  Then there is a \tdlcsc group $\tilde{H}$, a closed embedding $\alpha: G \rightarrow \tilde{H}$ with normal image, and a quotient map $\beta: \tilde{H} \rightarrow H$ such that
\begin{enumerate}[(1)]
\item $\psi=\beta \circ \alpha$;
\item $\tilde{H}/\alpha(G)$ is compact;
\item $\ker(\beta)$ is discrete, lies in the FC-center of $\tilde{H}$, and centralizes $\alpha(G)$;
\item $\tilde{H} = \ol{\alpha(G)\ker(\beta)}$.
\end{enumerate}
\end{thm}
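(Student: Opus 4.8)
The plan is to take the Polish factorization from \cite{RW_P_15} and sharpen it using the totally disconnected local structure of $G$ and $H$, the key input being that the conjugation action of $H$ on $\psi(G)$ transports, via $\psi$, into \emph{topological} automorphisms of $G$. For $h\in H$ set $c_h(g):=\psi^{-1}(h\psi(g)h^{-1})$; this is well defined since $\psi(G)\trianglelefteq H$ and $\psi$ is injective, and it is an abstract automorphism of $G$ with inverse $c_{h^{-1}}$. Its graph is the preimage of the diagonal of $H\times H$ under the continuous map $(g,g')\mapsto(h\psi(g)h^{-1},\psi(g'))$, hence closed in $G\times G$, so by the closed graph theorem for Polish groups $c_h$ is continuous; thus $c_h\in\Aut(G)$ and $c\colon H\to\Aut(G)$ is a homomorphism with $c\circ\psi=\inn$. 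Fix a compact open subgroup $U\le G$. As $\psi|_U$ is a continuous bijection of a compact space onto a Hausdorff space, $\psi(U)$ is a compact subgroup of $H$; moreover for each $h\in H$ we have $\psi(U)\cap h\psi(U)h^{-1}=\psi(U\cap c_h(U))$, which has finite index in both $\psi(U)$ and $h\psi(U)h^{-1}$ because $c_h(U)$ is again a compact open subgroup of $G$. Hence $\psi(U)$ is a compact subgroup of $H$ commensurated by $H$.

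Next apply the normal compression theorem of \cite{RW_P_15}: there are a Polish group $\tilde H_0$, a closed embedding $\alpha_0\colon G\to\tilde H_0$ with $\alpha_0(G)\trianglelefteq\tilde H_0$, and a quotient map $\beta_0\colon\tilde H_0\to H$ with $\psi=\beta_0\circ\alpha_0$. Put $L_0:=\ker\beta_0$. Since $\psi$ is injective, $L_0\cap\alpha_0(G)=\{1\}$, and as $L_0$ and $\alpha_0(G)$ are both normal in $\tilde H_0$ we get $[L_0,\alpha_0(G)]\le L_0\cap\alpha_0(G)=\{1\}$, so $L_0$ centralizes $\alpha_0(G)$. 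Replace $\tilde H_0$ by its closed subgroup $\tilde H:=\overline{\alpha_0(G)L_0}$, with $\alpha:=\alpha_0$ and $\beta:=\beta_0|_{\tilde H}$. Then $\ker\beta=L_0$ (as $L_0\le\tilde H$), and $\beta(\tilde H)$ is a closed subgroup of $H$ containing $\beta_0(\alpha_0(G)L_0)=\psi(G)$, hence $\beta(\tilde H)=H$; a continuous surjection of Polish groups is open, so $\beta$ is a quotient map. This gives (1), and (4) holds by construction. It remains to show that $\tilde H$ is locally compact, that $L_0$ is discrete, and that $\tilde H/\alpha(G)$ is compact.

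The heart of the proof — and the step that genuinely uses local compactness of $G$ and $H$ — is to produce a compact open subgroup of $\tilde H$. Since $\beta$ is surjective and $\ker\beta\subseteq\beta^{-1}(S)$ for all $S\subseteq H$, the operation $S\mapsto\beta^{-1}(S)$ carries conjugation, intersection and finite index in $H$ to the same in $\tilde H$; with the first paragraph this shows that $\tilde H$ commensurates $\beta^{-1}(\psi(W))=\alpha_0(W)L_0$ for every compact open $W\le G$, and that for a compact open subgroup $O\le H$ with $\psi(U)\le O$ (one exists, as every compact subgroup of a \tdlc group lies in a compact open subgroup) the open subgroup $\beta^{-1}(O)$ of $\tilde H$ satisfies $\beta^{-1}(O)/L_0\cong O$. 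Feeding the commensuration of $\beta^{-1}(\psi(U))=\alpha_0(U)L_0$ into $\beta^{-1}(O)$, and using $L_0\cap\alpha_0(G)=\{1\}$ together with $L_0\le C_{\tilde H}(\alpha_0(G))$, one constructs inside $\beta^{-1}(O)$ a compact subgroup $\tilde U$ that is open in $\tilde H$, with $\beta|_{\tilde U}\colon\tilde U\to O$ surjective of finite kernel and $\tilde U\cap\alpha_0(G)=\alpha_0(U)$. Then $\tilde H$ is \tdlcsc, and since $\tilde U$ is open with $\tilde U\cap L_0$ finite, $L_0=\ker\beta$ is discrete. I expect this construction of $\tilde U$ to be the main obstacle: without it one has only the Polish factorization, with neither local compactness nor any control on $\ker\beta$, whereas once $\tilde U$ is in hand the remaining assertions follow by routine arguments.

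Finally, since $O$ is open and $\psi(G)$ is dense, $O\psi(G)=H$, so $\tilde H=\beta^{-1}(O\psi(G))=\beta^{-1}(O)\,\beta^{-1}(\psi(G))=\beta^{-1}(O)\,\alpha_0(G)$ (using $L_0\subseteq\beta^{-1}(O)$ and $L_0\le C_{\tilde H}(\alpha_0(G))$); hence $\tilde H/\alpha(G)\cong\beta^{-1}(O)/\alpha_0(\psi^{-1}(O))$, which a short computation with $\tilde U$ shows is compact, proving (2). For the FC-center statement, for $x\in L_0$ the map $h\mapsto hxh^{-1}$ is continuous from $\tilde H$ into the discrete normal subgroup $L_0$, so $C_{\tilde H}(x)$ is open; as it also contains $\alpha_0(G)$, the conjugacy class of $x$ is a discrete continuous image of the compact group $\tilde H/\alpha(G)$, hence finite. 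Therefore $L_0$ lies in the FC-center of $\tilde H$, completing (3).
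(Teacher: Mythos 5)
There is a genuine gap, and it sits exactly where you flag ``the main obstacle'': the compact open subgroup $\tilde U$ you need cannot exist for the group $\tilde H$ you have chosen. After applying the Polish factorization of the earlier paper, your $\tilde H_0$ is $G\rtimes_\psi H$ and $L_0=\ker\beta_0$ is the antidiagonal $\{(g\inv,\psi(g)):g\in G\}$, which is isomorphic to $G$ as a topological group; moreover $\alpha_0(G)L_0=G\times\psi(G)$ is dense, so your replacement $\tilde H:=\ol{\alpha_0(G)L_0}$ is all of $\tilde H_0$ and changes nothing. Now if $\tilde U\le\tilde H$ is open, then $\tilde U\cap L_0$ is an \emph{open} subgroup of $L_0\simeq G$, so it is finite only when $G$ is discrete. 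Hence the asserted $\tilde U$ with $\beta|_{\tilde U}$ of finite kernel is impossible whenever $G$ is non-discrete, and likewise $\tilde H/\alpha(G)\simeq H$ is compact only when $H$ is compact. The commensuration of $\psi(U)$ established in your first paragraph (which is correct, but unused in the actual argument) cannot repair this, because every open subgroup of $\tilde H$ meets the closed copy of $G$ inside $\ker\beta$ in an open subgroup of $G$.

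The missing idea is that $\tilde H$ must be a proper \emph{quotient} of the semidirect product, not a closed subgroup of it. The paper forms $H^\rtimes:=G\rtimes_\psi O$ for a compact open $O\le H$, uses Lemma~\ref{lem:almost_connected} to find a compact open $W\le G$ with $\psi(W)\normal O$, and sets $\tilde H:=H^\rtimes/\Delta_W$ where $\Delta_W=\{(w\inv,\psi(w)):w\in W\}$; Lemma~\ref{lem:delta_subgroup} shows that $\Delta_W$ is a closed normal subgroup and that the composite $G\to H^\rtimes\to\tilde H$ remains a closed embedding. Dividing by $\Delta_W$ collapses precisely the compact part of the kernel: the induced kernel is $\Delta_{\psi^{-1}(O)}/\Delta_W$, which is discrete because $\Delta_{\psi^{-1}(O)}\cap\bigl(\Delta_W(W\rtimes O)\bigr)=\Delta_W$, and $\tilde H/\alpha(G)\simeq O/\psi(W)$ is compact. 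Your closing arguments --- that $\ker\beta$ centralizes $\alpha(G)$ because it is a normal subgroup meeting $\alpha(G)$ trivially, and that a discrete normal subgroup with $\tilde H/\alpha(G)$ compact lies in the FC-center --- are sound and match the paper, but they only become applicable after this quotient step has been performed.
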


In the case of compactly presented \tdlcsc groups, more can be said; see Corollary~\ref{cor:compression_factoring:compactly_presented}. For a normal compression of \lcsc groups, the compression map factorizes as a sequence of closed normal embeddings and quotient maps such that all kernels and cokernels appearing in the sequence are `small'; see Theorem \ref{thm:compression_factoring}.

Using these factorizations, we study the extent to which properties pass between $G$ and $H$ when $H$ is a normal compression of $G$.

\begin{thm}
	Suppose that $G$ and $H$ are \lcsc groups with $H$ a normal compression of $G$. 
\begin{enumerate}[(1)]
\item $G$ is amenable if and only if $H$ is amenable.  \textup{(See Proposition~\ref{prop:amenability:invariance}.)}
\item If $G$ and $H$ are totally disconnected, then $G$ is elementary with decomposition rank $\alpha$ if and only if $H$ is elementary with decomposition rank $\alpha$.  \textup{(See Proposition~\ref{prop:compression:elementary_rank}.)}
\end{enumerate}
\end{thm}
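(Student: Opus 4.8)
The plan is to reduce both assertions to two elementary modifications of a group, using the factorization theorems, and then to invoke permanence properties. In the \tdlcsc case, Theorem~\ref{thm:compression_factoring:tdlcsc} writes $\psi=\beta\circ\alpha$ with $\alpha\colon G\to\tilde{H}$ a closed embedding onto a cocompact normal subgroup and $\beta\colon\tilde{H}\to H$ a quotient map whose kernel $N:=\ker(\beta)$ is discrete, lies in the FC-center of $\tilde{H}$, and centralizes $\alpha(G)$; since $\psi$ is injective one also has $\alpha(G)\cap N=\triv$, so $\alpha(G)$ maps densely into $H$ as well. For general \lcsc groups, Theorem~\ref{thm:compression_factoring} decomposes $\psi$ into a chain of closed normal embeddings with compact cokernel together with quotient maps having ``small'' (in particular amenable) kernel. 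It therefore suffices to show, in each direction, that the property under consideration is shared by (i)~a \lcsc group and any cocompact closed normal subgroup of it, and (ii)~a \lcsc group and its quotient by a discrete normal subgroup contained in the FC-center; the theorem then follows by composing these steps along the factorization.

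For amenability one uses only standard facts: amenability is inherited by closed subgroups and by Hausdorff quotients, an extension of an amenable group by an amenable group is amenable, and compact groups and discrete FC-groups are amenable --- the latter because a finitely generated FC-group is central-by-finite by a theorem of B.~H.~Neumann, hence virtually abelian, and amenability survives directed unions. Hence in (i) the cocompact subgroup is amenable whenever the ambient group is (it is closed), and conversely the ambient group, being an extension of a compact group by that subgroup, is amenable whenever the subgroup is; in (ii) the quotient is amenable whenever the ambient group is, and conversely the ambient group, being an extension of the quotient by a discrete FC-group, is amenable whenever the quotient is. (The implication ``$G$ amenable $\Rightarrow H$ amenable'' also has a direct proof: $\psi(G)$ is a continuous bijective homomorphic image of $G$, hence an amenable topological group, and it is dense and normal in $H$, so in any continuous affine action of $H$ on a compact convex set its fixed-point set is nonempty, closed and $H$-invariant, and $H$ acts trivially on it by density, giving an $H$-fixed point.) This proves part~(1).

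For part~(2), the stability of the class of elementary \tdlcsc groups under closed subgroups, Hausdorff quotients and extensions shows, via the same reduction, that $G$ is elementary if and only if $H$ is; the real content is that the decomposition rank $\xi$ is preserved \emph{exactly} by (i) and (ii). Monotonicity of $\xi$ under closed subgroups and Hausdorff quotients gives $\xi(G)\le\xi(\tilde{H})$ and $\xi(H)\le\xi(\tilde{H})$ immediately, while the coarse extension bound --- applied to the compact cokernel of $\alpha$ and to the discrete kernel $N$, each of rank at most $2$ --- shows that $\xi(\tilde{H})$ exceeds neither $\xi(G)$ nor $\xi(H)$ by more than a fixed finite amount. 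The substantive step is to upgrade this to exact equality: passing to a cocompact closed normal subgroup, and quotienting by a discrete FC-central normal subgroup, must each leave $\xi$ completely unchanged. I would establish this by transfinite induction on the rank, passing to compactly generated open subgroups $O$ and analysing how the discrete residual $\mathrm{Res}(O)$ behaves under the two operations. The hypotheses that $N$ lies in the FC-center and that $\alpha(G)\cap N=\triv$ --- so that $N$ absorbs no part of $\tilde{H}=\ol{\alpha(G)N}$ --- are precisely what rule out a genuine drop in rank, since a general discrete normal quotient can lower $\xi$, as $\Zb\twoheadrightarrow\triv$ shows. Pinning down how $\mathrm{Res}(O)$ can meet the discrete FC-central kernel, and checking that this intersection contributes nothing to the rank, is the step I expect to be the main obstacle.
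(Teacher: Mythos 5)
Part (1) of your proposal is correct and is essentially the paper's argument: the forward direction is the standard fact that the closure of a continuous homomorphic image of an amenable group is amenable, and the converse pushes amenability backwards through the factorization of Theorem~\ref{thm:compression_factoring}, all of whose kernels and cokernels are compact, solvable, or discrete FC-groups and hence amenable. Your Neumann-style justification that discrete FC-groups are amenable is fine (the paper leaves this as an exercise).

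Part (2) contains a genuine gap, and you have flagged it yourself: you never establish that quotienting $\tilde{H}$ by the discrete FC-central kernel $N=\ker(\beta)$ preserves $\xi$ exactly, deferring it as ``the step I expect to be the main obstacle,'' and the remedy you sketch (transfinite induction tracking how $\Res{}(O)$ meets $N$) is not carried out. The idea you are missing is that the monotonicity of $\xi$ is not merely under \emph{closed} subgroups: by Proposition~\ref{prop:xi_monotone}(1) (together with Theorem~\ref{thm:elementary_closure}(1)), $\xi$ is monotone under \emph{arbitrary} continuous injective homomorphisms of \tdlcsc groups. Applied to the compression map $\psi\colon G\to H$ itself, this gives $\xi(G)\le\xi(H)$ with no further work, and no hypothesis on $N$ is needed. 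Combined with $\xi(H)\le\xi(\tilde{H})$ (quotient monotonicity, Proposition~\ref{prop:xi_monotone}(2)) and $\xi(\tilde{H})=\xi(G)$ (Lemma~\ref{lem:compact_ext}, since $\alpha(G)$ is a cocompact closed normal subgroup of $\tilde{H}$), one gets $\xi(G)\le\xi(H)\le\xi(\tilde{H})=\xi(G)$, which is exactly the paper's proof of Proposition~\ref{prop:compression:elementary_rank}. In short: the lower bound $\xi(G)\le\xi(H)$ should come directly from the injection $\psi$, not from an analysis of the kernel of $\beta$; once you see this, the ``main obstacle'' disappears.
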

The class of \textbf{elementary groups}\index{elementary groups} is the smallest class of \tdlcsc groups containing the profinite groups and discrete groups and closed under the elementary operations. This class admits a canonical rank function called the decomposition rank, taking values in the set $\omega_1$ of at most countable ordinals.  It is convenient to define non-elementary \tdlcsc groups to have decomposition rank $\omega_1$.

A Polish group $G$ is \textbf{quasi-discrete}\index{quasi-discrete} if the quasi-center $\QZ(G):=\{g\in G\mid \CC_G(g)\text{ is open}\}$ is dense.  The property of being discrete is not preserved under normal compressions, but quasi-discreteness is preserved up to metabelian factors. 

\begin{thm}[See Theorem~\ref{thm:compression:quasidiscrete}]
Suppose that $G$ and $H$ are \lcsc groups with $H$ a normal compression of $G$. 
	\begin{enumerate}[(1)]
		\item If $H$ is quasi-discrete, then $G/\ol{\QZ(G)}$ is metabelian.
		\item If $G$ is quasi-discrete, then $H/\ol{\QZ(H)}$ is metabelian.
	\end{enumerate}
\end{thm}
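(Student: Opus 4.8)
The plan is to treat the \tdlcsc case first and then reduce the general \lcsc case to it; the reduction is where ``metabelian'' rather than ``abelian'' enters. A quasi-discrete \lcsc group $A$ has central (hence abelian) identity component, since $\QZ(A)$ centralizes $A^\circ$ --- each of its elements has open centralizer, which contains $A^\circ$ --- and $\CC_A(A^\circ)$ is closed, so $A = \ol{\QZ(A)}$ centralizes $A^\circ$. Hence in part~(1) the subgroup $Z' := \psi^{-1}(H^\circ)$ lies in $\Z(G)$ (its $\psi$-image lies in $H^\circ \le \Z(H)$, so commutes with the dense subgroup $\psi(G)$), and in both parts the induced compression $G/\psi^{-1}(H^\circ)\to H/H^\circ$ of \tdlcsc groups keeps the relevant hypothesis (quasi-discreteness of the target in part~(1), of the source in part~(2); for part~(2) one may alternatively invoke Theorem~\ref{thm:compression_factoring} directly). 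So assume $G$, $H$ are \tdlcsc and factor $\psi = \beta\circ\alpha$ as in Theorem~\ref{thm:compression_factoring:tdlcsc}: $\alpha\colon G\to\tilde{H}$ a closed embedding with normal image, $\tilde{H}/\alpha(G)$ compact, $K:=\ker\beta$ discrete, lying in the FC-center of $\tilde{H}$ and centralizing $\alpha(G)$, and $\tilde{H}=\ol{\alpha(G)K}$. Throughout we use: $\alpha(\QZ(G))=\QZ(\alpha(G))$ and $\QZ(\tilde{H})\cap\alpha(G)\subseteq\QZ(\alpha(G))$ (since $\alpha(G)$ is closed and $\alpha$ a homeomorphism onto it); $[\QZ(\tilde{H}),\alpha(G)]\subseteq\QZ(\tilde{H})\cap\alpha(G)$ (both are normal in $\tilde{H}$); and $K\subseteq\QZ(\tilde{H})$, as each element of $K$ has a closed finite-index, hence open, centralizer in $\tilde{H}$.

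The next step compares quasi-centers across $\beta$: $\QZ(H)=\QZ(\tilde{H})/K$. The inclusion $\supseteq$ is immediate. For $\subseteq$, lift $\bar h\in\QZ(H)$ to $h\in\tilde{H}$; the $\beta$-preimage of $\CC_H(\bar h)$ is open, so contains a compact open subgroup $V$, and $[h,V]\subseteq K$ is compact in a discrete group, hence finite. Intersecting the (open) $\tilde{H}$-centralizers of the finitely many elements of $[h,V]$ gives an open subgroup $V'$, and on a compact open subgroup of $V\cap V'$ the map $v\mapsto[h,v]$ is a homomorphism into $K$ with finite image, hence with open kernel; so $\CC_{\tilde{H}}(h)$ is open. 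Therefore $\ol{\QZ(H)}=\beta(\ol{\QZ(\tilde{H})})$ and $H/\ol{\QZ(H)}\cong\tilde{H}/\ol{\QZ(\tilde{H})}$; in particular $H$ is quasi-discrete if and only if $\tilde{H}$ is.

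Part~(1), \tdlcsc case: $H$ quasi-discrete forces $\tilde{H}=\ol{\QZ(\tilde{H})}$, so by continuity of the commutator map and the facts above,
\[
[\tilde{H},\alpha(G)]=[\,\ol{\QZ(\tilde{H})},\,\alpha(G)\,]\;\subseteq\;\ol{[\QZ(\tilde{H}),\alpha(G)]}\;\subseteq\;\ol{\QZ(\tilde{H})\cap\alpha(G)}\;\subseteq\;\ol{\QZ(\alpha(G))}=\alpha(\ol{\QZ(G)}),
\]
so $[\alpha(G),\alpha(G)]\subseteq\alpha(\ol{\QZ(G)})$ and $G/\ol{\QZ(G)}$ is abelian. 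For general \lcsc $G$, $H$, the same applied to $G/Z'\to H/H^\circ$ gives $(G/Z')/\ol{\QZ(G/Z')}$ abelian; since $Z'\le\Z(G)$, the elementary fact ``$[a,w],[b,w]$ central $\Rightarrow[[a,b],w]=1$'' shows the preimage $\tilde{Q}$ in $G$ of $\QZ(G/Z')$ satisfies $[\tilde{Q},\tilde{Q}]\subseteq\QZ(G)$; as $\ol{\tilde{Q}}$ is the preimage of $\ol{\QZ(G/Z')}$, it contains $\ol{[G,G]}$, whence $\ol{[\,\ol{[G,G]},\ol{[G,G]}\,]}\subseteq\ol{[\tilde{Q},\tilde{Q}]}\subseteq\ol{\QZ(G)}$, i.e.\ $G/\ol{\QZ(G)}$ is metabelian --- and abelian when $H$ is totally disconnected.

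Part~(2): after the reduction it suffices to show $\tilde{H}/\ol{\QZ(\tilde{H})}$ is metabelian, where $\alpha(G)\cong G$ is quasi-discrete, closed, normal, with $\tilde{H}/\alpha(G)$ compact, $\tilde{H}=\ol{\alpha(G)K}$, $[\alpha(G),K]=1$, $K\subseteq\QZ(\tilde{H})$. There are two abelian layers. The top one: $\tilde{H}/\alpha(G)$ is compact and topologically generated by the image of the FC-central group $K$, so has dense quasi-center, forcing $\ol{\QZ(\tilde{H})}\,\alpha(G)$ to be dense in $\tilde{H}$, so that $\tilde{H}/\ol{\QZ(\tilde{H})}$ is topologically generated by the image of $\alpha(G)$. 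The bottom one: using $\alpha(G)=\ol{\QZ(\alpha(G))}$, the fact that $\tilde{H}$-conjugacy classes of elements of $\alpha(G)$ are controlled by their $\alpha(G)$-conjugacy classes (because $[\alpha(G),K]=1$), and the refined structure theory for quasi-products of \cite{RW_P_15} in the \lcsc setting, one arranges that $\ol{[\alpha(G),\alpha(G)]}$ maps into an abelian subgroup of $\tilde{H}/\ol{\QZ(\tilde{H})}$. The main obstacle is exactly this last step: in contrast to part~(1), the quasi-center of the closed normal subgroup $\alpha(G)$ need not lie in $\QZ(\tilde{H})$ --- an element of $\alpha(G)$ with open centralizer in $\alpha(G)$ need not have open centralizer in $\tilde{H}$, since $\alpha(G)$ is only cocompact, not open --- so one must exploit the compactness of $\tilde{H}/\alpha(G)$ and the relation $[\alpha(G),K]=1$ to transfer enough of $\QZ(\alpha(G))$ into $\QZ(\tilde{H})$, and it is in this transfer that the second abelian layer can appear, which is why the conclusion is ``metabelian'' and not ``abelian.''
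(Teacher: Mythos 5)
Your part (1) is correct and takes a genuinely different route from the paper's. The paper works directly inside $H$: setting $Q=\QZ(H)$ and $R=\ol{\psi(G)\cap Q}$, it notes that the dense normal subgroups $\psi(G)$ and $Q$ commute modulo $R$, so $H/R$ and hence $G/\psi\inv(R)$ are abelian, and then applies Proposition~\ref{prop:normal_compression} to the compression $\psi\inv(R)\rightarrow R$; the second abelian layer appears there because $\psi\inv(\ol{\psi(G)\cap Q})$ may exceed $\ol{\QZ(G)}$ (the topology on $G$ is finer). You instead pass to $\tilde{H}$ from Theorem~\ref{thm:compression_factoring:tdlcsc}, where $\alpha(G)$ is \emph{closed}, prove $\QZ(H)=\beta(\QZ(\tilde{H}))$ (your lifting argument via the finite compact set $[h,V]\subseteq\ker\beta$ and the resulting finite-image homomorphism is sound), and then run $[\tilde{H},\alpha(G)]=[\ol{\QZ(\tilde{H})},\alpha(G)]\le\ol{\QZ(\tilde{H})\cap\alpha(G)}\le\alpha(\ol{\QZ(G)})$. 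Because closures are now taken inside the closed subgroup $\alpha(G)$, this buys a sharper conclusion in the totally disconnected case ($G/\ol{\QZ(G)}$ abelian, with the metabelian slack entering only through the central connected quotient in the reduction), at the cost of invoking the factorization machinery; your handling of that reduction via the elementary commutator fact is also correct.

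Part (2), however, has a genuine gap, and it sits exactly where you flag "the main obstacle." The sentence asserting that, using "the refined structure theory for quasi-products of \cite{RW_P_15}," one "arranges" that $\ol{[\alpha(G),\alpha(G)]}$ maps into an abelian subgroup of $\tilde{H}/\ol{\QZ(\tilde{H})}$ is not a proof: quasi-products are not the relevant tool here, and this assertion \emph{is} the hard content of the statement. What must actually be shown is that $[\QZ(G),\QZ(G)]$ transfers into $\QZ(H)$ even though $\QZ(\alpha(G))\not\le\QZ(\tilde{H})$ in general. The paper does this with a Hall--Witt argument: for $m,n\in\QZ(G)$, take $W\in\U(G)$ centralizing both, use Lemma~\ref{lem:almost_connected} to get $V\in\U(H)$ normalizing $L:=\psi(W)$, and in $R:=\grp{\psi(n),\psi(m),L,V}$ observe that $(\psi(G)\cap R)L/L$ is a countable normal, hence quasi-central, subgroup of $R/L$; this produces an open $V'\le V$ with $[V',\psi(m)]\cup[V',\psi(n)]\subseteq L$, and the Hall--Witt identity then forces $[\psi(m)\inv,\psi(n)]$ to be centralized by $V'$, i.e.\ $\psi([\QZ(G),\QZ(G)])\le\QZ(H)$. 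Density of $\psi(\QZ(G))$ then gives $\ol{[H,H]}\le\ol{\QZ(H)}$ at once. Without this transfer (or an equivalent), your two-layer scheme does not close up. A secondary issue: your reduction "assume $G,H$ are \tdlcsc" is not immediate for part (2), since $H/\ol{\psi(G^\circ)}$ need not be totally disconnected; one must first invoke Theorem~\ref{thmintro:connected}(3) after passing to $G/G^\circ$, or argue as the paper does with almost connected subgroups of $H$, and each such central quotient step must be accounted for in the metabelian bookkeeping.
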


\subsection{Generalized direct products}

We next consider generalized direct products in the locally compact setting.  

\begin{defn}
	For $G$ a topological group and $J$ a collection of closed normal subgroups, we set $G_J :=  \cgrp{N \mid N \in J}$.  A collection $\mc{S}$ of closed normal subgroups is a \defbold{quasi-direct factorization} of $G$ if $G_{\mc{S}}=G$, and $\mc{S}$ has the following topological independence property:
	\[
	\forall X\subseteq \mathcal{P}(\mc{S}):\; \bigcap X=\emptyset \Rightarrow \bigcap_{A\in X}G_A=\{1\}. 
	\]
	In such a case, $(G,\mc{S})$ is said to be a \textbf{quasi-product}\index{quasi-product} (or we say $G$ is a quasi-product, when the factorization is implicit).\end{defn}

Basic examples of quasi-products are local direct products.

\begin{defn}
	Suppose that $(G_i)_{i\in I}$ is a set of topological groups and that there is a distinguished open subgroup $O_i\leq G_i$ for each $i\in I$.  The \textbf{local direct product}\index{local direct product} of $(G_i)_{i\in I}$ over $(O_i)_{i\in I}$ is denoted by $\bigoplus_{i\in I}\left(G_i,O_i\right)$ and defined to be
	\[
	\left\{ f:I\rightarrow \bigsqcup_{i\in I} G_i\mid f(i)\in G_i \text{, and }f(i)\in O_i\text{ for all but finitely many }i\in I\right\}
	\]
	with component-wise multiplication and the group topology such that $\prod_{i\in I}O_i$ with the product topology continuously embeds as an open subgroup of $\bigoplus_{i\in I}\left(G_i,O_i\right)$.
\end{defn}

We identify certain quasi-products that are closely related to local direct products.

\begin{defn}
	Let $G$ be a locally compact group with $\mc{S}$ a quasi-direct factorization of $G$.  A \defbold{local direct model}\index{local direct model} for $(G,\mc{S})$ is a continuous homomorphism $\phi: \bigoplus_{\mc{S}}(G) \rightarrow G$ where $\bigoplus_{\mc{S}}(G) := \bigoplus_{N\in \mc S}(N, N\cap O)$ for some open subgroup $O$ of $G$ is such that $\bigoplus_{\mc{S}}(G)$ is locally compact and $\phi\rest_N=\mathrm{id}_N$ for all $N \in \mc{S}$.
\end{defn}

Local direct models for quasi-products, provided they exist, enjoy a universal property and are in particular unique; see Theorem~\ref{thm:LDM_universal_prop} and Corollary~\ref{cor:LDM_unique}. 

\begin{thm}[See Corollary~\ref{cor:tdlc_LDM}] 
	If $(G,\mc{S})$ is a \tdlc quasi-product, then $(G,\mc{S})$ admits a local direct model.
\end{thm}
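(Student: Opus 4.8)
The plan is to build the local direct model relative to a compact open subgroup. By van Dantzig's theorem, $G$ has a basis of identity neighbourhoods consisting of compact open subgroups; I fix one such $U$ and take $O := U$, so that $\bigoplus_{\mc{S}}(G) = \bigoplus_{N \in \mc{S}}(N, N\cap U)$. Its designated open subgroup $\prod_{N\in\mc{S}}(N\cap U)$ is a product of compact groups, each $N\cap U$ being a closed subgroup of the compact group $U$, hence is compact by Tychonoff's theorem; thus $\bigoplus_{\mc{S}}(G)$ is locally compact, as required. Two consequences of the topological independence property will be used. Applying it with $X = \{\{N\},\{M\}\}$ for distinct $N, M \in \mc{S}$ gives $N\cap M = \{1\}$, whence $[N,M] \leq N\cap M = \{1\}$ by normality of $N$ and $M$, so the members of $\mc{S}$ pairwise commute. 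Applying it with $X$ the collection of cofinite subsets of $\mc{S}$ gives $\bigcap_{F}G_{\mc{S}\setminus F} = \{1\}$, the intersection running over the finite subsets $F\subseteq\mc{S}$.

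Next I define the model on the subgroup $D \leq \bigoplus_{\mc{S}}(G)$ of finitely supported elements, which is dense. Since distinct members of $\mc{S}$ commute and each $f\in D$ is trivial at all but finitely many coordinates, $\phi_0(f) := \prod_{N\in\mc{S}}f(N)$ is a well-defined element of $G$, and $\phi_0\colon D\to G$ is a homomorphism restricting to $\mathrm{id}_N$ on each $N\in\mc{S}$, viewing $N$ inside $D$ as the functions supported at $N$. The crucial point is that $\phi_0$ is continuous, and this is the one place total disconnectedness is genuinely used. Let $W\leq G$ be an open subgroup; replacing $W$ by $W\cap U$ we may assume $W\leq U$. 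The closed subgroups $G_{\mc{S}\setminus F}\cap U$ of the compact group $U$, indexed by the finite sets $F\subseteq\mc{S}$, form a downward-directed family with intersection $\{1\}\subseteq W$, so a routine compactness argument (using that $U\setminus W$ is closed in the compact space $U$) produces a finite $F$ with $G_{\mc{S}\setminus F}\cap U\leq W$; in particular $N\cap U\leq W$ for every $N\in\mc{S}\setminus F$. Now $\Omega := \prod_{N\in F}(W\cap N)\times\prod_{N\in\mc{S}\setminus F}(N\cap U)$ is an open subgroup of $\bigoplus_{\mc{S}}(G)$, and every $f\in\Omega\cap D$ takes all of its values in $W$ (in $W\cap N\subseteq W$ for $N\in F$, and in $N\cap U\subseteq W$ otherwise), so $\phi_0(f)=\prod_N f(N)\in W$ since $W$ is a subgroup. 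Hence $\phi_0(\Omega\cap D)\subseteq W$, so $\phi_0$ is continuous at the identity and therefore continuous.

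To finish, a continuous homomorphism of topological groups is uniformly continuous for the right uniformities, and $G$, being locally compact, is complete; as $D$ is dense in $\bigoplus_{\mc{S}}(G)$, the map $\phi_0$ extends uniquely to a continuous map $\phi\colon\bigoplus_{\mc{S}}(G)\to G$, which is again a homomorphism by density together with continuity of multiplication, and which still restricts to $\mathrm{id}_N$ on each $N\in\mc{S}$. Thus $\phi$ is a local direct model for $(G,\mc{S})$. The main obstacle is precisely the continuity of $\phi_0$: it relies on van Dantzig's theorem to reduce an arbitrary identity neighbourhood to an open subgroup $W$, and then on the compactness argument — powered by the independence property — to force cofinitely many of the $N\cap U$ inside $W$, so that the finite but unboundedly long products computing $\phi_0$ cannot escape $W$. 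Neither device is available in a general (non-totally-disconnected) Polish quasi-product, which is exactly why the hypothesis here is that $G$ be totally disconnected.
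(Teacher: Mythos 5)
Your proof is correct, and it reaches the conclusion by a genuinely different route from the paper's. The paper first proves that any quasi-direct factorization is \emph{compactly convergent} (Lemma~\ref{lem:quasiproduct:local_convergence}, via injectivity of the diagonal map $d\colon G\to\prod_{N}G/G_{\mc{S}\setminus\{N\}}$), and then, for any compactly convergent generalized central factorization of an almost \tdlc group (Theorem~\ref{thm:LDF}), defines $\phi$ on all of $\bigoplus_{\mc{S}}(G)$ at once as a net limit $\phi(h)=\lim_J\prod_{N\in J}h_N$, with convergence obtained from a cluster-point argument inside the compact group $\cgrp{N\cap U\mid N\in\mc{S}}$. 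You instead define $\phi$ only on the dense subgroup of finitely supported elements, where the product is literally finite and well defined because the factors pairwise commute (which you correctly extract from the independence axiom), prove continuity there by producing a finite $F$ with $G_{\mc{S}\setminus F}\cap U\le W$ via the finite intersection property in the compact group $U$, and then extend by uniform continuity and completeness of $G$. Both arguments run on the same engine --- compactness of $U$ forcing cofinitely many of the subgroups $N\cap U$ into any prescribed open subgroup --- but yours bypasses the notion of compact convergence and the net-limit machinery entirely, using the independence property directly. What you lose is generality: the paper's Theorem~\ref{thm:LDF} covers compactly convergent generalized central factorizations, where the independence property is unavailable and your finite-intersection step has no analogue. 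For the corollary as stated, your argument is complete; the standard facts you invoke (local compactness implies completeness in the right uniformity, and the density/completeness extension of a uniformly continuous homomorphism) are correct as used.
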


A natural source of quasi-products are groups of strict semisimple type: A Polish group $G$ is of \defbold{strict semisimple type}\index{strict semisimple type} if it is a quasi-product of non-abelian topologically simple groups.

\begin{thm}[See Theorem~\ref{thm:strict_semisimple}]
For $G$ an \lcsc group of strict semisimple type, 
\[
G\simeq D\times \prod_{i\in I}C_i\times\prod_{j=0}^nS_j
\]
with $D$ a \tdlcsc group of strict semisimple type, each $C_i$ a topologically simple compact connected Lie group, and each $S_j$ a non-compact topologically simple connected Lie group.
\end{thm}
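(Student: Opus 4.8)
We separate $G$ into its identity component $G^\circ$ and a totally disconnected complement, then treat each piece with structure theory. Write $\mc S$ for the given quasi-direct factorization into non-abelian topologically simple groups; distinct members of $\mc S$ have trivial intersection by topological independence, hence commute. For $N \in \mc S$ the identity component $N^\circ$ is a closed characteristic, hence normal, subgroup, so $N^\circ \in \{\{1\}, N\}$; accordingly $\mc S = \mc S_0 \sqcup \mc S_1$ with the members of $\mc S_0$ connected and those of $\mc S_1$ totally disconnected. Set $C := G_{\mc S_0}$ and $D := G_{\mc S_1}$; these are closed normal subgroups, $C$ is connected (being topologically generated by connected subgroups), and each member of $\mc S_0$ lies in $G^\circ$, so $C \le G^\circ$. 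For $N \in \mc S_1$ and $n \in N$, the map $G^\circ \to N$, $g \mapsto [n,g]$, is continuous (its image lies in $N$ since $N \trianglelefteq G$), so it is constant $= 1$ on the connected group $G^\circ$ because $N$ is totally disconnected; thus $D = \cgrp{N : N \in \mc S_1}$ centralizes $G^\circ$. Hence $G = \cgrp{\mc S} = \ol{CD}$ with $C \le G^\circ$ and $D \le \CC_G(G^\circ)$.

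The crucial step is to prove $C = G^\circ$. Put $\ol G := G/C$, and for $N \in \mc S_1$ let $\ol N$ be the image of $N$; since $N \cap C = \{1\}$ by independence, $\ol N \cong N$ is a non-abelian topologically simple closed normal subgroup of $\ol G$, and $\ol G = \cgrp{\ol N : N \in \mc S_1}$ because the images of $\mc S_0$ are trivial. Now $\ol G^\circ \cap \ol N$ is a closed normal subgroup of $\ol N$, and the commutator argument above (with the connected group $\ol G^\circ$ mapping into the totally disconnected group $\ol N$) shows $\ol G^\circ \cap \ol N \le \Z(\ol G^\circ)$; as $\ol N$ is non-abelian, topological simplicity of $\ol N$ forces $\ol G^\circ \cap \ol N = \{1\}$, whence $[\ol G^\circ, \ol N] = \{1\}$. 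Thus $\ol G^\circ$ centralizes $\cgrp{\ol N : N \in \mc S_1} = \ol G$, so $\ol G^\circ$ is a connected (hence abelian) closed central subgroup of $\ol G$. By the structure theory of quasi-products — a quasi-product of non-abelian topologically simple groups admits no nontrivial abelian closed normal subgroup (cf.\ \cite{RW_P_15}) — together with a verification that this passes to $\ol G$, we get $\ol G^\circ = \{1\}$, i.e.\ $G^\circ = C = \cgrp{\mc S_0}$. This is the principal obstacle of the proof.

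Next we determine $G^\circ = \cgrp{\mc S_0}$. Each $N \in \mc S_0$ is a topologically simple connected locally compact group; by Gleason--Yamabe it has arbitrarily small compact normal subgroups with Lie quotient, so $N$ is either a simple Lie group or compact, and a compact connected topologically simple group is a compact simple Lie group (structure theory of compact connected groups). Fixing a compact normal $W \trianglelefteq G^\circ$ with $G^\circ/W$ a Lie group, each non-compact $N \in \mc S_0$ has $N \cap W = \{1\}$ (a compact normal subgroup of a centerless simple Lie group is trivial), so $N$ embeds as a closed subgroup of the Lie group $G^\circ/W$; distinct such $N$ have commuting Lie subalgebras with zero intersection (else they share a nonzero simple ideal and hence coincide), so only finitely many are non-compact. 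Write $\mc S_0 = \{S_0, \dots, S_n\} \sqcup \{C_i : i \in I\}$ with the $S_j$ non-compact simple Lie groups and the $C_i$ compact simple Lie groups. Using topological independence and the structure theory of connected (and compact connected) locally compact groups, $\cgrp{C_i : i \in I} \cong \prod_{i \in I} C_i$ is compact and $G^\circ \cong \bigl(\prod_{i \in I} C_i\bigr) \times \bigl(\prod_{j=0}^n S_j\bigr)$, a direct product of topologically simple connected Lie groups; in particular $\Z(G^\circ) = \{1\}$.

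Finally we split $G^\circ$ off as a direct factor. Each $C_i$ and $S_j$, being normal in $G$, carries a conjugation action of $G$, giving continuous homomorphisms $G \to \Aut(C_i)$ and $G \to \Aut(S_j)$; a generator from $\mc S$ acts on such a factor either by an inner automorphism of it (if it is that factor) or trivially (lying in a commuting factor), so every element of $\cgrp{\mc S}$ acts by inner automorphisms. Since the outer automorphism groups of $C_i$ and $S_j$ are finite, $\Inn(C_i)$ and $\Inn(S_j)$ are open and closed in $\Aut(C_i)$ and $\Aut(S_j)$, so by density all of $G$ acts by inner automorphisms on each factor, hence (a factor-wise inner automorphism of $\prod C_i \times \prod S_j$ is inner, by assembling the conjugating elements) on $G^\circ$. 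Therefore $G = G^\circ \CC_G(G^\circ)$; since $G^\circ \cap \CC_G(G^\circ) = \Z(G^\circ) = \{1\}$ and both are normal, the multiplication map $G^\circ \times \CC_G(G^\circ) \to G$ is a continuous bijective homomorphism of \lcsc groups, hence an isomorphism. Projecting $\ol{CD} = G$ onto the factor $\CC_G(G^\circ) \ge D$ yields $\CC_G(G^\circ) = \ol D = D$; moreover $(D, \mc S_1)$ inherits topological independence from $(G, \mc S)$, so $D$ is a \tdlcsc group of strict semisimple type. Hence $G \cong D \times \prod_{i \in I} C_i \times \prod_{j=0}^n S_j$, as required.
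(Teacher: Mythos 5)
Your overall architecture is reasonable and partly parallels the paper's (split $\mc{S}$ into connected and totally disconnected factors, analyse the connected part via Lie theory, split it off as a direct factor), and your final splitting step via finiteness of the outer automorphism groups of centerless simple Lie groups is a genuinely different --- and attractive --- alternative to the paper's use of Zerling's theorem (Corollary~\ref{cor:Zerling:perfect}). But there are two genuine gaps. The first is in what you yourself call the crucial step. You pass to $\ol{G} = G/C$ and need $\ol{G}^\circ = \triv$; your argument reduces this to showing $\ol{G}$ has no nontrivial abelian closed normal subgroup, and you defer ``a verification that this passes to $\ol{G}$.'' That verification is exactly where the difficulty lies: a quotient of a quasi-product need not be a quasi-product --- by Proposition~\ref{prop:quasiproduct:quotient} it becomes one only after killing a further \emph{central} subgroup $M/C$, and a potential central subgroup of $G/C$ is precisely what you are trying to exclude, so the appeal is circular as stated. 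Within the same step, the image of $N \in \mc{S}_1$ in $G/C$ need not be closed; only its closure is a (normal compression of) $N$, and total disconnectedness of that closure is not free --- it requires Theorem~\ref{thmintro:connected}/Corollary~\ref{cor:con_association} together with knowing the closure is centerless and topologically perfect. Hence the deduction $\ol{G}^\circ \le \Z(\ol{G}) = \triv$ is not justified as written.

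The second gap is the assertion $G^\circ \simeq \prod_{i \in I} C_i \times \prod_{j=0}^n S_j$ ``using topological independence and structure theory.'' This is the content of the paper's Proposition~\ref{prop:quasi_connected}, whose proof is substantial: it uses Zerling's theorem to upgrade the normal compressions $N \rightarrow G/G_{\mc{S}\setminus\{N\}}$ to isomorphisms, a commutator computation to show that $\cgrp{C_i \mid i \in I}$ is compact (a closed subgroup generated by infinitely many compact normal subgroups need not be compact a priori), and Corollary~\ref{cor:quasiproduct:compact} to split the compact part. Your Lie-algebra count for finiteness of the non-compact factors also implicitly assumes the images $NW/W$ are closed Lie subgroups of $G^\circ/W$, which again is not automatic. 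Both gaps are repairable, and in fact the detour through $C = G^\circ$ is unnecessary: cite Proposition~\ref{prop:quasi_connected} for the structure of $C = \cgrp{\mc{S}_0}$, run your inner-automorphism splitting argument on $C$ itself to get $G \simeq C \times \CC_G(C)$, identify $\CC_G(C)$ with $D$ exactly as you do, and obtain total disconnectedness of $D$ from Lemma~\ref{lem:tdlc_quasi-factors} rather than from a complementation to $G^\circ$.
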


\begin{cor}[See Corollary~\ref{cor:strict_semisimple:local_model}]
Suppose that $G$ is an \lcsc group of strict semisimple type and let $\mc{S}$ be the set of non-abelian topologically simple closed normal subgroups of $G$.  Then $(G,\mc{S})$ is a quasi-product that admits a local direct model.\end{cor}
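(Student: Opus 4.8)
The plan is to split the argument into two parts: first, to pin down exactly which subgroups make up $\mc{S}$, so that $\mc{S}$ may be freely replaced by any convenient quasi-direct factorization of $G$ into topologically simple groups; and second, to assemble a local direct model for $(G,\mc{S})$ out of Theorem~\ref{thm:strict_semisimple}.

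For the first part I would prove a uniqueness lemma: if $(G,\mc{T})$ is a quasi-direct factorization of a topological group with every member of $\mc{T}$ non-abelian and topologically simple, then $\mc{T}$ is exactly the set of all non-abelian topologically simple closed normal subgroups of $G$. One inclusion is immediate, since members of $\mc{T}$ are closed normal by hypothesis. Conversely, if $N$ is a non-abelian topologically simple closed normal subgroup of $G$ and $T\in\mc{T}$, then $N\cap T$ is a closed normal subgroup of $N$, hence trivial or all of $N$. If $N\cap T=N$ for some $T$, then $N$ is a nontrivial closed normal subgroup of the topologically simple group $T$, so $N=T\in\mc{T}$; otherwise $[N,T]\leq N\cap T=\{1\}$ for every $T\in\mc{T}$, so $N$ centralizes $\grp{T\mid T\in\mc{T}}$ and hence its closure $G_{\mc T}=G$, forcing $N\leq\Z(G)$ and contradicting non-abelianness. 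Applied to the factorization witnessing that $G$ has strict semisimple type, this lemma shows that $(G,\mc{S})$ is itself that quasi-product; applied inside the \tdlcsc factor $D$ introduced below, it shows that $(D,\mc{S}_D)$ is a quasi-product, where $\mc{S}_D$ denotes the set of non-abelian topologically simple closed normal subgroups of $D$.

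For the second part I would invoke Theorem~\ref{thm:strict_semisimple} to write $G\simeq D\times\prod_{i\in I}C_i\times\prod_{j=0}^nS_j$ with $D$ \tdlcsc of strict semisimple type and the $C_i,S_j$ topologically simple connected Lie groups, each of which is non-abelian since a nontrivial connected abelian Lie group has proper nontrivial closed subgroups and so is not topologically simple. Since $(D,\mc{S}_D)$ is a \tdlc quasi-product, Corollary~\ref{cor:tdlc_LDM} supplies a local direct model $\phi_D\colon\bigoplus_{\mc{S}_D}(D)\to D$, with $\bigoplus_{\mc{S}_D}(D)=\bigoplus_{N\in\mc{S}_D}(N,N\cap O_D)$ for some open $O_D\leq D$. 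Setting $\mc{S}':=\mc{S}_D\cup\{C_i\mid i\in I\}\cup\{S_j\mid 0\leq j\leq n\}$ (as closed normal subgroups of $G$ via the decomposition), I would check that $(G,\mc{S}')$ is a quasi-direct factorization: $G_{\mc{S}'}=G$ since $G_{\mc{S}_D}=D$, and for the topological independence condition one uses that the three direct factors pairwise commute and meet trivially, so that for $A\subseteq\mc{S}'$ the group $G_A$ splits as the product of $G_{A\cap\mc{S}_D}$ (computed in $D$) with the corresponding subproducts of the $C_i$ and $S_j$, with an empty part contributing the trivial group; the required intersections then reduce to the independence of $(D,\mc{S}_D)$ and the trivial independence of the $C_i$ among themselves and of the $S_j$ among themselves. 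Since the members of $\mc{S}'$ are non-abelian topologically simple closed normal subgroups of $G$, the uniqueness lemma yields $\mc{S}'=\mc{S}$. Taking $O:=O_D\times\prod_{i\in I}C_i\times\prod_{j=0}^nS_j$, an open subgroup of $G$, and using that $C_i\cap O=C_i$ and $S_j\cap O=S_j$, one gets $\bigoplus_{\mc{S}}(G)=\bigoplus_{N\in\mc{S}}(N,N\cap O)\simeq\bigoplus_{\mc{S}_D}(D)\times\prod_{i\in I}C_i\times\prod_{j=0}^nS_j$, which is locally compact: $\bigoplus_{\mc{S}_D}(D)$ is locally compact by the definition of a local direct model, $\prod_iC_i$ is compact, and $\prod_jS_j$ is a finite product of Lie groups. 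Then $\phi:=\phi_D\times\mathrm{id}_{\prod_iC_i}\times\mathrm{id}_{\prod_jS_j}$ is a continuous homomorphism onto $G$ restricting to the identity on every member of $\mc{S}$, i.e.\ a local direct model for $(G,\mc{S})$.

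I expect the main obstacle to be the uniqueness lemma together with the bookkeeping in the topological independence check for $\mc{S}'$ — in particular making precise that $G_A$ really splits as a product over the three parts of the decomposition, with an empty part contributing the trivial factor, so that $\bigcap_{A\in X}G_A=\{1\}$ whenever $\bigcap X=\emptyset$. The Lie-theoretic ingredients (non-abelianness of topologically simple connected Lie groups, and the fact that a product of compact groups or a finite direct product is its own local direct model via the identity map) are routine.
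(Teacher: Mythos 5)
Your proof is correct and takes essentially the same route as the paper: identify $\mc{S}$ with the quasi-direct factorization witnessing strict semisimple type (the paper cites Theorem~\ref{thm:min_normal:simple} on components where you prove an explicit uniqueness lemma), decompose $G$ via Theorem~\ref{thm:strict_semisimple}, and assemble the local direct model from Corollary~\ref{cor:tdlc_LDM} on the totally disconnected part together with identity maps on the connected factors. The only cosmetic differences are that the paper absorbs the compact connected Lie factors into a single almost-\tdlc factor before invoking Corollary~\ref{cor:tdlc_LDM}, whereas you treat them as a separate compact product, and that your $\phi$ has dense image rather than being ``onto'' as written (harmless, since the definition of a local direct model does not require surjectivity).
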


\subsection{Chief factors and blocks}
We finally bring our results together to study chief factors in locally compact groups. The following results draw on the general theory of chief factors as developed in \cite{RW_P_15} as well as the existence of essentially chief series for compactly generated locally compact groups as shown in \cite{RW_EC_15}.  In fact, we will show (Proposition~\ref{prop:min_covered_exist}) that even without the assumption of compact generation, chief factors of arbitrary \lcsc groups $G$ account for a large part of their complexity as topological groups.

A normal factor $K/L$ of closed normal subgroups of a topological group $G$ is a \textbf{chief factor}\index{chief factor} if there is no closed $M\normal G$ such that $L<M<K$.
\begin{defn}
Given a topological group $G$, we say that the closed normal factors $K_1/L_1$ and $K_2/L_2$ are \defbold{associated}\index{association} if the following equations hold:
	\[
	\ol{K_1L_2} = \ol{K_2L_1}; \; K_1 \cap \ol{L_1L_2} = L_1; \; K_2 \cap \ol{L_1L_2} = L_2.
	\]
\end{defn}
The association relation is an equivalence relation when restricted to non-abelian chief factors via \cite[Proposition 6.8]{RW_P_15}. A \textbf{chief block}\index{chief block} is an association class of non-abelian chief factors.  The set of chief blocks of a group $G$ is denoted by $\mf{B}_G$\index{$\mf{B}_G$}.

The association relation is not in general comparable with isomorphism. To study group-theoretic properties via chief blocks, we identify properties which hold on the level of blocks. Blocks with compact or discrete representatives have exceptional properties, so it is natural to isolate a class of blocks which excludes these.

\begin{defn} 
Suppose that $\mf{a}\in \mf{B}_G$. We say that the block $\mf{a} \in \mf{B}_G$ is \defbold{robust}\index{chief block!robust} if none of the representatives of $\mf{a}$ are compact or quasi-discrete. The collection of robust chief blocks is denoted by $\mf{B}_G^r$.
\end{defn}

\begin{defn}
	A property $P$ of groups is a \textbf{(robust) block property}\index{block property} if for every \lcsc group $G$ and $\mf{a}$ a (robust) chief block of $G$, there exists $K/L\in \mf{a}$ with $P$ if and only if every $K/L\in \mf{a}$ has $P$. For a (robust) block property $P$, we say that a (robust) chief block $\mf{a}$ of $G$ has property $P$ if some, equivalently all, $K/L\in \mf{a}$ has property $P$.
\end{defn}

\begin{thm}[See Propositions~\ref{prop:cg_block_property} and \ref{prop:block_properties}]
The following are block properties: Being elementary with rank $\alpha$, being amenable, being quasi-discrete, and being isomorphic to a given connected group $H$.  Compact generation is a robust block property.
\end{thm}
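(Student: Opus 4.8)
The plan is to reduce everything to the behaviour of these properties under normal compressions. The key observation is that associated chief factors share a common normal compression: if $K_1/L_1$ and $K_2/L_2$ are associated non-abelian chief factors of $G$, put $M := \ol{K_1L_2} = \ol{K_2L_1}$ and $N := \ol{L_1L_2}$; then the inclusions $K_i \injects M$ induce continuous homomorphisms $K_i/L_i \to M/N$ which are injective (since $K_i \cap N = L_i$), have dense image (since $\ol{K_iN} = M$), and have normal image (since $K_iN \normal G$). Thus $M/N$ is a normal compression of both $K_1/L_1$ and $K_2/L_2$, so any property $P$ of \lcsc groups invariant under normal compressions in both directions is automatically a block property: $K_1/L_1$ has $P$ iff $M/N$ has $P$ iff $K_2/L_2$ has $P$. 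By Proposition~\ref{prop:amenability:invariance} this disposes of amenability.

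The remaining arguments combine this with rigidity features of a non-abelian chief factor $K/L$ coming from chiefness: every closed characteristic subgroup of $K/L$ pulls back to a closed normal subgroup of $G$ between $L$ and $K$, hence equals $L$ or $K$. Applied to the centre, the closed derived subgroup, the identity component and $\ol{\QZ(K/L)}$, this shows that a non-abelian chief factor is centreless, is never metabelian (it is abelian or topologically perfect, and a topologically perfect group is not metabelian), is connected or totally disconnected, and has $\ol{\QZ(K/L)}$ equal to $L$ or $K$. I would next show that ``totally disconnected'' is a block property: picking a compact open subgroup of $K_1/L_1$ by van Dantzig, and using that a connected group admits no dense non-abelian totally disconnected normal subgroup (such a subgroup would be centralized by the connected group), one gets that $M/N$, and hence $K_2/L_2$, is totally disconnected; dually ``connected'' is a block property.

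The elementary-rank statement now follows: if some representative of a block is elementary, it is totally disconnected, hence so are $M/N$ and $K_2/L_2$, and Proposition~\ref{prop:compression:elementary_rank} transfers the decomposition rank along $K_1/L_1 \to M/N \leftarrow K_2/L_2$. For quasi-discreteness, observe that in a connected group any element with open centralizer is central, so a connected quasi-discrete group is abelian; hence a non-abelian quasi-discrete chief factor is totally disconnected, so again $M/N$ is totally disconnected, and a centralizer computation across $K_1/L_1 \to M/N$ shows $\QZ(M/N)$ is dense. Feeding ``$M/N$ quasi-discrete'' into Theorem~\ref{thm:compression:quasidiscrete} applied to $K_2/L_2 \to M/N$ shows that $(K_2/L_2)/\ol{\QZ(K_2/L_2)}$ is metabelian, and since a non-abelian chief factor is not metabelian this forces $\ol{\QZ(K_2/L_2)} = K_2$, i.e.\ $K_2/L_2$ is quasi-discrete.

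The two harder cases are the last ones. For ``isomorphic to a fixed connected group $H$'', the block in question is connected by the above, so one must prove that a non-abelian connected chief factor is rigid under normal compressions: its radical and its maximal compact normal subgroup are characteristic, hence trivial or everything, so by the structure theory of connected locally compact groups together with Theorem~\ref{thm:strict_semisimple} the factor is a (quasi-)product of simple Lie groups and has no proper dense normal subgroup; therefore the two normal compressions above are isomorphisms and $K_2/L_2 \cong K_1/L_1 \cong H$. For compact generation as a \emph{robust} block property, robustness is essential since compact generation is not a normal-compression invariant; instead I would use the refined factorizations of Theorems~\ref{thm:compression_factoring} and~\ref{thm:compression_factoring:tdlcsc} to write each of $K_1/L_1 \to M/N$ and $K_2/L_2 \to M/N$ as a cocompact closed embedding (along which compact generation passes in both directions) followed by a quotient by a discrete subgroup contained in the FC-centre, and then use the exclusion of compact and quasi-discrete representatives to rule out a discrete kernel that could destroy compact generation. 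I expect the connected rigidity and this control of the discrete kernel to be the main obstacles; the rest is bookkeeping around the linking lemma and the already-proved normal-compression invariance results.
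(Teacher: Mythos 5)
Your overall strategy --- funnel both representatives of a block into a common normal compression and then invoke the two-way invariance results of Sections~\ref{sec:normal_compressions}--\ref{sec:invariant} --- is exactly the strategy of Propositions~\ref{prop:cg_block_property} and~\ref{prop:block_properties}. The essential difference is the choice of common target. You use the pairwise linking factor $M/N=\ol{K_1L_2}/\ol{L_1L_2}$; the paper instead uses the uppermost representative $M/\CC_G(\mf{a})$ of Proposition~\ref{prop:upper-rep}, which is a normal compression of \emph{every} representative and, crucially, is itself a chief factor, hence non-abelian, topologically characteristically simple, centerless and topologically perfect. This matters because the invariance results you need carry exactly those hypotheses on the target: Corollaries~\ref{cor:con_association} and~\ref{cor:Zerling:perfect} require the compressed group to be centerless, and Corollary~\ref{cor:compactlygenerated} requires both groups to be non-abelian, non-compact and topologically characteristically simple. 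Your $M/N$ is easily seen to be topologically perfect, but you never establish that it is centerless: chiefness of the $K_i/L_i$ only yields $\Z(M/N)=(M\cap\CC_G(\mf{a}))/N$, and showing this is trivial is not obviously easier than the problem you started with. Consequently your transfer of connectedness/total disconnectedness (you correctly get $(M/N)^{\circ}\le\Z(M/N)$ from Theorem~\ref{thmintro:connected}(3), but cannot conclude $(M/N)^{\circ}=\triv$), your connected-isomorphism case, and your compact-generation case all rest on unverified hypotheses. The amenability and quasi-discreteness cases do survive, since Proposition~\ref{prop:amenability:invariance} needs no structural hypotheses and Corollary~\ref{cor:compression:quasidiscrete} needs only topological perfectness.

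Two further concrete problems. First, in the connected case your rigidity mechanism is the claim that the factor ``has no proper dense normal subgroup''; this is false for an infinite direct product of centerless compact simple Lie groups (a form permitted by Theorem~\ref{thm:chief:block_structure}(5)), where the restricted direct sum is a proper dense normal subgroup. The correct rigidity statement is Corollary~\ref{cor:Zerling:perfect}: a normal compression from a connected topologically perfect group onto a dense normal subgroup of a connected \emph{centerless} group is automatically surjective, hence an isomorphism --- which is what the paper uses, and which again requires centerlessness of the target. Second, for compact generation you propose to redo the factorization analysis by hand and ``rule out a discrete kernel''; the needed statement is already packaged as Corollary~\ref{cor:compactlygenerated}, which the paper applies to the uppermost representative, using only non-negligibility (weaker than robustness) to guarantee that representative is non-compact. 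Replacing your linking factor throughout by the uppermost representative repairs all of these issues and collapses your proposal onto the paper's proof.
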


Representatives of non-robust blocks therefore have a restricted structure.

\begin{cor}
Suppose that $G$ is an \lcsc group and suppose $\mf{a}$ is a chief block of $G$ that is not robust.  Then either every representative of $\mf{a}$ is isomorphic to a fixed connected compact group, or every representative of $\mf{a}$ is totally disconnected and quasi-discrete.
\end{cor}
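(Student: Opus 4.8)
The plan is to split on the two ways a block can fail to be robust, and in each case to upgrade the property of a single representative to all representatives using the block-property theorem recorded above. By the definition of robustness, since $\mf{a}$ is not robust there is a representative $K/L \in \mf{a}$ that is compact or quasi-discrete.

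The first and main step is to isolate the following structural claim: a non-abelian chief factor $K/L$ of $G$ that is quasi-discrete is totally disconnected, and a compact non-abelian chief factor of $G$ is either connected or quasi-discrete. Both halves run on the same mechanism: the identity component $(K/L)^{\circ}$ and the closure $\ol{\QZ(K/L)}$ of the quasi-centre are closed \emph{characteristic} subgroups of $K/L$ (invariant under all topological automorphisms, in particular under conjugation by $G/L$), hence closed normal subgroups of $G/L$ contained in $K/L$; since $K/L$ is chief, each is $\{1\}$ or all of $K/L$. If $K/L$ is connected and quasi-discrete, then an open centralizer in $K/L$ is automatically the whole group (a connected group has no proper open subgroup), so $\ol{\QZ(K/L)}\leq \Z(K/L)$, and density of $\QZ(K/L)$ forces $K/L$ to be abelian -- impossible; this gives the first half. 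For the second half, the dichotomy for $(K/L)^{\circ}$ says $K/L$ is connected or profinite, and in the profinite case the dichotomy for $\ol{\QZ(K/L)}$ says $K/L$ is quasi-discrete unless $\QZ(K/L)=\{1\}$. So the remaining point is to exclude a profinite non-abelian chief factor $M=K/L$ with trivial quasi-centre. Here I would use that $M$, being a closed normal subgroup of $G/L$ with nothing strictly between it and $\{1\}$, is topologically characteristically simple, hence -- being profinite and non-abelian -- is a Cartesian power of a non-abelian finite simple group; but in such a group the finitely-supported elements are dense and have open centralizers, so $\QZ(M)$ is dense, contradicting $\QZ(M)=\{1\}$.

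Granting the claim, the corollary follows quickly. If some representative of $\mf{a}$ is quasi-discrete, then since being quasi-discrete is a block property \emph{every} representative of $\mf{a}$ is quasi-discrete, and the claim then makes every representative totally disconnected -- the second alternative. Otherwise, as $\mf{a}$ is not robust, some representative $R$ of $\mf{a}$ is compact, and since no representative is quasi-discrete the claim forces $R$ to be connected; setting $H:=R$, which is a connected compact group, the block property ``being isomorphic to the given connected group $H$'' shows every representative of $\mf{a}$ is isomorphic to $H$, i.e. to a fixed connected compact group -- the first alternative.

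The main obstacle is the profinite case of the claim: ruling out a profinite non-abelian chief factor with trivial quasi-centre relies on the structure theory of topologically characteristically simple profinite groups (they are Cartesian powers of a single finite simple group). This input is classical; if one prefers to stay internal to the paper, it can instead be read off from the classification of compact chief factors developed in the body.
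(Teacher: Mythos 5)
Your argument is correct and is essentially the paper's own: the body proves this as Theorem~\ref{thm:negligible_types} using exactly the two block properties from Proposition~\ref{prop:block_properties} together with the fact that a topologically characteristically simple profinite group is a power $F^I$ of a finite simple group and hence quasi-discrete (Lemma~\ref{lem:profinite_charsimple}). Your extra step showing a quasi-discrete non-abelian chief factor is totally disconnected (connected plus quasi-discrete forces abelian) is also how the paper gets that conclusion, via Lemma~\ref{lem:quasidiscrete:rank2} and centerlessness.
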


To generalize the existence of chief factors from compactly generated groups to general \lcsc groups, and also to analyze the internal structure of these chief factors, we employ the concepts of minimally covered and extendable chief blocks as introduced in \cite{RW_P_15}. For a Polish group $G$, a closed normal subgroup $N\normal G$ \textbf{covers}\index{chief block!covering} a block $\afr\in \mf{B}_G$ if there is $K/L\in \afr$ such that $K\leq N$.  A block $\afr$ is \textbf{minimally covered}\index{chief block!minimally covered} if there is a least normal subgroup that covers the block.

\begin{defn}
	Let $G$ be a Polish group and $H$ be a closed subgroup of $G$.  Given $\mf{a} \in \mf{B}_H$ and $\mf{b} \in \mf{B}_G$, we say that $\mf{b}$ is an \defbold{extension}\index{chief block!extension} of $\mf{a}$ in $G$ if for every closed normal subgroup $K$ of $G$, the group $K$ covers $\mf{b}$ if and only if $K \cap H$ covers $\mf{a}$.
\end{defn}

\begin{thm}[See Theorem~\ref{thm:limit_extension}] 
	If $O$ is a compactly generated open subgroup of an \lcsc group $G$, then all robust blocks of $O$ extend to $G$.
\end{thm}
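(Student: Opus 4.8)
The plan is to treat first the case where $G$ itself is compactly generated, using the essentially chief series of \cite{RW_EC_15}, and then to reduce the general case to this one by a limit argument. Two preliminary remarks. First, since $O$ is compactly generated, every chief block of $O$ is minimally covered (by \cite{RW_P_15}), so a robust block $\mf{a}\in\mf{B}_O^r$ has a least covering subgroup $A_{\mf{a}}\normal O$; moreover, for any closed subgroup $H$ with $O\leq H$ and any closed $K\normal H$, the group $K\cap O$ covers $\mf{a}$ if and only if $A_{\mf{a}}\leq K$ (forward because $A_{\mf{a}}$ is the least cover, backward because $A_{\mf{a}}\leq O$ and $A_{\mf{a}}$ covers $\mf{a}$). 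In particular, any minimally covered block of $H$ extending $\mf{a}$ has least covering subgroup equal to the closed normal subgroup of $H$ generated by $A_{\mf{a}}$. Second, the extension relation is transitive: for closed subgroups $H_0\leq H_1\leq H_2$, an extension to $H_2$ of an extension to $H_1$ of a block $\mf{b}_0\in\mf{B}_{H_0}$ is again an extension of $\mf{b}_0$, immediately from the definition since $(K\cap H_1)\cap H_0=K\cap H_0$.

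For the compactly generated case I would induct on the length of an essentially chief series $\{1\}=G^{(0)}\normal\cdots\normal G^{(n)}=G$ provided by \cite{RW_EC_15}, in which each factor is a chief factor of $G$, or is compact, or is discrete; put $N:=G^{(1)}$. If $N\cap O$ covers $\mf{a}$, then $A_{\mf{a}}\leq N$, and $N$ can be neither compact nor discrete: if it were, then $N\cap O$ would be a compact (respectively discrete) closed subgroup of $O$ covering $\mf{a}$, and any representative $K/L\in\mf{a}$ with $K\leq N\cap O$ would then be compact (respectively quasi-discrete), contradicting robustness. Hence $N$ is a non-abelian minimal closed normal subgroup of $G$, and a direct analysis of the two possibilities $K\cap N=\{1\}$ and $K\cap N=N$ (for closed $K\normal G$, using minimality of $N$) shows that $[N]\in\mf{B}_G$ extends $\mf{a}$. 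If instead $N\cap O$ does not cover $\mf{a}$, then $\mf{a}$ pushes forward along $O\to O/(O\cap N)$ to a robust block $\bar{\mf{a}}$ of the compactly generated open subgroup $ON/N\cong O/(O\cap N)$ of the compactly generated group $G/N$; the inductive hypothesis extends $\bar{\mf{a}}$ to a robust block $\bar{\mf{b}}$ of $G/N$, and the theory of extendable blocks from \cite{RW_P_15} then yields a block $\mf{b}\in\mf{B}_G$ that extends $\mf{a}$ (compatibly with $\bar{\mf{b}}$ under $G\to G/N$). In both cases $\mf{b}$ is robust.

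For the general case, use $\sigma$-compactness of $G$ to write $G=\bigcup_{k}O_k$ as an increasing union of compactly generated open subgroups with $O_0=O$. By the compactly generated case applied with ambient group $O_k$, each robust block $\mf{a}\in\mf{B}_O^r$ extends to a robust block $\mf{b}_k\in\mf{B}_{O_k}^r$, whose least covering subgroup is necessarily $A_k:=\overline{\langle\langle A_{\mf{a}}\rangle\rangle}^{O_k}$, the closed normal subgroup of $O_k$ generated by $A_{\mf{a}}$, by the first preliminary remark. Since $O_k\leq O_{k+1}$, the $A_k$ form an increasing chain, so $A:=\overline{\bigcup_k A_k}=\overline{\langle\langle A_{\mf{a}}\rangle\rangle}^{G}$ is a closed subgroup; it is normal in $G$ because every element of $G$ lies in some $O_m$ and hence normalizes all $A_k$ with $k\geq m$. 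Using the theory of minimally covered blocks in \cite{RW_P_15} together with the robustness of the $\mf{b}_k$, one shows that $A$ is the least covering subgroup of a non-abelian chief block $\mf{b}\in\mf{B}_G$, and that the covering-detection property passes to the limit: for closed $K\normal G$, the group $K$ covers $\mf{b}$ if and only if $A_{\mf{a}}\leq K$, equivalently if and only if $K\cap O$ covers $\mf{a}$. Hence $\mf{b}$ extends $\mf{a}$, as required.

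I expect the main obstacle to be the covering-detection statements in the compactly generated case: that a robust block $\mf{a}$ of $O$ is ``captured'' by a single genuine chief factor of the essentially chief series of $G$ (rather than absorbed into a compact or discrete factor, or smeared across several levels), and that covering of the resulting block $\mf{b}$ by an arbitrary closed normal subgroup $K\normal G$ is detected exactly by $A_{\mf{a}}\leq K$. The forward implication should follow fairly directly from the lemmas on associated normal factors in \cite{RW_P_15}; the reverse implication is delicate, requiring control of the interaction between $K$, the lower terms $G^{(i)}$, and the open subgroup $O$, and it is precisely the compact and quasi-discrete alternatives excluded by robustness that would otherwise obstruct it. Granting this, the limit step is comparatively formal — it amounts to the observation that along the tower $(O_k)$ the least covering subgroups accumulate to the closed normal subgroup of $G$ generated by $A_{\mf{a}}$, which then serves as the least covering subgroup of the sought extension.
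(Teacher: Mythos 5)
Your two-stage architecture --- first the compactly generated case via an essentially chief series, then a limit over an exhaustion $G=\bigcup_k O_k$ --- is exactly the paper's (Lemma~\ref{lem:unique_extension} followed by Proposition~\ref{prop:block_sequence}), and your first case (where $N\cap O$ covers $\mf{a}$, so that $N$ must be a non-abelian chief factor and $[N]$ extends $\mf{a}$) is correct. But the inductive quotient step contains the genuine gap, and you have only flagged it rather than closed it. Unwinding your reduction, the equivalence ``$K$ covers $\mf{b}$ iff $K\cap O$ covers $\mf{a}$'' comes down to showing that if $\ol{KN}\cap O$ covers $\mf{a}$ then $K\cap O$ covers $\mf{a}$. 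This is not formal: $\ol{KN}\cap O$ can be strictly larger than $\ol{(K\cap O)(N\cap O)}$, so knowing that $K\cap O$ and $N\cap O$ each lie in $\CC_O(\mf{a})$ does not force $\ol{KN}\cap O\le\CC_O(\mf{a})$. This is precisely where the paper's proof does its real work: it compares the uppermost representative $N/C$ and the lowermost representative $I/J$ of the candidate block $\mf{b}$, uses topological perfectness of a representative $A/B$ of $\mf{a}$ to show that $\psi(I/J)\cap\iota(A)$ is dense in $\iota(A)$ modulo $\iota(B)$, and then exploits the fact that $A/B$ is \emph{not quasi-discrete} (so this dense normal subgroup has uncountable index over $B$, while $(I\cap O)J/J$ has countable index in $I/J$) to conclude that $I\cap O$ still covers $\mf{a}$. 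Without that cardinality argument --- or a substitute --- the ``covering-detection'' you defer is unproved, and it is the heart of the theorem.

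The limit step is also not ``comparatively formal.'' Since $G$ need not be compactly generated, it has no essentially chief series, and there is no a priori chief factor of $G$ of which your subgroup $A=\ol{\bigcup_k A_k}$ is the least covering subgroup; asserting that ``$A$ is the least covering subgroup of a non-abelian chief block $\mf{b}$'' assumes exactly what must be produced. The paper's Proposition~\ref{prop:block_sequence} constructs the chief factor by hand: it sets $D=\bigcup_{n\ge 0}\bigcap_{i\ge n}\CC_{O_n}(\mf{a}_i)$, checks that $D$ is a closed normal subgroup and is the \emph{unique maximal} closed normal subgroup of $G$ whose intersection with $O_0$ avoids $\mf{a}_0$, then takes $M$ minimal among closed normal subgroups above $D$ covering $\mf{a}_0$ and shows $M/D$ is chief with $\CC_G([M/D])=D$. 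Your least-covering-subgroup bookkeeping is consistent with this a posteriori, but some such construction is needed to make the limit block exist at all.
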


Blocks of an \lcsc group arising from extending a robust block of a compactly generated open subgroup are called \textbf{regionally robust blocks}\index{chief block!regionally robust}. The set of regionally robust blocks of an \lcsc group $G$ is denoted by $\mf{B}_G^{rr}$\index{$\mf{B}_G^{rr}$}. These blocks are minimally covered and robust; see Lemma~\ref{lem:extension:min_covered} and Propositions~\ref{prop:min_covered} and \ref{prop:robust_extension}. The set of regionally robust blocks is setwise invariant under all continuous automorphisms. A \lcsc group also has at most countably many regionally robust blocks (Lemma~\ref{lem:limit_blocks_number}). Moreover, an \lcsc group $G$ has at least one regionally robust block provided that its structure as a topological group is sufficiently complex:

\begin{prop}[See Proposition~\ref{prop:min_covered_exist}] For $G$ an \lcsc group, either
	\begin{enumerate}
		\item $\mf{B}_{G}^{rr}\neq \emptyset$, or
		\item $G^{\circ}$ is compact-by-solvable-by-compact, and $G/G^{\circ}$ is elementary with rank at most $\omega+1$. If $G$ is compactly generated, then $G/G^{\circ}$ is elementary with finite rank.
	\end{enumerate}
\end{prop}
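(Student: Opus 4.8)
The plan is to reduce to the compactly generated case and then handle general $G$ by exhaustion. Since $G$ is $\sigma$-compact, fix an increasing sequence $(O_n)_{n\in\Nb}$ of compactly generated open subgroups with $G=\bigcup_n O_n$. As $G^\circ$ is connected and each $O_n$ is open (hence also closed), $G^\circ\subseteq O_n$, and comparing connected components gives $G^\circ=O_n^\circ$ for all $n$. If some $O_n$ has a robust chief block $\afr$, then by Theorem~\ref{thm:limit_extension} $\afr$ extends to a block of $G$, which by definition lies in $\mf{B}_G^{rr}$; thus $\mf{B}_G^{rr}\neq\emptyset$ and we are in case~(1). For the remainder we therefore assume that no $O_n$ admits a robust chief block, and we establish case~(2).

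Fix $n$ and put $O:=O_n$. By \cite{RW_EC_15}, $O$ has an essentially chief series, which we may take to refine $\{1\}\trianglelefteq G^\circ\trianglelefteq O$; so we obtain closed normal subgroups $\{1\}=N_0\trianglelefteq\cdots\trianglelefteq N_k=G^\circ\trianglelefteq\cdots\trianglelefteq N_m=O$ of $O$ in which each $N_{i+1}/N_i$ is compact, discrete, or a chief factor of $O$. Consider the factors with $i<k$, which are subquotients of the connected group $G^\circ$. A discrete such factor is a discrete normal subgroup of the connected group $G^\circ/N_i$, hence central and so abelian. If $N_{i+1}/N_i$ is a non-abelian chief factor, then, since closed subgroups of connected locally compact groups are compact-by-Lie, $N_{i+1}$ is compact-by-Lie, so its identity component is characteristic, thus normal in $O$, thus (as $N_{i+1}/N_i$ is a chief factor of $O$) trivial or everything; in the first case $N_{i+1}/N_i$ would be a totally disconnected, compact-by-discrete chief factor, forcing it to be compact or discrete and hence not genuinely non-abelian. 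So $N_{i+1}/N_i$ is a connected non-abelian chief factor; being of semisimple type it decomposes, by Theorem~\ref{thm:strict_semisimple}, as a product of compact and non-compact topologically simple connected Lie groups. A non-compact topologically simple connected Lie group is neither compact nor quasi-discrete (its quasi-centre lies in its discrete centre, so is not dense); hence if such a simple factor appeared, the chief block of $N_{i+1}/N_i$ would be a robust chief block of $O$, contrary to our assumption. Thus every $N_{i+1}/N_i$ with $i<k$ is compact or abelian, and since amenability passes through group extensions, $G^\circ$ is amenable. By the structure theory of connected locally compact groups, $G^\circ$ has a compact normal subgroup $W$ with $G^\circ/W$ a Lie group; this Lie group is amenable, hence of the form $R\rtimes S$ with $R$ its solvable radical and $S$ compact, so $G^\circ$ is compact-by-solvable-by-compact, which is the first assertion of~(2).

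It remains to treat the \tdlcsc group $O/G^\circ$. The factors $N_{i+1}/N_i$ with $i\ge k$ are totally disconnected, and each is compact, discrete, or a chief factor of $O$; compact and discrete factors are elementary of finite rank, an abelian chief factor is an abelian \tdlcsc group hence elementary of finite rank, and a non-abelian chief factor is non-robust by assumption, hence quasi-discrete (being totally disconnected), hence elementary of finite rank by \cite{RW_P_15}. As the elementary class is closed under group extensions and decomposition rank is additive, in the ordinal sense, along extensions, $O/G^\circ$ is elementary of finite rank. Consequently $G/G^\circ=\bigcup_n O_n/G^\circ$ is an increasing union of open elementary subgroups of finite rank, so it is elementary and $\rk(G/G^\circ)\le\sup_n\rk(O_n/G^\circ)+1\le\omega+1$; and if $G$ is compactly generated then $G=O_n$ for some $n$, whence $\rk(G/G^\circ)<\omega$. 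This completes case~(2).

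The step I expect to be most delicate is the analysis of the factors lying inside $G^\circ$: one must verify that a genuinely non-abelian chief factor occurring there is necessarily connected (using that totally disconnected normal subgroups of connected groups are central, and that closed subgroups of connected locally compact groups are compact-by-Lie) and that the only way such a connected chief factor can fail to be compact is to contain a non-compact simple Lie factor. After that, the upgrade from ``built from compact and abelian pieces'' to ``compact-by-solvable-by-compact'' is supplied by amenability together with the classical structure theory of connected locally compact groups, while the totally disconnected part and the final exhaustion amount to routine bookkeeping with the essentially chief series, Theorem~\ref{thm:limit_extension}, and the behaviour of decomposition rank under extensions and increasing unions.
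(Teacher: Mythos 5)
Your reduction to compactly generated open subgroups, your treatment of $O/G^\circ$, and the final exhaustion all track the paper's argument closely (the paper likewise uses Theorem~\ref{thm:limit_extension}, an essentially chief series whose totally disconnected factors are compact, discrete, or quasi-discrete, and Proposition~\ref{prop:d-rank:ascending}). Where you genuinely diverge is the connected component: the paper does not refine $\triv \le G^\circ \le O$ and analyze arbitrary factors inside $G^\circ$. It first builds the explicit characteristic series $\triv \le \RadLE(G^\circ) \le S \le Z \le G^\circ$ (compact radical, preimage of the solvable radical of the Lie quotient, preimage of the centre), so that the only factors to which the no-robust-blocks hypothesis must be applied live in the centerless semisimple Lie group $G^\circ/Z$, where they are automatically products of connected simple Lie groups. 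That choice sidesteps exactly the two places where your argument has gaps.

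The gaps are these. (i) Your dismissal of a totally disconnected non-abelian chief factor $N_{i+1}/N_i$ inside $G^\circ$ does not work as written: ``compact-by-discrete'' does not force a chief factor to be compact or discrete, and even if it did, compact or discrete chief factors need not be abelian (e.g.\ $F^I$ for $F$ finite simple is a compact, totally disconnected, non-abelian chief factor). The correct and standard argument is that a totally disconnected closed normal subgroup of a connected locally compact group is central — for each element, conjugation gives a continuous map from the connected group $G^\circ/N_i$ into a totally disconnected space, so its image is a point — whence $N_{i+1}/N_i$ is abelian, a contradiction. (ii) The assertion that a connected non-abelian chief factor is ``of semisimple type'' and so decomposes via Theorem~\ref{thm:strict_semisimple} into topologically simple connected Lie groups is not free: this is essentially case (5) of Theorem~\ref{thm:chief:block_structure}, which the paper obtains from the Hofmann--Morris structure theory of connected locally compact (pro-Lie) groups, and you must first observe that such a factor is topologically perfect and centerless before that theory or Theorem~\ref{thm:strict_semisimple} applies. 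With (i) repaired and (ii) properly justified, your route ``all factors in $G^\circ$ are compact or abelian, hence $G^\circ$ is amenable, hence its Lie quotient is solvable-by-compact'' does yield the compact-by-solvable-by-compact conclusion; note only that the Levi decomposition of an amenable connected Lie group need not literally be a semidirect product $R \rtimes S$, but all you use is that $L/\Sol(L)$ is compact, which is correct.
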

%see Proposition~\ref{prop:min_covered_exist}. In particular, it suffices for $G/G^\circ$ to be non-elementary or elementary of rank greater than $\omega+1$.

Significant restrictions on the structure of topologically characteristically simple \lcsc groups then follow; chief factors in \lcsc groups are particular examples of such groups.

\begin{thm}[See Theorem~\ref{thm:chief:block_structure}]\label{thm:intro_block_structure}
	Suppose that $G$ is a topologically characteristically simple \lcsc group. Then $G$ is exactly one of the following:
\begin{enumerate}[(1)]
		\item an abelian group;
		\item a non-abelian quasi-discrete \tdlcsc group;
		\item an elementary \tdlcsc group of decomposition rank $2$ with trivial quasi-center;
		\item an elementary \tdlcsc group of decomposition rank $\omega+1$;
		\item a direct product of copies of a simple Lie group;
		\item a quasi-product of copies of a topologically simple \tdlcsc group of decomposition rank greater than $\omega+1$;
		\item a \tdlcsc group of stacking type and decomposition rank greater than $\omega+1$.
\end{enumerate}
	In particular, chief factors of \lcsc groups have one of the above forms. 
\end{thm}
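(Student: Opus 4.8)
The plan is to peel off canonical closed characteristic subgroups one step at a time and feed the resulting subcases into the classification results assembled above. First, $G^{\circ}$ is closed and characteristic, so $G$ is either connected or totally disconnected; likewise $\ol{[G,G]}$ is closed and characteristic, so if $\ol{[G,G]}=\{1\}$ then $G$ is abelian, which is case~(1), and otherwise $\ol{[G,G]}=G$ and $\Z(G)$ is a proper closed characteristic subgroup, hence trivial. Assume henceforth that $G$ is non-abelian, so it is topologically perfect with trivial centre. If $G$ is connected, then its solvable radical (the largest connected closed solvable normal subgroup) is characteristic and cannot be all of $G$, since a non-trivial solvable group is not topologically perfect; thus it is trivial and $G$ is semisimple. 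By the structure theory of connected locally compact groups, $G$ is then of strict semisimple type, so Theorem~\ref{thm:strict_semisimple} writes $G$ as a direct product of topologically simple connected Lie groups (the \tdlcsc direct factor vanishes because $G$ is connected). Grouping the factors according to isomorphism type, the product of all factors isomorphic to a fixed one is closed and characteristic, so characteristic simplicity forces them to be mutually isomorphic, and $G$ is a direct product of copies of a single simple Lie group: case~(5).

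Now suppose $G$ is totally disconnected. Then $\ol{\QZ(G)}$ is closed and characteristic, so either $G$ is a non-abelian quasi-discrete \tdlcsc group — case~(2) — or $\QZ(G)=\{1\}$, which we assume for the remainder. By Proposition~\ref{prop:min_covered_exist}, applied with $G^{\circ}=\{1\}$, there are two subcases: either $\mf{B}_G^{rr}=\emptyset$ and $G$ is elementary of decomposition rank at most $\omega+1$, or $\mf{B}_G^{rr}\neq\emptyset$ and $G$ is non-elementary. In the elementary subcase, a topologically characteristically simple elementary group of rank $1$ is abelian or quasi-discrete, so the rank is at least $2$; and since the decomposition rank of a topologically characteristically simple elementary group is $2$ or of the form $\omega\cdot\beta+1$, the bound $\leq\omega+1$ forces it to equal $2$ — giving case~(3), the quasi-centre being trivial by assumption — or $\omega+1$, giving case~(4).

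In the non-elementary subcase, the regionally robust blocks of $G$ are minimally covered, their least covers are permuted by $\Aut(G)$, and together these covers generate a non-trivial closed characteristic subgroup, hence generate $G$ densely. Running the block theory of \cite{RW_P_15} for these blocks and using characteristic simplicity, one obtains a dichotomy: either $G$ is a quasi-product of topologically simple \tdlcsc groups, or $G$ is of stacking type — which is case~(7). In the former alternative, grouping the topologically simple closed normal subgroups of $G$ by isomorphism type and arguing as in the connected case shows $G$ is a quasi-product of copies of a single topologically simple \tdlcsc group $S$; since $S$ is non-discrete (otherwise $G$ would be quasi-discrete) and topologically simple, it is non-elementary, so its decomposition rank exceeds $\omega+1$, and $G$ falls under case~(6). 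Finally, one checks that the seven cases are pairwise incompatible — by connectedness, (non-)abelianness, quasi-discreteness, decomposition rank, and the quasi-product property — so $G$ lies in exactly one of them.

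The substantial obstacle is the dichotomy in the non-elementary subcase: distilling from the regionally robust blocks of a topologically characteristically simple non-elementary \tdlcsc group the clean split between quasi-products of topologically simple groups and groups of stacking type. A secondary point demanding care is the decomposition-rank arithmetic that confines the exceptional elementary cases to ranks $2$ and $\omega+1$; the remaining reductions and the connected-group structure theory are comparatively routine given the results cited above.
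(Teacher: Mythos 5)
Your overall architecture (peel off characteristic subgroups, handle the connected case via Lie theory, then invoke the trichotomy of Theorem~\ref{thm:trichotomy} in the totally disconnected case) matches the paper's, but there is a genuine gap in how you split the totally disconnected case. You divide according to whether $\mf{B}_G^{rr}$ is empty and assert that $\mf{B}_G^{rr}\neq\emptyset$ forces $G$ to be non-elementary. Proposition~\ref{prop:min_covered_exist} does not say this: it says that if $\mf{B}_G^{rr}=\emptyset$ then the rank is at most $\omega+1$, and its converse fails --- elementary groups (even of large transfinite rank) can and do have regionally robust blocks, as the paper's own examples of elementary stacking-type chief factors indicate. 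Consequently your argument does not exclude the possibility of a topologically characteristically simple elementary group with $\xi(G)=\omega+1$ and $\mf{B}_G^{rr}\neq\emptyset$; such a group belongs in case (4), but your proof would try to force it into (6) or (7), where the rank condition ``greater than $\omega+1$'' would fail. The correct division, which the paper uses, is by decomposition rank: if $\xi(G)>\omega+1$ then the \emph{contrapositive} of Proposition~\ref{prop:min_covered_exist} yields a regionally robust block and hence semisimple or stacking type, while if $\xi(G)\le\omega+1$ then Corollary~\ref{cor:rank_char_simple} pins the rank to $2$ or $\omega+1$, giving cases (2)--(4).

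A second, related flaw sits in your treatment of case (6): you claim that a non-discrete topologically simple \tdlcsc group is automatically non-elementary. This is only known for \emph{compactly generated} such groups; in general it is an open problem, and the quasi-factors of a quasi-product need not be compactly generated. The paper avoids this entirely: working in the case $\xi(G)>\omega+1$, it applies Theorem~\ref{quasiproduct:elementary} to conclude $\xi(H)=\xi(G)>\omega+1$ when the simple factor $H$ is elementary, and notes the bound is trivial when $H$ is non-elementary. Your rank arithmetic for the elementary subcase and your handling of the connected and quasi-discrete cases are essentially the paper's arguments and are fine.
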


All of the cases (1)-(7) can occur as chief factors of an \lcsc group. Example~\ref{ex:stacking} shows case (7) occurs naturally, even for a chief factor of a compactly generated \lcsc group.

\section{Preliminaries}

\subsection{Notations and definitions}
For $\mc{P}$ a poset, we say that $\mc{D}\subseteq \mc{P}$ is a \textbf{directed family} if for all $M,N\in \mc{D}$, there is $L\in \mc{D}$ with $M \le L$ and $N \le L$. We say $\mc{F}\subseteq \mc{P}$ is a \textbf{filtering family} if for all $M,N\in \mc{F}$ there is $L\in \mc{F}$ such that $L\leq M\cap N$.

Given a group $G$ and a set $I$, we denote the direct sum by $G^{<I}$ and the direct product by $G^I$. For a subset $K\subseteq G$, $\CC_G(K)$ is the collection of elements of $G$ that centralize every element of $K$. For the normalizer, we write $\N_G(K)$.  For $A,B\subseteq G$, we put 
\[
\left[A,B\right]:=\grp{aba\inv b\inv \;|\;a\in A\text{ and }b\in B}
\]
For $a,b\in G$, $[a,b]:=aba\inv b\inv $ and $a^b:=bab^{-1}$. For $a,b,c\in G$, we set $[a,b,c]:=[[a,b],c]$.

All topological groups are Hausdorff topological groups and are written multiplicatively. A topological group is \textbf{Polish}\index{Polish group} if the topology is separable and admits a complete, compatible metric. A locally compact group is Polish if and only if it is second countable. Topological group isomorphism is denoted by $\simeq$. For a topological group $G$, the connected component of the identity is denoted by $G^\circ$. The topological closure of a set $K$ in $G$ is denoted by $\ol{K}$. We shall say that $G$ is \defbold{topologically perfect} if $[G,G]$ is dense in $G$.

 We use ``t.d.", ``l.c.", and ``s.c." for ``totally disconnected", ``locally compact", and ``second countable", respectively. The symbols $\Zb$, $\Qb$, $\Zb_p$, $\Qb_p$, and $\Rb$ denote the additive groups of the relevant rings with the discrete topology for $\Qb$ and $\Zb$ and the usual locally compact topology for the others.  We write $\Tb$ for the circle group $\Rb/\Zb$ and write $C_p$ for the cyclic group of order $p$.

\subsection{Generalities on locally compact groups}\label{sec:prelim_connected}
A locally compact group is \defbold{almost connected}\index{almost connected} if the identity component is cocompact. For $G$ a locally compact group, the set of almost connected subgroups is denoted by $\U(G)$. Note that $G^\circ = U^\circ$ for all open subgroups $U$ of $G$, so $\U(G)$ consists precisely of those open subgroups $U$ of $G$ such that $U/G^{\circ}$ is compact.  If $G$ is a \tdlc group, then $\U(G)$ is a base of identity neighborhoods, by van Dantzig's theorem.

A locally compact group is \defbold{locally elliptic}\index{locally elliptic} if every compact subset (equivalently, every finite subset) generates a relatively compact subgroup.  The \defbold{locally elliptic radical}\index{locally elliptic radical} of a locally compact group $G$, denoted by $\RadLE(G)$\index{$\RadLE(G)$}, is the union of all closed normal locally elliptic subgroups of $G$.

\begin{thm}[Platonov, \cite{Plat66}]\label{thm:platonov_radical}
	For $G$ a locally compact group, $\RadLE(G)$ is the unique largest locally elliptic closed normal subgroup of $G$.  Additionally,
	\[
	\RadLE(G/\RadLE(G)) = \triv.
	\]
\end{thm}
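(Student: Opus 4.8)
The plan is to prove the stronger structural statement that $\RadLE(G)$, a priori just a subset of $G$, is a closed normal subgroup which is itself locally elliptic; the displayed identity $\RadLE(G/\RadLE(G))=\triv$ then drops out formally. Two routine observations are used throughout: the class of locally elliptic locally compact groups is stable under passing to closed subgroups and to (Hausdorff) homomorphic images, and a compactly generated locally elliptic group is compact, since a compact generating set generates a relatively compact — hence, being closed, compact — subgroup.

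The crux is the following extension lemma: if $N\trianglelefteq G$ is closed and both $N$ and $G/N$ are locally elliptic, then $G$ is locally elliptic. I would argue as follows. Let $K\subseteq G$ be compact and put $H:=\cgrp{K}$; it suffices to show $H$ is compact. The image of $H$ in $G/N$ is compactly generated and locally elliptic, hence has compact closure $Q$; replacing $G$ by the preimage of $Q$ one may assume $G/N$ is compact, so that $H\cap N$ is a cocompact closed normal subgroup of $H$. Now invoke the standard fact that a cocompact closed subgroup of a compactly generated locally compact group is again compactly generated: thus $H\cap N$ is compactly generated, and being a closed subgroup of the locally elliptic group $N$ it is locally elliptic, hence compact. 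Therefore $H$ is compact-by-compact, hence compact.

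Granting the extension lemma, I would first check that the family $\mc L$ of closed normal locally elliptic subgroups of $G$ is upward directed: for $M,N\in\mc L$ the group $\ol{MN}$ is closed and normal, the quotient $\ol{MN}/M$ is locally elliptic because it is the closure of the locally elliptic subgroup $MN/M$ (a homomorphic image of $N$), and applying the extension lemma to $M\trianglelefteq\ol{MN}$ gives $\ol{MN}\in\mc L$. Here I use the second essential ingredient: the closure of a locally elliptic subgroup of a locally compact group is locally elliptic. Once $\mc L$ is directed, $R:=\bigcup\mc L$ is a normal, locally elliptic subgroup, so its closure $\ol R$ is a closed normal locally elliptic subgroup; then $\ol R\in\mc L$, forcing $\ol R\subseteq R$. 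Hence $R$ is closed, $R=\RadLE(G)$ contains every closed normal locally elliptic subgroup, and it is the unique largest such. Finally, writing $R=\RadLE(G)$ and letting $P$ be the preimage in $G$ of $\RadLE(G/R)$, the group $P$ is closed normal, $P/R\cong\RadLE(G/R)$ is locally elliptic and $R$ is closed locally elliptic, so the extension lemma gives that $P$ is locally elliptic; thus $P\subseteq R$, i.e. $\RadLE(G/R)=\triv$.

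The main obstacle is the extension lemma, and within it the point-set bookkeeping it rests on — checking that the quotient identifications above really are isomorphisms of topological groups, and the cocompactness fact about compactly generated groups — together with the closure-stability statement, which is itself nontrivial: one natural route is to handle the identity component via the Gleason--Yamabe theorem (a connected locally elliptic locally compact group is compact) and the totally disconnected quotient via van Dantzig's theorem, while the expedient used in this paper is simply to cite \cite{Plat66}.
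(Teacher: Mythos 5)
The paper itself gives no proof of this statement (it is quoted from Platonov), so your argument must stand on its own. Its architecture is the standard one and the final formal deductions are fine, but your proof of the extension lemma contains a genuine gap at the step ``one may assume $G/N$ is compact, so that $H\cap N$ is a cocompact closed normal subgroup of $H$.'' Compactness of $G/N$ together with closedness of $H$ does \emph{not} imply that $H\cap N$ is cocompact in $H$: take $G=\Rb$, $N=\Zb$, $H=\sqrt{2}\,\Zb$. Then $G/N$ is compact and $H$ is closed (and compactly generated), yet $H\cap N=\{0\}$ has infinite discrete quotient $H/(H\cap N)\simeq\Zb$. The underlying issue is that the continuous injection $H/(H\cap N)\to G/N$ need not be a closed embedding because $HN$ need not be closed. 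In your situation the cocompactness of $H\cap N$ in $H$ is, given the rest of your argument, equivalent to the compactness of $H$ that you are trying to establish, so it cannot simply be asserted. In the totally disconnected case the standard repair is to enlarge $H$ to $\cgrp{K\cup U}$ for $U$ a compact open subgroup of $G$: then $H$ is open, so $HN$ is open and hence closed, $H/(H\cap N)\simeq HN/N$ is a closed (hence compact) subgroup of $G/N$, and the rest of your argument goes through. When connected pieces are present the reduction is genuinely harder (one has to control $G^\circ$ via Gleason--Yamabe), which is part of why the paper simply cites Platonov.

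Separately, the other load-bearing ingredient --- that the closure of a locally elliptic (dense normal) subgroup of a locally compact group is again locally elliptic --- is invoked twice (for the directedness of the family $\mc{L}$ and for $\ol{R}$) but not proved. You acknowledge this, but it is not a routine preliminary: it is essentially the substance of Platonov's theorem, being exactly what makes the union of the closed normal locally elliptic subgroups have locally elliptic closure. As written, the proposal is therefore a correct reduction of the theorem to two nontrivial facts, one of which is supported by an argument containing a false step.
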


The \textbf{quasi-center} of a locally compact group $G$ is 
\[
\QZ(G):=\{g\in G\mid \CC_G(g)\text{ is open}\}.
\]
The quasi-center is a characteristic subgroup, but it is usually not closed. A locally compact group is \textbf{quasi-discrete} if it has a dense quasi-center.

For a compactly generated \tdlc group $G$, there exists a locally finite graph $\Gamma$ such that $G$ acts vertex-transitively on $\Gamma$ with compact open stabilizers; such a graph is called a \defbold{Cayley--Abels graph} for $G$. The existence of this graph is ensured by the next proposition.

\begin{prop}[See {\cite[Beispiel 5.2]{A72}} or {\cite[Section 11]{Mon01}}]\label{prop:factor}
Let $G$ be a compactly generated \tdlc group, let $U \in \U(G)$, let $A$ be a compact symmetric subset of $G$ such that $G = \langle U, A \rangle$ and let $D$ be a dense symmetric subset of $G$.
\begin{enumerate}[(1)]
\item There exists a finite symmetric subset $B$ of $G$ such that $B \subseteq D$ and 
\[
BU = UB = UBU = UAU.
\]
\item For any subset $B$ satisfying part (1), it is the case that $G = \langle B \rangle U$ and the coset space $G/U$ carries the structure of a locally finite connected graph, invariant under the natural $G$-action, where $gU$ is adjacent to $hU$ if and only if $gU \neq hU$ and $Ug\inv hU \subseteq UBU$.  Thus, $G/U$ is the vertex set of a Cayley--Abels graph for $G$.
\end{enumerate}
\end{prop}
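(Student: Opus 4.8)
The statement is a standard construction (Abels, Monod) but let me reconstruct the argument.

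For part (1), the plan is to use compactness of $UAU$ together with density of $D$. Since $G = \langle U, A \rangle$ with $A$ compact and $U$ open, the set $UAU$ is compact (product of compact sets, $U$ being compact as $G$ is \tdlc and $U \in \U(G)$) and open. It is covered by finitely many left cosets $g_1 U, \dots, g_n U$; since $D$ is dense and each $g_iU$ is open, we may pick $b_i \in D \cap g_i U$. Enlarging by symmetry (replace the finite set $\{b_i\}$ by $\{b_i\} \cup \{b_i^{-1}\}$, which we may still take inside $D$ since $D$ is symmetric), we get a finite symmetric $B \subseteq D$ with $UAU \subseteq UBU$; shrinking $B$ by discarding elements whose coset $UbU$ is not needed, one arranges $UBU = UAU$. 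Then $BU$ and $UB$ are unions of double cosets contained in $UBU = UAU$; the content of part (1) is the equality $BU = UB = UBU$. This follows because $A$ (hence $UAU$) can be taken symmetric and we can absorb: since $U \leq UAU$ (as $1 \in A$, or after adjusting $A$ to contain $1$) and $UAU \cdot UAU \supseteq$ larger sets, but the key point is that $B$ normalizes... no — rather, one uses that $UBU$ is a \emph{symmetric} generating set containing $U$, and the standard trick is to replace $A$ at the outset by $A \cup A^{-1} \cup \{1\}$ and then by $(U A U)$-worth of coset representatives, getting $BU$ to be a union of $U$-cosets that is symmetric and $U$-bi-invariant, which forces $BU = UBU = UB$. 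I would spell this bookkeeping out carefully, as it is the one genuinely fiddly point.

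For part (2), given $B$ as in (1), first check $G = \langle B \rangle U$: since $G = \langle U, A\rangle = \langle U, B \rangle$ (as $A \subseteq UAU = UBU \subseteq \langle U, B\rangle$ and symmetrically) and $BU = UB$, an induction on word length shows every element of $\langle U, B\rangle$ lies in $\langle B \rangle U$, using $UB = BU$ to push all $U$-letters to the right. Next, define the graph on $G/U$ with $gU \sim hU$ iff $gU \neq hU$ and $Ug^{-1}hU \subseteq UBU$. Symmetry of the adjacency relation follows from symmetry of $B$ (so that $UBU$ is symmetric: $(UbU)^{-1} = Ub^{-1}U$). The $G$-action by left translation preserves adjacency since $g^{-1}h$ is unchanged under $x \mapsto kx$. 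Local finiteness: the neighbours of $U$ are the cosets $gU \subseteq UBU$, and $UBU = BU$ is a union of finitely many $U$-cosets (one for each of the finitely many elements of $B$, up to $U$ on the right); by vertex-transitivity every vertex has the same finite degree. Connectedness: the component of $U$ is $\{wU : w \in \langle B \rangle\}$ by an induction peeling off one generator $b \in B$ at a time (each step $wU \to wbU$ is an edge since $U(wU)^{-1}(wbU)U = UbU \subseteq UBU$), and this is all of $G/U$ by $G = \langle B\rangle U$. The stabilizer of the vertex $U$ is $U$ itself, which is compact open. Hence $G/U$ is a Cayley--Abels graph.

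The main obstacle is the combinatorial identity $BU = UB = UBU = UAU$ in part (1) — specifically arranging the finite symmetric set $B \subseteq D$ so that the product $BU$ is exactly the union of $U$-cosets comprising $UAU$ and is simultaneously left-$U$-invariant; all the graph-theoretic assertions in part (2) are then routine translations. I would therefore invest most of the write-up in (1), possibly by first reducing to the case where $A$ is already a union of finitely many $U$-cosets and $U$-bi-invariant (replace $A$ by $UAU$), then choosing $B \subseteq D$ to be one representative per $U$-coset of $A$, closed under $b \mapsto b^{-1}$ modulo $U$, and verifying $UBU = UAU$ and $BU = UBU$ directly.
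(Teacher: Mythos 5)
The paper gives no proof of this proposition at all --- it is quoted from Abels \cite[Beispiel 5.2]{A72} and Monod \cite[Section 11]{Mon01} --- so there is nothing internal to compare your argument against; I can only judge it on its own terms, and on those terms it is essentially correct. Part (2) is handled cleanly and completely (pushing $U$-letters to the right via $UB=BU$, well-definedness and symmetry of the adjacency relation from symmetry of $UBU$, local finiteness from $UBU$ being a finite union of left cosets, connectedness from $G=\langle B\rangle U$). In part (1) you correctly identify the construction --- $UAU$ is compact open and a union of left $U$-cosets, hence a finite disjoint union $\bigcup_i g_iU$; pick $b_i\in D\cap g_iU$ and symmetrize --- but you then describe the identity $BU=UB=UBU$ as ``genuinely fiddly'' and wander into a false start about $B$ normalizing something. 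It is not fiddly: once $B=B_0\cup B_0^{-1}$ is symmetric with $B_0U=UAU$ and $B\subseteq UAU$ (note $b_i^{-1}\in (UAU)^{-1}=UAU$ since $A$ and $U$ are symmetric), you get $UAU=B_0U\subseteq BU\subseteq UBU\subseteq U(UAU)U=UAU$, forcing $BU=UBU=UAU$, and then $UB=(B^{-1}U^{-1})^{-1}=(BU)^{-1}=(UAU)^{-1}=UAU$. In particular the ``discard unneeded elements of $B$'' step you mention is unnecessary (and mildly dangerous for preserving symmetry): no element of $B$ can push $UBU$ outside $UAU$ because every element of $B$ already lies in $UAU$. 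With that two-line finish in place of the hedging, your write-up would be a complete proof.
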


A (real) \defbold{Lie group}\index{Lie group} is a topological group that is a finite-dimensional analytic manifold over $\Rb$ such that the group operations are analytic maps. Closed subgroups of Lie groups are again Lie groups. A Lie group $G$ can have any number of connected components in general, but $G^\circ$ is always an \emph{open} subgroup of $G$. 

A Lie group $L$ has a largest connected solvable normal subgroup; it is called the \defbold{solvable radical}\index{Lie group!solvable radical} of $L$ and denoted by $\Sol(L)$\index{$\Sol(L)$}.  A connected Lie group with trivial solvable radical is called \defbold{semisimple}\index{Lie group!semisimple}. A semisimple Lie group has discrete center, and modulo its center, it is a finite direct product of abstractly simple groups.

The manifold structure of a Lie group gives a well-defined dimension $\dim_{\Rb}(G)$, which is additive with respect to extensions. We stress that dimension zero Lie groups are discrete. The dimension places a restriction on directed families of normal subgroups.

\begin{lem}\label{lem:Lie:directed_family}
	Let $G$ be a connected Lie group and let $\mc{D}$ be a directed family of closed normal subgroups of $G$.  Then there exists $N\in \mc{D}$ such that $MN/N$ is discrete in $G/N$ for all $M\in \mc{D}$.
\end{lem}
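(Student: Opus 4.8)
The plan is to use the additivity of $\dim_{\Rb}$ under extensions together with the fact that a connected Lie group has no infinite strictly decreasing chain of dimensions among its closed (normal) subgroups. First I would pass to the function $N \mapsto \dim_{\Rb}(G/N)$ on $\mc{D}$. Since $\mc{D}$ is directed, for $M, N \in \mc{D}$ there is $L \in \mc{D}$ with $M, N \le L$; then $G/L$ is a quotient of $G/N$, so $\dim_{\Rb}(G/L) \le \dim_{\Rb}(G/N)$, and likewise $\dim_{\Rb}(G/L) \le \dim_{\Rb}(G/M)$. Thus $N \mapsto \dim_{\Rb}(G/N)$ is monotone decreasing along the directed order, and as it takes values in $\{0, 1, \dots, \dim_{\Rb}(G)\}$, it attains a minimum value $d$. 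Fix $N \in \mc{D}$ with $\dim_{\Rb}(G/N) = d$.

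Now I claim this $N$ works. Let $M \in \mc{D}$ be arbitrary and choose $L \in \mc{D}$ with $M, N \le L$. Since $N \le L$, the group $L/N$ is a closed normal subgroup of $G/N$, and $\dim_{\Rb}(G/N) = \dim_{\Rb}(L/N) + \dim_{\Rb}(G/L)$ by additivity of dimension in the extension $1 \to L/N \to G/N \to G/L \to 1$. By minimality of $d$ we have $\dim_{\Rb}(G/L) \ge d = \dim_{\Rb}(G/N)$, forcing $\dim_{\Rb}(L/N) = 0$; that is, $L/N$ is discrete in $G/N$. Since $M \le L$, the subgroup $MN/N$ is contained in $L/N$, hence is also a zero-dimensional, i.e.\ discrete, subgroup of $G/N$. (Here I use that a subgroup of a discrete group is discrete, or equivalently that $\dim_{\Rb}(MN/N) \le \dim_{\Rb}(L/N) = 0$.) As $M \in \mc{D}$ was arbitrary, this proves the lemma.

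The only point requiring a little care is the additivity claim $\dim_{\Rb}(G/N) = \dim_{\Rb}(L/N) + \dim_{\Rb}(G/L)$ for closed normal subgroups $N \le L$ of a connected Lie group $G$: one needs that $G/N$ and $G/L$ are again Lie groups (true, since quotients of Lie groups by closed normal subgroups are Lie groups) so that their dimensions are defined, and then additivity of manifold dimension in a locally trivial fibration, which is exactly the ``additive with respect to extensions'' property quoted in the text just before the lemma. I do not anticipate a genuine obstacle here; the argument is essentially a finiteness/well-ordering observation once the dimension bookkeeping is in place.
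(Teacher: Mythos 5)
Your proof is correct and is essentially the paper's argument: the paper picks $N \in \mc{D}$ maximizing $\dim_{\Rb}(N)$, which by additivity of dimension is the same as your choice minimizing $\dim_{\Rb}(G/N)$, and then both proofs use directedness to find $L \ge M, N$ with $L/N$ zero-dimensional, hence discrete, and conclude via $MN/N \le L/N$.
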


\begin{proof}
	Let $d$ be the maximum of the set $\{\dim_{\Rb}(N) \mid N\in \mc{D}\}$ and take $N\in \mc{D}$ realizing this maximum. For all $M\in \mc{D}$, there is $K\in \mc{D}$ extending $N$ and $M$, since $\mc{D}$ is directed. The group $K/N$ is a dimension zero Lie subgroup of $G/N$, hence $K/N$ is discrete. We thus conclude that $MN/N$ is also discrete, verifying the lemma.
\end{proof}

Lie groups play a fundamental role in the structure theory of connected groups.

\begin{thm}[Gleason--Yamabe; see {\cite[Theorem 4.6]{MZ}}]\label{thm:yamabe_radical}
If $G$ is an almost connected locally compact group, then $\RadLE(G)$ is compact, and $G/\RadLE(G)$ is a Lie group.  Moreover, there exist arbitrarily small compact normal subgroups $K$ of $G$ such that $G/K$ is a Lie group.
\end{thm}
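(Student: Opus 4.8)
The plan is to take the hard part of the statement --- the ``moreover'' clause, asserting arbitrarily small compact normal subgroups with Lie quotient --- as an external input, namely the solution of Hilbert's fifth problem for almost connected groups, and to deduce the first assertion ($\RadLE(G)$ compact, $G/\RadLE(G)$ a Lie group) from it together with Platonov's theorem (Theorem~\ref{thm:platonov_radical}). For an article of this kind I would simply cite \cite[Theorem~4.6]{MZ}; what follows is the skeleton of that input together with the short deduction.

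I would establish the ``moreover'' clause in the classical two stages. \emph{Stage one} (Gleason): a locally compact group with \emph{no small subgroups} --- possessing an identity neighbourhood that contains no nontrivial subgroup --- is a Lie group. The substance here is the analysis of one-parameter subgroups: one builds a norm adapted to the group (a ``Gleason metric''), proves that the one-parameter subgroups through the identity form a finite-dimensional real vector space on which $X \mapsto X(1)$ is a local homeomorphism onto an identity neighbourhood, and transports the analytic structure through that chart, verifying that multiplication is analytic in the resulting coordinates. \emph{Stage two} (Yamabe): for $G$ almost connected and any identity neighbourhood $U$, there is a compact normal $K \le G$ with $K \subseteq U$ such that $G/K$ has no small subgroups, hence is a Lie group by stage one. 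Here one uses the Peter--Weyl theorem to approximate the compact directions by compact Lie quotients, and the Gleason--Yamabe commutator and one-parameter-subgroup estimates to eliminate the remaining small subgroups after dividing by a sufficiently small compact normal subgroup. Combining the two stages gives the ``moreover'' clause; this is the genuinely deep step --- essentially all of Hilbert's fifth problem --- and it is the one point where I would defer to the literature rather than reprove.

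Granting the ``moreover'' clause, the first assertion is routine bookkeeping. Fix a compact normal $K \le G$ with $L := G/K$ a Lie group; since $G$ is almost connected so is $L$, and because $L^{\circ}$ is open in $L$ this forces $L/L^{\circ}$ to be finite. Being compact and normal in $G$, the subgroup $K$ is locally elliptic, so $K \le \RadLE(G)$ by Theorem~\ref{thm:platonov_radical}; hence $\RadLE(G)/K$ makes sense and is a closed (as $K$ is compact) normal subgroup of $L$ which is locally elliptic, being a continuous image of $\RadLE(G)$. Therefore $\RadLE(G)/K \le \RadLE(L)$, and $\RadLE(L)$ is compact by standard Lie theory: $\RadLE(L)^{\circ}$ is a connected locally elliptic Lie group, hence compactly generated and so compact, while $\RadLE(L)/\RadLE(L)^{\circ}$ is a discrete torsion normal subgroup of the Lie group $L/\RadLE(L)^{\circ}$ --- which has finitely many components --- and such a subgroup is finite. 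It follows that $\RadLE(G)/K$ is compact, whence $\RadLE(G)$, being an extension of the compact group $\RadLE(G)/K$ by the compact group $K$, is compact, and $G/\RadLE(G) \cong L/(\RadLE(G)/K)$ is a quotient of a Lie group by a closed normal subgroup, hence a Lie group. The only real obstacle throughout is the Gleason--Yamabe input of the second paragraph; everything else is elementary.
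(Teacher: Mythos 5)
The paper offers no proof of this statement: it is quoted as the classical Gleason--Yamabe theorem with a citation to Montgomery--Zippin, exactly as you propose to handle the deep ``moreover'' clause. Your reduction of the first assertion to that clause is correct --- $K \le \RadLE(G)$ by Platonov, $\RadLE(G)/K$ is a closed locally elliptic normal subgroup of the almost connected Lie group $G/K$ and hence compact (the identity component being a compact connected group and the component group a finite discrete torsion normal subgroup), so $\RadLE(G)$ is compact and $G/\RadLE(G)$ is a quotient of a Lie group by a closed normal subgroup --- so your write-up supplies a sound derivation where the paper simply defers to the literature.
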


\subsection{Normal compressions of Polish groups} We conclude our preliminary discussion by recalling the basic theory of normal compressions developed in \cite{RW_P_15}.

\begin{defn}\label{def:compression}
	For $G$ and $H$ Polish groups, a \defbold{normal compression map}\index{normal compression map} from $G$ to $H$ is an injective, continuous homomorphism $\psi: G \rightarrow H$ with dense, normal image. When such a map exists, we say that $H$ is a \defbold{normal compression}\index{normal compression} of $G$.
\end{defn}

Given a normal compression map $\psi:G\rightarrow  H$, there is a canonical action $H\acts G$ via
\[
h.g:=\psi^{-1}(h\psi(g)h^{-1}).
\]
This action is called the \textbf{$\psi$-equivariant}\index{$\psi$-equivariant action} action, as it is the action with respect to which $\psi$ is an $H$-equivariant map for $H$ acting on itself by conjugation.

\begin{prop}[{\cite[Proposition 3.5]{RW_P_15}}]\label{prop:Polish:semidirect}
Suppose that $G$ and $H$ are Polish groups with $\psi: G \rightarrow H$ a normal compression map. Then the semidirect product $G\rtimes_{\psi} H$ via the $\psi$-equivariant action is a Polish group under the product topology.
\end{prop}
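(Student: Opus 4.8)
The plan is to build the semidirect product $G \rtimes_\psi H$ explicitly as a Polish group by exhibiting a compatible complete separable metric, so the real content is checking that the $\psi$-equivariant action $H \acts G$ behaves well enough topologically. First I would recall that for a continuous action of a Polish group on a Polish group the semidirect product with the product topology is automatically a separable metrizable topological group, so the only genuine issue is \emph{complete} metrizability; equivalently, by the standard characterization, I need to show $G \rtimes_\psi H$ is a Polish space, and since the underlying set is $G \times H$ with the product topology, and both factors are Polish, the product is Polish. In other words, the entire statement reduces to verifying that the group operations on $G \times H$ defined via the $\psi$-equivariant action are \emph{continuous} — once continuity is established, Polishness of the product space is free.

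So the crux is continuity of $(g_1, h_1)(g_2, h_2) = (g_1 \cdot (h_1.g_2),\, h_1 h_2)$ and of inversion. The second coordinate is no problem since $H$ is a topological group. For the first coordinate, multiplication in $G$ is continuous, so it suffices to show the map $H \times G \to G$, $(h,g) \mapsto h.g = \psi^{-1}(h \psi(g) h^{-1})$ is continuous. Here is where I expect the main obstacle: $\psi^{-1}$ need not be continuous on all of $H$, only on the (dense normal) image $\psi(G)$, so I must argue that this map is continuous as a map into $G$ with its own topology, not merely into $H$. The key tool is a Baire-category / automatic-continuity argument: a Borel homomorphism between Polish groups is continuous, and more relevantly, a Borel action of a Polish group on a Polish group is continuous (Pettis-type theorems). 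I would first check that $(h,g) \mapsto h.g$ is \emph{Borel}: it is the composition of the continuous map $(h,g) \mapsto h\psi(g)h^{-1}$ (into the Polish group $H$, landing in the Borel set $\psi(G)$) with $\psi^{-1} : \psi(G) \to G$, which is Borel because $\psi$ is a continuous injection between Polish groups and hence a Borel isomorphism onto its image (Lusin–Souslin). Composition of Borel maps is Borel, so the action is a Borel action of the Polish group $H$ on the Polish group $G$ by (abstract) automorphisms.

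Having reduced to a Borel action, I would invoke the automatic continuity result for Borel actions of Polish groups — specifically, that a Borel action $H \acts G$ of a Polish group on a Polish space, which is separately an action by homeomorphisms, is jointly continuous (this is a classical Effros/Pettis-style theorem, and is exactly the mechanism used in \cite{RW_P_15}). Concretely: for each fixed $h$, the map $g \mapsto h.g$ is an automorphism of $G$ and is continuous (it is $\psi^{-1} \circ \mathrm{conj}_h \circ \psi$ and one checks it is a Borel automorphism of the Polish group $G$, hence continuous by automatic continuity of Borel homomorphisms); and for each fixed $g$ the orbit map $h \mapsto h.g$ is Borel, hence continuous by the same principle applied to the $H$-action. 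Joint continuity then follows from the Montgomery–Ellis-type theorem that a separately continuous action of a Polish group on a Polish space with one side Polish is jointly continuous. This gives continuity of multiplication; inversion is handled the same way since $(g,h)^{-1} = (h^{-1}.g^{-1}, h^{-1})$ involves the same action map. Finally, with the group operations continuous and the underlying space $G \times H$ Polish, $G \rtimes_\psi H$ is a Polish group, completing the proof. The part requiring the most care is the passage from "Borel action" to "continuous action": one must correctly cite or reproduce the automatic-continuity input and make sure it applies to actions by automorphisms on a (nondiscrete) Polish group, not just to the better-known case of homomorphisms.
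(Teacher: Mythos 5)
This proposition is quoted from \cite[Proposition 3.5]{RW_P_15}; the present paper gives no proof of its own, so there is nothing internal to compare against. Your argument is, however, essentially the standard one used in that reference: reduce to joint continuity of the $\psi$-equivariant action, observe that $\psi(G)$ is Borel and $\psi^{-1}$ is Borel on it by Lusin--Souslin, conclude that the action map $H\times G\to G$ is Borel, use Pettis to see that each $g\mapsto h.g$ is a continuous automorphism of $G$, and then pass from Borel to continuous for the full action. The underlying space being $G\times H$, Polishness is then automatic, as you say.

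The one step that is not right as written is the claim that the orbit map $h\mapsto h.g$ is continuous ``by the same principle'': Pettis-type automatic continuity applies to \emph{homomorphisms}, and the orbit map is not a homomorphism, so you have not actually established separate continuity in the $H$-variable, and the Ellis/Montgomery ``separately continuous $\Rightarrow$ jointly continuous'' theorem cannot yet be invoked in the form you state it. The fix is cosmetic: the correct tool (e.g.\ \cite[(9.16)]{K95}) says that an action of a Polish group on a Polish space is jointly continuous provided each point map $x\mapsto h.x$ is continuous and each orbit map $h\mapsto h.x$ is merely Baire measurable. You have verified exactly these hypotheses (continuity of the point maps via Pettis, Borelness of the orbit maps via Lusin--Souslin), so the conclusion goes through once you cite the Baire-measurable version of the joint-continuity theorem rather than the separately-continuous version.
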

%The group $G\rtimes_{\psi} H$ is called the \textbf{$\psi$-compression factor}\index{$\psi$-compression factor}.
Normal compression maps factor through this semidirect product.

\begin{thm}[{\cite[Theorem~3.6]{RW_P_15}}]\label{thm:psi-compression_factor_rel}
Suppose that $G$ and $H$ are Polish groups with $\psi: G \rightarrow H$ a normal compression map and let $O\leq H$ be an open subgroup. Then the following hold:
\begin{enumerate}[(1)]
\item $\pi:G\rtimes_{\psi}O\rightarrow H$ via $(g,o)\mapsto \psi(g)o$ is a continuous, surjective homomorphism with $\ker(\pi)=\{(g\inv,\psi(g))\mid g\in \psi^{-1}(O)\}$, and if $O = H$, then $\ker(\pi)\simeq G$ as topological groups.
\item $\psi = \pi \circ \iota$ where $\iota: G \rightarrow G\rtimes_{\psi}O$ is the usual inclusion.
\item $G\rtimes_{\psi}O =\ol{\iota(G)\ker (\pi)}$, and the subgroups $\iota(G)$ and $\ker(\pi)$ are closed normal subgroups of $G\rtimes_{\psi}O$ with trivial intersection.
\end{enumerate}
\end{thm}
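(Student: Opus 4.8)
The plan is to read all three parts off the multiplication rule of the semidirect product, $(g_1,o_1)(g_2,o_2)=(g_1(o_1.g_2),o_1o_2)$, together with the defining identity of the $\psi$-equivariant action, $\psi(h.g)=h\psi(g)h\inv$ for $h\in O$ and $g\in G$; the latter makes sense exactly because $\psi(G)\normal H$ and $\psi$ is injective. Since $O$ is open in $H$, the subset $G\times O$ is an open subgroup of $G\times H$ under the restricted action, so $G\rtimes_\psi O$ is an open subgroup of the Polish group $G\rtimes_\psi H$ of Proposition~\ref{prop:Polish:semidirect}, and hence Polish in its own right (the proof of that proposition also applies verbatim).

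For part (1) I would first check that $\pi(g,o):=\psi(g)o$ is a homomorphism via the one-line computation $\pi((g_1,o_1)(g_2,o_2))=\psi(g_1)\psi(o_1.g_2)o_1o_2=\psi(g_1)o_1\psi(g_2)o_1\inv o_1o_2=\psi(g_1)o_1\psi(g_2)o_2=\pi(g_1,o_1)\pi(g_2,o_2)$, using the equivariance identity in the middle. Continuity of $\pi$ is immediate from continuity of $\psi$ and of the operations of $H$. For surjectivity I would note that $\img(\pi)=\psi(G)O$ is a subgroup of $H$ (as $\psi(G)\normal H$), is open since it is a union of cosets of $O$, hence is also closed, and contains the dense subgroup $\psi(G)$, so it equals $H$. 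Solving $\psi(g)o=1$ forces $o=\psi(g\inv)$, which lies in $O$ iff $g\in\psi\inv(O)$, so after reindexing $\ker(\pi)=\{(g\inv,\psi(g))\mid g\in\psi\inv(O)\}$. When $O=H$ we have $\psi\inv(O)=G$, and I would then verify that $g\mapsto(g\inv,\psi(g))$ is a continuous bijective homomorphism of $G$ onto $\ker(\pi)$ (a short check with the multiplication rule) whose inverse is the restriction to $\ker(\pi)$ of first-coordinate projection followed by inversion in $G$, hence continuous; thus $\ker(\pi)\simeq G$ as topological groups.

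Part (2) is the identity $\pi(\iota(g))=\pi(g,1)=\psi(g)$. For part (3): $\iota(G)=G\times\triv$ is the kernel of the projection $G\rtimes_\psi O\to O$, so it is a closed normal subgroup, while $\ker(\pi)$ is closed and normal as the kernel of a continuous homomorphism; and $\iota(G)\cap\ker(\pi)=\triv$ because $(g,1)\in\ker(\pi)$ forces $\psi(g)=1$, whence $g=1$ by injectivity of $\psi$. Finally, multiplying out $\iota(G)\ker(\pi)$ with the product rule gives exactly $G\times(\psi(G)\cap O)$, and $\psi(G)\cap O$ is dense in $O$ because $\psi(G)$ is dense in $H$ and $O$ is open; since the topology on $G\rtimes_\psi O$ is the product topology, $\iota(G)\ker(\pi)$ is dense in $G\times O=G\rtimes_\psi O$, which is the remaining claim.

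The computations involved are pure bookkeeping with the $\psi$-equivariant action, so there is no step of real depth; the only thing demanding care is keeping the semidirect-product convention and the direction of conjugation consistent, since both the homomorphism property of $\pi$ and the isomorphism $\ker(\pi)\simeq G$ would be broken by a mismatch in how conjugation in $H$ is transported back to $G$ along $\psi\inv$.
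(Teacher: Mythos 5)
Your argument is correct and is the standard direct verification that the cited source (and the paper, which quotes this result from \cite{RW_P_15} without reproving it) relies on: the homomorphism property of $\pi$ via the $\psi$-equivariance identity, surjectivity from $\psi(G)O$ being an open (hence closed) subgroup containing a dense set, the kernel computation, and density of $\iota(G)\ker(\pi)$ from $\iota(G)\ker(\pi)=G\times(\psi(G)\cap O)$ in the product topology. All the individual computations check out, including the verification that $g\mapsto(g\inv,\psi(g))$ is a topological isomorphism onto $\ker(\pi)$ when $O=H$.
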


This factorization allows some properties to pass back and forth under normal compressions, possibly up to an abelian factor.

\begin{prop}[{\cite[Proposition 3.8]{RW_P_15}}]\label{prop:normal_compression}
Suppose that $G$ and $H$ are Polish groups with $\psi: G \rightarrow H$ a normal compression map and let $K$ be a closed normal subgroup of $G$. Then the following hold:
\begin{enumerate}[(1)]
\item The image $\psi(K)$ is a normal subgroup of $H$.
\item If $\psi(K)$ is also dense in $H$, then $\ol{[G,G]} \le K$, and every closed normal subgroup of $K$ is normal in $G$.
\end{enumerate}
\end{prop}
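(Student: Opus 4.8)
The plan is to deduce (1) from the factorization of $\psi$ through the semidirect product, and to deduce (2) from the continuity of the $\psi$-equivariant action together with a density argument.

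For (1), apply Theorem~\ref{thm:psi-compression_factor_rel} with $O=H$ to write $\psi=\pi\circ\iota$, where $\iota\colon G\to G\rtimes_\psi H$ is the inclusion $g\mapsto(g,1)$ --- a closed embedding with normal image, the topology on $G\rtimes_\psi H$ being the product topology --- and $\pi\colon G\rtimes_\psi H\to H$ is continuous and surjective. Since $K\normal G$ is closed, $\iota(K)$ is a closed subgroup of $G\rtimes_\psi H$ that is normal in $\iota(G)$, so $\iota(G)\le \N_{G\rtimes_\psi H}(\iota(K))$. By Theorem~\ref{thm:psi-compression_factor_rel}(3), $\iota(G)$ and $\ker\pi$ are normal subgroups of $G\rtimes_\psi H$ with trivial intersection, hence they commute; so $\ker\pi$ centralizes $\iota(K)\le\iota(G)$, giving $\ker\pi\le\N_{G\rtimes_\psi H}(\iota(K))$ as well. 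As $\iota(K)$ is closed, its normalizer is a closed subgroup, and since $\ol{\iota(G)\ker\pi}=G\rtimes_\psi H$ we conclude $\iota(K)\normal G\rtimes_\psi H$. Applying the surjective homomorphism $\pi$ yields $\psi(K)=\pi(\iota(K))\normal H$.

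For (2), suppose in addition that $\psi(K)$ is dense in $H$. The key is that the $\psi$-equivariant action $\theta\colon H\times G\to G$, $\theta(h,g)=\psi\inv(h\psi(g)h\inv)$, is continuous: by Proposition~\ref{prop:Polish:semidirect} the semidirect product $G\rtimes_\psi H$ is a topological group under the product topology, and this forces the action map $\theta$ to be continuous. Hence, given $g\in G$, if we pick (using density of $\psi(K)$ and metrizability of $H$) elements $k_n\in K$ with $\psi(k_n)\to\psi(g)$ in $H$, then for every $y\in G$ we get $k_n y k_n\inv=\theta(\psi(k_n),y)\to\theta(\psi(g),y)=g y g\inv$ in $G$. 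Taking $y\in G$ arbitrary, $[k_n,y]=k_n y k_n\inv y\inv\to g y g\inv y\inv=[g,y]$, while $[k_n,y]=k_n(y k_n\inv y\inv)\in K$ because $k_n\in K\normal G$; since $K$ is closed, $[g,y]\in K$. As $g,y$ were arbitrary, $[G,G]\le K$, so $\ol{[G,G]}\le K$. For the second assertion, let $L$ be a closed normal subgroup of $K$ (hence closed in $G$) and let $x\in L$; then $k_n x k_n\inv\in L$ (as $L\normal K$) and $k_n x k_n\inv\to g x g\inv$, so $g x g\inv\in L$ by closedness of $L$. Thus $g L g\inv\le L$ for every $g\in G$, and replacing $g$ by $g\inv$ gives $g L g\inv=L$; hence $L\normal G$.

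The only ingredient that is not routine bookkeeping with normal subgroups and closures is the continuity of $\theta$ --- equivalently, the assertion recorded in Proposition~\ref{prop:Polish:semidirect} that the semidirect product really is a topological group. This is exactly what permits transporting the convergence $\psi(k_n)\to\psi(g)$ in $H$ to $k_n y k_n\inv\to g y g\inv$ in $G$; such a step is not justified by continuity of $\psi$ alone, since $\psi$ need not be open onto its image. (Part (1) can alternatively be proved in the same style, using that for each $x\in K$ the set $\{h\in H\mid\theta_h(x)\in K\}$ is a closed subset of $H$ containing the dense subgroup $\psi(G)$, hence all of $H$.)
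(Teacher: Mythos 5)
Your proof is correct. The paper does not supply a proof of this proposition (it is quoted from \cite{RW_P_15}), but your argument is the natural one: part (1) by observing that $\iota(K)$ is normalized by both $\iota(G)$ and $\ker(\pi)$, that normalizers of closed subgroups are closed, and that $\ol{\iota(G)\ker(\pi)}=G\rtimes_\psi H$ by Theorem~\ref{thm:psi-compression_factor_rel}; and part (2) by exploiting the joint continuity of the $\psi$-equivariant action, which, as you correctly isolate, is exactly what Proposition~\ref{prop:Polish:semidirect} provides and is the one non-routine ingredient (your sequential phrasing is legitimate since $H$ is metrizable, and the ``closed set containing the dense subgroup $\psi(K)$'' variant you mention works equally well).
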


We shall also make use of two technical results concerning connected components.

\begin{lem}[{\cite[Lemma 3.11]{RW_P_15}}]\label{lem:disconnected_to_connected}
Let $G$ and $H$ be Polish groups with $\psi: G \rightarrow H$ a normal compression map.  Let $R := \psi\inv(H^\circ)$ and let $N := \ol{\psi(R)}$.  Then the following inclusions hold:
\[
[H^\circ,\psi(G)] \le \psi(R); \;[H^\circ,H] \le N; [H^{\circ},\psi(G)]\leq \psi(G^{\circ}); \; [H^\circ, N] \le \ol{\psi(G^\circ)}.
\]
\end{lem}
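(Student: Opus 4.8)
The plan is to establish the four inclusions in the order $[H^\circ,\psi(G)]\leq\psi(G^\circ)$ first, then deduce $[H^\circ,\psi(G)]\leq\psi(R)$ immediately, and finally obtain $[H^\circ,H]\leq N$ and $[H^\circ,N]\leq\ol{\psi(G^\circ)}$ from the first two by a density argument.

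For the first and crucial inclusion I would work with the $\psi$-equivariant action $H\acts G$, using that it is jointly continuous: this is forced by Proposition~\ref{prop:Polish:semidirect}, since the product topology can only make $G\rtimes_{\psi}H$ a topological group when conjugation of the $H$-factor on the $G$-factor, i.e.\ the map $(h,g)\mapsto h.g$, is continuous. Rewriting a commutator, for all $g\in G$ and $h\in H$ we have $[h,\psi(g)]=h\psi(g)h\inv\psi(g)\inv=\psi(h.g)\psi(g)\inv=\psi((h.g)g\inv)$. Now fix $g$ and let $h$ range over $H^\circ$: the assignment $h\mapsto (h.g)g\inv$ is a continuous map from the connected space $H^\circ$ into $G$ sending $1$ to $1$, so its image is a connected subset of $G$ containing the identity and hence lies in $G^\circ$. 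Therefore $[h,\psi(g)]\in\psi(G^\circ)$ for all $h\in H^\circ$ and $g\in G$, and since $\psi(G^\circ)$ is a subgroup of $H$ this gives $[H^\circ,\psi(G)]\leq\psi(G^\circ)$. Continuity of $\psi$ and connectedness of $G^\circ$ give $\psi(G^\circ)\leq H^\circ$, that is $G^\circ\leq R$, so $\psi(G^\circ)\leq\psi(R)$ and the inclusion $[H^\circ,\psi(G)]\leq\psi(R)$ follows at once.

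For the remaining two inclusions I would fix $h_0\in H^\circ$ and use the continuous self-map $\kappa\colon H\to H$, $x\mapsto[h_0,x]$, which satisfies $\kappa(\ol A)\subseteq\ol{\kappa(A)}$ for every $A\subseteq H$. Taking $A=\psi(G)$, which is dense so $\ol A=H$, together with $\kappa(\psi(G))\subseteq\psi(R)\subseteq N$ and $N$ closed, yields $[h_0,H]\subseteq N$; as $h_0\in H^\circ$ was arbitrary and $N$ is a subgroup, $[H^\circ,H]\leq N$. Taking instead $A=\psi(R)$, whose closure is $N$, together with $\kappa(\psi(R))\subseteq\psi(G^\circ)\subseteq\ol{\psi(G^\circ)}$, yields $[h_0,N]\subseteq\ol{\psi(G^\circ)}$, hence $[H^\circ,N]\leq\ol{\psi(G^\circ)}$.

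The one step needing care is the opening move. It is tempting to argue only that $[H^\circ,\psi(g)]$ is connected \emph{in $H$} and therefore lies in $H^\circ\cap\psi(G)=\psi(R)$; this is correct but too weak, because the subspace topology on $\psi(G)$ may be strictly coarser than the topology of $G$, so a set connected in $\psi(G)$ can have a disconnected $\psi$-preimage in $G$. Transporting connectedness all the way into $G$ genuinely requires the continuity of $h\mapsto h.g$ as a map $H^\circ\to G$, which is exactly what is extracted from Proposition~\ref{prop:Polish:semidirect}; with that in hand, everything else is routine.
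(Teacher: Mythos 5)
Your proof is correct. The present paper does not prove this lemma — it is quoted from \cite{RW_P_15} — but your argument is the standard one for it: extract joint continuity of the $\psi$-equivariant action from the fact that $G\rtimes_\psi H$ is a topological group in the product topology, rewrite $[h,\psi(g)]$ as $\psi((h.g)g^{-1})$ so that connectedness can be transported into $G$ rather than merely into $\psi(G)$, and then get the two closure statements by applying $x\mapsto[h_0,x]$ to dense subsets; your remark about why connectedness in $\psi(G)$ alone would only yield the weaker inclusion $[H^\circ,\psi(G)]\le\psi(R)$ is exactly the right point of care.
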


\begin{thm}[{\cite[Theorem 3.12]{RW_P_15}}]\label{thmintro:connected}
	Suppose that $G$ and $H$ are Polish groups with $\psi: G \rightarrow H$ a normal compression map. Then the following hold:
	\begin{enumerate}[(1)]
		\item The group $H^\circ/\ol{\psi(G^\circ)}$ is nilpotent of class at most two.
		\item If $H$ is connected, then both $G/G^\circ$ and $H/\ol{\psi(G^\circ)}$ are abelian.
		\item If $G$ is totally disconnected, then $H^\circ$ is central in $H$.
	\end{enumerate}
\end{thm}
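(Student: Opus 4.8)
The plan is to read off all three parts directly from the commutator inclusions supplied by Lemma~\ref{lem:disconnected_to_connected}; once that lemma is in hand, no further analysis of the topology of the compression is needed. Throughout I write $R := \psi\inv(H^\circ)$ and $N := \ol{\psi(R)}$, as in that lemma. I would first record two preliminary observations: since $G^\circ$ is connected and $\psi$ is continuous, $\psi(G^\circ)$ is connected, so $\ol{\psi(G^\circ)}$ is a connected subgroup of $H$ and hence $\ol{\psi(G^\circ)} \le H^\circ$; and the inclusion $[H^\circ,\psi(G)] \le \psi(G^\circ)$ yields in particular $[H^\circ, \psi(G^\circ)] \le \psi(G^\circ)$, so that $\psi(G^\circ)$, and therefore $\ol{\psi(G^\circ)}$, is normalized by $H^\circ$. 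In particular $H^\circ/\ol{\psi(G^\circ)}$ is a well-defined topological group, so the statement of (1) makes sense.

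For (1), I would apply $[H^\circ, H] \le N$ with $H^\circ \le H$ to get $[H^\circ, H^\circ] \le N$, and then feed this into $[H^\circ, N] \le \ol{\psi(G^\circ)}$ to obtain
\[
[[H^\circ, H^\circ], H^\circ] \le [N, H^\circ] \le \ol{\psi(G^\circ)}.
\]
Thus the third term of the lower central series of $H^\circ$ is contained in $\ol{\psi(G^\circ)}$, which is exactly the assertion that $H^\circ/\ol{\psi(G^\circ)}$ is nilpotent of class at most two.

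For (2), suppose $H = H^\circ$. Then $R = G$ and $N = \ol{\psi(G)} = H$, so $[H^\circ, N] \le \ol{\psi(G^\circ)}$ becomes $[H,H] \le \ol{\psi(G^\circ)}$, i.e.\ $H/\ol{\psi(G^\circ)}$ is abelian. For the assertion about $G$, note that $\psi(G) \le H = H^\circ$, so $[\psi(G),\psi(G)] \le [H^\circ, \psi(G)] \le \psi(G^\circ)$; since $\psi$ is an injective homomorphism this forces $[G,G] \le G^\circ$, and as $G^\circ$ is closed, $G/G^\circ$ is abelian. For (3), suppose $G$ is totally disconnected, so $G^\circ = \triv$ and $\ol{\psi(G^\circ)} = \triv$; then $[H^\circ, \psi(G)] \le \psi(G^\circ) = \triv$, i.e.\ $\psi(G) \le \CC_H(H^\circ)$. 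The centralizer $\CC_H(H^\circ) = \bigcap_{x\in H^\circ}\CC_H(x)$ is an intersection of closed subgroups, hence closed, and $\psi(G)$ is dense in $H$; therefore $\CC_H(H^\circ) = H$, i.e.\ $H^\circ$ is central in $H$.

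Since Lemma~\ref{lem:disconnected_to_connected} may be assumed, I do not anticipate a real obstacle at this stage: the only points demanding care are the bookkeeping checks that $\ol{\psi(G^\circ)}$ is a closed connected subgroup normalized by $H^\circ$ and that centralizers of subsets of $H$ are closed. The substantive difficulty sits entirely inside Lemma~\ref{lem:disconnected_to_connected} itself, where one must control the commutator $[H^\circ, \psi(G)]$ despite $\psi(G)$ being merely dense, rather than open, in $H$.
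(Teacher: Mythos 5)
Your argument is correct, and it is essentially the intended one: the paper does not reprove this theorem (it is imported from \cite[Theorem 3.12]{RW_P_15}), but Lemma~\ref{lem:disconnected_to_connected} is stated immediately beforehand precisely because all three parts follow from its commutator inclusions by the bookkeeping you carry out. The only point worth adding is that normality of $\ol{\psi(G^\circ)}$ in $H$ (not just in $H^\circ$) is already automatic from Proposition~\ref{prop:normal_compression}(1), since $G^\circ$ is closed and normal in $G$.
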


\begin{cor}[{\cite[Corollary 3.13]{RW_P_15}}]\label{cor:con_association}
	Suppose that $G$ and $H$ are centerless topologically perfect Polish groups and that $H$ is a normal compression of $G$.  Then $G$ is connected if and only if $H$ is connected, and $G$ is totally disconnected if and only if $H$ is totally disconnected.
\end{cor}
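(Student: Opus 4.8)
The plan is to read the corollary off Theorem~\ref{thmintro:connected}: all the substantive content is already there, and the two hypotheses---``centerless'' and ``topologically perfect''---are each needed exactly once, to sharpen one of its conclusions into the clean equivalence asserted here.

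First I would dispose of the two implications that use nothing beyond $\psi$ being continuous with dense image. If $G$ is connected, then $\psi(G)=\psi(G^\circ)$ is a connected subset of $H$, hence $\ol{\psi(G^\circ)}$ is a connected subgroup of $H$; but density forces $\ol{\psi(G^\circ)}=H$, so $H=H^\circ$. Dually, if $H$ is totally disconnected, then $\ol{\psi(G^\circ)}$ is a connected subgroup of $H^\circ=\triv$, hence trivial, and injectivity of $\psi$ gives $G^\circ=\triv$.

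For the two remaining implications I would invoke the deeper parts of Theorem~\ref{thmintro:connected}. If $H$ is connected, part~(2) of that theorem says $G/G^\circ$ is abelian, so $\ol{[G,G]}\leq G^\circ$; as $G$ is topologically perfect this gives $G=\ol{[G,G]}\leq G^\circ$, i.e.\ $G$ is connected. If $G$ is totally disconnected, part~(3) says $H^\circ$ is central in $H$, and since $H$ is centerless we get $H^\circ=\triv$. Assembling the four implications yields both biconditionals.

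I do not expect a genuine obstacle: the corollary is a formal consequence of Theorem~\ref{thmintro:connected}. The only point needing a moment's care is the bookkeeping---recording that ``topologically perfect'' is what is used for ``$H$ connected $\Rightarrow$ $G$ connected'' while ``centerless'' is what is used for ``$G$ totally disconnected $\Rightarrow$ $H$ totally disconnected'', and observing that the other two implications are purely topological. (Part~(1) of Theorem~\ref{thmintro:connected} is not needed here, although it gives an alternative proof of the first implication.)
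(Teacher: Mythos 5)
Your proof is correct and is the standard derivation: the statement is quoted from \cite{RW_P_15} as a corollary of Theorem~\ref{thmintro:connected}, and your four implications (two purely topological, one using topological perfectness of $G$ via part~(2), one using centerlessness of $H$ via part~(3)) are exactly how it follows. No issues.
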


\section{Elementary groups and the decomposition rank}\label{sec:elementary}
Elementary groups will appear several times in the present work. We here recall the relevant definitions and note several new properties.
\subsection{Introduction}

\begin{defn}The class $\Es$ of \defbold{elementary groups}\index{elementary group} is the smallest class of \tdlcsc groups such that
	\begin{enumerate}[(i)]
		\item $\Es$ contains all second countable profinite groups and countable discrete groups.
		\item $\Es$ is closed under taking closed subgroups.
		\item $\Es$ is closed under taking Hausdorff quotients.
		\item $\Es$ is closed under forming group extensions.
		\item If $G$ is a \tdlcsc group and $G=\bigcup_{i\in \Nb}O_i$ where $(O_i)_{i\in \Nb}$ is an $\subseteq$-increasing sequence of open subgroups of $G$ with $O_i\in\Es$ for each $i$, then $G\in\Es$. We say that $\Es$ is \textbf{closed under countable increasing unions}.
	\end{enumerate}
\end{defn} 
The operations $\mathrm{(ii)}-\mathrm{(v)}$ are often called the \textbf{elementary operations}\index{elementary operations}. It turns out operations $\mathrm{(ii)}$ and $\mathrm{(iii)}$ follow from the others, and $\mathrm{(iv)}$ can be weakened to $\mathrm{(iv)}'$: \textit{$\Es$ is closed under extensions of profinite groups and discrete groups}. These results are given by \cite[Theorem 1.3]{W_1_14}. The class of elementary groups enjoys additional closure properties:

\begin{thm}[See {\cite[Theorem 1.3]{W_1_14}}]\label{thm:elementary_closure}
	The class $\Es$ enjoys the following properties:
\begin{enumerate}[(1)]
\item If $G\in \Es$, $H$ is a \tdlcsc group, and $\psi:H\rightarrow G$ is a continuous, injective homomorphism, then $H\in \Es$. 
\item If $G$ is a \tdlcsc group that is residually in $\Es$, then $G\in \Es$. In particular, $\Es$ is closed under inverse limits that result in a \tdlcsc group.
\end{enumerate}
\end{thm}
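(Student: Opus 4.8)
Both assertions are \cite[Theorem~1.3]{W_1_14}; the plan is to prove them by transfinite induction on the decomposition rank, for which I write $\xi$ below. The two inputs I would take from \cite{W_1_14} are that $\xi$ is monotone under passage to closed subgroups, and that for a compactly generated $G\in\Es$ with $\mathrm{Res}(G)\neq\triv$ one has $\xi(\mathrm{Res}(G))<\xi(G)$, where $\mathrm{Res}(G)$ is the intersection of the open normal subgroups of $G$; the one external structural input is the Caprace--Monod theorem that a compactly generated residually discrete \tdlc group is SIN, hence possesses a compact open normal subgroup with finitely generated discrete quotient. I use freely that $\Es$ is closed under extensions, closed subgroups, Hausdorff quotients, and countable increasing unions of open subgroups.

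For part (1), I would first replace $G$ by the closed subgroup $\ol{\psi(H)}\in\Es$, so that $\psi$ has dense image, and then reduce to $H$ compactly generated by writing $H$ as an increasing union of compactly generated open subgroups $O_n$ (it suffices to treat each $O_n$, and $\ol{\psi(O_n)}$ is a closed subgroup of $G$). With $H$ compactly generated and $\psi(H)$ dense, $G$ is compactly generated, and I induct on $\xi(G)$. If $G$ is residually discrete, choose $K\normal G$ compact open with $G/K$ finitely generated discrete; then $\psi^{-1}(K)\normal H$ is open with $H/\psi^{-1}(K)$ embedding into $G/K\in\Es$, while $\psi$ restricts to a continuous injection of $\psi^{-1}(K)$ into the profinite group $K$, which forces $\psi^{-1}(K)$ to be residually discrete, hence (via another increasing-union reduction and Caprace--Monod) to lie in $\Es$; so $H\in\Es$. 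Otherwise set $R:=\mathrm{Res}(G)$, so $\xi(R)<\xi(G)$. Then $\psi$ maps $\psi^{-1}(R)$ continuously and injectively into $R$, with image whose closure $R'\le R$ is a closed subgroup of $G$, so $\xi(R')\le\xi(R)<\xi(G)$ and the inductive hypothesis gives $\psi^{-1}(R)\in\Es$; meanwhile $\psi$ induces a continuous injection with dense image $H/\psi^{-1}(R)\to G/R$, and $G/R$ is residually discrete, so the residually discrete case gives $H/\psi^{-1}(R)\in\Es$. Since $\psi^{-1}(R)$ is closed normal in $H$ with both it and the quotient in $\Es$, closure under extensions yields $H\in\Es$.

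For part (2), the ``in particular'' is immediate: an inverse limit presentation $G=\varprojlim_i G_i$ with $G_i\in\Es$ realizes $G/\ker(G\to G_i)$ as a closed subgroup of $G_i$, hence in $\Es$, with the kernels intersecting trivially. For the main statement, given closed normal $N_i\normal G$ with $\bigcap_i N_i=\triv$ and $G/N_i\in\Es$, I would use that $G$ is second countable (hence hereditarily Lindel\"of) to pass to a countable subfamily still intersecting trivially, and then to its successive finite intersections, obtaining a decreasing sequence $N_1\ge N_2\ge\cdots$ with $\bigcap_k N_k=\triv$ and each $G/N_k\in\Es$ (a closed subgroup of a finite product of the $G/N_i$); then reduce to $G$ compactly generated as before, each compactly generated open $O_n\le G$ being again residually in $\Es$ via the subgroups $O_n\cap N_k$. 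The core is then parallel to part~(1): if $\mathrm{Res}(G)=\triv$, Caprace--Monod finishes; otherwise $G/\mathrm{Res}(G)$ is residually discrete and hence in $\Es$, and one shows $\mathrm{Res}(G)\in\Es$ by an induction in which progress is measured by $\xi(G/N_1)$ together with the residual series and which feeds the resulting continuous injections back into part~(1); then $G\in\Es$ by an extension.

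The step I expect to be the main obstacle --- and the reason one cannot simply pass to $\varprojlim_k G/N_k$ in (2), or to the subspace topology on $\psi(H)$ in (1) --- is that a continuous injective homomorphism of \tdlcsc groups need not be a topological embedding: open normal subgroups of $H$ need not come from $G$, the discrete residual can behave quite differently on the two sides, and $\varprojlim_k G/N_k$ need not be locally compact. The decomposition rank is precisely the device that repairs this, since it supplies a well-founded quantity that does descend correctly under passage to closed subgroups and to quotients by the residual, making the transfinite inductions legitimate; verifying its key property $\xi(\mathrm{Res}(G))<\xi(G)$ and combining it with the Caprace--Monod structure theorem is the substantive work, and is carried out in \cite{W_1_14}.
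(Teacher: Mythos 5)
The paper does not prove this theorem: it is quoted verbatim from \cite[Theorem 1.3]{W_1_14}, so there is no in-paper argument to measure your proposal against. That said, your sketch does follow the strategy of the proof in the cited source --- transfinite induction on the decomposition rank, the inequality $\xi(\Res{}(G))<\xi(G)$ for non-trivial compactly generated elementary $G$, and Caprace--Monod to dispose of the residually discrete case --- and your outline of part (1) is essentially a correct skeleton. (One organizational point: the induction hypothesis must be stated for arbitrary \tdlcsc domains, since $\psi^{-1}(\Res{}(G))$ need not be compactly generated; you then rerun the increasing-union reduction inside the inductive step. This is routine.)

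The genuine soft spot is the crux of part (2). ``An induction in which progress is measured by $\xi(G/N_1)$ together with the residual series'' is not yet an argument, and the natural candidate --- induct on $\alpha:=\sup_k\xi(G/N_k)$, use $\Res{}(G)N_k/N_k\le\Res{}(G/N_k)$ to see that $\Res{}(G)$ is residually elementary with witness quotients of rank strictly below $\xi(G/N_k)$, then appeal to the inductive hypothesis --- stalls when $\alpha$ is a limit ordinal: each witness rank drops, but their supremum need not (e.g.\ $\xi(G/N_k)=k+2$ gives $\xi(\Res{}(G/N_k))\le k+1$ with supremum still $\omega$). Handling the limit stage requires an extra device; compare Proposition~\ref{prop:residual_rank} in this paper, which faces exactly this difficulty and resolves it by replacing the family with a filtering family and invoking \cite[Theorem~3.3]{RW_EC_15} to produce a $K$ in the family with a compact subgroup $L\le K\cap O$ normal in $O$ and open in $K\cap O$, followed by a rank sandwich via Lemma~\ref{lem:d-rank_extensions}. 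Until that step is supplied, part (2) remains a plausible plan rather than a proof; with it supplied, your reconstruction matches the argument of \cite{W_1_14}.
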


A primary tool to study elementary groups is a canonical rank function.

\begin{defn}
	Let $\ms{T}$ be the class of \tdlcsc groups.  The \textbf{decomposition rank}\index{decomposition rank} $\xi$ is a partial ordinal-valued function from $\ms{T}$ to $[1,\omega_1)$ satisfying the following properties:
	\begin{enumerate}[(a)]
		\item $\xi(G)=1$ if and only if $G \simeq \triv$;
		
		\item If $G \in \ms{T}$ is non-trivial and $G=\bigcup_{i\in \Nb}O_i$ with $(O_i)_{i\in \Nb}$ some increasing sequence of compactly generated open subgroups of $G$, then $\xi(G)$ is defined if and only if $\xi(R_i)$ is defined for $R_i := \Res{}(O_i)$ for all $i \in \Nb$.  If $\xi(G)$ is defined, then
		\[
		\xi(G)=\sup_{i\in \Nb}\xi(R_i)+1.
		\]
	\end{enumerate}
\end{defn}

By \cite[Theorem 4.7]{W_1_14} and \cite[Lemma 4.12]{W_1_14}, such a function $\xi$ exists, is unique, has domain of definition exactly $\Es$, and is equivalent to the decomposition rank as defined in \cite{W_1_14}.  Note that if $G$ is a non-trivial \tdlcsc group that is compactly generated and has no non-trivial discrete quotients, then $G$ is \emph{not} in $\Es$.  The class $\Es$ is thus strictly smaller than the class of all \tdlcsc groups. For convenience, we extend the definition of $\xi$ to all \tdlcsc groups by saying that a non-elementary group satisfies $\xi(G) = \omega_1$.

By definition, the rank of an elementary \tdlcsc group is always a countable \emph{successor} ordinal.  We observe a slightly stronger restriction on the rank of a compactly generated group.

\begin{lem}\label{lem:betaplus2}
Let $G$ be a compactly generated elementary \tdlcsc group.  Then
\[
\xi(G) = \xi(\Res{}(G)) + 1 = \beta+2
\]
for some at most countable ordinal $\beta$.
\end{lem}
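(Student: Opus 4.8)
The plan is to obtain both equalities by feeding $G$ itself into clause (b) of the definition of the decomposition rank $\xi$ and then invoking the fact that elementary groups always have successor rank. Throughout I take $G$ non-trivial: the statement as written presupposes $G\neq\triv$, since when $G=\triv$ one has $\xi(G)=1$, which is not of the form $\beta+2$.

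First I would apply clause (b) of the definition of $\xi$ with the constant sequence $O_i=G$ for all $i\in\Nb$. Each $O_i$ is a compactly generated open subgroup of $G$ and $G=\bigcup_{i\in\Nb}O_i$, so $R_i:=\Res{}(O_i)=\Res{}(G)$ for every $i$ and hence $\sup_{i\in\Nb}\xi(R_i)=\xi(\Res{}(G))$. Since $G$ is elementary, $\xi(G)$ is defined, so clause (b) tells us that $\xi(\Res{}(G))$ is defined and that
\[
\xi(G)=\xi(\Res{}(G))+1,
\]
which is the first claimed equality. (That $\xi(\Res{}(G))$ is defined can also be read off directly: $\Res{}(G)$ is a closed subgroup of $G$, and $\Es$ is closed under closed subgroups, so $\Res{}(G)\in\Es$.)

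Next I would use that the rank of any elementary \tdlcsc group is a successor ordinal — this is immediate from the definition, since such a group is either trivial, of rank $1=0+1$, or non-trivial, in which case clause (b) expresses its rank as $\sup_i\xi(R_i)+1$, again a successor. Applying this to the elementary group $\Res{}(G)$, I write $\xi(\Res{}(G))=\beta+1$ for some at most countable ordinal $\beta$. Substituting into the display above gives $\xi(G)=(\beta+1)+1=\beta+2$, which completes the argument. There is no genuine obstacle: the proof is a direct unwinding of the recursive definition of $\xi$ together with the built-in ``successor'' feature of the rank, and the only points needing a moment's care are the bookkeeping of which groups have defined rank (handled by closure of $\Es$ under closed subgroups) and the excluded trivial case.
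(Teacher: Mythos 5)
Your proof is correct and follows essentially the same route as the paper: apply the rank formula with the constant exhaustion $O_i = G$ to get $\xi(G) = \xi(\Res{}(G))+1$, then use that the rank of an elementary group is a successor ordinal. Your extra remark about excluding the trivial case is a fair (if minor) observation the paper leaves implicit.
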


\begin{proof}
From the rank formula, $\xi(G) = \xi(\Res{}(G)) + 1$; take $O_i = G$ for all $i \in \Nb$. On the other hand, $\xi(\Res{}(G))$ is a successor ordinal, say $\beta+1$, so $\xi(G) = \beta+2$.
\end{proof}

\subsection{Stability of the decomposition rank}

The decomposition rank enjoys several useful stability properties.

\begin{prop}\label{prop:xi_monotone}
For $G$ a non-trivial elementary group, then the following hold:
\begin{enumerate}[(1)]
	\item If $H$ is a \tdlcsc group, and $\psi:H\rightarrow G$ is a continuous, injective homomorphism, then $\xi(H)\leq \xi(G)$. \textup{(\cite[Corollary 4.10]{W_1_14})}
	\item If $L\normal G$ is closed, then $\xi(G/L)\leq \xi(G)$. \textup{(\cite[Theorem 4.19]{W_1_14})}
\end{enumerate}
\end{prop}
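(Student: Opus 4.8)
The plan is to prove both parts by transfinite induction on the ordinal $\gamma := \xi(G)$, using the rank formula to reduce everything to the compactly generated open subgroups of $G$. Since the decomposition rank of a nontrivial elementary group is always a successor, in the inductive step we have $\gamma = \beta + 1$ and may fix an exhaustion $G = \bigcup_{i}O_i$ by an increasing sequence of compactly generated open subgroups with $\xi(\Res{}(O_i)) \le \beta$ for every $i$; the base case — and, more generally, the case in which the group whose rank is being bounded is trivial — is immediate. Throughout I would use the elementary observation that, for any continuous injective homomorphism $\phi\colon A \to B$ of \tdlcsc groups, $\phi(\Res{}(A)) \le \Res{}(B)$: if $M \normal B$ is open then $\phi^{-1}(M) \normal A$ is open (normality needs only that $\phi$ is an injective homomorphism), so $\Res{}(A) \le \phi^{-1}(M)$, and intersecting over all such $M$ gives the claim.

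For part (1), I would pull the exhaustion back along $\psi$ and refine, writing $H = \bigcup_{k}U_k$ with each $U_k$ a compactly generated open subgroup of $H$ satisfying $\psi(U_k) \le O_{k}$. Applying the observation to $\psi\rest_{U_k}\colon U_k \to O_k$ shows that $\psi$ restricts to a continuous injective homomorphism $\Res{}(U_k) \to \Res{}(O_k)$. Now $\Res{}(O_k)$ is a closed subgroup of the elementary group $G$, hence elementary, with $\xi(\Res{}(O_k)) \le \beta < \gamma$, so the induction hypothesis yields $\xi(\Res{}(U_k)) \le \xi(\Res{}(O_k)) \le \beta$. Feeding this into the rank formula for $H = \bigcup_{k}U_k$ gives $\xi(H) = \sup_k \xi(\Res{}(U_k)) + 1 \le \beta + 1 = \gamma$. (This argument in fact re-proves that $H \in \Es$, which is already available from Theorem~\ref{thm:elementary_closure}.)

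For part (2), write $\pi\colon G \to G/L$ for the quotient map. Since $\pi$ is open, $G/L = \bigcup_{i}\pi(O_i)$ is an exhaustion by an increasing sequence of compactly generated open subgroups, and each restriction $\pi\rest_{O_i}\colon O_i \to \pi(O_i)$ is a quotient map, so $\pi(O_i) \cong O_i/(O_i\cap L)$ topologically. The one genuinely technical ingredient is the behaviour of the discrete residual under quotients: for a compactly generated \tdlc group $A$ and a closed normal subgroup $K$, one has $\Res{}(A/K) = \Res{}(A)K/K \cong \Res{}(A)/(\Res{}(A)\cap K)$. The inclusion $\Res{}(A/K)\le \overline{\Res{}(A)K}/K$ holds because $A/\Res{}(A)$ is compactly generated and residually discrete, hence a SIN group, and Hausdorff quotients of SIN \tdlc groups are residually discrete; the reverse inclusion, together with the fact that $\Res{}(A)K$ is already closed (so the closure is superfluous), follows by examining the image of the closed subgroup $K$ in the compact-by-discrete group $A/\Res{}(A)$ through a compact open normal subgroup. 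Granting this, $\Res{}(\pi(O_i)) \cong \Res{}(O_i)/(\Res{}(O_i)\cap L)$ is a quotient of $\Res{}(O_i)$, which is a closed subgroup of $G$ with $\xi(\Res{}(O_i)) \le \beta < \gamma$, so the induction hypothesis gives $\xi(\Res{}(\pi(O_i))) \le \xi(\Res{}(O_i)) \le \beta$; the rank formula for $G/L$ then yields $\xi(G/L) \le \beta + 1 = \gamma$.

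I expect the main obstacle to be precisely the discrete-residual-under-quotients lemma used in part (2): one must handle the a priori possibility that $\Res{}(A)K$ fails to be closed, which is what forces the passage through SIN groups and the structure theory of compactly generated residually discrete \tdlc groups. Part (1), by contrast, is essentially formal once the preimage-of-open-normal-subgroup observation is isolated. Minor care is also needed in both parts with the bookkeeping required to turn the pulled-back (respectively pushed-forward) covers into genuine increasing sequences of compactly generated open subgroups, and one should dispose of the trivial cases directly rather than invoking the induction hypothesis on them.
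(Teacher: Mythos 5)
First, note that the paper does not prove this proposition at all: it is imported wholesale from \cite{W_1_14} (Corollary 4.10 and Theorem 4.19), so there is no internal argument to compare against. Judged on its own terms, your proof of part (1) is correct and is essentially the standard argument: the observation that a continuous (not even necessarily injective) homomorphism $\phi\colon A \to B$ satisfies $\phi(\Res{}(A)) \le \Res{}(B)$, combined with pulling back the exhaustion and inducting on $\xi(G)$, does the job, and your handling of the base/trivial cases and the re-indexing of the exhaustion is fine.

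Part (2), however, has a genuine gap at exactly the point you flag as ``the one genuinely technical ingredient.'' The correct general statement for a compactly generated \tdlc group $A$ and closed $K \normal A$ is $\Res{}(A/K) = \ol{\Res{}(A)K}/K$: your two inclusions establish $\Res{}(A)K/K \le \Res{}(A/K) \le \ol{\Res{}(A)K}/K$, but nothing you say establishes that $\Res{}(A)K$ is closed. ``Examining the image of $K$ in the compact-by-discrete group $A/\Res{}(A)$'' does not suffice, because the image of a closed normal subgroup in a SIN quotient can perfectly well fail to be closed (consider the diagonal copy of $\bigoplus_n C_2$ inside $\bigoplus_n C_2 \times \prod_n C_2$ and project to the second factor); any proof of closedness must use something specific to the discrete residual of a compactly generated group, and none is supplied. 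Without closedness, $\Res{}(\pi(O_i))$ is only a \emph{normal compression} of $\Res{}(O_i)/(\Res{}(O_i)\cap L)$, not isomorphic to it, and your induction hypothesis — which covers continuous injections and quotients but not the closure of a dense normal image — does not apply. This is not a repairable bookkeeping issue: bounding the rank of a normal compression by the rank of its dense normal image is precisely the content of Proposition~\ref{prop:compression:elementary_rank} of this paper, whose proof rests on Proposition~\ref{prop:xi_monotone} itself, so invoking it here would be circular. This closure problem is why quotient-monotonicity of $\xi$ is the hard half of the proposition and why the proof in \cite{W_1_14} requires additional machinery (in effect, a simultaneous treatment of normal compressions via the semidirect-product factorization) rather than the direct induction you propose.
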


We give here a slightly sharper version of \cite[Lemma~7.4]{RW_Hom_15}.  Given a successor ordinal $\alpha$, we write $(\alpha - 1)$ to mean the ordinal $\beta$ such that $\alpha = \beta+1$.

\begin{lem}\label{lem:d-rank_extensions}
Suppose that $G$ is a \tdlcsc group and $G$ lies in a short exact sequence of \tdlcsc groups
\[
\triv\rightarrow N\rightarrow G\rightarrow Q\rightarrow \triv.
\]
Then $\xi(G)\leq (\xi(N)-1)+\xi(Q)$.
\end{lem}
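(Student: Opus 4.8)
The plan is to induct on $\xi(Q)$, using the rank formula to reduce everything to the compactly generated case and then to $N$. If $Q$ is not elementary there is nothing to prove, so assume $\xi(Q) = \gamma + 1$ is a countable successor ordinal. Write $q: G \to Q$ for the quotient map and fix an increasing sequence $(P_i)_{i \in \Nb}$ of compactly generated open subgroups of $Q$ with $Q = \bigcup_i P_i$; set $Q_i := \Res{}(P_i)$, so $\xi(Q_i) \le \gamma$ for all $i$ and $\sup_i \xi(Q_i) + 1 = \xi(Q)$. Let $O_i := q\inv(P_i)$, an increasing sequence of open subgroups of $G$ with $G = \bigcup_i O_i$. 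Each $O_i$ need not be compactly generated, but it sits in a short exact sequence $\triv \to N \to O_i \to P_i \to \triv$, and $N$ is closed and open-cofinite-free... more precisely, it suffices to handle each $O_i$ and then apply the countable-increasing-union clause of the rank formula together with Proposition~\ref{prop:xi_monotone}(1) (the inclusion $O_i \injects G$ gives $\xi(G) \ge \sup_i \xi(O_i)$, and conversely $\xi(G)$ is computed from the $O_i$). So it is enough to prove the bound $\xi(O_i) \le (\xi(N) - 1) + \xi(P_i)$ for each $i$, i.e. we may assume $Q$ is compactly generated.

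Now suppose $Q$ is compactly generated, so by Lemma~\ref{lem:betaplus2} we have $\xi(Q) = \xi(\Res{}(Q)) + 1$ with $\xi(\Res{}(Q))$ itself a successor $\delta + 1 \le \gamma$. Let $R := q\inv(\Res{}(Q))$, a closed normal subgroup of $G$ fitting in $\triv \to N \to R \to \Res{}(Q) \to \triv$. There is a subtlety: $G$ itself may not be compactly generated (the hypothesis only constrains $Q$), but $G/R \cong Q/\Res{}(Q)$ is residually discrete and compactly generated, hence compact-by-discrete, so $\Res{}(G) \le R$ and in fact one checks $\Res{}(G)$ covers... the cleanest route is: pick a compactly generated open $U \le G$ with $q(U)$ open of finite index structure in $Q$ — better, since $Q$ is compactly generated we may pick $U \le G$ compactly generated open with $q(U) = Q$ after enlarging (using that $q$ is open and $Q$ is compactly generated, lift a compact generating set and adjoin a compact open subgroup of $N$ if needed so that $N \cap U$ is open in $N$). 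Then $\triv \to (N\cap U) \to U \to Q \to \triv$ is exact with $U$ compactly generated, $\xi(N \cap U) \le \xi(N)$ by Proposition~\ref{prop:xi_monotone}(1), and $\xi(G) \le \max\{\xi(U), \xi(G) \text{ via union of conjugates}\}$; since $G = \bigcup_i g_i U g_i\inv$-type increasing unions of compactly generated open subgroups each $\simeq$ to $U$-like extensions, $\xi(G)$ is governed by $\xi(U)$. Thus reduce further to: $G$ compactly generated, $Q$ compactly generated, $\xi(Q) = (\delta+1)+1$.

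In that final case, apply the rank formula with $O_i = G$: $\xi(G) = \xi(\Res{}(G)) + 1$. Since $\Res{}(G) \le R = q\inv(\Res{}(Q))$ is closed normal in $G$ (indeed in $R$), Proposition~\ref{prop:xi_monotone}(1) gives $\xi(\Res{}(G)) \le \xi(R)$. Now $R$ sits in $\triv \to N \to R \to \Res{}(Q) \to \triv$ with $\xi(\Res{}(Q)) = \delta+1 < \xi(Q)$, so by the inductive hypothesis $\xi(R) \le (\xi(N) - 1) + (\delta + 1)$. Combining,
\[
\xi(G) = \xi(\Res{}(G)) + 1 \le \xi(R) + 1 \le (\xi(N) - 1) + (\delta+1) + 1 = (\xi(N)-1) + \xi(Q),
\]
using ordinal addition associativity and $\xi(Q) = (\delta+1)+1$. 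The base case $\xi(Q) = 1$ (i.e. $Q$ trivial) is immediate since then $G \simeq N$. The main obstacle is the bookkeeping in the reduction to the compactly generated case: one must be careful that passing to $O_i = q\inv(P_i)$ or to a compactly generated open $U$ does not lose control, and that the relevant subgroups $N \cap U$ remain genuinely open/closed in $N$ so that Proposition~\ref{prop:xi_monotone}(1) and the rank formula apply verbatim — this is where the second countability and the openness of $q$ are used, and it is the only place the argument risks being more than a routine transfinite induction.
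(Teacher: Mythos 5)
Your core inductive idea is the right one and matches the paper's: induct on $\xi(Q)$, use the fact that the discrete residual of the quotient has strictly smaller rank, apply the induction hypothesis to a preimage, and add $1$. Your base case and your first reduction (to $Q$ compactly generated, via $O_i = q\inv(P_i)$ and the formula $\xi(G) = {\sup_i}^+\xi(O_i)$ for ascending unions of \emph{open} subgroups) are fine, modulo checking that $\sup^+$ of the bounds $(\xi(N)-1)+\xi(P_i)$ does not exceed $(\xi(N)-1)+\xi(Q)$, which works because $\xi(Q)$ is a successor. The final computation in the doubly compactly generated case is also correct.

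The genuine gap is your second reduction, to $G$ compactly generated. You invoke ``$G = \bigcup_i g_iUg_i\inv$-type increasing unions'' and assert that $\xi(G)$ is ``governed by $\xi(U)$'' for a single compactly generated open $U$ with $q(U)=Q$. Conjugates of $U$ do not form an ascending chain of open subgroups (their union need not even be a subgroup), so the ascending-union rank formula does not apply, and a single compactly generated open subgroup does not control $\xi(G)$ when $N$ is not compactly generated. The step is repairable: take an increasing exhaustion $(U_j)_{j\in\Nb}$ of $G$ by compactly generated open subgroups each surjecting onto $Q$ (start from one such $U_0$, obtained by lifting a finite set together with a compact open subgroup, and set $U_j := \grp{U_0, V_j}$ for any exhaustion $(V_j)$), then use $\xi(G) = {\sup_j}^+\xi(U_j)$ and $\xi(N\cap U_j)\le\xi(N)$. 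Note that the paper avoids both of your reductions entirely: for each compactly generated open $O_i \le G$ it sets $R := \Res{}(O_i)$ and $M := \ol{RN}$, observes $M/N \le \Res{}(\pi(O_i))$ has rank at most $\beta$, applies the induction hypothesis to $\triv\to N\to M\to M/N\to\triv$ to get $\xi(R)\le\xi(M)\le(\xi(N)-1)+\beta$, and concludes directly from the rank formula for $G$. That local argument is shorter and dodges the bookkeeping that tripped you up.
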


\begin{proof}
We argue by induction on $\xi(Q)$ for all such $G$. In the base case, $\xi(Q)= 1$ and $Q=\triv$, so $\xi(G) = \xi(N) = (\xi(N) -1) + \xi(Q)$.  Suppose that the lemma holds up to $\beta$ and that $\xi(Q)=\beta+1$. Let $\pi:G\rightarrow Q$ be the usual projection and fix $(O_i)_{i\in \Nb}$ an increasing exhaustion of $G$ by compactly generated open subgroups. Put $W_i:=\pi(O_i)$ and note that the $W_i$ form an increasing exhaustion of $Q$ by compactly generated open subgroups.

In view of the decomposition rank formula, it suffices to consider $\xi(\Res{}(O_i))$ for an arbitrary $i$. Fix $i\in \Nb$, form $R:=\Res{}(O_i)$, and put $M:=\ol{RN}$. It follows that $M/N\leq \Res{}(W_i)$, hence $\xi(M/N)\leq \beta$. The induction hypothesis implies $\xi(M)\leq (\xi(N)-1)+\beta$, and since $R\injects M$, we conclude that $\xi(R)\leq (\xi(N)-1)+\beta$. Appealing to Proposition~\ref{prop:xi_monotone}, $\xi(G)\leq (\xi(N)-1)+\beta+1$, verifying the lemma.  
\end{proof}

\begin{lem}\label{lem:product_elementary} 
Suppose that $H_1,\dots,H_n$ are \tdlcsc groups. If each $H_i$ is elementary, then $\prod_{i=1}^nH_i$ is elementary with \[
\xi\left(\prod_{i=1}^nH_i\right)=\max_{1\leq i\leq n}\xi(H_i).
\]
\end{lem}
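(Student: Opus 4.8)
The plan is to prove the statement by transfinite induction on the ordinal $\alpha:=\max_{1\le i\le n}\xi(H_i)$, the one genuine ingredient being that the discrete residual commutes with \emph{finite} products. First, $\prod_{i=1}^{n}H_i$ is elementary: the short exact sequences
\[
\triv\to H_1\to \prod_{i=1}^{n}H_i\to \prod_{i=2}^{n}H_i\to \triv
\]
together with closure of $\Es$ under extensions reduce this, by induction on $n$, to the case $n=1$ (alternatively, elementarity drops out of the rank computation below). Hence $\xi\big(\prod_i H_i\big)$ is a genuine countable ordinal, and it remains to evaluate it. In the base case $\alpha=1$ every $H_i$ is trivial, so the product is trivial and $\xi\big(\prod_i H_i\big)=1=\alpha$.

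The key lemma is the identity $\Res{}\big(\prod_{i=1}^{n}G_i\big)=\prod_{i=1}^{n}\Res{}(G_i)$ for \tdlc groups $G_1,\dots,G_n$, where $\Res{}(G)$ denotes the intersection of the open normal subgroups of $G$. The inclusion ``$\subseteq$'' holds because, for each $j$, the preimage under the coordinate projection $\prod_i G_i\to G_j$ of an open normal subgroup of $G_j$ is an open normal subgroup of $\prod_i G_i$; intersecting over all such subgroups, then over $j$, gives the inclusion. For ``$\supseteq$'', let $W$ be an open normal subgroup of $\prod_i G_i$. Since $n$ is finite, $W$ contains a box $\prod_i U_i$ of open subgroups $U_i\le G_i$, so for each $j$ the set of elements of $W$ supported in the $j$th coordinate is an open subgroup of $G_j$, normal in $G_j$ because $W$ is normal in the whole product; hence it contains $\Res{}(G_j)$. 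As $W$ is a subgroup it then contains $\prod_i\Res{}(G_i)$, and intersecting over $W$ yields ``$\supseteq$''.

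Now suppose $\alpha>1$ and the statement holds for all finite tuples of elementary \tdlcsc groups of smaller maximum rank. Write each $H_i=\bigcup_{k}O_{i,k}$ as an increasing union of compactly generated open subgroups; then $\big(\prod_i O_{i,k}\big)_{k}$ is an increasing exhaustion of $\prod_i H_i$ by compactly generated open subgroups, and $\prod_i H_i$ is non-trivial as $\alpha>1$. The defining recursion for $\xi$ and the key lemma give
\[
\xi\Big(\prod_i H_i\Big)=\sup_{k}\xi\Big(\Res{}\big(\prod_i O_{i,k}\big)\Big)+1=\sup_{k}\xi\Big(\prod_i\Res{}(O_{i,k})\Big)+1.
\]
Each $\Res{}(O_{i,k})$ is a closed subgroup of the elementary group $O_{i,k}$ (itself open in $H_i$), hence elementary, and $\xi(\Res{}(O_{i,k}))<\alpha$: when $O_{i,k}$ is non-trivial this follows from Lemma~\ref{lem:betaplus2} and Proposition~\ref{prop:xi_monotone}(1), which give $\xi(\Res{}(O_{i,k}))=\xi(O_{i,k})-1<\xi(O_{i,k})\le\xi(H_i)\le\alpha$, and otherwise it is immediate since $\alpha>1$. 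Thus $\max_i\xi(\Res{}(O_{i,k}))<\alpha$, so the induction hypothesis applies to the tuple $(\Res{}(O_{i,k}))_{i=1}^{n}$ and gives $\xi\big(\prod_i\Res{}(O_{i,k})\big)=\max_i\xi(\Res{}(O_{i,k}))$. Substituting this, and using that over the \emph{finite} index set $\{1,\dots,n\}$ the operations $\sup_k$ and $\max_i$ commute while $\max$ of finitely many ordinals commutes with adding $1$, we obtain
\[
\xi\Big(\prod_i H_i\Big)=\sup_k\max_i\xi(\Res{}(O_{i,k}))+1=\max_i\Big(\sup_k\xi(\Res{}(O_{i,k}))+1\Big)=\max_i\xi(H_i),
\]
the last equality being the defining recursion for $\xi$ applied to each $H_i$ with the exhaustion $(O_{i,k})_k$.

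The only steps needing real care are the interchange $\sup_k\max_i=\max_i\sup_k$ and the key lemma $\Res{}(\prod_i G_i)=\prod_i\Res{}(G_i)$; both genuinely use that $n$ is finite, and indeed the conclusion itself fails for infinite index sets, where an infinite local direct product of elementary groups may have decomposition rank strictly larger than the supremum of the ranks of its factors. Everything else is routine bookkeeping with the rank formula and ordinal arithmetic, the latter relying on the fact recorded earlier in the text that the rank of a non-trivial elementary group is a successor ordinal, so that ``$-1$'' is meaningful.
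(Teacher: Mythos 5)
Your proof is correct and takes essentially the same route as the paper's: both exhaust the product by the compactly generated open subgroups $\prod_i O_{i,k}$, use the identity $\Res{}\bigl(\prod_i O_{i,k}\bigr)=\prod_i\Res{}(O_{i,k})$, apply an inductive hypothesis to the product of residuals, and finish with the rank recursion --- the paper merely inducts on $\xi\bigl(\prod_i H_i\bigr)$ rather than on $\max_i\xi(H_i)$ and replaces your interchange of $\sup_k$ and $\max_i$ by passing to a subsequence. A small bonus on your side is that you actually prove the residual identity for finite products, which the paper asserts without proof; the only cosmetic point is that your final appeal to the rank recursion for each $H_i$ tacitly assumes $H_i$ is non-trivial, which is harmless since trivial factors may be discarded at the outset.
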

\begin{proof}
Set $G:=\prod_{i=1}^nH_i$. In view of Proposition~\ref{prop:xi_monotone}, $\max_{1\leq i \leq n}\xi(H_i)\leq \xi(G)$. We argue by induction on $\xi(G)$ for the converse inequality. If $\xi(G)=1$, then $G$ is the trivial group, and the inductive claim is immediate. Suppose that $\xi(G)=\beta+1$. For each $1\leq i\leq n$, let $(O_{j,i})_{j\in \Nb}$ be an increasing exhaustion of $H_i$ by compactly generated open subgroups. The groups $L_j:=\prod_{i=1}^nO_{j,i}$ then form an increasing exhaustion of $G$ by compactly generated open subgroups. Hence, $\xi(G)=\sup_{j\in \Nb}\xi(\Res{}(L_i))+1$.

On the other hand, $\Res{}(L_j)=\prod_{i=1}^n\Res{}(O_{j,i})$, so the inductive hypothesis ensures that 
\[
\xi(\Res{}(L_j))=\max_{1\leq i\leq n}\xi(\Res{}(O_{j,i})). 
\]
By passing to a subsequence if necessary, we may assume there is $1\leq k\leq n$ such that $\xi(\Res{}(L_j))=\xi(\Res{}(O_{j,k}))$ for all $j\in \Nb$. Therefore, 
\[
\xi(G)=\sup_{j\in \Nb}\xi(\Res{}(O_{j,k}))+1=\xi(H_k).
\]
 Since it must be the case that $\xi(H_k)=\max_{1\leq i\leq n}\xi(H_i)$, the induction is complete.
\end{proof}

\begin{lem}\label{lem:compact_ext}
	Suppose that $G$ is a \tdlcsc group and $N$ is a non-trivial closed cocompact normal subgroup of $G$. If $N\in \Es$, then $G\in \Es$ with $\xi(G)=\xi(N)$.
\end{lem}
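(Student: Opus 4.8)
The plan is to establish $G\in\Es$ first, and then pin down $\xi(G)$ by the two inequalities $\xi(N)\le\xi(G)$ and $\xi(G)\le\xi(N)$; the case $N=G$ being trivial, assume $N\neq G$.

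\medskip

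\noindent\textbf{Step 1: $G\in\Es$ and $\xi(N)\le\xi(G)$.} First I would observe that $G/N$ is profinite. It is compact by hypothesis; choosing a compact open subgroup $U$ of $G$ (van Dantzig), the subgroup $UN/N\cong U/(U\cap N)$ is a continuous image of the profinite group $U$, hence profinite and in particular totally disconnected, and it is open in $G/N$, so $(G/N)^{\circ}=\triv$. Thus $G/N$ is a second countable profinite group, hence $G/N\in\Es$, and since $G$ is \tdlcsc and $N\in\Es$, closure of $\Es$ under group extensions gives $G\in\Es$; in particular $\xi(G)$ is defined. Applying Proposition~\ref{prop:xi_monotone}(1) to the inclusion $N\hookrightarrow G$ yields $\xi(N)\le\xi(G)$.

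\medskip

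\noindent\textbf{Step 2: reduction of $\xi(G)\le\xi(N)$ to a residual computation.} Fix an increasing exhaustion $G=\bigcup_{i\in\Nb}O_i$ by compactly generated open subgroups, so that $\xi(G)=\sup_i\xi(\Res{}(O_i))+1$. As $\xi(N)$ is a successor ordinal, it suffices to prove $\xi(\Res{}(O_i))\le\xi(N)-1$ for every $i$. Fix $i$ and put $L:=O_i\cap N$. Then $L\normal O_i$; moreover $L$ is cocompact in $O_i$, because $O_iN/N$ is an open, hence compact, subgroup of $G/N$; $L$ is open in $N$; and $L$, being a cocompact closed subgroup of the compactly generated group $O_i$, is itself compactly generated. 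Hence $L$ is a compactly generated member of $\Es$ with $\xi(L)\le\xi(N)$ (Proposition~\ref{prop:xi_monotone}(1)), and by Lemma~\ref{lem:betaplus2} the group $R:=\Res{}(L)$ satisfies $\xi(R)=\xi(L)-1\le\xi(N)-1$. Finally, $R$ is characteristic in $L$ and hence normal in $O_i$, and $L/R$ is a residually discrete cocompact normal subgroup of $O_i/R$.

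\medskip

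\noindent\textbf{Step 3: the crux.} It remains to show $\Res{}(O_i)\le R$: granting this, $\Res{}(O_i)$ is a closed subgroup of the elementary group $R\le N$, so $\xi(\Res{}(O_i))\le\xi(R)\le\xi(N)-1$ by Proposition~\ref{prop:xi_monotone}(1), and the rank formula gives $\xi(G)\le(\xi(N)-1)+1=\xi(N)$; combined with Step 1 this yields $\xi(G)=\xi(N)$. Now $\Res{}(O_i)$ is the smallest closed normal subgroup of $O_i$ with residually discrete quotient, so $\Res{}(O_i)\le R$ will follow once we know $O_i/R$ is residually discrete. This is an instance of the key assertion that \emph{a compactly generated \tdlc group with a cocompact residually discrete normal subgroup is residually discrete}, which I expect to be the main difficulty. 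I would attack it by producing a compact open normal subgroup of the ambient group $H$ (equivalently, by directly establishing residual discreteness): since the cocompact normal subgroup $M$ of $H$ is then compactly generated and residually discrete, it admits compact open normal subgroups, and using a van Dantzig argument on $H$ together with the conjugation action of a compact open subgroup of $H$ on these, one builds a compact open normal subgroup of $H$; controlling this conjugation action (and, before that, reducing to the case where $M$ is discrete and finitely generated, so that $M$ lies in the quasi-center of $H$ and $\CC_H(M)$ is an open normal subgroup) is the delicate part. It is quite possible that this fact — or the equivalent statement that $\Res{}(O_i)=\Res{}(O_i\cap N)$ — is more cleanly obtained by citing the literature on the discrete residual (e.g.\ Caprace--Monod, or \cite{RW_EC_15}). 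Modulo this one ingredient, the argument above completes the proof.
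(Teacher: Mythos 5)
Your proposal is correct and follows essentially the same route as the paper: exhaust $G$ by compactly generated open subgroups $O_i$, note that $N_i := O_i\cap N$ is a compactly generated open exhaustion of $N$, and reduce everything to the key fact that $O_i/\Res{}(N_i)$ is residually discrete — which the paper obtains by citing \cite[Lemma 4.16]{W_1_14}, exactly the ingredient you flag as the crux and propose to quote from the literature. The only (immaterial) difference is that the paper establishes the equality $\Res{}(O_i)=\Res{}(N_i)$ and computes both ranks directly, whereas you use only the inclusion $\Res{}(O_i)\le\Res{}(N_i)$ and get the reverse inequality from monotonicity.
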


\begin{proof}
	Plainly $G\in \Es$. To compute the decomposition rank, fix $(O_i)_{i\in \Nb}$ a countable $\subseteq$-exhaustion of $G$ by compactly generated open subgroups of $G$ and put $N_i:=N\cap O_i$. The group $N_i$ is open in $N$, and since $N_i$ is cocompact in $O_i$, it is also compactly generated. We therefore infer that
	\[
	\xi(N)=\sup_{i\in \Nb}\xi(\Res{}(N_i))+1.
	\]
	
	On the other hand, \cite[Lemma 4.16]{W_1_14} shows that $O_i/\Res{}(N_i)$ is residually discrete, and it follows that $\Res{}(O_i)=\Res{}(N_i)$. We conclude that
	\[
	\xi(G)=\sup_{i\in \Nb}\xi(\Res{}(O_i))+1=\sup_{i\in \Nb}\xi(\Res{}(N_i))+1=\xi(N),
	\]
	verifying the lemma.
\end{proof}

At this point it will be convenient to introduce notation for a kind of limit that often occurs with elementary decomposition rank.

\begin{defn}Given a set $I$ of ordinals, define $\sup^+(I) = \sup(I)$ if $\sup(I)$ is either $\omega_1$ or a successor ordinal and define $\sup^+(I) = \sup(I)+1$ if $\sup(I)$ is a countable limit ordinal. If $I$ is a singleton $\{\alpha\}$, we write $\sup^+(\alpha)$. The ordinal $\sup^+(I)$ is the least ordinal that is an upper bound for $I$ and that is a potential decomposition rank for a \tdlcsc group. \end{defn}

It is easily verified, using the definition of the decomposition rank, that if $G$ is an ascending union of open subgroups $O_i$, then $\xi(G) = {\sup_{i \in \Nb}}^+\xi(O_i)$.  In fact, the same formula applies more generally if $G$ contains a dense ascending union of closed subgroups, as long as these subgroups each have open normalizer.
 
\begin{prop}\label{prop:d-rank:ascending}
	Let $G$ be a \tdlcsc group and let $(C_i)_{i \in I}$ be a directed family of closed subgroups of $G$ each with open normalizer such that $G = \ol{\bigcup_{i \in I}C_i}$. If each $C_i$ is elementary, then $G$ is elementary with
	\[
	\xi(G) = {\sup_{i \in I}}^+\xi(C_i).
	\]
\end{prop}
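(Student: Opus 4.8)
The plan is to reduce to a countable ascending chain, dispose of the non-elementary case, read off the lower bound from monotonicity of $\xi$, and prove the upper bound by exhibiting $G$ as an ascending union of open elementary subgroups of bounded rank.

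First I would reduce to the case $I = \Nb$ with $C_1 \le C_2 \le \cdots$ an ascending chain. Since $G$ is second countable, hence Lindel\"of, some countable subfamily of $(C_i)_{i\in I}$ still has dense union; enlarging it by countably many terms realising the supremum of the ranks and then using that $(C_i)_{i\in I}$ is directed, one refines to an ascending chain $C_1 \le C_2 \le \cdots$ whose union is dense in $G$ and with ${\sup_n}^+\xi(C_n) = {\sup_i}^+\xi(C_i) =: \lambda$; discarding an initial segment we may also assume $C_1 \neq \triv$ (the case where every $C_i$ is trivial being immediate, as $G$ is then trivial). If some $C_n$ is not elementary, or more generally if $\lambda = \omega_1$, then $G$ cannot be elementary, for an elementary $G$ would satisfy $\xi(C_n) \le \xi(G) < \omega_1$ by Proposition~\ref{prop:xi_monotone}(1) applied to the inclusion $C_n \hookrightarrow G$; so in this case $\xi(G) = \omega_1 = \lambda$ and we are done. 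Assume henceforth that each $C_n$ is elementary and $\lambda < \omega_1$. The same application of Proposition~\ref{prop:xi_monotone}(1) gives $\xi(C_n) \le \xi(G)$ for all $n$ as soon as $G$ is known to be elementary; since $\xi(G)$ is then a genuine value of the decomposition rank, this forces $\xi(G) \ge \lambda$. Hence everything comes down to showing that $G$ is elementary with $\xi(G) \le \lambda$.

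The model case is that in which every $C_n$ is normal in $G$. Fix a compact open subgroup $U$ of $G$ (van Dantzig's theorem) and put $O_n := C_n U$. Then $O_n$ is open (it contains $U$), $C_n$ is a closed normal subgroup of $O_n$, and $O_n/C_n \cong U/(U\cap C_n)$ is compact, so $C_n$ is cocompact in $O_n$; as $C_n$ is non-trivial elementary, Lemma~\ref{lem:compact_ext} gives $O_n \in \Es$ with $\xi(O_n) = \xi(C_n)$. The $O_n$ ascend with $n$, and $\bigcup_n O_n$ is a subgroup that is open (it contains $U$), hence closed, and dense (it contains $\bigcup_n C_n$), hence equal to $G$. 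Thus $G$ is a countable ascending union of the open subgroups $O_n \in \Es$; by closure of $\Es$ under countable increasing unions $G \in \Es$, and by the rank formula for ascending unions of open subgroups $\xi(G) = {\sup_n}^+\xi(O_n) = {\sup_n}^+\xi(C_n) = \lambda$.

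For the general case one must do the same thing without the hypothesis that the $C_n$ are normal: one wants an ascending chain of open subgroups $O_n$ of $G$ with $C_n$ a cocompact closed normal subgroup of $O_n$, after which Lemma~\ref{lem:compact_ext} and the argument of the previous paragraph finish the proof verbatim. The natural construction takes $O_n = C_n W_n$ for a compact open subgroup $W_n$ of $\N_G(C_n)$ (open by hypothesis), so that $C_n \normal O_n$ with $O_n/C_n$ compact. The difficulty is that the normalizers $\N_G(C_n)$ vary and $\bigcap_n \N_G(C_n)$ need not be open, so one cannot take a single $W_n = W$; arranging the $W_n$ so that $O_n \subseteq O_{n+1}$ while $W_{n+1}$ still normalizes $C_{n+1}$ requires an interleaved recursive choice of indices and compact open subgroups. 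This is where the density of $\bigcup_n C_n$ enters: in any compact open subgroup $V$ of $G$ the intersections $C_n \cap V$ form a dense ascending chain of closed subgroups of $V$, so by compactness of $V$ one has $V = (C_m \cap V)V'$ for a suitable large $m$ once a smaller open subgroup $V' \le V$ has been fixed, and this is the mechanism that pushes the chain forward. I expect the book-keeping of this interleaved construction — keeping $W_{n+1}$ simultaneously small enough to normalise $C_{n+1}$ yet large enough that $C_{n+1}W_{n+1}$ absorbs $O_n$ — to be the main obstacle; a possible alternative is to first localise to the open subgroups $\N_G(C_n)$, in which $C_n$ is normal, and iterate, but the non-normal behaviour of the remaining $C_m$ there must still be controlled.
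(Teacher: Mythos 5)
Your reduction to a countable ascending chain, your treatment of the non-elementary case, your lower bound via Proposition~\ref{prop:xi_monotone}, and your model case where every $C_n$ is normal are all correct. But the proof stops exactly where the proposition has content: the hypothesis is only that each $C_n$ has \emph{open} normalizer, and your plan for that case --- build an ascending chain of open subgroups $O_n = C_nW_n$ with $W_n \le \N_G(C_n)$ compact open --- is left as an acknowledged ``book-keeping obstacle'' rather than carried out. The obstacle is real and circular: to get $O_n \subseteq O_{n+1}$ you need $W_n \subseteq C_{n+1}W_{n+1}$, i.e.\ $C_{n+1}$ must meet every coset of $W_{n+1}$ in $W_n$; but $W_{n+1}$ is constrained to lie in $\N_G(C_{n+1})$, and to improve the coset coverage you must replace $C_{n+1}$ by a later $C_m$, which forces a new, possibly smaller $W_m \le \N_G(C_m)$ and resets the problem. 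Since $\bigcap_m \N_G(C_m)$ need not be open, there is no single compact open group that works for all $m$, and the density of $\bigcup_m C_m$ degrades at the same rate as the target subgroup $W_m$ shrinks. Nothing you have written closes this recursion, so as it stands the argument proves only the normal case.

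The paper avoids building any coherent chain. It fixes $U \in \U(G)$ and an exhaustion $(O_j)_{j\in\Nb}$ of $G$ by compactly generated open subgroups containing $U$, and handles each $O_j$ \emph{separately}: since $\bigcup_i (O_j \cap C_i)$ is dense in the open subgroup $O_j$, Proposition~\ref{prop:factor} produces a \emph{finite} symmetric set $A$ inside this dense set with $O_j = \grp{A}U$, and directedness plus finiteness of $A$ put $A$ inside a single $C_i$. Choosing $V \normal U$ open with $V \le \N_G(C_i)$, the group $\cgrp{A^V} \le C_i$ is a closed cocompact normal subgroup of the finite-index subgroup $\grp{A^V}V$ of $O_j$, so two applications of Lemma~\ref{lem:compact_ext} give $\xi(O_j) = \xi(\cgrp{A^V}) \le \xi(C_i)$, and the rank formula for $(O_j)$ finishes the upper bound (splitting on whether $\sup_i\xi(C_i)$ is attained, as you do). The point you are missing is that compact generation of $O_j$ converts the density hypothesis into a finite datum that lands in one $C_i$, so only one normalizer ever has to be controlled at a time; if you want to salvage your approach, this is the mechanism to import.
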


\begin{proof}
By Proposition~\ref{prop:xi_monotone}, we have $\xi(C_i) \le \xi(G)$ for each $i$. Thus $\sup_{i \in I}^+\xi(C_i)\leq \xi(G)$. 
	
For the reverse inequality, fix $U\in \U(G)$ and let $(O_j)_{j\in \Nb}$ be an increasing exhaustion of $G$ by compactly generated open subgroups each containing $U$.  For each $j$, the union $\bigcup_{i\in I}O_j\cap C_i$ is dense in $O_j$, so in view of Proposition~\ref{prop:factor}, we may find $i$ and $A\subseteq O_j\cap C_i$ such that $UAU=AU$ and $UAU$ contains a generating set for $O_j$. Therefore, $O_j=\grp{A}U=(O_j\cap C_i)U$.  
	
The subgroup $C_i$ has an open normalizer, so we may take an open $V\normal U$ such that $V\leq \N_G(C_i)$. The group $\grp{A^V}V$ is then a finite index subgroup of $O_j$. Letting $L$ be the normal core of $\grp{A^V}V$ in $O_j$, we see that $L$ is a finite index normal subgroup of $O_j$ and of $\grp{A^V}V$. Lemma~\ref{lem:compact_ext} thus ensures that 
	\[
	\xi(\grp{A^V}V)=\xi(L)=\xi(O_j).
	\]
	Furthermore, $\cgrp{A^V}$ is a closed cocompact normal subgroup of $\grp{A^V}V$, so a second application of Lemma~\ref{lem:compact_ext} implies that $\xi(\cgrp{A^V})=\xi(O_j)$. Proposition~\ref{prop:xi_monotone}  now ensures that $\xi(O_j)\leq \xi(C_i)$.
	
	Letting $\alpha := \sup_{i \in I}\xi(C_i)$, there are two possibilities: either $\alpha = \alpha'+1$ is achieved by some $\xi(C_i)$, or $\alpha$ is a limit ordinal.  If $\alpha$ is a limit ordinal, then
	\[
	\xi(G) = \sup_{j \in \Nb}\xi(\Res{}(O_j)) + 1 \le \sup_{i \in I}\xi(C_i) + 1 = {\sup_{i \in I}}^+\xi(C_i).
	\]
	On the other hand, suppose that $\alpha$ is achieved by some $\xi(C_i)$; without loss of generality, $\xi(C_i) = \alpha$ for all $i$. For every compactly generated open subgroup $O_j$, we have that $\xi(O_j) \le \alpha$, so $\xi(\Res{}(O_j)) \le \alpha'$. Hence, $\xi(G) \le \alpha'+1= \alpha$.  In either case, we have obtained the required upper bound on $\xi(G)$.  In particular, it follows that $G$ is elementary.
\end{proof}

There is also a dual result for residually elementary groups. 
\begin{lem}\label{lem:ordinal_FF}
	Let $G$ be a \tdlcsc group and $\alpha$ an ordinal. Then $\{N\normal G\mid N\text{ is closed and }\xi(G/N)\leq \alpha\}$ is a filtering family.
\end{lem}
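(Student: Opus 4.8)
The plan is to verify the filtering property directly by exhibiting, for any two members $M$ and $N$ of the family, a common lower bound lying in the family; the obvious candidate is the intersection $L := M \cap N$, which is automatically a closed normal subgroup of $G$ with $L \leq M$ and $L \leq N$. Thus the whole task reduces to showing $\xi(G/L) \leq \alpha$ whenever $\xi(G/M) \leq \alpha$ and $\xi(G/N) \leq \alpha$. (If the family is empty there is nothing to prove, and if $\alpha \geq \omega_1$ then, by the extended definition of $\xi$, the family is simply the set of all closed normal subgroups of $G$, for which $L = M \cap N$ works trivially; so one may assume $1 \leq \alpha < \omega_1$, and in particular $G/M$ and $G/N$ are elementary.)

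To bound $\xi(G/L)$, I would embed $G/L$ into the direct product $G/M \times G/N$. First, Lemma~\ref{lem:product_elementary} shows that $G/M \times G/N$ is elementary with
\[
\xi(G/M \times G/N) = \max\{\xi(G/M),\, \xi(G/N)\} \leq \alpha .
\]
Next, the diagonal map $G \to G/M \times G/N$, $g \mapsto (gM, gN)$, is a continuous homomorphism with kernel exactly $M \cap N = L$, so by the universal property of quotients it factors as a continuous injective homomorphism $G/L \to G/M \times G/N$; here $G/L$ is \tdlcsc, being a Hausdorff quotient of the \tdlcsc group $G$. Then Proposition~\ref{prop:xi_monotone}(1) applies and gives $\xi(G/L) \leq \xi(G/M \times G/N) \leq \alpha$, as required, so $L$ lies in the family.

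There is no substantial obstacle here; the only point needing a little care is the degenerate case. Proposition~\ref{prop:xi_monotone}(1) is stated for a non-trivial elementary target, so when $M = N = G$ (equivalently $L = G$, equivalently $G/M \times G/N$ trivial) one handles $G/L$ separately: it is trivial, so $\xi(G/L) = 1 \leq \alpha$, using that non-emptiness of the family forces $\alpha \geq 1$. In every other case at least one of $M, N$ is proper, so $G/M \times G/N$ is non-trivial and the argument above goes through verbatim.
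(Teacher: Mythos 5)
Your proof is correct and follows exactly the paper's argument: embed $G/(M\cap N)$ into $G/M\times G/N$ via the diagonal map and combine Lemma~\ref{lem:product_elementary} with Proposition~\ref{prop:xi_monotone}. The extra care you take with the degenerate cases (non-elementary $\alpha$, trivial quotient) is fine but not needed in substance.
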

\begin{proof}
	It suffices to show $\Sigma:=\{N\normal G\mid N\text{ is closed and }\xi(G/N)\leq \alpha\}$ is closed under pairwise intersections. Taking $M, N \in \Sigma$, the diagonal map $G\rightarrow G/N\times G/M$ induces an injective, continuous map $G/(N\cap M)\rightarrow G/N\times G/M$. Lemma~\ref{lem:product_elementary} and Proposition~\ref{prop:xi_monotone} now ensure that
	\[
	\xi(G/(N\cap M))\leq \xi(G/N\times G/M)\leq \alpha.
	\]
	Hence, $N\cap M\in \Sigma$.
\end{proof}
\begin{prop}\label{prop:residual_rank}
	Let $G$ be a \tdlcsc group and let $\mc{K}$ be a family of closed normal subgroups of $G$ with trivial intersection.  
	Then
	\[
	\xi(G) = {\sup_{K \in \mc{K}}}^+\xi(G/K).
	\]
	If $G$ is compactly generated and elementary, then there is $K \in \mc{K}$ such that $\xi(G) = \xi(G/K)$.
\end{prop}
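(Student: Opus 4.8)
The plan is to prove the two inequalities $\sup_{K\in\mc{K}}^+\xi(G/K) \le \xi(G)$ and $\xi(G) \le \sup_{K\in\mc{K}}^+\xi(G/K)$ separately, and then deduce the last sentence. The easy direction is immediate from Proposition~\ref{prop:xi_monotone}(2): each $G/K$ is a Hausdorff quotient of $G$, so $\xi(G/K) \le \xi(G)$, and since the right-hand side is by definition a potential decomposition rank, $\sup_{K\in\mc{K}}^+\xi(G/K) \le \xi(G)$. Note this also shows $G$ is residually elementary, hence elementary by Theorem~\ref{thm:elementary_closure}(2), so all the ranks in sight are actual ordinals unless every $G/K$ is non-elementary (in which case $\sup^+ = \omega_1$ and there is nothing to prove); so we may assume $G\in\Es$ throughout.

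For the reverse inequality, the key tool is Lemma~\ref{lem:ordinal_FF}: for a fixed ordinal $\alpha$, the set $\Sigma_\alpha := \{N\normal G \mid N \text{ closed},\ \xi(G/N)\le\alpha\}$ is a filtering family. First I would handle the compactly generated elementary case, since it drives the general case. So suppose $G$ is compactly generated and elementary, and let $\alpha := \sup_{K\in\mc{K}}\xi(G/K)$; I claim there is $K\in\mc{K}$ with $\xi(G/K) = \xi(G)$, which in particular gives $\xi(G) \le \sup_{K\in\mc{K}}^+\xi(G/K)$ (indeed equality). By Lemma~\ref{lem:betaplus2}, $\xi(G) = \beta+2$ for some countable $\beta$, so $G$ has a non-trivial discrete quotient; the point is to push this down to one of the $G/K$. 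Consider $\Res{}(G)$: since $\bigcap_{K\in\mc{K}}K = \triv$ and each $K\cap\Res{}(G)$ is closed normal in the compactly generated (??) — here I would instead work directly with the quotients. The cleanest route: fix a compact open $U \le G$ with a Cayley--Abels graph structure on $G/U$ (Proposition~\ref{prop:factor}), and observe that $\bigcap_{K\in\mc{K}} KU = U$ by compactness of $U$ together with $\bigcap K = \triv$; hence for some finite $\mc{K}_0\subseteq\mc{K}$, already $\bigcap_{K\in\mc{K}_0} KU$ is contained in a small enough open subgroup, and intersecting finitely many members of the filtering family $\Sigma$ stays in $\Sigma$ — so it suffices to bound $\xi(G/K)$ for a single $K$ from below by $\xi(G)$, using that $G\to G/K$ is ``almost injective on a compact open''. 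More precisely: choosing $K$ so that $K\cap U = \triv$ (possible since $\bigcap_{K} (K\cap U) = \triv$ and $U$ is profinite, so some $K\cap U$ is trivial — actually one needs finitely many, giving $K_1\cap\dots\cap K_m\cap U=\triv$, and then pass to $K = K_1\cap\dots\cap K_m\in\Sigma_\alpha$), the quotient map restricts to an isomorphism on $U$, so $U$ embeds as a compact open subgroup of $G/K$; then $\Res{}(G)$ and $\Res{}(G/K)$ are comparable via \cite[Lemma 4.16]{W_1_14}-type arguments, yielding $\xi(G)\le\xi(G/K)\le\alpha$, forcing $\xi(G/K)=\xi(G)$.

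The general (not necessarily compactly generated) case then follows by exhaustion: write $G = \bigcup_{j\in\Nb} O_j$ with $O_j$ compactly generated open, so $\xi(G) = \sup_j^+\xi(O_j)$. For each $j$, the family $\{K\cap O_j \mid K\in\mc{K}\}$ is a family of closed normal subgroups of $O_j$ with trivial intersection, and $O_j/(K\cap O_j)$ embeds continuously in $G/K$, so $\xi(O_j/(K\cap O_j)) \le \xi(G/K) \le \alpha$ by Proposition~\ref{prop:xi_monotone}(1); applying the compactly generated case to $O_j$ gives $\xi(O_j) \le \alpha$ for every $j$, whence $\xi(G) = \sup_j^+\xi(O_j) \le \sup^+(\{\beta : \beta\le\alpha\}) = \sup_{K\in\mc{K}}^+\xi(G/K)$. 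Combined with the easy direction this proves the displayed equality. The main obstacle I anticipate is the bookkeeping in the compactly generated case: making rigorous the reduction from ``$\bigcap_{K\in\mc{K}} K = \triv$'' to ``some finite intersection meets a fixed compact open trivially'' (this uses profiniteness of $U$ and a compactness/Baire argument, plus the filtering property of $\Sigma_\alpha$ to keep the finite intersection inside $\mc{K}$'s relevant sublattice), and then transferring the rank across the quotient map that is an isomorphism on a compact open subgroup — this last step should be routine from the decomposition rank formula and the behaviour of $\Res{}$ under such quotients, but it is where the argument must be written carefully.
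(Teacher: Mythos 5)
Your easy direction and your outer reduction to compactly generated open subgroups are sound, but the core of the argument --- the compactly generated case --- has a genuine gap. You want to find $K$ (a finite intersection of members of $\mc{K}$, hence still in the filtering family $\Sigma_\alpha$) with $K\cap U=\triv$ for a fixed compact open $U$, so that $U$ embeds into $G/K$. The step ``$\bigcap_{K}(K\cap U)=\triv$ and $U$ is profinite, so some finite intersection $K_1\cap\dots\cap K_m\cap U$ is trivial'' is false: a profinite group typically admits a family of closed subgroups with trivial total intersection none of whose finite subintersections is trivial. Take $G=U=\Zb_p$ and $\mc{K}=\{p^n\Zb_p\mid n\ge 1\}$: every finite intersection is some $p^N\Zb_p\neq\triv$, and no member meets $U$ trivially. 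So the $K$ your argument needs generally does not exist, and the subsequent transfer of rank across a quotient that is injective on a compact open subgroup never gets off the ground. (That transfer is itself only sketched; note that $K\cap U=\triv$ would force $K$ to be discrete, and even then Lemma~\ref{lem:d-rank_extensions} only gives $\xi(G)\le 1+\xi(G/K)$, which does not yield $\xi(G)\le\xi(G/K)$ when the ranks are finite.)

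The paper instead proves the upper bound by induction on $\alpha$, where $\alpha+1={\sup_{K\in\mc{K}}}^+\xi(G/K)$, after first enlarging $\mc{K}$ to a filtering family via Lemma~\ref{lem:ordinal_FF}. In the successor case it passes to $\Res{}(O)$ for a compactly generated open $O$ and applies the inductive hypothesis to the family $\{\Res{}(O)\cap K\}_{K\in\mc{K}}$. In the limit case --- which is also what yields the final claim for compactly generated $G$ --- the correct substitute for your ``$K\cap U=\triv$'' is \cite[Theorem~3.3]{RW_EC_15}: for a filtering family with trivial intersection in a compactly generated group $O$, some member $K$ satisfies that $K\cap O$ contains a compact subgroup $L$ that is normal in $O$ and open in $K\cap O$. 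One cannot make $K\cap O$ trivial, only ``compact-by-discrete''; the resulting error term is then absorbed by Lemma~\ref{lem:d-rank_extensions} precisely because $\xi(O)$ is transfinite in that case, while the finite-rank case is handled by the separate induction. If you want to repair your argument, replace the finite-intersection step with this theorem and add the inductive bookkeeping.
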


\begin{proof}
	Every $K\in \mc{K}$ has rank less than or equal $\sup_{K \in \mc{K}}^+\xi(G/K)$. Applying Lemma~\ref{lem:ordinal_FF}, we may expand $\mc{K}$ to a filtering family $\tilde{\mc{K}}$ such that $\sup_{K\in \tilde{\mc{K}}}^+\xi(G/K)=\sup_{K \in \mc{K}}^+\xi(G/K)$. Replacing $\mc{K}$ with $\tilde{\mc{K}}$, we may assume, without loss of generality, that $\mc{K}$ is a filtering family.
	
	If $G/K$ is non-elementary for some $K \in \mc{K}$, then $G$ is also non-elementary, and there is nothing to prove. We thus assume that $G/K$ is elementary for all $K \in \mc{K}$, so by Theorem~\ref{thm:elementary_closure}(2), $G$ is elementary. Proposition~\ref{prop:xi_monotone} then ensures that $\xi(G/K) \le \xi(G)$ for all $K\in \mc{K}$, and it follows that $\sup_{K \in \mc{K}}^+\xi(G/K)\leq \xi(G)$. 
	
%	For the converse inequality, we first consider the case when $\mc{K}$ is finite.  In this case, the diagonal map $G\rightarrow \prod_{K\in \mc{K}}G/K$ is a continuous injective homomorphism. Applying Lemma~\ref{lem:product_elementary} and Proposition~\ref{prop:xi_monotone}, we conclude that $\xi(G) \le \max_{K\in \mc{K}}\xi(G/K)=\sup^+_{K\in\mc{K}}G/K$. as required.
%	
	Say that $\alpha+1=\sup^+_{K\in\mc{K}}G/K$. We argue that $\xi(G)\leq \alpha+1$ by induction on $\alpha$. The base case, $\alpha=0$, is immediate, since in this case $G=\{1\}$.	Suppose that $\alpha$ is a successor ordinal; say $\alpha = \beta+1$.  For any compactly generated open subgroup $O\leq G$ and $K\in \mc{K}$, $\xi(O/(O \cap K)) \le \beta+2$, so $\xi(\Res{}(O/(O \cap K))) \le \beta+1$.  The quotient of $O/(O \cap K)$ by $\ol{\Res{}(O)(O\cap K)}/O\cap K$ is a quotient of $O/\Res{}(O)$, so it is residually discrete. Hence, $\Res{}(O)(O \cap K)/(O \cap K) \le \Res{}(O/(O \cap K))$. There is thus an injective, continuous map from $\Res{}(O)/(\Res{}(O) \cap K)$ into $\Res{}(O/(O \cap K))$.  Appealing to Proposition~\ref{prop:xi_monotone}, we conclude that 
	\[
	\xi(\Res{}(O)/(\Res{}(O) \cap K)) \le \beta+1.
	\]
	 Since $\beta < \alpha$ and $\beta$ does not depend on $K$, the inductive hypothesis ensures that $\xi(\Res{}(O)) \le \beta+1 = \alpha$. It now follows that $\xi(G)\leq \beta+2=\alpha+1$, since $O$ is an arbitrary compactly generated open subgroup.
	
	Let us now consider the case when $\alpha$ is a limit ordinal, and since $\alpha>0$, the ordinal $\alpha$ is transfinite. Take $O\leq G$ a compactly generated open subgroup. If $\xi(O)$ is finite, then the inductive hypothesis implies that $\xi(O)=\sup_{K\in\mc{K}}^+(\xi(O/O\cap K))$. Since $\{\xi(O/O\cap K)\}_{K\in \mc{K}}$ is a bounded collection of finite ordinals, there is some $K\in \mc{K}$ such that $\xi(O)=\xi(O/O\cap K)$. Let us now suppose that $\xi(O)$ is transfinite. Since $\mc{K}$ is a filtering family, we may apply \cite[Theorem~3.3]{RW_EC_15} to find $K \in \mc{K}$ such that there is a compact $L \le K \cap O$ that is normal in $O$ and is open in $K \cap O$. Applying Proposition~\ref{prop:xi_monotone} and Lemma~\ref{lem:d-rank_extensions}, we deduce that
	\[
	\xi(O/K\cap O) \le \xi(O) \le \xi(L) + \xi(K \cap O/L) + \xi(O/K\cap O) \le 4 + \xi(O/K\cap O).
	\]
	Since $\xi(O)$ is transfinite, the only way to achieve such inequalities is if $\xi(O/K\cap O) = \xi(O)$.  We conclude that $\xi(O) \le \alpha+1$ for any compactly generated open subgroup $O$ and hence that $\xi(\Res{}(O)) \le \alpha$. It now follows that $\xi(G)\leq \alpha+1$, completing the induction.
	
	The second claim follows from the previous paragraph.	
\end{proof}

\subsection{Regionally SIN groups}

Elementary groups of decomposition rank $2$ have a natural characterization. A topological group is a \textbf{small invariant neighborhood}\index{SIN group} group, abbreviated SIN, if there is a basis at $1$ of conjugation invariant neighborhoods. A \tdlc group is a SIN group if and only if there is a basis at $1$ of compact open \textit{normal} subgroups.

We say that a \tdlc group is \textbf{regionally SIN}\index{regionally SIN group} if every open compactly generated subgroup is a SIN group.
\begin{lem}\label{lem:regionally_SIN}
For a non-trivial \tdlcsc group $G$, the following are equivalent:
\begin{enumerate}[(1)]
\item $G$ is regionally SIN;
\item $G$ is elementary with $\xi(G)=2$.
\end{enumerate}
\end{lem}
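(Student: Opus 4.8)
The plan is to prove the two implications separately, reducing both to the compactly generated case and then invoking the decomposition rank formula. First I would handle $(2)\Rightarrow(1)$. Suppose $G$ is elementary with $\xi(G)=2$, and let $O\leq G$ be a compactly generated open subgroup; then $\xi(O)\leq 2$ by Proposition~\ref{prop:xi_monotone}(1), so either $O$ is trivial or $\xi(O)=2$. In the latter case, the rank formula gives $\xi(\Res{}(O))=1$, i.e. $\Res{}(O)=\triv$, which means $O$ is residually discrete. A compactly generated residually discrete \tdlc group is a SIN group: this is the content of the structure theory of residually discrete groups (it has a basis at $1$ of compact open normal subgroups — see Caprace--Monod, or cite the relevant result from \cite{W_1_14} used in the proof of Lemma~\ref{lem:compact_ext}, namely that $\Res{}(O)=\triv$ forces $O$ to have arbitrarily small compact open normal subgroups). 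Hence every compactly generated open subgroup of $G$ is SIN, so $G$ is regionally SIN.

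For $(1)\Rightarrow(2)$, suppose $G$ is regionally SIN and non-trivial. Write $G=\bigcup_{i\in\Nb}O_i$ as an increasing union of compactly generated open subgroups. Each $O_i$ is a SIN \tdlc group, hence has a basis at $1$ of compact open normal subgroups; in particular $O_i$ is residually discrete, so $\Res{}(O_i)=\triv$ and thus $O_i$ is elementary with $\xi(O_i)\leq 2$ (it is a discrete-by-profinite extension, or more directly: $O_i/K$ is discrete for arbitrarily small compact open normal $K$, and $K$ is profinite hence elementary of rank $\leq 2$, so $\xi(O_i)\leq 2$ by the extension behaviour of the rank — Lemma~\ref{lem:d-rank_extensions} gives $\xi(O_i)\leq (\xi(K)-1)+\xi(O_i/K)\leq 1+1=2$). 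By the rank formula, $\xi(G)=\sup_{i}\xi(\Res{}(O_i))+1=\sup_i 1+1=2$, provided some $O_i$ is non-trivial, which holds since $G$ is non-trivial. In particular $G$ is elementary. This establishes the equivalence.

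The only genuine input beyond bookkeeping with the rank is the structural fact that a compactly generated \tdlc group $O$ with $\Res{}(O)=\triv$ is a SIN group, i.e. has a basis of compact open normal subgroups; this is where I expect the argument to lean on external structure theory (Caprace--Monod on residually discrete groups, or the corresponding lemma in \cite{W_1_14} already cited in the proof of Lemma~\ref{lem:compact_ext} to the effect that $O/\Res{}(O)$ being residually discrete forces small compact open normal subgroups). That equivalence "$\Res{}(O)=\triv \iff O$ is SIN, for $O$ compactly generated" is really the crux; once it is in hand, both directions are immediate from the decomposition rank formula together with $\xi(\text{profinite})\leq 2$ and $\xi(\text{discrete})\leq 2$. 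The main obstacle, then, is simply citing (or quickly re-deriving) this residually-discrete structure result in a form that applies verbatim to the compactly generated open subgroups of $G$.
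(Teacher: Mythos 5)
Your proposal is correct and follows essentially the same route as the paper: both directions reduce, via the rank formula, to the statement that a compactly generated \tdlc group is residually discrete if and only if it is SIN, which is exactly the Caprace--Monod result (\cite[Corollary 4.1]{CM11}) the paper cites. You have correctly identified this as the sole nontrivial input, so nothing further is needed.
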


\begin{proof}
From the definition of decomposition rank, we see that $G$ is elementary with $\xi(G)=2$ if and only if every compactly generated open subgroup of $G$ is residually discrete. By \cite[Corollary 4.1]{CM11}, a compactly generated \tdlc group is residually discrete if and only if it is a SIN group, and the conclusion follows.
\end{proof}

\begin{lem}\label{lem:quasidiscrete:rank2}
Let $G$ be a quasi-discrete \lcsc group.  Then $G^\circ$ is central in $G$ and $G/G^\circ$ is elementary with $\xi(G/G^\circ)\le 2$.
\end{lem}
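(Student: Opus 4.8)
The plan is to establish the two conclusions separately, the first being immediate. Every open subgroup of $G$ is closed, hence clopen, and so (containing $1$) it contains the connected set $G^\circ$; in particular, for each $g\in\QZ(G)$ the open subgroup $\CC_G(g)$ contains $G^\circ$, so $g$ centralizes $G^\circ$. Hence $\QZ(G)\subseteq\CC_G(G^\circ)$, and as $\CC_G(G^\circ)$ is closed while $\QZ(G)$ is dense, $\CC_G(G^\circ)=G$; that is, $G^\circ\leq\Z(G)$.

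For the second conclusion, put $H:=G/G^\circ$, a \tdlcsc group, with quotient map $\pi$. I would first note that $H$ is quasi-discrete: $\pi$ is open, so for $g\in\QZ(G)$ the set $\pi(\CC_G(g))$ is an open subgroup of $H$ contained in $\CC_H(\pi(g))$, forcing $\pi(g)\in\QZ(H)$; hence $\QZ(H)\supseteq\pi(\QZ(G))$ is dense. By Lemma~\ref{lem:regionally_SIN} it then suffices to show that $H$ is regionally SIN, i.e.\ that every compactly generated open subgroup $O\leq H$ is a SIN group; the bound $\xi(G/G^\circ)\leq 2$ (with equality unless $H$ is trivial) follows at once.

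The main step is to prove this. Fix a compactly generated open $O\leq H$ and an arbitrary compact open subgroup $U\leq O$. Then $D:=\QZ(H)\cap O$ is a dense subgroup of $O$ (since $O$ is open in $H$), in particular a dense symmetric subset. Choosing a compact symmetric $A$ with $O=\grp{U,A}$ and applying Proposition~\ref{prop:factor} with this $D$ yields a finite symmetric $B\subseteq\QZ(H)\cap O$ with $O=\grp{B}U$. Each $b\in B$ lies in $\QZ(H)$, so $\CC_H(b)$ is open in $H$ and hence $\CC_U(b)$ is an open finite-index subgroup of the compact group $U$; intersecting over the finite set $B$ gives an open finite-index subgroup $W\leq U$ with $[W,b]=1$ for all $b\in B$. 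Let $V$ be the normal core of $W$ in $U$: an open finite-index (so compact open in $O$) normal subgroup of $U$ contained in $W$, hence still centralizing $B$. Since $O$ is generated by $U\cup B$, with $U$ normalizing $V$ and every element of $B\cup B^{-1}$ centralizing $V$, we conclude $V\normal O$. Thus every compact open subgroup of $O$ contains a compact open normal subgroup of $O$; since the former form a neighborhood basis at $1$ by van Dantzig's theorem, $O$ is a SIN group.

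I do not foresee a serious obstacle; the one delicate point, and the only place quasi-discreteness is genuinely used, is the construction of $V$ --- converting the ``open centralizers'' supplied by $\QZ(H)$ into an actual compact open normal subgroup. The key device is Proposition~\ref{prop:factor}, which lets one replace an arbitrary compact generating set of $O$ by one lying inside the quasi-center; once the generators are quasi-central, their normalizing $V$ is automatic. One should also check (as above) that the relevant form of quasi-discreteness survives both the quotient $G\to H$ and the restriction to an open subgroup $O\leq H$.
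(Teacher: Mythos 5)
Your proof is correct and follows the same route as the paper: centrality of $G^\circ$ from density of the quasi-center, passage to the quasi-discrete totally disconnected quotient $G/G^\circ$, and then Lemma~\ref{lem:regionally_SIN}. The only difference is that where the paper simply cites \cite[Proposition 4.3]{CM11} for the fact that a quasi-discrete \tdlcsc group is regionally SIN, you reprove that step directly via Proposition~\ref{prop:factor} (replacing a compact generating set of $O$ modulo $U$ by a finite quasi-central one and taking the normal core of the joint centralizer in $U$), which is a correct, self-contained reconstruction of the cited result.
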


\begin{proof}
Every open subgroup contains $G^\circ$, so $G^\circ$ centralizes $\QZ(G)$.  Since $\QZ(G)$ is dense, $G^\circ$ is in fact central.

We see that $G/G^\circ$ is quasi-discrete, so we may assume $G^\circ = \triv$. The group $G$ is then a \tdlcsc group, and via \cite[Proposition 4.3]{CM11}, $G$ is regionally SIN. Lemma~\ref{lem:regionally_SIN} now ensures that $\xi(G)\le 2$.
\end{proof}

\subsection{The bounded rank residuals}\label{sec:bounded_rank}

Proposition~\ref{prop:residual_rank} suggests a natural source of closed characteristic subgroups.

\begin{defn} For a \tdlcsc group $G$ and an ordinal $\alpha \ge 1$, define the \defbold{rank-$\alpha$ residual} to be
	\[
	 \Res{\alpha}(G):=\bigcap\{K\normal G\mid K\text{ is closed and }\xi(G/K)\leq \alpha\}
	 \] 
	The rank-$\omega$ residual will be called the \defbold{finite rank residual} of $G$.
\end{defn} 

These residuals have many nice properties. First, the rank of the quotient $G/\Res{\alpha}(G)$ is restricted.

\begin{prop}\label{prop:residual_rank_bound}
For $G$ a \tdlcsc group and $\alpha$ an at most countable ordinal, the following hold:
\begin{enumerate}[(1)]
\item $\xi(G/\Res{\alpha}(G))\leq \sup^+(\alpha)$;
\item If $G$ is compactly generated and $\alpha$ is a limit ordinal, then $\xi(G/\Res{\alpha}(G)) < \alpha$.
\item If $G$ is compactly generated, then $\Res{2}(G)=\Res{}(G)$.
\end{enumerate}
\end{prop}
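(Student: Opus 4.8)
The plan is to prove the three parts of Proposition~\ref{prop:residual_rank_bound} in order, leaning heavily on Proposition~\ref{prop:residual_rank} and the filtering-family lemma (Lemma~\ref{lem:ordinal_FF}).

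For part (1), I would apply Proposition~\ref{prop:residual_rank} with $\mc{K} = \{K \normal G \mid K \text{ closed}, \xi(G/K) \le \alpha\}$, which by definition has $\bigcap \mc{K} = \Res{\alpha}(G)$. Passing to the quotient $\bar{G} := G/\Res{\alpha}(G)$, the images $\bar{K} := K/\Res{\alpha}(G)$ form a family of closed normal subgroups of $\bar{G}$ with trivial intersection, and $\xi(\bar{G}/\bar{K}) = \xi(G/K) \le \alpha$ for each such $K$. Proposition~\ref{prop:residual_rank} then gives $\xi(\bar{G}) = {\sup_{\bar K}}^+ \xi(\bar G/\bar K) \le \sup^+(\alpha)$, which is exactly the claim. (One should note that if no $K$ with $\xi(G/K) \le \alpha$ exists, then $\Res{\alpha}(G) = G$ and the inequality is trivial; likewise if $\alpha \ge \omega_1$ nothing needs proof. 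So we may assume $\mc{K}$ is nonempty and $\alpha$ countable.)

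For part (2), assume $G$ is compactly generated and $\alpha$ is a countable limit ordinal. Again set $\mc{K}$ as above; by Lemma~\ref{lem:ordinal_FF} it is a filtering family, and $\sup^+_{K \in \mc{K}} \xi(G/K) \le \sup^+(\alpha) = \alpha + 1$ since $\alpha$ is a limit ordinal. Here I would invoke the ``compactly generated elementary'' clause at the end of Proposition~\ref{prop:residual_rank}: working in $\bar{G} = G/\Res{\alpha}(G)$ — which is compactly generated as a quotient of $G$, and elementary by part (1) once we know $\xi(\bar G) \le \alpha + 1 < \omega_1$ — there is some $\bar K$ in the family with $\xi(\bar G) = \xi(\bar G / \bar K) \le \alpha$. (If $\bar G$ were non-elementary, part (1) would already fail, so elementarity is automatic.) The point is that a compactly generated group realizes the supremum over a filtering family rather than merely being bounded by its $\sup^+$, which strictly improves $\alpha + 1$ to $\alpha$. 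One small subtlety to handle carefully: the family $\mc{K}$ of subgroups of $G$ descends to a family of subgroups of $\bar G$ with trivial intersection and the same quotients, so Proposition~\ref{prop:residual_rank} applies verbatim in $\bar G$.

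For part (3), with $G$ compactly generated we want $\Res{2}(G) = \Res{}(G)$, the discrete residual. Here I would use Lemma~\ref{lem:regionally_SIN} together with the fact (from the definition of decomposition rank) that a \tdlcsc group $Q$ satisfies $\xi(Q) \le 2$ iff every compactly generated open subgroup of $Q$ is residually discrete; for $G$ compactly generated this just says $\xi(G/K) \le 2$ iff $G/K$ is residually discrete (using Lemma~\ref{lem:betaplus2} or the rank formula directly with $O_i = G$: $\xi(G/K) \le 2$ iff $\Res{}(G/K)$ is trivial). Since $\Res{}(G/K) = \ol{\Res{}(G) K}/K$, the condition $\xi(G/K) \le 2$ is equivalent to $\Res{}(G) \le K$. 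Intersecting over all such $K$ gives $\Res{2}(G) = \bigcap\{K \normal G \text{ closed} \mid \Res{}(G) \le K\} = \Res{}(G)$, as $\Res{}(G)$ itself is a closed normal subgroup appearing in the intersection.

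The main obstacle I anticipate is part (2): one must be sure that Proposition~\ref{prop:residual_rank}'s second assertion transfers cleanly to the quotient $\bar G$ and that the filtering family of quotients $\bar G / \bar K$ genuinely has $\sup$ equal to (not just bounded by) a value $\le \alpha$ — i.e. that the limit ordinal $\alpha$ is not itself attained as some $\xi(\bar G/\bar K)$, which it cannot be since decomposition ranks of \tdlcsc groups are always successor ordinals. Parts (1) and (3) are essentially bookkeeping on top of results already proved.
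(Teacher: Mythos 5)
Your proposal is correct and follows essentially the same route as the paper: part (1) via Proposition~\ref{prop:residual_rank} applied to the family defining $\Res{\alpha}(G)$, part (2) via the compactly-generated clause of that proposition together with the observation that decomposition ranks are successor ordinals while $\alpha$ is a limit, and part (3) (which the paper leaves as an exercise) by the SIN/residually-discrete characterization of rank-$2$ quotients. No gaps.
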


\begin{proof}
Claim (1) is immediate from Proposition~\ref{prop:residual_rank} and Lemma~\ref{lem:ordinal_FF}. Claim (3) is an easy exercise.

For claim (2), Proposition~\ref{prop:residual_rank} ensures there is a closed normal subgroup $K$ of $G$ such that $\xi(G/R) = \xi(G/K) \le \alpha$. Since the rank of $G/R$ is a successor ordinal, we conclude that $\xi(G/R)<\alpha$.
\end{proof}

The rank-$\alpha$ residual of $G$ can be expressed in terms of the rank-$\alpha$ residuals of compactly generated open subgroups.

\begin{prop}\label{prop:rank_residual:exhaustion}
Let $\alpha \ge 2$ be an at most countable ordinal, $G$ an \tdlcsc group, and $(O_i)_{i \in \Nb}$ be an increasing exhaustion of $G$ by compactly generated open subgroups. If $\alpha$ is a successor ordinal, then
\[
\Res{\alpha}(G) = \ol{\bigcup_{i \in \Nb} \Res{\alpha}(O_i)}.
\]
If $\alpha$ is a limit ordinal, then
\[
\Res{\alpha}(G) = \bigcap_{\beta < \alpha}\left(\ol{\bigcup_{i \in \Nb} \Res{\beta}(O_i)}\right).
\]
\end{prop}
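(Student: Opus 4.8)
The plan is to establish the successor-ordinal case by a direct argument and then deduce the limit-ordinal case from it. I will use two elementary monotonicity facts, both immediate from the definition of the residuals: for any \tdlcsc group $H$, the map $\gamma\mapsto\Res{\gamma}(H)$ is $\subseteq$-decreasing; consequently, for the fixed exhaustion $(O_i)_{i\in\Nb}$, the map $\gamma\mapsto\ol{\bigcup_{i\in\Nb}\Res{\gamma}(O_i)}$ is $\subseteq$-decreasing as well.

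Assume first that $\alpha\ge 2$ is a successor ordinal, so that $\sup^+(\alpha)=\alpha$, and put $R:=\ol{\bigcup_{i\in\Nb}\Res{\alpha}(O_i)}$. Since $O_i$ is open in $O_{i+1}$, the quotient $O_i/(O_i\cap\Res{\alpha}(O_{i+1}))$ embeds continuously into $O_{i+1}/\Res{\alpha}(O_{i+1})$, which has rank $\le\sup^+(\alpha)=\alpha$ by Proposition~\ref{prop:residual_rank_bound}(1); hence $\Res{\alpha}(O_i)\le\Res{\alpha}(O_{i+1})$ by Proposition~\ref{prop:xi_monotone}(1). So $\bigcup_i\Res{\alpha}(O_i)$ is an ascending union of subgroups, each characteristic in the corresponding $O_i$; as every $g\in G$ lies in some $O_j$ and therefore normalizes $\Res{\alpha}(O_i)$ for all $i\ge j$, it normalizes this union and hence its closure, so $R$ is a closed normal subgroup of $G$. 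For the inclusion $R\le\Res{\alpha}(G)$, observe that for each $i$ the open subgroup $O_i$ gives a continuous injection $O_i/(O_i\cap\Res{\alpha}(G))\to G/\Res{\alpha}(G)$, whose target has rank $\le\sup^+(\alpha)=\alpha$ by Proposition~\ref{prop:residual_rank_bound}(1), so $\Res{\alpha}(O_i)\le O_i\cap\Res{\alpha}(G)$ by Proposition~\ref{prop:xi_monotone}(1); taking the union over $i$ and then the closure gives $R\le\Res{\alpha}(G)$. For the reverse inclusion it suffices to show $\xi(G/R)\le\alpha$, for then $R$ belongs to the family defining $\Res{\alpha}(G)$. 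From $\Res{\alpha}(O_i)\le O_i\cap R$ we see that $O_iR/R\simeq O_i/(O_i\cap R)$ is a quotient of $O_i/\Res{\alpha}(O_i)$, which has rank $\le\sup^+(\alpha)=\alpha$ by Proposition~\ref{prop:residual_rank_bound}(1), so $\xi(O_iR/R)\le\alpha$ by Proposition~\ref{prop:xi_monotone}(2). The subgroups $O_iR/R$ are open in $G/R$, and their ascending union is all of $G/R$, so Proposition~\ref{prop:d-rank:ascending} yields $\xi(G/R)={\sup_{i\in\Nb}}^+\xi(O_iR/R)\le\sup^+(\alpha)=\alpha$; here it is essential that $\alpha$ is a successor, since this is what keeps the $\sup^+$ of a family of ordinals bounded by $\alpha$ bounded by $\alpha$. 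This proves $\Res{\alpha}(G)=R$.

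Now suppose $\alpha\ge 2$ is a limit ordinal. Since $\alpha$ is countable and the decomposition rank of a \tdlcsc group is either $\omega_1$ or a successor ordinal, $\xi(G/K)\le\alpha$ holds if and only if $\xi(G/K)\le\beta$ for some $\beta<\alpha$; intersecting the corresponding families of closed normal subgroups gives $\Res{\alpha}(G)=\bigcap_{\beta<\alpha}\Res{\beta}(G)$. By the first monotonicity fact this intersection is unchanged if $\beta$ is restricted to successor ordinals with $2\le\beta<\alpha$ (the term $\beta=1$ is just $G$, and any limit $\beta<\alpha$ is subsumed by the term $\beta+1<\alpha$), and for each such $\beta$ the successor case gives $\Res{\beta}(G)=\ol{\bigcup_i\Res{\beta}(O_i)}$; by the second monotonicity fact the intersection $\bigcap_{\beta<\alpha}\ol{\bigcup_i\Res{\beta}(O_i)}$ may be restricted in the same way, so the two sides agree. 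I expect the main obstacle to be the inclusion $\Res{\alpha}(G)\subseteq R$ in the successor case: one must verify that $R$ is genuinely normal, so that it is admissible in the defining intersection, and that $\xi(G/R)$ comes out as $\alpha$ rather than $\alpha+1$ — both points hinge on $\alpha$ being a successor and on the $\sup^+$ formula for ascending unions of open subgroups. The limit case is then purely formal, depending only on the two monotonicity facts and on decomposition ranks avoiding countable limit ordinals.
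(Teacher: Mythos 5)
Your proof is correct and follows essentially the same route as the paper's: define $R$ as the closure of the increasing union, check normality, get $R\le\Res{\alpha}(G)$ from the defining property of the residuals, get the reverse inclusion by bounding $\xi(G/R)\le\alpha$ using the $\sup^+$ formula for ascending unions of open subgroups (where successor-ness of $\alpha$ is indeed the crucial point), and reduce the limit case to $\Res{\alpha}(G)=\bigcap_{\beta<\alpha}\Res{\beta}(G)$ since decomposition ranks are never countable limit ordinals. The only cosmetic differences are that the paper argues the first inclusion against an arbitrary $K$ with $\xi(G/K)\le\alpha$ rather than via Proposition~\ref{prop:residual_rank_bound}, and handles the limit case by induction rather than by restricting the intersection to successor $\beta$; both variants are fine.
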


\begin{proof}
Take $\beta$ an ordinal and $(O_i)_{i\in \Nb}$ an $\subseteq$-increasing exhaustion of $G$ by compactly generated open subgroups. The union $\bigcup_{i\in \Nb}\Res{\beta}(O_i)$ is an increasing union of subgroups, so $\bigcup_{i\in \Nb}\Res{\beta}(O_i)$ is a subgroup of $G$.  Given a compactly generated open subgroup $L$ of $G$, sufficiently large $i$ are such that $L \le O_i$, hence $\Res{\beta}(L) \le \Res{\beta}(O_i)$. We deduce that $\bigcup_{i \in \Nb} \Res{\beta}(O_i)$ is in fact the union of $\Res{\beta}(L)$ as $L$ ranges over \textit{all} compactly generated open subgroups of $G$. Hence, $\bigcup_{i\in\Nb}\Res{\beta}(O_i)$ is indeed a characteristic subgroup of $G$.

We are now ready to prove the claims. Suppose first that $\alpha = \beta+1$ is a successor ordinal and set  $R: = \ol{\bigcup_{i\in \Nb}\Res{\alpha}(O_i)}$. Suppose that $K\normal G$ is closed with $\xi(G/K)\leq \alpha$. By Proposition~\ref{prop:xi_monotone}, $O_i/(O_i \cap K)$ is a compactly generated open subgroup of $G/K$ of rank at most $\alpha$ for every $i$. We deduce that $\Res{\alpha}(O_i) \le K$, so $R \le K$. Therefore, $R \le \Res{\alpha}(G)$.

For the converse inclusion, fix a compactly generated open subgroup $L/R$ of $G/R$. For $i$ sufficiently large, $L \subseteq O_iR$, and there is a quotient map from $O_i/\Res{\alpha}(O_i)$ to $O_iR/R$. Applying Proposition~\ref{prop:xi_monotone}, we deduce that
\[
\xi(L/R) \le \xi(O_iR/R) \le \xi(O_i/\Res{\alpha}(O_i)) \le \alpha.
\]
Thus, $\xi(\Res{}(L/R)) \le \beta$, and since $L$ is arbitrary, it follows that $\xi(G/R) \le \beta+1 = \alpha$. We conclude that $R \ge \Res{\alpha}(G)$.  

Let us now consider the case when $\alpha$ is a limit ordinal. Set $R_{\beta}:=\Res{\beta}(G)$.  Since $G$ has no quotients of rank exactly $\alpha$, we see that
\[
R_{\alpha} = \bigcap_{\beta < \alpha}R_{\beta}.
\]
We can now use induction on $\alpha$ and the first part of the proposition applied to $R_{\beta}$ to write
\[
R_{\alpha} = \bigcap_{\beta <\alpha}\left(\ol{\bigcup_{i \in \Nb} \Res{\beta}(O_i)}\right),
\]
as required.
\end{proof}

\begin{cor}\label{cor:rank_residual:exhaustion:2}
Let $G$ be a \tdlcsc group and $(O_i)_{i \in \Nb}$ be an increasing exhaustion of $G$ by compactly generated open subgroups.  Then
\[
\Res{2}(G) = \ol{\bigcup_{i \in \Nb} \Res{}(O_i)}.
\]
\end{cor}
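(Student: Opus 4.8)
The statement is essentially the $\alpha = 2$ instance of Proposition~\ref{prop:rank_residual:exhaustion}, combined with the identification of the rank-$2$ residual with the discrete residual on compactly generated groups. The plan is as follows. First I would observe that $2 = 1+1$ is a successor ordinal, so that Proposition~\ref{prop:rank_residual:exhaustion}, applied with $\alpha = 2$ to the increasing exhaustion $(O_i)_{i\in\Nb}$, yields
\[
\Res{2}(G) = \ol{\bigcup_{i\in\Nb}\Res{2}(O_i)}.
\]
Then, since each $O_i$ is compactly generated, Proposition~\ref{prop:residual_rank_bound}(3) gives $\Res{2}(O_i) = \Res{}(O_i)$ for every $i \in \Nb$. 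Substituting this equality termwise into the union gives
\[
\Res{2}(G) = \ol{\bigcup_{i\in\Nb}\Res{}(O_i)},
\]
which is the claimed formula.

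There is no real obstacle here: the corollary is a direct specialization, and both ingredients are already available in the excerpt. The only point requiring a moment's care is the verification that $\alpha = 2$ satisfies the hypothesis $\alpha \ge 2$ of Proposition~\ref{prop:rank_residual:exhaustion} and that it falls into the successor-ordinal case of that proposition (rather than the limit-ordinal case), which is immediate. One could alternatively prove the corollary directly by repeating the successor-case argument of Proposition~\ref{prop:rank_residual:exhaustion} with the discrete residual in place of $\Res{2}$, using Lemma~\ref{lem:regionally_SIN} (equivalently \cite[Corollary 4.1]{CM11}) to control ranks, but invoking the already-established general statement is cleaner.
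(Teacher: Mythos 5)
Your proof is correct and matches the paper's intent exactly: the corollary is stated without proof precisely because it is the $\alpha=2$ successor case of Proposition~\ref{prop:rank_residual:exhaustion} combined with the identity $\Res{2}(O_i)=\Res{}(O_i)$ for compactly generated $O_i$ from Proposition~\ref{prop:residual_rank_bound}(3). Nothing further is needed.
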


\begin{rmk}
	If $\alpha$ is a limit ordinal, it is not necessarily the case that $\Res{\alpha}(G) = \ol{\bigcup_{i \in \Nb} \Res{\alpha}(O_i)}$. For instance, if $G$ is a topologically characteristically simple \tdlcsc group with $\xi(G)= \omega+1$ (for an example, see \S\ref{ex:weak} below), then $\Res{\omega}(G) = G$ by Theorem~\ref{thm:lower_finite_rank} below, but the finite rank radical of every compactly generated open subgroup of $G$ is trivial.
\end{rmk}

In the case of the rank-$2$ residual, we also have an easy calculation for the rank of $\Res{2}(G)$ in terms of the rank of $G$.  This will put a restriction on the possibilities for the ranks of topologically characteristically simple \tdlcsc groups.

\begin{prop}\label{prop:rank_char_subgroup}
For $G$ a non-compact elementary \tdlcsc group, exactly one of the following occurs:
\begin{enumerate}[(1)]
\item $\xi(G) = \xi(\Res{2}(G)) = \alpha+1$ where $\alpha$ is a countable transfinite limit ordinal;
\item $\Res{2}(G)$ is a non-cocompact subgroup of $G$, and $\xi(G) = \xi(\Res{2}(G))+1$.
\end{enumerate}
\end{prop}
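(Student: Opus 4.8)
The plan is to compute $\xi(G)$ and $\xi(\Res{2}(G))$ simultaneously from an exhaustion of $G$ by compactly generated open subgroups, and then extract the dichotomy according to whether a certain supremum of ordinals is attained.

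First I would fix an increasing exhaustion $(O_i)_{i\in\Nb}$ of $G$ by compactly generated open subgroups; since $G$ is non-compact it is non-trivial, so after discarding an initial segment we may assume every $O_i$ is non-trivial. Set $R := \Res{2}(G)$. By Corollary~\ref{cor:rank_residual:exhaustion:2} we have $R = \ol{\bigcup_{i}\Res{}(O_i)}$, and the $\Res{}(O_i)$ form an increasing (hence directed) family of closed subgroups each normalised by the open subgroup $O_i$, so Proposition~\ref{prop:d-rank:ascending} gives $\xi(R) = {\sup_i}^+\xi(\Res{}(O_i))$. The formula for ascending unions of open subgroups (recorded just before Proposition~\ref{prop:d-rank:ascending}) gives $\xi(G) = {\sup_i}^+\xi(O_i)$. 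Finally Lemma~\ref{lem:betaplus2} gives $\xi(O_i) = \xi(\Res{}(O_i)) + 1$; writing $a_i := \xi(\Res{}(O_i))$, the $a_i$ form a non-decreasing sequence of countable successor ordinals with $\xi(O_i) = a_i + 1$.

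Next I would split on whether $(a_i)$ is eventually constant. If it is, say $a_i = \beta + 1$ for all large $i$, then $\xi(R) = {\sup}^+\{\beta+1\} = \beta + 1$ and $\xi(G) = {\sup}^+\{\beta+2\} = \beta + 2 = \xi(R) + 1$; moreover $R$ is then non-cocompact, since if $R = \{1\}$ this is forced by non-compactness of $G$, and if $R$ were a non-trivial cocompact normal subgroup then Lemma~\ref{lem:compact_ext} would give $\xi(G) = \xi(R)$, contradicting $\xi(G) = \xi(R) + 1$. Thus case (2) holds. If instead $(a_i)$ is not eventually constant, it contains a strictly increasing $\Nb$-indexed subsequence of ordinals $\ge 1$, so $\lambda := \sup_i a_i$ is a countable limit ordinal with $\lambda \ge \omega$, i.e.\ transfinite; and since each $a_i < \lambda$ with $\lambda$ a limit ordinal we get $a_i + 1 < \lambda$, while $\sup_i(a_i+1) \ge \lambda$ trivially, so $\sup_i(a_i+1) = \lambda$ as well. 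Hence $\xi(R) = {\sup}^+\{a_i\} = \lambda + 1 = {\sup}^+\{a_i+1\} = \xi(G)$ with $\lambda$ a countable transfinite limit ordinal, which is case (1). The two outcomes are mutually exclusive, because case (1) asserts $\xi(G) = \xi(\Res{2}(G))$ whereas case (2) asserts $\xi(G) = \xi(\Res{2}(G)) + 1$ and no ordinal equals its successor; combined with the exhaustive dichotomy on $(a_i)$, this shows exactly one of (1), (2) holds.

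The delicate point is the ordinal bookkeeping around $\sup^+$: one must see that passing to the discrete residual of a compactly generated elementary group drops the rank by exactly $1$ (Lemma~\ref{lem:betaplus2}), that this shift is invisible to $\sup^+$ precisely when the supremum of the $a_i$ is not attained, and that an unattained supremum of a non-decreasing $\Nb$-indexed sequence of successor ordinals is necessarily a transfinite limit ordinal. Making the cocompactness assertion in case (2) airtight — in particular handling the degenerate possibility $R = \{1\}$ through Lemma~\ref{lem:compact_ext} together with the non-compactness of $G$ — is the other place requiring care.
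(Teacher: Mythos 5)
Your proposal is correct and follows essentially the same route as the paper: identify $\Res{2}(G)$ with $\ol{\bigcup_i\Res{}(O_i)}$ via Corollary~\ref{cor:rank_residual:exhaustion:2}, compute $\xi(\Res{2}(G))$ with Proposition~\ref{prop:d-rank:ascending} and $\xi(G)$ with the rank formula, split on whether $\sup_i\xi(\Res{}(O_i))$ is attained (equivalently, successor versus limit), and rule out cocompactness of $\Res{2}(G)$ in the successor case using Lemma~\ref{lem:compact_ext}. The only cosmetic difference is that you reach $\xi(G)$ through ${\sup_i}^+\xi(O_i)$ and Lemma~\ref{lem:betaplus2} rather than quoting $\xi(G)=\sup_i\xi(\Res{}(O_i))+1$ directly from the definition of the rank.
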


\begin{proof}
Let $(O_i)_{i \in \Nb}$ be an increasing exhaustion of $G$ by compactly generated open subgroups and set $R:=\Res{2}(G)$. By Corollary~\ref{cor:rank_residual:exhaustion:2}, $\ol{\bigcup_{i\in \Nb}\Res{}(O_i)} = R$ . We have 
\[
\xi(G)=\sup_{i\in \Nb}\xi(\Res{}(O_i))+1,
\]
and by Proposition~\ref{prop:d-rank:ascending}, we also have
\[
\xi(R) = {\sup_{i \in \Nb}}^+\xi(\Res{}(O_i)).
\]

Set $\beta := \sup_{i\in \Nb}\xi(\Res{}(O_i))$. If $\beta$ is a successor ordinal, then $\xi(R) = \beta$, and $\xi(G) = \beta+1$, so $\xi(G) = \xi(R)+1$.  In particular, $R$ is a proper subgroup of $G$ with $\xi(G/R) = 2$. In view of Lemma~\ref{lem:compact_ext}, the only way $R$ can be cocompact is if it is trivial; however, this would imply that $G$ is compact, contradicting the hypotheses.  We conclude that (2) holds and (1) does not hold.

If $\beta$ is a limit ordinal, then $\beta$ is transfinite. We conclude that $\xi(R) = \xi(G) = \beta+1$.  Condition (1) thus holds and (2) does not hold, completing the proof of the dichotomy.
\end{proof}

\begin{cor}\label{cor:rank_char_simple}
	If $G$ is a non-trivial topologically characteristically simple \tdlcsc group, then exactly one of the following holds:
	\begin{enumerate}[(1)]
	\item $\xi(G)=2$;
	\item $\xi(G)=\alpha+1$ for $\alpha$ some countable transfinite limit ordinal;
	\item $G$ is non-elementary.
	\end{enumerate}
\end{cor}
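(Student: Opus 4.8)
The plan is to reduce everything to Proposition~\ref{prop:rank_char_subgroup}, using the fact that $\Res{2}(G)$ is a closed characteristic subgroup of $G$ and hence, by topological characteristic simplicity, equals either $\triv$ or $G$.

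First I would dispose of two degenerate cases. If $G$ is non-elementary, we are in case (3); and since a non-elementary \tdlcsc group has $\xi(G)=\omega_1$ by convention, neither (1) nor (2) can also hold. If $G$ is elementary and compact, then $G$ is a non-trivial second countable profinite group, so it is residually discrete, whence $\Res{}(G)=\triv$ and $\xi(G)=\xi(\triv)+1=2$: this is case (1), and (2), (3) fail.

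The substantive case is $G$ elementary and non-compact, where Proposition~\ref{prop:rank_char_subgroup} applies. Write $R:=\Res{2}(G)$, so $R\in\{\triv,G\}$. If alternative (2) of that proposition holds, then $\xi(G)=\xi(R)+1$, which is incompatible with $R=G$; thus $R=\triv$ and $\xi(G)=\xi(\triv)+1=2$, so we land in case (1) of the corollary. If alternative (1) of the proposition holds, then $\xi(G)=\xi(R)=\alpha+1$ for some countable transfinite limit ordinal $\alpha$, which is precisely case (2) of the corollary (and forces $R=G$, since $\xi(R)=\alpha+1\neq 1$, consistently).

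Finally I would record mutual exclusivity: a non-elementary group has rank $\omega_1$ and so satisfies neither (1) nor (2), and $\xi(G)=2$ cannot coincide with $\xi(G)=\alpha+1\geq\omega+1$. Hence in every case exactly one alternative holds. There is no real obstacle here beyond correctly matching the two alternatives of Proposition~\ref{prop:rank_char_subgroup} to those of the corollary and handling the trivial compact case; the analytic content all sits in Proposition~\ref{prop:rank_char_subgroup} (and, through it, in Propositions~\ref{prop:d-rank:ascending} and~\ref{prop:residual_rank} and Lemma~\ref{lem:compact_ext}).
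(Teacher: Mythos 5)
Your proof is correct and follows essentially the same route as the paper: both reduce to Proposition~\ref{prop:rank_char_subgroup} via the observation that $\Res{2}(G)$ is characteristic and hence trivial or all of $G$. Your explicit treatment of the compact case is a minor (and slightly cleaner) reorganization, since the paper instead deduces non-compactness from $\Res{2}(G)=G$ before invoking the dichotomy; otherwise the arguments coincide.
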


\begin{proof}
The three cases are clearly mutually exclusive. 

Suppose that (3) does not hold, so $G$ is an elementary group. Since $G$ is topologically characteristically simple, $\Res{2}(G) = \triv$, or $\Res{2}(G) = G$.  By Proposition~\ref{prop:rank_char_subgroup}, $\Res{2}(G) = \triv$ if and only if $G$ is elementary of rank $2$; the case $\xi(G)=1$ being eliminated by the hypothesis that $G \neq \triv$.  We thus deduce that (1) holds if $\Res{2}(G)=\triv$. If $\Res{2}(G)=G$, then $G$ is non-compact, since $\xi(G) > 2$. The dichotomy of Proposition~\ref{prop:rank_char_subgroup} thus ensures that $\xi(G)=\alpha+1$ for $\alpha$ some transfinite limit ordinal, hence (2) holds.
\end{proof}

\subsection{The rank-$2$ series}
We conclude this section by isolating a canonical series of closed characteristic subgroups in a \tdlcsc group.

\begin{defn} For a \tdlcsc group $G$, the \defbold{(lower) rank-$2$ series}, denoted by $(G_i)_{i\in \Nb}$, is defined as follows: $G_0:=G$ and $G_{i+1}:=\Res{2}(G_i)$. We call a \tdlcsc group \textbf{rank-2 resolvable} if the rank-$2$ series stabilizes at $\{1\}$ in finitely many steps.
\end{defn}

We can think of the rank-$2$ series as an analogue of the derived series, with rank-$2$ groups playing the role of abelian groups.  The next results show this analogy can be pushed further.

%\begin{thm}\label{thm:lower_finite_rank}
%Let $G$ be a \tdlcsc group.  Define $G_0 = G$ and thereafter $G_{i+1} = \Res{2}(G_i)$.
%\begin{enumerate}[(1)]
%\item Let $K$ be a closed $G$-invariant subgroup of $\Res{\omega}(G)$ such that $G/K$ is elementary.  Then we can write $\xi(G/K)$ in a unique way as $\alpha + n + 1$ where $\alpha$ is a limit ordinal and $0 \le n < \omega$.  For this $\alpha$ and $n$, we have $\xi(G_i/K) = \alpha + (n-i) + 1$ if $i \le n$ and $\xi(G_i/K) = \alpha + 1$ if $i > n$.
%\item Given $0 \le i < j < \omega$, then $G_{j}$ is the unique smallest closed normal subgroup $K$ of $G_{i}$ such that $\xi(G_{i}/K) \le j - i + 1$.  We have $\xi(G_i/G_j) = j - i + 1$ if and only if $G_j < G_{j-1}$.
%\item We have $\Res{\omega}(G) = \bigcap_{j \ge 0}G_j$.  If $G$ is compactly generated, then $\Res{\omega}(G) = G_j$ for some $j < \omega$; in particular, the series $(G_i)$ terminates in finitely many steps.
%\end{enumerate}
%\end{thm}

\begin{thm}\label{thm:lower_finite_rank}
	For $G$ a \tdlcsc group and $(G_i)_{i\in \Nb}$ the rank-$2$ series for $G$, the following hold:
	\begin{enumerate}[(1)]
		\item For each $i\in \Nb$, $G_i=\Res{i+1}(G)$.
		\item $\Res{\omega}(G)=\bigcap_{i\in \Nb}G_i$.
		\item If $G$ is compactly generated, then $\Res{\omega}(G) = G_j$ for some $j \in \Nb$; in particular, the series $(G_i)_{i\in \Nb}$ stabilizes in finitely many steps.
	\end{enumerate}
\end{thm}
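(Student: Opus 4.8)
The plan is to prove the three parts in order, since (2) and (3) lean on (1).

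For part (1), I would argue by induction on $i$. The base case $i=0$ is $G_0 = G = \Res{1}(G)$, which holds because $\xi(G/K) \le 1$ forces $K = G$. For the inductive step, suppose $G_i = \Res{i+1}(G)$; we must show $\Res{2}(G_i) = \Res{i+2}(G)$. The inclusion $\Res{i+2}(G) \le G_i$ is clear since $\xi(G/K) \le i+2$ implies $\xi(G/K) \le i+1$ is false in general — so instead I would note: if $K \normal G$ is closed with $\xi(G/K) \le i+2$, then applying Lemma~\ref{lem:d-rank_extensions} to $\triv \to G_i/(G_i \cap K) \to G/K \to G/\ol{G_iK} \to \triv$ and using $\xi(G/\ol{G_iK}) \le \xi(G/G_i) = \xi(G/\Res{i+1}(G)) \le i+1$ forces $\xi(G_i/(G_i\cap K))$ to be small; more carefully one uses that the rank formula behaves well and that $G_iK/K \le \Res{i+1}(G/K)$, giving $\xi(G_iK/K) \le 1$, hence $G_i \cap K \supseteq \Res{2}(G_i)$ (after checking $\Res{2}(G_i)$ is normal in $G$, which follows since it is characteristic in the characteristic subgroup $G_i$). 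This shows $\Res{2}(G_i) \le \Res{i+2}(G)$. Conversely, $G/\Res{2}(G_i)$ has the closed normal subgroup $G_i/\Res{2}(G_i)$ of rank $\le 2$ with quotient $G/G_i$ of rank $\le i+1$, so by Lemma~\ref{lem:d-rank_extensions}, $\xi(G/\Res{2}(G_i)) \le 1 + (i+1) = i+2$, whence $\Res{i+2}(G) \le \Res{2}(G_i)$. Combining gives (1).

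For part (2), by (1) we have $\bigcap_{i \in \Nb} G_i = \bigcap_{i \in \Nb} \Res{i+1}(G)$. Since $\Res{\alpha}(G)$ is visibly decreasing in $\alpha$ and $\Res{\omega}(G) = \bigcap\{K \normal G \text{ closed} \mid \xi(G/K) \le \omega\}$, I would show the two intersections agree: any $K$ with $\xi(G/K) \le \omega$ has $\xi(G/K) = n$ finite (a decomposition rank is always a successor, so never equal to $\omega$ itself), hence $K \supseteq \Res{n}(G) \supseteq \bigcap_i G_i$; conversely each $G_i = \Res{i+1}(G)$ is one of the defining subgroups for $\Res{\omega}(G)$ since $\xi(G/G_i) \le i+1 < \omega$. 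This yields $\Res{\omega}(G) = \bigcap_i G_i$.

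For part (3), assume $G$ is compactly generated. Apply Proposition~\ref{prop:residual_rank} (second sentence) with $\mc{K} = \{G_i \mid i \in \Nb\}$ if $G$ is elementary: there is $j$ with $\xi(G) = \xi(G/G_j)$... but that is not quite what I want. Instead, the cleaner route: if $G$ is non-elementary, then by Proposition~\ref{prop:residual_rank_bound}(2)-type reasoning $\Res{\omega}(G)$ is itself non-elementary and compactly generated (being a closed normal subgroup of finite-rank quotient type) — actually the right statement is that $G/\Res{\omega}(G)$ has rank $\le \omega$, so by Proposition~\ref{prop:residual_rank_bound}(2) (with $\alpha = \omega$, a limit ordinal) $\xi(G/\Res{\omega}(G)) < \omega$, i.e. $\xi(G/\Res{\omega}(G)) = n$ for some finite $n$. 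Then $\Res{\omega}(G) \supseteq \Res{n}(G) = G_{n-1} \supseteq \Res{\omega}(G)$, so $\Res{\omega}(G) = G_{n-1}$, and by (1) and monotonicity of the $G_i$, the series is constant from index $n-1$ onward. I expect \textbf{the main obstacle} to be the normality bookkeeping in part (1): one must take care that $\Res{2}(G_i)$, a priori only characteristic in $G_i$, is normal (indeed characteristic) in $G$, and that the rank-formula manipulations (invoking Lemma~\ref{lem:d-rank_extensions} and Proposition~\ref{prop:xi_monotone}) correctly track successor-ordinal arithmetic like $1 + (i+1) = i+2$; the rest is routine given the machinery already assembled in this section.
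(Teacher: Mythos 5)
Parts (2) and (3) of your proposal are correct and follow the same route as the paper: (2) uses that every quotient of rank at most $\omega$ has finite successor rank together with $\xi(G/G_i)\le i+1$, and (3) combines (1), (2) and Proposition~\ref{prop:residual_rank_bound}(2). The converse inclusion in part (1) --- $\Res{i+2}(G)\le \Res{2}(G_i)$ via Lemma~\ref{lem:d-rank_extensions} applied to $\triv\to G_i/\Res{2}(G_i)\to G/\Res{2}(G_i)\to G/G_i\to\triv$ --- is also correct and is essentially what the paper does, and you are right that the bookkeeping needed there is that $\Res{2}(G_i)$ is characteristic (hence closed and normal) in $G$.

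The genuine gap is in the forward inclusion $\Res{2}(G_i)\le\Res{i+2}(G)$. Your pivotal claim ``$G_iK/K\le\Res{i+1}(G/K)$, giving $\xi(G_iK/K)\le 1$'' is false as stated: already for $i=1$ and $K$ with $\xi(G/K)=3$, the group $\Res{2}(G/K)$ typically has rank $2$, not $1$ (rank $\le 1$ would force $G_i\le K$, far more than you need). What you actually need is $\xi(G_iK/K)\le 2$, so that $\xi(G_i/(G_i\cap K))\le 2$ and hence $\Res{2}(G_i)\le G_i\cap K\le K$. But the required bound $\xi(\Res{i+1}(G/K))\le 2$ whenever $\xi(G/K)\le i+2$ is not something you can simply cite: proving it amounts to understanding how the rank-$\alpha$ residuals of the \emph{quotient} $G/K$ relate to its own rank-$2$ series, i.e., essentially an instance of statement (1) for a group other than $G$. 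This is why the induction must be formulated for all \tdlcsc groups simultaneously rather than for $G$ alone; the paper's proof does exactly this, pulling $\Res{2}(G/K)$ back to $R:=\pi^{-1}(\Res{2}(G/K))\le G$ and applying the inductive hypothesis to $R$. Your argument can be repaired either that way, or by first showing (iterating Proposition~\ref{prop:rank_char_subgroup} and Lemma~\ref{lem:d-rank_extensions} along the rank-$2$ series $(H_j)$ of $H:=G/K$) that $\xi(H/H_i)\le i+1$ and $\xi(H_i)\le 2$, whence $\Res{i+1}(H)\le H_i$ has rank at most $2$ --- but some such argument must be supplied.
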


\begin{proof}

For claim (1), we argue by induction on $i\geq 1$. The base case, $i=1$, holds by definition. Suppose that the claim holds up to $i$ and consider $G_{i+1}$. Let $K\normal G$ be such that $\xi(G/K)\leq i+1$ and put $R:=\pi^{-1}(\Res{2}(G/K))$ where $\pi:G\rightarrow G/K$ is the usual projection. We see that $G_1\leq R$ and that $\xi(R/K)\leq i$ by Proposition~\ref{prop:rank_char_subgroup}. The inductive hypothesis now implies that the $i$-th term of the rank-$2$ series for $R$ is contained in $K$. Since $G_1\leq R$, we conclude that the $i$-th term of the rank-$2$ series of $G_1$ is contained in $K$. That is to say, $G_{i+1}\leq K$, hence $G_{i+1}\leq \Res{i+1}(G)$. 

For the converse inclusion, the induction hypothesis implies that $\xi(G_1/G_{i+1})\leq i+1$, since $G_{i+1}$ is the $i$-th term of the rank-$2$ series for $G_1$. It is easy to see that $\Res{2}(G/G_{i+1})=G_1/G_{i+1}$, hence applying Proposition~\ref{prop:rank_char_subgroup}, we deduce that $\xi(G/G_{i+1})\leq i+2$. Therefore, $G_{i+1}=\Res{i+1}(G)$, completing the induction.

For claim (2), the quotient $G/G_j$ is the largest quotient of $G$ of rank at most $j+1$ by claim (1).  In particular, if $G/K$ is a quotient of $G$ of finite rank, then $K \ge G_j$ for some $j$. Hence, $\bigcap_{j\in \Nb}G_j\leq \Res{\omega}(G)$. The other inclusion is obvious, so we conclude that $\Res{\omega}(G) = \bigcap_{j\in \Nb }G_j$.

For the final claim, Proposition~\ref{prop:residual_rank_bound} ensures that $G/\Res{\omega}(G)$ has finite rank, hence $\Res{\omega}(G) = G_j$ for some $j$.
\end{proof}

The next corollary is now immediate from Theorem~\ref{thm:lower_finite_rank} and Proposition~\ref{prop:residual_rank_bound}.
\begin{cor} 
	Let $G$ be a \tdlcsc group. Then $G$ is an elementary group with finite rank if and only if $G$ is rank-$2$ resolvable.
\end{cor}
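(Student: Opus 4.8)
The plan is to prove both directions by combining Theorem~\ref{thm:lower_finite_rank} with the rank bound from Proposition~\ref{prop:residual_rank_bound}, so the argument is essentially a matter of matching up definitions. First suppose $G$ is an elementary \tdlcsc group with finite rank, say $\xi(G) = n$ for some $n \in \Nb$. Since $\Res{\alpha}(G)$ is defined as the intersection of the closed normal subgroups $K$ with $\xi(G/K) \le \alpha$, and $G$ itself is such a $K$ when $\alpha \ge n$, we get $\Res{n}(G) = \triv$. By Theorem~\ref{thm:lower_finite_rank}(1), $G_{n-1} = \Res{n}(G) = \triv$, so the rank-$2$ series reaches $\triv$ after at most $n-1$ steps; that is, $G$ is rank-$2$ resolvable.

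Conversely, suppose $G$ is rank-$2$ resolvable, so $G_j = \triv$ for some $j \in \Nb$. By Theorem~\ref{thm:lower_finite_rank}(1), $\Res{j+1}(G) = G_j = \triv$. The bounded-rank residual is by definition an intersection of closed normal subgroups, hence closed, and applying Proposition~\ref{prop:residual_rank_bound}(1) with $\alpha = j+1$ (a successor ordinal, so $\sup^+(\alpha) = \alpha$) yields $\xi(G/\Res{j+1}(G)) \le j+1$. But $G/\Res{j+1}(G) = G/\triv \simeq G$, so $\xi(G) \le j+1 < \omega$, and in particular $G$ is elementary with finite rank.

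I do not expect any serious obstacle here: both implications are short deductions from the two cited results, the only mild care being to track the off-by-one in the indexing (the rank-$2$ series has $G_0 = G$, and $G_i = \Res{i+1}(G)$, so stabilization at $\triv$ in $j$ steps corresponds to $\Res{j+1}(G) = \triv$, hence $\xi(G) \le j+1$). One could also phrase the converse via Theorem~\ref{thm:lower_finite_rank}(3) and (2), observing that rank-$2$ resolvability forces $\Res{\omega}(G) = \bigcap_i G_i = \triv$ and then that $G/\Res{\omega}(G) \simeq G$ has finite rank by Proposition~\ref{prop:residual_rank_bound}, but the direct computation above is cleaner. The corollary is then immediate.
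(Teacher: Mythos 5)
Your proof is correct and follows exactly the route the paper intends: the paper states the corollary as immediate from Theorem~\ref{thm:lower_finite_rank} and Proposition~\ref{prop:residual_rank_bound}, and your two-directional argument (including the careful handling of the off-by-one indexing $G_i = \Res{i+1}(G)$) is precisely the deduction being left to the reader.
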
 

If $G$ does have finite rank, we see that the bound in Lemma~\ref{lem:d-rank_extensions} is achieved by the extensions coming from the rank-$2$ series.  Thus at least in the case of groups of finite rank, the bound in Lemma~\ref{lem:d-rank_extensions} is the best possible.

\begin{cor}
Let $G$ be an elementary \tdlcsc group such that $\xi(G) < \omega$, let $(G_i)_{i\in \Nb}$ be the rank-$2$ series for $G$ and let $i \in \Nb$.  Then
\[
\xi(G) = (\xi(G_i) - 1) + \xi(G/G_i).
\]
\end{cor}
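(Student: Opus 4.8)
The plan is to prove the two inequalities separately. The upper bound $\xi(G) \le (\xi(G_i) - 1) + \xi(G/G_i)$ is immediate: each $G_i$ is a closed normal (in fact characteristic) subgroup of $G$, so applying Lemma~\ref{lem:d-rank_extensions} to the short exact sequence $\triv \to G_i \to G \to G/G_i \to \triv$ gives it at once. Since $\xi(G) < \omega$, Proposition~\ref{prop:xi_monotone} forces $\xi(G_i)$ and $\xi(G/G_i)$ to be finite as well, so throughout the argument all ordinal sums are ordinary sums of natural numbers.

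For the reverse inequality $(\xi(G_i) - 1) + \xi(G/G_i) \le \xi(G)$ I would induct on $i$. The base case $i = 0$ is an identity, since $G_0 = G$ and $G/G_0 = \triv$. For the inductive step, the key local fact is that $\Res{2}$ lowers the rank by at least one on non-trivial finite-rank groups: $\xi(G_{i+1}) \le \xi(G_i) - 1$ whenever $G_i \ne \triv$. This is where Proposition~\ref{prop:rank_char_subgroup} enters. If $G_i$ is non-compact, then since $\xi(G_i) < \omega$ rules out case~(1) of that proposition, case~(2) must hold, giving $\xi(G_i) = \xi(\Res{2}(G_i)) + 1 = \xi(G_{i+1}) + 1$. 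If instead $G_i$ is compact, it is a non-trivial second countable profinite group, so $\xi(G_i) = 2$ and $\Res{2}(G_i) = \triv$, whence again $\xi(G_{i+1}) = 1 = \xi(G_i) - 1$. (If $G_i = \triv$ there is nothing to prove, as then $G_{i+1} = G_i$.)

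The second ingredient bounds the growth of the quotient rank. Since $G_i/G_{i+1} = G_i/\Res{2}(G_i)$ has rank at most $2$ by Proposition~\ref{prop:residual_rank_bound}(1), applying Lemma~\ref{lem:d-rank_extensions} to $\triv \to G_i/G_{i+1} \to G/G_{i+1} \to G/G_i \to \triv$ yields $\xi(G/G_{i+1}) \le 1 + \xi(G/G_i)$. Combining this with $\xi(G_{i+1}) \le \xi(G_i) - 1$ and the induction hypothesis,
\[
(\xi(G_{i+1}) - 1) + \xi(G/G_{i+1}) \le (\xi(G_i) - 2) + (1 + \xi(G/G_i)) = (\xi(G_i) - 1) + \xi(G/G_i) \le \xi(G),
\]
which closes the induction; together with the upper bound this gives the asserted equality.

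I do not expect a genuine obstacle: everything reduces to monotonicity of $\xi$ (Proposition~\ref{prop:xi_monotone}), the extension estimate of Lemma~\ref{lem:d-rank_extensions}, and the dichotomy of Proposition~\ref{prop:rank_char_subgroup}. The one point needing a little care is the bookkeeping in the degenerate cases where $G_i$ is trivial or compact, in which $\Res{2}$ collapses it all the way to $\triv$ rather than merely lowering its rank by one; but these are absorbed by the uniform inequality $\xi(\Res{2}(H)) \le \xi(H) - 1$, valid for every non-trivial finite-rank elementary \tdlcsc group $H$.
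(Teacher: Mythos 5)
Your proof is correct, and it takes a somewhat different route from the paper's, although both arguments ultimately run on the same engine: Proposition~\ref{prop:rank_char_subgroup}, which for finite-rank groups (where case (1) is impossible) shows that each application of $\Res{2}$ drops the rank by exactly one. The paper computes both sides of the identity exactly: using Theorem~\ref{thm:lower_finite_rank} it identifies $G_i = \Res{i+1}(G)$, reads off $\xi(G/G_i) = j+1$ for some $j \le i$ from Proposition~\ref{prop:residual_rank_bound}, notes that $G_i = G_j$ with a strictly descending chain $G_0 > G_1 > \dots > G_j$, and obtains $\xi(G) = \xi(G_j)+j$ by iterating Proposition~\ref{prop:rank_char_subgroup}; the equality then falls out by arithmetic. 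You instead prove the two inequalities separately: the upper bound is immediate from Lemma~\ref{lem:d-rank_extensions} applied to $\triv \to G_i \to G \to G/G_i \to \triv$, and the lower bound is an induction on $i$ combining $\xi(G_{i+1}) \le \xi(G_i)-1$ with the estimate $\xi(G/G_{i+1}) \le 1 + \xi(G/G_i)$, itself another application of Lemma~\ref{lem:d-rank_extensions} using $\xi(G_i/G_{i+1}) \le 2$. Your route avoids Theorem~\ref{thm:lower_finite_rank} entirely and is more self-contained; the paper's is shorter because it leans on the exact residual characterization of the rank-$2$ series and the exact value of $\xi(G/G_i)$. Your explicit treatment of the degenerate cases (compact or trivial $G_i$, where $\Res{2}$ collapses the group to $\triv$ rather than merely lowering its rank) is correct and necessary for the uniform inequality $\xi(\Res{2}(H)) \le \xi(H)-1$; the paper sidesteps this by iterating only along the strictly descending initial segment $G_0 > \dots > G_j$.
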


\begin{proof}
By Theorem~\ref{thm:lower_finite_rank} we have $G_i=\Res{i+1}(G)$.  It follows by Proposition~\ref{prop:residual_rank_bound} that $\xi(G/G_i) = j +1$ where $0 \le j \le i$.  Then we see that $G_i = \Res{i+1}(G) = \Res{j+1}(G) = G_j$ and that we have a strictly descending chain of subgroups $G_0 > G_1 > \dots > G_j$.  In particular, $2 \le \xi(G_k) < \omega$ for all $0 \le k < j$, so for these $k$, then $(\xi(G_k) - 1)$ is not a limit ordinal.  Repeated application of Proposition~\ref{prop:rank_char_subgroup} yields $\xi(G) = \xi(G_j) + j$; in turn, $\xi(G_j) + j = (\xi(G_j) - 1) + \xi(G/G_j)$.
\end{proof}

\section{Factorization of normal compressions in \lcsc groups}\label{sec:normal_compressions}

Suppose that $G$ and $H$ are \lcsc groups with $\psi: G \rightarrow H$ a normal compression map.  Since $G$ and $H$ are Polish groups, Proposition~\ref{prop:Polish:semidirect} and Theorem \ref{thm:psi-compression_factor_rel} supply a factorization of $\psi$ as
\[
\xymatrix{
 G\ar_{\psi}[rd] \ar^{\alpha}[r]& G\rtimes_{\psi} H\ar^{\beta}[d] \\
& H }
\]
where $\alpha$ is a closed embedding with normal image, $\beta$ is a quotient map, and $\ker(\beta)$ is isomorphic to $G$ as a topological group. 

Our aim in the present section is to use the structure of locally compact groups to factorize $\psi$ into a sequence of closed embeddings with normal image and quotient maps, such that all kernels and cokernels appearing in the sequence are small. This will allow group-theoretic properties to be transported between $G$ and $H$.

\subsection{The totally disconnected case}
We first prove a factorization result for normal compression maps $\psi: G \rightarrow H$ of \lcsc groups where $\psi$ restricts to an isomorphism on the identity components. This is, in particular, the case when $G$ and $H$ are totally disconnected.

An \defbold{FC-group}\index{FC-group} is a group in which every conjugacy class is finite; equivalently, an FC-group is such that every element has a finite index centralizer.  The set of finite conjugacy classes of a group $G$ forms a subgroup called the \defbold{FC-center}\index{FC-center} of $G$.  We shall require some basic properties of FC-groups and elements with finite conjugacy classes.

\begin{lem}\label{lem:basic:FCgroup}Let $G$ be a group.
\begin{enumerate}[(1)] 
\item If $x \in G$ is an element of finite order such that $|G:\CC_G(x)|$ is finite, then $x$ is contained in a finite normal subgroup of $G$. \textup{(Dietzmann's Lemma \cite[14.5.7]{R96}.)}
\item If $G$ is an FC-group, then $[G,G]$ and $G/\Z(G)$ are torsion. Hence, every element of $[G,G]$ or $G/\Z(G)$ is contained in a finite normal subgroup. \textup{(\cite[Section 14.5]{R96})}
\end{enumerate}
\end{lem}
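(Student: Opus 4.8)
Both statements are classical; (1) is Dietzmann's lemma, and the assertions in (2) are standard facts about FC-groups (see \cite{R96}). If one wishes to reprove them rather than merely cite, here is the route I would take. For (1), set $N := \ngrpd{x}$, the normal closure of $x$ in $G$. Since $[G:\CC_G(x)]$ is finite, the conjugacy class $x^G$ is finite, so $N = \grp{x^G}$ is finitely generated, and each of its generators, being conjugate to $x$, has the same finite order as $x$. The next step is to observe that $N$ is centre-by-finite: one has $\CC_G(N) = \bigcap_{y \in x^G}\CC_G(y)$, a finite intersection of finite-index subgroups, hence of finite index in $G$, and $\Z(N) = N \cap \CC_G(N)$, so
\[
[N:\Z(N)] = [N\CC_G(N):\CC_G(N)] \le [G:\CC_G(N)] < \infty.
\]
By Schur's theorem $[N,N]$ is finite, whence $N/[N,N]$ is a finitely generated abelian group generated by torsion elements and so is finite; therefore $N$ is finite, and it is the desired finite normal subgroup containing $x$. (Robinson's proof instead bounds the word-length of elements of $N$ over the generators and so avoids Schur's theorem; I would mention this as an alternative.)

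For the first assertion of (2), any $c \in [G,G]$ is a product of finitely many commutators $[a_i,b_i]$, so $c \in [H,H]$ where $H := \grp{a_1,\dots,a_n,b_1,\dots,b_n}$. Arguing as above, $\bigcap_i\bigl(\CC_G(a_i)\cap\CC_G(b_i)\bigr)$ has finite index in $G$ and meets $H$ inside $\Z(H)$, so $H$ is centre-by-finite; Schur's theorem gives $|[H,H]|<\infty$, so $c$ has finite order. Thus $[G,G]$ is torsion.

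The subtler assertion, that $G/\Z(G)$ is torsion, is the step I expect to be the main obstacle, since there is no uniform bound and one cannot read off a central power of an element directly from the FC-condition. Here I would fix $g \in G$; as in (1) the group $N := \ngrpd{g}$ is finitely generated and centre-by-finite, so $g^m \in \Z(N)$ for some $m \ge 1$. The conjugacy class $(g^m)^G$ is finite and contained in the abelian normal subgroup $\Z(N)$, so $A := \ngrpd{g^m}$ is a finitely generated abelian normal subgroup of $G$ with $g^m \in A$; let $T$ be its (finite) torsion subgroup. The crucial point is that the FC-condition forces $G$ to act trivially on $A/T$: for each $y \in G$ the set $\{a^{-1}ya \mid a \in A\}$ lies in the finite class $y^G$, and (writing $A$ additively) it has size $|(1-\psi_y)(A)|$, where $\psi_y$ denotes conjugation by $y$ on $A$; so $(1-\psi_y)(A)$ is a finite subgroup of the finitely generated abelian group $A$, hence lies in $T$, i.e.\ $\psi_y$ is trivial on $A/T$. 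Consequently $[g^m,y]\in T$ for all $y \in G$. Finally, since $g^m \in \Z(N)$ and $N \normal G$, the element $g^m$ commutes with $[g^m,y]\in T\le A\le N$ and with every $G$-conjugate of a power of $g^m$, as these all lie in $N$; a routine induction on $k$ using $[ab,c]=[a,c]^b[b,c]$ then yields $[g^{mk},y]=[g^m,y]^k$, so $[g^{me},y]=1$ for $e:=\exp(T)$ and every $y$. Hence $g^{me}\in\Z(G)$, and as $g$ was arbitrary, $G/\Z(G)$ is torsion.

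The concluding ``Hence'' clauses then follow from part (1): if $c$ is an element of $[G,G]$ (respectively of $G/\Z(G)$, which is again an FC-group), then $c$ has finite order by the foregoing and has finite conjugacy class since the ambient group is an FC-group, so part (1) places $c$ inside a finite normal subgroup.
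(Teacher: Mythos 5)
The paper offers no proof of this lemma at all: it is stated as a collection of classical facts with the citations to Robinson \cite{R96} that you also give, so there is nothing in the paper to compare your argument against. Your supplementary proofs are correct — part (1) via centre-by-finite plus Schur, the torsion of $[G,G]$ by reducing to a finitely generated (hence centre-by-finite) subgroup, and the torsion of $G/\Z(G)$ by the Baer-style argument that conjugation acts trivially on $A/T$ for the finitely generated abelian normal subgroup $A=\ngrpd{g^m}$ — and the concluding deductions from part (1) are sound.
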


Proving our factorization theorem requires two technical lemmas.

\begin{lem}\label{lem:delta_subgroup}
Let $G$ and $H$ be \lcsc groups with $\psi:G\rightarrow H$ a normal compression.  For $U\leq H$ and $K\leq G$ closed subgroups, form $H^{\rtimes}: = G \rtimes_{\psi} U$ where $U$ acts via the $\psi$-equivariant action and set
\[
\Delta_K := \{(k\inv,\psi(k)) \in H^{\rtimes} \mid k \in K\}.
\]
Then,
\begin{enumerate}[(1)]
\item The set $\Delta_K$ is a closed subgroup of $H^{\rtimes}$ with $\Delta_K\leq \CC_{H^{\rtimes}}(G\times \{1\})$.

\item If $\psi$ restricts to a homeomorphism from $K$ to $\psi(K)$ and $\psi(K)$ is a closed normal subgroup of $U$, then $\Delta_K$ is normal in $H^{\rtimes}$, and $q\circ \iota$ is a closed map where $q: H^{\rtimes} \rightarrow H^{\rtimes}/\Delta_K$ is the quotient map and $\iota: G \rightarrow H^{\rtimes}$ is the usual embedding.
\end{enumerate}
\end{lem}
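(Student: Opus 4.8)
The plan is to argue concretely inside $H^{\rtimes}=G\rtimes_{\psi}U$, whose underlying space is $G\times U$ with the product topology and whose multiplication is $(g_1,u_1)(g_2,u_2)=(g_1(u_1.g_2),u_1u_2)$, where $u.g=\psi\inv(u\psi(g)u\inv)$. Note $H^{\rtimes}$ is a closed subgroup of the Polish group $G\rtimes_{\psi}H$ of Proposition~\ref{prop:Polish:semidirect}, hence itself Polish, so sequences suffice in all topological arguments below. The one recurring computational fact is that for $x\in G$ and $k\in K$ one has $\psi(k).x=\psi\inv(\psi(k)\psi(x)\psi(k)\inv)=kxk\inv$ since $\psi$ is an injective homomorphism; thus the $U$-action restricted to $\psi(K)$ acting on $K$ is just conjugation in $G$ transported through $\psi$.

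For part (1): I would first check $\Delta_K$ is a subgroup via the direct computation $(k_1\inv,\psi(k_1))(k_2\inv,\psi(k_2))=((k_1k_2)\inv,\psi(k_1k_2))$, so that $k\mapsto(k\inv,\psi(k))$ is a homomorphism $K\to H^{\rtimes}$ with image $\Delta_K$; closedness follows by writing $\Delta_K=(K\times U)\cap m\inv(\{1\})$ for the continuous map $m:H^{\rtimes}\to H$, $(g,u)\mapsto\psi(g)u$, both of whose factors are closed; and the containment $\Delta_K\leq\CC_{H^{\rtimes}}(G\times\{1\})$ is immediate once one checks that both $(k\inv,\psi(k))(g,1)$ and $(g,1)(k\inv,\psi(k))$ equal $(gk\inv,\psi(k))$, using $\psi(k).g=kgk\inv$ and $1.g=g$. (One may assume $\psi(K)\subseteq U$: the condition $(k\inv,\psi(k))\in H^{\rtimes}$ already forces $\psi(k)\in U$, so $\Delta_K=\Delta_{K\cap\psi\inv(U)}$, and $K\cap\psi\inv(U)$ is again a closed subgroup.)

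For part (2), normality: Since every element of $H^{\rtimes}$ is a product $(g,1)(1,u)$ and $(g,1)\in\iota(G)$ centralizes $\Delta_K$ by part (1), it suffices to show $(1,u)\Delta_K(1,u)\inv=\Delta_K$ for each $u\in U$. A short calculation gives $(1,u)(k\inv,\psi(k))(1,u)\inv=(u.k\inv,u\psi(k)u\inv)$; by normality of $\psi(K)$ in $U$ there is a unique $k'\in K$ with $u\psi(k)u\inv=\psi(k')$, and then $u.k\inv=\psi\inv((u\psi(k)u\inv)\inv)=k'\inv$, so the conjugate equals $(k'\inv,\psi(k'))\in\Delta_K$; since $k\mapsto k'$ is a bijection of $K$, conjugation by $(1,u)$ preserves $\Delta_K$ setwise. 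Only the normality hypothesis is used here, not the homeomorphism one.

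The step I expect to be the real obstacle is showing $q\circ\iota$ is a closed map. Since $q$ is an open quotient map, a subset of $H^{\rtimes}/\Delta_K$ is closed exactly when its $q$-preimage is, so it suffices to prove that $\iota(C)\Delta_K=\{(ck\inv,\psi(k))\mid c\in C,\,k\in K\}$ is closed in $H^{\rtimes}$ for every closed $C\subseteq G$. The plan is: given $(c_nk_n\inv,\psi(k_n))\to(g,u)$, observe $\psi(k_n)\to u$ with all $\psi(k_n)\in\psi(K)$, so $u\in\psi(K)$ because $\psi(K)$ is closed in $U$, say $u=\psi(k)$; the hypothesis that $\psi$ restricts to a homeomorphism $K\to\psi(K)$ then forces $k_n\to k$, hence $k_n\inv\to k\inv$, hence $c_n=(c_nk_n\inv)k_n\to gk$; closedness of $C$ gives $gk\in C$, and $(g,u)=((gk)k\inv,\psi(k))\in\iota(C)\Delta_K$. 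Both remaining hypotheses — $\psi(K)$ closed in $U$, and $\psi$ a homeomorphism onto $\psi(K)$ — are essential precisely here; everything else is bookkeeping inside the semidirect product. A minor point to check is that the $U$-coordinate of $H^{\rtimes}$ carries the subspace topology from $H$, so $\psi(K)$, being closed in $U$, inherits the subspace topology from $H$ as well and the homeomorphism hypothesis applies verbatim to convergence in $H^{\rtimes}$.
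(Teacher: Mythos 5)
Your proposal is correct and follows essentially the same route as the paper's proof: the same direct computations for part (1) and for normality in part (2), and the same reduction of the closed-map claim to showing $\iota(C)\Delta_K$ is closed, using precisely the closedness of $\psi(K)$ in $U$ and the homeomorphism $K\to\psi(K)$. The only cosmetic difference is that the paper packages the final step as the preimage of $C$ under the continuous map $\gamma:G\times\psi(K)\to G$, $(g,\psi(k))\mapsto gk$, whereas you unfold the same idea sequentially (legitimate, since $H^{\rtimes}$ is Polish).
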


\begin{proof}
For $(1)$, easy calculations verify that $\Delta_K$ is a subgroup and that $\Delta_K$ commutes with $G \times \{1\}$. That $\Delta_K$ is closed follows since $K$ is closed and $\psi$ is continuous.

For $(2)$, since $\psi(K)$ is normal in $U$, the subgroup $\Delta_K$ is normalized by $\triv \times U$, and part $(1)$ implies $\Delta_K$ is centralized by $G \times \{1\}$. The subgroup $\Delta_K$ is thus normal in $H^{\rtimes}$. To show that $q \circ \iota$ is a closed map, it suffices to show $ \iota(R)\Delta_K$ is closed in $H^{\rtimes}$ for every closed subset $R$ of $G$. Fix such an $R$. The set $G \times \psi(K)\subseteq H^{\rtimes}$ is closed, and there is a continuous map $\gamma: G\times \psi(K) \rightarrow G$ defined by $(g,\psi(k))\mapsto gk$. Moreover, $ \iota(R)\Delta_K$ is the preimage $\gamma\inv(R)$. We deduce that $\iota(R)\Delta_K$ is a closed subset of $H^{\rtimes}$, proving that $q \circ \iota$ is a closed map.
\end{proof}

\begin{lem}\label{lem:almost_connected}
Let $G$ and $H$ be \lcsc groups with $\psi:G\rightarrow H$ a normal compression map. If $U \in \U(G)$, then $\N_H(\psi(U))$ is open in $H$.  Furthermore, for every $L\in \U(H)$, there exists $W \le_o U$ such that $\psi(W)$ is normalized by $L$.
\end{lem}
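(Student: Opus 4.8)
The plan is to transfer the question to the totally disconnected quotient $\ol{G} := G/G^{\circ}$, where $U$ leaves a \emph{compact} trace, and then to extract openness from the tube lemma. By Proposition~\ref{prop:Polish:semidirect} the $\psi$-equivariant action $H \times G \to G$, $(h,g)\mapsto h.g := \psi^{-1}(h\psi(g)h^{-1})$, is jointly continuous, and each $h$ acts as a topological automorphism of $G$ with $\psi(h.U) = h\psi(U)h^{-1}$; in particular $h$ normalises $\psi(U)$ iff $h.U = U$. Topological automorphisms preserve the identity component, so this action descends to a continuous action $\Phi\colon H \times \ol{G} \to \ol{G}$ by automorphisms (the map $\mathrm{id}_{H}\times q$ is a quotient map, where $q\colon G\to\ol{G}$ is the projection). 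Since $U\in\U(G)$, its image $\ol{U} := q(U) = U/G^{\circ}$ is a compact open subgroup of $\ol{G}$ and $U = q^{-1}(\ol{U})$; hence $h.U = U$ iff $h.\ol{U} = \ol{U}$, and therefore $\N_{H}(\psi(U))$ is precisely the stabiliser of the compact set $\ol{U}$ under $\Phi$.

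For the first assertion I would invoke the tube lemma: $\Phi^{-1}(\ol{U})$ is open and contains the compact slice $\{1\}\times\ol{U}$, so it contains a tube $V\times\ol{U}$ for some open identity neighbourhood $V$ in $H$, i.e. $v.\ol{U}\subseteq\ol{U}$ for all $v\in V$. Shrinking $V$ to be symmetric, $v^{-1}.\ol{U}\subseteq\ol{U}$ forces $\ol{U}\subseteq v.\ol{U}$, so $v.\ol{U} = \ol{U}$; thus $V\subseteq\N_{H}(\psi(U))$, and a subgroup containing an identity neighbourhood is open.

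For the ``furthermore'' part, let $L\in\U(H)$. As $\ol{G}$ is totally disconnected while $H^{\circ}$ is connected, the orbit maps show $H^{\circ}$ acts trivially on $\ol{G}$, so the $L$-action on $\ol{G}$ factors through the \emph{compact} group $L/H^{\circ}$. By the first part $\N_{H}(\psi(U))$ is open, so $L\cap\N_{H}(\psi(U))$ is an open subgroup of $L$ containing $H^{\circ}$; compactness of $L/H^{\circ}$ then makes it of finite index in $L$, so the orbit $\{\,l.\ol{U}\;:\;l\in L\,\}$ consists of finitely many compact open subgroups $\ol{U}=\ol{U}_{1},\dots,\ol{U}_{k}$ of $\ol{G}$. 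Put $\ol{W} := \bigcap_{i=1}^{k}\ol{U}_{i}$, a compact open subgroup of $\ol{G}$ with $\ol{W}\le\ol{U}$; left translation by any $l\in L$ permutes the finite set $\{\ol{U}_{1},\dots,\ol{U}_{k}\}$, so $l.\ol{W}=\ol{W}$ for all $l\in L$. Then $W := q^{-1}(\ol{W})$ is an open subgroup of $G$ with $G^{\circ}\le W\le U$, hence $W\le_o U$, and $l.W = W$ for every $l\in L$, which says exactly that $L$ normalises $\psi(W)$.

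The only genuine obstacle is the passage to $\ol{G}$. One cannot argue directly inside $G$ or $H$ because the $\psi$-equivariant action is defined through the possibly discontinuous map $\psi^{-1}$; its joint continuity is exactly the content of Proposition~\ref{prop:Polish:semidirect} (\cite[Proposition~3.5]{RW_P_15}), and one has to check that it descends continuously to the quotient $\ol{G}$. Once $U$ has been traded for the compact subgroup $\ol{U}$, the rest is a routine tube-lemma argument, together with the standard facts that $G/G^{\circ}$ is totally disconnected and that a connected group acts trivially on a totally disconnected group.
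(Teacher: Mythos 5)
Your proof is correct, and for the first assertion it takes a genuinely different route from the paper. The paper stays inside the semidirect product $H^{\rtimes}=G\rtimes_{\psi}H$: since $H^{\rtimes}$ permutes the \emph{countable} set $\U(G)$ via the (continuous) $\psi$-equivariant action, $\N_{H^{\rtimes}}(U)$ is a closed subgroup of countable index, and the Baire category theorem forces it to be open, whence $\N_H(\psi(U))$ is open. You instead push the action down to $G/G^{\circ}$, where $U$ leaves a compact open trace, and extract an identity neighbourhood normalising $U$ from the tube lemma. Both arguments rest on the joint continuity supplied by Proposition~\ref{prop:Polish:semidirect}; yours trades the countability of $\U(G)$ (a second-countability input) for the compactness of $U/G^{\circ}$, at the cost of having to verify that the action descends continuously to the quotient (which works because $\mathrm{id}_H\times q$ is an open surjection, hence a quotient map). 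The second assertion is handled essentially identically in both treatments: openness of the normaliser together with almost connectedness of $L$ yields finitely many $L$-conjugates of $\psi(U)$, and one intersects them. Your $W=q^{-1}\bigl(\bigcap_{i}\ol{U}_i\bigr)$ is exactly the paper's $\psi^{-1}\bigl(\bigcap_{l\in L}l\psi(U)l^{-1}\bigr)$, and, as you note, it lies in $\U(G)$ — a fact the paper uses implicitly when the lemma is applied later.
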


\begin{proof}
Consider $\U(G)$ as a collection of subgroups of $G \times \triv$ in the semidirect product $H^{\rtimes} = G \rtimes_{\psi} H$, which is a Polish group by Proposition~\ref{prop:Polish:semidirect}. Since $U \in \U(G)$ is a closed subgroup of $H^{\rtimes}$, the normalizer $\N_{H^{\rtimes}}(U)$ is closed. On the other hand, the $\psi$-equivariant action is continuous, so $H^{\rtimes}\acts \U(G)$ as permutations. Recalling that  $\U(G)$ is countable, we deduce that $\N_{H^{\rtimes}}(U)$ is a closed subgroup of $H^{\rtimes}$ with countable index. The Baire Category Theorem now implies that $\N_{H^{\rtimes}}(U)$ is open in $H^{\rtimes}$, and it follows that $\N_H(\psi(U))$ is open in $H$.

For the second claim, consider $L$ an almost connected subgroup of $H$ and fix $U \in \U(G)$. For each $l\in L$, the subgroup $\psi\inv(l\psi(U)l^{-1})$ is exactly the image of $U$ by $l$ under the $\psi$-equivariant action of $H$ on $G$, hence $\psi\inv(l\psi(U)l^{-1})$ is again an element of $\U(G)$. On the other hand, every open subgroup of $L$ has finite index. We deduce that $|L:\N_L(\psi(U))|$ is finite, and thus,  
\[
W := \psi\inv\left(\bigcap_{l \in L}l\psi(U)l\inv\right)
\]
is an element of $\U(G)$. Furthermore, $\psi(W)$ is normalized by $L$, verifying the claim.
\end{proof}

A \defbold{covering map}\index{covering map} between Polish groups $G$ and $H$ is a surjective, continuous homomorphism $\psi: G \rightarrow H$ such that there exist identity neighborhoods $U$ in $G$ and $V$ in $H$ for which $\psi\rest_U:U\rightarrow V$ is a homeomorphism. Covering maps between \lcsc groups can be characterized as follows: $\psi: G \rightarrow H$ is a covering map if and only if it is a surjective continuous group homomorphism such that the kernel is discrete.

We now obtain a decomposition of a normal compression $\psi$ of \tdlcsc groups as $\psi = \beta \circ \alpha$, where $\alpha$ is a closed embedding and $\beta$ is a covering map.

\begin{thm}\label{thm:compression_factoring:tdlcsc}
Suppose that $G$ and $H$ are \lcsc groups with $\psi: G \rightarrow H$ a normal compression map and that $\psi$ restricts to a homeomorphism from $G^\circ$ to $H^\circ$.  Then there is an \lcsc group $\tilde{H}$, a closed embedding $\alpha: G \rightarrow \tilde{H}$ with normal image, and a covering map $\beta: \tilde{H} \rightarrow H$ such that
\begin{enumerate}[(1)]
\item $\psi=\beta \circ \alpha$;
\item $\tilde{H}/\alpha(G)$ is compact;
\item $\ker(\beta)$ lies in the FC-center of $\tilde{H}$ and centralizes $\alpha(G)$; and
\item $\tilde{H} = \ol{\alpha(G)\ker(\beta)}$.
\end{enumerate}
If $G$ and $H$ are \tdlcsc groups, then $\tilde{H}$ is also a \tdlcsc group.
\end{thm}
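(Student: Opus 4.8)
The plan is to realize $\tilde{H}$ as a quotient of a semidirect product, building on the factorization of Theorem~\ref{thm:psi-compression_factor_rel}. Fix $L \in \U(H)$ (such a subgroup exists: $H/H^\circ$ is totally disconnected, so it has a compact open subgroup, whose preimage in $H$ lies in $\U(H)$). By Lemma~\ref{lem:almost_connected}, applied to any element of $\U(G)$ and to $L$, there is $J \in \U(G)$ with $\psi(J)$ normalized by $L$. Put $U := \psi(J)L$, an open subgroup of $H$ containing $\psi(J)$ as a normal subgroup, and form $H^{\rtimes} := G \rtimes_\psi U$; this is an open subgroup of the Polish group $G \rtimes_\psi H$ of Proposition~\ref{prop:Polish:semidirect}, hence is itself an \lcsc group. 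Write $\iota : G \to H^{\rtimes}$ for the inclusion and $\pi : H^{\rtimes} \to H$, $(g,u)\mapsto\psi(g)u$, for the continuous surjection of Theorem~\ref{thm:psi-compression_factor_rel}, so that $\ker(\pi) = \Delta_{\psi\inv(U)}$ in the notation of Lemma~\ref{lem:delta_subgroup} and $\psi = \pi\circ\iota$.

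The two facts that make the construction work are that $\psi(J)$ is closed in $H$ and that $\psi\rest_J$ is a homeomorphism onto $\psi(J)$. For the first: $G^\circ \le J$, and since $\psi$ restricts to a homeomorphism $G^\circ\to H^\circ$ we get $H^\circ \le \psi(J)$ together with a continuous injection $J/G^\circ \to H/H^\circ$; the domain is compact and the target Hausdorff, so the image $\psi(J)/H^\circ$ is a compact, hence closed, subgroup of $H/H^\circ$, whence $\psi(J)$ is closed in $H$. For the second: $\psi\rest_J$ is then a continuous bijective homomorphism of \lcsc groups (the codomain $\psi(J)$ being closed in $H$), so it is open by the open mapping theorem. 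Therefore $\psi(J)$ is a closed normal subgroup of $U$ onto which $\psi\rest_J$ is a homeomorphism, and Lemma~\ref{lem:delta_subgroup}(2) applies with $K = J$: the subgroup $\Delta_J$ is closed and normal in $H^{\rtimes}$, and $q\circ\iota$ is a closed map, where $q : H^{\rtimes}\to H^{\rtimes}/\Delta_J$ is the quotient map.

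Now set $\tilde H := H^{\rtimes}/\Delta_J$, $\alpha := q\circ\iota$, and let $\beta:\tilde H\to H$ be the map induced by $\pi$ (legitimate since $\Delta_J \le \Delta_{\psi\inv(U)} = \ker(\pi)$, using $\psi(J)\le U$). Then $\psi = \beta\circ\alpha$, and $\alpha$ is an injective continuous closed homomorphism with normal image because $\iota(G) = G\times\triv$ is normal in $H^{\rtimes}$ and $q\circ\iota$ is closed; this is~(1). Since $\tilde H/\alpha(G) \simeq H^{\rtimes}/(\iota(G)\Delta_J) \simeq U/\psi(J) \simeq L/(L\cap\psi(J))$, with $H^\circ\le L\cap\psi(J)$ and $L/H^\circ$ compact, $\tilde H/\alpha(G)$ is compact, giving~(2). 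For~(3) and the covering claim: $\ker(\beta) = \ker(\pi)/\Delta_J \simeq \psi\inv(U)/J$ is discrete because $J$ is open, so $\beta$ is a covering map; $\ker(\beta)$ centralizes $\alpha(G)$ by Lemma~\ref{lem:delta_subgroup}(1); and $\ker(\beta)$ lies in the FC-center because, being a discrete normal subgroup, the centralizer in $\tilde H$ of each of its elements is open and contains $\alpha(G)$, hence has finite index as an open subgroup of the compact group $\tilde H/\alpha(G)$. Finally Theorem~\ref{thm:psi-compression_factor_rel}(3) gives $H^{\rtimes} = \ol{\iota(G)\ker(\pi)}$, and applying the continuous surjection $q$ yields~(4). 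If $G$ and $H$ are totally disconnected, then so is $G\times H$, hence $H^{\rtimes}$, and $\tilde H$ is totally disconnected because $q$ carries a compact open subgroup of $H^{\rtimes}$ onto one of $\tilde H$; second countability is preserved throughout.

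The one genuinely delicate step is the pair of claims in the second paragraph: $\psi$ is not assumed proper, so in general it need not carry open subgroups to closed or topologically embedded subgroups, and it is precisely the hypothesis on identity components --- through the compactness of $J/G^\circ$ --- that forces $\psi(J)$ to behave well. Everything else is bookkeeping with the semidirect product, the kernel computations of Theorem~\ref{thm:psi-compression_factor_rel}, and the normality and closedness furnished by Lemma~\ref{lem:delta_subgroup}.
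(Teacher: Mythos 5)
Your proof is correct and follows essentially the same route as the paper: form $G \rtimes_\psi U$ for a suitable almost connected open subgroup $U$ of $H$, quotient by the antidiagonal $\Delta_J$ of an almost connected open $J \le G$ with $\psi(J)$ closed and normal in $U$, and read off the listed properties from Theorem~\ref{thm:psi-compression_factor_rel} and Lemma~\ref{lem:delta_subgroup}. The only notable difference is that you explicitly verify the hypotheses of Lemma~\ref{lem:delta_subgroup}(2) --- that $\psi(J)$ is closed in $H$ and $\psi\rest_J$ is a homeomorphism onto it, deduced from the compactness of $J/G^\circ$ and the hypothesis on identity components --- a point the paper's proof leaves implicit.
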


\begin{proof}
Fix $O\in \U(H)$ and form $H^{\rtimes} := G \rtimes O$ where $O$ acts on $G$ via the $\psi$-equivariant action.  By Proposition~\ref{prop:Polish:semidirect}, $H^{\rtimes}$ with the product topology is an \lcsc group. Fix $W\in \U(G)$ such that $\psi(W)\normal O$, as given by Lemma~\ref{lem:almost_connected}, and observe that $V:=W\rtimes O$ is an almost connected open subgroup of $H^{\rtimes}$. 

We now define the group $\tilde{H}$ and the maps $\alpha$ and $\beta$. For $W$ as fixed in the previous paragraph, Lemma~\ref{lem:delta_subgroup} ensures 
\[
\Delta_{W}:=\{(w^{-1},\psi(w))\in H^{\rtimes}\mid w\in W\}
\]
is a closed normal subgroup of $H^{\rtimes}$. We put $\tilde{H}:=H^{\rtimes}/\Delta_W$. For the map $\alpha$, let $\iota: G \rightarrow H^{\rtimes}$ be the obvious inclusion and let $q:H^{\rtimes}\rightarrow \tilde{H}$ be the usual projection. We set $\alpha:=q\circ \iota$. To define $\beta$, consider the quotient map $\pi: H^{\rtimes} \rightarrow H$ as in Theorem~\ref{thm:psi-compression_factor_rel}. The kernel of $\pi$ is exactly $\Delta_G$, and since $\Delta_{G}$ contains $\Delta_W$, the map $\pi$ induces the desired map $\beta:\tilde{H}\rightarrow H$. 

 Let us now check the claimed properties. The maps $\alpha$ and $\beta$ are clearly continuous. It is also clear that $\alpha$ is injective with normal image. That $\alpha$ is a closed map is given by Lemma~\ref{lem:delta_subgroup}. To see $\beta$ is a covering map, we remark that $(g\inv,\psi(g)) \in \Delta_W V$ implies $g \in W$, so $\Delta_G \cap (\Delta_W V)= \Delta_W$. Therefore, $\ker(\beta)\cap q(V)=\{1\}$, so $\ker(\beta)$ is discrete, verifying that $\beta$ is a covering map.

Claims $(1)$ and $(4)$ are immediate from Theorem~\ref{thm:psi-compression_factor_rel}. For claim $(2)$, the subgroup $\iota(G)\Delta_W$ contains the connected component of $H^{\rtimes}$, since $\psi(G^{\circ})=H^{\circ}$. Recalling that $O$ is almost connected, it follows that the quotient 
\[
H^{\rtimes}/\iota(G)\Delta_W\simeq \tilde{H}/\alpha(G)
\]
is compact.

For claim $(3)$, $\ker(\beta)$ equals $\Delta_{G}/\Delta_{W}$. The group $\Delta_{G}$ is centralized by $\iota(G)$, hence $\ker(\beta)$ is centralized by $\alpha(G)$, verifying the second claim of $(3)$. In view of $(2)$, every element of $\ker(\beta)$ has a cocompact centralizer in $\tilde{H}$. On the other hand, $\ker(\beta)$ is a discrete normal subgroup of $\tilde{H}$, so every element indeed has an open centralizer. We thus deduce every $x \in \ker(\beta)$ has a finite index centralizer and so has a finite conjugacy class.  That is to say, $\ker(\beta)$ is in the FC-center of $\tilde{H}$.  
\end{proof}

A locally compact group $G$ is \textbf{compactly presented}\index{compactly presented} if there is a presentation for $G$ as an abstract group $G= \langle S\mid R \rangle$ such that $S$ has compact image in $G$ and $R$ consists of relators with bounded length. For such groups, we obtain a stronger factorization.

\begin{cor}\label{cor:compression_factoring:compactly_presented}
Suppose that $G$ and $H$ are \lcsc groups with $\psi: G \rightarrow H$ a normal compression map and that $\psi$ restricts to a homeomorphism from $G^\circ$ to $H^\circ$. If $G$ is compactly generated and $H$ is compactly presented, then $\tilde{H}$, $\alpha$, and $\beta$ given by Theorem~\ref{thm:compression_factoring:tdlcsc} can be chosen such that $\tilde{H}/\alpha(G)$ is compact abelian and $\ker(\beta)$ is central, finitely generated, and torsion free.
\end{cor}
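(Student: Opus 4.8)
The plan is to refine the triple $(\tilde{H},\alpha,\beta)$ furnished by Theorem~\ref{thm:compression_factoring:tdlcsc} by dividing out a finite normal subgroup. First observe that $\tilde{H}$ is compactly generated: by conclusion~(2) of that theorem $\alpha(G)$ is a closed normal subgroup with $\tilde{H}/\alpha(G)$ compact, and $\alpha(G)\cong G$ is compactly generated, so $\tilde{H}$ is compactly generated (adjoin to a compact generating set of $\alpha(G)$ a compact subset of $\tilde{H}$ mapping onto the compact quotient). Since $H$ is compactly presented and $\tilde{H}$ is compactly generated, the kernel of $\beta\colon\tilde{H}\to H$ is compactly normally generated --- hence, being discrete, it is the normal closure in $\tilde{H}$ of a finite set $\{x_1,\dots,x_m\}$. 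By conclusion~(3), $\ker\beta$ lies in the FC-center of $\tilde{H}$, so each $x_i$ has only finitely many conjugates in $\tilde{H}$; therefore the normal closure of $\{x_1,\dots,x_m\}$ is generated by finitely many elements, and $\ker\beta$ is finitely generated.

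Now analyse $F:=\ker\beta$ as a finitely generated FC-group. Such a group is center-by-finite, so $F/\mathrm{Z}(F)$ is finite, whence $[F,F]$ is finite by Schur's theorem. It follows that the set $T$ of torsion elements of $F$ is exactly the preimage in $F$ of the (finite) torsion subgroup of the finitely generated abelian group $F/[F,F]$; thus $T$ is a finite characteristic subgroup of $F$ with $[F,F]\leq T$, and $F/T\cong\Zb^{n}$ for some $n\geq 0$. As $F\normal\tilde{H}$ and $T$ is characteristic in $F$, we have $T\normal\tilde{H}$, and $T\cap\alpha(G)\leq\ker\beta\cap\alpha(G)=\triv$. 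Replace $\tilde{H},\alpha,\beta$ by $\tilde{H}':=\tilde{H}/T$ and the induced maps $\alpha',\beta'$ (the latter defined because $T\leq\ker\beta$). Routine checks show $\alpha'$ is a closed embedding with normal image (note $\alpha(G)T$ is a finite union of closed cosets of $\alpha(G)$, hence closed, and $T$ is finite, so $\alpha'$ is a continuous bijective homomorphism onto a closed subgroup, hence a homeomorphism onto it), that $\beta'$ is again a covering map (its kernel $F/T$ is discrete), and that conclusions~(1),~(2), and~(4) of Theorem~\ref{thm:compression_factoring:tdlcsc} pass to the new triple; in particular $\tilde{H}'/\alpha'(G)$ is a Hausdorff quotient of the compact group $\tilde{H}/\alpha(G)$, so it is compact. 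Moreover $\ker\beta'=F/T$ is finitely generated and torsion free.

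It remains to upgrade the two structural conclusions. Since $\ker\beta'$ is abelian and, by conclusion~(3), centralizes $\alpha'(G)$, it centralizes the subgroup $\alpha'(G)\ker\beta'$, which is dense in $\tilde{H}'$ by conclusion~(4); hence $\ker\beta'$ centralizes $\tilde{H}'$, i.e. it is central. For the cokernel, the image of $\ker\beta'$ under the quotient map $\tilde{H}'\to\tilde{H}'/\alpha'(G)$ is an abelian subgroup, and it is dense because $\alpha'(G)\ker\beta'$ is dense in $\tilde{H}'$; so $\tilde{H}'/\alpha'(G)$, being the closure of an abelian subgroup in a Hausdorff topological group, is abelian. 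Combined with the preceding paragraph, $(\tilde{H}',\alpha',\beta')$ satisfies all the conclusions of Theorem~\ref{thm:compression_factoring:tdlcsc} together with the required strengthenings.

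The crux of the argument is the finite normal generation of $\ker\beta$: this is the one place where compact presentability of $H$, rather than mere compact generation, is essential, and it should be invoked from the classical theory of compactly presented groups (e.g. Abels, or the treatment of group extensions of compactly presented groups in the literature). Everything after that is elementary --- Schur's theorem and the structure of finitely generated FC-groups --- together with the bookkeeping needed to check that passing to the quotient by the finite normal subgroup $T$ preserves all the conclusions of Theorem~\ref{thm:compression_factoring:tdlcsc}.
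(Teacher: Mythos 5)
Your proof is correct, and it takes a genuinely different route from the paper's after the common opening moves. Both arguments begin identically: $\tilde{H}$ is compactly generated because $\alpha(G)$ is cocompact, so compact presentability of $H$ (via the cited fact that the kernel of a quotient from a compactly generated group onto a compactly presented group is compactly normally generated) together with discreteness and the FC-property forces $\ker(\beta)$ to be finitely generated. From there the paper stays inside the explicit construction of Theorem~\ref{thm:compression_factoring:tdlcsc}: it shrinks the almost connected open subgroup $O \le H$ twice, first to a finite-index $O'$ landing in the finite-index subgroup $\CC_{\tilde{H}}(\ker\beta)$ (giving centrality of the new kernel), and then to $O''$ chosen so that the image of the kernel in the profinite group $O''/\psi(W)$ avoids torsion (giving torsion-freeness); abelianness of the cokernel then comes from the normal compression $\ker(\beta) \to \tilde{H}/\alpha(G)$. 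You instead keep the triple fixed and quotient $\tilde{H}$ by the finite characteristic subgroup $T$ of torsion elements of the finitely generated FC-group $\ker(\beta)$ (using B.\ H.\ Neumann's center-by-finite theorem plus Schur), then verify that all conclusions of Theorem~\ref{thm:compression_factoring:tdlcsc} descend to $\tilde{H}/T$; your derivations of centrality (an abelian kernel centralizing the dense subgroup $\alpha'(G)\ker(\beta')$) and of abelianness of the cokernel (dense abelian image of the kernel) are clean and correct. The only caveat is interpretive rather than mathematical: your final triple is not literally produced by the semidirect-product construction in the proof of Theorem~\ref{thm:compression_factoring:tdlcsc}, but it satisfies all of that theorem's stated conclusions, which is all the corollary requires. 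Your approach buys a more structural explanation of where torsion-freeness comes from (the structure theory of finitely generated FC-groups), at the cost of the bookkeeping needed to push the conclusions through the quotient by $T$; the paper's approach avoids that bookkeeping by re-running its own construction with smaller parameters.
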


\begin{proof}
Take $O\in \U(H)$, find $W\in \U(G)$ such that $\psi(W)\normal O$, and form $\tilde{H}=G\rtimes O/\Delta_W$ as given by Theorem~\ref{thm:compression_factoring:tdlcsc}. The group $\tilde{H}$ is compactly generated since it contains $G$ as a cocompact subgroup.

In view of \cite[Proposition 8.A.10]{CdH14}, $\ker(\beta)$ is the normal closure of a compact set. Applying Theorem~\ref{thm:compression_factoring:tdlcsc}, $\ker(\beta)$ is discrete and contained in the $FC$-center of $\tilde{H}$, hence $\ker(\beta)$ is indeed \textit{finitely generated} by some finite subset $X$. Each $x\in X$ is centralized by a finite index open subgroup of $\tilde{H}$, so $C:=\CC_{\tilde{H}}(\ker(\beta))$ is a finite index subgroup of $\tilde{H}$. 

Letting $q:G\rtimes O\rightarrow \tilde{H}$ be the usual projection, we may find $O'\leq O$ open with finite index such that $q(\triv \times O')\leq C$. Since $q\circ\iota(G)=\iota(G)\Delta_W/\Delta_W$ centralizes $\ker(\beta)$, we may assume further that $\psi(W)\leq O'$. We now apply Theorem~\ref{thm:compression_factoring:tdlcsc} to $O'$ and $W$ to form $\tilde{H}'$, $\alpha'$, and $\beta'$. The group $\tilde{H}'$ is an open subgroup of $\tilde{H}$ contained in $C$, and thus, $\ker(\beta')$ is central in $\tilde{H}'$. We thus deduce that $\ker(\beta')$ is a central finitely generated subgroup. 

Observing that $\ker(\beta')=\Delta_G/\Delta_W$, the projection on the second coordinate gives an injective homomorphism $\ker(\beta')\rightarrow O'/\psi(W)$. The group $O'/\psi(W)$ is a profinite group since $\psi(G^{\circ})=H^{\circ}$. We may thus find $O''\leq O'$ a finite index open subgroup such that $\psi(W)\leq O''$ and $O''/\psi(W)$ avoids the torsion elements of the image of $\ker(\beta')$ in $O'/\psi(W)$. Applying Theorem~\ref{thm:compression_factoring:tdlcsc} to $O''$ and $W$, we form $\tilde{H}''$, $\alpha''$, and $\beta''$. The group $\tilde{H}''$ is an open subgroup of $\tilde{H}'$ contained in $C$, and thus, $\ker(\beta'')$ is central in $\tilde{H}''$. Furthermore, the map $\Delta_G/\Delta_W\rightarrow O''/\psi(W)$ must have torsion free image. We thus deduce that $\ker(\beta'')$ is central, finitely generated, and torsion free. Without loss of generality, we now assume $O=O''$.

Appealing to Theorem~\ref{thm:compression_factoring:tdlcsc} again, there is a normal compression map $\theta:\ker(\beta)\rightarrow \tilde{H}/\alpha(G)$. Since $\ker(\beta)$ is abelian, we conclude that $\tilde{H}/\alpha(G)$ is abelian, verifying the corollary.
\end{proof}

\subsection{The general case}
We now relax the requirement that $\psi$ restricts to an isomorphism from $G^\circ$ to $H^\circ$. To deal with the connected components, we require several preliminary results.

For an \lcsc group $G$, the group of continuous automorphisms, denoted by $\Aut(G)$, carries a Polish topology, namely the modified compact-open topology\index{modified compact open topology}; this topology is alternatively called the Braconnier topology or the Arens $g$-topology.  When we consider $\Aut(G)$ as a topological group, we always mean with respect to this to this topology.

\begin{lem}\label{lem:continuous_aut_ext} Suppose that $G$ and $H$ are \lcsc groups with $\psi:G\rightarrow H$ a continuous embedding with dense image and that $A\leq \Aut(G)$ is closed. If every $ \alpha\in A$ has $\epsilon(\alpha)\in \Aut(H)$ such that $\psi\circ \alpha=\epsilon(\alpha)\circ \psi$, then the map $\epsilon:A\rightarrow \Aut(H)$ is continuous.
\end{lem}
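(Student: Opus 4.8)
The plan is to reduce continuity of $\epsilon$ to a statement about convergence on compact sets, exploiting that $\psi$ has dense image and that the modified compact-open topology on $\Aut(H)$ is generated by the behaviour of automorphisms (and their inverses) on compact subsets of $H$. First I would recall the concrete description of a neighbourhood base of the identity in $\Aut(H)$ with the Braconnier topology: a basic neighbourhood of $\mathrm{id}_H$ is determined by a compact $C\subseteq H$, an open identity neighbourhood $V\subseteq H$, and a compact identity neighbourhood; it consists of those $\eta\in\Aut(H)$ with $\eta(c)c^{-1}\in V$ and $\eta^{-1}(c)c^{-1}\in V$ for all $c\in C$. So to prove $\epsilon$ continuous it suffices to fix $\alpha_0\in A$ and show that for every such compact $C$ and open $V$, there is a neighbourhood $\mc{O}$ of $\alpha_0$ in $A$ with $\epsilon(\alpha)\epsilon(\alpha_0)^{-1}$ and its inverse $V$-close to the identity on $C$ for all $\alpha\in\mc{O}$. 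Replacing $\alpha$ by $\alpha\alpha_0^{-1}$ and using that $\epsilon$ is a homomorphism (which follows from the intertwining relation together with density of $\psi(G)$, since an element of $\Aut(H)$ is determined by its restriction to a dense subgroup), I may assume $\alpha_0=\mathrm{id}$.

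The key step is a closed-graph / automatic-continuity argument. Consider the graph
\[
\Gamma:=\{(\alpha,\epsilon(\alpha))\mid \alpha\in A\}\subseteq \Aut(G)\times\Aut(H).
\]
Since $A\leq\Aut(G)$ is closed and $\Aut(G)$, $\Aut(H)$ are Polish, it suffices to show $\Gamma$ is closed in $A\times\Aut(H)$: then $\Gamma$ is a Polish group, the projection $\Gamma\to A$ is a continuous bijective homomorphism between Polish groups, hence by the open mapping theorem a topological isomorphism, and composing its inverse with the projection $\Gamma\to\Aut(H)$ gives continuity of $\epsilon$. To see $\Gamma$ is closed, suppose $(\alpha_n,\epsilon(\alpha_n))\to(\alpha,\eta)$ with $\alpha\in A$. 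From $\psi\circ\alpha_n=\epsilon(\alpha_n)\circ\psi$ and joint continuity of evaluation in the Braconnier topology, passing to the limit gives $\psi\circ\alpha=\eta\circ\psi$. But also $\psi\circ\alpha=\epsilon(\alpha)\circ\psi$ by hypothesis, so $\eta$ and $\epsilon(\alpha)$ agree on the dense subgroup $\psi(G)$ of $H$; being continuous homomorphisms $H\to H$ that agree on a dense set, they are equal, so $\eta=\epsilon(\alpha)$ and $(\alpha,\eta)\in\Gamma$.

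The main obstacle is the evaluation/limit step: one must be careful that the Braconnier topology on $\Aut(H)$ is fine enough that $\alpha_n\to\alpha$ and $\epsilon(\alpha_n)\to\eta$ really force $\epsilon(\alpha_n)(\psi(g))\to\eta(\psi(g))$ and $\psi(\alpha_n(g))\to\psi(\alpha(g))$ for fixed $g$, so that the intertwining relation survives in the limit. This is where the continuity of the $\Aut$-action on the underlying groups (evaluation $\Aut(H)\times H\to H$ is continuous, and likewise for inverses) is used; I would cite this as a standard property of the modified compact-open topology. Once that pointwise-on-$\psi(G)$ convergence is in hand, density of $\psi(G)$ in $H$ closes the argument. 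A minor bookkeeping point, which I would dispatch first, is verifying that $\epsilon$ is well-defined and a homomorphism: $\epsilon(\alpha)$ is uniquely determined by $\alpha$ because two continuous automorphisms of $H$ agreeing on the dense set $\psi(G)$ coincide, and $\epsilon(\alpha\beta)$ and $\epsilon(\alpha)\epsilon(\beta)$ both intertwine with $\psi\circ\alpha\beta$, hence are equal by the same density argument.
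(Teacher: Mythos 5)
Your proof is correct, but it takes a genuinely different route from the paper's. The paper proves continuity of $\epsilon$ by showing it is \emph{Borel measurable} and then invoking the standard fact that a Borel homomorphism between Polish groups is continuous: it fixes a sub-basic neighbourhood $\mf{A}_H(K,U)$ of the identity in $\Aut(H)$, writes $K$ as the closure of an open set, covers $U$ by basic open sets $B$ with $\ol{B}\subseteq U$, exhausts $\psi^{-1}(K)$ by compacta $J_i$, and exhibits $\epsilon^{-1}(\mf{A}_H(K,U))$ as a countable union of countable intersections of open sets $\mf{A}_G(J_i,\psi^{-1}(B))\cap A$. You instead verify that the graph of $\epsilon$ is a closed subgroup of $A\times\Aut(H)$ (using continuity of the evaluation action in the Braconnier topology to pass the intertwining relation $\psi\circ\alpha_n=\epsilon(\alpha_n)\circ\psi$ to the limit, and density of $\psi(G)$ to identify the limit automorphism) and then apply the open mapping theorem to the first projection $\Gamma\to A$. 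Both arguments ultimately rest on automatic-continuity phenomena for Polish groups, and both use that $\epsilon$ is a homomorphism determined on the dense subgroup $\psi(G)$; but your closed-graph argument avoids the paper's delicate combinatorial bookkeeping with the sub-basis of the Braconnier topology, at the cost of needing the (standard, and worth citing explicitly) fact that evaluation $\Aut(H)\times H\rightarrow H$ is continuous, which guarantees pointwise convergence $\epsilon(\alpha_n)(\psi(g))\rightarrow\eta(\psi(g))$ and $\psi(\alpha_n(g))\rightarrow\psi(\alpha(g))$. Note also that your opening paragraph's reduction to neighbourhoods of the identity is never actually used once the closed-graph argument takes over; it could be deleted.
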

\begin{proof}
That $\psi$ has a dense image ensures the map $\epsilon$ is a group homomorphism. To show that $\epsilon$ is continuous, it thus suffices to verify that it is Borel measurable, since $A$ and $\Aut(H)$ are Polish groups; see, for example, \cite[(9.10)]{K95}.

A sub-basis at $1$ for $\Aut(H)$ is given by the sets
\[
\mf{A}_H(K,U):=\{\beta\in \Aut(H)\mid \forall x\in K\; \beta(x)x^{-1}\in U\text{ and }\beta^{-1}(x)x^{-1}\in U\}
\]
where $K\subseteq H$ is compact and $U\subseteq H$ is an open neighborhood of $1$. We may further restrict to compact sets $K$ which are the closure of an open set. To show $\epsilon$ is Borel measurable, it is therefore enough to show the preimage of $\mf{A}_H(K,U)$ is Borel-measurable in $A$ for $U$ open and $K=\ol{O}$ with $O$ open and relatively compact. Fix such a $K$ and $U$.

Fix a countable basis $\mc{B}$ for the topology on $H$. Since $U$ is open and $H$ is a regular space, we may cover $U$ by $B\in \mc{B}$ such that $\ol{B}\subseteq U$. Let $\mc{C}\subseteq \mc{B}$ be a cover of $U$ by such $B$; we may additionally assume $\mc{C}$ is closed under finite unions. The set $\psi^{-1}(K)$ is a non-trivial closed set in $G$, so we may write $\psi^{-1}(K)=\bigcup_{i\in \Nb}J_i$ where the $J_i$ are compact sets. 

For $B\in \mc{C}$ and $i\in \Nb$, we now consider
\[
\Sigma(i,B):=\mf{A}_G\left(J_i,\psi^{-1}(B)\right)\cap A.
\]
We claim 
\[
\epsilon^{-1}(\mf{A}_H(K,U))=\bigcup_{B\in \mc{C}}\bigcap_{i \in \Nb} \Sigma(i,B).
\]
Since the $\Sigma(i,B)$ are open sets in $A$, this will prove $\epsilon$ is Borel measurable.

Take $\beta\in \bigcup_{B\in \mc{C}}\bigcap_{i \in \Nb} \Sigma(i,B)$ and say that $B\in \mc{C}$ is such that $\beta \in \Sigma(i,B)$ for all $i$. Take $x \in \psi(\psi^{-1}(K))$ and suppose that $x = \psi(y)$ for $y \in \psi^{-1}(K)$. Since $\epsilon(\beta)^{\pm 1}\circ\psi =\psi\circ\beta^{\pm 1}$, we have
\[
\epsilon(\beta)^{\pm 1}(x)x^{-1} = \psi(\beta^{\pm 1}(y)y^{-1}) \in B.
\]
The set $K$ is the closure of an open set, so $\psi(\psi^{-1}(K))$ is dense in $K$. We thus deduce that $\epsilon(\beta)^{\pm 1}(x)x^{-1} \in \ol{B}\subseteq U$ for all $x\in K$. That is to say, $\epsilon(\beta)\in \mf{A}_H(K,U)$.

On the other hand, take $\beta\in \epsilon^{-1}(\mf{A}_H(K,U))$. Since $K$ is compact, $\mc{C}$ is closed under finite unions and the maps $K\rightarrow U$ by $x\mapsto \epsilon(\beta)^{\pm 1}(x)x^{-1}$ are continuous, there is some $B\in \mc{C}$ for which $B$ contains the image of both maps. For each $i$ and $y\in J_i$, we thus have $\epsilon(\beta)^{\pm 1}(\psi(y))\psi(y)^{-1}\in B$. Hence, $\beta^{\pm 1}(y)y^{-1}\in \psi^{-1}(B)$. It now follows that $\beta\in \bigcap_{i \in \Nb}\Sigma(i,B)$, and the claim is demonstrated.	
\end{proof}

A locally compact group $G$ is a \defbold{(CA) group}\index{(CA) group} if the natural map $\rho:G\rightarrow \Aut(G)$ has a closed image, where $\Aut(G)$ is endowed with the modified compact-open topology.

\begin{thm}[Zerling, {\cite[Main Theorem]{Zerling_75}}\label{thm:Zerling}]
Suppose that $G$ is a connected locally compact group that is not a (CA) group.  Then there is a (CA) locally compact connected group $N$, a torus $T$ that acts continuously on $N$, and a vector group $V$ that maps densely into $T$, such that:
\begin{enumerate}[(1)]
\item $P := N \rtimes T$ is a (CA) locally compact group.
\item $G \simeq N \rtimes V$.
\item Every automorphism of $G$ as a topological group extends to an automorphism of $P$ as a topological group.
\end{enumerate}
\end{thm}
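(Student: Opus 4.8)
The plan is to locate the failure of the (CA) property inside $\Aut(G)$, specifically in the closure of $\Inn(G)$, and to read off $N$, $T$, and $V$ from that closure. First I would reduce to the case that $G$ is a connected Lie group. By Theorem~\ref{thm:yamabe_radical}, $K := \RadLE(G)$ is compact, normal, hence characteristic, with $G/K$ a connected Lie group; restriction to the quotient gives a continuous homomorphism $\Aut(G) \to \Aut(G/K)$ sending $\Inn(G)$ onto $\Inn(G/K)$ with compact kernel (compact because $K$ is). I would use this to show that $G$ is a (CA) group if and only if $G/K$ is, and that a decomposition $(N_0,T,V_0)$ as in the statement for $G/K$ lifts to one for $G$: the vector group $V_0$ has no nontrivial compact subgroups, so its preimage in $G$ splits as $K \rtimes V$ via the Iwasawa decomposition, giving a copy of $V_0$ inside $G$, while the preimage of $N_0$ is compact-by-(CA), hence (CA), and absorbs $K$. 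Tori and vector groups being Lie, the torus $T$ and the density of $V$ in $T$ carry over unchanged.

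Assuming now that $G$ is a connected Lie group, $\Aut(G)$ with the modified compact-open topology is a Lie group, $\ker(\rho\colon G\to\Aut(G)) = \Z(G)$, and $\Inn(G) = \rho(G)$ is a connected Lie subgroup of $\Aut(G)$, normal in it. The main structural input is the Mal'cev-type closure theorem for connected Lie subgroups: $\ol{\Inn(G)} = \Inn(G)\cdot T$ for some torus $T \leq \Aut(G)$ normalizing $\Inn(G)$. If $T \leq \Inn(G)$ then $\Inn(G)$ is closed and $G$ is (CA), contrary to hypothesis, so I would fix $T$ minimal with $\ol{\Inn(G)} = \Inn(G)\,T$; then $T \cap \Inn(G)$ is a proper subtorus of $T$.

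Next I would extract the vector group and the complement. Using the splitting theory of connected (solvable-by-semisimple) Lie groups, I would produce a closed vector subgroup $V \leq G$ with $\rho|_V$ injective and $\ol{\rho(V)} = T$, so that $\rho(V)$ is a vector subgroup of $\Aut(G)$ winding densely around the torus $T$. Taking $N \normal G$ to be a complementary normal subgroup with $N \cap V = \{1\}$ and $NV = G$ --- essentially the $\rho$-preimage of the ``(CA)-part'' of $\ol{\Inn(G)}$ --- gives $G \simeq N \rtimes V$, and I would argue from the minimality of $T$ that $\Inn(N)$ is already closed in $\Aut(N)$, i.e.\ that $N$ is a (CA) group, all the winding that obstructs closedness for $G$ having been pushed into the $V$-direction. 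Then $T$ acts continuously on $N$ (continuity via a measurability argument as in Lemma~\ref{lem:continuous_aut_ext}), and setting $P := N \rtimes T$ the group $\Inn(P)$ is closed in $\Aut(P)$ because $T$ is compact and $\Inn(N)$ closed; so $P$ is (CA), and $G \simeq N\rtimes V \hookrightarrow N\rtimes T = P$. For clause (3), any $\phi \in \Aut(G)$ conjugates $\Inn(G) = \rho(G)$ to itself, hence fixes $\ol{\Inn(G)}$ and permutes its minimal tori; after composing $\phi$ with an inner automorphism I may assume $\phi$ normalizes $T$, whence $\phi$ preserves the decomposition $G = NV$ and its action on $V$ extends uniquely and continuously to $T$ by density, producing the required automorphism of $P$.

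The step I expect to be the main obstacle is the extraction of $V$ and $N$: simultaneously making $N$ normal, complementary to a vector group, \emph{and} (CA). This requires combining the Mal'cev closure theorem with the conjugacy and splitting theory of maximal tori and vector complements in connected Lie groups, and exploiting the minimality of $T$ in full to know that the obstruction to closedness of $\Inn(G)$ is confined to the complementary vector direction and does not survive in $N$. The reduction to the Lie case --- which rests essentially on compactness of $\RadLE(G)$ and on the behaviour of $\Aut$ under quotients by compact normal subgroups --- is a secondary but nontrivial point.
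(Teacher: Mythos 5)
This theorem is not proved in the paper at all: it is quoted verbatim as Zerling's Main Theorem from his 1975 article and used as a black box (e.g.\ in Corollary~\ref{cor:Zerling:derived}). There is therefore no internal proof to compare against, and your proposal has to be judged as an attempted reproof of Zerling's result.

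Judged that way, it has two genuine gaps. First, the reduction to the Lie case does not work as written. The kernel of the restriction map $\Aut(G)\to\Aut(G/K)$ for $K=\RadLE(G)$ consists of all automorphisms inducing the identity on $G/K$, and there is no reason for this to be compact just because $K$ is; already the kernel of $\Inn(G)\to\Inn(G/K)$ is $\rho$ of the full preimage of $\Z(G/K)$, which is noncompact whenever $G/K$ has noncompact centre. Consequently the asserted equivalence ``$G$ is (CA) if and only if $G/K$ is (CA)'' is left unproved, and the behaviour of the (CA) property under quotients by compact normal subgroups is a delicate point in this literature, not a formality. Second, and more seriously, the heart of the theorem --- producing a closed vector subgroup $V\le G$ with $\ol{\rho(V)}=T$ together with a closed \emph{normal} complement $N$ that is itself (CA) --- is precisely the step you defer. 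Goto's theorem $\ol{\Inn(G)}=\Inn(G)\cdot T$ does give you a torus in $\Aut(G)$, but $\ol{\Inn(G)}$ has no canonical ``(CA)-part'' whose $\rho$-preimage you can take, and nothing in the sketch explains why the obstruction to closedness of $\Inn(G)$ can be concentrated in a single vector direction that splits off as a semidirect factor of $G$. Since that splitting is the entire content of Zerling's theorem, the proposal is an outline of a plausible strategy rather than a proof; for the purposes of this paper the correct move is what the authors do, namely cite Zerling.
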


Given a normal compression map $\psi: G \rightarrow H$ of connected \lcsc groups, we have control over the derived group of $H$.

\begin{cor}\label{cor:Zerling:derived}
If $G$ and $H$ are connected \lcsc groups with $\psi: G \rightarrow H$ a normal compression map, then $\ol{[H,H]} \le \psi(G)\Z(H)$.
\end{cor}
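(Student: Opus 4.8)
The plan is to study the $\psi$-equivariant action of $H$ on $G$. Its image lies in $\ol{\Inn(G)}$, and the content of the corollary amounts to the fact that $\ol{\Inn(G)}/\Inn(G)$ is abelian, which is exactly what Zerling's theorem (Theorem~\ref{thm:Zerling}) delivers.

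First I would record the basic features of the $\psi$-equivariant action $\theta\colon H\to\Aut(G)$, $\theta(h)\colon g\mapsto\psi^{-1}(h\psi(g)h^{-1})$. This is well defined since $\psi(G)\normal H$, and $\theta(\psi(g))=\inn(g)$, so $\theta(\psi(G))=\Inn(G)$ and $\ker\theta=\CC_H(\psi(G))=\Z(H)$ (the centralizer of a dense subgroup equals the center). The map $\theta$ is continuous: by Proposition~\ref{prop:Polish:semidirect} the semidirect product $G\rtimes_\psi H$ is an \lcsc group, a standard tube-lemma argument shows that an \lcsc group acts continuously (for the Braconnier topology) on any closed normal subgroup by conjugation, and restricting the conjugation action of $G\rtimes_\psi H$ on $G\times\{1\}$ to $\{1\}\times H$ recovers $\theta$ since $(1,h)(g,1)(1,h)^{-1}=(h.g,1)$. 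As $\psi(G)$ is dense in $H$ and $\theta$ is continuous, $\theta(H)\subseteq\ol{\theta(\psi(G))}=\ol{\Inn(G)}$.

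The heart of the argument is that $\ol{\Inn(G)}/\Inn(G)$ is abelian. If $G$ is a (CA) group then $\Inn(G)=\rho(G)$ is closed, this is immediate, and moreover $\theta(H)=\Inn(G)=\theta(\psi(G))$, which forces $H=\psi(G)\Z(H)$, so the corollary is trivial. Otherwise, apply Theorem~\ref{thm:Zerling} to $G$ to get $G\simeq N\rtimes V$, a (CA) group $P=N\rtimes T$ with $T$ a torus, and a vector group $V$ mapping densely into $T$, such that every automorphism of $G$ extends to $P$. Then $G=N\rtimes V$ is a dense subgroup of $P$, and it is normal because $T$ is abelian. I would apply Lemma~\ref{lem:continuous_aut_ext} to the inclusion $G\hookrightarrow P$ with $A=\Aut(G)$, using Zerling's extension property, to conclude that the extension map $e\colon\Aut(G)\to\Aut(P)$ is a continuous homomorphism; it is injective since $G$ is dense in $P$, and $e(\inn(g))=\inn(g)$ computed in $P$. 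As $P$ is (CA), $\Inn(P)=\rho_P(P)$ is closed in $\Aut(P)$, and $\rho_P(G)$ is dense in it; hence $e(\ol{\Inn(G)})\subseteq\ol{\rho_P(G)}=\Inn(P)$, so $e$ induces an injection of $\ol{\Inn(G)}/\Inn(G)$ into $\Inn(P)/\rho_P(G)\cong P/(G\Z(P))$, which is a quotient of $P/G\cong T/V$ and therefore abelian. Thus $\ol{\Inn(G)}/\Inn(G)$ is abelian. Given this, $\Inn(G)=\theta(\psi(G))$ is normal in $\theta(H)\le\ol{\Inn(G)}$ and the quotient is abelian, so $[\theta(H),\theta(H)]\subseteq\Inn(G)=\theta(\psi(G))$; for $h_1,h_2\in H$ this gives $\theta([h_1,h_2])=\theta(\psi(g))$ for some $g\in G$, whence $[h_1,h_2]\in\psi(G)\ker\theta=\psi(G)\Z(H)$, i.e.\ $[H,H]\le\psi(G)\Z(H)$. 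To upgrade to $\ol{[H,H]}$ (the (CA) case being already settled by $H=\psi(G)\Z(H)$), I would work inside the genuine \lcsc group $L:=\Inn(P)\cong P/\Z(P)$: the continuous homomorphism $e\circ\theta\colon H\to L$ has dense image, $[L,L]$ is contained in the image of $[P,P]\subseteq N$ (as $P/N\cong T$ is abelian), and since $N$ is a closed subgroup of $G$ the closure of the image of $N$ in $L$ lies in the image of $G$; chasing this back through the injective map $e$ and using continuity of $e\circ\theta$ gives $\theta(\ol{[H,H]})\subseteq\theta(\psi(G))$, hence $\ol{[H,H]}\le\psi(G)\Z(H)$.

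The main obstacle is establishing that $\ol{\Inn(G)}/\Inn(G)$ is abelian, equivalently that the image of $H$ in $\Aut(G)/\Inn(G)$ is abelian; Zerling's theorem, combined with the observation (via Lemma~\ref{lem:continuous_aut_ext}) that passing to the (CA) completion $P$ induces a \emph{continuous} homomorphism $\Aut(G)\to\Aut(P)$, is precisely what makes this go through. A secondary technical point requiring care is the passage from $[H,H]$ to its closure: one must argue inside the locally compact group $P/\Z(P)$ rather than inside $\Aut(G)$, where $\Inn(G)$ is not closed, and one may need to normalize the Zerling decomposition so that the image of $N$ in $P/\Z(P)$ is closed.
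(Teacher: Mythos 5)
Your proposal is correct and follows essentially the same route as the paper: apply Zerling's theorem to $G$, use Lemma~\ref{lem:continuous_aut_ext} to make the extension map $\Aut(G)\to\Aut(P)$ continuous, land the $\psi$-equivariant action of $H$ inside the closed subgroup $\Inn(P)\cong P/\Z(P)$, and exploit $\ol{[P,P]}\le N\le G$ together with $\CC_H(\psi(G))=\Z(H)$. The paper runs the closure argument directly in $P/\Z(P)$ rather than first isolating the abelianness of $\ol{\Inn(G)}/\Inn(G)$ and treating the (CA) case separately, so your two-stage presentation is only an organizational difference.
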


\begin{proof}
Let $P$ and $N$ be as given in Theorem~\ref{thm:Zerling} for $G$ and let $\epsilon: \Aut(G) \rightarrow \Aut(P)$ be the group homomorphism, necessarily unique, taking each automorphism of $G$ to its extension to an automorphism of $P$. Lemma~\ref{lem:continuous_aut_ext} ensures $\epsilon$ is continuous.  Define $\rho: H \rightarrow \Aut(G)$ to be the map induced by the $\psi$-equivariant action of $H$ on $G$.  

The group $\psi(G)$ is dense in $H$, so by continuity, $\epsilon\rho\psi(G)$ is dense in $\epsilon\rho(H)$. Since $P$ is a (CA) group, we see that $\Inn(P)$ is a closed subgroup of $\Aut(P)$ that contains $\epsilon\rho\psi(G)$.  Hence, $\epsilon\rho(H) \le \Inn(P)$.  

We thus obtain a continuous homomorphism $\theta: H \rightarrow P/\Z(P)$.  In particular, $\theta(\ol{[H,H]}) \le \ol{[P,P]}\Z(P)/\Z(P)$, and since $\ol{[P,P]} \le N \le G$, every element of $\ol{[H,H]}$ induces an inner automorphism of $G$. We conclude that $\ol{[H,H]} \le \psi(G)\CC_H(\psi(G))$.  Since $\CC_H(\psi(G)) = \Z(H)$, we indeed have $\ol{[H,H]} \le \psi(G)\Z(H)$, as desired.
\end{proof}

The following special case is immediate.

\begin{cor}\label{cor:Zerling:perfect}
Suppose that $G$ and $H$ are connected \lcsc groups with $\psi: G \rightarrow H$ a normal compression map. If $G$ is topologically perfect and $H$ is centerless, then $\psi$ is an isomorphism of topological groups.
\end{cor}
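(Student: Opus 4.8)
The plan is to derive this as a formal consequence of Corollary~\ref{cor:Zerling:derived}, using only two elementary facts: that topological perfectness is inherited by dense homomorphic images, and that a continuous bijective homomorphism of \lcsc groups is a topological isomorphism. First I would feed $G$, $H$, and $\psi$ into Corollary~\ref{cor:Zerling:derived} to obtain $\ol{[H,H]} \le \psi(G)\Z(H)$. Since $H$ is centerless, $\Z(H) = \triv$, so this reads simply $\ol{[H,H]} \le \psi(G)$.

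Next I would observe that $H$ is itself topologically perfect. As $\psi$ is a continuous homomorphism, $\psi([G,G]) \le [H,H]$; and since $[G,G]$ is dense in $G$ while $\psi(G)$ is dense in $H$, the subgroup $\psi([G,G])$ — and hence $[H,H]$ — is dense in $H$. Therefore $\ol{[H,H]} = H$, and combining this with the inclusion of the previous paragraph gives $H = \ol{[H,H]} \le \psi(G)$, so $\psi$ is surjective. Being a normal compression map, $\psi$ is also injective, so it is a continuous bijective homomorphism between \lcsc groups; as such groups are $\sigma$-compact and locally compact, the open mapping theorem forces $\psi$ to be open, hence a homeomorphism, and thus an isomorphism of topological groups. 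I do not anticipate any genuine obstacle here: all the substance resides in Corollary~\ref{cor:Zerling:derived} (and through it in Zerling's theorem), and the present statement is essentially a bookkeeping consequence.
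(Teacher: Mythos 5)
Your proof is correct and is exactly the argument the paper intends: the paper simply declares the corollary ``immediate'' from Corollary~\ref{cor:Zerling:derived}, and your write-up (centerlessness kills $\Z(H)$, topological perfectness of $G$ passes to the dense image to give surjectivity, and the open mapping theorem for Polish/\lcsc groups upgrades the continuous bijection to an isomorphism) is the standard way to fill in that ``immediate.'' No gaps.
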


%\begin{proof}
%Theorem~\ref{thm:Zerling} ensures $G$ must be a (CA) group, since $G$ has no non-trivial abelian quotient.
%
%For the second claim, $H$ acts continuously on $G$ via the $\psi$-equivariant action, and this action induces a continuous homomorphism $\rho: H \rightarrow \Aut(G)$. In particular, $\rho(\psi(G)) = \Inn(G)$ is dense in $\rho(H)$.  Since $G$ is a (CA) group, $\Inn(G)$ is already closed in $\Aut(G)$, so $\Inn(G) = \rho(H)$. We deduce that $H = \psi(G)\CC_H(\psi(G))$, and as $\psi(G)$ is dense in $H$, we indeed have $\CC_H(\psi(G)) = \Z(H)$. Therefore, $H = \psi(G)$, and $\psi$ is a homeomorphism.
%\end{proof}

We now prove the desired factorization result for arbitrary normal compression maps between \lcsc groups.

\begin{thm}\label{thm:compression_factoring}
Suppose that $G$ and $H$ are \lcsc groups with $\psi: G \rightarrow H$ a normal compression map.  Then there are continuous homomorphisms and \lcsc groups
\[
G=G_0 \xrightarrow{\alpha_1} K_1 \xrightarrow{\beta_1} G_1  \xrightarrow{\alpha_2} K_2 \xrightarrow{\beta_2} G_2 \xrightarrow{\alpha_3} K_3 \xrightarrow{\beta_3} H 
\]
such that the following properties hold:
\begin{enumerate}[(1)]
\item $\psi = \beta_3 \circ \alpha_3 \circ \beta_2 \circ \alpha_2 \circ \beta_1 \circ \alpha_1$;
\item The $\alpha_i$ are closed embeddings with normal image, the $\beta_i$ are quotient maps, and $\beta_3$ is a covering map;
\item The groups $K_1/\alpha_1(G)$ and $\ker(\beta_1)$ are abelian;
\item The groups $K_2/\alpha_2(G_1)$ and $\ker(\beta_2)$ are solvable of derived length at most $3$;
\item $K_3/\alpha_3(G_2)$ is compact, and $K_3 = \ol{\alpha_3(G_2)\ker(\beta_3)}$; and
\item $\ker(\beta_3)$ lies in the FC-center of $K_3$ and centralizes $\alpha_3(G_2)$.
\end{enumerate}
\end{thm}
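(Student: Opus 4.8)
The strategy is to peel off the connected part first, then apply Theorem~\ref{thm:compression_factoring:tdlcsc} to a compression that is an isomorphism on identity components. Write $N := \ol{\psi(G^\circ)}$; by Theorem~\ref{thmintro:connected}(1), $H^\circ/N$ is nilpotent of class at most $2$, and $\psi$ restricts to a normal compression $G^\circ \to N$ with $N$ normal in $H$. The first step is to build $K_1$ as the result of ``filling in'' the abelian quotient $N^\circ / \ol{\psi(G^\circ)}$ coming from Theorem~\ref{thmintro:connected}(2) applied to the connected compression $\psi\rest_{G^\circ}$ — more precisely, I would use Corollary~\ref{cor:Zerling:derived} to locate, inside $H^\circ$ (or inside a semidirect product model of the relevant Zerling extension), a closed normal $L_1 \supseteq \psi(G^\circ)$ with $L_1/\psi(G^\circ)$ abelian and $H^\circ/L_1$ controlled, and take $\alpha_1 : G \hookrightarrow K_1$ a closed embedding with $K_1/\alpha_1(G)$ abelian, followed by a quotient $\beta_1 : K_1 \to G_1$ with abelian kernel, arranged so that in $G_1$ the image of $G^\circ$ is now closed and cocompact in $G_1^\circ$.

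The second step handles the remaining nilpotent-of-class-$\le 2$ discrepancy between $N$ and $H^\circ$, together with the nilpotent-of-class-$\le 2$ behaviour of $H^\circ$ modulo $\ol{\psi(G^\circ)}$: composing the extension from $G^\circ$ to $N$ (abelian cokernel, handled in step 1) with the extension from $N$ to $H^\circ$ accumulates derived length at most $3$, which is exactly the bound recorded in item~(4). Concretely, $\alpha_2 : G_1 \hookrightarrow K_2$ is a closed normal embedding chosen so that $K_2/\alpha_2(G_1)$ absorbs the class-$\le 2$ (hence solvable of derived length $\le 3$ after also accounting for the abelian step) piece of $H^\circ$, and $\beta_2 : K_2 \to G_2$ is a quotient with solvable kernel of derived length $\le 3$; one uses Lemma~\ref{lem:disconnected_to_connected} and Theorem~\ref{thmintro:connected} to check that $\ker(\beta_2)$, being built from commutators of connected pieces, lands in such a solvable group. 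After these two steps, the induced compression $\psi_2 : G_2 \to H$ restricts to an isomorphism from $G_2^\circ$ onto $H^\circ$.

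The third step is then a direct invocation of Theorem~\ref{thm:compression_factoring:tdlcsc} applied to $\psi_2 : G_2 \to H$: it yields $K_3$, a closed embedding $\alpha_3$ with normal image and compact cokernel, and a covering map $\beta_3$ whose kernel lies in the FC-center of $K_3$ and centralizes $\alpha_3(G_2)$, with $K_3 = \ol{\alpha_3(G_2)\ker(\beta_3)}$ — precisely items~(5) and~(6). Item~(1) is the composite identity, which holds by construction since each stage factors the previous compression, and item~(2) collects the types of the maps.

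The main obstacle I anticipate is the bookkeeping in steps one and two: making the groups $K_1, K_2$ genuinely \lcsc and the maps genuinely closed embeddings / quotients (rather than merely abstract extensions), and in particular producing the \lcsc models for the abelian-by-compression and class-$\le 2$-by-compression situations. This is where one must combine the semidirect-product construction of Proposition~\ref{prop:Polish:semidirect} with the Zerling machinery (Theorem~\ref{thm:Zerling}, Corollary~\ref{cor:Zerling:derived}) to get a concrete ambient locally compact group in which to find $L_1$, and then verify that the quotient maps one writes down are open with the stated kernels. The derived-length accounting — checking that the two successive ``at most class $2$ / abelian'' discrepancies really do compose into something solvable of derived length $\le 3$ rather than $\le 4$ — is the delicate numerical point, and it relies on the fact that the first discrepancy ($N/\ol{\psi(G^\circ)}$, i.e. ``$H^\circ$ modulo the closure of the image of $G^\circ$'' after the connected reduction) is abelian by Theorem~\ref{thmintro:connected}(2), not merely class $\le 2$.
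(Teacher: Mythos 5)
Your three-stage strategy is the paper's: two intermediate compressions that repair the failure of $\psi$ to be an isomorphism on identity components, followed by an application of Theorem~\ref{thm:compression_factoring:tdlcsc} to the resulting $\psi_2 \colon G_2 \to H$. The third stage of your plan is exactly right and complete. The gap is that the first two stages are left as ``bookkeeping,'' and your sketch of them misidentifies both the object being adjoined and the quotient whose abelianness drives the derived-length count. With $N = R_1 := \ol{\psi(G^\circ)}$, the quotient $N^\circ/\ol{\psi(G^\circ)}$ that you propose to ``fill in'' is trivial by definition, and Theorem~\ref{thmintro:connected}(2) applied to the connected compression $G^\circ \to R_1$ likewise yields only a trivial statement. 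What is actually adjoined at the first stage is $\Z(R_1)$: one sets $K_1 := G \rtimes_{\psi} \Z(R_1)$ and $G_1 := K_1/\Delta$, where $\Delta$ is the diagonal copy of $\psi\inv(\Z(R_1))$; Proposition~\ref{prop:Polish:semidirect} and Lemma~\ref{lem:delta_subgroup} make $\alpha_1$ a closed embedding and $\beta_1$ a quotient map, with cokernel $\Z(R_1)$ and kernel $\psi\inv(\Z(R_1))$, both abelian. The purpose of adjoining $\Z(R_1)$ is that Corollary~\ref{cor:Zerling:derived} then gives $\ol{[R_1,R_1]} \le \psi(G^\circ)\Z(R_1) \le \psi_1(G_1)$, so that $T_1 := \psi_1\inv(\ol{[R_1,R_1]})$ is carried homeomorphically onto the closed normal subgroup $\ol{[R_1,R_1]}$ of $H^\circ$. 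That --- not closedness or cocompactness of the image of $G^\circ$ in $G_1^\circ$ --- is the invariant the second stage needs.

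The second stage is then $A_2 := G_1 \rtimes_{\psi_1} H^\circ$ with $K_2 := A_2/\Delta_{T_1}$ and $G_2 := A_2/\Delta_{G_1}$, where Lemma~\ref{lem:delta_subgroup} (applied to $T_1$, which is why stage one was necessary) again provides the closed embedding $\alpha_2$. The cokernel $K_2/\alpha_2(G_1)$ is a quotient of $H^\circ/\ol{[R_1,R_1]}$ and $\ker(\beta_2)$ injects into it, so the derived-length-$3$ bound comes from the chain $\ol{[R_1,R_1]} \le R_1 \le H^\circ$: the bottom factor is abelian because it is the abelianization of $R_1$, and the top factor is nilpotent of class at most $2$ because $[[H,H^\circ],H^\circ] \le R_1$ by Lemma~\ref{lem:disconnected_to_connected}. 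Since $A_2$ contains a genuine copy of $H^\circ$, we get $\psi_2(G_2) \ge H^\circ$, hence $\psi_2$ restricts to an isomorphism $(G_2)^\circ \to H^\circ$, and Theorem~\ref{thm:compression_factoring:tdlcsc} finishes as you say. So the architecture of your plan is sound, but the explicit semidirect-product-and-diagonal constructions are the substance of the proof and are missing from your write-up, and the two concrete claims you do make about stages one and two need to be corrected as above.
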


\begin{proof}We build the maps $\alpha_i$ and $\beta_i$ and the groups $K_i$ and $G_i$ in stages.
	
Define $R_1 := \ol{\psi(G^\circ)} \le H^\circ$ and $K_1 := G \rtimes_{\psi} \Z(R_1)$ where $\Z(R_1)$ acts via the $\psi$-equivariant action. Set 
\[
\Delta_1 := \{(g\inv,\psi(g)) \in K_1 \mid g \in G\}
\]
and $G_1: = K_1/\Delta_1$.  Let $\alpha_1$ be the natural closed embedding of $G$ into $K_1$ and let $\beta_1$ be the natural quotient map from $K_1$ to $G_1$. The groups $K_1$ and $G_1$ are \lcsc groups by Proposition~\ref{prop:Polish:semidirect} and Lemma~\ref{lem:delta_subgroup}, and the cokernel of $\alpha_1$ and the kernel of $\beta_1$ are abelian. 

Let $\psi_1:G_1\rightarrow H$ be such that $\psi_1 \circ \beta_1 \circ\alpha_1 = \psi$; that is, $\psi_1$ is the map $(g,r)\Delta_1\mapsto \psi(g)r$. The map $\psi_1$ is a normal compression map. Additionally, the map 
\[
\psi_1 \circ \beta_1\rest_{\alpha_1(G^{\circ})}:\alpha_1(G^{\circ})\rightarrow R_1
\]
is a normal compression map from the connected \lcsc group $\alpha_1(G^\circ)$ to the connected \lcsc group $R_1$.  By Corollary~\ref{cor:Zerling:derived}, we have that $\ol{[R_1,R_1]} \le \psi(G^\circ)\Z(R_1)$. Both $\psi(G^\circ)$ and $\Z(R_1)$ are subgroups of $\psi_1(G_1)$, hence $\ol{[R_1,R_1]} \le \psi_1(G_1)$.  The group $T_1:=\psi^{-1}_1(\ol{[R_1,R_1]})$ is thus isomorphic as a topological group to $\ol{[R_1,R_1]}$ via $\psi_1$. 

Define $A_2: = G_1 \rtimes_{\psi_1} H^\circ$, set $\Delta_{2} := \{(t\inv,\psi_1(t)) \in A_2 \mid t \in T_1\}$, and put $K_2: = A_2/\Delta_{2}$.  By Lemma~\ref{lem:delta_subgroup}, $K_2$ is an \lcsc group, and the natural map $\alpha_2: G_1 \rightarrow K_2$ is a closed embedding. Letting $\rho$ be the composition $H^{\circ}\rightarrow A_2\rightarrow K_2$, the image $\alpha_2(G_1)$ contains $\rho(\overline{[R_1,R_1]})$. Lemma~\ref{lem:disconnected_to_connected} ensures $[[H,H^\circ],H^\circ] \le R_1$, and thus, $K_2/\alpha_2(G_1)$ is solvable of derived length at most $3$. 

Define $\Delta_3 := \{(g\inv,\psi_1(g)) \in A_2 \mid g \in G_1\}$ and $G_2: = A_2/\Delta_3$. Define the map $\beta_2$ to be the natural quotient map from $K_2$ to $G_2$. The kernel of $\beta_2$ is $\ker(\beta_2) = \Delta_3/\Delta_2$. The projection onto the second coordinate of $\Delta_3$ induces an injective homomorphism $\Delta_3/\Delta_2\rightarrow H^{\circ}/\ol{[R_1,R_1]}$. Since $[[H,H^\circ],H^\circ] \le R_1$, we conclude that $H^{\circ}/\ol{[R_1,R_1]}$ is solvable of derived length at most $3$. Hence, $\ker \beta_2$ is solvable of derived length at most $3$.
%
%  We observe that as an abstract group, $\ker(\beta)_2 = \Delta_3/\Delta_2$ is isomorphic to $\psi\inv_1(H^\circ)/T_1$.  In turn, $\psi\inv_1(H^\circ)$ is an image of $\psi\inv(H^\circ) \rtimes_{\psi} \Z(R_1)$, which is isomorphic as an abstract group to $C = (H^\circ \cap \psi(G)) \rtimes \Z(R_1)$.  By Lemma~\ref{lem:disconnected_to_connected}, we see that $[C,C] \le G^\circ \rtimes \triv$, in other words, every element of $[\Delta_3,\Delta_3]$ is of the form $(g^{-1},\psi_1(g))$ for $g \in \beta_1\alpha_1(G^\circ)$.  Given the definitions of $T_1$ and $\Delta_2$, it follows that $\Delta_3/\Delta_2$ is metabelian.
%

As with $G_1$, we obtain a normal compression map $\psi_2: G_2 \rightarrow H$ such that 
\[
\psi_2\circ\beta_2 \circ\alpha_2\circ \beta_1\circ \alpha_1 = \psi.
\]
The group $A_2$ already contains a copy of $H^\circ$, so we have $\psi_2(G_2) \ge H^\circ$. The map $\psi_2$ thus restricts to an isomorphism from $(G_2)^\circ$ to $H^\circ$.  We finally obtain the maps $\alpha_3$ and $\beta_3$ by applying Theorem~\ref{thm:compression_factoring:tdlcsc} to $\psi_2:G_2\rightarrow H$, completing the construction. 
\end{proof}

%\addtocontents{toc}{\protect\setcounter{tocdepth}{1}}
\section{Properties invariant under normal compressions}\label{sec:invariant}
We now explore to what extent various properties are preserved under normal compressions. Several of the properties studied here and the derived preservation theorems will be used later to analyze the structure of chief factors. 

\subsection{Compact generation}\label{sec:generation}
Given a \tdlcsc group $G$ and a normal compression $H$ of $G$ with $H$ compactly generated, it \textit{does not} follow that $G$ is compactly generated.  For example, $\prod_{i \in \Nb}F_i$ is a normal compression of $\bigoplus_{i \in \Nb}F_i$ for any sequence of finite groups $(F_i)_{i\in \Nb}$, and $\Rb$ is a normal compression of $\Qb$. Our next theorem shows that if $H$ is compactly generated, then profinite quotients and connected central factors of $H$ are the only potential obstacles to the compact generation of $G$.

\begin{thm}\label{thm:compression:generation}
	Suppose that $G$ and $H$ are \lcsc groups with $\psi: G \rightarrow H$ a normal compression map and set $R:=  \psi\inv(H^\circ)$.
	\begin{enumerate}[(1)]
		\item If $G$ is compactly generated, then $H$ is compactly generated.
		\item If $H$ is compactly generated and $R/\ol{[R,G]}$ is compactly generated, then there exists an increasing exhaustion $(N_i)_{i \in \Nb}$ of $G$ by compactly generated open normal subgroups such that $\ol{\psi(N_i)}$ is cocompact and normal in $H$ for each $i \in \Nb$. 
	\end{enumerate}
\end{thm}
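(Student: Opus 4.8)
The statement has two parts, and the first is routine. For (1), let $C$ be a compact symmetric generating set of $G$ and $V$ a compact symmetric identity neighbourhood of $H$. Then $\grp{\psi(C)\cup V}$ is open (it contains the interior of $V$), hence closed, and it contains the dense subgroup $\psi(G)=\grp{\psi(C)}$, so it equals $H$; thus $H$ is generated by the compact set $\psi(C)\cup V$.

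For (2) I would first replace the hypothesis by a cleaner one. By Lemma~\ref{lem:disconnected_to_connected} we have $[H^\circ,\psi(G)]\le\psi(G^\circ)$, so applying $\psi\inv$ on the embedded copy $\psi(G)$ gives $[R,G]\le G^\circ$, hence $\ol{[R,G]}\le G^\circ\le R$. A connected locally compact group is compactly generated, so $G^\circ$ is; and $R/G^\circ$ is a quotient of $R/\ol{[R,G]}$, hence compactly generated; being an extension of a compactly generated group by a compactly generated group, $R$ is compactly generated. So from here on we may assume that $H$ \emph{and} $R$ are compactly generated. Now, since $R$ is a compactly generated closed normal subgroup, $G=\bigcup_{i\in\Nb}O_i$ for an increasing sequence of compactly generated open subgroups $O_i\supseteq R$, and it suffices to prove: for every compactly generated open $O\le G$ there is a compactly generated open $N\normal G$ with $O\le N$ and $\ol{\psi(N)}$ cocompact in $H$. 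Granting this, pick $N_i\supseteq O_i$ and set $N_i':=N_0\cdots N_i$: these are compactly generated (finite products of compactly generated normal subgroups), open, increasing with union $G$, with $\ol{\psi(N_i')}\supseteq\ol{\psi(N_0)}$ cocompact and normal in $H$ (normality being automatic since $\psi(N_i')\normal\psi(G)$ with $\psi(G)$ dense; cf. Proposition~\ref{prop:normal_compression}(1)). Replacing $O$ by $\grp{O,R}$ we may assume $R\le O$.

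To build $N$, I would use compact generation of $H$ to choose $U\in\U(H)$ and, via density of $\psi(G)$ and Proposition~\ref{prop:factor}(1), a finite symmetric $F\subseteq\psi(G)$ with $H=\grp{U,F}$ and with $F$ additionally containing a finite set of $U$-coset representatives for the image of $O$; write $F=\psi(F_0)$. Because $\psi(G)$ is normal in $H$, the algebraic normal closure $\ngrpd{F}_H$ lies in $\psi(G)$ and so equals $\psi(\ngrpd{F_0}_G)$; and $H=\ngrpd{F}_H\,U$ because $H/\ngrpd{F}_H$ is generated by the image of $U$ and hence equals that subgroup. Passing to $H/H^\circ$ (where $U$ becomes compact) one obtains that $M:=\ol{\psi(\ngrpd{F_0}_G)}$ is cocompact in $H$, and as a closed cocompact subgroup of the compactly generated group $H$ it is itself compactly generated. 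The essential point is that $N:=\ngrpd{F_0}_G$ is then compactly generated: $\psi$ restricts to a continuous bijection from $N$ onto a dense subgroup of the compactly generated group $M$, and the tool that upgrades this is the open mapping theorem for Polish groups, which converts the relevant continuous bijections of locally compact second countable groups into isomorphisms and thereby transports compact generation back through $\psi$. One also arranges, using Lemma~\ref{lem:almost_connected}, that $N=\ngrpd{F_0}_G$ contains a member of $\U(G)$, so that $N$ is open, and $O\le N$ holds by the choice of $F$. Finally, $\ol{\psi(N)}=M$ is cocompact in $H$: one combines cocompactness modulo $H^\circ$ (built into $M$) with control of the connected part, using that $R\le N$ forces $[H^\circ,H]\le\ol{\psi(R)}\le\ol{\psi(N)}$ (Lemma~\ref{lem:disconnected_to_connected}), so the image of $H^\circ$ in $H/\ol{\psi(N)}$ is central, together with Theorem~\ref{thmintro:connected} and the compact generation of $G^\circ$ to see that this image has compact closure.

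I expect the main obstacle to be precisely the reconciliation of $H$ with $H/H^\circ$ in the last step. The Cayley--Abels/normal-closure machinery naturally delivers a subgroup that is cocompact and compactly generated \emph{modulo $H^\circ$}, but $H^\circ$ itself need not be cocompactly covered by $\psi(R)$ --- for instance $\Rb\times\Zb_p$ is a normal compression of the discrete group $\Zb[1/q]$ (diagonal embedding, $q\neq p$), with $R$ trivial and $\ol{\psi(R)}$ far from cocompact in $H^\circ=\Rb$ --- so one must check directly that the preimage nonetheless wraps $H^\circ$ cocompactly inside $H$. Making this rigorous, while being careful about the attendant closure issues (an algebraic normal closure need not be a closed subgroup, and local compactness, hence the open mapping theorem, is only available once one has passed to a genuinely closed subgroup), is where the real work lies; the combinatorial input from Cayley--Abels graphs and the connected-group input from Theorem~\ref{thmintro:connected} are, individually, comparatively routine.
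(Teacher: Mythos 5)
Part (1) is fine. The fatal problem in (2) is the step where you establish that your normal subgroup is compactly generated, which is the crux of the whole theorem. You propose to deduce compact generation of $N=\ngrpd{F_0}_G$ from compact generation of $M=\ol{\psi(N)}$ via the open mapping theorem. But $\psi$ restricted to $N$ (or to $\ol{N}$) is a continuous injection onto a \emph{dense, non-closed} subgroup of $M$: it is not surjective, neither $N$ nor $\psi(N)$ is locally compact in its subspace topology, and so the open mapping theorem simply does not apply. What you actually have is a normal compression $\ol{N}\rightarrow M$, and pulling compact generation backwards through a normal compression is precisely the thing that fails in general --- the paper opens this discussion with the counterexamples $\bigoplus_{i}F_i \rightarrow \prod_i F_i$ and $\Qb\rightarrow\Rb$ --- and precisely what the theorem, with its extra hypotheses, is designed to circumvent; invoking it here is circular. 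A secondary error feeds into this: your identification $\ngrpd{F}_H=\psi(\ngrpd{F_0}_G)$ is false in general, since $\ngrpd{F}_H$ corresponds to the subgroup generated by the full $H$-orbit of $F_0$ under the $\psi$-equivariant action, which properly contains the $G$-normal closure (only the closures of the two subgroups agree).

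The device you are missing is that $\psi$-equivariant action itself, restricted to a \emph{compact} open subgroup of $H$. After passing to the totally disconnected quotients $\tilde{\psi}:G/R\rightarrow H/H^\circ$ (which is how the paper front-loads and disposes of the connected-component difficulty you correctly flag but leave unresolved at the end), one fixes $W\in\U(G)$ and $U\in\U(H)$ with $\psi(W)\normal U$ via Lemma~\ref{lem:almost_connected}, and a finite symmetric $A\subseteq\psi(G)$ with $UAU=AU$ and $H=\grp{A,U}$. The subgroup to take is $N:=\grp{U.\psi\inv(A),\,W}$, where $U.\psi\inv(A)$ is the orbit of the finite set $\psi\inv(A)$ under the continuous action of the compact group $U$ on $G$; this orbit is compact, so $N$ is compactly generated \emph{by construction}, with no backwards transfer of compact generation through $\psi$ required. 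Then $\psi(N)=\grp{A^U,\psi(W)}$ is normalized by $U$ and contains $A$, hence is normal in $H=\grp{A,U}$; this yields $N\normal G$ and $H=\psi(N)U$, so $\ol{\psi(N)}$ is cocompact. Your skeleton (finite set $F$ with $H=\grp{F,U}$, normal closure, cocompact image) is the right one, but without the compact-orbit mechanism the key claim of compact generation has no proof.
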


\begin{proof}
	Claim $(1)$ is immediate.
	
	For $(2)$, we see that $R \ge G^\circ$, and Lemma~\ref{lem:disconnected_to_connected} implies $[R,G] \le G^\circ$. The quotient $R/\ol{[R,G]}$ is compactly generated, so $R/G^\circ$ is also compactly generated. It now follows that $G$ is compactly generated if and only if $G/R$ is compactly generated. We thus reduce to $G/R$, and since $\psi$ induces a normal compression map $\tilde{\psi}:G/R\rightarrow H/H^{\circ}$, we may suppose that both $G$ and $H$ are totally disconnected. 
	
	Using Lemma~\ref{lem:almost_connected}, we fix $W \in \U(G)$ and $U \in \U(H)$ such that $\psi(W)$ is a normal subgroup of $U$. Let $\{g_i\}_{i\in\Nb}$ be a countable dense subset of $G$ with $1=g_0$ and set $S_i: = \{\psi(g_0),\dots,\psi(g_i)\}$.  By Proposition~\ref{prop:factor}, there is a finite symmetric set $A_i\subseteq H$ containing $ S_i$ such that $UA_iU=A_iU$ and $A_iU$ generates $H$. Since $\psi(G)$ is dense, we can choose $A_i$ to be a subset of $\psi(G)$; we can also choose $(A_i)_{i \in \Nb}$ to be an increasing sequence.
	
	The group $U$ acts continuously on $G$ via the $\psi$-equivariant action. Under this action, the orbit $U.\psi^{-1}(A_i)$ is a compact subset of $G$, so we obtain a compactly generated open subgroup $N_i:=\grp{U.\psi^{-1}(A_i),W}$ that contains $\{g_0,\dots,g_i\}$. The sequence $(N_i)_{i \in \Nb}$ is thus an increasing exhaustion of $G$ by compactly generated open subgroups. 
	
	Fix $i$, set $N := N_i$, and set $A:= A_i$.  The group $\psi(N)$ is generated by $A^U\cup \psi(W)$. That $\psi(W)\normal U$ ensures that $U$ normalizes $\psi(N)$, and plainly, $A\subseteq \psi(N)$.  Since $H=\grp{A,U}$, we deduce that $\psi(N)\normal H$ and hence $N\normal G$. The group $U$ maps surjectively onto $H/\ol{\psi(N)}$, so $\ol{\psi(N)}$ is also cocompact in $H$. The proof is now complete.
\end{proof}

\begin{cor}
	Suppose that $G$ and $H$ are \tdlcsc groups with $H$ a normal compression of $G$.  Then $G$ is compactly generated if and only if $H$ is compactly generated and every discrete quotient of $G$ is finitely generated.
\end{cor}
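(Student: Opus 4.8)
The plan is to derive both implications directly from Theorem~\ref{thm:compression:generation}, fixing a normal compression map $\psi\colon G\to H$ throughout.

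For the forward implication, suppose $G$ is compactly generated. Then Theorem~\ref{thm:compression:generation}(1) immediately yields that $H$ is compactly generated. If $G/N$ is a discrete quotient of $G$ — so $N\normal G$ is closed, hence (since $G/N$ is discrete) open — then $G/N$ is a quotient of a compactly generated group, so it is a compactly generated discrete group, i.e.\ finitely generated. This direction needs nothing beyond part (1) of that theorem together with the fact that compact generation passes to Hausdorff quotients.

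For the converse, the main point is that the totally disconnected hypothesis trivializes the side condition in Theorem~\ref{thm:compression:generation}(2). Indeed, since $H$ is totally disconnected, $H^\circ=\triv$, and since $\psi$ is injective, $R:=\psi\inv(H^\circ)=\triv$; hence $R/\ol{[R,G]}=\triv$ is compactly generated. As $H$ is compactly generated by assumption, Theorem~\ref{thm:compression:generation}(2) applies and produces an increasing exhaustion $(N_i)_{i\in\Nb}$ of $G$ by compactly generated open normal subgroups. Fix any index $i$. Because $N_i$ is open and normal, $G/N_i$ is a discrete quotient of $G$, so by hypothesis it is finitely generated. Choosing a compact generating set $C$ for $N_i$ and a finite set $F\subseteq G$ whose image generates $G/N_i$, the subgroup $\grp{C\cup F}$ contains $N_i$ and surjects onto $G/N_i$, hence equals $G$; since $C\cup F$ is compact, $G$ is compactly generated.

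There is no real obstacle here: all of the substantive work is contained in Theorem~\ref{thm:compression:generation}, particularly part (2), which has already been proved. The only thing to notice is the reduction observed above — that in the totally disconnected setting the auxiliary hypothesis ``$R/\ol{[R,G]}$ compactly generated'' is automatic — after which the argument is a one-line extension argument (compactly generated normal subgroup with finitely generated quotient).
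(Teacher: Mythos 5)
Your proof is correct and follows essentially the same route as the paper: both directions reduce to Theorem~\ref{thm:compression:generation}, with the converse using the exhaustion $(N_i)_{i\in\Nb}$ of part (2) and the observation that each $G/N_i$ is a finitely generated discrete quotient. The only cosmetic difference is the final step — you conclude via a single $N_i$ together with a compact-by-finite generating set, while the paper observes that $G=N_j$ for $j$ large; your explicit check that $R=\triv$ makes the hypothesis of part (2) automatic is a point the paper leaves implicit.
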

\begin{proof}
	If $G$ has an infinitely generated discrete quotient, then certainly $G$ is not compactly generated, and if $H$ is not compactly generated, then $G$ is not compactly generated by Theorem~\ref{thm:compression:generation}.  Conversely, if $H$ is compactly generated and every discrete quotient of $G$ is finitely generated, then in particular $G/N_i$ is finitely generated for all $i$ where $(N_i)_{i \in \Nb}$ is the exhaustion given by Theorem~\ref{thm:compression:generation}. It now follows that $G = N_j$ for $j$ large enough, and thus, $G$ is compactly generated.
\end{proof}

\begin{cor}\label{cor:compactlygenerated}
	Suppose that $G$ and $H$ are non-abelian non-compact topologically characteristically simple \lcsc groups with $H$ a normal compression of $G$. Then $G$ is compactly generated if and only if $H$ is compactly generated.
\end{cor}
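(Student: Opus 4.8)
The forward implication is immediate from Theorem~\ref{thm:compression:generation}(1), so the plan is to establish the reverse implication: assuming $H$ is compactly generated, show $G$ is compactly generated. I would begin by noting that, since $G$ and $H$ are non-abelian and topologically characteristically simple, $\Z(G)$ and $\Z(H)$ are trivial (being proper closed characteristic subgroups) while $\overline{[G,G]}=G$ and $\overline{[H,H]}=H$ (being nontrivial closed characteristic subgroups); thus $G$ and $H$ are centerless and topologically perfect, so Corollary~\ref{cor:con_association} applies. It shows $G$ is connected if and only if $H$ is and $G$ is totally disconnected if and only if $H$ is, and since $G^{\circ}$ is closed characteristic, one of these holds. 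If $G$ is connected there is nothing to prove, as a connected locally compact group is compactly generated.

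Now suppose $G$ (hence $H$) is totally disconnected and, for contradiction, that $G$ is not compactly generated. Since $H^{\circ}=\triv$ we have $\psi\inv(H^{\circ})=\triv$, so Theorem~\ref{thm:compression:generation}(2) yields an increasing exhaustion $(N_i)_{i\in\Nb}$ of $G$ by compactly generated open \emph{normal} subgroups, necessarily with $N_i\neq G$ for all $i$. The key observation is then that $G$ has a proper open normal subgroup, so the intersection of all open normal subgroups of $G$ is a proper closed characteristic subgroup and hence trivial; that is, $G$ is residually discrete. Residual discreteness is inherited by open subgroups, so every compactly generated open subgroup of $G$ is residually discrete and therefore SIN by \cite[Corollary~4.1]{CM11}; thus $G$ is regionally SIN, and Lemma~\ref{lem:regionally_SIN} gives that $G$ is elementary with $\xi(G)=2$.

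Next I would transport this to $H$: by Proposition~\ref{prop:compression:elementary_rank}, $H$ is elementary with $\xi(H)=2$, so $H$ is regionally SIN by Lemma~\ref{lem:regionally_SIN}, and hence $H$ itself is a SIN group since it is compactly generated. The group $H$ is non-discrete, for otherwise $G$ would be discrete and $\psi$ a topological isomorphism (a dense subgroup of a discrete group is all of it), contradicting that $H$ is compactly generated but $G$ is not. A non-discrete SIN group contains a nontrivial compact open normal subgroup $V$, and then $V\leq\RadLE(H)$, so the locally elliptic radical of $H$ is a nontrivial closed characteristic subgroup; by Theorem~\ref{thm:platonov_radical} and characteristic simplicity, $\RadLE(H)=H$, i.e.\ $H$ is locally elliptic. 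But a compactly generated locally elliptic group is compact, contradicting the hypothesis that $H$ is non-compact. This contradiction shows $G$ is compactly generated.

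The step I expect to be the crux --- and where the obvious approach fails --- is relating the failure of compact generation of $G$ to a usable property of $H$: directly trying to show some $\overline{\psi(N_i)}$ is dense in $H$ runs aground because cocompact closed normal subgroups need not be characteristic. The way out is to observe that, for a topologically characteristically simple group, failure of compact generation forces residual discreteness, and then to push this through $\psi$ using invariance of the decomposition rank, landing $H$ in the regionally SIN setting where compact generation and non-compactness are incompatible.
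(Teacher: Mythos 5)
Your proof is correct, and while it shares the paper's skeleton, the middle of the argument takes a genuinely different route. Both arguments reduce to the totally disconnected case via Corollary~\ref{cor:con_association}, invoke Theorem~\ref{thm:compression:generation}(2) to obtain compactly generated open \emph{normal} subgroups of $G$, and close with the same endgame (residual discreteness plus compact generation plus characteristic simplicity plus non-compactness is impossible, via \cite[Corollary 4.1]{CM11} and the locally elliptic radical). But the paper fixes a single $N = N_i$, uses that $\ol{\psi(N)}$ is cocompact and normal in $H$, and argues that if $\ol{\psi(N)} \neq H$ then $H$ has a nontrivial profinite quotient, forcing $\Res{}(H) = \triv$ and hence the compactness contradiction \emph{in $H$}; once $\ol{\psi(N)} = H$, Proposition~\ref{prop:normal_compression}(2) gives $\ol{[G,G]} \le N$, and topological perfection of $G$ yields $N = G$ directly. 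You instead note that failure of compact generation makes every $N_i$ proper, so $\Res{}(G) = \triv$ by characteristic simplicity, whence $G$ is regionally SIN of rank $2$, and you transport this to $H$ via Proposition~\ref{prop:compression:elementary_rank} before running the compactness contradiction there. Your route trades Proposition~\ref{prop:normal_compression} for the decomposition-rank machinery (Lemma~\ref{lem:regionally_SIN} and Proposition~\ref{prop:compression:elementary_rank}); the paper's version is lighter on prerequisites and proves the positive statement $N = G$ rather than arguing by contradiction, but both are sound, and your closing diagnosis of why one cannot simply argue that $\ol{\psi(N_i)}$ is characteristic is accurate --- the paper circumvents that obstacle with the profinite-quotient argument rather than avoiding the subgroup $\ol{\psi(N)}$ altogether.
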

\begin{proof}
	Since both $G$ and $H$ are topologically characteristically simple groups, Corollary~\ref{cor:con_association} ensures either both are totally disconnected or both are connected. Connected groups are compactly generated, so the corollary is immediate in the latter case. We thus assume that $G$ and $H$ are \tdlcsc groups. 
	
	The forward implication is given by Theorem~\ref{thm:compression:generation}. Conversely, suppose that $H$ is compactly generated and let $\psi:G\rightarrow H$ be a normal compression map. Appealing to Theorem~\ref{thm:compression:generation}, there is $N\normal G$ open and compactly generated such that $\ol{\psi(N)}$ is cocompact in $H$. The quotient $H/\ol{\psi(N)}$ is therefore a profinite group.
	
	If this quotient is non-trivial, then $\Res{}(H)$ is a proper characteristic closed subgroup of $H$, hence $\Res{}(H)=\{1\}$. We thus deduce that $H$ is residually discrete. Applying \cite[Corollary 4.1]{CM11}, $H$ admits a compact open normal subgroup $L$. The locally elliptic radical of $H$ is then a non-trivial characteristic closed subgroup, hence $H$ is a compactly generated locally elliptic group. We conclude that $H$ is compact, which contradicts the hypotheses. 
	
	The group $\ol{\psi(N)}$ thus equals $H$, and Proposition~\ref{prop:normal_compression} implies $[G,G]\leq N$. Since $G$ is non-abelian and topologically characteristically simple, we conclude that $N=G$. In particular, $G$ is compactly generated.	
\end{proof}

\subsection{Decomposition rank}\label{sec:elementary2}

The property of being elementary and the associated decomposition rank are outright invariant under normal compressions between \tdlcsc groups.

\begin{prop}\label{prop:compression:elementary_rank}
	Suppose that $G$ and $H$ are \tdlcsc groups with $H$ a normal compression of $G$.  Then $G$ is elementary with decomposition rank $\alpha$ if and only if $H$ is elementary with decomposition rank $\alpha$. 
\end{prop}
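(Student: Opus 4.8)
The plan is to route $\psi$ through the factorization of Theorem~\ref{thm:compression_factoring:tdlcsc} and then play the \emph{exact} rank equality that theorem yields for the cocompact piece against the monotonicity of the decomposition rank for the discrete piece. First I would dispose of the degenerate case: if $G = \triv$, then $\psi(G)$ is both trivial and dense in $H$, so $H = \triv$, and conversely injectivity of $\psi$ forces $G = \triv$ whenever $H = \triv$; in either case both groups are elementary of rank $1$. So assume from now on that $G$ and $H$ are non-trivial. Since $G$ and $H$ are \tdlcsc, Theorem~\ref{thm:compression_factoring:tdlcsc} supplies a \tdlcsc group $\tilde H$, a closed embedding $\alpha\colon G\to\tilde H$ with normal image, and a covering map $\beta\colon\tilde H\to H$ with $\psi=\beta\circ\alpha$, such that $\tilde H/\alpha(G)$ is compact and $\ker(\beta)$ is discrete. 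Two facts drive the argument: $\alpha(G)$ is a non-trivial closed cocompact normal subgroup of $\tilde H$ isomorphic to $G$; and $\ker(\beta)$, being a discrete normal subgroup of a second countable group, is a countable discrete group, hence a closed subgroup of $\tilde H$ lying in $\Es$, with $\tilde H/\ker(\beta)\cong H$.

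For the forward implication, suppose $G$ is elementary with $\xi(G)=\alpha$. Applying Lemma~\ref{lem:compact_ext} to the cocompact normal subgroup $\alpha(G)\cong G$ of $\tilde H$ gives $\tilde H\in\Es$ with $\xi(\tilde H)=\xi(G)=\alpha$. Then $H\cong\tilde H/\ker(\beta)$ is a Hausdorff quotient of $\tilde H$, so $H\in\Es$ and $\xi(H)\le\xi(\tilde H)=\alpha$ by Proposition~\ref{prop:xi_monotone}(2). Finally, $\psi\colon G\to H$ is a continuous injection into the elementary group $H$, so Proposition~\ref{prop:xi_monotone}(1) gives $\alpha=\xi(G)\le\xi(H)$; hence $\xi(H)=\alpha$.

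For the converse, suppose $H$ is elementary with $\xi(H)=\alpha$. Since $\ker(\beta)\in\Es$ and $\tilde H/\ker(\beta)\cong H\in\Es$, closure of $\Es$ under group extensions gives $\tilde H\in\Es$; hence $\alpha(G)\cong G$, being a closed subgroup of $\tilde H$, also lies in $\Es$. Lemma~\ref{lem:compact_ext} applied to $\alpha(G)$ now yields $\xi(\tilde H)=\xi(G)$, while Proposition~\ref{prop:xi_monotone}(2) gives $\alpha=\xi(H)=\xi(\tilde H/\ker(\beta))\le\xi(\tilde H)=\xi(G)$. On the other hand $\psi\colon G\to H$ is a continuous injection into the elementary group $H$, so $\xi(G)\le\xi(H)=\alpha$ by Proposition~\ref{prop:xi_monotone}(1). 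Therefore $\xi(G)=\alpha$, completing the proof.

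I do not anticipate a genuinely hard step here once Theorem~\ref{thm:compression_factoring:tdlcsc} is in hand; the argument is bookkeeping with the displayed inequalities. The one point deserving care is that the cocompact factor $\alpha(G)\le\tilde H$ must be handled with Lemma~\ref{lem:compact_ext}, which provides the exact equality $\xi(\tilde H)=\xi(G)$: the general extension estimate of Lemma~\ref{lem:d-rank_extensions} would only bound $\xi(\tilde H)$ by $\xi(G)+1$, and it is precisely the compactness of $\tilde H/\alpha(G)$ that excludes the extra $+1$.
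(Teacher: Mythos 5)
Your proof is correct and follows essentially the same route as the paper: factor $\psi$ through Theorem~\ref{thm:compression_factoring:tdlcsc}, apply Lemma~\ref{lem:compact_ext} to the cocompact normal copy of $G$ in $\tilde H$, and squeeze with Proposition~\ref{prop:xi_monotone}. The only difference is cosmetic: in the converse direction the paper simply notes that $\psi$ is a continuous injection into the elementary group $H$, so $G$ is elementary by Proposition~\ref{prop:xi_monotone}(1), and then invokes the forward direction, whereas you rebuild $\tilde H\in\Es$ via extension closure; both are fine.
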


\begin{proof}
	Let $\psi: G\rightarrow H$ be a normal compression map. Let $\tilde{H}$, $\alpha:G\rightarrow H$, and $\beta:\tilde{H}\rightarrow H$ be as given by Theorem~\ref{thm:compression_factoring:tdlcsc}.
	
	Suppose first that $G$ is an elementary group. Since $G$ embeds as a cocompact normal subgroup of $\tilde{H}$, Lemma~\ref{lem:compact_ext} implies that $\xi(\tilde{H}) = \xi(G)$. The group $H$ is a quotient of $\tilde{H}$, so in view of Proposition~\ref{prop:xi_monotone}, we deduce that $H$ is elementary with $\xi(H) \le \xi(G)$. On the other hand, $\psi:G\rightarrow H$ is a continuous injective homomorphism, and Proposition~\ref{prop:xi_monotone} thus shows that $\xi(G) \le \xi(H)$. We conclude that $\xi(H)=\xi(G)$.
	
	Conversely, if $H$ is elementary, then Proposition~\ref{prop:xi_monotone} ensures that $G$ is elementary. The forward direction then implies that $\xi(G)=\xi(H)$.
\end{proof}

\subsection{Amenability}\label{sec:amenability}
A locally compact group $G$ is \defbold{amenable}\index{amenable group} if every continuous affine action of $G$ on a non-empty compact convex subset of a locally convex topological vector space has a fixed point; we direct the reader to \cite[Appendix G]{BHV08} for an account of the basic theory of amenable groups. Amenability is a second property outright preserved under normal compressions.

Let us recall several basic facts.

\begin{lem}[See {\cite[Appendix G]{BHV08}}]\label{lem:prelim:amenability}
	Let $G$ be an \lcsc group.
	\begin{enumerate}[(1)]
		\item Every compact or virtually solvable \lcsc group is amenable.
		\item If $K$ is a closed normal subgroup of $G$, then $G$ is amenable if and only if both $K$ and $G/K$ are amenable.
		\item If $G$ is amenable, then every closed subgroup of $G$ is amenable.
		\item If $H$ is an amenable \lcsc group and $\psi: H \rightarrow G$ is a continuous homomorphism, then $\ol{\psi(H)}$ is amenable.
		\item If $G = \ol{\bigcup_{\alpha \in I}A_\alpha}$ with $(A_\alpha)_{\alpha \in I}$ a directed system of closed amenable subgroups of $G$, then $G$ is amenable.
	\end{enumerate}
\end{lem}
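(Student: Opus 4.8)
The plan is to derive all five items from the fixed-point characterization of amenability recalled above, together with the classical facts that compact groups and abelian groups are amenable and that closed subgroups of amenable locally compact groups are amenable; all of this is contained in \cite[Appendix G]{BHV08}, so I only sketch the arguments.

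I would first dispatch $(5)$, which is a pure compactness argument. Given a continuous affine action of $G$ on a non-empty compact convex set $C$ in a locally convex space, each fixed-point set $C^{A_\alpha}$ is non-empty by amenability of $A_\alpha$, and it is closed, hence compact, and convex. Since $(A_\alpha)_{\alpha \in I}$ is directed by inclusion, $\{C^{A_\alpha}\mid \alpha \in I\}$ is a filtering family of non-empty closed subsets of the compact set $C$, so $\bigcap_{\alpha \in I}C^{A_\alpha}\neq \emptyset$; this intersection is the fixed-point set of $\bigcup_{\alpha \in I}A_\alpha$, and since stabilizers for a continuous action are closed, it equals $C^{\ol{\bigcup_{\alpha \in I}A_\alpha}} = C^G$. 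Next I would prove the extension direction of $(2)$: if $K\normal G$ with $K$ and $G/K$ amenable, then for any continuous affine $G$-action on a non-empty compact convex $C$, amenability of $K$ makes $C^K$ non-empty, and it is closed, convex, and — crucially — $G$-invariant because $K$ is normal; the induced continuous affine action of $G/K$ on $C^K$ then has a fixed point, which is fixed by all of $G$. The converse direction of $(2)$ splits into ``$G/K$ amenable'', which is immediate since every continuous affine $G/K$-action is a continuous affine $G$-action via the quotient map, and ``$K$ amenable'', which is the instance of $(3)$ for the closed subgroup $K$.

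Statement $(3)$ — a closed subgroup $H$ of an amenable locally compact group $G$ is amenable — is the only genuinely non-formal point; this is Greenleaf's theorem, proved in \cite[Appendix G]{BHV08} by pushing a left-invariant mean on $L^{\infty}(G)$ down to one on $L^{\infty}(H)$ along a Bruhat section of $G \to G/H$, and I would simply cite it. Item $(4)$ then follows without using $(3)$: $\ker\psi$ is a closed normal subgroup of the amenable group $H$, so $H/\ker\psi$ is amenable by the quotient half of $(2)$; the induced map $H/\ker\psi \to \ol{\psi(H)}$ is a continuous homomorphism with dense image, and amenability transfers along such a map by a soft argument — pull back a continuous affine action of $\ol{\psi(H)}$ to $H/\ker\psi$, obtain a fixed point, and note that by density and closedness of its stabilizer it is fixed by all of $\ol{\psi(H)}$.

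Finally, for $(1)$: compact groups are amenable because integrating a continuous affine action against normalized Haar measure produces a fixed point; abelian groups are amenable by the Markov--Kakutani fixed point theorem; a solvable group is then amenable by induction along its derived series using the extension half of $(2)$; and a \emph{virtually solvable} $G$ contains a finite-index solvable subgroup, which we may replace by its closure (still solvable, and now open), so $G$ has a finite-index closed normal solvable subgroup $N$ (the core of that subgroup, a finite intersection of its conjugates), whence $N$ is amenable, $G/N$ is finite and amenable, and $G$ is amenable by $(2)$. The one step I expect to be the real obstacle — and the reason for the citation — is $(3)$; everything else is soft fixed-point topology.
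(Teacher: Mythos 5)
The paper offers no proof of this lemma at all --- it simply cites \cite[Appendix G]{BHV08} --- and your sketch is a correct, standard derivation of all five items from the fixed-point characterization, with the dependencies ordered so as to avoid circularity and with item (3) (Greenleaf's theorem on closed subgroups) correctly isolated as the one genuinely non-formal step that must be taken from the reference. Nothing to add.
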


Observe that the FC-groups are also amenable, with respect to the discrete topology; this is an easy exercise using Lemma~\ref{lem:prelim:amenability}. 

\begin{prop}\label{prop:amenability:invariance}
	Suppose that $G$ and $H$ are \lcsc groups with $H$ a normal compression of $G$.  Then $G$ is amenable if and only if $H$ is amenable.
\end{prop}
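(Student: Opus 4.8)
The plan is to reduce the statement to the factorization of $\psi$ provided by Theorem~\ref{thm:compression_factoring}, combined with the permanence properties of amenability recorded in Lemma~\ref{lem:prelim:amenability}. Write $\psi:G\to H$ for the normal compression map and apply Theorem~\ref{thm:compression_factoring} to obtain the chain
\[
G=G_0 \xrightarrow{\alpha_1} K_1 \xrightarrow{\beta_1} G_1 \xrightarrow{\alpha_2} K_2 \xrightarrow{\beta_2} G_2 \xrightarrow{\alpha_3} K_3 \xrightarrow{\beta_3} H,
\]
in which the $\alpha_i$ are closed embeddings with normal image, the $\beta_i$ are quotient maps (with $\beta_3$ a covering map), and the successive cokernels $K_i/\alpha_i(G_{i-1})$ and kernels $\ker(\beta_i)$ are, respectively, abelian, solvable of derived length at most $3$, compact, or discrete and contained in the FC-center of $K_3$. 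The key observation is that all of these small kernels and cokernels are amenable \lcsc groups: abelian and solvable \lcsc groups are amenable by Lemma~\ref{lem:prelim:amenability}(1), compact groups are amenable by the same item, and a discrete FC-group is amenable by the remark following Lemma~\ref{lem:prelim:amenability} (note $\ker(\beta_3)$ is discrete, and since each of its elements has a finite $K_3$-conjugacy class it has a finite $\ker(\beta_3)$-conjugacy class, so $\ker(\beta_3)$ is indeed an FC-group).

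Next I would argue that amenability is equivalent across each individual arrow of the chain. For an embedding $\alpha_i:X\to Y$ with normal image and amenable cokernel $Y/\alpha_i(X)$: if $X$ is amenable then $\alpha_i(X)$ is amenable, whence $Y$ is amenable by Lemma~\ref{lem:prelim:amenability}(2); conversely, if $Y$ is amenable then $\alpha_i(X)$, being a closed subgroup, is amenable by Lemma~\ref{lem:prelim:amenability}(3), and hence so is $X$. For a quotient map $\beta_i:Y\to Z$ with amenable kernel (using that $\beta_i$ realizes $Z$ as the topological quotient $Y/\ker(\beta_i)$): if $Y$ is amenable then $Z$ is amenable by Lemma~\ref{lem:prelim:amenability}(2), and if $Z$ is amenable then, $\ker(\beta_i)$ also being amenable, $Y$ is amenable again by Lemma~\ref{lem:prelim:amenability}(2). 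Composing these equivalences along $G\to K_1\to G_1\to K_2\to G_2\to K_3\to H$ shows that $G$ is amenable if and only if $H$ is amenable.

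I do not expect a serious obstacle: the argument is an assembly of Theorem~\ref{thm:compression_factoring} with standard closure properties. The only points requiring a little care are checking that $\ker(\beta_3)$ is genuinely amenable — which holds because it is a discrete FC-group — and that each $\beta_i$ is a quotient map in the topological sense, so that it exhibits the topological quotient by its kernel; both are furnished by Theorem~\ref{thm:compression_factoring}. (For the direction $G$ amenable $\Rightarrow$ $H$ amenable one could alternatively bypass the factorization, since $H=\ol{\psi(G)}$ is then the closure of a continuous homomorphic image of an amenable group and hence amenable by Lemma~\ref{lem:prelim:amenability}(4); but the factorization argument handles both directions uniformly.)
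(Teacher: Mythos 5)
Your proposal is correct and follows essentially the same route as the paper: the paper likewise deduces the converse direction by applying Theorem~\ref{thm:compression_factoring}, noting that $\ker(\beta_3)$ is an amenable FC-group so $K_3$ is amenable, and then passing amenability back down the chain through the closed embeddings and the quotient maps with solvable kernels. The only cosmetic difference is that the paper dispatches the forward direction immediately via Lemma~\ref{lem:prelim:amenability}(4) rather than running it through the factorization, an alternative you yourself note.
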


\begin{proof}
	If $G$ is amenable, then $H$ is amenable by Lemma~\ref{lem:prelim:amenability}.
	
	Conversely, suppose that $H$ is amenable. Applying Theorem~\ref{thm:compression_factoring}, we obtain a factorization of $\psi: G \rightarrow H$ as $\psi = \beta_3\circ\alpha_3\circ\beta_2\circ\alpha_2\circ\beta_1\circ\alpha_1$ where
	\[
	G=G_0 \xrightarrow{\alpha_1} K_1 \xrightarrow{\beta_1} G_1  \xrightarrow{\alpha_2} K_2 \xrightarrow{\beta_2} G_2 \xrightarrow{\alpha_3} K_3 \xrightarrow{\beta_3} H 
	\]
	The kernel $\ker(\beta_3)$ is an FC-group, so it is amenable. Since $H$ is also amenable, Lemma~\ref{lem:prelim:amenability} ensures that $K_3$ is amenable. Each $\gamma \in \{\alpha_1,\alpha_2,\alpha_3,\beta_1,\beta_2\}$ is either a closed embedding or a quotient map with solvable kernel. In either case, if the range of $\gamma$ is amenable, then so is the domain.  Since $K_3$ is amenable, we deduce that $G$ is amenable.
\end{proof}

\subsection{Quasi-discreteness}
Just as with compact generation, discreteness is \textit{not} a normal compression invariant, even if we restrict to compressions between non-abelian topologically characteristically simple groups; cf. $\bigoplus_{i\in \Zb}A_5$ and $\prod_{i\in \Zb}A_5$. There is a more general property that is close to discreteness and admits better preservation properties: A \lcsc group $G$ is said to be \defbold{quasi-discrete}\index{quasi-discrete} if $G$ has a dense quasi-center $\QZ(G):=\{g\in G\mid \CC_G(g)\text{ is open}\}$. 

\begin{rmk}
For an \lcsc group $G$, the quasi-center $\QZ(G)$ is the union of all countable conjugacy classes in $G$, so $\QZ(G)$ is the union of all countable normal subgroups of $G$. This gives the following characterization: \textit{For $G$ an \lcsc group, $G$ is quasi-discrete if and only if $G$ contains a countable dense normal subgroup.} The quasi-discrete \lcsc groups are thus exactly the \lcsc groups that arise as normal compressions of countable discrete groups.
\end{rmk}

Our proof requires a classical commutator identity; recall that $[x,y,z]:=[[x,y],z]$, $[a,b] = ab a\inv b\inv$, and $a^b = bab\inv$.

\begin{fact}[Hall--Witt identity]
	For $G$ a group and any $a,b,c\in G$, the following holds:
	\[
	[a\inv,c,b]^a[b\inv,a,c]^b[c\inv,b,a]^c=1.
	\]
\end{fact}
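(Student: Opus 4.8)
The plan is to prove this classical identity by a direct calculation in the free group $F$ on symbols $a,b,c$: every triple of elements of an arbitrary group $G$ is the image of $(a,b,c)$ under a homomorphism from $F$, so it suffices to check that the left-hand word reduces to $1$ in $F$. No conceptual input is needed here, so the ``hard part'' is purely bookkeeping; the work can be cut by a factor of three by exploiting the evident cyclic symmetry.

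First I would expand the first factor using $[x,y]=xyx^{-1}y^{-1}$ and $x^y=yxy^{-1}$: this gives $[a^{-1},c]=a^{-1}cac^{-1}$, then $[a^{-1},c,b]=(a^{-1}cac^{-1})\,b\,(ca^{-1}c^{-1}a)\,b^{-1}$, and, conjugating by $a$,
\[
[a^{-1},c,b]^a = cac^{-1}\,b\,ca^{-1}c^{-1}\,a\,b^{-1}a^{-1},
\]
a reduced word of length $10$ in $a^{\pm1},b^{\pm1},c^{\pm1}$. The statement is invariant under the cyclic substitution $a\mapsto b\mapsto c\mapsto a$, which cyclically permutes the three factors, so applying this substitution once and then twice yields
\[
[b^{-1},a,c]^b = aba^{-1}\,c\,ab^{-1}a^{-1}\,b\,c^{-1}b^{-1}, \qquad [c^{-1},b,a]^c = bcb^{-1}\,a\,bc^{-1}b^{-1}\,c\,a^{-1}c^{-1}.
\]

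Then I would concatenate these three length-$10$ words and reduce. At the junction of the first two words the letters cancel in succession ($a^{-1}a$, then $b^{-1}b$, then $aa^{-1}$, then $c^{-1}c$, then $a^{-1}a$), leaving the length-$10$ word $cac^{-1}bcb^{-1}a^{-1}bc^{-1}b^{-1}$; appending the third word and cancelling in the same fashion consumes the word letter by letter down to the empty word, which is the desired identity. As an independent check one may note that $[a^{-1},c]^a=[c,a]$, so that $[a^{-1},c,b]^a=\bigl[[c,a],b^a\bigr]$ and the identity assumes the perhaps more recognizable form $\bigl[[c,a],b^a\bigr]\bigl[[a,b],c^b\bigr]\bigl[[b,c],a^c\bigr]=1$; being entirely standard, it may also simply be quoted (for instance from Robinson, \emph{A Course in the Theory of Groups}). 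The only real risk is an arithmetic slip in the word reduction, which the cyclic-symmetry observation keeps under control.
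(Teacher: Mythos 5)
Your verification is correct: with the paper's conventions ($[x,y]=xyx^{-1}y^{-1}$, $x^y=yxy^{-1}$, $[x,y,z]=[[x,y],z]$) each conjugated triple commutator expands to the length-$10$ word you give, and the concatenation of the three cyclic images does reduce to the empty word exactly as you describe. The paper itself offers no proof — it records the identity as a classical ``Fact'' — so your direct free-group computation (or the observation that it is the standard Hall--Witt identity, restated for these conventions, and citable from Robinson) is all that is needed.
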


\begin{thm}\label{thm:compression:quasidiscrete}
	Suppose that $G$ and $H$ are \lcsc groups with $H$ a normal compression of $G$. 
	\begin{enumerate}[(1)]
		\item If $H$ is quasi-discrete, then $G/\ol{\QZ(G)}$ is metabelian.
		\item If $G$ is quasi-discrete, then $H/\ol{\QZ(H)}$ is metabelian.
	\end{enumerate}
\end{thm}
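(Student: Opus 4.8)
The plan is to realize both parts inside the semidirect product $H^{\rtimes}:=G\rtimes_{\psi}H$ attached to a normal compression map $\psi\colon G\to H$, which is an \lcsc group by Proposition~\ref{prop:Polish:semidirect}. Write $\iota\colon G\to H^{\rtimes}$ for the closed embedding $g\mapsto(g,1)$ and $\pi\colon H^{\rtimes}\to H$, $(g,h)\mapsto\psi(g)h$, for the quotient map of Theorem~\ref{thm:psi-compression_factor_rel}, whose kernel is $\Delta_G:=\{(g^{-1},\psi(g))\mid g\in G\}\simeq G$; recall that $\Delta_G$ centralizes $\iota(G)$ and that $H^{\rtimes}=\ol{\iota(G)\Delta_G}$. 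Three soft facts will be used throughout: $\mathrm{(a)}$ quotient maps of topological groups are open, so $q(\QZ(A))\subseteq\QZ(B)$ and hence $q(\ol{\QZ(A)})\subseteq\ol{\QZ(B)}$ for any quotient map $q\colon A\to B$; $\mathrm{(b)}$ if $z\in\QZ(A)$ and $a\in A$ then $[a,z]\in\QZ(A)$, since $\CC_A([a,z])\supseteq a\CC_A(z)a^{-1}\cap\CC_A(z)$ is open; and $\mathrm{(c)}$ the quasi-center of a second countable group is the union of its countable normal subgroups, so in particular for $z\in\QZ(H^{\rtimes})\cap\iota(G)$ the open subgroup $\CC_{H^{\rtimes}}(z)$ meets $\iota(G)$ in an open subgroup, whence $\QZ(H^{\rtimes})\cap\iota(G)\subseteq\QZ(\iota(G))=\iota(\QZ(G))$.

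For part $(1)$, assume $H$ is quasi-discrete. For $d\in\QZ(H)$ the group $\CC_H(d)$ is open, hence so is $\psi^{-1}(\CC_H(d))$, and a direct computation gives $\CC_{H^{\rtimes}}((1,d))\supseteq\psi^{-1}(\CC_H(d))\rtimes\CC_H(d)$, an open subgroup of $H^{\rtimes}$; thus $1\times\QZ(H)\subseteq\QZ(H^{\rtimes})$. Given $g\in G$ and $d\in\QZ(H)$, fact $\mathrm{(b)}$ gives $[(g,1),(1,d)]\in\QZ(H^{\rtimes})$, and one computes $[(g,1),(1,d)]=\iota\!\left(g\,(d.g)^{-1}\right)$, where $d.g$ denotes the $\psi$-equivariant action; by $\mathrm{(c)}$ this forces $g\,(d.g)^{-1}\in\QZ(G)$. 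Since $h\mapsto g\,(h.g)^{-1}$ is continuous from $H$ to $G$, sends the dense set $\QZ(H)$ into the subgroup $\QZ(G)$, we obtain $g\,(h.g)^{-1}\in\ol{\QZ(G)}$ for all $h\in H$; specializing $h=\psi(g')$ yields $[g,g']\in\ol{\QZ(G)}$ for all $g,g'\in G$. As $\ol{\QZ(G)}$ is a closed normal subgroup, $\ol{[G,G]}\le\ol{\QZ(G)}$, so $G/\ol{\QZ(G)}$ is in fact abelian, which is more than required.

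For part $(2)$, assume $G$ is quasi-discrete; the goal is $\ol{[[H,H],[H,H]]}\le\ol{\QZ(H)}$. Because $\psi(G)$ is dense in $H$ and commutators are continuous, $\ol{[[H,H],[H,H]]}=\ol{\psi([[G,G],[G,G]])}$; and $[[G,G],[G,G]]$ is generated by the $G$-conjugates of commutators $[[x,y],[u,v]]$ ($x,y,u,v\in G$), which are limits of commutators $[[g_1,g_2],[g_3,g_4]]$ with $g_i\in\QZ(G)$ — and these, being words in elements of the subgroup $\QZ(G)$, lie in $\QZ(G)$. Since $\ol{\QZ(H)}$ is normal in $H$, the claim reduces to showing $\psi(w)\in\ol{\QZ(H)}$ for every $w=[[g_1,g_2],[g_3,g_4]]$ with $g_i\in\QZ(G)$. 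The relevant structure is that $\psi([g_1,g_2])$ and $\psi([g_3,g_4])$ lie in the images of the countable normal subgroups $\grp{g_i^{G}}$ of $G$, which are countable subgroups of $H$ normal in the dense subgroup $\psi(G)$; hence $\psi(w)$ lies in such a subgroup too. The plan is then to run the Hall--Witt identity on $\psi(g_1),\psi(g_2)$ and the commutator $[\psi(g_3),\psi(g_4)]$: each of the three iterated commutators in the identity lands in one of these countable $\psi(G)$-normal subgroups, and controlling two of them — showing that the $H$-normal closure of such a subgroup, or at least its image in $H/\ol{\QZ(H)}$, is trivial — forces $\psi(w)\in\ol{\QZ(H)}$.

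The crux, which I expect to be the main obstacle, is this last step: a countable subgroup of $H$ that is normal in the dense subgroup $\psi(G)$ need not be normal in $H$ — which is exactly what prevents the conclusion in $(2)$ from being `abelian' as in $(1)$ — so one must genuinely use the rigidity of a normal compression, via the $\psi$-equivariant $H$-action on $G$ together with Hall--Witt, to bound its $H$-normal closure modulo $\ol{\QZ(H)}$. One must also absorb the connected component: in a quasi-discrete group $G^{\circ}$ is central (Lemma~\ref{lem:quasidiscrete:rank2}), but under a normal compression $H^{\circ}$ may only be a nilpotent-of-class-at-most-$2$ extension of $\ol{\psi(G^{\circ})}$ by Theorem~\ref{thmintro:connected}, and this extra layer is precisely what makes the bound in $(2)$ metabelian rather than abelian; if convenient, the general \lcsc case can be reduced to the \tdlcsc case up to bounded-derived-length error terms via the factorization in Theorem~\ref{thm:compression_factoring}.
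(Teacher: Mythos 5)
Your part (1) is correct, and it is a genuinely different argument from the paper's: you work inside $G\rtimes_{\psi}H$, observe that $1\times\QZ(H)\subseteq\QZ(G\rtimes_{\psi}H)$ and that $\QZ(G\rtimes_{\psi}H)\cap\iota(G)\subseteq\iota(\QZ(G))$, and then use joint continuity of the $\psi$-equivariant action to pass from $\QZ(H)$ to all of $H$. (One can streamline this without the semidirect product: for $d\in\QZ(H)$ one has $\psi\bigl(g(d.g)^{-1}\bigr)=[\psi(g),d]\in\QZ(H)$, and $\psi^{-1}(\QZ(H))\le\QZ(G)$.) Note that your argument actually proves the stronger statement that $G/\ol{\QZ(G)}$ is \emph{abelian}: specializing $h=\psi(g')$ gives $[g,g']\in\ol{\QZ(G)}$ for all $g,g'$. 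The paper's route (observing that $\psi(G)$ and $\QZ(H)$ are commuting-modulo-intersection dense normal subgroups and then applying Proposition~\ref{prop:normal_compression} to the compression $K\rightarrow R$) only reaches metabelian, so your approach buys a cleaner and sharper conclusion for this half.

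Part (2), however, has a genuine gap, and it is precisely at the point you yourself flag as "the crux". Your reduction to showing $\psi(w)\in\ol{\QZ(H)}$ for $w=[[g_1,g_2],[g_3,g_4]]$ with $g_i\in\QZ(G)$ is legitimate, but the plan that follows is not a proof and, as described, would not work: running Hall--Witt with all three entries drawn from $\psi(G)$ cannot exhibit an \emph{open} centralizer in $H$, which is what membership in $\QZ(H)$ requires. The paper's proof makes one Hall--Witt entry range over an open subgroup of $H$, and the work goes into producing that open subgroup. Concretely, the two missing ingredients are: (i) a reduction to the totally disconnected case --- since $G$ is quasi-discrete, $G^\circ\le\Z(G)$, so $Z:=\ol{\psi(G^\circ)}$ is central in $H$; setting $K:=\pi^{-1}(\QZ(H/Z))$, a first Hall--Witt application (using that $[O,m]\cup[O,n]\subseteq Z$ for some open $O$ and that $Z$ is central) gives $\ol{[K,K]}\le\ol{\QZ(H)}$, and this two-layer structure is exactly where the metabelian (rather than abelian) bound comes from; and (ii) in the totally disconnected case, for $m,n\in\QZ(G)$ one chooses $W\in\U(G)$ centralizing $m$ and $n$ and invokes Lemma~\ref{lem:almost_connected} to get $V\in\U(H)$ normalizing $\psi(W)$; inside $R:=\grp{\psi(m),\psi(n),\psi(W),V}$ the subgroup $(\psi(G)\cap R)\psi(W)/\psi(W)$ is countable and normal, hence quasi-central in $R/\psi(W)$, which yields an open $V'\le V$ with $[V',\psi(m)]\cup[V',\psi(n)]\subseteq\psi(W)$; Hall--Witt then shows $V'$ centralizes $[\psi(m),\psi(n)]$, so $\psi([\QZ(G),\QZ(G)])\le\QZ(H)$ and density finishes the argument. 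Your "countable subgroups normal in $\psi(G)$" heuristic points in the right direction, but without the passage to the locally compact subgroup $R$ and the compact normal subgroup $\psi(W)$ supplied by Lemma~\ref{lem:almost_connected}, quasi-centrality of those countable subgroups in $H$ itself is exactly what fails, so the argument does not close.
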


\begin{proof} Let $\psi:G\rightarrow H$ be a normal compression map.
	
	For $(1)$, set $Q := \QZ(H)$. For $g \in G$ such that $\psi(g) \in Q$, the injectivity of $\psi$ ensures that $\psi\inv(\CC_H(\psi(g))) = \CC_G(g)$, so $\CC_G(g)$ is open. We deduce that $\psi\inv(Q) \le \QZ(G)$.
	
	Setting $R := \ol{\psi(G) \cap Q}$, the subgroups $\psi(G)$ and $Q$ commute modulo $R$, and since $\psi(G)$ and $Q$ are both dense normal subgroups of $H$, the group $H/R$ is abelian. The group $G/K$ with $K:=\psi^{-1}(R)$ is therefore abelian. The  map $\psi:K\rightarrow R$ is a normal compression map, and $K\cap \ol{\QZ(G)}$ is a closed normal subgroup of $K$ with dense image in $R$. Proposition~\ref{prop:normal_compression} ensures that  $\ol{[K,K]}\leq K\cap \ol{\QZ(G)}$. It now follows that $G/\ol{\QZ(G)}$ is metabelian, verifying $(1)$.
	
	For $(2)$, each $g\in \QZ(G)$ centralizes $G^{\circ}$, so $G^{\circ} \le \Z(G)$, which implies that $Z := \ol{\psi(G^\circ)}$ is central in $H$. We thus obtain a normal compression map $G/G^\circ\rightarrow H/Z$ with $G/G^\circ$ quasi-discrete.  Let $K:=\pi^{-1}(\QZ(H/Z))$ with $\pi:H\rightarrow H/Z$ the usual projection.  For each $m,n\in K$, there is an open subgroup $O\leq H$ such that $[O,m]\cup [O,n]\subseteq Z$.  Therefore, $[o\inv,m,n] = [n\inv,o,m] = 1$, and the Hall-Witt identity with $a:=o$, $c:=m$, and $b:=n$ implies $[m\inv,n,o]=1$ for all $o \in O$. We deduce that $[m\inv,n]\in \QZ(H)$ for all $m,n\in K$, and thus, $\ol{[K,K]}\leq \ol{\QZ(H)}$.
	
	To show that $H/\ol{\QZ(H)}$ is metabelian, it suffices to verify that $H/\ol{K}$ is abelian. We observe that $H/\ol{K}\simeq (H/Z)/(\ol{K}/Z)$, so it is enough to show that $(H/Z)/\ol{\QZ(H/Z)}$ is abelian. By passing to $H/Z$, we assume that $Z=\{1\}$ and argue that $H/\ol{\QZ(H)}$ is abelian. Given this reduction, $G$ is a \tdlc group.
	
	Fix $n,m\in \QZ(G)$ and $W\in \U(G)$ such that $W$ centralizes $n$ and $m$. Appealing to Lemma~\ref{lem:almost_connected}, we may find $V\in \U(H)$ such that $\psi(W)\normal V$. We now consider $R:=\grp{\psi(n),\psi(m),\psi(W),V}\leq H$. The subgroup $L:=\psi(W)$ is a compact normal subgroup of $R$ that centralizes $\psi(m)$ and $\psi(n)$. Furthermore, $(\psi(G)\cap R)L/L$ is countable and normal in $R/L$, so it is quasi-central in $R/L$. We may thus find an open subgroup $V'$ of $V$ such that $[V',\psi(n)]\cup[V',\psi(m)]\subseteq L$. Fixing $v\in V'$, we see that $[v\inv,\psi(m),\psi(n)]=1$ and $[\psi(n)\inv,v,\psi(m)]=1$. The Hall--Witt identity with $a:=v$, $b:=\psi(n)$, and $c:=\psi(m)$ thus implies that $[\psi(m)\inv,\psi(n),v] = 1$. It follows that $[\psi(m)\inv,\psi(n)]=\psi([m\inv,n])$ has an open centralizer in $H$. 
	
	We have now established that $\psi([\QZ(G),\QZ(G)])\leq \QZ(H)$. On the other hand, $\QZ(G)$ is dense in $G$, so by continuity, $\psi(\QZ(G))$ is a dense subgroup of $H$.  We deduce that
	\[
	\ol{[H,H]}\leq \ol{\psi([\QZ(G),\QZ(G)])}.
	\]
	The group $H/\ol{\QZ(H)}$ is thus abelian, completing the proof of (2).
\end{proof}

\begin{cor}\label{cor:compression:quasidiscrete}
	Suppose that $G$ and $H$ are topologically perfect \lcsc groups with $H$ a normal compression of $G$. Then $H$ is quasi-discrete if and only if $G$ is quasi-discrete.
\end{cor}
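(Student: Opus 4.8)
The plan is to derive the statement directly from Theorem~\ref{thm:compression:quasidiscrete}, using one elementary observation: \emph{a topologically perfect metabelian topological group is trivial}. Indeed, if $Q$ is metabelian it has a closed abelian normal subgroup $A$ with $Q/A$ abelian, hence $\ol{[Q,Q]}\leq A$, so $\ol{[Q,Q]}$ is abelian; if moreover $Q$ is topologically perfect, then $Q=\ol{[Q,Q]}$ is abelian and therefore $Q=\ol{[Q,Q]}=\triv$. I will also use that topological perfectness descends to Hausdorff quotients: for a quotient map $\pi\colon Q\to Q'$ with $Q=\ol{[Q,Q]}$ one has $Q'=\pi(Q)=\pi(\ol{[Q,Q]})\subseteq\ol{\pi([Q,Q])}=\ol{[Q',Q']}$, so $Q'$ is topologically perfect as well.

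For the forward direction, I would assume $H$ is quasi-discrete and invoke Theorem~\ref{thm:compression:quasidiscrete}(1) to get that $G/\ol{\QZ(G)}$ is metabelian. Since $G$ is topologically perfect, so is this quotient, so by the observation it is trivial; that is, $\QZ(G)$ is dense in $G$, i.e.\ $G$ is quasi-discrete. For the converse, assuming $G$ quasi-discrete, Theorem~\ref{thm:compression:quasidiscrete}(2) gives that $H/\ol{\QZ(H)}$ is metabelian, and as $H$ is topologically perfect the same reasoning yields $H/\ol{\QZ(H)}=\triv$, so $H$ is quasi-discrete.

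I do not expect any real obstacle: the whole content is the reduction to Theorem~\ref{thm:compression:quasidiscrete} together with the two routine facts about topologically perfect groups recorded above. (One could even weaken the hypothesis to ``$G$ topologically perfect'', since a normal compression of a topologically perfect group is again topologically perfect: if $\psi\colon G\to H$ is the compression map then $\psi([G,G])\subseteq[H,H]$, and $\psi([G,G])$ is dense in $\psi(G)$, hence in $H$.)
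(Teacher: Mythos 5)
Your proof is correct and is exactly the argument the paper intends: the corollary is stated without proof as an immediate consequence of Theorem~\ref{thm:compression:quasidiscrete}, the point being precisely that a topologically perfect metabelian group is trivial, so the quotients $G/\ol{\QZ(G)}$ and $H/\ol{\QZ(H)}$ collapse. Your parenthetical observation that topological perfectness of $H$ already follows from that of $G$ is also correct.
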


%\addtocontents{toc}{\protect\setcounter{tocdepth}{2}}

\section{Generalized direct products}

We now relate locally compact generalized central products to local direct products, showing in particular that any \tdlc quasi-product is a normal compression of a local direct product.  We then establish a structure theorem for locally compact groups of semisimple type.  Our discussion concludes by considering generalized central products of elementary groups.

\subsection{Preliminaries}

For a topological group $G$ and $J$ a set of closed subgroups, we define $G_J :=  \cgrp{N \mid N \in J}$. 

\begin{defn} 
Suppose that $G$ is a topological group and $\mc{S}\subseteq \mc{N}(G)$. The set $\mc{S}$ is a \defbold{generalized central factorization}\index{generalized central factorization} of $G$, if $G_{\mc{S}}=G$ and $[N,M] = \triv$ for any distinct $N$ and $M$ in $\mc{S}$. In such a case, $(G,\mc{S})$ is said to be a \defbold{generalized central product}\index{generalized central product}; we will also say that $G$ is a generalized central product, when the factorization is implicit.

We say $\mc{S}$ is a \defbold{quasi-direct factorization}\index{quasi-direct factorization} of $G$ if $G_{\mc{S}}=G$ and $\mc{S}$ has the following topological independence property:
\[
\forall X\subseteq \mathcal{P}(\mc{S}):\; \bigcap X=\emptyset \Rightarrow \bigcap_{A\in X}G_A=\{1\}. 
\]
In such a case, $(G,\mc{S})$ (or $G$) is said to be a \textbf{quasi-product}\index{quasi-product}. A subgroup $H$ of $G$ is a \defbold{quasi-factor}\index{quasi-factor} of $G$ if it is an element of some quasi-direct factorization of $G$.

A factorization $\mc{S}$ is \textbf{non-trivial} if $|\mc{S}|\geq 2$.
\end{defn}

We shall make use of a general property of generalized central products.
\begin{thm}[{\cite[Theorem 1.7]{RW_P_15}}]\label{thm:directproduct:quasiproduct}
	Let $G$ be a topological group with $\mc{S}$ a non-trivial generalized central factorization of $G$. Then the diagonal map 
		\[
		d: G \rightarrow \prod_{N\in \mc{S}}G/G_{\mc{S}\setminus\{N\}}
		\]
		is a continuous homomorphism such that $d(G) \cap G/G_{\mc{S}\setminus \{N\}}$ is dense in $G/G_{\mc{S}\setminus \{N\}}$ for every $N\in \mc{S}$ and $\ker(d)$ is central in $G$. Furthermore, $\mc{S}$ is a quasi-direct factorization of $G$ if and only if $d$ is injective.
\end{thm}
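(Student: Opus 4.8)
My plan is to reduce everything to one observation: for each $N\in\mc{S}$, the subgroup $G_{\mc{S}\setminus\{N\}}$ centralizes $N$. This holds because $[N,M]=\triv$ for every $M\in\mc{S}$ with $M\neq N$, so $N$ centralizes the abstract subgroup $\grp{M\mid M\in\mc{S},\,M\neq N}$, and since $\CC_G(N)$ is closed it then centralizes the closure $G_{\mc{S}\setminus\{N\}}$. Symmetrically $N$ and $G_{\mc{S}\setminus\{N\}}$ commute; moreover $NG_{\mc{S}\setminus\{N\}}$, being a subgroup (as $N\normal G$) containing every member of $\mc{S}$, is dense in $G_{\mc{S}}=G$. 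With these facts recorded, the easy assertions follow quickly. The map $d$ is a homomorphism because each coordinate $G\to G/G_{\mc{S}\setminus\{N\}}$ is, and it is continuous for the product topology since each coordinate is; and $g\in\ker(d)$ iff $g\in G_{\mc{S}\setminus\{N\}}$ for every $N$, i.e. $\ker(d)=\bigcap_{N\in\mc{S}}G_{\mc{S}\setminus\{N\}}$. To see this kernel is central I apply the observation: such a $g$ centralizes every $N\in\mc{S}$, hence centralizes the dense subgroup $\grp{N\mid N\in\mc{S}}$, and as $\CC_G(g)$ is closed, $g\in\Z(G)$.

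Next I would handle the density statement, in which $G/G_{\mc{S}\setminus\{N\}}$ is read as the ``coordinate $N$'' subgroup of the product, namely the tuples trivial in all coordinates $\neq N$. The idea is to feed $N$ itself into $d$: if $x\in N$ and $M\neq N$, then $x\in N\leq G_{\mc{S}\setminus\{M\}}$, so the $M$-th coordinate of $d(x)$ is trivial; hence $d(N)$ lands in the coordinate-$N$ subgroup, and its $N$-th coordinate is $xG_{\mc{S}\setminus\{N\}}$. As $x$ runs over $N$ these cosets exhaust the image of $N$ in $G/G_{\mc{S}\setminus\{N\}}$, which is dense because $NG_{\mc{S}\setminus\{N\}}$ is dense in $G$ and quotient maps carry dense sets to dense sets. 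Since the coordinate-$N$ subgroup is topologically isomorphic to $G/G_{\mc{S}\setminus\{N\}}$ via the coordinate projection, $d(N)$ is dense in it, and therefore so is the larger set $d(G)\cap G/G_{\mc{S}\setminus\{N\}}$.

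It remains to prove the last equivalence. Since $d$ is injective iff $\bigcap_{N\in\mc{S}}G_{\mc{S}\setminus\{N\}}=\triv$, I would show this single equation is equivalent to the topological independence property defining a quasi-direct factorization, using only that $A\mapsto G_A$ is monotone. If $\mc{S}$ is a quasi-direct factorization, apply the property to $X:=\{\mc{S}\setminus\{N\}\mid N\in\mc{S}\}$, whose members have empty intersection, to obtain the equation. Conversely, given the equation and any $X\subseteq\mathcal{P}(\mc{S})$ with $\bigcap X=\emptyset$, each $N\in\mc{S}$ is missing from some $A_N\in X$, so $\bigcap_{A\in X}G_A\leq G_{A_N}\leq G_{\mc{S}\setminus\{N\}}$; intersecting over $N$ forces $\bigcap_{A\in X}G_A=\triv$. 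I do not expect a serious obstacle anywhere; the only points requiring care are the repeated upgrade of ``commutes with a dense subgroup'' to ``commutes with its closure'' via closedness of centralizers, the correct identification of each $G/G_{\mc{S}\setminus\{N\}}$ with a coordinate subgroup of the product when parsing the density claim, and the small combinatorial reduction of the independence property to a single intersection.
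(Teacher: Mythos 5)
Your proof is correct, and every step checks out: the key observation that $G_{\mc{S}\setminus\{N\}}\leq \CC_G(N)$ (via closedness of centralizers), the identification of $\ker(d)$ with $\bigcap_{N\in\mc{S}}G_{\mc{S}\setminus\{N\}}$ and its centrality, the density of $d(N)$ in the coordinate-$N$ subgroup via density of $NG_{\mc{S}\setminus\{N\}}$ in $G$, and the combinatorial equivalence of the independence property with the single equation $\bigcap_{N\in\mc{S}}G_{\mc{S}\setminus\{N\}}=\triv$ are all sound. Note that the paper itself gives no proof of this statement --- it is imported as \cite[Theorem 1.7]{RW_P_15} --- so there is nothing internal to compare against; your argument is the natural one, and it in fact also establishes Corollary~\ref{cor:quasi-product_char} along the way.
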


Theorem~\ref{thm:directproduct:quasiproduct} has a useful, immediate corollary.
\begin{cor}\label{cor:quasi-product_char}
	 Let $G$ be a topological group with $\mc{S}$ a non-trivial generalized central factorization of $G$. Then $\mc{S}$ is a quasi-direct factorization if and only if $\bigcap_{N\in \mc{S}}G_{\mc{S}\setminus\{N\}}=\{1\}$.
\end{cor}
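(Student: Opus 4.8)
The plan is to read this straight off Theorem~\ref{thm:directproduct:quasiproduct}. That theorem, applied to the non-trivial generalized central factorization $\mc S$, tells us that the diagonal map $d\colon G\to\prod_{N\in\mc S}G/G_{\mc S\setminus\{N\}}$ is a continuous homomorphism, and that $\mc S$ is a quasi-direct factorization of $G$ if and only if $d$ is injective. So the only thing that remains to be done is to identify $\ker(d)$.

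First I would note that, by the definition of the diagonal map, an element $g\in G$ lies in $\ker(d)$ exactly when its image in $G/G_{\mc S\setminus\{N\}}$ is trivial for every $N\in\mc S$, that is, exactly when $g\in G_{\mc S\setminus\{N\}}$ for every $N\in\mc S$. Hence
\[
\ker(d)=\bigcap_{N\in\mc S}G_{\mc S\setminus\{N\}}.
\]
Combining this with the equivalence supplied by Theorem~\ref{thm:directproduct:quasiproduct} gives that $\mc S$ is a quasi-direct factorization of $G$ if and only if $\bigcap_{N\in\mc S}G_{\mc S\setminus\{N\}}=\triv$, which is the assertion.

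There is essentially no obstacle here; the only point to keep in mind is that the hypothesis $|\mc S|\ge 2$ is precisely what licenses the use of Theorem~\ref{thm:directproduct:quasiproduct}. Alternatively, one can bypass that theorem entirely with a short elementary argument using only the monotonicity $G_A\le G_{A'}$ whenever $A\subseteq A'$: the forward implication is the instance $X=\{\mc S\setminus\{N\}\mid N\in\mc S\}$ of the topological independence property (this $X$ has empty intersection, since no $M\in\mc S$ lies in $\mc S\setminus\{M\}$), and for the converse, given any $X\subseteq\mathcal{P}(\mc S)$ with $\bigcap X=\emptyset$ one picks for each $N\in\mc S$ some $A_N\in X$ with $N\notin A_N$, so that $G_{A_N}\le G_{\mc S\setminus\{N\}}$ and therefore $\bigcap_{A\in X}G_A\le\bigcap_{N\in\mc S}G_{A_N}\le\bigcap_{N\in\mc S}G_{\mc S\setminus\{N\}}=\triv$. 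Either route is routine; I would present the first, since it is exactly the ``immediate corollary'' advertised.
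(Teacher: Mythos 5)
Your primary argument is exactly the derivation the paper intends: the corollary is stated as an immediate consequence of Theorem~\ref{thm:directproduct:quasiproduct}, and identifying $\ker(d)=\bigcap_{N\in\mc{S}}G_{\mc{S}\setminus\{N\}}$ and invoking the injectivity criterion is precisely that. The proof is correct (and the elementary alternative is also sound), so nothing further is needed.
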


At various points, we will take quotients of generalized central products. There is a natural quasi-product one can extract from such a quotient:

\begin{prop}[{\cite[Proposition 4.7]{RW_P_15}}]\label{prop:quasiproduct:quotient}
Let $G$ be a topological group with $\mc{S}$ a non-trivial generalized central factorization of $G$ and let $N$ be a closed normal subgroup of $G$.  Set
\[
M := \bigcap_{S \in \mc{S}}\ol{G_{\mc{S} \setminus \{S\}}N}.
\]
Then $M/N$ is central in $G/N$, and
\[
\mc{S}/M := \{\ol{SM}/M \mid S\in\mc{S}\text{ and } S \not\le M\}
\]
is a quasi-direct factorization of $G/M$.
\end{prop}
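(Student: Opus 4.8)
The plan is to prove the two assertions in turn: the first by a short commutator computation combined with continuity of commutators, and the second by verifying the hypotheses of Corollary~\ref{cor:quasi-product_char}. Throughout, abbreviate $Q_S := G_{\mc{S}\setminus\{S\}}$ for $S\in\mc{S}$. Each $Q_S$ is a closed normal subgroup of $G$ (it is the closure of the subgroup generated by the normal subgroups $\{T\in\mc{S}\mid T\neq S\}$), so each $\ol{Q_SN}$ is closed normal in $G$, and hence so is $M$, with $N\le M$. I would first record the identity $\ol{Q_SM}=\ol{Q_SN}$: since $N\le M\le\ol{Q_SN}$ we get $Q_SM\subseteq Q_S\,\ol{Q_SN}\subseteq\ol{Q_SN}$, and the reverse inclusion is immediate from $N\le M$. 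For the first assertion, fix $m\in M$. The set $\{g\in G\mid [g,m]\in N\}$ is the preimage in $G$ of $\CC_{G/N}(mN)$, hence a closed subgroup; so it is enough to show it contains every $S\in\mc{S}$, since then it contains $G_{\mc{S}}=G$ and $mN$ is central in $G/N$. Fix $S\in\mc{S}$ and $s\in S$. As $S$ centralizes every $T\in\mc{S}$ distinct from $S$ and $\CC_G(s)$ is closed, $s$ centralizes $Q_S$; hence for $h\in Q_S$ and $n\in N$ one has $[s,hn]=h\,(sns\inv n\inv)\,h\inv\in N$ using normality of $N$. Thus the continuous map $g\mapsto [s,g]N$ is trivial on $Q_SN$, hence on $\ol{Q_SN}\supseteq M\ni m$, so $[s,m]\in N$, as needed.

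For the second assertion, I would first check that $\mc{S}/M$ is a generalized central factorization of $G/M$: each $\ol{SM}/M$ is closed normal in $G/M$; the images of the $S\in\mc{S}$ topologically generate $G/M$ (those with $S\le M$ contributing nothing); and two distinct members $\ol{SM}/M\neq\ol{S'M}/M$ necessarily come from distinct $S\neq S'$, so $[S,S']=\triv$, whence $[\ol{SM}/M,\ol{S'M}/M]=\triv$ on passing to closures. If $|\mc{S}/M|\le 1$ the conclusion is immediate, since a one-element factorization whose member is the whole group (and the empty factorization of a trivial group) is vacuously a quasi-direct factorization; so assume $|\mc{S}/M|\ge 2$. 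By Corollary~\ref{cor:quasi-product_char} it then suffices to prove that $\bigcap_{C\in\mc{S}/M}(G/M)_{(\mc{S}/M)\setminus\{C\}}=\triv$.

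Here I would use three observations. First, if $S\le M$ then $S\subseteq M\subseteq\ol{Q_SN}$ and $T\subseteq Q_S\subseteq\ol{Q_SN}$ for all $T\neq S$, so $G=G_{\mc{S}}\subseteq\ol{Q_SN}$; hence these $S$ contribute nothing and $M=\bigcap_{S\in\mc{S},\,S\not\le M}\ol{Q_SN}$. Second, if $\ol{SM}=\ol{S'M}$ with $S\neq S'$, then $S\subseteq\ol{S'M}\subseteq\ol{Q_SM}=\ol{Q_SN}$, which forces $Q_{S'}\subseteq\ol{Q_SN}$ and hence $\ol{Q_{S'}N}\subseteq\ol{Q_SN}$; by symmetry $\ol{Q_SN}=\ol{Q_{S'}N}$. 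Third, for $C=\ol{SM}/M$, every member of $(\mc{S}/M)\setminus\{C\}$ has the form $\ol{S'M}/M$ with $S'\neq S$, so a standard manipulation of closures and quotients gives $(G/M)_{(\mc{S}/M)\setminus\{C\}}\le\ol{Q_SM}/M=\ol{Q_SN}/M$. Picking a representative $S_C\in\mc{S}$ with $\ol{S_CM}/M=C$ for each $C$, and combining the three observations, we obtain
\[
\bigcap_{C\in\mc{S}/M}(G/M)_{(\mc{S}/M)\setminus\{C\}}\ \le\ \Bigl(\,\bigcap_{C\in\mc{S}/M}\ol{Q_{S_C}N}\,\Bigr)\Big/M\ =\ \Bigl(\,\bigcap_{S\in\mc{S},\,S\not\le M}\ol{Q_SN}\,\Bigr)\Big/M\ =\ M/M\ =\ \triv,
\]
where the middle equality holds because, by the second observation, passing from all $S\not\le M$ to a transversal of the relation $\ol{SM}=\ol{S'M}$ does not change the intersection. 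Corollary~\ref{cor:quasi-product_char} then yields that $\mc{S}/M$ is a quasi-direct factorization of $G/M$. I expect the bookkeeping in this last paragraph — recognizing that factors contained in $M$ drop out of the defining intersection, and that factors with equal image modulo $M$ give equal $\ol{Q_SN}$ — to be the only real obstacle; the commutator identity for $[s,hn]$ and the identity $\ol{Q_SM}=\ol{Q_SN}$ are routine.
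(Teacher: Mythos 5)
Your proof is correct. Note that the paper itself does not prove this statement — it is quoted verbatim from \cite[Proposition 4.7]{RW_P_15} as a preliminary — so there is no in-paper argument to compare against; but your route is the natural one and all the steps check out. The centrality of $M/N$ via the closedness of $\{g \mid [g,m]\in N\}$ and the computation $[s,hn]=h[s,n]h\inv$ is sound, and the reduction of the quasi-direct property to Corollary~\ref{cor:quasi-product_char} together with the three bookkeeping observations (factors inside $M$ make $\ol{G_{\mc{S}\setminus\{S\}}N}=G$ and drop out; $\ol{SM}=\ol{S'M}$ forces $\ol{G_{\mc{S}\setminus\{S\}}N}=\ol{G_{\mc{S}\setminus\{S'\}}N}$; and $(G/M)_{(\mc{S}/M)\setminus\{C\}}\le \ol{G_{\mc{S}\setminus\{S_C\}}N}/M$) correctly yields the trivial intersection. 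You were also right to treat $|\mc{S}/M|\le 1$ separately, since Corollary~\ref{cor:quasi-product_char} is stated only for non-trivial factorizations.
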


The canonical examples of quasi-products are the local direct products:

\begin{defn}
	Suppose that $(G_i)_{i\in I}$ is a set of topological groups and that there is a distinguished open subgroup $O_i\leq G_i$ for each $i\in I$.  The \textbf{local direct product}\index{local direct product} of $(G_i)_{i\in I}$ over $(O_i)_{i\in I}$ is denoted by $\bigoplus_{i\in I}\left(G_i,O_i\right)$ and defined to be
	\[
	\left\{ f:I\rightarrow \bigsqcup_{i\in I} G_i\mid f(i)\in G_i \text{ and }f(i)\in O_i\text{ for all but finitely many }i\in I\right\}
	\]
	with the group topology such that the natural embedding of $\prod_{i\in I}O_i$  with the product topology is continuous and open.
	\end{defn}

In the setting of locally compact groups, a notion of convergence becomes useful to study generalized products.

\begin{defn}
A set $\mc{S}$ of closed subgroups of a topological group $G$ is \defbold{compactly convergent} if for every compact set $C$ and every identity neighborhood $U$ in $G$, there exists a cofinite subset $\mc{S}'\subseteq \mc{S}$ such that $G_{\mc{S}'}\cap C \subseteq U$.
\end{defn}

This notion of convergence arises in quasi-products.

\begin{lem}\label{lem:quasiproduct:local_convergence}
If $(G,\mc{S})$ is a quasi-product, then $\mc{S}$ is compactly convergent in $G$.
\end{lem}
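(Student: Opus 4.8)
The plan is to deduce compact convergence from the topological independence property of the quasi-direct factorization by a standard compactness argument. The key observation is that the collection
\[
\mc{F} := \{\, G_{\mc{S}'} \mid \mc{S}' \subseteq \mc{S} \text{ cofinite}\,\}
\]
is a filtering family of closed subgroups of $G$ with trivial intersection, and that compact convergence of $\mc{S}$ is precisely the assertion that such a filtering family ``converges to $\{1\}$'' uniformly on compact sets.

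First I would verify the two properties of $\mc{F}$. It is filtering because a finite intersection of cofinite subsets of $\mc{S}$ is again cofinite, and $\mc{S}'_1 \cap \mc{S}'_2 \subseteq \mc{S}'_i$ forces $G_{\mc{S}'_1 \cap \mc{S}'_2} \le G_{\mc{S}'_1} \cap G_{\mc{S}'_2}$ by monotonicity of $G_{(-)}$; thus any finitely many members of $\mc{F}$ have a common lower bound in $\mc{F}$. For the trivial intersection, let $X$ be the collection of all cofinite subsets of $\mc{S}$, regarded as a subset of $\mathcal{P}(\mc{S})$. Then $\bigcap X = \emptyset$, since for every $N \in \mc{S}$ the set $\mc{S} \setminus \{N\}$ is cofinite and omits $N$ (if $\mc{S} = \emptyset$ then $G = \{1\}$ and there is nothing to prove). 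The topological independence property of $\mc{S}$ therefore yields $\bigcap_{\mc{S}' \in X} G_{\mc{S}'} = \{1\}$, that is, $\bigcap \mc{F} = \{1\}$.

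Now I would run the compactness argument. Fix a compact $C \subseteq G$ and an identity neighbourhood $U$; replacing $U$ by an open subset, we may assume $U$ is open, so that $C \setminus U$ is compact. For each $x \in C \setminus U$ we have $x \neq 1$, so by $\bigcap \mc{F} = \{1\}$ there is $F_x \in \mc{F}$ with $x \notin F_x$; since $F_x$ is closed, $G \setminus F_x$ is an open neighbourhood of $x$. Hence $\{\, G \setminus F \mid F \in \mc{F} \,\}$ is an open cover of the compact set $C \setminus U$, and finitely many members $F_1, \dots, F_n$ suffice. This says $(\bigcap_{i=1}^n F_i) \cap (C \setminus U) = \emptyset$, i.e. $(\bigcap_{i=1}^n F_i) \cap C \subseteq U$. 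Using that $\mc{F}$ is filtering, choose $F \in \mc{F}$ with $F \le \bigcap_{i=1}^n F_i$ and write $F = G_{\mc{S}'}$ for a cofinite $\mc{S}' \subseteq \mc{S}$; then $G_{\mc{S}'} \cap C \subseteq U$, as required. I do not anticipate a real obstacle: the argument is elementary once the filtering family is identified, and the only points needing care are the correct specialization of the topological independence axiom to the cofinite subsets of $\mc{S}$ and the routine reduction to an open $U$ so that $C \setminus U$ is compact. (Local compactness of $G$ is not used in this argument.)
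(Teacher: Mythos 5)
Your proof is correct, and it takes a genuinely different route from the paper's. The paper argues through the diagonal map $d\colon G \rightarrow \prod_{N\in\mc{S}}G/G_{\mc{S}\setminus\{N\}}$ of Theorem~\ref{thm:directproduct:quasiproduct}: since $d(G_J)$ is supported on the coordinates in $J$ and basic identity neighbourhoods of the product restrict only finitely many coordinates, one gets $d(G_{\mc{S}'})\subseteq V$ for a cofinite $\mc{S}'$, and then the fact that $d$ restricts to a homeomorphism on the compact set $C$ transports this back to $G_{\mc{S}'}\cap C\subseteq U$. You instead bypass the diagonal map entirely: you apply the topological independence axiom directly to the family $X$ of all cofinite subsets of $\mc{S}$ (whose intersection is empty) to conclude that the filtering family $\mc{F}=\{G_{\mc{S}'}\mid \mc{S}'\ \text{cofinite}\}$ of closed subgroups has trivial intersection, and then run a finite-subcover argument on the compact set $C\setminus U$. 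Your verifications are all sound: monotonicity of $J\mapsto G_J$ gives the filtering property, each $G_{\mc{S}'}$ is closed by definition, and the degenerate case $\mc{S}=\emptyset$ is handled. What each approach buys: the paper's proof leans on machinery (Theorem~\ref{thm:directproduct:quasiproduct}) that is used elsewhere and makes the connection to the product model explicit, while yours is more elementary and self-contained, isolating exactly which consequence of the independence axiom is needed (trivial intersection of the tail subgroups); both, as you note, avoid local compactness.
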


\begin{proof}
Via Theorem~\ref{thm:directproduct:quasiproduct}, the diagonal map $d: G \rightarrow \prod_{N\in\mc{S}}G/G_{\mc{S}\setminus \{N\}}$ is injective and continuous. For any subset $J\subseteq \mc{S}$, we have $d(G_J) \le \prod_{N \in J}G/G_{\mc{S}\setminus\{N\}}$.  For any identity neighborhood $V$ in $\prod_{N\in\mc{S}}G/G_{\mc{S}\setminus \{N\}}$, there thus exists a cofinite subset $\mc{S}'$ of $\mc{S}$ such that $d(G_{\mc{S}'}) \subseteq V$.

Fix $C$ a compact set and $U$ an identity neighborhood in $G$; without loss of generality, we may assume that $U$ is open and that $1 \in C$.  Since $d$ is injective and continuous, it restricts to a homeomorphism $C\rightarrow d(C)$.  There is then an open subset $V$ of $\prod_{N\in\mc{S}}G/G_{\mc{S}\setminus \{N\}}$ such that $d(C) \cap V = d(C \cap U)$, and since $1 = d(1) \in d(C \cap U)$, the set $V$ is an identity neighborhood.  We deduce there is a cofinite subset $\mc{S}'$ of $\mc{S}$ such that $d(G_{\mc{S}'}) \subseteq V$, and hence
\[
d(G_{\mc{S}'} \cap C) = d(G_{\mc{S}'}) \cap d(C) \subseteq V \cap d(C) = d(C \cap U) \subseteq d(U).
\]
Therefore, $G_{\mc{S}'}\cap C\subseteq U$, so $\mc{S}$ is compactly convergent.
\end{proof}

With topologically perfect centerless quasi-products $G$, there is a connection between the connectedness of $G$ and the connectedness of its quasi-factors.

\begin{lem}\label{lem:centerless_quotient}
	Let $G$ be a topological group. If $A$ is a centerless closed normal subgroup of $G$, then $G/\CC_G(A)$ is centerless.
\end{lem}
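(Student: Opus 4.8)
The plan is purely algebraic; the topology enters only insofar as $\CC_G(A)$ is a closed (as an intersection of centralizers of single elements) normal subgroup, so that $G/\CC_G(A)$ is again a topological group and the statement makes sense. Write $C := \CC_G(A)$. The first step is to record two elementary facts: $C \normal G$, since $A \normal G$ (if $c$ centralizes $A$ and $g \in G$, then for $a \in A$ the element $g\inv a g$ lies in $A$, hence is centralized by $c$, whence $gcg\inv$ centralizes $a$); and $A \cap C = \Z(A) = \triv$, since $A$ is centerless.

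The second step is the main point. Let $gC \in \Z(G/C)$; the goal is to show $gC = C$. For every $a \in A$, the fact that $gC$ is central in $G/C$ gives $[g,a] \in C$, while the normality $A \normal G$ gives $[g,a] = (gag\inv)a\inv \in A$. Therefore $[g,a] \in A \cap C = \triv$ for all $a \in A$, which means $g$ centralizes $A$, i.e.\ $g \in C$. Hence $\Z(G/C) = \triv$, as required. There is no real obstacle here beyond the two bookkeeping facts in the first step; the argument is a one-line commutator computation once those are in place.
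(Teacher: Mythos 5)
Your proof is correct and is essentially the same as the paper's: both arguments observe that for $gC$ central in $G/C$ each commutator $[g,a]$ with $a \in A$ lies in $A \cap \CC_G(A) = \Z(A) = \triv$, forcing $g \in C$. The preliminary bookkeeping (normality and closedness of $\CC_G(A)$) is fine and standard.
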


\begin{proof}
	Let $\pi: A \rightarrow G/\CC_G(A)$ be the restriction of the usual quotient map.  For $h\CC_G(A) \in \Z(G/\CC_G(A))$, we see that $[a,h] \in A \cap \CC_G(A)$ for all $a\in A$. Since $\Z(A) = \triv$, it must be the case that $[a,h] = 1$. We conclude that $h \in \CC_G(A)$, and thus, $h\CC_G(A)$ is the trivial coset.
\end{proof}

\begin{lem}\label{lem:tdlc_quasi-factors}
	Let $G$ be a centerless Polish group with $\mc{S}$ a quasi-direct factorization of $G$.  If every $N \in \mc{S}$ is topologically perfect and totally disconnected, then so is $G$.
\end{lem}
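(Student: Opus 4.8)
The plan is to treat topological perfectness and total disconnectedness separately, the former being essentially formal and the latter carrying the content. Topological perfectness is immediate: for each $N\in\mc{S}$ we have $N=\ol{[N,N]}\le\ol{[G,G]}$, and since $G=G_{\mc{S}}=\cgrp{N\mid N\in\mc{S}}$, this forces $\ol{[G,G]}=G$. No hypothesis beyond the definitions is used here.

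For total disconnectedness, the key observation is that each quotient $Q_N:=G/G_{\mc{S}\setminus\{N\}}$ is a normal compression of the quasi-factor $N$. Indeed, since $N\normal G$ is normalized by $G_{\mc{S}\setminus\{N\}}$, the product $NG_{\mc{S}\setminus\{N\}}$ is a subgroup, and it is dense in $G$ because $\mc{S}$ generates $G$; hence the natural map $q_N\colon N\to Q_N$ has dense normal image. It is injective because $N\cap G_{\mc{S}\setminus\{N\}}=\{1\}$, which is the topological independence property of $\mc{S}$ applied to $X=\{\{N\},\,\mc{S}\setminus\{N\}\}$. As $G$ is Polish, both $N$ (a closed subgroup) and $Q_N$ (a Hausdorff quotient by a closed normal subgroup) are Polish, so $q_N$ is a normal compression map between Polish groups; since $N$ is totally disconnected, Theorem~\ref{thmintro:connected}(3) gives that $Q_N^\circ$ is central in $Q_N$.

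I would then push $G^\circ$ through all the $Q_N$ at once. Writing $\pi_N\colon G\to Q_N$ for the quotient map, the set $\pi_N(G^\circ)$ is connected and contains $1$, hence lies in $Q_N^\circ\le\Z(Q_N)$; therefore $[G^\circ,G]\le\ker(\pi_N)=G_{\mc{S}\setminus\{N\}}$. Since this holds for every $N\in\mc{S}$, Corollary~\ref{cor:quasi-product_char} yields
\[
[G^\circ,G]\le\bigcap_{N\in\mc{S}}G_{\mc{S}\setminus\{N\}}=\{1\},
\]
so $G^\circ$ is central in $G$, and hence trivial because $G$ is centerless. The degenerate cases in which $\mc{S}$ is empty or a singleton — where Theorem~\ref{thm:directproduct:quasiproduct} and Corollary~\ref{cor:quasi-product_char} are not stated — are handled directly: $\mc{S}=\emptyset$ forces $G=\{1\}$, and $\mc{S}=\{N\}$ forces $G=\ol{N}=N$, which is already topologically perfect and totally disconnected by hypothesis.

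The one step that needs genuine care is verifying that $q_N$ really is a normal compression map, in particular extracting injectivity from the independence property with the right choice of $X$; once that is in place, the rest is a direct appeal to the already-developed machinery (Theorem~\ref{thmintro:connected}(3) and Corollary~\ref{cor:quasi-product_char}). It is worth noting that the hypothesis that $G$ is Polish — rather than an arbitrary topological group — is used exactly to bring $N$ and $Q_N$ within the scope of Theorem~\ref{thmintro:connected}.
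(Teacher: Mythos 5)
Your proof is correct. The overall strategy is the same as the paper's --- realize each factor $N$ as the source of a normal compression into a quotient of $G$, deduce that $G^\circ$ maps into the center of that quotient, and intersect over all $N$ to conclude that $G^\circ\le\Z(G)=\triv$ --- but you route it through a different quotient and a different black box. The paper uses $G/\CC_G(N)$: one first checks each $N$ is centerless, then that $G/\CC_G(N)$ is centerless (Lemma~\ref{lem:centerless_quotient}) and topologically perfect, and invokes Corollary~\ref{cor:con_association} to get that $G/\CC_G(N)$ is totally disconnected, whence $G^\circ\le\CC_G(N)$. You instead use $Q_N=G/G_{\mc{S}\setminus\{N\}}$, whose compression property and injectivity come straight from the independence axiom (indeed this is already recorded in Theorem~\ref{thm:directproduct:quasiproduct}), and you only need the weaker Theorem~\ref{thmintro:connected}(3), which requires no perfectness or centerlessness of the quotient. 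That makes your argument for total disconnectedness slightly more economical --- topological perfectness of the factors is used only for the (trivial) perfectness claim about $G$, not for the connectedness argument --- at the cost of having to verify by hand that $q_N$ is a normal compression and to dispose of the degenerate cases $|\mc{S}|\le 1$, both of which you do correctly.
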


\begin{proof}
	It is immediate that $G$ is topologically perfect and that every $N \in \mc{M}$ is centerless.  For $N \in \mc{M}$, the quotient $G/\CC_G(N)$ is a normal compression of $N$. The quotient is then topologically perfect, and it is centerless by Lemma~\ref{lem:centerless_quotient}. Applying Corollary~\ref{cor:con_association}, we conclude that $G/\CC_G(N)$ is totally disconnected, and thus, $G^{\circ} \le \CC_G(N)$.  It now follows that $G^\circ \le \Z(G)=\{1\}$, so $G$ is totally disconnected.
\end{proof}

Our primary interest in generalized direct products is their connection with groups of semisimple type.
\begin{defn}\label{defn:component}
	Let $G$ be a topological group.  A \defbold{component}\index{component} of $G$ is a closed subgroup $M$ of $G$ such that the following conditions hold:
	\begin{enumerate}[(a)]
		\item $M$ is normal in $ \ngrp{M}$.
		\item $M/\Z(M)$ is non-abelian.
		\item Whenever $K$ is a closed normal subgroup of $\ngrp{M}$ such that $K < M$, then $K$ is central in $\ngrp{M}$. 
	\end{enumerate}
	The \defbold{layer}\index{layer} $E(G)$ of $G$ is the closed subgroup generated by the components of $G$. 
\end{defn}

\begin{defn}\label{defn:semisimple}
	A topological group $G$ is of \defbold{semisimple type}\index{semisimple type} if $G = E(G)$. We say $G$ is of \defbold{strict semisimple type}\index{strict semisimple type} if in addition $\Z(G) = \triv$.
\end{defn}

Each component of a semisimple type group is topologically perfect via \cite[Observation 5.3]{RW_P_15}, so groups of semisimple type are topologically perfect. Furthermore, they are very close to products of topologically simple groups.

\begin{thm}[See {\cite[Theorem 1.11]{RW_P_15}}]\label{thm:min_normal:simple}
	Let $G$ be a non-trivial topological group and let $\mc{M}$ be the collection of components of $G$.
	\begin{enumerate}[(1)]
		\item The group $G$ is of semisimple type if and only if $(G,\mc{M})$ is a generalized central product and each $M\in \mc{M}$ is non-abelian and central-by-topologically simple.
		\item The group $G$ is of strict semisimple type if and only if $(G,\mc{M})$ is a quasi-product and each $M\in \mc{M}$ is non-abelian and topologically simple. 
	\end{enumerate}
\end{thm}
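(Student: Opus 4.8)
Throughout write $\mc M$ for the set of components of $G$ and, for a component $M$, let $\ngrp{M}$ be its closed normal closure. I would first record three facts about components that hold in an arbitrary topological group. \emph{(i) Every component is topologically perfect.} Since $M\normal\ngrp{M}$, the characteristic subgroup $\ol{[M,M]}$ is a closed normal subgroup of $\ngrp{M}$ contained in $M$; if it were proper, condition (c) would put it inside $\Z(\ngrp{M})$, forcing $M/\Z(M)$ abelian against (b), so $\ol{[M,M]}=M$. \emph{(ii) Conjugates behave well.} As $\ngrp{M}\normal G$, each $M^g$ is again a component with $\ngrp{M^g}=\ngrp{M}$ and $M^g\normal\ngrp{M}$; if $M^g\neq M^h$ then $M^g\cap M^h$ is a closed normal subgroup of $\ngrp{M}$ properly contained in $M^g$ (equality would give $M^g\le M^h$ and, via (c) applied to $M^h$, would make $M^g$ central, contradicting (b)), so by (c) it lies in $\Z(\ngrp{M})$; since $M^g,M^h\normal\ngrp{M}$ we get $[M^g,M^h]\le M^g\cap M^h\le\Z(\ngrp{M})$, and then the three‑subgroups lemma together with (i) and the closedness of centralizers gives $[M^g,M^h]=1$. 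Hence $\ngrp{M}$ is a generalized central product of the distinct $G$‑conjugates of $M$. \emph{(iii) Components are non‑abelian central‑by‑topologically‑simple.} A proper closed $N\normal M$ is normalized by $M$ and centralized by every conjugate $M^g\neq M$ (by (ii)), hence $N\normal\ngrp{M}$, so $N\le\Z(M)$ by (c); thus $M/\Z(M)$ is topologically simple, and non‑abelian by (b).

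The crux is that distinct components $M\neq N$ commute. Here $M\cap\ngrp{N}=M\cap(\ngrp{M}\cap\ngrp{N})$ is a closed normal subgroup of $\ngrp{M}$ inside $M$, so by (c) either $M\cap\ngrp{N}\le\Z(M)$ or $M\le\ngrp{N}$. I would dispose of the second alternative: $M$ non‑abelian does not lie in $\Z(\ngrp{N})=\bigcap_g\CC_{\ngrp{N}}(N^g)$, so $[M,N^{g_0}]\neq 1$ for some conjugate; if $M=N^{g_0}$ then $M$ and $N$ are distinct conjugates and commute by (ii), and otherwise, replacing $N$ by $N^{g_0}$, we may assume $[M,N]\neq1$ and $\ngrp{M}\le\ngrp{N}$, whence $N\cap\ngrp{M}\normal\ngrp{N}$ is contained in $N$ and (c) leaves two subcases — it is central in $\ngrp{N}$, forcing $[M,N]\le N\cap\ngrp{M}\le\Z(N)$; or $\ngrp{M}=\ngrp{N}$, forcing $[M,N]\le M\cap N\le\Z(\ngrp{M})$ — and in either subcase the three‑subgroups lemma with (i) gives $[M,N]=1$, a contradiction. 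So $M\cap\ngrp{N}\le\Z(M)$ and, symmetrically, $N\cap\ngrp{M}\le\Z(N)$. Now put $Z:=\ngrp{M}\cap\ngrp{N}\normal G$. Because $M$ normalizes $\ngrp{N}$ and $N$ normalizes $\ngrp{M}$ we get $[M,N]\le\ngrp{M}\cap\ngrp{N}=Z$, and because $Z\le\ngrp{M}$ with $M\normal\ngrp{M}$ (and likewise for $N$) we get $[Z,M]\le M\cap\ngrp{N}\le\Z(M)$ and $[Z,N]\le N\cap\ngrp{M}\le\Z(N)$. Two rounds of the three‑subgroups lemma (with (i)) then upgrade these to $[Z,M]=1=[Z,N]$, so $[M,N]$ centralizes $\grp{M,N}$, and a final round gives $[M,N]=1$.

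With (i)–(iii) and the commuting lemma in hand, the theorem follows quickly. Since $G$ is of semisimple type exactly when $G=E(G)=G_{\mc M}$: if so, the commuting lemma makes $\mc M$ a generalized central factorization and (i),(iii) describe its members, while conversely $(G,\mc M)$ being a generalized central product already forces $G=G_{\mc M}=E(G)$ — this is (1). For (2), if $G=E(G)$ and $\Z(G)=\triv$ then for each $M\in\mc M$ we have $\Z(M)\le\CC_G(M)\cap\bigcap_{N\neq M}\CC_G(N)=\CC_G(G_{\mc M})=\Z(G)=\triv$, so $M$ is topologically simple and centerless; when $|\mc M|\ge2$, Theorem~\ref{thm:directproduct:quasiproduct} makes $\ker d$ central, hence trivial, so $d$ is injective and $(G,\mc M)$ is a quasi‑product (the case $|\mc M|=1$ being immediate). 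For the converse, $(G,\mc M)$ a quasi‑product with each $M$ topologically simple non‑abelian gives $G=G_{\mc M}=E(G)$ as before; each $M$ is centerless, so $\CC_G(M)\supseteq G_{\mc M\setminus\{M\}}$ by the commuting lemma, and composing $d$ with the projection to the factor $G/G_{\mc M\setminus\{M\}}$ — a normal compression of the topologically simple group $M$, hence centerless — shows $\Z(G)$ dies in every coordinate of $d$, so $\Z(G)=\triv$ by Corollary~\ref{cor:quasi-product_char} and the injectivity of $d$. I expect the commuting lemma to be the main obstacle: it is the classical statement that the components of a group generate a central product, and the work lies in the topological bookkeeping — taking closures before each appeal to the three‑subgroups lemma, which is legitimate precisely because components are topologically perfect and centralizers are closed — and in carrying the two‑step subnormality of (a) and the substitute condition (c) through in place of quasisimplicity.
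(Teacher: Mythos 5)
Most of your argument is sound and, as far as I can tell, follows the intended route: topological perfectness of components, the pairwise commuting of distinct components via the three-subgroups lemma (with closures taken before each application, which perfectness licenses), and the central-by-topologically-simple structure. Part (1) and the forward direction of part (2) then come out correctly. (One small omission in (1): a generalized central factorization consists of closed \emph{normal} subgroups of $G$, so you should record that once $G=G_{\mc M}$ and distinct components commute, each $M$ is normalized by $\grp{N\mid N\in\mc M}$ and hence by its closure $G$.)

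The genuine gap is the last step of the converse of part (2), where you assert that $G/G_{\mc M\setminus\{M\}}$ is ``a normal compression of the topologically simple group $M$, hence centerless.'' Nothing you have established implies this, and as a bare statement about topological groups it is false: a group with a dense normal subgroup that is the continuous injective image of a non-abelian topologically simple group can have non-trivial centre. For instance, let $S$ be the finitary alternating group on $\Nb$ (discrete, abstractly simple), let $s_n$ be the $3$-cycle $(3n,3n{+}1,3n{+}2)$, and equip the abstract group $S\times\Zb/3\Zb$ with the Hausdorff group topology whose basis of open subgroups at the identity is $W_k:=\grp{(s_n,-1)\mid n\ge k}$; then $S\times\{0\}$ is a dense normal subgroup, the inclusion of the discrete group $S$ is continuous, and the centre is $\{1\}\times\Zb/3\Zb\ne\triv$. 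The source of the trouble in your argument is that $M$ is closed in $G$ but its image in $G/G_{\mc M\setminus\{M\}}$ need not be, and after passing to that quotient the remaining data (``normal compression of a simple group'') is exactly insufficient: one computes $\Z\bigl(G/G_{\mc M\setminus\{M\}}\bigr)=\CC_G(M)/G_{\mc M\setminus\{M\}}$, so the assertion that every coordinate of $d(\Z(G))$ vanishes is equivalent to $\bigcap_{M}\CC_G(M)=\Z(G)=\triv$ — the very statement being proved. The converse of (2) therefore needs a genuinely different argument for $\Z(G)=\triv$, one exploiting the closedness of the factors inside $G$ itself rather than properties of the coordinate quotients.
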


\subsection{Local direct models}\label{sec:localdirect}

\begin{defn}
Let $G$ be a locally compact group with $\mc{S}$ a generalized central factorization of $G$.  A \defbold{local direct model}\index{local direct model} for $(G,\mc{S})$ is a locally compact local direct product of the form $\bigoplus_{\mc{S}}(G) := \bigoplus_{N\in \mc S}(N, N\cap O)$ where $O$ is an open subgroup of $G$ together with a continuous homomorphism $\phi: \bigoplus_{\mc{S}}(G) \rightarrow G$ such that $\phi\rest_N=\mathrm{id}_N$ for all $N \in \mc{S}$. When there is no ambiguity in doing so, we will refer to either the group $\bigoplus_{\mc{S}}(G)$ or the homomorphism $\phi$ as the local direct model.
\end{defn}

Let us make a few basic observations about local direct models.

\begin{lem}\label{lem:LDM_normal}Let $G$ be a locally compact group with $\mc{S}$ a generalized central factorization of $G$. If $\phi: \bigoplus_{\mc{S}}(G) \rightarrow G$ is a local direct model for $G$, then $\img(\phi)$ is a dense normal subgroup of $G$.
\end{lem}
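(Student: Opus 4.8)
The plan is to establish density and normality separately. Set $D := \grp{N \mid N \in \mc{S}}$, the abstract subgroup of $G$ generated by the members of $\mc{S}$. Since $\phi$ is a homomorphism with $\phi\rest_N = \mathrm{id}_N$ for every $N \in \mc{S}$, its image is a subgroup of $G$ containing each $N$, so $D \le \img(\phi)$; as $\mc{S}$ is a generalized central factorization, $\ol{D} = G_{\mc{S}} = G$, and hence $\img(\phi)$ is dense. Moreover, each $N \in \mc{S}$ is normal in $G$, so $D$ is normal in $G$, and since $D \le \img(\phi)$ with $\img(\phi)$ a subgroup, every element of $D$ normalizes $\img(\phi)$.

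The substance of the lemma is normality, and the point to watch is that $\img(\phi)$ is typically not closed, so one cannot simply observe that its normalizer is a closed subgroup containing the dense set $D$. Instead I would show that the distinguished open subgroup $O$ also normalizes $\img(\phi)$. The key observation is that conjugation by an element $o \in O$ preserves $L := \bigoplus_{\mc{S}}(G) = \bigoplus_{N \in \mc{S}}(N, N \cap O)$: such conjugation normalizes each $N \in \mc{S}$ (normality in $G$) and normalizes $O$, hence normalizes each $N \cap O$, so the coordinatewise map $\psi_o \colon f \mapsto (o\inv f(N) o)_{N \in \mc{S}}$ is a well-defined automorphism of $L$, continuous because it restricts to a continuous automorphism of the open subgroup $\prod_{N \in \mc{S}}(N \cap O)$. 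For a finitely supported $f \in L$, say with support $F$, one has $\phi(f) = \prod_{N \in F} f(N)$ and $\phi(\psi_o(f)) = \prod_{N \in F} o\inv f(N) o = o\inv \phi(f) o$; since the finitely supported elements are dense in $L$ and $\phi$, $\psi_o$, and conjugation by $o$ are all continuous, the identity $\phi \circ \psi_o = (x \mapsto o\inv x o) \circ \phi$ holds on all of $L$. Consequently $o\inv \img(\phi) o = \img(\phi)$, so $o$ normalizes $\img(\phi)$.

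To conclude, since $D$ is normal in $G$ and $O$ is open, $DO$ is an open --- hence closed --- subgroup of $G$ containing the dense subgroup $D$, so $DO = G$. As both $D$ and $O$ normalize $\img(\phi)$, so does $G$, and therefore $\img(\phi)$ is a dense normal subgroup of $G$. The only real obstacle in this argument is the non-closedness of $\img(\phi)$, circumvented above by the fact that the open ``box'' $\prod_{N}(N \cap O)$ inside the local direct model is invariant under conjugation by $O$ (which fails for conjugation by arbitrary elements of $G$), together with the decomposition $G = DO$.
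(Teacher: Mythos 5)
Your proof is correct and follows essentially the same route as the paper's: density comes directly from the factorization, and normality comes from showing that the distinguished open subgroup $O$ normalizes $\img(\phi)$, so that the normalizer of the image is an open, dense subgroup of $G$ and hence equals $G$. The only difference is in how the key step $O \le \N_G(\img(\phi))$ is verified: the paper describes the image explicitly as $\left\langle \mc{S} \cup \cgrp{N\cap O \mid N \in \mc{S}'}\right\rangle$ and observes that each generating set is $O$-invariant, whereas you lift conjugation by $o \in O$ to a coordinatewise automorphism $\psi_o$ of the local direct product and check the equivariance $\phi\circ\psi_o = c_o\circ\phi$ on the dense set of finitely supported elements; both justifications are valid.
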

\begin{proof}
Since $\bigoplus_{\mc{S}}(G)$ is locally compact, there is a cofinite $\mc{S}'\subseteq \mc{S}$ such that $N\cap O$ is compact for all $N\in \mc{S}'$. It now follows that
	\[
	\img(\psi)=\left\langle\mc{S}\cup \cgrp{N\cap O\mid N\in \mc{S}'}\right\rangle.
	\]
Therefore, $\img(\psi)$ is dense in $G$. Furthermore, since $O$ normalizes $N \cap O$ for all $N \in \mc{S}$, we see that $O \le \N_G(\img(\psi))$. The normalizer $N_G(\img(\psi))$ is then a dense open subgroup and thus equals $G$. 
\end{proof}

%Proposition~\ref{prop:LDQ_injective} gives another connection between normal compressions and quasi-products.
\begin{prop}\label{prop:LDQ_injective}
Let $G$ be a locally compact group with $\mc{S}$ a quasi-direct factorization of $G$. If $\phi: \bigoplus_{\mc{S}}(G) \rightarrow G$ is a local direct model for $G$, then $\phi$ is injective. Hence, $G$ is a normal compression of $\bigoplus_{\mc{S}}(G)$. 
\end{prop}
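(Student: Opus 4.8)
The plan is to prove directly that $\ker(\phi)$ is trivial, exploiting the local direct product structure of the domain together with the topological independence property of $\mc{S}$; the assertion that $G$ is a normal compression of $\bigoplus_{\mc{S}}(G)$ then follows immediately, since $\img(\phi)$ is a dense normal subgroup of $G$ by Lemma~\ref{lem:LDM_normal} and $\phi$ is continuous by definition of a local direct model, so $\phi$ is a normal compression map.

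Fix the open subgroup $O \le G$ with $\bigoplus_{\mc{S}}(G) = \bigoplus_{N \in \mc{S}}(N, N\cap O)$, and suppose $f \in \ker(\phi)$. Fix $N \in \mc{S}$ and write $f = f_N g_N$, where $f_N$ takes the value $f(N) \in N$ in the $N$-coordinate and is trivial in all others, and $g_N$ lies in the closed subgroup $L_N := \bigoplus_{M \in \mc{S}\setminus\{N\}}(M, M\cap O)$ of $\bigoplus_{\mc{S}}(G)$; both belong to $\bigoplus_{\mc{S}}(G)$ because $f_N$ has at most one coordinate outside $O$. Since $\phi\rest_N = \mathrm{id}_N$ we have $\phi(f_N) = f(N)$, and as $\phi$ is a homomorphism with $\phi(f) = 1$ this gives $f(N) = \phi(g_N)\inv$.

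Next I would check that $\phi(L_N) \le G_{\mc{S}\setminus\{N\}}$. The elements of $L_N$ with finite support are dense in $L_N$ (a standard feature of local direct products), and each such element is a finite product of single-coordinate elements with coordinates in members of $\mc{S}\setminus\{N\}$; applying the homomorphism $\phi$, which restricts to the identity on each member, its image is a finite product of elements of $\bigcup(\mc{S}\setminus\{N\})$ and hence lies in the closed subgroup $G_{\mc{S}\setminus\{N\}}$. By continuity of $\phi$, the whole of $\phi(L_N)$ lies in $G_{\mc{S}\setminus\{N\}}$, so $f(N) = \phi(g_N)\inv \in N \cap G_{\mc{S}\setminus\{N\}}$. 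Now apply the topological independence property of $\mc{S}$ to the collection $X := \{\{N\}, \mc{S}\setminus\{N\}\} \subseteq \mathcal{P}(\mc{S})$, whose intersection is empty: this yields $N \cap G_{\mc{S}\setminus\{N\}} = G_{\{N\}} \cap G_{\mc{S}\setminus\{N\}} = \{1\}$, so $f(N) = 1$. As $N$ was arbitrary, $f = 1$, so $\phi$ is injective, and combined with Lemma~\ref{lem:LDM_normal} this completes the proof.

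The only mildly delicate point is the middle step: the density of the finitely supported elements in $L_N$ and the use of continuity of $\phi$ to push $\phi(L_N)$ into the closed subgroup $G_{\mc{S}\setminus\{N\}}$; everything else is bookkeeping and the one line invoking topological independence. One could instead route the argument through the injective diagonal map of Theorem~\ref{thm:directproduct:quasiproduct} (its $N$-th component sends $f$ to $f(N)G_{\mc{S}\setminus\{N\}}$), but the appeal to topological independence to kill $N \cap G_{\mc{S}\setminus\{N\}}$ is still needed, so the direct argument is preferable.
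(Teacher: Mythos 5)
Your proof is correct and follows essentially the same route as the paper's: split $f \in \ker(\phi)$ into its $N$-coordinate part and its complementary part, observe that the image of the complementary part lands in $G_{\mc{S}\setminus\{N\}}$, and invoke the topological independence of $\mc{S}$ to conclude $f(N) \in N \cap G_{\mc{S}\setminus\{N\}} = \{1\}$. You in fact supply a detail the paper leaves implicit, namely the density-plus-continuity argument showing $\phi(L_N) \le G_{\mc{S}\setminus\{N\}}$.
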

\begin{proof}
Suppose for contradiction that $f\in \ker(\phi)$ is non-trivial. Since $\phi\rest_S$ is injective for all $S\in \mc{S}$, there is some $N\in \mc{S}$ such that $f(N)\neq 1$ and $f\rest_{\mc{S}\setminus\{N\}}\neq 1$. Consider the functions $g, h \in \bigoplus_{S\in \mc{S}}(N,N\cap O)$ defined as follows:
\[
g(M) := \begin{cases}
 f(M) &\mbox{if } M=N \\ 
 1 & \mbox{else} 
\end{cases};\text{ and } h(M) := \begin{cases}
 f(M) &\mbox{if } M\neq N \\ 
 1 & \mbox{else.} 
\end{cases}
\]

By construction, $f=gh$, and since $1=\phi(f)=\phi(g)\phi(h)$, we deduce that $\phi(h)\in N\cap G_{\mc{S}\setminus \{N\}}$. This contradicts the independence property enjoyed by $\mc{S}$. 
\end{proof}

We now establish the universal property for the local direct model.

\begin{thm}\label{thm:LDM_universal_prop}
Let $G$ be a locally compact group with $\mc{S}$ a compactly convergent generalized central factorization and suppose that $(G,\mc{S})$ admits a local direct model. If $H=\bigoplus_{N\in \mc{S}}(N,W_N)$ is a locally compact local direct product with $\psi:H\rightarrow G$ a continuous map such that $\psi\rest_N=\mathrm{id}_N$ for all $N\in\mc{S}$, then $H\leq \bigoplus_{\mc{S}}(G)$, and the inclusion $\iota:H\rightarrow \bigoplus_{\mc{S}}(G)$ is a continuous map making the following diagram commute:
\[
\xymatrixcolsep{3pc}\xymatrix{
\bigoplus_{\mc{S}}(G) \ar^{\phi}[r] & G\\
H \ar_{\psi}[ru] \ar^{\iota}[u]& }
\]
\end{thm}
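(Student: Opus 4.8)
The plan is to take $\iota\colon H\to\bigoplus_{\mc{S}}(G)$ to be the obvious map: an element of $H$ is a coordinate function on $\mc{S}$, and $\iota$ regards the very same function as an element of $\bigoplus_{\mc{S}}(G)$. Granting that this is well defined, it is visibly an injective group homomorphism (multiplication is coordinatewise in both local direct products), and $\phi\circ\iota$ and $\psi$ then agree with $\mathrm{id}_N$ on each $N\in\mc{S}$. Since the subgroup of $H$ generated by $\bigcup\mc{S}$ is precisely the set of finitely supported functions, and this is dense in $H$, and $G$ is Hausdorff, I would conclude $\phi\circ\iota=\psi$, giving the commuting triangle. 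So the two genuine points to establish are: (a) every $f\in H$ already lies in $\bigoplus_{\mc{S}}(G)$, so that $\iota$ is defined at all; and (b) $\iota$ is continuous.

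For (a), the key point is that $\psi^{-1}(O)$ is an \emph{open subgroup} of $H$, since $\psi$ is continuous and $O$ is open in $G$. It is immediate from the definition of the local direct product topology that any open subgroup of $\bigoplus_{N\in\mc{S}}(N,W_N)$ contains the distinguished subgroup $W_N$ for all $N$ outside some finite set; fix a finite $E\subseteq\mc{S}$ witnessing this for $\psi^{-1}(O)$. For $N\notin E$ we then get $W_N=\psi(W_N)\le O$ from $\psi\rest_N=\mathrm{id}_N$, hence $W_N\le N\cap O$. Consequently, for any $f\in H$ the set $\{N\mid f(N)\notin N\cap O\}$ lies inside the finite set $E\cup\{N\mid f(N)\notin W_N\}$, so $f\in\bigoplus_{\mc{S}}(G)$. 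The hypotheses that $H$ is locally compact and $\mc{S}$ compactly convergent are the natural ambient assumptions here — forcing cofinitely many of the $W_N$ and $N\cap O$ to be compact — but, as just seen, the comparison $W_N\le N\cap O$ is really forced by $\psi^{-1}(O)$ being an open subgroup.

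For (b), I would check continuity of the homomorphism $\iota$ at the identity. Given a basic identity neighbourhood $\prod_{N\in E'}U_N\times\prod_{N\notin E'}(N\cap O)$ of $\bigoplus_{\mc{S}}(G)$ with $E'$ finite, enlarge $E'$ to $E'':=E'\cup E$ (with $E$ as in (a)) so that $W_N\le N\cap O$ for every $N\notin E''$. Then the basic identity neighbourhood $\prod_{N\in E''}V_N\times\prod_{N\notin E''}W_N$ of $H$, where $V_N:=U_N\cap W_N$ for $N\in E'$ and $V_N:=W_N\cap(N\cap O)$ for $N\in E''\setminus E'$, is carried by $\iota$ into the given neighbourhood; this establishes continuity. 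The commuting triangle then follows as in the first paragraph. I expect (a) to be the only step requiring thought: once one notices that $\psi^{-1}(O)$ is an open subgroup of $H$ and therefore contains cofinitely many of the $W_N$, the remaining verifications are routine bookkeeping with basic neighbourhoods in local direct products.
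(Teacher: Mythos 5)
Your proof is correct, but the key containment $H\leq \bigoplus_{\mc{S}}(G)$ is obtained by a genuinely different argument from the paper's. The paper uses the local compactness of $H$ to fix a cofinite $\mc{T}\subseteq\mc{S}$ with $\prod_{N\in\mc{T}}W_N$ compact, observes that $W_H:=\psi\bigl(\prod_{N\in\mc{T}}W_N\bigr)$ is then compact, and invokes the compact convergence of $\mc{S}$ to find a cofinite $\mc{S}'\subseteq\mc{T}$ with $G_{\mc{S}'}\cap W_H\leq O$, from which $f(N)\in N\cap O$ for cofinitely many $N$. You instead note that $\psi^{-1}(O)$ is an open identity neighbourhood of $H$, hence contains a basic neighbourhood $\prod_{N\in E}U_N\times\prod_{N\notin E}W_N$, so that $W_N=\psi(W_N)\leq N\cap O$ for all $N$ outside the finite set $E$. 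Your route is shorter and, as you observe, bypasses both the local compactness of $H$ and the compact convergence of $\mc{S}$ at this step (these hypotheses remain relevant elsewhere, e.g.\ for the existence of the local direct model, which is a standing assumption). The continuity check and the commutativity of the diagram via density of the finitely supported elements match what the paper leaves as an easy verification. Two minor remarks: all you actually need from $\psi^{-1}(O)$ is that it is an open set containing the identity, not that it is a subgroup; and your final step, upgrading agreement of $\phi\circ\iota$ and $\psi$ on each $N\in\mc{S}$ to agreement on the subgroup generated by $\bigcup\mc{S}$, does use that $\psi$ is a homomorphism, which is how the statement's ``continuous map'' is intended to be read.
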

\begin{proof}
If $\mc{S}$ is finite, the universal property is immediate. Let us suppose that $\mc{S}$ is infinite and let $O\leq G$ be open such that $\bigoplus_{N\in \mc{S}}(N,N\cap O)=\bigoplus_{\mc{S}}(G)$. Since $H$ is locally compact, there is a cofinite $\mc{T}\subseteq \mc{S}$ such that $\prod_{N\in \mc{T}}W_N$ is compact, and hence $W_H:=\psi(\prod_{N\in \mc{T}}W_N)$ is compact. That $\mc{S}$ is compactly convergent ensures there is a cofinite $\mc{S}'\subseteq \mc{T}$ such that $G_{\mc{S}'}\cap W_H\leq O$. For $f\in H$, it now follows that $f(N)\in G_{\mc{S}'}\cap W_H\subseteq O$ for cofinitely many $N\in \mc{S}$, hence $f\in \bigoplus_{\mc{S}}(G)$. We thus deduce that $H\leq \bigoplus_{\mc{S}}(G)$. One easily checks that $\iota:H\rightarrow \bigoplus_{\mc{S}}(G)$ is continuous, verifying the theorem.
\end{proof}
It follows local direct models are unique, provided they exist.
\begin{cor}\label{cor:LDM_unique}
	Let $G$ be a locally compact group with $\mc{S}$ a compactly convergent generalized central factorization. If there is a local direct model for $(G,\mc{S})$, then it is unique up to isomorphism.
\end{cor}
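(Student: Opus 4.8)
The plan is to obtain uniqueness as a formal consequence of the universal property in Theorem~\ref{thm:LDM_universal_prop}, applied twice. Suppose $(H_1,\phi_1)$ and $(H_2,\phi_2)$ are two local direct models for $(G,\mc{S})$, so that $H_i=\bigoplus_{N\in\mc{S}}(N,N\cap O_i)$ for some open subgroups $O_i\le G$, each $H_i$ is locally compact, and $\phi_i\rest_N=\mathrm{id}_N$ for every $N\in\mc{S}$. In particular each $H_i$ is a locally compact local direct product of the family $(N)_{N\in\mc{S}}$ on which $\phi_i$ restricts to the identity of each factor, so each pair $(H_i,\phi_i)$ is an admissible choice of $(H,\psi)$ in the hypothesis of Theorem~\ref{thm:LDM_universal_prop}; here we use that $\mc{S}$ is compactly convergent by assumption and that a local direct model for $(G,\mc{S})$ exists, again by assumption.

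First I would apply Theorem~\ref{thm:LDM_universal_prop} with the ambient local direct model taken to be $(H_1,\phi_1)$ and with $(H,\psi)=(H_2,\phi_2)$. This produces a continuous homomorphism $\iota_2\colon H_2\to H_1$, realized as an inclusion of the underlying sets of functions $\mc{S}\to\bigsqcup_{N\in\mc{S}}N$, with $\phi_1\circ\iota_2=\phi_2$. Interchanging the roles of the two models yields a continuous homomorphism $\iota_1\colon H_1\to H_2$, likewise an inclusion of sets of functions, with $\phi_2\circ\iota_1=\phi_1$.

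To finish, I would note that $\iota_1$ and $\iota_2$ are both induced by the identity map on elements, so the two set inclusions $H_2\subseteq H_1$ and $H_1\subseteq H_2$ furnished above force $H_1$ and $H_2$ to have the same underlying set and $\iota_1,\iota_2$ to be mutually inverse. Being mutually inverse continuous homomorphisms of topological groups, each is an isomorphism of topological groups, and the relation $\phi_1\circ\iota_2=\phi_2$ shows the isomorphism is compatible with the structure maps to $G$. Hence the local direct model for $(G,\mc{S})$ is unique up to isomorphism, as claimed.

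I do not expect a genuine obstacle; the one point to handle with care is that Theorem~\ref{thm:LDM_universal_prop} a priori provides only two injective continuous maps, and one must observe that both are realized by the identity on the common underlying set of finitely-supported-mod-$O_i$ functions in order to conclude that they compose to the identity in both orders (rather than merely being injective). All the substantive content — the verification via compact convergence that an admissible $H$ lands inside the ambient local direct model — is already contained in Theorem~\ref{thm:LDM_universal_prop}.
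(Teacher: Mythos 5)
Your proof is correct and is exactly the argument the paper intends: the corollary is stated as an immediate consequence of Theorem~\ref{thm:LDM_universal_prop}, obtained by applying the universal property in both directions and observing that the two continuous inclusions are the identity on the common underlying set of functions, hence mutually inverse. The one point you flag — that the maps are set inclusions rather than merely injections — is indeed the detail that makes the formal argument close.
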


It remains to identify which factorizations of locally compact groups admit a local direct model.

\begin{lem}\label{lem:quasiproduct:compact}
Let $G$ be a compact group with $\mc{S}$ a generalized central factorization of $G$ and let $I$ be the directed set of finite subsets of $\mc{S}$. For a sequence $(h_N)_{N\in \mc{S}}\in \prod_{N\in \mc{S}}N$ and $J\in I$, set $h_J:=\prod_{N\in J}h_N$ where the product is taken in $G$. If $\mc{S}$ is compactly convergent, then for any sequence $(h_N)_{N\in \mc{S}}\in \prod_{N\in \mc{S}}N$ the associated net $\hat{h}:=(h_J)_{J\in I}$ in $G$ converges. 
\end{lem}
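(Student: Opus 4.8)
The plan is to prove that the net $\hat h=(h_J)_{J\in I}$ is Cauchy with respect to the (two-sided) uniformity on the compact group $G$; since compact groups are complete, this forces $\hat h$ to converge. First I would isolate the one algebraic fact in play: because $\mc S$ is a generalized central factorization, $[N,M]=\triv$ for distinct $N,M\in\mc S$, so the elements $h_N$ ($N\in\mc S$) commute pairwise. Hence $h_J=\prod_{N\in J}h_N$ is independent of the order of its factors; in particular, for $J_0\subseteq J$ in $I$ we may write $h_J=h_{J\setminus J_0}h_{J_0}$, where $h_{J\setminus J_0}:=\prod_{N\in J\setminus J_0}h_N$ lies in $\grp{N\mid N\in\mc S\setminus J_0}\le G_{\mc S\setminus J_0}$. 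The key computation is then the identity, valid for all $J,J'\supseteq J_0$,
\[
h_Jh_{J'}^{-1}=\bigl(h_{J\setminus J_0}h_{J_0}\bigr)\bigl(h_{J'\setminus J_0}h_{J_0}\bigr)^{-1}=h_{J\setminus J_0}h_{J_0}h_{J_0}^{-1}h_{J'\setminus J_0}^{-1}=h_{J\setminus J_0}h_{J'\setminus J_0}^{-1}\in G_{\mc S\setminus J_0}.
\]

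With this in hand, the Cauchy property is immediate from compact convergence. Given an identity neighbourhood $U$ of $G$, I would apply the definition of compact convergence to the compact set $C:=G$ and the neighbourhood $U$, obtaining a cofinite $\mc S'\subseteq\mc S$ with $G_{\mc S'}=G_{\mc S'}\cap G\subseteq U$, and set $J_0:=\mc S\setminus\mc S'\in I$. Then for all $J,J'\in I$ with $J,J'\supseteq J_0$ the displayed identity gives $h_Jh_{J'}^{-1}\in G_{\mc S\setminus J_0}=G_{\mc S'}\subseteq U$, which is exactly the Cauchy condition for $\hat h$. To conclude, I would invoke completeness of the compact group $G$, so the Cauchy net $\hat h$ converges. (If one prefers to avoid uniform-space language, one instead extracts a cluster point $g$ of $\hat h$ from compactness of $G$ and substitutes the estimate above into the definition of cluster point to check directly that $h_J\to g$.) No case split on $|\mc S|$ is needed: when $\mc S$ is finite, $\emptyset$ is a cofinite subset with $G_\emptyset=\triv$, so the argument applies verbatim and $\hat h$ is in fact eventually constant.

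I do not expect a genuine obstacle here. The only points demanding a little care are checking that the ``tail'' $h_{J\setminus J_0}h_{J'\setminus J_0}^{-1}$ really lands in $G_{\mc S\setminus J_0}$ — this is precisely where pairwise commutativity of the $h_N$ is used, to cancel the $h_{J_0}$ factors — and the appeal to the standard fact that compact Hausdorff groups are complete; both are routine.
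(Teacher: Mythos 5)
Your proof is correct and is essentially the paper's argument: the key estimate in both is that for $J,J'$ containing $J_0=\mc S\setminus\mc S'$ (with $\mc S'$ cofinite and $G_{\mc S'}\subseteq U$, obtained by applying compact convergence to $C=G$), the quotient $h_Jh_{J'}^{-1}$ lies in $G_{\mc S'}\subseteq U$ thanks to pairwise commutativity of the $h_N$. The paper packages this by extracting a cluster point from compactness and showing the whole net converges to it, which is exactly the variant you mention in your parenthetical, while you phrase it as Cauchyness plus completeness of compact groups; the two are interchangeable.
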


\begin{proof}
As $G$ is compact, there exists a limit point $k\in G$ of a subnet of $\hat{h}$.  Given an identity neighborhood $U$ in $G$, there exists $J \in I$ such that $G_{\mc{S} \setminus J} \subseteq U$, since $\mc{S}$ is compactly convergent and $G$ is compact. It is then the case that $h_JU = h_{J'}U$ for all $J' \in I$ such that $J \subseteq J'$, and considering $J'$ from our convergent subnet, we deduce that $h_JU=kU$. Hence, $\hat{h}$ converges.
\end{proof}

Say that a locally compact group is \defbold{almost \tdlc}\index{almost \tdlc} if the connected component is compact. 

\begin{thm}\label{thm:LDF} 
Let $G$ be an almost \tdlc group. If $\mc{S}$ is a compactly convergent generalized central factorization of $G$, then $(G,\mc{S})$ admits a local direct model.
\end{thm}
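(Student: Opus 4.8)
The plan is to produce a compact open subgroup $O\le G$, form the local direct product $\bigoplus_{\mc S}(G):=\bigoplus_{N\in\mc S}(N,N\cap O)$, and define the structural homomorphism $\phi\colon\bigoplus_{\mc S}(G)\to G$ by ``multiplying out'' each element inside $G$. For the compact open subgroup: since $G$ is almost \tdlc, $G^\circ$ is compact, so $G/G^\circ$ is a totally disconnected locally compact group, and van Dantzig's theorem gives it a compact open subgroup; its preimage $O$ in $G$ is open, and is compact since both $G^\circ$ and $O/G^\circ$ are compact. For this $O$, each $N\cap O$ is compact and open in $N$, so $\bigoplus_{\mc S}(G)$ is well defined, and its distinguished open subgroup $\prod_{N\in\mc S}(N\cap O)$ is a product of compact groups, hence compact; thus $\bigoplus_{\mc S}(G)$ is a locally compact group.

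To define $\phi$, let $I$ be the directed set of finite subsets of $\mc S$, and for $f\in\bigoplus_{\mc S}(G)$ and $J\in I$ put $g_J:=\prod_{N\in J}f(N)$, which is independent of the order of the factors because distinct members of $\mc S$ centralize one another. I would set $\phi(f):=\lim_{J\in I}g_J$, and the crux is that this net converges in $G$. Only finitely many coordinates $f(N)$ fail to lie in $N\cap O\subseteq O$, so after splitting off the corresponding fixed finite product the remaining net lies inside the compact group $P:=\cgrp{N\cap O\mid N\in\mc S}\le O$, on which $\{N\cap O\mid N\in\mc S\}$ is a generalized central factorization. This factorization is compactly convergent in $P$: for $\mc S'\subseteq\mc S$ one has $P_{\mc S'}\le G_{\mc S'}$, so the defining estimate descends from the compact convergence of $\mc S$ in $G$. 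Lemma~\ref{lem:quasiproduct:compact} applied to $P$ then supplies the required convergence, and $\phi(f)$ is defined to be the resulting limit.

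Granting convergence, the remaining verifications are routine. The map $\phi$ is a homomorphism because $\prod_{N\in J}(f_1f_2)(N)=\big(\prod_{N\in J}f_1(N)\big)\big(\prod_{N\in J}f_2(N)\big)$ for commuting families, and one passes to the limit using joint continuity of multiplication; and $\phi\rest_N=\mathrm{id}_N$ follows by evaluating on an element supported at a single $N$. For continuity it suffices to check continuity at $1$ on the open subgroup $\prod_{N\in\mc S}(N\cap O)$: given an identity neighborhood $U$ of $G$, choose $U_1$ with $U_1^2\subseteq U$, apply compact convergence of $\mc S$ to the compact set $O$ and to $U_1$ to obtain a cofinite $\mc S'\subseteq\mc S$ with $G_{\mc S'}\cap O\subseteq U_1$ and finite complement $F$, and note that for $f$ in the open subgroup $\phi(f)$ is the product of the finite product $\prod_{N\in F}f(N)$ with a tail lying in the closed set $G_{\mc S'}\cap O\subseteq U_1$; choosing an identity neighborhood $V_0$ of $\prod_{N\in F}(N\cap O)$ that maps into $U_1$ under the (continuous, finite) product map and setting $V:=V_0\times\prod_{N\in\mc S'}(N\cap O)$ yields $\phi(V)\subseteq U$.

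The step I expect to be the main obstacle is the convergence of the defining net $(g_J)_{J\in I}$: one must isolate the compact subgroup $P$ on which $\{N\cap O\mid N\in\mc S\}$ is an honest generalized central factorization, check that compact convergence passes from $G$ down to $P$, and carefully handle the finitely many coordinates where $f(N)\notin N\cap O$ so that Lemma~\ref{lem:quasiproduct:compact} can be invoked cleanly. Everything else reduces to manipulation of commuting families together with the compact-convergence estimate.
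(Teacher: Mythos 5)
Your proposal is correct and follows essentially the same route as the paper: fix a compact (almost connected) open subgroup, observe that $\{N\cap O\}$ is a compactly convergent generalized central factorization of the compact group $K=\cgrp{N\cap O\mid N\in\mc{S}}$, split off the finitely many coordinates outside $O$, and invoke Lemma~\ref{lem:quasiproduct:compact} to get convergence of the defining net. Your additional verifications of continuity and the homomorphism property are details the paper dismisses as "easily verified," and they check out.
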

\begin{proof} 
Fix $U\in \U(G)$ and consider $K:=\cgrp{N\cap U\mid N\in \mc{S}}$. The subgroup $K$ is compact and admits a generalized central factorization $\{N\cap U\mid N\in \mc{S}\}$. This factorization is also compactly convergent since $\mc{S}$ is compactly convergent. 

Form the local direct product $\bigoplus_{N\in \mc{S}}(N,N\cap U)$ and let $I$ be the directed set consisting of the finite subsets of $\mc{S}$ ordered by inclusion. An element $h := (h_N)_{N \in \mc{S}}$ of $\bigoplus_{N\in \mc{S}}(N,N\cap U)$ gives a net $\hat{h}: = (h_J)_{J \in I}$ in $G$ where $h_J:=\prod_{N\in J}h_N$. Since $h$ has only finitely many coordinates that do not lie in $U$, there is a finite $J\subseteq \mc{S}$ such that $h_J^{-1}h_{J'}\in K\leq U$ for all $J'\supseteq J$. The net $(h_J)^{-1}h_{J'}$ for $J'$ extending $J$ converges by Lemma~\ref{lem:quasiproduct:compact}, hence the net $\hat{h}$ converges. 

We may now define $\psi:\bigoplus_{N\in \mc{S}}(N,N\cap U)\rightarrow G$ via $\psi(h):=\lim \hat{h}$.  It is easily verified that $\psi$ is a continuous homomorphism, that $\psi$ restricts to the identity map on each factor $N$, and that $\psi$ has a dense image in $G$. 
\end{proof}

In view of Lemma~\ref{lem:quasiproduct:local_convergence}, we deduce the following corollary:
\begin{cor}\label{cor:tdlc_LDM}
Let $G$ be an almost \tdlc group. If $\mc{S}$ is a quasi-direct factorization of $G$, then $(G,\mc{S})$ admits a local direct model.
\end{cor}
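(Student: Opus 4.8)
The plan is to combine the two immediately preceding results. First I would invoke Lemma~\ref{lem:quasiproduct:local_convergence}, which says that any quasi-product $(G,\mc{S})$ has the property that $\mc{S}$ is compactly convergent in $G$. In particular, a quasi-direct factorization is a generalized central factorization (the commuting condition $[N,M]=\triv$ for distinct $N,M\in\mc{S}$ follows from the independence property, since $[N,M]\leq N\cap M\leq G_{\{N\}}\cap G_{\{M\}}=\triv$ when $N\neq M$), and it is compactly convergent. Then I would apply Theorem~\ref{thm:LDF} to conclude that $(G,\mc{S})$ admits a local direct model, since an almost \tdlc group is exactly a locally compact group whose connected component is compact. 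This is essentially a one-line deduction.

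The only point requiring a sentence of care is the passage from ``quasi-direct factorization'' to ``generalized central factorization,'' so that Theorem~\ref{thm:LDF} and Lemma~\ref{lem:quasiproduct:local_convergence} both apply: one needs $[N,M]=\triv$ for distinct members of $\mc{S}$, which I would extract from the independence property applied to $X = \{\{N\},\{M\}\}$, giving $G_{\{N\}}\cap G_{\{M\}} = N\cap M=\triv$, and hence $[N,M]\leq N\cap M=\triv$. With that observation, $\mc{S}$ is a compactly convergent generalized central factorization, so Theorem~\ref{thm:LDF} directly yields a local direct model.

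I do not anticipate any genuine obstacle here; this corollary is packaged precisely so that the work has already been done in Lemma~\ref{lem:quasiproduct:local_convergence} (compact convergence of quasi-products) and Theorem~\ref{thm:LDF} (existence of local direct models for almost \tdlc groups from compactly convergent generalized central factorizations). The ``hardest'' part is merely noting that the hypotheses of those two results are met, which is immediate. A short proof reading essentially ``By Lemma~\ref{lem:quasiproduct:local_convergence}, $\mc{S}$ is compactly convergent; since $\mc{S}$ is also a generalized central factorization and $G$ is almost \tdlc, Theorem~\ref{thm:LDF} supplies a local direct model'' is all that is needed.
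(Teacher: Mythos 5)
Your proof is correct and is exactly the paper's argument: the corollary is deduced by combining Lemma~\ref{lem:quasiproduct:local_convergence} (quasi-products are compactly convergent) with Theorem~\ref{thm:LDF}, and your observation that the independence property forces $[N,M]\leq N\cap M=\triv$ for distinct normal factors is the right justification that $\mc{S}$ is a generalized central factorization.
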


There are two classes of groups for which any quasi-product is actually a direct product, so in particular, the quasi-product is its own local direct model. The first follows immediately from Theorem~\ref{thm:LDF}.

\begin{cor}\label{cor:quasiproduct:compact}
If $G$ is a compact group with $\mc{S}$ a quasi-direct factorization, then $G \simeq \prod_{N\in \mc{S}}N$.
\end{cor}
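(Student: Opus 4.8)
The plan is to produce the local direct model of $(G,\mathcal{S})$ guaranteed by Theorem~\ref{thm:LDF} in a form where the model group is literally the full direct product, and then to upgrade the resulting continuous injection to an isomorphism by a compactness argument.

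First I would record the hypotheses needed to invoke Theorem~\ref{thm:LDF}: since $G$ is compact, each $N\in\mathcal{S}$ is a closed subgroup of $G$ and hence compact, $G^\circ$ is compact so $G$ is almost \tdlc, and by Lemma~\ref{lem:quasiproduct:local_convergence} the quasi-direct factorization $\mathcal{S}$ is compactly convergent. Theorem~\ref{thm:LDF} therefore produces a local direct model $\phi\colon\bigoplus_{\mathcal{S}}(G)\to G$. Inspecting the proof of Theorem~\ref{thm:LDF}, the open subgroup $U\in\U(G)$ used there may be taken to be $G$ itself, since $G\in\U(G)$ when $G$ is compact; with this choice $\bigoplus_{\mathcal{S}}(G)=\bigoplus_{N\in\mathcal{S}}(N,N\cap G)$, and because $N\cap G=N$ for every $N$, the model group is exactly $\prod_{N\in\mathcal{S}}N$ with the product topology.

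Finally I would argue that $\phi\colon\prod_{N\in\mathcal{S}}N\to G$ is an isomorphism of topological groups. By Lemma~\ref{lem:LDM_normal}, $\img(\phi)$ is dense in $G$, and by Proposition~\ref{prop:LDQ_injective} (which applies as $\mathcal{S}$ is a quasi-direct factorization) the map $\phi$ is injective. Since $\prod_{N\in\mathcal{S}}N$ is compact and $\phi$ is continuous, $\img(\phi)$ is a compact, hence closed, subgroup of $G$; being dense, it equals $G$. Thus $\phi$ is a continuous bijective homomorphism from a compact group onto a Hausdorff topological group, so it is a homeomorphism and therefore a topological group isomorphism, giving $G\simeq\prod_{N\in\mathcal{S}}N$. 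There is no real obstacle here; the only point that deserves care is the choice of open subgroup in the local direct model, which is what makes the model group the genuine full direct product rather than a proper dense local direct product.
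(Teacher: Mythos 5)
Your proof is correct and is essentially the argument the paper intends: the paper gives no separate proof, stating only that the corollary "follows immediately from Theorem~\ref{thm:LDF}", and your write-up (take $U=G$ in the construction of the local direct model so that it becomes $\prod_{N\in\mc{S}}N$, then use injectivity, density, and compactness of the domain to conclude $\phi$ is an isomorphism) is precisely the intended instantiation of that theorem.
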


\begin{prop}\label{prop:quasi_connected}
Let $G$ be a connected \lcsc group that is topologically perfect and centerless. If $\mc{S}$ is a quasi-direct factorization of $G$, then $G \simeq  \prod_{S\in \mc{S}}S$, and all but finitely many members of $\mc{S}$ are compact.
\end{prop}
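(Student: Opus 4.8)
The plan is to show, in order, that (a) every member of $\mc{S}$ is connected and topologically perfect, (b) all but finitely many members are compact, and (c) $G$ then splits as the asserted direct product. Throughout, fix a quasi-direct factorization $\mc{S}$; we may assume $G\neq\triv$ and, deleting trivial members, that every $S\in\mc{S}$ is non-trivial. Applying the independence property to $\{\{S\},\mc{S}\setminus\{S\}\}$ gives $S\cap G_{\mc{S}\setminus\{S\}}=\triv$, and since the members are closed and normal this forces $[S,T]=\triv$ for distinct $S,T$; thus $\mc{S}$ is also a generalized central factorization and $G_{\mc{S}\setminus\{S\}}\leq\CC_G(S)$. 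As in the proof of Lemma~\ref{lem:tdlc_quasi-factors}, $\Z(S)$ centralizes $S$ and every other member, hence centralizes $G=G_{\mc{S}}$, so $\Z(S)=\triv$. Consequently the quotient map $S\rightarrow H_S:=G/\CC_G(S)$ is a normal compression, and $H_S$ is connected, topologically perfect, and centerless by Lemma~\ref{lem:centerless_quotient}.

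The heart of the argument is step (a). Fix $S\in\mc{S}$ and put $P:=\cgrp{[S^\circ,S^\circ]}$, a connected closed characteristic subgroup of $S$, hence normal in $G$. First, the image of $P$ in $H_S$ is dense: letting $N\normal H_S$ be its closure, $H_S/N$ is topologically perfect and is densely generated by the (normal) image of $S$, which is a quotient of $S/P$; since $S/S^\circ$ is abelian by Theorem~\ref{thmintro:connected}(2) (applied to $S\rightarrow H_S$ with connected target), $S/P$ is metabelian, so $H_S/N$ is metabelian, hence trivial. Thus $H_S$ is a connected normal compression of the connected group $P$, so Corollary~\ref{cor:Zerling:derived} yields $H_S=\cgrp{[H_S,H_S]}\leq(\text{image of }P)\cdot\Z(H_S)=\text{image of }P$; the normal compression $P\rightarrow H_S$ is therefore surjective, hence an isomorphism of Polish groups. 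Since $P\leq S^\circ\leq S$ all embed into $H_S$ while $P$ already maps onto $H_S$, we conclude $P=S^\circ=S$; in particular $S$ is connected and $S=\cgrp{[S,S]}$.

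For step (b), note $G$ is almost connected, so $R:=\RadLE(G)$ is compact (Theorem~\ref{thm:yamabe_radical}), and since every compact normal subgroup is locally elliptic, $S\in\mc{S}$ is compact iff $S\leq R$. Let $\rho\colon G\rightarrow L:=G/R$ be the quotient onto the connected Lie group $L$. For each $S$ the set $SR$ is closed, so $\rho(S)$ is a closed --- hence Lie --- normal subgroup of $L$, connected and topologically perfect since $S$ is; thus $\mf{a}_S:=\mathrm{Lie}(\rho(S))$ is a perfect ideal of the finite-dimensional algebra $\mathrm{Lie}(L)$, with $[\mf{a}_S,\mf{a}_T]=0$ for distinct $S,T$. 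Choose finitely many $S_1,\dots,S_k$ with $\sum_i\mf{a}_{S_i}=\sum_{S\in\mc{S}}\mf{a}_S$; for $T\notin\{S_1,\dots,S_k\}$ we get $\mf{a}_T\subseteq\sum_i\mf{a}_{S_i}$ and $[\mf{a}_T,\mf{a}_{S_i}]=0$ for all $i$, so $\mf{a}_T=[\mf{a}_T,\mf{a}_T]=0$. Hence $\rho(S)=\triv$, i.e. $S$ is compact, for all but finitely many $S\in\mc{S}$; write $S_1,\dots,S_n$ for the non-compact ones and $\mc{S}_c$ for the rest. For step (c): $C:=G_{\mc{S}_c}\leq R$ is compact, and $\mc{S}_c$ is a quasi-direct factorization of $C$, so $C\simeq\prod_{S\in\mc{S}_c}S$ by Corollary~\ref{cor:quasiproduct:compact}. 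For any subset $\mc{T}\subseteq\{S_1,\dots,S_n\}$, the group $G_{\mc{T}}$ is connected, topologically perfect, and centerless; using this, an induction on $n$ with $D:=G_{\{S_1,\dots,S_n\}}$ and $N:=G_{\{S_2,\dots,S_n\}}\leq\CC_D(S_1)$ shows $D\simeq S_1\times\dots\times S_n$, since Corollary~\ref{cor:Zerling:perfect} makes the normal compression $S_1\rightarrow D/\CC_D(S_1)$ an isomorphism, whence $D=S_1\times\CC_D(S_1)$ topologically, $\CC_D(S_1)=N$, and the inductive hypothesis applies to $N$. In particular $D$ is closed in $G$; since $D\cap C=\triv$ by independence we get $[D,C]=\triv$, so $DC$ is closed (as $C$ is compact) and equals $\overline{DC}=G$, and the continuous bijection $D\times C\rightarrow G$ of Polish groups is an isomorphism. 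Thus $G\simeq D\times C\simeq\prod_{S\in\mc{S}}S$, with all but $S_1,\dots,S_n$ compact.

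The main obstacle is step (a): this is the only place the connectedness and topological perfectness of $G$ are used, and the point is to recognize $H_S=G/\CC_G(S)$ as a normal compression not merely of $S$ but of its connected derived part $\cgrp{[S^\circ,S^\circ]}$, so that Corollary~\ref{cor:Zerling:derived} collapses that part onto all of $S$. Step (b) then only needs finite-dimensionality of the Lie quotient via the perfect pairwise-commuting ideals $\mf{a}_S$, and step (c) is routine bookkeeping with Corollaries~\ref{cor:Zerling:perfect} and~\ref{cor:quasiproduct:compact}.
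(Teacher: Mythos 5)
Your argument is correct in substance and follows the same three-stage outline as the paper (each factor is connected, topologically perfect and centerless; cofinitely many factors are compact; assemble the product), but it takes genuinely different routes in the first two stages, and one justification in your step (b) is wrong as stated and needs a small repair.

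On the comparison. In step (a) the paper works with $K:=\ol{[N,N]}=\ol{[G,N]}$ and the compression $N\rightarrow G/G_{\mc{S}\setminus\{N\}}$, first deducing that $K$ is topologically perfect from Proposition~\ref{prop:normal_compression} and then that $K$ is connected from Theorem~\ref{thmintro:connected}, before invoking Corollary~\ref{cor:Zerling:derived}. Your choice of $P=\cgrp{[S^\circ,S^\circ]}$ is connected for free, and you obtain density of its image from the observation that $S/P$ is metabelian while $H_S$ is topologically perfect; this is a clean alternative. In step (b) the paper avoids Lie algebras entirely: it applies Lemma~\ref{lem:Lie:directed_family} to the directed family $\{G_JR/G_JR \cap \cdot\}$ of images of the $G_J$ in the Lie quotient to produce a finite $J$ with $G_JR=G$, and then the identity $L=\ol{[L,L]}=\ol{[G,L]}=\ol{[G_JR,L]}=\ol{[R,L]}\le R$ for $L=G_{\mc{S}\setminus J}$ shows the complementary part is compact all at once. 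Your passage to pairwise-commuting ideals of the finite-dimensional algebra $\mathrm{Lie}(G/R)$ uses the same finite-dimensionality input and is a legitimate alternative. Step (c) coincides with the paper's (Corollary~\ref{cor:quasiproduct:compact} for the compact part, Corollary~\ref{cor:Zerling:perfect} and Lemma~\ref{lem:centerless_quotient} for the finitely many non-compact factors).

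The misstep: you assert that $\mf{a}_S=\mathrm{Lie}(\rho(S))$ is a \emph{perfect} ideal because $\rho(S)$ is connected and topologically perfect, and you use this to write $\mf{a}_T=[\mf{a}_T,\mf{a}_T]$. That implication is false in general: a connected topologically perfect Lie group need not have perfect Lie algebra. For instance, let $\tilde H=\widetilde{\mathrm{SL}_2(\Rb)}\times\Rb$, let $z$ generate the center of $\widetilde{\mathrm{SL}_2(\Rb)}$, and let $\Gamma=\grp{(z,\alpha),(1,1)}$ with $\alpha$ irrational; then $\Gamma$ is discrete and central, and $H=\tilde H/\Gamma$ satisfies $\ol{[H,H]}=H$ because the image of $\widetilde{\mathrm{SL}_2(\Rb)}\times\{1\}$ is dense, yet $\mathrm{Lie}(H)=\mf{sl}_2\oplus\Rb$ is not perfect. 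Since $\rho(S)\simeq S/(S\cap R)$ may acquire a center, you cannot exclude this behaviour. Fortunately your argument only needs the weaker conclusion you actually derive from the commuting relations, namely $[\mf{a}_T,\mf{a}_T]\subseteq[\mf{a}_T,\sum_i\mf{a}_{S_i}]=0$: then $\rho(T)$ is a connected Lie group with abelian Lie algebra, hence abelian, hence trivial because it is topologically perfect, which gives $T\le R$ as intended. With that sentence replacing the appeal to perfectness of the ideals, the proof is complete.
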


\begin{proof}
Fix $N\in \mc{S}$ and set $\ol{[G,N]}=\ol{[N,N]}=:K$. The usual projection $\pi:N\rightarrow G/G_{\mc{S}\setminus \{N\}}=:H$ is a normal compression map. Since $H$ is topologically perfect, $\pi(K)$ must be dense in $H$, and thus, $\pi(\ol{[K,K]})$ is also dense in $H$. In view of Proposition~\ref{prop:normal_compression}, we deduce that $K\leq \ol{[K,K]}$ and therefore that $K$ is topologically perfect. 

We now have a normal compression map $\pi:K\rightarrow H$ with $K$ and $H$ topologically perfect and $H$ connected. Theorem~\ref{thmintro:connected} ensures the topologically perfect group $K$ is connected, and applying Corollary~\ref{cor:Zerling:derived}, we infer that $\pi(K)\Z(H)=H$. On the other hand, $\pi(N)\leq H=\pi(K)\Z(H)$, so $\pi(N)=\pi(K)(\Z(H)\cap \pi(N))$.  It is clear from the definition of a quasi-product that $\Z(N) \le \Z(G)$, so $N$ is centerless. The group $\pi(N)$ is then also centerless, and thus, $\Z(H)\cap \pi(N)=\triv$. We deduce that $\pi(N)=\pi(K)$ and therefore that $N$ is topologically perfect.  Applying again Theorem~\ref{thmintro:connected}, $N$ is additionally connected. Every $N \in \mc{S}$ is therefore topologically perfect, connected, and centerless.  

In view of Theorem~\ref{thm:yamabe_radical}, there is a compact normal subgroup $R$ of $G$ such that $G/R$ is a Lie group. The quotient map $\pi: G \rightarrow G/R$ is a closed map, so each member of $\{NR/R \mid N\in \mc{S}\}$ is closed and normal in $G/R$. By Lemma~\ref{lem:Lie:directed_family}, there is a finite subset $J$ of $\mc{S}$ such that $NG_JR/G_JR$ is discrete for all $N \in \mc{S}$. The group $G/G_JR$ is connected, so each $NG_JR/G_JR$ is indeed abelian. The group $G/G_JR$ is thus abelian, and since $G$ is topologically perfect, we deduce that $G_JR=G$.

Each $M\in \mc{S}\setminus J$ is topologically perfect, so $L:=G_{\mc{S}\setminus J}$ is topologically perfect. We now see that 
\[
L=\ol{[L,L]}=\ol{[G,L]}=\ol{[G_JR,L]}=\ol{[R,L]} \le R.
\]
The group $L$ is then compact.  In particular, every element of $\mc{S} \setminus J$ is compact.  Applying Corollary~\ref{cor:quasiproduct:compact}, we conclude that $G_{\mc{S}\setminus J}\simeq \prod_{S\in \mc{S}\setminus J}S$, and it follows further that $G\simeq G_J\times \prod_{S\in \mc{S}\setminus J}S$.

We now turn our attention to $G_J$. Fix $N\in J$ and form $G_J/\CC_{G_J}(N)$.  This quotient is centerless by Lemma~\ref{lem:centerless_quotient}.  The restriction of the natural projection $\pi:N\rightarrow G_J/\CC_{G_J}(N)$ is a normal compression map. Since $N$ and $G_J$ are connected, topologically perfect groups, Corollary~\ref{cor:Zerling:perfect} ensures $\pi$ is indeed an isomorphism. We conclude $G_J=N\CC_{G_J}(N)$, and it follows that $G_{J}\simeq \prod_{N\in J}N$, finishing the proof.
\end{proof}

Since we are primarily interested in non-abelian chief factors, which are topologically characteristically simple, the above results will suffice for our purposes. However, one naturally wonders if a local direct model exists for any locally compact quasi-product. The following example shows that not all locally compact quasi-products admit a local direct model, and it also illustrates the need to exclude abelian quasi-products from Proposition~\ref{prop:quasi_connected}. 

\begin{example}\label{ex:torus_quasiproduct}
Set $G=\Rb\times \Tb^{\Nb}$ and let us consider $G$ as the collection of functions $f:\{-1\}\cup\Nb\rightarrow \Rb\sqcup \Tb$ such that $f(-1)\in \Rb$ and $f(i)\in \Tb$ for $i\in \Nb$. The group $G$ is a connected locally compact group under the product topology. For each $i\in \Nb$ let $f_i\in G$ be defined as follows:
\[
f_i(n):=
\begin{cases}
1& n=-1\\
\sqrt{2} & n=i\\
0 & \text{ else.}
\end{cases}
\]
We now define subgroups of $G$:
\[
G_i:=
\begin{cases}
\Rb & i=-1\\
\grp{f_i} & \text{ else.}
\end{cases}
\]
It is easy to verify that each $G_i$ is closed and the collection $\{G_i\}_{i\in \{-1\}\cup \Nb}$ is a quasi-direct factorization of $G$.

Since $G$ is connected, it has no proper open subgroups, so the only possible local direct model for $(G,\mc{S})$ is $ \prod_{i \in \Nb}G_i$. However, the direct product fails to be a local direct model since it is not locally compact.
\end{example}

\subsection{Locally compact groups of semisimple type}

We here derive a structure result for locally compact groups of strict semisimple type.

\begin{thm}\label{thm:strict_semisimple}
For $G$ an \lcsc group of strict semisimple type, 
\[
G\simeq D\times \prod_{i\in I}C_i\times\prod_{j=0}^nS_j
\]
with $D$ a \tdlcsc group of strict semisimple type, each $C_i$ a topologically simple compact connected Lie group, and each $S_j$ a non-compact topologically simple connected Lie group.
\end{thm}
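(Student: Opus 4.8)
The plan is to exhibit $G$ as a quasi-product of topologically simple groups, split off the connected part as a topological direct factor, and then invoke the structure theory already established for the connected and totally disconnected pieces. Since $G$ is of strict semisimple type, Theorem~\ref{thm:min_normal:simple}(2) presents $(G,\mc{M})$ as a quasi-product in which $\mc{M}$ is the set of components of $G$ and every $M\in\mc{M}$ is non-abelian and topologically simple; as each $M$ is a quasi-factor we have $\Z(M)\le\Z(G)=\triv$, so each $M$ is also centerless and in particular topologically perfect. Because $M^{\circ}$ is a closed characteristic subgroup of the topologically simple group $M$, each $M$ is either connected or totally disconnected; split $\mc{M}=\mc{M}_{c}\sqcup\mc{M}_{td}$ accordingly and set $A:=\cgrp{M\mid M\in\mc{M}_{c}}$. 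If $\mc{M}_{td}=\emptyset$ then $G=A$ is connected, topologically perfect and centerless, and the conclusion follows from Proposition~\ref{prop:quasi_connected} together with the Lie-theoretic remark below, with $D$ trivial; if $\mc{M}_{c}=\emptyset$ then $A=\triv$ and $G$ is itself totally disconnected by Lemma~\ref{lem:tdlc_quasi-factors}, so we take $D:=G$ and no Lie factors. So assume both halves are non-empty. Then $A$ is connected, is normalised by every component and hence (being closed) normal in $G$, and is topologically perfect since each $M\in\mc{M}_{c}$ is. Each $S\in\mc{M}_{td}$ is distinct from every member of $\mc{M}_{c}$, hence centralises every such member and therefore centralises $A$; consequently $\grp{\mc{M}}\le A\,\CC_{G}(A)$ is dense in $G$, and moreover $\Z(A)$ is centralised by the dense subgroup $A\,\CC_{G}(A)$, forcing $\Z(A)\le\Z(G)=\triv$. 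Thus $A$ is a connected, topologically perfect, centerless closed normal subgroup of $G$.

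The key step is to show $A$ is a direct factor. The restriction $\pi$ of the quotient map $G\to G/\CC_{G}(A)$ to $A$ has kernel $A\cap\CC_{G}(A)=\Z(A)=\triv$ and dense image, so $\pi$ is a normal compression map. Now $G/\CC_{G}(A)$ is centerless by Lemma~\ref{lem:centerless_quotient}, topologically perfect as a quotient of the topologically perfect group $G$, and a normal compression of the connected group $A$; Corollary~\ref{cor:con_association} then shows $G/\CC_{G}(A)$ is connected, and Corollary~\ref{cor:Zerling:perfect} shows $\pi$ is an isomorphism of topological groups. Hence $G=A\,\CC_{G}(A)$ with $A\cap\CC_{G}(A)=\triv$ and $A,\CC_{G}(A)$ commuting; writing $D:=\CC_{G}(A)$, the multiplication map $A\times D\to G$ is a continuous bijective homomorphism between Polish groups, hence a topological isomorphism by the open mapping theorem for Polish groups. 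So $G\simeq A\times D$. Since $D$ is centralised by $A$ as well as by itself and $AD=G$, we get $\Z(D)\le\Z(G)=\triv$, and $D\simeq G/A$ is topologically perfect.

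It remains to describe the two factors. Apply Proposition~\ref{prop:quasiproduct:quotient} to the generalized central factorization $\mc{M}$ and the closed normal subgroup $A$: for $S\in\mc{M}_{c}$ the set $\ol{G_{\mc{M}\setminus\{S\}}A}$ is all of $G$ (it contains $S$ and $G_{\mc{M}\setminus\{S\}}$, hence the dense subgroup $\grp{\mc{M}}$), while for $S\in\mc{M}_{td}$ it equals $G_{\mc{M}\setminus\{S\}}\supseteq A$, so the associated subgroup $M_{0}$ equals $\bigcap_{S\in\mc{M}_{td}}G_{\mc{M}\setminus\{S\}}$; since $M_{0}/A$ is central in $G/A\simeq D$ and $D$ is centerless, $M_{0}=A$. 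Thus $\{\ol{SA}/A\mid S\in\mc{M}_{td}\}$ is a quasi-direct factorization of $G/A\simeq D$, and each $\ol{SA}/A$ is isomorphic to $S$ (as $S$ centralises $A$, we have $S\cap A\le S\cap\CC_{G}(S)=\Z(S)=\triv$), hence non-abelian, topologically simple and totally disconnected, and one checks directly from the definition that it is a component of $D$. Since these generate $D$, we get $D=E(D)$, so $D$ is of strict semisimple type; and $D$ is totally disconnected by Lemma~\ref{lem:tdlc_quasi-factors}, hence (being closed in $G$) a \tdlcsc group. On the other side, $A$ is connected, topologically perfect and centerless, so Proposition~\ref{prop:quasi_connected} gives $A\simeq\prod_{S\in\mc{M}_{c}}S$ with all but finitely many $S$ compact. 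Each such $S$ is a connected topologically simple \lcsc group, so by the Gleason--Yamabe theorem (Theorem~\ref{thm:yamabe_radical}) it admits arbitrarily small compact normal subgroups with Lie quotient, and topological simplicity forces one of these to be trivial, so $S$ is a connected simple Lie group. Labelling the compact ones $C_{i}$ ($i\in I$) and the finitely many non-compact ones $S_{0},\dots,S_{n}$ yields $A\simeq\prod_{i\in I}C_{i}\times\prod_{j=0}^{n}S_{j}$ and hence $G\simeq D\times\prod_{i\in I}C_{i}\times\prod_{j=0}^{n}S_{j}$, as required. The principal obstacle is the second paragraph: realising the connected component as a genuine topological direct factor, which relies on the rigidity of normal compressions of connected, topologically perfect, centerless groups (Corollaries~\ref{cor:con_association} and~\ref{cor:Zerling:perfect}) rather than on formal manipulation.
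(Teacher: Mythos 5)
Your proof is correct and takes essentially the same route as the paper's: gather the connected components into a connected, topologically perfect, centerless closed normal subgroup, split it off as a direct factor via Lemma~\ref{lem:centerless_quotient} and Corollary~\ref{cor:Zerling:perfect}, describe it with Proposition~\ref{prop:quasi_connected} and Gleason--Yamabe, and identify the complement $\CC_G(A)$ as a totally disconnected group of strict semisimple type. The only cosmetic difference is that you recover the quasi-direct factorization of $D$ through Proposition~\ref{prop:quasiproduct:quotient} applied to $G/A$, whereas the paper shows directly that $D = G_{\mc{M}\setminus\mc{C}}$; both arguments are sound.
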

\begin{proof}
Let $\mc{M}$ list the components of $G$. Appealing to Theorem~\ref{thm:min_normal:simple}, each $N\in \mc{M}$ is topologically simple, and $\mc{M}$ is a quasi-direct factorization of $G$. Let $\mc{C}$ be the connected elements of $\mc{M}$.\

The group $C:=\cgrp{\mc{C}}$ is a connected, topologically perfect locally compact group with quasi-factorization $\mc{C}$, and it is centerless, since $G$ is centerless. Applying Proposition~\ref{prop:quasi_connected}, $C\simeq \prod_{N\in \mc{C}}N$ with only finitely many $N\in \mc{C}$ non-compact. It now follows from Theorem~\ref{thm:yamabe_radical} that $C\simeq  \prod_{i\in I}C_i\times\prod_{j=0}^nS_j$ with each $S_j$ a non-compact topologically simple connected Lie group and each $C_i$ a topologically simple compact connected Lie group.

Setting $D:=\CC_G(C)$, the group $G/D$ is centerless by Lemma~\ref{lem:centerless_quotient}, and $G/D$ is a normal compression of $C$.  Appealing to Corollary~\ref{cor:Zerling:perfect}, the natural projection $\pi:G\rightarrow G/D$ restricts to an isomorphism from $C$ to $G/D$, and thus, $G\simeq C\times D$.

Finally, since $C$ and $D$ are closed direct factors of $G$ with $D \ge G_{\mc{M}\setminus\mc{C}}$, the product $CG_{\mc{M} \setminus \mc{C}}$ is closed in $G$. This product is also dense, so it is indeed the case that $G = CG_{\mc{M} \setminus \mc{C}}$. We thus deduce that $D = G_{\mc{M}\setminus\mc{C}}$.  The group $D$ is then a quasi-product of the set $\mc{M}\setminus \mc{C}$ of non-abelian topologically simple \tdlcsc groups.  Theorem~\ref{thm:min_normal:simple} implies $D$ is of strict semisimple type, and $D$ is totally disconnected by Lemma~\ref{lem:tdlc_quasi-factors}. 
\end{proof}

\begin{cor}\label{cor:strict_semisimple:local_model}
	Let $G$ be an \lcsc group of strict semisimple type with $\mc{S}$ the set of non-trivial topologically simple closed normal subgroups of $G$.  Then $(G,\mc{S})$ is a quasi-product that admits a local direct model, and the local direct model is an \lcsc group of strict semisimple type.
\end{cor}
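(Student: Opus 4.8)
The plan is to first pin down $\mc{S}$ as the set of components of $G$, and then reduce to the concrete decomposition of Theorem~\ref{thm:strict_semisimple} and build the local direct model by hand, since $G$ itself need not be almost \tdlc.

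\emph{Identifying $\mc{S}$.} Let $\mc{M}$ be the set of components of $G$. By Theorem~\ref{thm:min_normal:simple}(2), $(G,\mc{M})$ is a quasi-product and each $M \in \mc{M}$ is non-abelian and topologically simple; since distinct components commute and together generate $G$, each $M$ is normal in $G$, so $\mc{M} \subseteq \mc{S}$. For the reverse inclusion, take $N \in \mc{S}$. If $N$ centralized every component it would centralize $E(G) = G$, forcing $N \le \Z(G) = \triv$; hence $[N,M] \neq \triv$ for some $M \in \mc{M}$. As $N$ and $M$ are normal, $\ol{[N,M]}$ is a non-trivial closed normal subgroup of $G$ contained in $N \cap M$, so topological simplicity of $N$ gives $N = \ol{[N,M]} \le M$, and then topological simplicity of $M$ gives $N = M$. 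Thus $\mc{S} = \mc{M}$ is a quasi-direct factorization of $G$ (in particular every member of $\mc{S}$ is automatically non-abelian), and it is compactly convergent by Lemma~\ref{lem:quasiproduct:local_convergence}, so by Corollary~\ref{cor:LDM_unique} there is at most one local direct model to find.

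\emph{Building the model.} Apply Theorem~\ref{thm:strict_semisimple} to write $G \simeq A \times L$, where $A := D \times \prod_{i\in I}C_i$ and $L := \prod_{j=0}^{n}S_j$. Here $A$ has compact connected component $A^{\circ} = \prod_{i\in I}C_i$, so $A$ is almost \tdlc, while $L$ is a connected Lie group, being a finite product of connected simple Lie groups; moreover $\CC_G(A) = L$ since $A$ and $L$ are centerless. Both $A$ and $L$ are again of strict semisimple type, and applying the argument of the previous paragraph to $A$, to $L$, and to $G$ (using $\CC_G(A)=L$ to separate off those components not meeting $A$) shows $\mc{S} = \mc{S}_A \sqcup \{S_0, \dots, S_n\}$, where $\mc{S}_A$ is the set of components of $A$. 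By Corollary~\ref{cor:tdlc_LDM}, $(A, \mc{S}_A)$ admits a local direct model $\phi_A \colon \bigoplus_{\mc{S}_A}(A) \to A$, with $\bigoplus_{\mc{S}_A}(A) = \bigoplus_{N \in \mc{S}_A}(N, N \cap O_A)$ for some open $O_A \le A$; and since the factorization $\{S_0,\dots,S_n\}$ of $L$ is finite, the associated local direct product is just $L$ itself, with the identity as local direct model. Set $O := O_A \times L$, which is open in $G$. One checks that $N \cap O = N \cap O_A$ for $N \in \mc{S}_A$ and $N \cap O = N$ for $N \in \{S_j\}$, so the local direct product $\bigoplus_{N \in \mc{S}}(N, N \cap O)$ is canonically $\bigoplus_{\mc{S}_A}(A) \times L$; this group is locally compact, and $\phi := \phi_A \times \mathrm{id}_L$ is a continuous homomorphism restricting to $\mathrm{id}_N$ on each $N \in \mc{S}$. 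Hence $\phi$ is a local direct model for $(G,\mc{S})$.

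\emph{Properties of the model, and the main obstacle.} The group $\bigoplus_{\mc{S}}(G) = \bigoplus_{\mc{S}_A}(A) \times L$ is locally compact by construction and second countable, because $\mc{S}$ is countable (as $G$ is second countable) and each member of $\mc{S}$ is \lcsc; hence it is an \lcsc group. Its coordinate subgroups are precisely the members of $\mc{S}$: they pairwise commute, generate a dense subgroup, are non-abelian and topologically simple, and form a quasi-direct factorization directly from the local direct product structure; and $\Z(\bigoplus_{\mc{S}}(G)) = \triv$, since an element commuting with every coordinate subgroup $N$ must be trivial in the $N$-coordinate as $\Z(N) = \triv$. Therefore $\bigoplus_{\mc{S}}(G)$ is of strict semisimple type by Theorem~\ref{thm:min_normal:simple}(2). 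The only genuinely non-routine point is the second step: $G$ may fail to be almost \tdlc, so Corollary~\ref{cor:tdlc_LDM} does not apply to it directly, and Theorem~\ref{thm:strict_semisimple} is exactly the tool that splits off the finite non-compact connected part $L$, whose local direct model is trivially the group itself; once $G$ is broken into the almost-\tdlc piece $A$ and the Lie piece $L$, assembling the models is routine bookkeeping with local direct product topologies.
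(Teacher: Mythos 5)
Your proposal is correct and follows essentially the same route as the paper: split $G$ via Theorem~\ref{thm:strict_semisimple} into an almost \tdlc factor (handled by Corollary~\ref{cor:tdlc_LDM}) and a finite product of connected simple Lie groups (its own local direct model), then combine. The only difference is that you spell out the identification of $\mc{S}$ with the set of components and the bookkeeping for assembling the two models, both of which the paper leaves implicit.
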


\begin{proof}
Theorem~\ref{thm:min_normal:simple} ensures that $(G,\mc{S})$ is a quasi-product. Let $\mc{C}$ be the collection of connected non-compact elements of $\mc{S}$ and define $D:=\cgrp{\mc{S}\setminus \mc{C}}$ and $L:=\cgrp{\mc{C}}$. By Theorem~\ref{thm:strict_semisimple}, we can write $G=D\times L$. Additionally, $D$ is almost \tdlc, and $L$ is a finite direct product of topologically simple connected Lie groups. It is now enough to show that $D$ and $L$ both admit local direct models. Since $L$ is a finite product, it is its own local direct model. On the other hand, $D$ is a quasi-product of $\mc{S}\setminus \mc{C}$, so Corollary~\ref{cor:tdlc_LDM} ensures that $D$ also admits a local direct model. 

For the second claim, the local direct model is itself a quasi-product of non-abelian topologically simple groups, so it is of strict semisimple type.
\end{proof}

\subsection{Products of elementary groups}
We conclude this section with a generalization of Lemma~\ref{lem:product_elementary} to generalized central products.

\begin{thm}\label{quasiproduct:elementary}
	Let $G$ be a \tdlcsc group with $\mc{S}$ a countable generalized central factorization of $G$.  If each $N \in \mc{S}$ is elementary, then $G$ is elementary with
	\[
	\xi(G) = {\sup_{N \in \mc{S}}}^+\xi(N).
	\]
\end{thm}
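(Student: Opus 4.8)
The plan is to exhibit $G$ as a dense increasing union of closed normal subgroups, each generated by finitely many members of $\mc{S}$, to pin down the decomposition rank of each such subgroup, and then to invoke Proposition~\ref{prop:d-rank:ascending}. First I would enumerate $\mc{S}=\{N_1,N_2,\dots\}$ (a finite list if $\mc{S}$ is finite) and set $H_k:=G_{\{N_1,\dots,N_k\}}=\cgrp{N_1,\dots,N_k}$, a closed normal subgroup of $G$; then $\N_G(H_k)=G$ is open and $G=\ol{\bigcup_k H_k}$, since $\bigcup_k H_k$ is a subgroup containing every member of $\mc{S}$. Because $[N_i,N_j]=\triv$ for $i\ne j$, the subgroup $N_k$ commutes with $\cgrp{N_1,\dots,N_{k-1}}=H_{k-1}$, so $H_{k-1}N_k$ is a dense normal subgroup of $H_k$ and the multiplication map $\mu_k\colon H_{k-1}\times N_k\to H_k$, $(h,n)\mapsto hn$, is a continuous homomorphism with $\img(\mu_k)=H_{k-1}N_k$.

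The key observation is that, although $H_{k-1}N_k$ is in general not closed in $H_k$, the kernel of $\mu_k$ is closed and normal (being the kernel of a continuous homomorphism of Hausdorff groups), so $P_k:=(H_{k-1}\times N_k)/\ker(\mu_k)$ is a \tdlcsc group and the induced map $\bar\mu_k\colon P_k\to H_k$ is a continuous injective homomorphism with dense normal image; that is, $H_k$ is a normal compression of $P_k$. I would then prove by induction on $k$ that $H_k$ is elementary with $\xi(H_k)=\max_{1\le i\le k}\xi(N_i)$, the case $k=1$ being the hypothesis on $N_1$, and the degenerate case $H_k=\triv$ (which forces the relevant $N_i$ to be trivial) being immediate. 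For the inductive step, Lemma~\ref{lem:product_elementary} gives that $H_{k-1}\times N_k$ is elementary with $\xi(H_{k-1}\times N_k)=\max(\xi(H_{k-1}),\xi(N_k))$, whence $P_k$ is elementary with $\xi(P_k)\le\xi(H_{k-1}\times N_k)$ by Proposition~\ref{prop:xi_monotone}(2); since $H_k$ is a normal compression of $P_k$, Proposition~\ref{prop:compression:elementary_rank} shows $H_k$ is elementary with $\xi(H_k)=\xi(P_k)\le\max(\xi(H_{k-1}),\xi(N_k))$. The inclusions $H_{k-1}\hookrightarrow H_k$ and $N_k\hookrightarrow H_k$ together with Proposition~\ref{prop:xi_monotone}(1) give the reverse inequality $\max(\xi(H_{k-1}),\xi(N_k))\le\xi(H_k)$, and combining with the inductive hypothesis closes the induction.

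Finally, $(H_k)_k$ is an increasing, hence directed, family of closed subgroups of $G$, each with open normalizer, each elementary, and with dense union, so Proposition~\ref{prop:d-rank:ascending} gives that $G$ is elementary with $\xi(G)={\sup_{k}}^+\xi(H_k)$. The ordinals $\xi(H_k)=\max_{i\le k}\xi(N_i)$ form a non-decreasing sequence whose supremum is $\sup_{N\in\mc{S}}\xi(N)$, so ${\sup_{k}}^+\xi(H_k)={\sup_{N\in\mc{S}}}^+\xi(N)$, yielding the claimed formula (when $\mc{S}$ is finite this reads $\xi(G)=\max_{N\in\mc{S}}\xi(N)$, which is the same thing). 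I expect the only genuinely delicate point to be the inductive step: one must resist the temptation to treat $H_{k-1}N_k$ as a closed subgroup and instead route through the honest Hausdorff quotient $P_k$ of the finite product $H_{k-1}\times N_k$, which is exactly what makes the normal-compression machinery, and hence Proposition~\ref{prop:compression:elementary_rank}, applicable.
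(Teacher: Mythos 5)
Your proposal is correct and follows essentially the same route as the paper: the paper also reduces each finite stage $G_j=\cgrp{N_1,\dots,N_j}$ to a normal compression of a Hausdorff quotient of the direct product $\prod_{i\le j}N_i$ (there phrased via the local direct model, which for a finite factorization is exactly your multiplication map), then applies Lemma~\ref{lem:product_elementary}, Proposition~\ref{prop:xi_monotone}, Proposition~\ref{prop:compression:elementary_rank}, and finally Proposition~\ref{prop:d-rank:ascending}. Your only deviation is building the finite product one factor at a time by induction rather than all at once, which changes nothing essential.
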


\begin{proof}
Let $\{N_i\}_{i\in \Nb}$ enumerate $\mc{S}$ and put $G_j:=\cgrp{N_i\mid i\leq j}$.  Each $G_j$ is a generalized central product of finitely many elementary groups.  For fixed $j$, there is a local direct model $\phi: P \rightarrow G_j$ where $P = \prod_{i \leq j}N_i$. In view of Lemma~\ref{lem:product_elementary}, $\xi(P)=\max_{ i\leq j}\xi(N_i)$, and applying Proposition~\ref{prop:compression:elementary_rank} to the normal compression $P/\ker(\phi) \rightarrow G_j$, we deduce that 
\[
\xi(G_j)\leq \xi(P/\ker(\phi))\leq \xi(P)=\max_{ i\leq j}\xi(N_i)\leq \xi(G_j),
\]
where the middle inequality follows from Proposition~\ref{prop:xi_monotone}.  Equality thus holds throughout; in particular $\xi(G_j) = \max_{i \le j}\xi(N_i)$. Proposition~\ref{prop:d-rank:ascending} now completes the proof, showing that $G$ is elementary with
\[
	\xi(G) = {\sup_{j \in \Nb}}^+\xi(G_j) = {\sup_{i \in \Nb}}^+\xi(N_i). \qedhere
\]
\end{proof}

\section{Chief factors and blocks}\label{sec:quasidiscrete}
 Any Polish group $G$ comes with a collection of chief blocks $\mf{B}_G$, as defined in \cite{RW_P_15}. Each $\mf{a}\in \mf{B}_G$ is an equivalence class of non-abelian chief factors under the relation of association. The chief blocks appear to be basic building blocks of compactly generated \lcsc groups, as discovered in \cite{RW_EC_15}. It is, however, not straightforward to interpret the group-theoretic significance of chief blocks. Associated chief factors are not necessarily isomorphic as abstract groups, and isomorphic chief factors are not necessarily associated. In this section, we show that the association relation does in fact preserve many group-theoretic properties, so group-theoretic data can be recovered from the chief blocks. 

\subsection{Preliminaries}
Given a topological group $G$, we say the closed normal factors $K_1/L_1$ and $K_2/L_2$ are \defbold{associated}\index{association relation} to one another if the following equations hold:
	\[
	\ol{K_1L_2} = \ol{K_2L_1}; \; K_1 \cap \ol{L_1L_2} = L_1; \; K_2 \cap \ol{L_1L_2} = L_2.
	\]
A \textbf{chief factor}\index{chief factor} of a topological group $G$ is a normal factor $K/L$ with $L < K$ closed such that there is no closed $M\normal G$ with $L<M<K$. By \cite[Corollary~6.9]{RW_P_15}, restricting the association relation to non-abelian chief factors of a Polish group $G$ produces an equivalence relation. 

\begin{defn} A \defbold{chief block}\index{chief block} of a Polish group $G$ is an association class of non-abelian chief factors. The set of chief blocks of $G$ is denoted by $\mf{B}_G$\index{$\mf{B}_G$}.
\end{defn}
\noindent A chief factor is called \textbf{negligible}\index{chief factor!negligible} if it is associated to a compact or discrete factor. A chief block is \textbf{negligible}\index{chief block!negligible} if it contains a negligible chief factor; the collection of non-negligible chief blocks is denoted by $\mf{B}_G^*$. 

An important normal subgroup associated to a chief block is the centralizer: For a chief block $\mf{a}\in \mf{B}_G$ in a topological group $G$, the \defbold{centralizer}\index{chief block!centralizer} of $\mf{a}$ is
\[
\CC_G(\mf{a}):=\{g\in G\mid [g,L]\subseteq K\}
\]
for some (equivalently, any) representative $K/L$ of $\mf{a}$; see \cite[Proposition 6.8]{RW_P_15}. Inclusion of centralizers induces a partial order on $\mf{B}_G$ denoted by $\leq$. Centralizers also give rise to canonical representatives.

\begin{prop}[{\cite[Proposition 7.4]{RW_P_15}}]\label{prop:upper-rep}
Let $G$ be a Polish group and $\mf{a}\in \mf{B}_G$. Then,
	\begin{enumerate}[(1)]
		\item $G/\CC_G(\mf{a})$ is monolithic, and the socle $M/\CC_G(\mf{a})$ of $G/\CC_G(\mf{a})$ is a representative of $\mf{a}$; and
		\item  If $R/S \in \mf{a}$, then $M/\CC_G(\mf{a})$ is a $G$-equivariant normal compression of $R/S$.
	\end{enumerate}
\end{prop}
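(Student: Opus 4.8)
The plan is to fix a representative $K/L\in\mf{a}$, write $C:=\CC_G(\mf{a})$, and show that $M:=\ol{KC}$ is the desired subgroup. By \cite[Proposition~6.8]{RW_P_15} the centralizer $C$ equals the kernel of the conjugation action of $G$ on $K'/L'$ for \emph{every} representative $K'/L'\in\mf{a}$. I would begin by recording the elementary fact that for any non-abelian chief factor $K'/L'$ of $G$ one has $K'\cap\CC_G(K'/L')=L'$ and $K'\not\le\CC_G(K'/L')$. Indeed $\Z(K'/L')$ is closed in $K'/L'$, hence closed and normal in $G/L'$, hence corresponds to a closed normal subgroup of $G$ lying between $L'$ and $K'$; by chief-ness it is $L'$ or $K'$, and since $K'/L'$ is non-abelian it must be $L'$, which is exactly the first equation; non-abelianity also supplies some $k\in K'$ with $[k,K']\not\subseteq L'$, giving the second assertion. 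Applied to $K/L$ this yields $L\le C$, $K\cap C=L$ and $K\not\le C$, so that $M/C$ is a non-trivial closed normal subgroup of $G/C$.

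For part (1) I would first prove that $M/C$ is contained in every non-trivial closed normal subgroup of $G/C$; together with the previous paragraph this shows $G/C$ is monolithic and that $M/C$ is its monolith, hence its socle, and in particular a chief factor of $G$. So let $C<N\trianglelefteq G$ with $N$ closed. Then $\ol{L(N\cap K)}$ is a closed normal subgroup of $G$ lying between $L$ and $K$, so by chief-ness it is $L$ or $K$. If it is $L$, then $N\cap K\le L$, so $[N,K]\le N\cap K\le L$, whence $N$ lies in the kernel of the conjugation action of $G$ on $K/L$, i.e. $N\le C$, contradicting $N>C$. Hence $\ol{L(N\cap K)}=K$; since $L\le C\le N$ we get $L(N\cap K)\le N$, so $K\le N$ and therefore $M=\ol{KC}\le N$, as claimed. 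Next, $M/C$ is associated to $K/L$: using $L\le C$ (so $\ol{LC}=C$), $C\le M$, $\ol{KC}=M$ and $K\cap C=L$, the three defining equations $\ol{ML}=\ol{KC}$, $M\cap\ol{LC}=C$ and $K\cap\ol{LC}=L$ all hold. Finally the map $K/L\to M/C$, $kL\mapsto kC$, is injective (its kernel is $(K\cap C)/L=\{1\}$) and continuous with dense normal image $KC/C$ in $M/C$, so $M/C$ is a normal compression of $K/L$; since the compression map is injective, $M/C$ is non-abelian. Therefore $M/C$ is a non-abelian chief factor associated to $K/L$, i.e. a representative of $\mf{a}$.

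For part (2), let $R/S\in\mf{a}$. Since $C$ is also the kernel of the conjugation action of $G$ on $R/S$, the opening observation applied to $R/S$ gives $S\le C$ and $R\cap C=S$. Restricting the quotient map $q\colon G\to G/C$ to $R$ therefore induces a continuous injective homomorphism $\bar q\colon R/S\to G/C$ with image $RC/C$, whose closure in $G/C$ is $\ol{RC}/C$. Now
\[
\ol{RC}=\ol{RLC}=\ol{\,\ol{RL}\,C}=\ol{\,\ol{KS}\,C}=\ol{KSC}=\ol{KC}=M,
\]
using $L\le C$ and $S\le C$, the identity $\ol{AB}=\ol{\,\ol{A}B\,}$ for subgroups, and the association equation $\ol{RL}=\ol{KS}$. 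Hence $\bar q$ has dense image in $M/C$; since $RC\trianglelefteq G$ its image is normal in $G/C$, so $\bar q\colon R/S\to M/C$ is a normal compression. It is $G$-equivariant because $q$ intertwines the conjugation action of $G$ on itself with the conjugation action of $G$ on $G/C$, and this property passes to the induced map $\bar q$.

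The main difficulty I anticipate is not a deep step but the discipline of working topologically: in a general Polish group, products of closed subgroups and images of closed subgroups under a quotient map need not be closed, so every product of subgroups above must be interpreted inside a closure, and the two-case dichotomy coming from chief-ness must be applied to $\ol{L(N\cap K)}$ rather than to $L(N\cap K)$. The only genuinely structural input is the opening observation — that a non-abelian chief factor meets its $G$-centralizer in the bottom term — which is where chief-ness and the closedness of centers in topological groups are used.
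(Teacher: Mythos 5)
Your argument is correct, and it follows the canonical construction of the uppermost representative (the statement is quoted from \cite[Proposition 7.4]{RW_P_15}, so the present paper contains no proof to compare against, but your route --- setting $M=\ol{KC}$ with $C=\CC_G(\mf{a})$, using chiefness of $K/L$ applied to $\ol{L(N\cap K)}$ to get monolithicity, and using the association identity $\ol{RL}=\ol{KS}$ to see $\ol{RC}=M$ for any representative $R/S$ --- is exactly the intended one). The only cosmetic point is the justification ``since the compression map is injective, $M/C$ is non-abelian'': what you really use is that the injective image $KC/C\simeq K/L$ is a non-abelian subgroup of $M/C$, which you have already established.
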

The representative isolated in Proposition~\ref{prop:upper-rep} is called the \textbf{uppermost representative}\index{chief block!uppermost representative}.

For Polish groups, a normal series may always be refined to include a representative for a given chief block.

\begin{thm}[{\cite[Theorem~1.14]{RW_P_15}}]\label{thm:Schreier_refinement}
	Let $G$ be a Polish group, $\mf{a}\in \mf{B}_G$, and
	\[
	\triv = G_0 \leq G_1 \leq \dots \leq G_n = G
	\]
	be a series of closed normal subgroups in $G$.  Then there is exactly one $i \in \{0,\dots,n-1\}$ such that there exist closed normal subgroups $G_i \le B < A \le G_{i+1}$ of $G$ for which $A/B\in \mf{a}$.
\end{thm}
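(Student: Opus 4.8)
I would phrase everything in terms of the \emph{covering} relation of \cite{RW_P_15}: for a closed normal subgroup $N$ of $G$, say $N$ covers $\mf{a}$ if some representative $K/L\in\mf{a}$ has $K\le N$. This relation is monotone in $N$. Along the given series $\triv=G_0\le\dots\le G_n=G$, the group $G_0=\triv$ does not cover $\mf{a}$ (a representative $K/L$ has $L<K$, so $K\neq\triv$), while $G_n=G$ does cover $\mf{a}$ (since $\mf{a}\in\mf{B}_G$ there is a chief factor of $G$ in $\mf{a}$, and its top term lies in $G$). Hence there is a unique index $i\in\{0,\dots,n-1\}$ with $G_i$ not covering $\mf{a}$ and $G_{i+1}$ covering $\mf{a}$, and the claim is that this $i$ is exactly the one (and the only one) for which the desired $A/B$ exists.

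\emph{Uniqueness.} Suppose $A/B\in\mf{a}$ with closed $G$-normal subgroups $G_j\le B<A\le G_{j+1}$. Then $A$ covers $\mf{a}$ (it contains the representative $A/B$), so $G_{j+1}\supseteq A$ covers $\mf{a}$. On the other hand $B$ does \emph{not} cover $\mf{a}$: if a representative $K/L\in\mf{a}$ had $K\le B$, then applying the association equation $\ol{K_1L_2}=\ol{K_2L_1}$ to the associated factors $A/B$ and $K/L$ gives $\ol{AL}=\ol{KB}=B$, forcing $A\le B$, contrary to $B<A$. Thus $G_j\subseteq B$ does not cover $\mf{a}$, so $j$ is the transition index $i$ identified above. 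This argument is independent of existence, so it settles uniqueness.

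\emph{Existence at the transition index $i$.} I bring in the canonical data of Proposition~\ref{prop:upper-rep}: set $C:=\CC_G(\mf{a})$ and let $M\ge C$ be such that $M/C$ is the socle (equivalently, the monolith) of the monolithic group $G/C$, so that $M/C$ is the uppermost representative of $\mf{a}$. I then use the standard characterization of covering via the block centralizer: for a closed normal subgroup $N$ of $G$, $N$ covers $\mf{a}$ if and only if $N\not\le C$. (The easy direction: if $K/L\in\mf{a}$ with $K\le C$, then $C$ centralizes $K/L$, so $K/L$ is abelian, impossible. The other direction is the technical heart, established in \cite{RW_P_15} from the monolithic structure of $G/C$: if $N\not\le C$ then $\ol{NC}/C$ is a nontrivial closed normal subgroup of $G/C$, hence contains $M/C$, and one extracts a representative of $\mf{a}$ inside $N$ — and this is exactly where one must be careful, since products such as $NC$ need not be closed.) Granting this, $G_i\le C$ while $G_{i+1}\not\le C$. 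Fix a representative $K/L\in\mf{a}$ with $K\le G_{i+1}$, and put
\[
A:=\ol{KG_i},\qquad B:=\ol{LG_i}.
\]
These are closed subgroups normal in $G$, and $G_i\le B\le A\le G_{i+1}$. Since $L\le K\normal G$ one checks $KG_iL=KG_i$ and $LG_iL=LG_i$, so the three association equations for $A/B$ against $K/L$ collapse to the single claim $K\cap\ol{LG_i}=L$. Now $K\cap\ol{LG_i}$ is a closed $G$-normal subgroup between $L$ and the chief-factor top $K$, hence equals $L$ or $K$; the case $K$ would give $K\le\ol{LG_i}\le C$ (using that $L$ does not cover $\mf{a}$, hence $L\le C$, together with $G_i\le C$), contradicting $K\not\le C$. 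Therefore $A/B$ is associated to $K/L$; in particular $B<A$ (else $\ol{KG_i}=\ol{LG_i}$ forces $K\le\ol{LG_i}$). Finally $A/B$ is again a non-abelian chief factor of $G$ — it is a normal compression of $K/L$ via $kL\mapsto kB$ (injective since $K\cap B=L$, with dense normal image since $\ol{KB}=A$), and being associated to the non-abelian chief factor $K/L$ it is itself a non-abelian chief factor by the basic theory of the association relation \cite{RW_P_15} — so $A/B\in\mf{a}$, as required.

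\emph{Main obstacle.} The conceptual content is concentrated in the covering-versus-centralizer characterization, specifically its hard direction: extracting a genuine representative of $\mf{a}$ inside a closed normal subgroup not contained in $\CC_G(\mf{a})$. This uses the monolithic structure of Proposition~\ref{prop:upper-rep}, and the friction is entirely topological — in a general topological group the product of two closed normal subgroups need not be closed, so all the modular-law manipulations must be performed after passing to the closed quotient $G/C$ (or $G/L$). Everything else — the monotonicity bookkeeping, the push-up construction $A=\ol{KG_i}$, $B=\ol{LG_i}$, and the verification of the association equations — is routine.
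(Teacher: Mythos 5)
This theorem is imported verbatim from \cite[Theorem~1.14]{RW_P_15}; the present paper offers no proof of its own, so I am judging your argument on its own terms. Your reconstruction is sound and, I believe, close to the original. The uniqueness half is clean: covering (in the sense of containing the top term of a representative) is monotone, fails for $\triv$ and holds for $G$, and your observation that the bottom term $B$ of any member of $\mf{a}$ cannot contain the top term of another member (via $\ol{AL}=\ol{KB}$) pins any admissible index to the unique transition index. The push-up $A:=\ol{KG_i}$ for a representative $K/L$ with $K\le G_{i+1}$ is the right move, and your reduction of the three association equations to $K\cap\ol{LG_i}=L$ checks out. You also correctly flag the genuinely hard external input: the implication ``$N$ contains no representative of $\mf{a}$ $\Rightarrow$ $N\le\CC_G(\mf{a})$,'' which you need for $G_i$. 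One caution there: in this paper the extraction of a representative from a covering subgroup is stated as a \emph{consequence} of the refinement theorem, so when you cite \cite{RW_P_15} for it you must make sure you are citing the version proved from the monolithic structure of $G/\CC_G(\mf{a})$ (Proposition~\ref{prop:upper-rep}) rather than something downstream of the theorem you are proving.

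The one step I would tighten is the last one: the chiefness of $A/B$. With your choice $B=\ol{LG_i}$, the direct argument only shows that an intermediate closed $G$-normal $D$ satisfies $D/B\le \Z(A/B)$ (it meets the dense normal image $KB/B$ trivially, hence commutes with it), and $\Z(A/B)=\triv$ is not obvious for a normal compression of $K/L$; so your appeal to ``associated to a non-abelian chief factor implies chief'' from \cite{RW_P_15} is genuinely load-bearing. You can eliminate it entirely by taking $B:=\CC_G(\mf{a})\cap\ol{KG_i}$ instead. Then $G_i\le B$ since $G_i\le\CC_G(\mf{a})$; one has $K\cap\CC_G(\mf{a})=L$ because $(K\cap\CC_G(\mf{a}))/L\le\Z(K/L)=\triv$, so $B<A$ and $K\cap B=L$, and the association equations follow exactly as in your computation. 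Chiefness is now immediate: if $B\le D<A$ is closed and normal in $G$, then $K\cap D$ equals $L$ or $K$ by chiefness of $K/L$; the case $K\cap D=K$ forces $D\ge\ol{KG_i}=A$, while $K\cap D=L$ gives $[D,K]\le D\cap K=L$, hence $D\le\CC_G(\mf{a})\cap A=B$. This makes the existence half self-contained modulo the single input $G_i\le\CC_G(\mf{a})$.
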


In the setting of locally compact groups, we have available the essentially chief series, which restricts the number of chief blocks.
\begin{defn}
	An \defbold{essentially chief series}\index{esentially chief series} for a locally compact group $G$ is a finite series
	\[
	\triv = G_0 \leq G_1 \leq \dots \leq G_n = G
	\]
	of closed normal subgroups such that each normal factor $G_{i+1}/G_i$ is either compact, discrete, or a chief factor of $G$.
\end{defn}

\begin{thm}[{\cite[Theorem~4.4]{RW_EC_15}}]\label{thm:chief_series}
	Let $G$ be a compactly generated locally compact group and $(G_i)_{i=0}^{m}$ be a finite ascending sequence of closed normal subgroups of $G$.  Then there exists an essentially chief series
	\[
	\triv = K_0 \le K_1 \le \dots \le K_l = G
	\]
	such that $\{G_0,\dots,G_{m}\}$ is a subset of $\{K_0,\dots,K_l\}$.
\end{thm}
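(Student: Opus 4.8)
The plan is to reduce to refining a single gap in the chain, then to the totally disconnected case, and finally to extract the heart of the argument via Cayley--Abels graphs. First I would reduce to one gap: it suffices to insert closed normal subgroups between each pair $G_i \le G_{i+1}$, and passing to $G/G_i$ (which is again compactly generated locally compact) we may assume the chain is just $\triv = G_0 \normal G_1 = M \normal G$. Indeed, closed normal subgroups of $G$ lying between $G_i$ and $G_{i+1}$ correspond to closed normal subgroups of $G/G_i$ below $G_{i+1}/G_i$, so chief factors of $G$ in this range match chief factors of $G/G_i$, and compactness or discreteness of a factor is unaffected. Thus the task becomes: given a compactly generated locally compact $G$ and a closed $M \normal G$, build a finite chain $\triv = K_0 \le \dots \le K_l = M$ of closed normal subgroups of $G$ whose successive quotients are each compact, discrete, or a chief factor of $G$.

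Second I would reduce to the case where $G$ is a compactly generated \tdlc group. Using the Gleason--Yamabe theorem (Theorem~\ref{thm:yamabe_radical}), $\RadLE(G^\circ)$ is compact and normal in $G$ with Lie quotient $G^\circ/\RadLE(G^\circ)$; inserting the compact factor $\RadLE(G^\circ)\cap M$ and passing to the corresponding quotient (with $\RadLE$ under quotients controlled by Platonov's theorem, Theorem~\ref{thm:platonov_radical}) leaves the connected contribution of $M$ sitting inside a Lie group. There $\dim_{\Rb}$ bounds the length of any chain of closed connected normal subgroups (as in Lemma~\ref{lem:Lie:directed_family}), discrete normal subgroups of connected groups are central, and the solvable-radical/Levi structure yields, just as for finite groups, a $G$-invariant finite series whose factors are abelian or simple-Lie chief factors and compact Lie groups. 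After these insertions the problem is reduced to a closed normal subgroup of a compactly generated \tdlc group.

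Third, the heart of the matter: for compactly generated \tdlc $G$ and closed $M \normal G$, fix a compact open $U \le G$ and its Cayley--Abels graph $\Gamma = G/U$ (Proposition~\ref{prop:factor}); for closed $N \normal G$ the double-coset graph $N\backslash\Gamma$ is a Cayley--Abels graph for $G/N$, and local finiteness together with compact generation should yield a descending chain condition ``up to compact and discrete factors''. The mechanism: the family of closed $N \normal G$ with $N \le M$ and $M/N$ compact is filtering — a diagonal embedding $M/(N_1 \cap N_2) \hookrightarrow M/N_1 \times M/N_2$ has closed image, so finite intersections remain in the family — so the convergence theorem for filtering families of closed normal subgroups in compactly generated \tdlc groups (\cite[Theorem~3.3]{RW_EC_15}) furnishes a member close to the intersection that is compact-by-discrete, letting one peel off a block $\triv \le L \le K \le M$ with $L$ compact, $K/L$ discrete, and $M/K$ compact; dualizing, residually discrete behaviour is peeled off the bottom the same way. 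What remains in between, once these compact and discrete ``slack'' pieces are removed, is a strictly shorter configuration relative to $\Gamma$, so after finitely many peelings the remaining factors are forced to be chief. Concatenating with the connected refinement and undoing the initial quotients then assembles the essentially chief series.

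The main obstacle is that compactly generated \tdlc groups obey no genuine ascending or descending chain condition on closed normal subgroups — directed unions of finite groups, or the gap between $\bigoplus_{i}F_i$ and $\prod_i F_i$, already break this — so one cannot simply take a maximal chain. The real work lies in the convergence statement \cite[Theorem~3.3]{RW_EC_15} and its variant for directed families: it is precisely the assertion that the only obstructions to chain conditions among closed normal subgroups are compact-type and discrete-type, which is exactly what the definition of an essentially chief series is designed to absorb. Turning the ``Cayley--Abels complexity decreases'' idea into a rigorous termination argument, and checking that each inserted factor is a chief factor of the original $G$ and not merely of an auxiliary quotient, is where the care is needed.
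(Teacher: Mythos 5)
The paper does not actually prove this statement; it is quoted from \cite{RW_EC_15}, so your proposal can only be measured against the argument in that reference. Your overall route --- reduce to refining a single factor, split off the connected part via Gleason--Yamabe and Lie theory, and attack the totally disconnected part with Cayley--Abels graphs together with convergence theorems for filtering and directed families of closed normal subgroups --- is indeed the route of the source, and your first two reductions are essentially sound (modulo the point that an abelian connected factor such as a vector group is in general neither compact, discrete, nor chief, and needs its own finite $G$-invariant refinement).

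There are, however, two genuine problems in the third step. First, the family $\{N \normal G : N \le M,\ M/N \text{ compact}\}$ is \emph{not} filtering, and the justification you offer (that the diagonal map $M/(N_1\cap N_2)\rightarrow M/N_1\times M/N_2$ has closed image) is false: continuous injective homomorphisms of locally compact groups need not be closed embeddings. Concretely, take $G=M=\Zb\times\Zb_p$, $N_1=\{(n,n):n\in\Zb\}$ and $N_2=\{(n,n\alpha):n\in\Zb\}$ for some $\alpha\in\Zb_p\setminus\{1\}$; both are closed and normal with $M/N_i\simeq\Zb_p$ compact, yet $N_1\cap N_2=\triv$ and $M$ is not compact. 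So the peeling mechanism as you set it up does not even launch; the filtering families to which \cite[Theorem~3.3]{RW_EC_15} is applied must be chosen differently (via open normal subgroups or the quotient-graph degree), and verifying that they are filtering is part of the work. Second, and more fundamentally, the termination argument is missing. Peeling compact and discrete pieces off the top and bottom does not leave a chief factor --- for $G=H_1\times H_2$ with each $H_i$ topologically simple, non-compact and non-discrete, nothing peels and yet $G$ is not chief in itself --- so one must keep inserting intermediate closed normal subgroups, and since compactly generated \tdlc groups satisfy no chain condition on closed normal subgroups, the recursion has no a priori reason to halt. The phrase ``a strictly shorter configuration relative to $\Gamma$'' carries the entire weight of the theorem: one must name the complexity (the degree of the quotient graph $N\backslash\Gamma$, a non-increasing integer bounded by $\deg\Gamma$) and prove the key lemma that if $A\le B$ are closed normal subgroups with equal quotient-graph degree, then $B/A$ is built from boundedly many compact and discrete pieces, so that every genuine insertion forces a strict drop in degree. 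You correctly flag this as ``where the care is needed,'' but it is precisely the content of the theorem rather than a detail to be deferred.
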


The previous two theorems together imply that the set of \emph{non-negligible} blocks of a compactly generated \lcsc group is finite.

\begin{cor}\label{cor:nonneg}
Let $G$ be an \lcsc group and $(A_i)_{i=0}^m$ be an essentially chief series for $G$. Setting
\[
\mc{N} := \{ A_i/A_{i-1} \mid  A_i/A_{i-1} \text{ is a non-negligible chief factor of $G$, and }1 \le i \le m\},
\]
there is a bijection $\theta: \mf{B}_G^* \rightarrow \mc{N}$ such that $\theta(\mf{a})$ is the unique representative of $\mf{a}$ occurring as a factor of $(A_i)_{i=0}^m$.  In particular, $\mf{B}_G^*$ is finite.
\end{cor}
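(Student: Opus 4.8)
The plan is to construct the bijection $\theta$ directly from Theorem~\ref{thm:Schreier_refinement}; since an essentially chief series is assumed to exist here, Theorem~\ref{thm:chief_series} plays no role in the argument. Fix $\mf{a}\in\mf{B}_G^*$ and apply Theorem~\ref{thm:Schreier_refinement} to the series $\triv=A_0\le A_1\le\dots\le A_m=G$: there is a unique index $i=i(\mf{a})\in\{1,\dots,m\}$ admitting closed normal subgroups $A_{i-1}\le B<A\le A_i$ of $G$ with $A/B\in\mf{a}$. The candidate is $\theta(\mf{a}):=A_i/A_{i-1}$, and what must be checked is that this factor really is a representative of $\mf{a}$, that it is the unique factor of the series lying in $\mf{a}$, and that the map so defined is a bijection onto $\mc{N}$.

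The crux is to rule out the possibility that $A_i/A_{i-1}$ is a compact or a discrete factor of the series rather than a chief factor of $G$. If $A_i/A_{i-1}$ were compact, then $A/A_{i-1}$, being a closed subgroup of it, is compact, and $A/B$, being a Hausdorff quotient of $A/A_{i-1}$, is compact; similarly $A/B$ is discrete whenever $A_i/A_{i-1}$ is. In either case $A/B$ is associated to itself and compact or discrete, hence negligible, and since $A/B\in\mf{a}$ this forces $\mf{a}$ to be a negligible block, contradicting $\mf{a}\in\mf{B}_G^*$. Therefore $A_i/A_{i-1}$ is a chief factor of $G$; then there is no closed $G$-normal subgroup strictly between $A_{i-1}$ and $A_i$, so $B=A_{i-1}$ and $A=A_i$, whence $A_i/A_{i-1}=A/B\in\mf{a}$, and as $\mf{a}$ is non-negligible this chief factor is non-negligible, so $A_i/A_{i-1}\in\mc{N}$. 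Moreover $A_i/A_{i-1}$ is the unique factor of the series belonging to $\mf{a}$: any $A_j/A_{j-1}\in\mf{a}$ exhibits $j$ as an index meeting the condition of Theorem~\ref{thm:Schreier_refinement}, so $j=i(\mf{a})$.

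It then remains to check that $\theta$ is a bijection and to deduce finiteness. For injectivity, if $\theta(\mf{a})=\theta(\mf{b})$ then the common factor is a non-abelian chief factor lying in both blocks, and since chief blocks are by definition disjoint association classes, $\mf{a}=\mf{b}$. For surjectivity, take $N=A_i/A_{i-1}\in\mc{N}$; being non-negligible, $N$ is non-abelian, so it lies in a unique chief block $\mf{a}$, and as $N\in\mf{a}$ is non-negligible, $\mf{a}\in\mf{B}_G^*$; by the uniqueness just proved, $N$ must be $\theta(\mf{a})$. Finally, $\mc{N}$ has at most $m$ elements, so $\mf{B}_G^*$ is finite.

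I do not expect a deep obstacle here: the argument is essentially bookkeeping around Theorem~\ref{thm:Schreier_refinement}, using both of its clauses (existence and uniqueness of the index). The one genuinely content-bearing step is the observation that compactness and discreteness are inherited by subquotients, so a compact or discrete factor of the series cannot carry any representative of a non-negligible block; and the only point deserving a moment's care is that every member of $\mc{N}$ is non-abelian and hence genuinely represents a chief block, which is what makes $\theta$ surjective onto $\mc{N}$ rather than merely injective into it.
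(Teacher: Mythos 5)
Your overall strategy is the intended one --- the paper offers no explicit proof, indicating only that the corollary follows from the two preceding theorems, and you are right that once the series is given the whole content is Theorem~\ref{thm:Schreier_refinement}. The bookkeeping is mostly correct: the existence and uniqueness of the index $i(\mf{a})$, the exclusion of compact and discrete factors of the series (compactness and discreteness pass to closed subgroups and Hausdorff quotients, and every factor is associated to itself), the resulting identification $B=A_{i-1}$ and $A=A_i$, the uniqueness of the representative in the series, and injectivity are all fine.

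The gap is in surjectivity, which rests on two assertions that do not follow from the definitions as reproduced in this paper. First, you assert that a non-negligible chief factor is non-abelian. With negligibility defined literally as ``associated to a compact or discrete factor,'' this fails: in $G=\Qp\rtimes\Zb$, with $\Zb$ acting by multiplication by $p$, the abelian chief factor $\Qp/\triv$ is associated only to itself, hence is non-negligible in the literal sense, yet it belongs to no chief block; for the essentially chief series $\triv<\Qp<G$ one would then have $\mc{N}\neq\emptyset$ while $\mf{B}_G^*=\emptyset$. The statement is only saved by the precise convention of \cite{RW_P_15} under which abelian chief factors count as negligible (equivalently, by reading $\mc{N}$ as the set of \emph{non-abelian} non-negligible factors of the series), and your proof needs to invoke that explicitly rather than treat the implication as immediate. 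Second, from ``$N\in\mf{a}$ is non-negligible'' you conclude $\mf{a}\in\mf{B}_G^*$; but a block is negligible as soon as it contains \emph{one} negligible representative, so you are implicitly using that negligibility is constant on association classes. That is true --- if some $K/L\in\mf{a}$ is associated to a compact or discrete normal factor $F$, then $F$ covers $\mf{a}$, Theorem~\ref{thm:Schreier_refinement} produces a compact or discrete representative of $\mf{a}$ inside $F$, and every member of $\mf{a}$ is associated to that representative --- but it is a fact requiring an argument, not a restatement of the definition, and it is the second place where your surjectivity step silently leans on material from \cite{RW_P_15}.
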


\subsection{Chief block properties}

\begin{defn}
A property $P$ of groups is a \textbf{(non-negligible) block property}\index{block property} if for every \lcsc group $G$ and $(\mf{a}\in \mf{B}^*_G)$ $\mf{a}\in \mf{B}_G$ there exists $K/L\in \mf{a}$ with $P$ if and only if every $K/L\in \mf{a}$ has $P$.  For a (non-negligible) block property $P$, we say a block ($\mf{a}\in \mf{B}_G^*$) $\mf{a}\in \mf{B}_G$ has property $P$ if some, equivalently all, $K/L\in \mf{a}$ has property $P$.
\end{defn}

\begin{prop}\label{prop:cg_block_property}
	Compact generation is a non-negligible block property.
\end{prop}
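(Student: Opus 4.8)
The plan is to pass to the uppermost representative of the block and then invoke Corollary~\ref{cor:compactlygenerated}. Fix an \lcsc group $G$ and a non-negligible chief block $\mf{a} \in \mf{B}_G^*$. First I would record that every representative $K/L \in \mf{a}$ is a non-abelian, non-compact, topologically characteristically simple \lcsc group. Indeed, it is \lcsc, being a closed normal factor of an \lcsc group; it is non-abelian by the definition of a chief block; it is non-compact because a compact chief factor is associated to itself and hence negligible, contrary to $\mf{a} \in \mf{B}_G^*$; and it is topologically characteristically simple because the conjugation action of $G$ on $K/L$ factors through $\Aut(K/L)$, so the closure of any characteristic subgroup of $K/L$ pulls back to a closed normal subgroup of $G$ lying between $L$ and $K$, which must therefore be $L$ or $K$ since $K/L$ is a chief factor.

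Next, let $U := M/\CC_G(\mf{a})$ be the uppermost representative of $\mf{a}$ supplied by Proposition~\ref{prop:upper-rep}. Since $U$ is itself a representative of $\mf{a}$, it too is a non-abelian, non-compact, topologically characteristically simple \lcsc group, and by Proposition~\ref{prop:upper-rep}(2) it is a normal compression of every $R/S \in \mf{a}$.

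Now take any two representatives $K_1/L_1, K_2/L_2 \in \mf{a}$. The group $U$ is a normal compression of each of them, and $U$, $K_1/L_1$, $K_2/L_2$ all lie in the class of non-abelian, non-compact, topologically characteristically simple \lcsc groups. Corollary~\ref{cor:compactlygenerated}, applied first to the normal compression $U$ of $K_1/L_1$ and then to the normal compression $U$ of $K_2/L_2$, gives that $K_1/L_1$ is compactly generated if and only if $U$ is, and that $U$ is compactly generated if and only if $K_2/L_2$ is; hence $K_1/L_1$ is compactly generated if and only if $K_2/L_2$ is. As $K_1/L_1$ and $K_2/L_2$ were arbitrary, some representative of $\mf{a}$ is compactly generated if and only if all of them are, which is exactly the assertion that compact generation is a non-negligible block property.

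The only content here beyond assembling earlier results is the two structural observations in the first paragraph — that chief factors are topologically characteristically simple and that non-negligible blocks have no compact representative — both of which are routine, but which are precisely what is needed to enter the hypotheses of Corollary~\ref{cor:compactlygenerated}. I do not anticipate any further obstacle.
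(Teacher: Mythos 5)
Your proof is correct and follows essentially the same route as the paper's: both pass to the uppermost representative from Proposition~\ref{prop:upper-rep} and transfer compact generation along the resulting normal compressions via Corollary~\ref{cor:compactlygenerated}, after noting that non-negligible blocks have non-abelian, non-compact, topologically characteristically simple representatives. The only cosmetic difference is that the paper uses the easy forward direction of Theorem~\ref{thm:compression:generation} for one of the two transfers, where you invoke the corollary's biconditional twice.
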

\begin{proof}
	Suppose that $G$ is an \lcsc group and $\mf{a}\in \mf{B}^*_G$ has a compactly generated representative $K/L$. Take $A/B$ another representative and form the uppermost representative $M/C\in\mf{a}$ as given by Proposition~\ref{prop:upper-rep}. The factor $M/C$ is a normal compression of $K/L$, so it is compactly generated. Furthermore, $M/C$ is a normal compression of $A/B$ and is non-compact since $\mf{a}$ is non-negligible. As both $M/C$ and $A/B$ are also non-abelian topologically characteristically simple groups, $A/B$ is compactly generated by Corollary~\ref{cor:compactlygenerated}.  All representatives of $\afr$ are thus compactly generated.	
\end{proof}

\begin{prop}\label{prop:block_properties}
	The following are block properties in \lcsc groups: Being isomorphic to a given connected group, being elementary with rank $\alpha$, being amenable, and being quasi-discrete.
\end{prop}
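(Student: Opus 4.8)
The plan is to reduce all four cases to the invariance of the property under normal compressions, using the uppermost representative as a common hub. Fix an \lcsc group $G$ and a chief block $\mf{a} \in \mf{B}_G$, and let $M/\CC_G(\mf{a})$ be the uppermost representative from Proposition~\ref{prop:upper-rep}; recall that for every representative $R/S \in \mf{a}$ the factor $M/\CC_G(\mf{a})$ is a normal compression of $R/S$. Recall also that a non-abelian chief factor $R/S$ is always centerless and topologically perfect: the subgroups of $G$ corresponding to $\Z(R/S)$ and to $\ol{[R,R]S}/S$ are closed and normal in $G$ and lie between $S$ and $R$; since there is no closed $G$-normal subgroup strictly between $S$ and $R$, and since $R/S$ is non-abelian, the first of these equals $S$ and the second equals $R$. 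Consequently, for any two representatives $R_1/S_1$ and $R_2/S_2$ of $\mf{a}$, the groups $R_1/S_1$, $M/\CC_G(\mf{a})$, $R_2/S_2$ are centerless topologically perfect \lcsc groups, the middle one being a normal compression of each of the others. Hence any property $P$ of \lcsc groups that is invariant in both directions under normal compressions between centerless topologically perfect \lcsc groups is automatically a block property, and it suffices to verify this invariance case by case.

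For amenability this is immediate from Proposition~\ref{prop:amenability:invariance}, and for quasi-discreteness it is immediate from Corollary~\ref{cor:compression:quasidiscrete}, whose hypotheses are exactly topological perfectness of the two \lcsc groups involved. For ``elementary with decomposition rank $\alpha$'' a small extra step is needed, since Proposition~\ref{prop:compression:elementary_rank} is stated for \tdlcsc groups: if some representative $R/S$ is elementary of rank $\alpha$ then it is totally disconnected, hence by Corollary~\ref{cor:con_association} so is $M/\CC_G(\mf{a})$, which is therefore \tdlcsc, and Proposition~\ref{prop:compression:elementary_rank} gives that $M/\CC_G(\mf{a})$ is elementary of rank $\alpha$; running the same argument in the other direction, total disconnectedness of $M/\CC_G(\mf{a})$ propagates (again by Corollary~\ref{cor:con_association}) to every representative, and Proposition~\ref{prop:compression:elementary_rank} then shows every representative is elementary of rank $\alpha$.

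The remaining case --- being isomorphic to a fixed connected group $H_0$ --- uses the rigidity statement Corollary~\ref{cor:Zerling:perfect} rather than a mere invariance. If some representative $R/S \simeq H_0$, then $R/S$ is connected, so by Corollary~\ref{cor:con_association} the uppermost representative $M/\CC_G(\mf{a})$ is connected too; the normal compression $R/S \rightarrow M/\CC_G(\mf{a})$ is then a normal compression map between connected \lcsc groups whose source is topologically perfect and whose target is centerless, so Corollary~\ref{cor:Zerling:perfect} forces it to be an isomorphism, giving $M/\CC_G(\mf{a}) \simeq H_0$. For an arbitrary representative $R'/S'$, connectedness of $M/\CC_G(\mf{a})$ again propagates to $R'/S'$ via Corollary~\ref{cor:con_association}, and the normal compression $R'/S' \rightarrow M/\CC_G(\mf{a})$ is again an isomorphism by Corollary~\ref{cor:Zerling:perfect}; hence $R'/S' \simeq H_0$. (If no representative is isomorphic to $H_0$, there is nothing to prove.)

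The only thing requiring care is matching hypotheses: the transfer results invoked have varying requirements --- topological perfectness for Corollary~\ref{cor:compression:quasidiscrete}, total disconnectedness for Proposition~\ref{prop:compression:elementary_rank}, connectedness together with centerlessness for Corollary~\ref{cor:Zerling:perfect} --- so one must first record the structural facts about non-abelian chief factors and then propagate connectedness or total disconnectedness through Corollary~\ref{cor:con_association} before applying them. Beyond this bookkeeping there is no real obstacle, since all the substantive work has already been done in the normal-compression theory of Sections~\ref{sec:normal_compressions} and~\ref{sec:invariant}.
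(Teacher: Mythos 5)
Your proposal is correct and follows essentially the same route as the paper: pass through the uppermost representative $M/\CC_G(\mf{a})$ of Proposition~\ref{prop:upper-rep} and apply Proposition~\ref{prop:amenability:invariance}, Corollary~\ref{cor:compression:quasidiscrete}, Proposition~\ref{prop:compression:elementary_rank}, and Corollaries~\ref{cor:con_association} and~\ref{cor:Zerling:perfect} to the two normal compressions $R_i/S_i \rightarrow M/\CC_G(\mf{a})$. Your extra bookkeeping (recording that non-abelian chief factors are centerless and topologically perfect, and propagating connectedness or total disconnectedness before invoking the transfer results) is exactly what the paper's "similar" steps leave implicit.
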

\begin{proof}
Let $G$ be an \lcsc group, $\mf{a} \in \mf{B}_G$, and $M/C$ be the uppermost representative of $\mf{a}$, as given by Proposition~\ref{prop:upper-rep}.

For the first property, suppose that $K/L\in \mf{a}$ is isomorphic to a connected group $H$. The group $M/C$ is connected via Corollary~\ref{cor:con_association}, so Corollary~\ref{cor:Zerling:perfect} ensures that $K/L$ is isomorphic to $M/C$. Consider another representative $A/B$ of $\mf{a}$. There is a normal compression map $A/B\rightarrow M/C$, and again $A/B \simeq M/C$ by Corollaries~\ref{cor:con_association} and~\ref{cor:Zerling:perfect}. All representatives of $\mf{a}$ are therefore isomorphic to the connected group $H$.

Let us now suppose that $\mf{a}\in \mf{B}_G$ has a quasi-discrete representative $K/L$ and take $A/B \in \mf{a}$. The factor $M/C$ is a normal compression of $K/L$, and these chief factors are topologically perfect. Corollary~\ref{cor:compression:quasidiscrete} thus ensures that $M/C$ is quasi-discrete. On the other hand, $M/C$ is also a normal compression of $A/B$. Applying again Corollary~\ref{cor:compression:quasidiscrete}, we deduce that $A/B$ is quasi-discrete, so all representatives are quasi-discrete.

The proof that being elementary with rank $\alpha$ is a block property is similar, using Proposition~\ref{prop:compression:elementary_rank} instead of Corollary~\ref{cor:compression:quasidiscrete}.  Likewise, amenability is a block property using Proposition~\ref{prop:amenability:invariance}.
\end{proof}

Proposition~\ref{prop:block_properties} allows us to define the decomposition rank of a chief block.

\begin{defn} Let $G$ be an \lcsc group and $\mf{a}\in \mf{B}_G$.  For an elementary block $\mf{a}\in \mf{B}_G$, we define the \textbf{decomposition rank} of $\mf{a}$ as $\xi(\mf{a}):=\xi(K/L)$ for some (equivalently, any) $K/L\in \mf{a}$.
\end{defn}

If $\mf{a}$ is negligible and totally disconnected, then $\xi(\mf{a})=2$, since both profinite and discrete groups have decomposition rank $2$. The converse is false in general; as shown in Example~\ref{ex:rank 2}, there exist decomposition rank two blocks which are non-negligible.  Elementary blocks with transfinite rank also exist. For instance, Example~\ref{ex:weak} admits a block of rank $\omega+1$; in later work, we use the techniques outlined in Example~\ref{ex:stacking} to give many more examples.  We recall that by Corollary~\ref{cor:rank_char_simple}, only certain ranks are possible:

\begin{cor}\label{cor:rank_char_simple:blocks}
Let $G$ be an \lcsc group and let $\mf{a}$ be a totally disconnected chief block of $G$.  Then exactly one of the following holds:
	\begin{enumerate}[(1)]
	\item $\xi(\mf{a})=2$;
	\item $\xi(\mf{a})=\alpha+1$ for $\alpha$ some countable transfinite limit ordinal;
	\item $\mf{a}$ is non-elementary.
	\end{enumerate}
\end{cor}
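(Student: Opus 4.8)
The plan is to reduce directly to Corollary~\ref{cor:rank_char_simple} applied to a single representative of $\mf{a}$. First I would fix a representative $K/L \in \mf{a}$. Being a chief factor, $K/L$ is non-trivial, and since $\mf{a}$ is a totally disconnected block, $K/L$ is a \tdlcsc group. The one substantive point to verify is that $K/L$ is topologically characteristically simple, and this is a short argument: $K$ and $L$ are closed normal subgroups of $G$, so conjugation by any $g \in G$ induces a topological automorphism of $K/L$; hence any closed topologically characteristic subgroup of $K/L$ is $G$-invariant, so its preimage $D$ in $K$ is a closed normal subgroup of $G$ with $L \le D \le K$. By the definition of a chief factor, $D = L$ or $D = K$, so $K/L$ is topologically characteristically simple. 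The same argument applied to $\Z(K/L)$ and $\ol{[K/L,K/L]}$ shows that $K/L$ is centerless and topologically perfect; in particular, by Corollary~\ref{cor:con_association}, total disconnectedness is an association-invariant of non-abelian chief factors, which is what makes the phrase ``totally disconnected chief block'' unambiguous and ensures every representative of $\mf{a}$ is a \tdlcsc group.

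Now $K/L$ is a non-trivial topologically characteristically simple \tdlcsc group, so Corollary~\ref{cor:rank_char_simple} applies verbatim and gives that exactly one of the following holds: $\xi(K/L) = 2$; $\xi(K/L) = \alpha + 1$ for some countable transfinite limit ordinal $\alpha$; or $K/L$ is non-elementary. It remains to transfer this trichotomy from $K/L$ to $\mf{a}$. By Proposition~\ref{prop:block_properties}, being elementary with rank $\alpha$ is a block property for every $\alpha$; consequently $\mf{a}$ is non-elementary precisely when $K/L$ is non-elementary, and otherwise $\xi(\mf{a})$ is well-defined with $\xi(\mf{a}) = \xi(K/L)$. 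Mutual exclusivity of the three cases is then immediate from their mutual exclusivity for $K/L$.

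I do not expect any genuine obstacle here: the entire quantitative content sits in Corollary~\ref{cor:rank_char_simple}, and the rest is bookkeeping. The only step requiring care is the (routine but essential) observation that chief factors are topologically characteristically simple, centerless, and topologically perfect, since this is exactly what licenses the use of both Corollary~\ref{cor:con_association} and Corollary~\ref{cor:rank_char_simple}.
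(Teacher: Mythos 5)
Your proposal is correct and follows exactly the route the paper intends: the paper presents this corollary as an immediate consequence of Corollary~\ref{cor:rank_char_simple} applied to any representative (chief factors being topologically characteristically simple), with Proposition~\ref{prop:block_properties} making $\xi(\mf{a})$ well-defined. Your additional remarks on centerlessness, topological perfection, and the association-invariance of total disconnectedness via Corollary~\ref{cor:con_association} are accurate and consistent with the paper's setup.
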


Let us conclude by giving a method to compute the rank of elementary chief factors of semisimple type.
\begin{prop}\label{prop:rank_semisimple}
	Let $G$ be a topologically characteristically simple \tdlcsc group of semisimple type and let $L\leq K$ be closed normal subgroups of $G$ such that $K/L$ is non-abelian.  Then $G$ is elementary if and only if $K/L$ is elementary.  If $G$ is elementary, then $\xi(G) = \xi(K/L)$.
\end{prop}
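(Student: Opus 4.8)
The plan is to reduce to the structure theory of groups of strict semisimple type and then compare decomposition ranks using the monotonicity properties in Proposition~\ref{prop:xi_monotone}. We may assume $G\neq\triv$, so that $K/L$ is a non-abelian subquotient and hence $G$ is non-abelian; as $\Z(G)$ is a closed characteristic subgroup, $\Z(G)=\triv$, and therefore $G$ is of strict semisimple type. By Theorem~\ref{thm:min_normal:simple}(2) the set $\mc{M}$ of components of $G$ is a quasi-direct factorization of $G$ into non-abelian topologically simple subgroups, and since $(G,\mc{M})$ is also a generalized central factorization, distinct components commute; hence each $M\in\mc{M}$ is normalized by $\cgrp{\mc{M}}=G$, so $M\normal G$. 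The set $\mc{M}$ is countable: for each $M$ choose $1\neq x_M\in M$, and use $M\cap G_{\mc{M}\setminus\{M\}}=\triv$ to find an open $W_M\ni x_M$ disjoint from the closed set $G_{\mc{M}\setminus\{M\}}$, which contains $x_{M'}$ for every $M'\neq M$; thus $\{x_M\}$ is a discrete subspace of the second countable space $G$, hence countable. Moreover $\Aut(G)$ permutes $\mc{M}$, the closed subgroup generated by any $\Aut(G)$-orbit is characteristic and non-trivial, hence equals $G$, and the subgroups generated by two distinct orbits would commute and force $G$ to be abelian; so $\Aut(G)$ acts transitively on $\mc{M}$. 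In particular all components are isomorphic as topological groups; fix $S\simeq M$ for $M\in\mc{M}$. Theorem~\ref{quasiproduct:elementary} then shows that $G$ is elementary if and only if $S$ is, and in that case $\xi(G)={\sup_{M\in\mc{M}}}^+\xi(M)=\xi(S)$.

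The main structural input is that every closed normal subgroup $N$ of $G$ is generated by the components it contains: $N=G_{\mc{M}_N}$, where $\mc{M}_N:=\{M\in\mc{M}\mid M\le N\}$; in particular a non-trivial closed normal subgroup contains a component. (The inclusion $G_{\mc{M}_N}\le N$ is clear. For the reverse, topological simplicity of each $M\in\mc{M}\setminus\mc{M}_N$ forces $M\cap N=\triv$, so $[M,N]=\triv$ and $N$ centralizes $G_{\mc{M}\setminus\mc{M}_N}$; applying Proposition~\ref{prop:quasiproduct:quotient} with $G_{\mc{M}_N}$ in the role of the normal subgroup, one identifies a closed normal $M_0\ge G_{\mc{M}_N}$ with $M_0/G_{\mc{M}_N}$ central and $G/M_0$ a quasi-product of normal compressions of the remaining topologically simple components; $N$ centralizes a dense subgroup of $G/M_0$, so its image there is trivial, whence $N\le M_0$ and then $N\le G_{\mc{M}_N}$ — this uses the analysis of components and the layer $E(G)$ from \cite{RW_P_15}.) Applying this to $K$ and $L$ gives $L=G_{\mc{M}_L}<K=G_{\mc{M}_K}$ with $\mc{M}_L\subseteq\mc{M}_K$, and since $L<K$ we have $\mc{M}_L\neq\mc{M}_K$, so we may fix a component $M$ with $M\le K$ and $M\not\le L$. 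Topological simplicity of $M$ forces $M\cap L=\triv$, so the composite $M\hookrightarrow K\twoheadrightarrow K/L$ is a continuous injective homomorphism; recall $M\simeq S$.

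With this embedding in hand, both directions follow from Proposition~\ref{prop:xi_monotone}. If $G$ is elementary, then $K$ is an elementary closed subgroup with $\xi(K)\le\xi(G)$, and $K/L$ is an elementary quotient of $K$ with $\xi(K/L)\le\xi(K)\le\xi(G)$; on the other hand $\xi(S)=\xi(M)\le\xi(K/L)$ by Proposition~\ref{prop:xi_monotone}(1), and since $\xi(G)=\xi(S)$ these inequalities collapse to $\xi(K/L)=\xi(G)$. Conversely, if $K/L$ is elementary, then $\xi(S)=\xi(M)\le\xi(K/L)<\omega_1$, so $S$ is elementary, hence $G$ is elementary by Theorem~\ref{quasiproduct:elementary}, and the previous case gives $\xi(K/L)=\xi(G)$. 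The one genuinely substantial point is the structural fact that closed normal subgroups of a strict semisimple type group are subproducts of components; everything else is bookkeeping with the rank function, and that fact is supplied by the theory of components in \cite{RW_P_15}.
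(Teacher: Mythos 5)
Your overall strategy coincides with the paper's: reduce to strict semisimple type, locate a component $M$ that injects continuously into $K/L$, use that all components are isomorphic together with Theorem~\ref{quasiproduct:elementary} to get $\xi(G)=\xi(S)$, and then squeeze with Proposition~\ref{prop:xi_monotone}. The reduction, the countability of $\mc{M}$, the transitivity of $\Aut(G)$ on $\mc{M}$, and the final rank bookkeeping are all fine. The problem is the step you yourself single out as ``the one genuinely substantial point'': the claim that every closed normal subgroup $N$ of a strict semisimple type group equals $G_{\mc{M}_N}$. Your sketch of it has two real holes. First, from ``$N$ centralizes a dense subgroup of $G/M_0$'' you conclude that the image of $N$ in $G/M_0$ is trivial; that only gives that the image is \emph{central}, and you have not shown $G/M_0$ is centerless --- its quasi-factors are merely normal compressions of topologically simple groups, and a normal compression of a centerless group is not obviously centerless. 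Second, even granting $N\le M_0$, the subgroup $M_0$ produced by Proposition~\ref{prop:quasiproduct:quotient} may strictly contain $G_{\mc{M}_N}$ (by a central factor $M_0/G_{\mc{M}_N}$), so ``whence $N\le G_{\mc{M}_N}$'' does not follow. As written, the structural claim is not proved, and it is exactly what you use to get $\mc{M}_L\subsetneq\mc{M}_K$.

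The good news is that the full structural claim is overkill, and the special case you actually need follows from the hypothesis you have not yet exploited at that point, namely that $K/L$ is non-abelian. Suppose no component $M$ satisfies $M\le K$ and $M\not\le L$. For each component $M$, either $M\cap K=\triv$ (by topological simplicity), in which case $[M,K]=\triv\le L$, or $M\le K$, in which case $M\le L$ and again $[M,K]\le M\le L$. Thus $K/L$ centralizes the image in $G/L$ of every component, hence a dense subgroup of $G/L$, hence $K/L\le \Z(G/L)$ is abelian --- a contradiction. This yields $M\le K$ with $M\cap L=\triv$ directly, which is essentially how the paper proceeds (it finds a component $C$ with $\pi(C)\cap K/L$ non-trivial and then uses simplicity of $C$). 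Replacing your structural lemma by this three-line argument repairs the proof; the remainder of your write-up then goes through unchanged.
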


\begin{proof} 
	The forward implication of the first claim is immediate from Proposition~\ref{prop:xi_monotone}. For the converse implication, consider the quotient $G/L$ and let $\mc{M}$ list the components of $G$. The group $G/L$ contains a dense subgroup generated by the images of components. Since $K/L$ is non-abelian, there is some $C\in \mc{M}$ that intersects $K/L$ non-trivially.
	
	Let $\pi:G\rightarrow G/L$ be the usual projection and find $C\in \mc{M}$ such that $\pi(C)\cap K/L$ is non-trivial. Since $G$ is non-abelian, it has trivial center and therefore is of strict semisimple type. Every component $N\in\mc{M}$ is thus topologically simple via Theorem~\ref{thm:min_normal:simple}. The restriction $\pi:C\rightarrow G/L$ is injective, and $\pi(C)$ is contained in $K/L$. The group $\ol{CL/L}\leq K/L$ is then a normal compression of $C$. As $K/L$ is elementary, $C$ is elementary with $\xi(C)= \xi(K/L)$ via Proposition~\ref{prop:compression:elementary_rank}.
	
	All components of $G$ are isomorphic to $C$, since $G$ is topologically characteristically simple. In view of Theorem~\ref{quasiproduct:elementary}, we now deduce that $G$ is elementary with
	\[
	\xi(G) = {\sup_{N \in \mc{M}}}^+\xi(N)=\xi(C)= \xi(K/L).\qedhere
	\]		
\end{proof}

\subsection{Negligible chief blocks}
By Corollary~\ref{cor:nonneg}, an essentially chief series for a compactly generated \lcsc group $G$ includes representatives for every non-negligible chief block of $G$. Of course, an essentially chief series can also include negligible chief factors. The utility of essentially chief series as a decomposition into well-behaved ``basic'' groups, compact groups, and discrete groups thus depends on how much structure beyond the compact or discrete cases can be hidden in negligible chief factors.  We here analyze negligible chief factors and blocks. This analysis will demonstrate that negligible chief factors are either compact or very close to discrete.

Let us first identify three types of chief blocks of an \lcsc group $G$. Proposition~\ref{prop:block_properties} ensures the following definition is sensible:
\begin{defn} 
	Let $G$ be an \lcsc group and $\mf{a}\in \mf{B}_G$. We say that the block $\mf{a}$ is \defbold{connected compact}\index{chief block!connected compact} if all representatives are isomorphic to a single connected compact group. We say that $\mf{a}$ is \textbf{quasi-discrete}\index{chief block!quasi-discrete} if all representatives are quasi-discrete. The block $\mf{a}$ is \defbold{robust}\index{chief block!robust} if none of the representatives of $\mf{a}$ are compact or quasi-discrete. Denote by $\mf{B}_G^r$ the collection of robust blocks.
\end{defn}

We now argue that these three types partition the collection of chief blocks. To this end, recall the special structure of topologically characteristically simple profinite groups.

\begin{lem}[See for instance {\cite[Lemma~8.2.3]{RZ00}}]\label{lem:profinite_charsimple}
	A profinite group $P$ is topologically characteristically simple if and only if
	\[
	P \simeq F^I
	\]
	for some finite simple group $F$ and set $I$.  In particular, every topologically characteristically simple profinite group is quasi-discrete, since it contains the dense quasi-central subgroup $F^{<I}$.
\end{lem}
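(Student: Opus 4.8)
The plan is to establish the equivalence in both directions and then read off the ``in particular'' clause from the structure produced. For the forward implication the idea is to recover $P$ as a Cartesian power of a finite simple group by exploiting the action of $\Aut(P)$ on the maximal open normal subgroups of $P$. For the reverse implication one argues directly that $F^I$ has no proper nontrivial closed characteristic subgroup, using the standard description of its closed normal subgroups together with the coordinate permutations in $\Sym(I)\le\Aut(F^I)$.

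\emph{Forward direction.} Assume $P\neq\triv$ is topologically characteristically simple. Being profinite, $P$ has a proper open normal subgroup, hence a maximal one, $N$, so that $S:=P/N$ is a finite simple group. Let $\mc{O}:=\{\phi(N)\mid\phi\in\Aut(P)\}$; each $M\in\mc{O}$ is a maximal open normal subgroup with $P/M\simeq S$. The subgroup $\bigcap_{M\in\mc{O}}M$ is closed and, since $\Aut(P)$ permutes $\mc{O}$, characteristic; as it lies in the proper subgroup $N$, it must be trivial. By Zorn's lemma pick $\mc{T}\subseteq\mc{O}$ maximal subject to the condition that the natural map $\theta_{\mc{T}}\colon P\to\prod_{M\in\mc{T}}P/M$ be surjective; this is possible because surjectivity onto the profinite target can be tested on finite sub-products, so the family of such $\mc{T}$ is closed under unions of chains. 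I claim $\ker\theta_{\mc{T}}=\bigcap_{M\in\mc{T}}M$ is trivial. Otherwise there is $M_0\in\mc{O}\setminus\mc{T}$ with $\bigcap_{M\in\mc{T}}M\not\le M_0$; since $P/M_0$ is simple this forces $(\bigcap_{M\in\mc{T}}M)M_0=P$, and hence $(\bigcap_{M\in\mc{F}}M)M_0=P$ for every finite $\mc{F}\subseteq\mc{T}$. A Chinese-remainder argument — using that $P\to(P/(\bigcap_{M\in\mc{F}}M))\times P/M_0$ is onto because the two factors are coprime — then shows $\theta_{\mc{T}\cup\{M_0\}}$ is surjective onto every finite sub-product, hence onto $\prod_{M\in\mc{T}\cup\{M_0\}}P/M$, contradicting maximality of $\mc{T}$. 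Thus $\theta_{\mc{T}}$ is a continuous bijection of compact groups, hence an isomorphism, and $P\simeq\prod_{M\in\mc{T}}P/M\simeq S^{\mc{T}}$.

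\emph{Reverse direction and the quasi-discreteness.} Suppose $P\simeq F^I$ with $F$ finite simple. The closed normal subgroups of $F^I$ are exactly the sub-products $F^J$, $J\subseteq I$: when $F$ is non-abelian this follows from a commutator argument identifying $J$ with the set of onto coordinate projections of the given subgroup, with closedness upgrading $\ol{\grp{F^{\{j\}}\mid j\in J}}$ to all of $F^J$; when $F\simeq C_p$ it follows from the duality between closed subgroups of $C_p^I$ and subspaces of the discrete dual $\mathrm{Hom}_{\mathrm{cts}}(C_p^I,C_p)$. In either case the permutations $\Sym(I)\le\Aut(F^I)$ carry $F^J$ to $F^{J'}$ for any $J'$ with $|J'|=|J|$, so the only closed characteristic subgroups of $F^I$ are $\triv$ and $F^I$; hence $F^I$ is topologically characteristically simple. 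Finally, in $P\simeq F^I$ the finitely supported elements form a dense normal subgroup $F^{<I}$, and each $x\in F^{<I}$ is centralized by the open subgroup consisting of the elements supported off $\supp(x)$, so $\CC_P(x)$ is open and $F^{<I}\le\QZ(P)$; therefore $\QZ(P)$ is dense and $P$ is quasi-discrete.

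\emph{Main obstacle.} The substantive point is the forward direction, specifically the Zorn/Chinese-remainder step that assembles $P$ as a \emph{full} Cartesian product rather than merely a closed subgroup of one; this rests on the ``coprimality'' of distinct maximal open normal subgroups having isomorphic simple quotients. The abelian subcase of the reverse direction is the other point needing care, since it uses Pontryagin-type duality rather than a purely group-theoretic argument.
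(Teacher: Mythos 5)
The paper does not prove this lemma at all---it is quoted directly from Ribes--Zalesskii---so your argument stands or falls on its own. Your forward direction is correct and is essentially the standard argument: the $\Aut(P)$-orbit of a maximal open normal subgroup has trivial intersection by characteristic simplicity, and your Zorn plus Goursat/CRT step (using $KM_0=P$, which follows since $KM_0$ is open normal and properly contains the maximal $M_0$) correctly assembles a full, not merely subdirect, product; testing surjectivity on finite sub-products is legitimate because the image is a closed subgroup of a profinite product. The non-abelian half of the converse and the quasi-discreteness claim are also fine.

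The genuine error is in the abelian subcase of the converse. It is simply false that the closed (normal) subgroups of $C_p^I$ are exactly the sub-products $C_p^J$: already in $C_p\times C_p$ the diagonal is a closed subgroup of neither form, and the Pontryagin duality you invoke proves the opposite of what you want---closed subgroups of $C_p^I$ correspond to \emph{arbitrary} subspaces of the discrete dual $C_p^{<I}$, of which the coordinate subspaces are a tiny fraction. Consequently the coordinate permutations $\Sym(I)$ do not suffice to rule out proper closed characteristic subgroups in this case (they fix the diagonal of $C_p\times C_p$, for instance). The conclusion is still true, but the fix requires the full continuous automorphism group: under the annihilator correspondence a closed characteristic subgroup of $C_p^I$ corresponds to a subspace of the $\Fb_p$-vector space $C_p^{<I}$ invariant under all of its linear automorphisms, and since $\mathrm{GL}$ of a vector space acts transitively on nonzero vectors, only $\triv$ and the whole group survive. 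With that repair the proof is complete.
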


\begin{thm}\label{thm:negligible_types}
Suppose that $G$ is an \lcsc group. If $\mf{a}$ is a chief block of $G$, then $\mf{a}$ is exactly one of the following: connected compact, quasi-discrete, or robust.	
\end{thm}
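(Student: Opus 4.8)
The plan is to prove the trichotomy in two halves: first that the three conditions are pairwise mutually exclusive, and then that every chief block satisfies at least one of them. Throughout I will use that every representative $K/L$ of $\mf{a}$ is a non-abelian chief factor of $G$, hence topologically characteristically simple.

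For mutual exclusivity, the key observation is that a \emph{connected} topological group $C$ has no proper open subgroup, so $\QZ(C) = \Z(C)$; when $C$ is additionally non-abelian, its (closed) center is a proper subgroup, and therefore $C$ is not quasi-discrete. Applied to a connected compact representative, this shows that a connected compact block cannot be quasi-discrete, and it trivially cannot be robust since its representatives are compact. The remaining exclusions are immediate from the definitions: a quasi-discrete block has all representatives quasi-discrete, so it is not robust, and a robust block has by definition no compact representative (so it is not connected compact) and no quasi-discrete representative (so it is not quasi-discrete).

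For exhaustiveness, I would suppose $\mf{a}$ is not robust and fix a representative $K/L$ that is compact or quasi-discrete. If $K/L$ is quasi-discrete, then since quasi-discreteness is a block property (Proposition~\ref{prop:block_properties}), every representative of $\mf{a}$ is quasi-discrete, so $\mf{a}$ is quasi-discrete. If instead $K/L$ is compact, then topological characteristic simplicity forces its identity component $(K/L)^{\circ}$, a closed characteristic subgroup, to be trivial or all of $K/L$. In the first case $K/L$ is profinite, hence quasi-discrete by Lemma~\ref{lem:profinite_charsimple}, and $\mf{a}$ is quasi-discrete as before; in the second case $K/L$ is a connected compact group, and since being isomorphic to a given connected group is a block property (Proposition~\ref{prop:block_properties}), every representative of $\mf{a}$ is isomorphic to $K/L$, so $\mf{a}$ is connected compact.

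I do not expect a serious obstacle: once the structural inputs are in hand (Proposition~\ref{prop:block_properties}, Lemma~\ref{lem:profinite_charsimple}, and the elementary fact that a compact topologically characteristically simple group is either profinite or connected), the argument is essentially bookkeeping. The only point requiring a moment's care is the non-quasi-discreteness of non-abelian connected groups, which is exactly what makes all three mutual-exclusion statements go through simultaneously.
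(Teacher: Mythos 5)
Your proof is correct and follows essentially the same route as the paper: the exhaustiveness argument uses exactly the same ingredients (Proposition~\ref{prop:block_properties}, Lemma~\ref{lem:profinite_charsimple}, and the connected/totally disconnected dichotomy for a compact topologically characteristically simple factor). The paper dismisses mutual exclusivity as obvious, whereas you correctly supply the one non-trivial detail, namely that a non-abelian connected group has $\QZ = \Z$ proper and hence is not quasi-discrete.
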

\begin{proof}
It is obvious the three cases are exclusive, so we need only to show $\mf{a}$ is one of the three types.  By Proposition~\ref{prop:block_properties}, if $\mf{a}$ is not quasi-discrete, then none of its representatives are quasi-discrete. Suppose that $\mf{a}$ is neither robust nor quasi-discrete; in other words, there is a representative $A/B \in \mf{a}$ that is compact, but not quasi-discrete. The factor $A/B$ is either connected or totally disconnected. In the latter case, $A/B$ is profinite, so it is quasi-discrete by Lemma~\ref{lem:profinite_charsimple}. We deduce that $A/B$ is connected and compact, and hence by Proposition~\ref{prop:block_properties}, every representative of $\mf{a}$ is isomorphic to $A/B$ as a topological group.  Thus, $\mf{a}$ is connected compact.
\end{proof}

A structure theorem for compactly generated groups without robust blocks is now in hand.
\begin{cor}\label{cor:negligible_decomp} 
Suppose that $G$ is a compactly generated \lcsc group. If $\mf{B}_G^r=\emptyset$, then every essentially chief series $(G_{i})_{i=0}^n$ for $G$ is such that each factor $G_i/G_{i-1}$ is either compact or quasi-discrete.
\end{cor}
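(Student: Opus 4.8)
The plan is to run through the factors of a fixed essentially chief series one at a time and invoke Theorem~\ref{thm:negligible_types} together with the hypothesis $\mf{B}_G^r=\emptyset$. So I would fix an essentially chief series $(G_i)_{i=0}^n$ for $G$ and consider a single normal factor $F:=G_i/G_{i-1}$. By the definition of an essentially chief series, $F$ is compact, discrete, or a chief factor of $G$. If $F$ is compact there is nothing to prove. If $F$ is discrete, then every subset of $F$ is open, so in particular $\CC_F(g)$ is open for every $g\in F$ and $\QZ(F)=F$; thus $F$ is quasi-discrete.

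It remains to treat the case in which $F$ is a chief factor of $G$. If $F$ is abelian, then $\CC_F(g)=F$ is open for every $g\in F$, so again $\QZ(F)=F$ and $F$ is quasi-discrete. If instead $F$ is non-abelian, then its association class $\mf{a}$ is a chief block of $G$ (the association relation restricts to an equivalence relation on non-abelian chief factors, as recalled above), and $F\in\mf{a}$. Since $\mf{B}_G^r=\emptyset$, the block $\mf{a}$ is not robust, so Theorem~\ref{thm:negligible_types} forces $\mf{a}$ to be either connected compact or quasi-discrete. In the connected compact case, every representative of $\mf{a}$, in particular $F$, is isomorphic to a fixed connected compact group, so $F$ is compact; in the quasi-discrete case $F$ is quasi-discrete by definition. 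This exhausts all possibilities for $F$, and running over all $i$ completes the argument.

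The proof is essentially bookkeeping once Theorem~\ref{thm:negligible_types} is available, so I do not anticipate a genuine obstacle; the only point that wants a moment's care is that chief blocks classify only the \emph{non-abelian} chief factors, so abelian chief factors must be handled separately, but they are trivially quasi-discrete. It is also worth noting that compact generation of $G$ enters only through Theorem~\ref{thm:chief_series}, which guarantees that essentially chief series exist; the case analysis above does not otherwise use it.
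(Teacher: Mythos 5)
Your argument is correct and follows the same route as the paper: split the factors of the series into compact, discrete, abelian chief, and non-abelian chief, note that discrete and abelian factors are trivially quasi-discrete, and apply Theorem~\ref{thm:negligible_types} to the non-abelian chief factors using $\mf{B}_G^r=\emptyset$. The paper's proof is just a terser version of exactly this case analysis.
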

\begin{proof}
Let $\triv=G_0<\dots <G_n=G$ be an essentially chief series for $G$, so each factor $G_i/G_{i-1}$ is either compact, discrete, or chief.  Discrete factors and abelian chief factors are obviously quasi-discrete.  Theorem~\ref{thm:negligible_types} implies that every non-abelian chief factor in the series is either compact-connected or quasi-discrete.
\end{proof}

\subsection{Application: Hopfian property}\label{sec:hopfian}
A group is called \textbf{Hopfian}\index{Hopfian} if every continuous self-surjection is also an injection. Using the machinery of chief blocks, we show many compactly generated \lcsc groups are Hopfian. Our analysis of the Hopfian property is via the notion of block morphisms developed in \cite[Section 8]{RW_P_15}; we direct the reader to loc.\ cit.\ for further details on this construction. 

For $G$ a Polish group, let $X_G$ denote the collection of non-abelian chief factors. The partial order on centralizers induces a preorder on $X_G$, which we denote by $(X_G,\leq)$. We remark that the natural quotient of $(X_G,\leq)$ giving a partial ordering recovers the partially ordered set of chief blocks $\mf{B}_G$. 

\begin{defn}\label{defn:block_morphism}
	For $G$ and $H$ Polish groups, a \textbf{strong block morphism} $\psi:(X_H,\leq)\rightarrow (X_G,\leq)$ is an homomorphism of preorders such that $\psi(A/B)\simeq A/B$ as topological groups for each $A/B\in X_H$. If the induced map $\wt{\psi}:\mf{B}_H\rightarrow \mf{B}_G$ is injective, we say that $\psi$ is a \textbf{strong block monomorphism}.
\end{defn}

By \cite[Proposition~8.4]{RW_P_15}, a continuous surjective homomorphism $\phi:G\rightarrow H$ of Polish groups induces a strong block monomorphism $\psi:(X_H,\leq)\rightarrow (X_G,\leq)$ defined by $A/B\mapsto\phi^{-1}(A)/\phi^{-1}(B)$. In the setting of locally compact groups, this morphism also respects robust blocks.

\begin{lem}\label{lem:robust_injection}
Suppose that $G$ and $H$ are \lcsc groups and $\phi:G\rightarrow H$ is a continuous surjective homomorphism. Then the strong block monomorphism 
\[
\psi:(X_H,\leq)\rightarrow (X_G,\leq)
\]
induced by $\phi$ is such that $\tilde{\psi}(\mf{B}_H^r) =  \mf{B}_G^r \cap \tilde{\psi}(\mf{B}_H)$, where $\tilde{\psi}$ is the map induced from $\psi$.
\end{lem}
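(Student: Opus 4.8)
The map $\tilde\psi$ takes a chief block $\mf b\in\mf B_H$ to the unique chief block of $G$ whose representatives, after intersecting with a suitable normal subgroup, appear as $\phi^{-1}(A)/\phi^{-1}(B)$ for $A/B\in\mf b$; here I would recall from \cite[Proposition~8.4]{RW_P_15} that $A/B$ and $\phi^{-1}(A)/\phi^{-1}(B)$ are actually \emph{isomorphic as topological groups} via the map $\phi$ induces, since $\ker\phi\le \phi^{-1}(B)$. So the whole statement reduces to the observation that robustness is detected by the isomorphism type of (one, equivalently all) representative: $\mf b$ is robust iff no representative is compact or quasi-discrete, and by Proposition~\ref{prop:block_properties} both ``isomorphic to a fixed connected compact group'' and ``quasi-discrete'' are block properties. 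Hence $\mf b\in\mf B_H^r$ iff its representatives are neither connected-compact nor quasi-discrete (using Theorem~\ref{thm:negligible_types} to know these are the only two ways to fail robustness), and the same dichotomy applies to $\tilde\psi(\mf b)\in\mf B_G$ with the \emph{same} isomorphism type of representative.

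\textbf{Key steps in order.} First, recall that $\psi$ is a strong block monomorphism, so for $\mf b\in\mf B_H$ with representative $A/B$, the block $\tilde\psi(\mf b)$ has the representative $\phi^{-1}(A)/\phi^{-1}(B)$, and the isomorphism $\phi^{-1}(A)/\phi^{-1}(B)\simeq A/B$ of topological groups holds. Second, observe that all representatives of $\mf b$ are isomorphic precisely when $\mf b$ is connected-compact or quasi-discrete (Proposition~\ref{prop:block_properties} and Theorem~\ref{thm:negligible_types}), and in either of those two cases the representative $\phi^{-1}(A)/\phi^{-1}(B)$ of $\tilde\psi(\mf b)$ is isomorphic to $A/B$, hence $\tilde\psi(\mf b)$ is of the same non-robust type, and conversely. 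Third, conclude: if $\mf b\in\mf B_H^r$, then by Theorem~\ref{thm:negligible_types} $\mf b$ is neither connected-compact nor quasi-discrete; since the representative $\phi^{-1}(A)/\phi^{-1}(B)$ of $\tilde\psi(\mf b)$ is isomorphic to $A/B$, it is neither compact nor quasi-discrete, so $\tilde\psi(\mf b)$ is robust by Theorem~\ref{thm:negligible_types} again. This gives $\tilde\psi(\mf B_H^r)\subseteq \mf B_G^r\cap\tilde\psi(\mf B_H)$. For the reverse inclusion, take $\mf a\in \mf B_G^r\cap\tilde\psi(\mf B_H)$, say $\mf a=\tilde\psi(\mf b)$; if $\mf b$ were connected-compact or quasi-discrete then $\mf a$ would inherit a compact or quasi-discrete representative (same isomorphism type), contradicting robustness of $\mf a$, so $\mf b$ is robust.

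\textbf{Main obstacle.} The only genuinely substantive point is to be sure that the representative of $\tilde\psi(\mf b)$ supplied by the strong block monomorphism, namely $\phi^{-1}(A)/\phi^{-1}(B)$, really is isomorphic \emph{as a topological group} to $A/B$ — this is where the surjectivity of $\phi$ and $\ker\phi\le\phi^{-1}(B)$ enter, and it is exactly the content of the construction in \cite[Proposition~8.4]{RW_P_15}, so no new work is needed. Everything else is bookkeeping with the trichotomy of Theorem~\ref{thm:negligible_types} and the block-property status of connected-compactness and quasi-discreteness from Proposition~\ref{prop:block_properties}. I expect the write-up to be short: one sentence invoking the isomorphism of representatives, then two short paragraphs for the two inclusions.
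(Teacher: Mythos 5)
Your proposal is correct and follows essentially the same route as the paper's proof: both arguments rest on the fact that a strong block monomorphism carries a representative $A/B$ to an isomorphic representative $\phi^{-1}(A)/\phi^{-1}(B)$, and then run both inclusions through the trichotomy of Theorem~\ref{thm:negligible_types} together with the block-property status of connected-compactness and quasi-discreteness from Proposition~\ref{prop:block_properties}. No gaps; the write-up would match the paper's almost line for line.
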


\begin{proof}
Take $\mf{a}\in \mf{B}_H$ and suppose that $\tilde{\psi}(\mf{a})\notin\mf{B}_G^r$. Taking $A/B\in \mf{a}$, that $\psi$ is a strong block monomorphism implies that $A/B\simeq \psi(A/B)$ and that $\psi(A/B)\in \tilde{\psi}(\mf{a})$.  The block $\tilde{\psi}(\mf{a})$ is not robust, so Theorem~\ref{thm:negligible_types} implies that $\tilde{\psi}(\mf{a})$ is either connected-compact or quasi-discrete. The factor $\psi(A/B)$ is thus either a connected compact group or a quasi-discrete group, hence $A/B$ is  either a connected compact group or a quasi-discrete group. We conclude that $\mf{a}$ is either connected-compact or quasi-discrete. That is to say, $\mf{a}$ is not robust.  This demonstrates that $\tilde{\psi}(\mf{B}_H^r) \subseteq  \mf{B}_G^r \cap \tilde{\psi}(\mf{B}_H)$.

Conversely, suppose that $\mf{a} \not\in \mf{B}_H^r$.  We may find $A/B \in \mf{a}$ which is either connected and compact or quasi-discrete. The image $\psi(A/B)$ is then either connected and compact or quasi-discrete, since $\psi$ is a strong block monomorphism. The block $\tilde{\psi}(\mf{a})$ is thus not robust.  This demonstrates that $\tilde{\psi}(\mf{B}_H \setminus \mf{B}_H^r) \subseteq  \mf{B}_G \setminus \mf{B}_G^r$, and hence $\tilde{\psi}(\mf{B}_H^r) = \mf{B}_G^r \cap \tilde{\psi}(\mf{B}_H)$.
\end{proof}

We stress that it is important to consider \emph{robust blocks} for the previous lemma. Negligible blocks can become non-negligible in a quotient, so in general we do not have $\tilde{\psi}(\mf{B}_H^*) \subseteq \mf{B}_G^*$, where $\mf{B}_G^*$ is the set of non-negligible blocks.

\begin{thm}\label{thm:Hopfian} 
Suppose that $G$ is a compactly generated \lcsc group. If $G$ has no non-trivial compact or quasi-discrete closed normal subgroups, then $G$ is Hopfian.
\end{thm}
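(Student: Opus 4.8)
The plan is to argue by contradiction. Let $\phi\colon G\to G$ be a continuous surjective homomorphism and suppose $K:=\ker\phi\neq\triv$; I will derive a contradiction. By \cite[Proposition~8.4]{RW_P_15}, $\phi$ induces a strong block monomorphism $\psi\colon(X_G,\leq)\to(X_G,\leq)$, $A/B\mapsto\phi^{-1}(A)/\phi^{-1}(B)$, and hence an injective map $\tilde\psi\colon\mf{B}_G\to\mf{B}_G$ on chief blocks. By Lemma~\ref{lem:robust_injection}, $\tilde\psi(\mf{B}_G^r)\subseteq\mf{B}_G^r$. Robust blocks are non-negligible: a non-abelian chief factor associated to a compact (resp.\ discrete) factor is itself compact (resp.\ discrete), so by Theorem~\ref{thm:negligible_types} every negligible block is connected compact or quasi-discrete, hence not robust; thus $\mf{B}_G^r\subseteq\mf{B}_G^*$, which is finite by Corollary~\ref{cor:nonneg} since $G$ is compactly generated. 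An injective self-map of a finite set is onto, so $\tilde\psi$ restricts to a bijection of $\mf{B}_G^r$ onto itself.

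Next I would locate a robust block sitting at the bottom of $K$. Using Theorem~\ref{thm:chief_series}, refine the chain $\triv\leq K\leq G$ to an essentially chief series $\triv=K_0\leq K_1\leq\dots\leq K_l=G$, so that $K=K_j$ for some $j\geq 1$. Let $i\geq 1$ be least with $K_i\neq\triv$ and set $N:=K_i$; since $K_{i-1}=\triv$, the group $N$ is compact, discrete, or a chief factor of $G$. A discrete group is quasi-discrete, so the hypothesis on $G$ excludes the first two cases, and $N$ is a minimal non-trivial closed normal subgroup of $G$; moreover $N$ is non-abelian, since an abelian \lcsc group is quasi-discrete. Thus $N$ represents a block $\mf a\in\mf{B}_G$. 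As $N$ is a representative of $\mf a$ that is neither compact nor quasi-discrete, and being quasi-discrete resp.\ isomorphic to a fixed connected group are block properties (Proposition~\ref{prop:block_properties}), Theorem~\ref{thm:negligible_types} forces $\mf a\in\mf{B}_G^r$.

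To finish I would combine the two points. Since $\tilde\psi$ maps $\mf{B}_G^r$ onto itself, $\mf a=\tilde\psi(\mf b)$ for some $\mf b\in\mf{B}_G^r$, so $\mf a$ has a representative of the form $\phi^{-1}(A)/\phi^{-1}(B)$ with $A/B$ a non-abelian chief factor of $G$. Then $\phi^{-1}(B)$ is a closed normal subgroup of $G$ containing $K$, and since $K\normal G$ we get $[K,\phi^{-1}(A)]\leq K\leq\phi^{-1}(B)$, i.e.\ $K$ centralizes $\phi^{-1}(A)/\phi^{-1}(B)$; hence $K\leq\CC_G(\mf a)$, the centralizer being independent of the chosen representative by \cite[Proposition~6.8]{RW_P_15}. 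But $N/\triv$ is also a representative of $\mf a$, so $\CC_G(\mf a)=\CC_G(N)$, and therefore $N\leq K\leq\CC_G(N)$; that is, $N$ is abelian, contradicting the previous paragraph. Hence $K=\triv$ and $\phi$ is injective. I expect the step needing the most care to be the passage from ``$\mf a$ lies in the image of $\tilde\psi$'' to ``$K\leq\CC_G(\mf a)$'', i.e.\ recognising that the image of $\tilde\psi$ consists exactly of the chief blocks centralized by $K$; the remaining ingredients (Lemma~\ref{lem:robust_injection}, finiteness of $\mf{B}_G^*$, the essentially chief series, and the minimal-normal-subgroup observation) then fit together routinely.
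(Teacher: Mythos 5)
Your proposal is correct and follows essentially the same route as the paper's proof: refine $\triv\le\ker\phi\le G$ to an essentially chief series, use the hypothesis to extract a robust non-abelian chief factor at the bottom of $\ker\phi$, invoke Lemma~\ref{lem:robust_injection} together with the finiteness of the set of robust blocks (Corollary~\ref{cor:nonneg}) to make $\tilde\psi$ a bijection on $\mf{B}_G^r$, and derive the contradiction from the fact that a representative of the form $\phi^{-1}(A)/\phi^{-1}(B)$ forces $\ker\phi$ (hence the bottom factor) into the centralizer of its own block. The only cosmetic difference is that you spell out why robust blocks are non-negligible and why the minimal normal subgroup is non-abelian, steps the paper leaves implicit.
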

\begin{proof}
We assume that $G$ is non-trivial. Suppose for contradiction that $\phi:G\rightarrow G$ is a continuous surjective homomorphism with a non-trivial kernel. We apply Theorem~\ref{thm:chief_series} to find a chief series $(G_{i})_{i=0}^n$ which refines the normal series $\triv <\ker(\phi)<G$. The first factor $G_1/G_0=G_1$ must be neither compact nor quasi-discrete since $G$ has no such non-trivial closed normal subgroups. Hence, $G_1/G_0$ is a non-abelian chief factor, and the chief block $[G_1/G_0]$ is robust via Theorem~\ref{thm:negligible_types}. 
	
On the other hand, Lemma~\ref{lem:robust_injection} supplies a canonical injective map $\tilde{\psi}:\mf{B}_{G}^r\rightarrow \mf{B}_G^r$ arising from the surjection $\phi:G\rightarrow G$. The set $\mf{B}_G^r$ is finite by Corollary~\ref{cor:nonneg}, so $\tilde{\psi}$ is indeed a bijection. We may thus find a chief factor $A/B$ of $G$ such that 
\[
K/L=\psi(A/B)=\phi^{-1}(A)/\phi^{-1}(B)
\]
is an element of $[G_1/G_0]$. In the series $G_0<G_1<\ker(\phi)\leq L<K\leq G$, the non-abelian chief factor $G_1/G_0$ is associated to the factor $K/L$. This is absurd since associated chief factors have the same centralizer and $L\leq \CC_G(K/L)$.
\end{proof}

It is easy to build non-discrete compactly generated locally compact groups which are non-Hopfian simply by taking the direct product of an infinite profinite group with a finitely generated non-Hopfian group.

\section{Extensions of chief blocks}\label{sec:limit_blocks}
Corollary~\ref{cor:negligible_decomp} shows a \emph{compactly generated} \lcsc group $G$ without robust chief factors has limited complexity as a topological group. In this section, we show that except for groups with limited topological structure, \textit{every} \lcsc group admits robust chief blocks that are minimally covered. These results are then applied to investigate the structure of chief factors of compactly generated \lcsc groups.

\subsection{Preliminaries}

\begin{defn}
	Let $\mf{a}$ be a chief block and let $N/M$ be a closed normal factor of a topological group $G$.  We say $N/M$ \defbold{covers}\index{covers} $\mf{a}$ if $\CC_G(\mf{a})$ contains $M$ but not $N$.
	If $N/M$ does not cover $\mf{a}$, we say that $N/M$ \defbold{avoids}\index{avoids} $\mf{a}$. A block can be avoided in one of two ways: The factor $N/M$ is \defbold{below} $\mf{a}$ if $N \le \CC_G(\mf{a})$; equivalently, $G/N$ covers $\mf{a}$. The factor $N/M$ is \defbold{above} $\mf{a}$ if $M \not\le \CC_G(\mf{a})$; equivalently, $M/\triv$ covers $\mf{a}$. For a closed normal subgroup $N\normal G$, we will say that $N$ covers or avoids a chief block by seeing $N$ as the closed normal factor $N/\triv$.
\end{defn}

It follows easily from Theorem~\ref{thm:Schreier_refinement} that if a closed normal factor $N/M$ of a Polish group $G$ covers $\mf{a} \in \mf{B}_G$, then there is a representative $K/L\in \mf{a}$ such that $M\leq L<K\leq N$.

A block $\afr\in \mf{B}_G$ is \textbf{minimally covered}\index{chief block!minimally covered} if there is a least closed normal subgroup covering the block. This least normal subgroup is denoted by $G_{\afr}$. The \textbf{lowermost representative} is $G_{\afr}/\CC_{G_{\afr}}(\afr)$; this normal factor is a representative of $\afr$ via \cite[Proposition 7.13]{RW_P_15}. The collection of minimally covered chief blocks is denoted by $\mf{B}_G^{\min}$.

\begin{defn}
Let $G$ be a Polish group with $H$ a closed subgroup.  Given $\mf{a} \in \mf{B}_H$ and $\mf{b} \in \mf{B}_G$, we say that $\mf{b}$ is an \defbold{extension}\index{chief block!extension} of $\mf{a}$ in $G$ if for every closed normal subgroup $K$ of $G$, the group $K$ covers $\mf{b}$ if and only if $K\cap H$ covers $\mf{a}$.
\end{defn}

In general, it is not clear whether extensions exist; however, when an extension exists, it is unique, {\cite[Lemma 9.2]{RW_P_15}}.  If the extension exists, we write $\mf{a}^G$ for the extension of $\mf{a}$ to $G$.  The following observations are made in \cite[Section 9.1]{RW_P_15}; to gain familiarity with the definitions, the reader is encouraged to prove these observations.

\begin{obs}\label{obs:extensions_transitive}
Let $A \le B \le G$ be closed subgroups of a topological group $G$ and $\mf{a} \in \mf{B}_A$ be extendable to $B$.  Then $\mf{a}$ is extendable to $G$ if and only if $\mf{a}^B$ is extendable to $G$. If $\mf{a}$ is extendable to $G$, then $\mf{a}^G = (\mf{a}^B)^G$.
\end{obs}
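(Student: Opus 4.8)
The plan is to unwind the definition of extension directly, using the characterization that $\mf{b}$ extends $\mf{a}$ in $G$ precisely when, for every closed normal subgroup $K$ of $G$, the subgroup $K$ covers $\mf{b}$ if and only if $K \cap A$ covers $\mf{a}$. First I would handle the ``only if'' direction of the biconditional: suppose $\mf{a}$ is extendable to $B$, with extension $\mf{a}^B \in \mf{B}_B$, and suppose $\mf{a}^B$ is extendable to $G$, with extension $(\mf{a}^B)^G \in \mf{B}_G$. I want to show $(\mf{a}^B)^G$ is an extension of $\mf{a}$ to $G$, which by uniqueness (\cite[Lemma 9.2]{RW_P_15}) will give $\mf{a}^G = (\mf{a}^B)^G$ and in particular the extendability of $\mf{a}$ to $G$. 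Take any closed normal subgroup $K \normal G$. Since $(\mf{a}^B)^G$ extends $\mf{a}^B$, the subgroup $K$ covers $(\mf{a}^B)^G$ iff $K \cap B$ covers $\mf{a}^B$. Now $K \cap B$ is a closed normal subgroup of $B$, so since $\mf{a}^B$ extends $\mf{a}$, it covers $\mf{a}^B$ iff $(K \cap B) \cap A = K \cap A$ covers $\mf{a}$ (here I use $A \le B$, so $(K\cap B)\cap A = K \cap A$). Chaining the two equivalences gives exactly that $K$ covers $(\mf{a}^B)^G$ iff $K \cap A$ covers $\mf{a}$, which is the defining property of an extension of $\mf{a}$ to $G$.

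Next I would address the converse: suppose $\mf{a}$ is extendable to $B$ and $\mf{a}$ is extendable to $G$, with extension $\mf{a}^G \in \mf{B}_G$; I want to conclude that $\mf{a}^B$ is extendable to $G$ (and that its extension is $\mf{a}^G$). The natural approach is to verify that $\mf{a}^G$ itself serves as an extension of $\mf{a}^B$ to $G$. Take $K \normal G$ closed. I need: $K$ covers $\mf{a}^G$ iff $K \cap B$ covers $\mf{a}^B$. Since $\mf{a}^G$ extends $\mf{a}$, the left side is equivalent to $K \cap A$ covering $\mf{a}$. So it suffices to show that for a closed normal subgroup $K \normal G$, we have $K \cap A$ covers $\mf{a}$ iff $K \cap B$ covers $\mf{a}^B$. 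The ``$\Leftarrow$'' here is immediate from the definition of $\mf{a}^B$ as extension of $\mf{a}$ in $B$, applied to the closed normal subgroup $K \cap B$ of $B$ (again $(K\cap B)\cap A = K\cap A$). The ``$\Rightarrow$'' direction is the part requiring a little care: from $K \cap A$ covering $\mf{a}$ I must produce that $K \cap B$ covers $\mf{a}^B$; but $K \cap B$ is closed normal in $B$ with $(K \cap B) \cap A = K \cap A$ covering $\mf{a}$, so again the defining biconditional of $\mf{a}^B$ as an extension of $\mf{a}$ to $B$ gives that $K \cap B$ covers $\mf{a}^B$. So in fact both directions follow formally.

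The one genuine subtlety — and the step I expect to be the main (mild) obstacle — is bookkeeping about which extendability hypotheses are actually available at each point, since the statement is an ``if and only if'' about extendability and then a separate identity of blocks. In the ``only if'' direction I am given extendability of $\mf{a}$ to $B$ and of $\mf{a}^B$ to $G$, and I produce an object satisfying the extension property for $\mf{a}$ to $G$; by the uniqueness lemma this both establishes extendability of $\mf{a}$ to $G$ and identifies $\mf{a}^G = (\mf{a}^B)^G$. In the ``if'' direction I am given extendability of $\mf{a}$ to $B$ and of $\mf{a}$ to $G$, and I exhibit $\mf{a}^G$ as an object satisfying the extension property for $\mf{a}^B$ to $G$; again uniqueness gives extendability of $\mf{a}^B$ to $G$ and the identity $(\mf{a}^B)^G = \mf{a}^G$. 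Combining the two directions yields the displayed biconditional and, under either equivalent hypothesis, the equality $\mf{a}^G = (\mf{a}^B)^G$. Throughout, the only algebraic fact used about the chain $A \le B \le G$ is the trivial identity $(K \cap B) \cap A = K \cap A$ for $K \normal G$, together with the observation that $K \cap B$ is closed and normal in $B$; everything else is a formal manipulation of the defining biconditionals for extensions, so no appeal to the finer structure theory (centralizers, uppermost/lowermost representatives, etc.) is needed beyond the uniqueness of extensions.
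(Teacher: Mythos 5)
Your proof is correct and is exactly the routine unwinding of the definition that the paper intends: the paper itself gives no proof of this Observation, merely citing \cite[Section 9.1]{RW_P_15} and inviting the reader to verify it. Both directions, the use of $(K\cap B)\cap A = K\cap A$ together with $K\cap B\trianglelefteq B$ closed, and the appeal to uniqueness of extensions are all exactly as required.
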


\begin{obs}\label{obs:centralizers}
Let $G$ be a topological group with $H$ a closed subgroup. If $K/L$ is a closed normal factor of $G$ such that $K\cap H/L\cap H$ covers $\mf{a}\in \mf{B}_H$, then $\CC_H(K/L)\leq \CC_H(\mf{a})$.
\end{obs}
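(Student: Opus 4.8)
The plan is to reduce the claim to a short commutator computation inside $H$, after first extracting a representative of $\mf{a}$ from the covering hypothesis. Since $K,L\normal G$, the subgroups $K\cap H$ and $L\cap H$ are closed and normal in $H$, so $(K\cap H)/(L\cap H)$ is a closed normal factor of $H$, and by hypothesis it covers $\mf{a}\in\mf{B}_H$. I would then apply the refinement consequence of Theorem~\ref{thm:Schreier_refinement} recorded above (a closed normal factor covering a block refines to a representative of that block) to obtain a representative $A/B\in\mf{a}$ with $L\cap H\le B<A\le K\cap H$. Evaluating the definition of the block centralizer at the representative $A/B$, the desired inclusion $\CC_H(K/L)\le\CC_H(\mf{a})$ reduces to showing that every $h\in H$ with $[h,K]\subseteq L$ satisfies $[h,A]\subseteq B$.

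So I would fix such an $h$ and compute. As $K\normal G$ and $h\in H\le G$, the element $h$ normalizes $K$; as $A\normal H$ (since $A/B$ is a chief factor of $H$) and $h\in H$, it also normalizes $A$. Hence $[h,A]\subseteq[h,K]\subseteq L$ (using $A\le K$) and $[h,A]\subseteq A$ (using $A\normal H$), so $[h,A]\subseteq L\cap A$. Finally $A\le H$ forces $L\cap A\subseteq L\cap H$, and $L\cap H\le B$ by the choice of $A/B$, whence $[h,A]\subseteq B$; this is exactly what was needed.

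There is no substantive obstacle here: the argument is pure bookkeeping with the normality hypotheses, which is presumably why the observation is offered as a familiarization exercise. The two points that merit a moment's care are that $h$ normalizes both $K$ and $A$, so that the inclusions $[h,A]\subseteq[h,K]$ and $[h,A]\subseteq A$ are legitimate, and that Theorem~\ref{thm:Schreier_refinement} (hence the refinement statement derived from it) applies to $H$ — which is fine since $H$, being a closed subgroup of a Polish group, is itself Polish.
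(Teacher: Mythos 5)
Your proof is correct; the paper itself offers no argument for this observation (it is left as an exercise, with a citation to \cite[Section 9.1]{RW_P_15}), and what you give is precisely the intended one: refine the covering factor to a representative $A/B\in\mf{a}$ with $L\cap H\le B<A\le K\cap H$, then check $[h,A]\le [h,K]\cap A\le L\cap A\le L\cap H\le B$ for $h\in\CC_H(K/L)$. Your two points of care (normality of $A$ in $H$, and applicability of Theorem~\ref{thm:Schreier_refinement} to the Polish group $H$) are exactly the right ones to flag.
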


\begin{obs}\label{obs:covering}
Let $H$ be a closed subgroup of a Polish group $G$ and $\mf{a} \in \mf{B}_H$ be extendable to $G$.  Then a closed normal factor $K/L$ of $G$ covers $\mf{a}^G$ if and only if $K\cap H/L\cap H$ covers $\mf{a}$.
\end{obs}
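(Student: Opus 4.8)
The plan is to peel the definition of ``covers'' for a closed normal factor down to two containment conditions --- one for the numerator and one for the denominator --- and then invoke the defining property of an extension, which only constrains closed normal \emph{subgroups}, separately for each.

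Write $\mf{b} := \mf{a}^G$. First I would record what the two sides mean. Since $K/L$ is a closed normal factor of $G$ (so $L \le K$ are closed normal subgroups of $G$), by definition $K/L$ covers $\mf{b}$ exactly when $L \le \CC_G(\mf{b})$ and $K \not\le \CC_G(\mf{b})$. Similarly, $K \cap H$ and $L \cap H$ are closed normal subgroups of $H$ with $L\cap H \le K\cap H$, and $(K\cap H)/(L\cap H)$ covers $\mf{a}$ exactly when $L\cap H \le \CC_H(\mf{a})$ and $K\cap H \not\le \CC_H(\mf{a})$. Next I would reformulate the extension hypothesis: regarding a closed normal subgroup $N$ as the factor $N/\triv$, the statement ``$N$ covers a block'' means precisely ``$N$ is not contained in the centralizer of that block''. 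So the hypothesis that $\mf{b}$ is the extension of $\mf{a}$ to $G$ states that, for every closed normal subgroup $M$ of $G$, we have $M \not\le \CC_G(\mf{b})$ if and only if $M\cap H \not\le \CC_H(\mf{a})$; equivalently, $M \le \CC_G(\mf{b})$ if and only if $M\cap H \le \CC_H(\mf{a})$.

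Then I would finish by applying this last equivalence with $M := L$ and with $M := K$, both legitimate since $K$ and $L$ are closed normal subgroups of $G$. This yields $L \le \CC_G(\mf{b}) \iff L\cap H \le \CC_H(\mf{a})$ and $K \le \CC_G(\mf{b}) \iff K\cap H \le \CC_H(\mf{a})$. Conjoining the first with the negation of the second shows that $L \le \CC_G(\mf{b})$ and $K\not\le\CC_G(\mf{b})$ hold simultaneously if and only if $L\cap H\le\CC_H(\mf{a})$ and $K\cap H\not\le\CC_H(\mf{a})$ hold simultaneously --- that is, $K/L$ covers $\mf{b}$ if and only if $(K\cap H)/(L\cap H)$ covers $\mf{a}$, which is the claim. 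There is no substantive obstacle here; the one point needing care is not to apply the extension property to the factor $K/L$ directly (it is stated only for subgroups), but to split it into the separate conditions on $K$ and on $L$ and recombine them.
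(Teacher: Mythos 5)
Your proof is correct. The paper does not actually supply a proof of this observation (it is quoted from the companion paper and explicitly left as an exercise for the reader), but your argument --- unfolding ``$K/L$ covers'' into the two centralizer containments $L \le \CC_G(\mf{a}^G)$ and $K \not\le \CC_G(\mf{a}^G)$, and then applying the defining property of the extension separately to the closed normal subgroups $K$ and $L$ --- is exactly the intended one, and your closing caution about not applying the extension property to the factor $K/L$ directly is the right point to flag.
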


In the case of minimally covered blocks, extensions are easier to identify:

\begin{lem}[{\cite[Lemma~9.7]{RW_P_15}}]\label{lem:extension:min_covered}
	Let $G$ be a topological group with $H$ a closed subgroup of $G$ and $\mf{a} \in \mf{B}^{\min}_H$.  Then $\mf{a}$ is extendable to $G$ if and only if there exists $\mf{b} \in \mf{B}^{\min}_G$ such that $(G_{\mf{b}} \cap H)/\CC_{(G_{\mf{b}} \cap H)}(\mf{b})$ covers $\mf{a}$.  If $\mf{a}$ is extendable to $G$, then $\mf{a}^G = \mf{b}$.
\end{lem}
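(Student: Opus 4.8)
The plan is to prove the two implications separately, using only the definitions of covering, extension, and minimal coverage, together with Observations~\ref{obs:centralizers} and~\ref{obs:covering}, the uniqueness of extensions \cite[Lemma~9.2]{RW_P_15}, and the fact that $G_{\mf{b}}/\CC_{G_{\mf{b}}}(\mf{b})$ is a representative of $\mf{b}$ \cite[Proposition~7.13]{RW_P_15}. The one structural fact I would isolate at the start is a consequence of minimality: if $\mf{c}\in\mf{B}_N^{\min}$ and $L\normal N$ is closed, then $L$ covers $\mf{c}$ if and only if $N_{\mf{c}}\le L$. One direction is the defining property of the least cover $N_{\mf{c}}$; for the other, $N_{\mf{c}}\not\le\CC_N(\mf{c})$ forces $L\not\le\CC_N(\mf{c})$ whenever $N_{\mf{c}}\le L$, and $\triv\le\CC_N(\mf{c})$ always, so $L$ covers $\mf{c}$.

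For the forward direction, assume $\mf{a}$ is extendable and put $\mf{b}:=\mf{a}^G$. For a closed $N\normal G$, the definition of extension and the preceding remark give
\[
N\text{ covers }\mf{b}\iff N\cap H\text{ covers }\mf{a}\iff H_{\mf{a}}\le N\cap H\iff H_{\mf{a}}\le N,
\]
the last step since $H_{\mf{a}}\le H$. So the closed normal subgroups of $G$ covering $\mf{b}$ are exactly those containing $H_{\mf{a}}$, and their intersection is again such a subgroup, hence the least one; thus $\mf{b}\in\mf{B}_G^{\min}$. Now $G_{\mf{b}}$ covers $\mf{b}$, i.e.\ $G_{\mf{b}}\not\le\CC_G(\mf{b})$, while $\CC_{G_{\mf{b}}}(\mf{b})=\CC_G(\mf{b})\cap G_{\mf{b}}\le\CC_G(\mf{b})$, so the closed normal factor $G_{\mf{b}}/\CC_{G_{\mf{b}}}(\mf{b})$ of $G$ covers $\mf{b}=\mf{a}^G$. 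Observation~\ref{obs:covering} then gives that $(G_{\mf{b}}\cap H)/(\CC_{G_{\mf{b}}}(\mf{b})\cap H)$ covers $\mf{a}$, and since $\CC_{G_{\mf{b}}}(\mf{b})\cap H=\CC_G(\mf{b})\cap G_{\mf{b}}\cap H=\CC_{(G_{\mf{b}}\cap H)}(\mf{b})$, this is precisely the asserted covering.

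For the reverse direction, suppose $\mf{b}\in\mf{B}_G^{\min}$ satisfies that $(G_{\mf{b}}\cap H)/\CC_{(G_{\mf{b}}\cap H)}(\mf{b})$ covers $\mf{a}$; by uniqueness of extensions it suffices to show that, for every closed $K\normal G$, $K$ covers $\mf{b}$ iff $K\cap H$ covers $\mf{a}$. If $K$ covers $\mf{b}$ then $G_{\mf{b}}\le K$, hence $G_{\mf{b}}\cap H\le K\cap H$; as $G_{\mf{b}}\cap H\not\le\CC_H(\mf{a})$ (the ``not below'' half of the hypothesis), $K\cap H$ covers $\mf{a}$. For the converse, apply Observation~\ref{obs:centralizers} to the closed normal factor $G_{\mf{b}}/\CC_{G_{\mf{b}}}(\mf{b})$ of $G$: its trace on $H$ is exactly $(G_{\mf{b}}\cap H)/\CC_{(G_{\mf{b}}\cap H)}(\mf{b})$, which covers $\mf{a}$, so $\CC_H(G_{\mf{b}}/\CC_{G_{\mf{b}}}(\mf{b}))\le\CC_H(\mf{a})$; since $G_{\mf{b}}/\CC_{G_{\mf{b}}}(\mf{b})$ represents $\mf{b}$, its $G$-centralizer is $\CC_G(\mf{b})$, so $\CC_G(\mf{b})\cap H\le\CC_H(\mf{a})$. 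Hence if $K\cap H$ covers $\mf{a}$, i.e.\ $K\cap H\not\le\CC_H(\mf{a})$, then $K\cap H\not\le\CC_G(\mf{b})$, so $K\not\le\CC_G(\mf{b})$, i.e.\ $K$ covers $\mf{b}$.

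I expect the only real difficulty to be the converse implication in the reverse direction: it rests on the inclusion $\CC_G(\mf{b})\cap H\le\CC_H(\mf{a})$, and the crux is recognizing that Observation~\ref{obs:centralizers}, applied to the lowermost representative of $\mf{b}$ together with the identity $\CC_{(G_{\mf{b}}\cap H)}(\mf{b})=\CC_{G_{\mf{b}}}(\mf{b})\cap H$, delivers exactly this. Everything else is routine manipulation of the definitions and the minimality of $H_{\mf{a}}$ and $G_{\mf{b}}$.
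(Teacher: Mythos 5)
Your argument is correct: the forward direction correctly deduces minimal coverage of $\mf{b}=\mf{a}^G$ from the equivalence ``$N$ covers $\mf{b}$ iff $H_{\mf{a}}\le N$'', and the reverse direction correctly isolates the key inclusion $\CC_G(\mf{b})\cap H\le \CC_H(\mf{a})$ via Observation~\ref{obs:centralizers} applied to the lowermost representative. The paper itself imports this lemma from \cite{RW_P_15} without reproving it, so there is no in-paper proof to compare against, but your proof is the expected one from the definitions and the stated observations.
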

We can say more when the subgroup $H$ is also normal.
\begin{prop}[{\cite[Proposition~9.8]{RW_P_15}}]\label{prop:induced_block}
	Let $G$ be a topological group with $H$ a closed normal subgroup of $G$ and $\mf{a} \in \mf{B}^{\min}_H$.  Then $\mf{a}$ is extendable to $G$, and the extension $\mf{a}^G$ has least representative $M/N$ where
	\[
	M := \ngrp{H_{\mf{a}}}_G\quad \text{ and } \quad N := \bigcap_{g \in G}g\CC_G(\mf{a})g\inv \cap M.
	\]
\end{prop}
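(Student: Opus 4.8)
The plan is to build the extension explicitly as the association class of the factor $M/N$ in the statement, and then to read off both extendability and the minimality of $M/N$ from a single structural dichotomy for $G$-invariant closed subgroups lying below $M$.

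First some bookkeeping. Fix the lowermost representative $H_{\mf{a}}/\CC_{H_{\mf{a}}}(\mf{a})$ of $\mf{a}$ in $H$; this is a non-abelian chief factor of $H$ with both terms closed and normal in $H$, by \cite[Proposition~7.13]{RW_P_15}. Since $H \normal G$, every $G$-conjugate of $H_{\mf{a}}$ again lies in $H$, so $M = \ngrp{H_{\mf{a}}}_G$ is a closed subgroup of $H$ that is normal in $G$; dually $N$ is a closed $G$-invariant subgroup (the intersection $\bigcap_{g\in G}g\CC_G(\mf{a})g^{-1}$ is $G$-invariant by construction and $M$ is $G$-invariant), with $N \le M$. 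Writing $\Omega$ for the orbit of $\mf{a}$ under the conjugation action of $G$ on $\mf{B}_H$, one has $\CC_G(\mf{a}) \cap H = \CC_H(\mf{a})$ and, since $M \le H$, also $M = \cgrp{H_{\mf{b}} \mid \mf{b} \in \Omega}$ and $N = \bigcap_{\mf{b}\in\Omega}\CC_H(\mf{b}) \cap M$; in particular $N \le \CC_H(\mf{a})$ and $H_{\mf{a}} \le M$ while $H_{\mf{a}} \not\le N$ (as $H_{\mf{a}}$ covers $\mf{a}$), so $N < M$.

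The heart of the proof, and the step I expect to be the main obstacle, is the dichotomy: \emph{for every closed $P \normal G$ with $P \le M$, either $P \cap H_{\mf{a}}$ covers $\mf{a}$, in which case $P = M$, or $P \cap H_{\mf{a}}$ avoids $\mf{a}$, in which case $P \le N$}. The first alternative is easy: $P \cap H_{\mf{a}}$ is a closed normal subgroup of $H$ inside $H_{\mf{a}}$, so if it covers $\mf{a}$, minimality of $H_{\mf{a}}$ gives $H_{\mf{a}} \le P$, whence $M = \ngrp{H_{\mf{a}}}_G \le P \le M$. For the second, one observes $\ol{[P,H_{\mf{a}}]} \le P \cap H_{\mf{a}}$; if $P \cap H_{\mf{a}}$ avoids $\mf{a}$, i.e.\ $P \cap H_{\mf{a}} \le \CC_H(\mf{a})$, then $\ol{[P,H_{\mf{a}}]} \le \CC_H(\mf{a}) \cap H_{\mf{a}} = \CC_{H_{\mf{a}}}(\mf{a})$, so $P \le \CC_H(\mf{a})$; conjugating and using $P \normal G$ gives $P \le \bigcap_{\mf{b}\in\Omega}\CC_H(\mf{b})$, hence $P \le N$. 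Applying the dichotomy to subgroups $P$ with $N \le P \le M$ shows $M/N$ has no proper nontrivial closed $G$-invariant subgroup, and it is non-abelian since $H_{\mf{a}}N/N \simeq H_{\mf{a}}/(H_{\mf{a}}\cap N)$ surjects onto the non-abelian group $H_{\mf{a}}/\CC_{H_{\mf{a}}}(\mf{a}) \in \mf{a}$ (using $H_{\mf{a}}\cap N \le \CC_{H_{\mf{a}}}(\mf{a})$). Thus $M/N$ is a non-abelian chief factor of $G$, and we put $\mf{b} := [M/N] \in \mf{B}_G$.

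It remains to verify the extension property for $\mf{b}$ and that $M/N$ is the least representative. For a closed $K \normal G$, covering $\mf{b}$ means $K \not\le \CC_G(\mf{b})$ and covering $\mf{a}$ means $K\cap H \not\le \CC_H(\mf{a})$, so it suffices to show $K \le \CC_G(\mf{b}) \iff K\cap H \le \CC_H(\mf{a})$; since $\CC_G(\mf{b})$ is the centralizer of any representative, in particular of $M/N$, the forward implication is a short commutator computation using $H_{\mf{a}}\le M$ and $N\cap H_{\mf{a}}\le\CC_{H_{\mf{a}}}(\mf{a})$, and the reverse follows by applying the dichotomy to $\ol{[K,M]}$, which is contained in $K\cap M\le K\cap H\le\CC_H(\mf{a})$ and hence, not covering $\mf{a}$, lies in $N$. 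Thus $\mf{a}$ is extendable to $G$ with $\mf{a}^G = \mf{b}$. Finally, any closed $G$-normal subgroup covering $\mf{a}^G$ meets $H$ in a subgroup covering $\mf{a}$, hence contains $H_{\mf{a}}$ and therefore $M$; as $M$ itself covers $\mf{a}^G$, we get $M = G_{\mf{a}^G}$, and the lowermost representative is $M/(\CC_G(\mf{b})\cap M)$. That $\CC_G(\mf{b}) \cap M = N$ is the last application of the dichotomy: for $g \in \CC_G(\mf{b})\cap M$ the subgroup $\ngrp{g}_G$ lies in $M$; it cannot equal $M$, for then $M/N$ would be the normal closure in $G/N$ of the element $gN$, which is central in $M/N$, forcing $M/N$ to be abelian — a contradiction; hence $\ngrp{g}_G \le N$ and $g \in N$.
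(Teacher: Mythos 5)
Your argument is correct. Note that the paper itself does not prove this proposition — it is quoted verbatim from \cite[Proposition~9.8]{RW_P_15} — so there is no in-paper proof to compare against; I am assessing your proposal on its own terms.

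The structure is sound and all the key steps check out. The covering dichotomy for closed $P \normal G$ with $P \le M$ is the right engine: the first branch correctly uses that $P \cap H_{\mf{a}}$ is a closed normal subgroup of $H$ together with the minimality of $H_{\mf{a}}$, and the second branch correctly passes from $\ol{[P,H_{\mf{a}}]} \le \CC_{H_{\mf{a}}}(\mf{a})$ to $P \le \CC_H(\mf{a})$ (centralizing the lowermost representative is centralizing the block) and then to $P \le N$ by conjugating. From there, chiefness and non-abelianness of $M/N$, both directions of the covering equivalence, the identification $M = G_{\mf{a}^G}$, and the identity $\CC_G(\mf{a}^G) \cap M = N$ all follow as you describe; the final step, ruling out $\ngrp{g}_G = M$ for $g \in \CC_G(\mf{a}^G) \cap M$ because a dense subgroup of $M/N$ generated by central elements would force $M/N$ to be abelian, is a clean way to close the loop. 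Two points you leave implicit but which are immediate: the inclusion $N \le \CC_G(\mf{a}^G)$ (since $[N,M] \le N$), and the fact that the properties of $\CC_G(\mf{a})$ you use ($\CC_G(\mf{a}) \cap H = \CC_H(\mf{a})$ and equivariance under conjugation) are part of the framework presupposed by the statement itself. Neither is a gap.
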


The previous results are particularly useful in the setting of compactly generated groups, where minimally covered blocks are known to exist.
\begin{prop}[{\cite[Proposition 4.10, Corollary 4.9]{RW_EC_15}}]\label{prop:min_covered}
If $G$ is a compactly generated \lcsc group, then all non-negligible chief blocks are minimally covered, and there are only finitely many non-negligible chief blocks.
\end{prop}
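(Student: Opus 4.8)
The plan is to separate the two assertions: the finiteness of $\mf{B}^*_G$ is almost immediate from what is already available, while the minimal-coverage statement carries all the weight. For the bound on the number of non-negligible blocks I would apply Theorem~\ref{thm:chief_series} to the trivial series $\triv \le G$ to obtain an essentially chief series $\triv = A_0 \le \cdots \le A_m = G$ of finite length, and then invoke Corollary~\ref{cor:nonneg}, which furnishes a bijection between $\mf{B}^*_G$ and the set of non-negligible chief factors occurring among the $A_i/A_{i-1}$; that set has size at most $m$, so $|\mf{B}^*_G| < \infty$.

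For minimal coverage, fix $\mf{a} \in \mf{B}^*_G$ and set $C := \CC_G(\mf{a})$. By Proposition~\ref{prop:upper-rep}, $G/C$ is monolithic with socle $M/C$, and $M/C$ is a representative of $\mf{a}$; being the socle of a monolithic group, $M/C$ is a minimal nontrivial closed normal subgroup of $G/C$, hence a non-abelian chief factor of $G$, and since $\mf{a}$ is non-negligible this chief factor is neither compact nor discrete. First I would reduce to closed normal subgroups of $G$ lying inside $M$: if $N \normal G$ is closed and covers $\mf{a}$, that is, $N \not\le C$, then $NC/C$ is a nontrivial closed normal subgroup of the monolithic group $G/C$, so $NC \ge M$, and the modular law (using $C \le M$) gives $(N \cap M)C = NC \cap M = M$, so $N \cap M$ also covers $\mf{a}$. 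Consequently $\mf{a}$ is minimally covered, with $G_{\mf{a}}$ the least element, precisely when the family
\[
\mc{N} := \{ N \normal G \text{ closed} : N \le M \text{ and } NC = M\}
\]
has a least element; here we used that, for $N \le M$, the condition $N \not\le C$ is equivalent to $NC/C$ being a nontrivial closed normal subgroup of the minimal normal subgroup $M/C$, that is, to $NC = M$. Next I would check that $\mc{N}$ is a filtering family: if $N_1, N_2 \in \mc{N}$ and $(N_1 \cap N_2)C \ne M$, then $(N_1 \cap N_2)C/C$ is a proper closed normal subgroup of the chief factor $M/C$, hence trivial, so $N_1 \cap N_2 \le C$; then $[N_1, N_2] \le N_1 \cap N_2 \le C$, so the images of $N_1$ and $N_2$ in $G/C$ — both equal to $M/C$, being nontrivial closed normal subgroups of the monolith — commute, forcing $M/C$ to be abelian and contradicting that $\mf{a}$ is a non-abelian block. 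Thus $N_1 \cap N_2 \in \mc{N}$.

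The hard part is to show that this filtering family $\mc{N}$ actually attains a minimum, equivalently that it is stable under filtered intersections; once this is known, $G_{\mf{a}} := \bigcap_{N \in \mc{N}} N$ is the least closed normal subgroup covering $\mf{a}$, and we are done. This is the step where compact generation is indispensable and where the hypothesis that $\mf{a}$ is non-negligible cannot be dropped: for the compact group $\prod_{i \in \Zb} A_5$ the corresponding family of closed normal subgroups mapping onto its (negligible) chief factor has no least element, since it can be shrunk indefinitely along commuting direct factors. To exclude such behaviour for a genuinely non-negligible block one has to use that $G$ is compactly generated, via the essentially chief series of Theorem~\ref{thm:chief_series} and the finer analysis of \cite{RW_EC_15}: a non-negligible chief factor of a compactly generated \lcsc group is itself compactly generated, and a compactly generated, non-compact, non-discrete chief factor cannot be realized as a strictly descending filtered intersection of members of $\mc{N}$ collapsing into $C$. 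I expect this last point to be the main obstacle — everything preceding it is formal bookkeeping with centralizers, monoliths, and the modular law, whereas this is exactly the structural input supplied by \cite{RW_EC_15}.
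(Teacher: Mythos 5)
This proposition is not proved in the present paper at all --- it is imported verbatim from \cite{RW_EC_15} (Proposition 4.10 and Corollary 4.9 there) --- so the comparison is really between your sketch and the argument of that earlier paper. Your treatment of finiteness is fine: it is exactly Corollary~\ref{cor:nonneg}, which the paper derives from Theorem~\ref{thm:Schreier_refinement} together with Theorem~\ref{thm:chief_series}, independently of minimal coverage, so there is no circularity. Your reduction of minimal coverage to the family $\mc{N}$ of closed normal subgroups $N \le M$ with $\ol{NC} = M$, and the verification that $\mc{N}$ is filtering, are also essentially sound (one small correction: $NC$ need not be closed, so the dichotomy should be phrased as $\ol{NC} = C$ versus $\ol{NC} = M$, and the modular-law identity $(N\cap M)C = NC \cap M = M$ should be replaced by the observation that a closed normal subgroup of $G$ between $C$ and $M$ is $C$ or $M$).

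The genuine gap is that the entire content of the proposition is the step you defer: showing that the filtering family $\mc{N}$ attains its infimum. You assert that "a compactly generated, non-compact, non-discrete chief factor cannot be realized as a strictly descending filtered intersection of members of $\mc{N}$ collapsing into $C$," but this is precisely the statement to be proved, and no argument is offered. The auxiliary claim you lean on --- that a non-negligible chief factor of a compactly generated group is itself compactly generated --- is also not established anywhere (Proposition~\ref{prop:cg_block_property} only says compact generation is a block property, i.e.\ constant on association classes, not that it holds). The missing structural input is the compactness theorem for filtering families of closed normal subgroups in compactly generated locally compact groups, namely \cite[Theorem~3.3]{RW_EC_15} (used elsewhere in this paper, e.g.\ in the proof of Proposition~\ref{prop:residual_rank}): if $\mc{K}$ is a filtering family of closed normal subgroups of a compactly generated locally compact group with $\bigcap \mc{K} = \triv$, then some $K \in \mc{K}$ has a compact open subgroup that is normal in $G$. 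Applying this in $G/D$ with $D := \bigcap \mc{N}$, if $D$ failed to cover $\mf{a}$ one would find $N \in \mc{N}$ that is compact-by-discrete modulo $D$, forcing the representative of $\mf{a}$ covered by $N$ to be associated to a compact or discrete factor, contradicting non-negligibility. Without this (or an equivalent substitute) your proof does not close; as written it reduces the proposition to an unproven claim of comparable depth.
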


\subsection{Extending robust blocks}
We now show that the robust blocks of any compactly generated open subgroup of a \lcsc group $G$ extend to $G$. Let us first note that robustness is preserved by extension, provided the extension exists.

\begin{lem}\label{lem:quasi-center_chief}
	Suppose that $G$ is an \lcsc group with $K/L$ a quasi-discrete closed normal factor of $G$. If $L\leq B<A\leq K$ are such that $A/B$ is a chief factor of $G$, then $A/B$ is quasi-discrete.
\end{lem}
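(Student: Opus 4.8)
The plan is to pass to a quotient in which the given factor becomes a minimal closed normal subgroup, and then play minimality off against the fact that the quasi-center is a characteristic subgroup. First I would replace $G$ by $\bar{G} := G/B$ and set $\bar{K} := K/B$, $\bar{A} := A/B$. The closed normal subgroups of $\bar G$ correspond to the closed normal subgroups of $G$ containing $B$, and $A/B$ is a chief factor of $G$, so $\bar A$ is a minimal non-trivial closed normal subgroup of $\bar G$. Since $L \le B \le K$, the group $\bar K$ is a Hausdorff quotient of $K/L$; the image of the (dense) quasi-center of $K/L$ under the open quotient map $K/L \to \bar K$ is a dense subgroup of $\bar K$ contained in $\QZ(\bar K)$, so $\bar K$ is again quasi-discrete. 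It thus suffices to show: if $\bar G$ is an \lcsc group, $\bar A \le \bar K$ are closed normal subgroups of $\bar G$ with $\bar K$ quasi-discrete, and $\bar A$ is a minimal non-trivial closed normal subgroup of $\bar G$, then $\bar A$ is quasi-discrete.

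I would then examine $Q := \QZ(\bar K) \cap \bar A$. As $\QZ(\bar K)$ is characteristic in $\bar K \normal \bar G$, it is normal in $\bar G$; hence $Q \normal \bar G$ and $\overline{Q}$ is a closed normal subgroup of $\bar G$ contained in $\bar A$. By minimality of $\bar A$, either $\overline{Q} = \triv$ or $\overline{Q} = \bar A$. If $Q = \triv$, then $\QZ(\bar K)$ and $\bar A$ are normal subgroups of $\bar K$ with trivial intersection and therefore commute; since $\QZ(\bar K)$ is dense in $\bar K$ and centralizers are closed, $\bar A$ centralizes $\bar K$, so $\bar A \le \Z(\bar K)$, whence $\bar A$ is abelian and in particular quasi-discrete. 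If $\overline{Q} = \bar A$, then for each $a \in Q$ we have $\CC_{\bar A}(a) = \CC_{\bar K}(a) \cap \bar A$, which is open in $\bar A$ since $\CC_{\bar K}(a)$ is open in $\bar K$; thus $Q \le \QZ(\bar A)$, and $Q$ being dense in $\bar A$ shows $\bar A$ is quasi-discrete. In either case $\bar A = A/B$ is quasi-discrete.

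I do not expect a genuine obstacle. The points that need a little care are the verification that the further quotient $K/L \to K/B$ preserves quasi-discreteness and that $\QZ(\bar K)$ is normal in $\bar G$, since these are precisely what make the minimality dichotomy applicable; it is also worth noting that the branch $\overline{Q} = \triv$ already disposes of abelian chief factors, so no separate argument for the abelian case is needed.
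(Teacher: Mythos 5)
Your proof is correct and follows essentially the same route as the paper's: both pass to the quasi-discrete quotient $K/B$ and then split on whether $A/B$ meets $\QZ(K/B)$ trivially (forcing $A/B$ central, hence abelian) or not (forcing a dense quasi-center by minimality/characteristic simplicity of the chief factor). The only cosmetic difference is that you invoke minimality of $A/B$ as a closed normal subgroup of $G/B$, whereas the paper invokes the characteristic nature of the quasi-center inside the topologically characteristically simple group $A/B$.
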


\begin{proof}
	The factor $K/B$ is quasi-discrete, and $A/B$ is a normal subgroup of $K/B$. If $A/B$ intersects $\QZ(K/B)$ trivially, then $A/B$ is central in $K/B$ and a fortiori is quasi-discrete. If the group $A/B$ has non-trivial intersection with $\QZ(K/B)$, then $A/B$ has a non-trivial quasi-center. Since the quasi-center is characteristic and $A/B$ is a chief factor of $G$, we conclude that $A/B$ is quasi-discrete.
\end{proof}

\begin{prop}\label{prop:robust_extension}
Suppose that $G$ is an \lcsc group with $O$ an open subgroup and let $\mf{a}$ be a robust chief block of $O$.
\begin{enumerate}[(1)]
\item If $K/L$ is a closed normal factor of $G$ such that $K\cap O/L\cap O$ covers $\afr$, then $K/L$ is neither compact nor quasi-discrete.
\item If $\mf{a}$ extends to $G$, then $\mf{a}^G$ is robust.
\end{enumerate}
\end{prop}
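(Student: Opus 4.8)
The plan is to establish part (1) directly by a covering-and-transfer argument, and then read off part (2) from (1) together with Observation~\ref{obs:covering}. For (1), note first that since $O$ is open in the \lcsc group $G$ it is itself \lcsc, so the block theory of \cite{RW_P_15} applies to $O$. I would begin by observing that $(K\cap O)/(L\cap O)$ is a closed normal factor of $O$ covering $\mf{a}$, so the remark following the definition of covering (a consequence of Theorem~\ref{thm:Schreier_refinement}) produces a representative $A/B \in \mf{a}$ with $L\cap O \le B < A \le K\cap O$; robustness of $\mf{a}$ then guarantees that $A/B$ is neither compact nor quasi-discrete. The crucial step is the topological identification of $(K\cap O)/(L\cap O)$ with an open subgroup of $K/L$: the subgroup $(K\cap O)L$ of $K$ contains the open subgroup $K\cap O$, hence is itself open (and closed) in $K$, so $(K\cap O)L/L$ is an open subgroup of $K/L$, and the natural continuous bijective homomorphism $(K\cap O)/(L\cap O) \to (K\cap O)L/L$ is a topological isomorphism by the open mapping theorem for Polish groups. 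Write $P$ for this open subgroup of $K/L$.

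Now I would argue by contradiction. If $K/L$ is compact, then $P$ is compact, the subgroups $A/(L\cap O)$ and $B/(L\cap O)$ of $P$ are closed and hence compact, and therefore $A/B$ is compact, contradicting robustness of $\mf{a}$. If $K/L$ is quasi-discrete, then, using that an open subgroup of a quasi-discrete group is quasi-discrete (the quasi-center meets it in a dense set and centralizers remain open), $P$ is a quasi-discrete closed normal factor of $O$; as $A/B$ is a chief factor of $O$ with $L\cap O \le B < A \le K\cap O$, Lemma~\ref{lem:quasi-center_chief}, applied with $O$ in place of $G$, forces $A/B$ to be quasi-discrete, again contradicting robustness. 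This proves (1).

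For (2), assume $\mf{a}$ extends to $G$ and let $K/L$ be an arbitrary representative of $\mf{a}^G$. Since a representative of a block covers that block, $K/L$ covers $\mf{a}^G$, so Observation~\ref{obs:covering} (applied to the closed subgroup $O \le G$) shows $(K\cap O)/(L\cap O)$ covers $\mf{a}$. Part (1) then gives that $K/L$ is neither compact nor quasi-discrete; since $K/L$ was an arbitrary representative, no representative of $\mf{a}^G$ is compact or quasi-discrete, i.e., $\mf{a}^G$ is robust.

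I expect the only real subtlety to lie in the bookkeeping in part (1): the block-theoretic data lives in $O$ (via the representative $A/B$ and the centralizer $\CC_O(\mf{a})$) while the compactness or quasi-discreteness hypothesis lives in $K/L$, and the bridge between the two is precisely the identification of $(K\cap O)/(L\cap O)$ with the open subgroup $(K\cap O)L/L$ of $K/L$. Once that identification is in place, transferring compactness is routine and transferring quasi-discreteness is exactly Lemma~\ref{lem:quasi-center_chief}.
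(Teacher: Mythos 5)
Your proposal is correct and follows essentially the same route as the paper: identify $(K\cap O)/(L\cap O)$ with the open subgroup $(K\cap O)L/L$ of $K/L$, transfer compactness directly and quasi-discreteness via Lemma~\ref{lem:quasi-center_chief}, and deduce (2) from (1) together with Observation~\ref{obs:covering}. Your explicit justification of the topological isomorphism via the open mapping theorem is a welcome bit of care that the paper leaves implicit.
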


\begin{proof}
For (1), the subgroup $(K\cap O)L/L$ is an open subgroup of $K/L$, and $(K \cap O)L/L \simeq (K \cap O)/(L\cap O)$.  If $K/L$ is compact, then $(K \cap O)/(L \cap O)$ is compact, and since a representative of $\mf{a}$ occurs as a closed normal factor of $(K \cap O)/(L \cap O)$, it follows that $\mf{a}$ is negligible, which is absurd.  If instead $K/L$ is quasi-discrete, then $(K \cap O)L/L$ is also quasi-discrete since it is an open subgroup of $K/L$, so $(K \cap O)/(L \cap O)$ is quasi-discrete.  In this case, Lemma~\ref{lem:quasi-center_chief} ensures that $\mf{a}$ has a quasi-discrete representative, which contradicts the hypothesis that $\mf{a}$ is robust. 

Claim (2) now follows immediately from (1) and Observation~\ref{obs:covering}.
\end{proof}

We now isolate a set of blocks that arise from robust blocks of compactly generated open subgroups.

\begin{defn}
Let $G$ be an \lcsc group and let $\mf{a} \in \mf{B}_G$. We say that $\mf{a}$ is a \defbold{regionally robust block}\index{chief block!regionally robust} if there exists a compactly generated open subgroup $O$ of $G$ and a robust block $\mf{b}$ of $O$ such that $\mf{a}$ is the extension of $\mf{b}$ in $G$.  The collection of regionally robust blocks is denoted by $\mf{B}_G^{rr}$.
\end{defn}

Proposition~\ref{prop:min_covered} and Lemma~\ref{lem:extension:min_covered} ensure that every regionally robust block is minimally covered, and Proposition~\ref{prop:robust_extension} ensures that every regionally robust block is robust.  If $G$ is compactly generated, we now see that $\mf{B}_G^{rr} \subseteq \mf{B}_G^r$, so indeed $\mf{B}_G^{rr} = \mf{B}_G^r$.

\begin{lem}\label{lem:limit_blocks_number}
	An \lcsc group has at most countably many regionally robust blocks. 
\end{lem}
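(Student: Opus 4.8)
The plan is to reduce the count to the finiteness of $\mf{B}_{O}^{r}$ for compactly generated $O$, using transitivity of block extensions. Let $G$ be an \lcsc group. Since $G$ is $\sigma$-compact, I would first fix an increasing exhaustion $G=\bigcup_{n\in\Nb}O_n$ of $G$ by compactly generated open subgroups. The elementary fact I will lean on is that \emph{every} compactly generated open subgroup $O$ of $G$ is contained in some $O_n$: a compact generating set for $O$ is covered by the increasing chain $(O_n)_{n\in\Nb}$ and hence lies in a single $O_n$, so $O\le O_n$.

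Next I would show that each regionally robust block of $G$ is the $G$-extension of a robust block of some $O_n$. Let $\mf{a}\in\mf{B}_G^{rr}$; by definition $\mf{a}=\mf{b}^G$ for a robust block $\mf{b}$ of some compactly generated open $O\le G$, and we may pick $n$ with $O\le O_n$. Then $O$ is a compactly generated open subgroup of the \lcsc group $O_n$, so Theorem~\ref{thm:limit_extension} gives that $\mf{b}$ extends to $O_n$, and Proposition~\ref{prop:robust_extension}(2), applied with $O_n$ as the ambient group, shows $\mf{b}^{O_n}$ is robust. Since $\mf{b}$ is extendable to $G$ by hypothesis, transitivity of extensions (Observation~\ref{obs:extensions_transitive}, applied along $O\le O_n\le G$) shows $\mf{b}^{O_n}$ is extendable to $G$ and $\mf{a}=\mf{b}^G=(\mf{b}^{O_n})^G$. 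Consequently
\[
\mf{B}_G^{rr}\subseteq\bigcup_{n\in\Nb}\{\,\mf{c}^G\mid \mf{c}\in\mf{B}_{O_n}^{r}\text{ and }\mf{c}\text{ extends to }G\,\}.
\]

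Finally, each $O_n$ is compactly generated, hence has only finitely many non-negligible chief blocks by Proposition~\ref{prop:min_covered} and therefore only finitely many robust chief blocks (recall robust blocks are non-negligible). The right-hand side above is thus a countable union of finite sets, so $\mf{B}_G^{rr}$ is countable. I do not expect a genuine obstacle; the only care needed is to apply Theorem~\ref{thm:limit_extension} and Proposition~\ref{prop:robust_extension} with $O_n$ rather than $G$ as ambient group (legitimate, since $O$ is a compactly generated open subgroup of $O_n$), and to note that the hypothesis $\mf{a}=\mf{b}^G$ already records the extendability of $\mf{b}$ to $G$ that drives the transitivity step.
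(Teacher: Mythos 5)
Your proof is correct, but it is organized differently from the paper's. The paper's argument is a direct count: a regionally robust block of $G$ is determined by a pair $(O,\mf{b})$ with $O$ a compactly generated open subgroup and $\mf{b}\in\mf{B}^r_O$; there are only countably many compactly generated open subgroups of an \lcsc group (an ``easy counting argument'', essentially Proposition~\ref{prop:factor} applied to $G/G^\circ$); and each such $O$ has finitely many robust blocks by Proposition~\ref{prop:min_covered}. You instead fix a single countable exhaustion $(O_n)_{n\in\Nb}$ and use the extension machinery --- Theorem~\ref{thm:limit_extension}, Proposition~\ref{prop:robust_extension}(2) with $O_n$ as ambient group, and the transitivity in Observation~\ref{obs:extensions_transitive} --- to funnel every regionally robust block of $G$ through some $O_n$. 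What your route buys is that you never need the countability of the full family of compactly generated open subgroups; what it costs is a forward reference, since Theorem~\ref{thm:limit_extension} (and the special case you actually need, namely Lemma~\ref{lem:unique_extension} applied with $O_n$ as the ambient compactly generated group) is proved \emph{after} this lemma in the paper. This is not circular --- neither of those results invokes the present lemma --- but if your proof were placed where the lemma sits, you should cite Lemma~\ref{lem:unique_extension} rather than the later theorem and remark on the independence. The only step you pass over quickly is that $\mf{B}_{O_n}^r$ is finite because robust blocks are non-negligible (so that Proposition~\ref{prop:min_covered} applies); this is true, and is used implicitly by the paper as well, since a negligible block has a compact or discrete, hence quasi-discrete, representative.
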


\begin{proof}
Let $G$ be an \lcsc group. A regionally robust block $\mf{b}$ of $G$ is determined by a pair $(O,\mf{a})$ where $O$ is a compactly generated open subgroup of $G$ and $\mf{a} \in \mf{B}^r_O$ is such that $\mf{a}^G = \mf{b}$. An easy counting argument shows there are only countably many compactly generated open subgroups, and via Proposition~\ref{prop:min_covered}, a compactly generated open subgroup has finitely many robust blocks. We conclude there are at most countably many such pairs $(O,\afr)$, and hence an \lcsc group has at most countably many regionally robust blocks.
\end{proof}

We now set about showing that regionally robust blocks of open subgroups always extend. We begin by considering compactly generated open subgroups of compactly generated \lcsc groups; the key here is to exploit the essentially chief series. In the following proof, it will be important that certain subgroups and homomorphisms are invariant under some group action. The action is always the obvious action by conjugation.

\begin{lem}\label{lem:unique_extension} 
Let $G$ be a compactly generated \lcsc group. If $O$ is a compactly generated open subgroup of $G$, then all robust blocks of $O$ extend to $G$.
\end{lem}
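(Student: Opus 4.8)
The plan is to reduce to the case of a single robust block $\mf{b} \in \mf{B}_O^r$ and use the essentially chief series machinery to locate its extension. First I would apply Theorem~\ref{thm:chief_series} to $G$: fix a compact generating set and, using that $O$ is open and compactly generated, build an essentially chief series $\triv = K_0 \le K_1 \le \dots \le K_n = G$ for $G$. The goal is to track how $\mf{b}$ interacts with the factors $K_i/K_{i-1}$ after intersecting with $O$. Concretely, the series $\triv = K_0 \cap O \le K_1 \cap O \le \dots \le K_n \cap O = O$ is a series of closed normal subgroups of $O$ (since each $K_i \normal G$ and $O \le G$), so by Theorem~\ref{thm:Schreier_refinement} applied to $O$ and the block $\mf{b}$, there is a unique index $i$ and closed normal subgroups $K_{i-1}\cap O \le B < A \le K_i \cap O$ of $O$ with $A/B \in \mf{b}$. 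Thus $(K_i \cap O)/(K_{i-1}\cap O)$ covers $\mf{b}$.

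Next I would identify the relevant block of $G$. Since $\mf{b}$ is robust, Proposition~\ref{prop:robust_extension}(1) tells us $K_i/K_{i-1}$ is neither compact nor quasi-discrete; in particular it is not a compact or discrete factor of the essentially chief series, so $K_i/K_{i-1}$ must be a (non-negligible, since it is non-compact and covers a robust block after intersecting with $O$) chief factor of $G$. Let $\mf{c} := [K_i/K_{i-1}] \in \mf{B}_G$ be the corresponding chief block; it is minimally covered by Proposition~\ref{prop:min_covered}, with $G_{\mf{c}} = K_i$ and $\CC_{K_i}(\mf{c}) = K_{i-1}$ (as $K_i/K_{i-1}$ is itself a chief factor, it is the lowermost representative). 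Now apply Lemma~\ref{lem:extension:min_covered} with $H := O$: since $(G_{\mf{c}} \cap O)/\CC_{(G_{\mf{c}}\cap O)}(\mf{c})$ is (an open subgroup of, hence has the same non-abelian chief factors as) $(K_i \cap O)/(K_{i-1}\cap O)$, which covers $\mf{b}$ — one checks $\CC_{(K_i \cap O)}(\mf{c}) = K_{i-1}\cap O$ using Observation~\ref{obs:centralizers} and that $K_i/K_{i-1}$ is chief — the lemma yields that $\mf{b}$ is extendable to $G$ with $\mf{b}^G = \mf{c}$.

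The main obstacle I anticipate is the bookkeeping around centralizers: verifying that $\CC_{K_i \cap O}(\mf{c})$ really equals $K_{i-1}\cap O$, rather than something larger, so that $(G_{\mf{c}}\cap O)/\CC_{(G_{\mf{c}}\cap O)}(\mf{c})$ genuinely covers $\mf{b}$ and not merely contains a representative "too high up." Here Observation~\ref{obs:centralizers} gives $\CC_{K_i\cap O}(K_i/K_{i-1}) \le \CC_{K_i\cap O}(\mf{c})$ in one direction, and since $A/B \in \mf{b}$ sits inside $(K_i\cap O)/(K_{i-1}\cap O)$ with $A \not\le \CC_O(\mf{b})$, a direct argument with the definition of covering shows $K_i \cap O \not\le \CC_{K_i\cap O}(\mf{c}) \cdot (\text{anything avoiding } \mf{b})$; combined with the fact that $K_i/K_{i-1}$ is $G$-chief (so $K_{i-1}\cap O$ is forced as the centralizer of the factor in $K_i \cap O$), one pins down the centralizer exactly. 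Once this is settled, Lemma~\ref{lem:extension:min_covered} does the rest, and since $\mf{b}$ was an arbitrary robust block of $O$, all robust blocks of $O$ extend to $G$.
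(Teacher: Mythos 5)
Your first half follows the paper exactly: refine to an essentially chief series $(K_i)$, apply Theorem~\ref{thm:Schreier_refinement} to the restricted series on $O$ to find the unique $i$ with $(K_i\cap O)/(K_{i-1}\cap O)$ covering the robust block, and use Proposition~\ref{prop:robust_extension}(1) together with Theorem~\ref{thm:negligible_types} and Proposition~\ref{prop:min_covered} to see that $K_i/K_{i-1}$ is a robust, minimally covered chief factor of $G$. The gap is in the application of Lemma~\ref{lem:extension:min_covered}. You assert that because $K_i/K_{i-1}$ is a chief factor it is the \emph{lowermost} representative of $\mf{c}$, i.e.\ that $G_{\mf{c}}=K_i$ and $\CC_{K_i}(\mf{c})=K_{i-1}$. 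That is false in general: $G_{\mf{c}}$ is the least closed normal subgroup of $G$ covering $\mf{c}$, so $G_{\mf{c}}\le K_i$, but associated chief factors need not be nested, and $G_{\mf{c}}$ can be a proper subgroup of $K_i$ (this is the whole point of the distinction between uppermost and lowermost representatives in Proposition~\ref{prop:upper-rep} and \cite[Proposition~7.13]{RW_P_15}). Lemma~\ref{lem:extension:min_covered} demands that $(G_{\mf{c}}\cap O)/\CC_{(G_{\mf{c}}\cap O)}(\mf{c})$ cover $\mf{b}$, and covering does not pass from $K_i\cap O$ down to the possibly much smaller subgroup $G_{\mf{c}}\cap O$: one needs to rule out $G_{\mf{c}}\cap O\le \CC_O(\mf{b})$.

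This descent is precisely the substantial part of the paper's proof, and it is where robustness is used a second time. The paper first passes to the uppermost representative $N/C$ of $\mf{c}$ (where covering of $\mf{b}$ by $N\cap O/C\cap O$ does follow easily, via Observation~\ref{obs:centralizers}), then uses the $G$-equivariant normal compression $\psi\colon I/J\to N/C$ from the lowermost representative $I/J$ to transfer covering back down: since a representative $A/B$ of $\mf{b}$ inside $N\cap O/C\cap O$ is topologically perfect, the image of $I/J$ meets $\iota(A)$ in a dense subgroup $D$ of $\iota(A)$; because $A/B$ is \emph{not quasi-discrete}, $D$ has uncountable index over $\iota(B)$, whereas the open subgroup $(I\cap O)J/J$ has countable index in $I/J$, forcing $\psi((I\cap O)J/J)\cap\iota(A)\nleq\iota(B)$ and hence $I\cap O$ to cover $\mf{b}$. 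Your proposal contains no substitute for this argument, and your anticipated "obstacle" (pinning down $\CC_{K_i\cap O}(\mf{c})$) is not the real difficulty. You would need to add the compression-and-uncountability step (or an equivalent) to complete the proof.
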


\begin{proof}
Fix $\mf{a}\in \mf{B}^r_O$ and let $(H_i)_{i=0}^n$ be an essentially chief series for $G$. The restriction $(H_i\cap O)_{i=0}^n$ is a normal series for $O$. Applying Theorem~\ref{thm:Schreier_refinement}, there is some $0\leq i<n$ such that $H_{i+1}\cap O/H_i\cap O$ covers $\mf{a}$.  The factor $H_{i+1}/H_i$ is neither compact nor quasi-discrete by Proposition~\ref{prop:robust_extension}, and thus, $H_{i+1}/H_i$ is a non-abelian chief factor. Theorem~\ref{thm:negligible_types} implies that $\mf{b}:=[H_{i+1}/H_i]$ is indeed robust. In view of Proposition~\ref{prop:min_covered}, we deduce that $\mf{b} \in \mf{B}^{\min}_G$.
	
We argue that $\mf{b}$ is the extension of $\mf{a}$ to $G$. Let us first consider the uppermost representative $N/C$ of $\mf{b}$.  Since $N \cap O \ge H_{i+1} \cap O$, the subgroup $N\cap O$ covers $\mf{a}$. On the other hand, Observation~\ref{obs:centralizers} implies $\CC_O(H_{i+1}/H_i) \le \CC_O(\mf{a})$, and the construction of the uppermost representative ensures $C = \CC_G(H_{i+1}/H_i)$. We conclude that $C \cap O \le \CC_O(\mf{a})$ and thus that $C\cap O$ is below $\mf{a}$. Hence, $N\cap O/C\cap O$ covers $\mf{a}$.
	
We now consider the lowermost representative $I/J$ of $\mf{b}$. Proposition~\ref{prop:upper-rep} gives a $G$-equivariant normal compression $\psi:I/J\rightarrow N/C$. We also have an $O$-equivariant isomorphism
\[
\iota:N\cap O/C\cap O\rightarrow (N\cap O)C/C:=W.
\]
Since $N\cap O/C\cap O$ covers $\mf{a}$, there are $A$ and $B$ closed $O$-invariant subgroups of $N\cap O/C\cap O$ such that $A/B$ is a representative of $\mf{a}$. The subgroup $W$ is open in $N/C$, so $\psi(I/J)\cap W$ is dense and normal in $W$ and hence accounts for a dense normal subgroup of $W/\ol{\iota(B)}$.  Since $A/B$ is topologically perfect, setting $D:= (\psi(I/J)\cap \iota(A))\iota(B)$, we have
\[
\ol{D} \ge \ol{[\psi(I/J)\cap W,\iota(A)]\iota(B)} \ge \ol{[W,\iota(A)]\iota(B)}=\ol{\iota(A)}.
\]
In other words, $D$ is a dense subgroup of $\iota(A)$.

The map $\iota$ is an $O$-equivariant isomorphism of topological groups, so $\iota\inv(D)$ is a dense $O$-invariant subgroup of $A$.  Since $A/B$ is not quasi-discrete, it follows that $|\iota\inv(D):B|$ is uncountable. On the other hand, the group $V:=(I\cap O)J/J$ is open in $I/J$, so it is of countable index. It follows that $(\psi(V)\cap \iota(A))\iota(B)$ is of countable index in $D$, and thus, $|(\psi(V) \cap \iota(A))\iota(B):\iota(B)|$ is also uncountable; in particular, $\psi(V) \cap \iota(A) \nleq \iota(B)$. The group $I\cap O$ therefore cannot centralize $\afr$. That is to say, $I\cap O$ covers $\mf{a}$. Plainly, $J \cap O$ is below $\mf{a}$, since $J \le C$, so $I\cap O/J\cap O$ covers $\mf{a}$. Lemma \ref{lem:extension:min_covered} now ensures that $\mf{b}$ is the extension of $\mf{a}$ to $G$.
\end{proof}

We now upgrade from compactly generated supergroups to arbitrary \lcsc supergroups.  The strategy is to apply Lemma~\ref{lem:unique_extension} to an increasing exhaustion by compactly generated open subgroups. The existence of the extensions $\mf{a}_i$ in the following statement is ensured by Lemma~\ref{lem:unique_extension}.

\begin{prop}\label{prop:block_sequence}
Let $G$ be an \lcsc group with $(O_i)_{i\in \Nb}$ an increasing exhaustion of $G$ by compactly generated open subgroups.  Fix $\mf{a}_0 \in \mf{B}^r_{O_0}$ and set $\mf{a}_i := \mf{a}^{O_i}$ for each $i\in \Nb$.  Then there is a robust chief block $\bfr$ of $G$ that extends each of the $\mf{a}_i$ to $G$. Additionally, 
	\[
	 \CC_G(\mf{b})=\bigcup_{n\geq 0}\bigcap_{i\geq n}\CC_{O_n}(\mf{a}_i).
    \]
\end{prop}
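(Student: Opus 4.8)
The plan is to produce a single chief block $\mf{b}$ of $G$ by a limiting argument over the exhaustion $(O_i)$, then verify the extension property and the centralizer formula. First I would set up the notation: for each $i$, Lemma~\ref{lem:unique_extension} gives the extension $\mf{a}_i = \mf{a}_0^{O_i} \in \mf{B}_{O_i}^r$, and by transitivity of extensions (Observation~\ref{obs:extensions_transitive}) we have $\mf{a}_j = \mf{a}_i^{O_j}$ whenever $i \le j$. Each $\mf{a}_i$ is robust by Proposition~\ref{prop:robust_extension} and minimally covered by Proposition~\ref{prop:min_covered}; write $C_i := \CC_{O_i}(\mf{a}_i)$ for its centralizer in $O_i$. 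The key compatibility to extract from the extension relation is that for $i \le j$, a closed normal subgroup $K$ of $O_j$ covers $\mf{a}_j$ iff $K \cap O_i$ covers $\mf{a}_i$; applied to $K = C_j$ and to suitable characteristic subgroups this should give $C_j \cap O_i = C_i$, or at least a controlled relationship between the $C_i$'s. I expect the cleanest route is to track the \emph{uppermost representatives}: let $N_i/C_i$ be the uppermost representative of $\mf{a}_i$ in $O_i$ (Proposition~\ref{prop:upper-rep}), so $O_i/C_i$ is monolithic with socle $N_i/C_i \in \mf{a}_i$.

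The main construction: define $C := \bigcup_{n \ge 0} \bigcap_{i \ge n} C_i$ — this is exactly the set appearing in the claimed formula — and check it is a closed normal subgroup of $G$. (Normality is easy since each $C_i$ is normal in $O_i$ and conjugation by $g \in G$ eventually fixes things as $g \in O_n$ for large $n$; closedness requires an argument, perhaps using that $C \cap O_n = \bigcap_{i \ge n} C_i$ is closed in $O_n$ and $G = \bigcup O_n$ with each $O_n$ open, so $C$ is closed.) Then I would show $G/C$ is monolithic: its monolith should be $\bigcup_n ((N_n \cap \bigcap_{i \ge n} C_i^{\,\text{preimage}})\cdots)$ — more carefully, one takes the ascending union of the socles $N_n C / C$ and argues this is a chief factor $K/C$ of $G$, using that each $N_n/C_n \in \mf{a}_n$ and the extension relation glues these coherently. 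Declare $\mf{b} := [K/C] \in \mf{B}_G$. Robustness of $\mf{b}$ then follows from Proposition~\ref{prop:robust_extension}(2) once we know $\mf{b}$ extends some $\mf{a}_i$.

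To verify that $\mf{b}$ extends each $\mf{a}_i$: fix $i$ and a closed normal $L \normal G$; I must show $L$ covers $\mf{b}$ iff $L \cap O_i$ covers $\mf{a}_i$. For "only if": if $L$ covers $\mf{b}$ then $L \not\le C$ and $L \ge$ (socle-related subgroup); pushing down via $L \cap O_j$ for large $j$ and using that $\mf{a}_j$ extends $\mf{a}_i$ within $O_j$, plus Observation~\ref{obs:covering}, gives that $L \cap O_i$ covers $\mf{a}_i$. For "if": run this in reverse, using that covering $\mf{a}_i$ propagates up to covering $\mf{a}_j$ in $O_j$ (again Observation~\ref{obs:covering}, since $\mf{a}_j = \mf{a}_i^{O_j}$), and that covering $\mf{a}_j$ for all large $j$ forces $L$ to meet $K$ outside $C$. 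By Lemma~\ref{lem:extension:min_covered} (or directly by uniqueness of extensions, \cite[Lemma 9.2]{RW_P_15}), $\mf{b} = \mf{a}_i^G$ for every $i$. Finally, the centralizer formula: $\CC_G(\mf{b}) = \{g \in G : [g,C] \subseteq K\}$ where $K/C$ is our representative; intersecting with $O_n$ and using that $K \cap O_n$, $C \cap O_n$ relate to the uppermost representative of $\mf{a}_n$, one gets $\CC_G(\mf{b}) \cap O_n = \bigcap_{i \ge n} C_i$, whence $\CC_G(\mf{b}) = \bigcup_{n \ge 0}\bigcap_{i \ge n} C_i$.

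\textbf{Main obstacle.} The delicate point is the coherence of the centralizers and socles along the exhaustion — specifically, establishing that $\bigcup_n (N_n C/C)$ is genuinely a single chief factor of $G$ (not something that keeps growing into strictly larger $G$-normal subgroups) and that $C \cap O_n = \bigcap_{i\ge n} C_i$ exactly. This hinges on carefully combining the "uppermost representative is a normal compression of any representative" fact with the extension relation, controlling how $C_i$ sits inside $C_j$ for $i < j$; the monolithicity of each $O_i/C_i$ is the lever, but transferring it to a limit statement about $G/C$ is where the real work lies.
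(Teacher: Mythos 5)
Your starting point is the right object: the set $D=\bigcup_{n\geq 0}\bigcap_{i\geq n}\CC_{O_n}(\mf{a}_i)$ is exactly the paper's candidate for $\CC_G(\mf{b})$, and the monotonicity $\CC_{O_n}(\mf{a}_{i+1})\leq\CC_{O_n}(\mf{a}_i)$ from Observation~\ref{obs:centralizers} shows $D\cap O_n=\bigcap_{i\geq n}\CC_{O_n}(\mf{a}_i)$ is closed, so $D$ is a closed normal subgroup. However, the two steps you flag as ``the main obstacle'' are genuine gaps, and the tools you propose for them would not close them. First, the hoped-for equality $C_j\cap O_i=C_i$ should not be expected: Observation~\ref{obs:centralizers} only yields the containment $C_j\cap O_i\leq C_i$, and if equality always held the displayed formula would collapse to $\CC_G(\mf{b})=\bigcup_n C_n$ --- the nested intersection is there precisely because the centralizers can shrink as $i$ grows. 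Second, proving up front that $G/C$ is monolithic with monolith an ascending union of the socles $N_nC/C$ is harder than the problem itself; monolithicity of $G/\CC_G(\mf{b})$ is a consequence (Proposition~\ref{prop:upper-rep}) once $\mf{b}$ is in hand, not a convenient intermediate step.

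The paper instead works entirely at the level of covering and avoiding. One shows $D$ is the unique maximal closed normal subgroup $N$ of $G$ with $N\cap O_0$ avoiding $\mf{a}_0$: if $N\nleq D$, then $N\cap O_n\nleq\CC_{O_n}(\mf{a}_i)$ for some $i\geq n$, so $N\cap O_i$ covers $\mf{a}_i$, and the extension relation pushes this down to $N\cap O_0$ covering $\mf{a}_0$. One then sets $M$ to be the intersection of all closed normal subgroups $N$ of $G$ with $D\leq N$ and $N\cap O_0$ covering $\mf{a}_0$; since $\mf{a}_0$ is minimally covered in $O_0$ (Proposition~\ref{prop:min_covered}), every such $N\cap O_0$ contains the least coverer of $\mf{a}_0$ in $O_0$, so $M\cap O_0$ still covers $\mf{a}_0$, and maximality of $D$ forces $M/D$ to be a chief factor. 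With $\mf{b}:=[M/D]$, Observation~\ref{obs:centralizers} gives $\CC_G(\mf{b})\cap O_0\leq\CC_{O_0}(\mf{a}_0)$, so $\CC_G(\mf{b})$ avoids $\mf{a}_0$ and hence $\CC_G(\mf{b})\leq D$; the reverse containment is clear, so $\CC_G(\mf{b})=D$, which is the centralizer formula. Both ``$N$ covers $\mf{b}$'' and ``$N\cap O_0$ covers $\mf{a}_0$'' are now equivalent to $N\nleq D$, so $\mf{b}=\mf{a}_0^G$, and robustness plus the extension of every $\mf{a}_i$ follow from Proposition~\ref{prop:robust_extension} and Observation~\ref{obs:extensions_transitive}. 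You should restructure your argument along these lines rather than via socles.
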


\begin{proof}
Set
\[
	D:=\bigcup_{n\geq 0}\bigcap_{i\geq n}\CC_{O_n}(\mf{a}_i)
\]
For fixed $n$ and all $i\geq n$, Observation~\ref{obs:centralizers} ensures that $\CC_{O_n}(\mf{a}_{i+1})\leq \CC_{O_n}(\mf{a}_i)$, hence
\[
	D \cap O_n = \bigcap_{i\geq n}\CC_{O_n}(\mf{a}_i) \le \CC_{O_n}(\mf{a}_n).
\]
The group $D$ is thus closed and such that $D\cap O_0$ avoids $\mf{a}_0$.  

For $N \in \mc{N}(G)$ such that $N \nleq D$, there exists $n$ such that $N \cap O_n \nleq D \cap O_n$. Therefore, $N \cap O_n \nleq \CC_{O_n}(\mf{a}_i)$ for some $i \ge n$, so $N\cap O_i$ covers $\mf{a}_i$. From the definition of an extension, we infer that $N\cap O_0$ covers $\afr_0$. The subgroup $D$ is thus the unique maximal closed normal subgroup of $G$ such that $D\cap O_0$ avoids $\afr_0$. 

Set $M:=\bigcap\{N\in \mc{N}(G)\mid  N\cap O_0\text{ covers }\afr_0\text{, and }D\leq N\}$. Since the block $\afr_0$ is minimally covered, it follows that $M\cap O_0$ covers $\afr_0$. On the other hand, $D$ is maximal among closed normal subgroups of $G$ that avoid $\mf{a}_0$, hence $M/D$ is a chief factor of $G$.	Set $\mf{b}:=[M/D]$ and let $C = \CC_G(\mf{b})$.  Since $M\cap O_0/D\cap O_0$ covers $\mf{a}_0$, we have $C \cap O_0 \le \CC_{O_0}(\mf{a}_0)$ by Observation~\ref{obs:centralizers}. The subgroup $C$ therefore avoids $\afr_0$, so $C \le D$ by the previous paragraph. As $D$ clearly centralizes $M/D$, we deduce that $C = D$.
	
Finally, $N\in \mc{N}(G)$ is such that $N\cap O_0$ covers $\mf{a}_0$ if and only if $N \nleq D$.  Since $D = \CC_G(\mf{b})$, it is also the case that $N\in \mc{N}(G)$ covers $\mf{b}$ if and only if $N \nleq D$.  The block $\mf{b}$ is thus the extension of $\mf{a}_0$, and it is robust via Proposition~\ref{prop:robust_extension}. That $\mf{b}$ extends each $\afr_i$ follows from Observation~\ref{obs:extensions_transitive}.
\end{proof}

We now prove our general extension theorem.
\begin{thm}\label{thm:limit_extension}
	Suppose that $G$ is an \lcsc group and $H$ is a closed subgroup of $G$.  If there is a sequence
	\[
	H = H_0 \le H_1 \le \dots \le H_n = G
	\]
	of closed subgroups of $G$ such that $H_i$ either an open subgroup of $H_{i+1}$ or a normal subgroup of $H_{i+1}$ for each $0 \le i < n$, then every regionally robust block of $H$ extends to a regionally robust block of $G$.
\end{thm}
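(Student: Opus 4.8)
The plan is to induct on the length $n$ of the chain. For $n=0$ there is nothing to prove; for the inductive step I split the chain into its first link $H=H_0\le H_1$ and the tail $H_1\le H_2\le\dots\le H_n=G$. Given $\mf{a}\in\mf{B}_H^{rr}$, the base case $n=1$ will produce a regionally robust block $\mf{a}'$ of $H_1$ extending $\mf{a}$; the inductive hypothesis applied to the tail produces a regionally robust block $\mf{a}''$ of $G$ extending $\mf{a}'$; and Observation~\ref{obs:extensions_transitive} applied to $H_0\le H_1\le G$ gives that $\mf{a}$ is extendable to $G$ with $\mf{a}^G=(\mf{a}^{H_1})^G=\mf{a}''\in\mf{B}_G^{rr}$. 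So the whole problem is the base case $n=1$; write $G':=H_1$, so that $H_0$ is either open or normal in $G'$.

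Fix $\mf{a}\in\mf{B}_{H_0}^{rr}$, say $\mf{a}=\mf{b}^{H_0}$ with $\mf{b}\in\mf{B}_O^r$ and $O\le H_0$ compactly generated and open. Since $O$ is generated by a compact set, there is a compactly generated open subgroup $O^+$ of $G'$ with $O\le O^+$. The goal is to produce a robust block $\mf{c}\in\mf{B}_{O^+}^r$ with $\mf{c}=\mf{b}^{O^+}$. Once this is done, Proposition~\ref{prop:block_sequence}, applied to $G'$ with an exhaustion by compactly generated open subgroups whose first term is $O^+$, yields a robust block $\mf{c}^{G'}$ of $G'$ extending $\mf{c}$, which is regionally robust by definition; and two applications of Observation~\ref{obs:extensions_transitive} --- along $O\le O^+\le G'$ and along $O\le H_0\le G'$ --- identify $\mf{a}^{G'}=\mf{b}^{G'}=\mf{c}^{G'}$, finishing the base case. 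When $H_0$ is open in $G'$ the subgroup $O$ is open in $O^+$, so $\mf{c}:=\mf{b}^{O^+}$ exists by Lemma~\ref{lem:unique_extension} and is robust by Proposition~\ref{prop:robust_extension}(2). When $H_0$ is normal in $G'$, put $N:=H_0\cap O^+$; then $N\normal O^+$, the group $N$ is open in $H_0$, and $O$ is open in $N$, so Proposition~\ref{prop:block_sequence} gives a robust block $\mf{c}_0:=\mf{b}^N$ of $N$, which is regionally robust in $N$ and hence minimally covered, whence Proposition~\ref{prop:induced_block} yields the extension $\mf{c}:=\mf{c}_0^{O^+}=\mf{b}^{O^+}$ (the second equality by Observation~\ref{obs:extensions_transitive}).

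The crux, and the step I expect to be the main obstacle, is verifying in the normal subcase that $\mf{c}=\mf{c}_0^{O^+}$ is robust, i.e.\ (by Theorem~\ref{thm:negligible_types}) neither connected compact nor quasi-discrete. The mechanism I would use is: take a representative $K/L$ of $\mf{c}$ as a closed normal factor of $O^+$; by Observation~\ref{obs:covering}, $(K\cap N)/(L\cap N)$ is a closed normal factor of $N$ covering $\mf{c}_0$, so by Theorem~\ref{thm:Schreier_refinement} it contains a representative $K'/L'\in\mf{c}_0$ with $L\cap N\le L'<K'\le K\cap N$, and since $\mf{c}_0$ is robust the chief factor $K'/L'$ is neither compact nor quasi-discrete; using that a closed subgroup of a compact group is compact, and that Lemma~\ref{lem:quasi-center_chief} propagates quasi-discreteness from a closed normal factor of $N$ to a chief factor of $N$ inside it, one deduces that $(K\cap N)/(L\cap N)$ is itself neither compact nor quasi-discrete. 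One then transfers this back to $K/L$: the closure $\overline{(K\cap N)L}$ is a nontrivial closed $O^+$-invariant subgroup of the chief factor $K/L$, hence equals $K$, so $K/L$ is a normal compression of $(K\cap N)/(L\cap N)$, and the non-compactness and non-quasi-discreteness of $K/L$ should follow from this --- the quasi-discrete case via Corollary~\ref{cor:compression:quasidiscrete} (after noting that non-abelian chief factors are topologically perfect) and the connected-compact case via Corollary~\ref{cor:con_association}. The delicate point throughout is that in \lcsc groups a product of two closed normal subgroups need not be closed, so one must consistently work with closures and honest normal compressions rather than with topological isomorphisms; carrying that bookkeeping through cleanly is where the real work lies.
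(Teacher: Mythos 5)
Your overall architecture — induction on the chain, reduction to a single open or normal link, Proposition~\ref{prop:block_sequence} for the exhaustion step, Proposition~\ref{prop:induced_block} for existence of the extension across a normal inclusion, and Observation~\ref{obs:extensions_transitive} to reassemble — matches the paper, and your open case is correct. The gap is exactly where you flagged it: the forward transfer of robustness along the normal compression $(K\cap N)/(L\cap N)\rightarrow K/L$. Neither non-compactness nor non-quasi-discreteness passes forward along a normal compression in general: $\bigoplus_{i\in\Nb}F_i\rightarrow\prod_{i\in\Nb}F_i$ is a normal compression of a non-compact group onto a compact one, so non-compactness of the domain says nothing about the codomain. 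For quasi-discreteness, Corollary~\ref{cor:compression:quasidiscrete} requires \emph{both} groups to be topologically perfect; $K/L$ is (being a non-abelian chief factor), but $(K\cap N)/(L\cap N)$ is merely a closed normal factor of $N$ covering $\mf{c}_0$ and may carry arbitrary abelian material above and below the representative $K'/L'$, so topological perfection is unavailable. Without it, the best one can extract is Theorem~\ref{thm:compression:quasidiscrete}(1): if $K/L$ were quasi-discrete, then $(K\cap N)/(L\cap N)$ would be (quasi-discrete)-by-metabelian — which is not a contradiction with its failure to be quasi-discrete. Corollary~\ref{cor:con_association} suffers from the same hypothesis problem (it needs the domain centerless and topologically perfect), and in any case it controls connectedness rather than compactness. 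So the normal subcase of your base case is not closed.

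The paper takes a different route precisely to avoid pushing robustness across the compression of an auxiliary factor. In the normal case it never forms $N=H_0\cap O^+$; instead it uses that $G$ permutes the countably many compactly generated open subgroups of the normal subgroup $H$, so $\N_G(O)$ is closed of countable index and hence open by Baire category. It then chooses a compactly generated open subgroup $W\leq \N_G(O)$ with $O\normal W$, extends $\mf{b}$ from $O$ to $W$ using Propositions~\ref{prop:induced_block} and~\ref{prop:robust_extension}, and finishes by applying the already-established open case to $W\leq G$. The essential difference is that the block never leaves the compactly generated group $O$ on which it is known to be robust: the only non-open inclusion that robustness must cross is the single step $O\normal W$ with both groups compactly generated, which is the configuration that Proposition~\ref{prop:robust_extension} and the uncountability argument in the proof of Lemma~\ref{lem:unique_extension} (density of $\QZ$ versus countable index of $(I\cap O)J/J$) are designed to handle. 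To repair your argument you should either adopt this normalizer trick, or prove an honest analogue of Proposition~\ref{prop:robust_extension}(1) for factors of a closed normal subgroup — but the latter is not a formal consequence of the compression corollaries you cite.
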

\begin{proof}
By Observation~\ref{obs:extensions_transitive} and induction on $n$, it is enough to consider the cases where $H$ is either open in $G$ or normal in $G$.

Suppose first that $H$ is open in $G$ and let $\mf{a}$ be a regionally robust block of $H$.  The block $\mf{a}$ is the extension of some robust chief block $\mf{b}_0$ of $O_0$, where $O_0$ is a compactly generated open subgroup of $H$.  Starting with $O_0$, we can find an increasing exhaustion $(O_i)_{i\in \Nb}$ of $G$ by compactly generated open subgroups. Proposition~\ref{prop:block_sequence} then provides the extension $\mf{b}$ of $\mf{b}_0$ to $G$, and appealing to Observation~\ref{obs:extensions_transitive}, $\mf{b}$ is the extension of $\mf{a}$ to $G$.  We conclude that $\mf{a}$ is extendable to $G$, and the extension is clearly also a regionally robust block of $G$. 

Suppose that $H$ is normal in $G$ and let $\mf{a}$ be a regionally robust block of $H$. Say that $\mf{a}$ is the extension of $\mf{b} \in \mf{B}^r_O$, where $O$ is a compactly generated open subgroup of $H$.  There are only countably many compactly generated open subgroups of $H$, so $N_G(O)$ is open in $G$. We can then take a compactly generated open subgroup $W$ of $N_G(O)$ such that $O\normal W$.  By Propositions~\ref{prop:induced_block} and \ref{prop:robust_extension}, $\mf{b}$ extends to a robust chief block $\mf{b}^W$ of $W$. The previous case now ensures that $\mf{b}^W$ extends to a regionally robust block $\mf{c} := (\mf{b}^W)^G$ of $G$, and in view of Observation~\ref{obs:extensions_transitive}, we deduce that
\[
\mf{c} = (\mf{b}^W)^G = \mf{b}^G = (\mf{b}^H)^G = \mf{a}^G.
\]
The block $\mf{a}$ is thus extendable to $G$. Since $\mf{b}^W$ is a robust block of a compactly generated open subgroup of $G$, we see that $\mf{a}^G$ is also a regionally robust block.
\end{proof}

\subsection{Further observations on regionally robust blocks}
The persistence of regionally robust blocks from open subgroups shows that regionally robust blocks are persistent in a more subtle manner: if a chief factor $K/L$ \emph{has} a regionally robust block, then $[K/L]$ \emph{is} a regionally robust block.

\begin{prop}\label{prop:limit_blocks_of_blocks}
Let $G$ be an \lcsc group with $K/L$ a closed normal factor of $G$.  If $\mf{B}^{rr}_{K/L} \neq \emptyset$, then $K/L$ covers a regionally robust block of $K$ ,and hence, it covers a regionally robust block of $G$.  If in addition $K/L$ is a chief factor of $G$, then $[K/L]$ is a regionally robust block of $G$.
\end{prop}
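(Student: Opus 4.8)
The plan is to establish the three assertions in turn; the first is the substantive one, and the other two are short deductions using the extension machinery of Section~\ref{sec:limit_blocks}. Fix a regionally robust block $\mf{a}$ of $K/L$, witnessed by a compactly generated open subgroup $\bar O$ of $K/L$ and a robust block $\bar{\mf{c}}$ of $\bar O$ with $\bar{\mf{c}}^{K/L}=\mf{a}$, and write $q\colon K\to K/L$ for the quotient map. The first move is to lift $\bar O$ into $K$: since $q$ is continuous and open and $\bar O$ is generated by a compact set, covering a compact generating set of $\bar O$ by finitely many translates of $q(V)$ for a compact identity neighbourhood $V$ in $K$ and lifting the translating elements produces a compactly generated open subgroup $P$ of $K$ with $q(P)\supseteq\bar O$. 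Set $\bar P:=q(P)$; this is a compactly generated open subgroup of $K/L$ containing $\bar O$, so by Theorem~\ref{thm:limit_extension} the block $\bar{\mf{c}}$ extends to a block $\bar{\mf{c}}^{\bar P}$ of $\bar P$, which is robust by Proposition~\ref{prop:robust_extension}.

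Next I would pull $\bar{\mf{c}}^{\bar P}$ back to $P$ along the surjection $q|_P\colon P\to\bar P$. By \cite[Proposition~8.4]{RW_P_15} this surjection induces a strong block monomorphism, yielding a block $\mf{c}$ of $P$ with $\CC_P(\mf{c})=(q|_P)^{-1}(\CC_{\bar P}(\bar{\mf{c}}^{\bar P}))$, and in particular $P\cap L\le\CC_P(\mf{c})$. Since $P/\CC_P(\mf{c})\cong\bar P/\CC_{\bar P}(\bar{\mf{c}}^{\bar P})$ as topological groups, the uppermost representative of $\mf{c}$ is isomorphic to that of $\bar{\mf{c}}^{\bar P}$; as a block is robust exactly when its uppermost representative is neither compact nor quasi-discrete (by Proposition~\ref{prop:upper-rep} together with Corollary~\ref{cor:compression:quasidiscrete} and the fact that the dense continuous image of a compact group is compact), $\mf{c}$ is robust. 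Thus $\mf{c}$ is a robust block of the compactly generated open subgroup $P$ of $K$, so by Theorem~\ref{thm:limit_extension} it extends to $\mf{c}^K\in\mf{B}^{rr}_K$. Applying the defining property of the extension to the closed normal subgroup $L$ of $K$: the subgroup $L$ covers $\mf{c}^K$ if and only if $L\cap P$ covers $\mf{c}$; since $L\cap P\le\CC_P(\mf{c})$ this fails, so $L\le\CC_K(\mf{c}^K)$, and as $\CC_K(\mf{c}^K)$ is a proper subgroup of $K$ the closed normal factor $K/L$ of $K$ covers $\mf{c}^K$. This proves the first assertion.

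For the second assertion, $K$ is a closed normal subgroup of $G$, so Theorem~\ref{thm:limit_extension} gives a regionally robust block $\mf{d}:=(\mf{c}^K)^G$ of $G$ extending $\mf{c}^K$; by Observation~\ref{obs:covering} (with closed subgroup $K$ of $G$) the closed normal factor $K/L$ of $G$ covers $\mf{d}$ precisely because $K/L$ covers $\mf{c}^K$. For the final assertion, suppose moreover that $K/L$ is a chief factor of $G$. Since $K/L$ covers $\mf{d}$, the remark following Theorem~\ref{thm:Schreier_refinement} yields closed normal subgroups $L\le B<A\le K$ of $G$ with $A/B\in\mf{d}$; chief-ness of $K/L$ then forces $B=L$ and $A=K$, so $K/L\in\mf{d}$ and hence $[K/L]=\mf{d}$ is a regionally robust block of $G$.

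The main obstacle is the first assertion, and within it the step of producing a compactly generated open subgroup $P$ of $K$ whose image in $K/L$ engulfs $\bar O$, together with checking that robustness genuinely survives the pull-back along $P\to\bar P$; identifying the uppermost representative is the clean route to the latter. Once the witness $P$ is secured, everything else is bookkeeping with the extension results (Theorem~\ref{thm:limit_extension}, Proposition~\ref{prop:robust_extension}, Observation~\ref{obs:covering}) already available.
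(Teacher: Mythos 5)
Your proposal is correct and follows essentially the same route as the paper: lift the witnessing compactly generated open subgroup of $K/L$ to a compactly generated open subgroup of $K$, pull the robust block back along the quotient map using the strong block monomorphism, extend it to $K$ and then to $G$ via Theorem~\ref{thm:limit_extension}, and use chiefness of $K/L$ to identify $[K/L]$ with the resulting regionally robust block. The only cosmetic differences are that the paper arranges $WL/L=O$ exactly rather than passing to a larger image $\bar P$, and it cites Lemma~\ref{lem:robust_injection} for robustness of the pulled-back block where you re-derive this via uppermost representatives.
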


\begin{proof}
Fix a regionally robust block $\mf{a}$ of $K/L$ and say that $\mf{a}$ is the extension of a robust block $\mf{b}$ of $O$ where $O$ is an open compactly generated subgroup of $K/L$. We may fix $W\leq K$ compactly generated and open such that $WL/L=O$. Given a continuous surjection of \lcsc groups, there is a canonical way to lift blocks from the range to the domain; see the discussion in Section~\ref{sec:hopfian}. Lemma~\ref{lem:robust_injection} also ensures that robust blocks lift to robust blocks.  Considering the surjection $W\rightarrow WL/L=O$, the lift $\tilde{\mf{b}}$ of $\mf{b}$ to $W$ is robust. Theorem~\ref{thm:limit_extension} then implies that $\tilde{\mf{b}}$ is extendable to $K$, and the extension is plainly regionally robust. In view of the definition of the lift, $L\cap W$ avoids $\wt{\bfr}$, so the normal factor $K/L$ covers $\wt{\bfr}^K=:\cfr$.  By Theorem~\ref{thm:limit_extension}, $\cfr$ is also extendable to a regionally robust block $\mf{d}$ of $G$; since $K/L$ is a normal factor of $G$, it also covers $\mf{d}$, via Proposition~\ref{prop:induced_block}. 

The factor $K/L$ covers the regionally robust block $\mf{d}$ of $G$, so we may insert closed normal subgroups $L\leq B<A\leq K$ of $G$ such that $A/B\in \mf{d}$. If $K/L$ is already a chief factor, we must have $A = K$ and $B = L$.  We deduce that $K/L\in \mf{d}$, hence $[K/L]$ is a regionally robust block of $G$.
\end{proof}

We now prove an alternative for \lcsc groups.

%\begin{prop}\label{prop:min_covered_exist}Let $G$ be an \lcsc group and let $K/L$ be a normal factor of $G$.  Suppose that $K/L$ does not cover any regionally robust block of $G$.  Then as an \lcsc group, $H := K/L$ has the following structure:
%
%$H^{\circ}$ is compact-by-solvable-by-compact, and $H/H^{\circ}$ is elementary with rank at most $\omega+1$.  If $H$ is compactly generated, then $H/H^{\circ}$ is elementary with finite rank.
%\end{prop}

\begin{prop}\label{prop:min_covered_exist} For $G$ an \lcsc group, either
	\begin{enumerate}
		\item $\mf{B}_{G}^{rr}\neq \emptyset$, or
		\item $G^{\circ}$ is compact-by-solvable-by-compact, and $G/G^{\circ}$ is elementary with rank at most $\omega+1$. If $G$ is compactly generated, then $G/G^{\circ}$ is elementary with finite rank.
	\end{enumerate}
\end{prop}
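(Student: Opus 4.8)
The plan is to prove the dichotomy by assuming $\mf{B}_G^{rr}=\emptyset$ and deriving (2). The first step is to transfer this hypothesis to the totally disconnected quotient. If $\bar O$ is a compactly generated open subgroup of $G/G^\circ$, its full preimage $O$ in $G$ is open and, being an extension of the compactly generated group $\bar O$ by the compactly generated connected group $G^\circ$, is itself compactly generated; Theorem~\ref{thm:limit_extension} applied to the open inclusion $O\le G$ then forces $\mf{B}_O^r=\emptyset$, and Lemma~\ref{lem:robust_injection} applied to the quotient $O\to\bar O$ forces $\mf{B}_{\bar O}^r=\emptyset$. By Theorem~\ref{thm:chief_series} and Corollary~\ref{cor:negligible_decomp}, the \tdlcsc group $\bar O$ admits an essentially chief series each of whose factors is compact (hence profinite, of rank at most $2$) or quasi-discrete (hence of rank at most $2$ by Lemma~\ref{lem:quasidiscrete:rank2}); iterating Lemma~\ref{lem:d-rank_extensions} along this series shows $\bar O$ is elementary with $\xi(\bar O)<\omega$.

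Since $G/G^\circ$ is the ascending union of its compactly generated open subgroups, Proposition~\ref{prop:d-rank:ascending} shows $G/G^\circ$ is elementary with $\xi(G/G^\circ)\le\omega+1$. If $G$ is compactly generated then $\mf{B}_G^r=\mf{B}_G^{rr}=\emptyset$, so Lemma~\ref{lem:robust_injection} applied to the quotient $G\to G/G^\circ$ gives $\mf{B}_{G/G^\circ}^r=\emptyset$, and running the argument of the previous paragraph directly on the compactly generated \tdlcsc group $G/G^\circ$ yields $\xi(G/G^\circ)<\omega$. It remains to show $G^\circ$ is compact-by-solvable-by-compact.

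By Theorem~\ref{thm:yamabe_radical}, $R:=\RadLE(G^\circ)$ is compact; being characteristic in $G^\circ\normal G$ it is normal in $G$, and $L:=G^\circ/R$ is a connected Lie group. Let $S\normal G$ be the preimage in $G^\circ$ of the solvable radical $\Sol(L)$ and $M\normal G$ the preimage in $G^\circ$ of the (discrete) centre of $Q:=L/\Sol(L)=G^\circ/S$; these are normal in $G$ since $G^\circ\normal G$ and the relevant subgroups are characteristic in sections of $G^\circ$. Then $G^\circ/M\simeq Q/\Z(Q)$ is a finite direct product $T_1\times\dots\times T_k$ of topologically simple connected Lie groups, which are precisely the minimal non-trivial connected closed normal subgroups of $G^\circ/M$; hence the conjugation action of $G$ permutes them and preserves compactness. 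If $Q$ is compact, then $L$ is solvable-by-compact and so $G^\circ$, being compact-by-$L$, is compact-by-solvable-by-compact, as desired. Otherwise $Q/\Z(Q)$ is non-compact, since a connected semisimple Lie group with compact adjoint form is compact; so some $T_i$ is non-compact, and letting $N\normal G$ be the preimage in $G^\circ$ of the product of the non-compact $T_i$, the factor $N/M$ is a non-trivial finite direct product of non-compact topologically simple connected Lie groups and is therefore compactly generated. Fixing one such factor $T\le N/M$: $T$ is a minimal closed normal subgroup of $N/M$, it is non-compact, and $\QZ(T)=\Z(T)$ is discrete and therefore not dense, so $T$ is not quasi-discrete; by Theorem~\ref{thm:negligible_types}, $[T]$ is a robust chief block of $N/M$, and since $N/M$ is compactly generated, $[T]\in\mf{B}_{N/M}^{rr}$. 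Proposition~\ref{prop:limit_blocks_of_blocks}, applied to the closed normal factor $N/M$ of $G$, then produces a regionally robust block of $G$, contradicting $\mf{B}_G^{rr}=\emptyset$. Hence $Q$ is compact, completing the proof.

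The main obstacle is the treatment of $G^\circ$: stripping away the locally elliptic and solvable radicals to expose a connected semisimple Lie quotient, verifying that all the subgroups involved can be taken normal in $G$ rather than merely in $G^\circ$, and --- when that quotient is non-compact --- converting a non-compact simple Lie factor into a genuine regionally robust chief block of $G$ via Proposition~\ref{prop:limit_blocks_of_blocks}; this last point also invokes the standard fact that a non-compact connected semisimple Lie group has a non-compact topologically simple factor in its adjoint form. By comparison, the totally disconnected bookkeeping behind the statement on $G/G^\circ$, resting on Corollary~\ref{cor:negligible_decomp} and the rank estimates of Section~\ref{sec:elementary}, is routine.
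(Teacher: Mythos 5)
Your proof is correct, and its skeleton matches the paper's: both arguments strip $G^{\circ}$ down to a centerless semisimple Lie quotient via $\RadLE(G^{\circ})$ and the solvable radical and show its simple factors must be compact, and both obtain the rank bound on $G/G^{\circ}$ from Corollary~\ref{cor:negligible_decomp}. The mechanisms differ in two places, though. The paper proves everything first for compactly generated $G$ by refining the canonical series $\triv\le R\le S\le Z\le G^{\circ}\le G$ into an essentially chief series of $G$ itself (Theorem~\ref{thm:chief_series}) and reading off that any non-compact simple Lie factor would produce a robust chief factor of $G$; the general case is then deduced by passing to compactly generated open subgroups $O$ (using $O^{\circ}=G^{\circ}$ and Proposition~\ref{prop:d-rank:ascending}). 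You instead treat the connected and totally disconnected parts separately for arbitrary $G$: for $G^{\circ}$ you locate a robust chief block of the compactly generated normal factor $N/M$ and transport it to $G$ via Proposition~\ref{prop:limit_blocks_of_blocks}, which buys you a direct argument with no reduction to the compactly generated case but costs you an appeal to Weyl's theorem (compact adjoint form implies compact) that the paper avoids --- you could in fact sidestep Weyl by noting that if all $T_i$ are compact then $G^{\circ}/M$ is compact and $M/R$ is solvable, giving the conclusion directly. For $G/G^{\circ}$ you pull compactly generated open subgroups back from the quotient and push the emptiness of $\mf{B}^{rr}$ down through Theorem~\ref{thm:limit_extension} and Lemma~\ref{lem:robust_injection}; this is a legitimate alternative to the paper's route, relying on the (correct) observation that the preimage of a compactly generated open subgroup of $G/G^{\circ}$ is compactly generated. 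All the auxiliary results you invoke are proved before Proposition~\ref{prop:min_covered_exist}, so there is no circularity.
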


\begin{proof}
Suppose that $\mf{B}_G^{rr}=\emptyset$; we argue that (2) holds. 

Let us first suppose that $G$ is compactly generated. Set $R:=\RadLE(G^{\circ})$. The group $R$ is a compact normal subgroup of $G$, and $G^{\circ}/R$ is a Lie group by Theorem~\ref{thm:yamabe_radical}. Letting $\pi:G^{\circ}\rightarrow G^{\circ}/R$ be the natural projection, put $S:=\pi^{-1}(\Sol(G^{\circ}/R))$ where $\Sol(G^{\circ}/R)$ is the solvable radical and take $Z$ to be such that $Z/S:=\Z(G^{\circ}/S)$. This produces a series of closed normal subgroups of $G$:
\[
\triv\leq R\leq S\leq Z\leq G^{\circ}\leq G.
\]
Applying Theorem~\ref{thm:chief_series}, we refine this series into an essentially chief series $\triv=G_0<\dots <G_n=G$.  Any factor given by terms $G_k<G_{k+1}$ with $Z\leq G_k<G_{k+1}\leq G^{\circ}$ is a product of non-abelian connected simple groups, since $G^{\circ}/Z$ is a centerless connected semisimple group. Since $\mf{B}_G^r=\mf{B}_G^{rr}=\emptyset$, these simple groups must be compact.  The connected component $G^{\circ}$ is therefore compact-by-solvable-by-compact.

For the totally disconnected part, Corollary~\ref{cor:negligible_decomp} ensures that any chief factors $G_{k+1}/G_k$ with $G^\circ \le G_k$ are quasi-discrete, hence elementary of rank $2$. The compact totally disconnected factors and discrete factors are also elementary of rank $2$. The rank is subadditive by Lemma~\ref{lem:d-rank_extensions}, hence
\[
\xi(G/G^{\circ})\leq 2n < \omega.
\] 
This completes the proof in the compactly generated case.

Suppose now that $G$ is an arbitrary \lcsc group. Taking $O\leq G$ open and compactly generated, we see that $\mf{B}_O^{rr}=\mf{B}_O^{r}=\emptyset$, since otherwise $G$ admits a regionally robust block via Theorem~\ref{thm:limit_extension}. Appealing to the compactly generated case of the proposition, we deduce that $O^{\circ}=G^{\circ}$ is compact-by-solvable-by-compact and that $O/O^{\circ}$ is elementary with finite rank. The quotient $G/G^{\circ}$ is thus an increasing union of finite rank elementary groups, hence $\xi(G/G^{\circ})\leq \omega+1$, via Proposition~\ref{prop:d-rank:ascending}. 
\end{proof}

As a consequence, sufficiently large chief factors are necessarily regionally robust.

\begin{cor}\label{cor:nonlimits_are_small}
	Let $G$ be an \lcsc group with $K/L$ a chief factor of $G$.  Suppose that one of the following holds:
\begin{enumerate}[(1)]
\item $K/L$ is connected, non-abelian and non-compact;
\item $K/L$ is elementary with $\xi(K/L) > \omega+1$;
\item $K/L$ is totally disconnected and non-elementary.
\end{enumerate}
Then $[K/L]$ is a regionally robust block of $G$, and $\mf{B}^{rr}_{K/L} \neq \emptyset$.
\end{cor}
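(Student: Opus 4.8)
The plan is to deduce the statement from two results already in hand: Proposition~\ref{prop:min_covered_exist} and Proposition~\ref{prop:limit_blocks_of_blocks}. The key preliminary remark is that a chief factor $K/L$ of $G$ is topologically characteristically simple: a closed characteristic subgroup of $K/L$ is invariant under the conjugation action of $G$ by continuous automorphisms, hence pulls back to a closed $G$-invariant subgroup between $L$ and $K$, so it is $\triv$ or $K/L$. By Proposition~\ref{prop:limit_blocks_of_blocks}, as soon as I know $\mf{B}^{rr}_{K/L}\neq\emptyset$ it follows that $[K/L]$ is a regionally robust block of $G$; so the corollary reduces to proving $\mf{B}^{rr}_{K/L}\neq\emptyset$ under each of (1)--(3). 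Equivalently, applying Proposition~\ref{prop:min_covered_exist} to the non-trivial \lcsc group $K/L$, I must rule out its alternative~(2) for $K/L$.

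Cases (2) and (3) should be immediate. In both, $K/L$ is totally disconnected, so $(K/L)^{\circ}=\triv$ and alternative~(2) of Proposition~\ref{prop:min_covered_exist} asserts in particular that $K/L$ is elementary with decomposition rank at most $\omega+1$. This contradicts hypothesis~(2), that $\xi(K/L)>\omega+1$, and it contradicts hypothesis~(3), that $K/L$ is non-elementary; in either case the alternative fails, so $\mf{B}^{rr}_{K/L}\neq\emptyset$, with no use of characteristic simplicity.

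Case (1) carries the real content, and is where I expect the main obstacle. Write $N:=K/L$, which is connected, non-abelian, non-compact, and topologically characteristically simple. First I would record: (a) $N$ is topologically perfect, since $\overline{[N,N]}$ is closed and characteristic and is non-trivial because $N$ is non-abelian, hence equals $N$; consequently $N$ is not abstractly solvable, since in a non-trivial Hausdorff topological group that is abstractly solvable of derived length $d$ one has $\overline{[N,N]}$ abstractly solvable of length at most $d-1$ (the closure of a solvable subgroup is solvable of no greater length), hence proper. (b) Since $N$ is connected, $N^{\circ}=N$, so if alternative~(2) of Proposition~\ref{prop:min_covered_exist} held for $N$ then $N$ would be compact-by-solvable-by-compact; moreover, inspecting the proof of that proposition, the witnessing subgroups are built from $\RadLE$, the solvable radical and the center and hence can be taken characteristic: there are closed characteristic subgroups $A\le B\trianglelefteq N$ with $A=\RadLE(N)$ compact (Gleason--Yamabe, Theorem~\ref{thm:yamabe_radical}), $B/A$ solvable, and $N/B$ compact. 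Then topological characteristic simplicity forces $A,B\in\{\triv,N\}$, and each case fails: $A=N$ makes $N=\RadLE(N)$ compact; $B=\triv$ (hence $A=\triv$) makes $N=N/B$ compact; and $B=N$ makes $N/A$ solvable, so, since $A=N$ is already excluded, $A=\triv$ and $N$ is abstractly solvable, contradicting~(a). Thus alternative~(2) fails for $N$, and $\mf{B}^{rr}_{K/L}\neq\emptyset$.

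Combining the three cases, $\mf{B}^{rr}_{K/L}\neq\emptyset$ in all of them, and Proposition~\ref{prop:limit_blocks_of_blocks} then gives that $[K/L]$ is a regionally robust block of $G$, completing the proof. The only genuinely non-formal ingredient is the Lie-theoretic analysis in case~(1) — in particular, being careful that the ``compact-by-solvable-by-compact'' decomposition furnished by Proposition~\ref{prop:min_covered_exist} for a connected group can be realized through closed characteristic subgroups — while cases (2) and (3) and the overall reduction are bookkeeping with the two quoted propositions.
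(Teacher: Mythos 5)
Your proposal is correct and follows exactly the paper's route: reduce via Proposition~\ref{prop:limit_blocks_of_blocks} to showing $\mf{B}^{rr}_{K/L}\neq\emptyset$, then rule out alternative~(2) of Proposition~\ref{prop:min_covered_exist} for $K/L$. The paper's own proof is just these two citations; your careful elimination of the compact-by-solvable-by-compact possibility in case~(1) via characteristic subgroups is a correct filling-in of a detail the paper leaves implicit.
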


\begin{proof}
By Proposition~\ref{prop:limit_blocks_of_blocks}, it is sufficient to show that $K/L$ has a regionally robust block.  Such a regionally robust block exists in all cases by Proposition~\ref{prop:min_covered_exist}.
\end{proof}

\subsection{A-simple groups}
For a topological group $G$ and $A\leq \Aut(G)$, we say that $G$ is \textbf{$A$-simple}\index{$A$-simple} if the only $A$-invariant closed normal subgroups of $G$ are $\{1\}$ and $G$.  A trichotomy exists for $A$-simple topological groups.

\begin{defn}\label{def:A-simple_types} 
Let $G$ be a topologically characteristically simple topological group. 
\begin{enumerate}
\item The group $G$ is of \defbold{weak type}\index{weak type} if $\mf{B}_G^{\min}=\emptyset$. 
\item The group $G$ is of \defbold{stacking type}\index{stacking type} if $\mf{B}_G^{\min}\neq \emptyset$ and for all $\mf{a}, \mf{b} \in \mf{B}_G^{\min}$, there exists $\psi \in \Aut(G)$ such that $\psi.\mf{a} < \mf{b}$.
\end{enumerate}
\end{defn}

\begin{thm}[{\cite[Theorem 9.16]{RW_P_15}}]\label{thm:trichotomy}
Suppose that $G$ is an $A$-simple topological group for some $A \le \Aut(G)$. Then exactly one of the following holds:
\begin{enumerate}[(1)]
\item The group $G$ is of weak type.
\item The group $G$ is of semisimple type and $A$ acts transitively on $\mf{B}_G$.
\item The group $G$ is of stacking type and for all $\mf{a}, \mf{b} \in \mf{B}_G^{\min}$, there exists $\psi \in A$ such that $\psi.\mf{a} < \mf{b}$. 
\end{enumerate}
\end{thm}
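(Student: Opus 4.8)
The plan is to run two nested dichotomies, invoking the $A$-simplicity of $G$ at each branch. Before anything else I would check mutual exclusivity of the three alternatives: weak type means precisely $\mf{B}_G^{\min}=\emptyset$, so it is incompatible with the other two; and a group of semisimple type is never of stacking type, because distinct components $M\neq M'$ have incomparable block-centralizers — one has $M'\leq\CC_G(M)$ (different components of a generalized central product centralize each other, by Theorem~\ref{thm:min_normal:simple}) but $M'\not\leq\CC_G(M')$ (as $M'/\Z(M')$ is non-abelian) — whence the associated minimally covered blocks, and all their $A$-translates, are pairwise $\leq$-incomparable. If $\mf{B}_G^{\min}=\emptyset$ we are in case~(1), so from now on assume $\mf{B}_G^{\min}\neq\emptyset$. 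Then $G$ has a non-abelian chief factor, hence is non-abelian, and since $\Z(G)$ is a closed characteristic (hence $A$-invariant) subgroup, $A$-simplicity forces $\Z(G)=\triv$.

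Next I would extract the two consequences of $A$-simplicity that drive the whole argument. Continuous automorphisms permute chief blocks while preserving $\leq$, minimal coverage, and centralizers, so the maps $\mf{a}\mapsto G_{\mf{a}}$ and $\mf{a}\mapsto\CC_G(\mf{a})$ are $A$-equivariant. Therefore $\cgrp{G_{\mf{a}}\mid\mf{a}\in\mf{B}_G^{\min}}$ and $\bigcap_{\mf{a}\in\mf{B}_G^{\min}}\CC_G(\mf{a})$ are $A$-invariant closed normal subgroups; the first is non-trivial, so equals $G$, while the second lies inside each $\CC_G(\mf{a})$, which is proper since a representative of $\mf{a}$ is non-abelian, so it equals $\triv$. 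Thus $G=\cgrp{G_{\mf{a}}\mid\mf{a}\in\mf{B}_G^{\min}}$ and $\bigcap_{\mf{a}}\CC_G(\mf{a})=\triv$. Finally, $E(G)$ is characteristic, hence $A$-invariant, so either $E(G)=G$ or $E(G)=\triv$, and this is the second dichotomy.

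In the branch $E(G)=G$, the group $G$ is of semisimple type with trivial centre, so by Theorem~\ref{thm:min_normal:simple} it is a quasi-product of topologically simple components, each of which is a chief factor of $G$; a routine argument using the independence property shows that these represent all of $\mf{B}_G$. Now $A$ permutes the set $\mc{M}$ of components. If this action were not transitive, a non-empty proper orbit $\mc{O}$ would give the $A$-invariant closed normal subgroup $\cgrp{\mc{O}}$, which is non-trivial and, by the quasi-product property, proper — contradicting $A$-simplicity. Hence $A$ is transitive on $\mc{M}$, and therefore on $\mf{B}_G$: this is case~(2).

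The branch $E(G)=\triv$, where $G$ has no components, is where I expect the real work, and the goal is to show $G$ is of stacking type. Fix $\mf{a},\mf{b}\in\mf{B}_G^{\min}$ and set $M:=G_{\mf{b}}$; one must produce $\psi\in A$ with $\psi.\mf{a}<\mf{b}$. Since $M\normal G$ we have $\ngrp{M}=M$, and $M/\Z(M)$ is non-abelian: otherwise $M$ would be topologically $2$-step nilpotent, whence $\ol{[M,M]}\leq\Z(M)$ and every chief factor $K/L$ of $G$ with $L<K\leq M$ would be abelian (split on whether $\ol{[M,M]}\cap K\leq L$ or $=K$), contradicting that $M$ covers the non-abelian block $\mf{b}$. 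As $M$ is not a component, axiom~(c) of Definition~\ref{defn:component} fails, producing a closed $K\normal M$ with $K<M$ and $K\not\leq\Z(M)$; passing to the $G$-normal closure $\ngrp{K}_G\leq M$ when necessary — and using minimality of $M=G_{\mf{b}}$ to check it stays proper in $M$ and non-central there — one arrives at a non-trivial $G$-normal subgroup $N\leq\CC_G(\mf{b})\cap M$ that is not central in $M$. Using $G=\cgrp{G_{\mf{c}}\mid\mf{c}\in\mf{B}_G^{\min}}$ one then locates a minimally covered block $\mf{c}$ with $G_{\mf{c}}\leq N$, so that $\CC_G(\mf{c})\subsetneq\CC_G(\mf{b})$, i.e.\ $\mf{c}<\mf{b}$; and finally, using $\bigcap_{\mf{d}}\CC_G(\mf{d})=\triv$ together with the $A$-action, one upgrades this to $\psi.\mf{a}<\mf{b}$ for the prescribed $\mf{a}$, uniformly in $\mf{b}$. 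The hard part is exactly this last chain — extracting a genuine minimally covered block strictly below $\mf{b}$ from the failure of the component axioms, and then showing every minimally covered block can be $A$-translated strictly below any other — which relies on careful bookkeeping of the centralizers of lowermost representatives inside $G_{\mf{b}}$. Once this is done, $G$ is of stacking type, we land in case~(3), and the trichotomy is complete.
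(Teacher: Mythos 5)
A preliminary but important point: this paper does not prove Theorem~\ref{thm:trichotomy} at all — it is quoted verbatim from \cite[Theorem 9.16]{RW_P_15} and used as a black box — so there is no in-paper proof to compare against. Judged on its own terms, your architecture is the natural one: settle exclusivity, land in case (1) when $\mf{B}_G^{\min}=\emptyset$, and otherwise use $A$-invariance of the characteristic subgroup $E(G)$ to split into $E(G)=G$ (semisimple) versus $E(G)=\triv$ (stacking). The exclusivity discussion, the deductions $\Z(G)=\triv$, $\cgrp{G_{\mf{a}}\mid \mf{a}\in\mf{B}_G^{\min}}=G$ and $\bigcap_{\mf{a}}\CC_G(\mf{a})=\triv$, and the orbit argument for transitivity on components are all sound; note, though, that ``the components represent all of $\mf{B}_G$'' is itself a nontrivial lemma of the cited framework (case (2) claims transitivity on all of $\mf{B}_G$, not merely on $\mf{B}_G^{\min}$), so calling it routine is glossing over a real step.

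The genuine gaps sit in the branch $E(G)=\triv$, precisely where you say the real work lies. First, the failure of axiom (c) of Definition~\ref{defn:component} for $M:=G_{\mf{b}}$ only yields a closed $K\normal M$ with $K<M$ and $K\not\leq\Z(M)$; this $K$ need not be normal in $G$, and its normal closure $\ngrp{K}_G$ can perfectly well equal $M$, in which case your construction produces nothing. Minimality of $G_{\mf{b}}$ does not repair this: it says that a \emph{proper} $G$-normal subgroup of $M$ avoids $\mf{b}$ and hence lies in $\CC_G(\mf{b})$, but it gives no reason for $\ngrp{K}_G$ to be proper. Ruling out the scenario $M\cap\CC_G(\mf{b})\leq\Z(M)$ with $M$ not a component — i.e., actually manufacturing a $G$-normal, non-central-in-$M$ subgroup of $\CC_G(\mf{b})\cap M$ — is the crux of the theorem and is missing. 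Second, even granting a minimally covered $\mf{c}$ with $G_{\mf{c}}\leq\CC_G(\mf{b})$, the upgrade to ``for \emph{every} $\mf{a}\in\mf{B}_G^{\min}$ there is $\psi\in A$ with $\psi.\mf{a}<\mf{b}$'' is asserted rather than argued; the obvious $A$-invariant subgroups ($\cgrp{G_{\psi.\mf{a}}\mid\psi\in A}=G$, $\bigcap_{\psi\in A}\CC_G(\psi.\mf{b})=\triv$) yield covering statements in the wrong direction and do not by themselves place a translate of $\mf{a}$ below $\mf{b}$. Finally, the inference from $G_{\mf{c}}\leq\CC_G(\mf{b})$ to $\CC_G(\mf{c})\subsetneq\CC_G(\mf{b})$ relies on order-theoretic lemmas about minimal covering groups and centralizers from \cite{RW_P_15} that are not immediate from the definitions recalled here. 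In short: the skeleton is right, but the two steps you yourself flag as hard are the actual content of the theorem and remain unproved.
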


Using our results for regionally robust blocks, we obtain a stronger version of Theorem \ref{thm:trichotomy} for locally compact groups, by describing the groups that are neither semisimple nor stacking type.

\begin{thm}\label{thm:chief:block_structure}
	Suppose that $G$ is a topologically characteristically simple \lcsc group. Then $G$ is exactly one of the following:
\begin{enumerate}[(1)]
		\item an abelian group;
		\item a non-abelian quasi-discrete \tdlcsc group;
		\item a regionally SIN \tdlcsc group with trivial quasi-center;
		\item an elementary \tdlcsc group of decomposition rank $\omega+1$;
		\item a direct product of copies of a simple Lie group;
		\item a quasi-product of copies of a topologically simple \tdlcsc group of decomposition rank greater than $\omega+1$;
		\item a \tdlcsc group of stacking type and decomposition rank greater than $\omega+1$.
\end{enumerate}
\end{thm}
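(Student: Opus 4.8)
The plan is to reduce to two disjoint cases --- $G$ connected and $G$ totally disconnected --- which is legitimate since $G^\circ$ is a closed characteristic subgroup and hence equals $\triv$ or $G$. If $G$ is abelian it is of type (1), so I would assume from the outset that $G$ is non-abelian; then $\ol{[G,G]}$ and $\Z(G)$ are closed characteristic subgroups, so $G$ is topologically perfect and centerless, and moreover $G$ is $\Aut(G)$-simple (its $\Aut(G)$-invariant closed normal subgroups are precisely its closed characteristic subgroups).

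\textbf{Connected case.} Here $\RadLE(G)$ is characteristic, hence equals $\triv$ or $G$. If $\RadLE(G) = G$ then $G$ is connected and locally elliptic, so $G$ is compact (it is generated by a compact identity neighbourhood, which is relatively compact); being compact, connected and centerless, the structure theory of compact connected groups presents $G$ as a direct product of centerless compact connected simple Lie groups, and topological characteristic simplicity forces these factors to be pairwise isomorphic, so $G$ is of type (5). If $\RadLE(G) = \triv$, then $G$ is a Lie group by Theorem~\ref{thm:yamabe_radical}; its solvable radical $\Sol(G)$ is characteristic, and it cannot equal $G$ because a non-trivial connected solvable Lie group has a non-trivial abelian Hausdorff quotient, contradicting topological perfectness; hence $\Sol(G) = \triv$, so $G$ is semisimple, and being centerless it is a finite direct product of centerless connected simple Lie groups, which are pairwise isomorphic, again giving type (5).

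\textbf{Totally disconnected case.} I would classify $G$ by its decomposition rank, which by Corollary~\ref{cor:rank_char_simple} is either $2$, or $\alpha+1$ for a countable transfinite limit ordinal $\alpha$, or $\omega_1$. If $\xi(G) = 2$, then $G$ is regionally SIN by Lemma~\ref{lem:regionally_SIN}; since $\ol{\QZ(G)}$ is characteristic, either $\ol{\QZ(G)} = G$, so $G$ is quasi-discrete (type (2)), or $\QZ(G) = \triv$ (type (3)). If $\xi(G) = \omega+1$ then $G$ is of type (4), and its quasi-center is automatically trivial by Lemma~\ref{lem:quasidiscrete:rank2}. In the remaining case $\xi(G) > \omega+1$ in the extended sense, so $G$ is not an elementary group of rank at most $\omega+1$, and Proposition~\ref{prop:min_covered_exist} (with $G^\circ = \triv$) gives $\mf{B}_G^{rr} \neq \emptyset$; in particular $\mf{B}_G^{\min} \neq \emptyset$, since regionally robust blocks are minimally covered. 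Applying the trichotomy of Theorem~\ref{thm:trichotomy} to the $\Aut(G)$-simple group $G$: the weak type is impossible because it forces $\mf{B}_G^{\min} = \emptyset$; the stacking type gives type (7); and the semisimple type, combined with $\Z(G) = \triv$, gives the strict semisimple type, so by Theorem~\ref{thm:min_normal:simple}(2) the group $G$ is a quasi-product of its components, each a non-abelian topologically simple \tdlcsc group, all isomorphic to a single group $S$ (the union of the components of a fixed isomorphism type generates a closed characteristic subgroup), and Proposition~\ref{prop:rank_semisimple} yields $\xi(S) = \xi(G) > \omega+1$, so $G$ is of type (6). Finally I would verify pairwise disjointness of the seven types: (1) is the unique abelian one and (5) the unique connected non-abelian one, while among the totally disconnected types the decomposition rank ($2$, $\omega+1$, or $>\omega+1$), the dichotomy between dense and trivial quasi-center, and the distinction between semisimple and stacking type separate them.

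\textbf{Main obstacle.} The delicate part is the totally disconnected, high-rank case: one must produce a minimally covered (regionally robust) block from Proposition~\ref{prop:min_covered_exist} purely in order to exclude the weak type in the trichotomy, then extract the component structure via Theorem~\ref{thm:min_normal:simple} and pin down its rank via Proposition~\ref{prop:rank_semisimple}, all while keeping careful track of exactly which ranks can occur so that types (2)--(4) and (6)--(7) do not overlap. A secondary point is that the connected compact subcase relies on the external structure theory of compact connected groups rather than on anything developed in this paper.
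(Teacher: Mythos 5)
Your proof is correct, and on the totally disconnected side---the heart of the theorem---it follows the same route as the paper's: Corollary~\ref{cor:rank_char_simple} to sort by decomposition rank, Proposition~\ref{prop:min_covered_exist} to produce a regionally robust (hence minimally covered) block when $\xi(G)>\omega+1$, the trichotomy of Theorem~\ref{thm:trichotomy} to exclude weak type and split into semisimple and stacking type, and Theorem~\ref{thm:min_normal:simple} together with a rank computation to land in case (6); you use Proposition~\ref{prop:rank_semisimple} where the paper uses Theorem~\ref{quasiproduct:elementary}, and both give $\xi(S)=\xi(G)$. The only genuine divergence is the connected case: the paper invokes the pro-Lie structure theory of Hofmann--Morris (the largest pro-solvable connected normal subgroup is characteristic, hence trivial by topological perfectness, and their semisimplicity theorem yields the product decomposition in one stroke), whereas you split on the characteristic subgroup $\RadLE(G)$, handling $\RadLE(G)=G$ by compactness and the classical structure of compact connected centerless groups, and $\RadLE(G)=\triv$ via Theorem~\ref{thm:yamabe_radical} and the solvable radical of the resulting Lie group. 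Your route is equally valid and stays closer to the tools already set up in the paper's preliminaries, at the cost of still importing the external structure theorem for compact connected groups in one branch; the paper's single citation is more economical. Your explicit verification of disjointness of the seven cases goes beyond the paper, which leaves it as an exercise.
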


\begin{proof}
 We may assume that $G$ is non-abelian.  Since $G$ is topologically characteristically simple, the group $G$ is then topologically perfect and centerless. 

Plainly, $G$ is either connected or totally disconnected. Let us suppose first that $G$ is connected. By \cite[Theorem~10.25]{HMProLie}, $G$ has a unique largest pro-solvable connected normal subgroup, and since $G$ is topologically perfect, $G$ itself cannot be pro-solvable.  That $G$ is characteristically simple now ensures that $G$ is semisimple in the sense of \cite[Theorem~10.29]{HMProLie}.  Applying \cite[Theorem~10.29]{HMProLie}, we deduce that $G$ is a direct product of copies of a simple Lie group. Therefore, case (5) holds.  

We now suppose that $G$ is totally disconnected. We consider first the case that $\xi(G) > \omega+1$.  From Proposition~\ref{prop:min_covered_exist}, it follows that $G$ has a regionally robust block, so it is either of semisimple type or of stacking type via Theorem~\ref{thm:trichotomy}.   If $G$ is of semisimple type, then it is a quasi-product of copies of some topologically simple \tdlcsc group $H$ by Theorem~\ref{thm:min_normal:simple}.  If $H$ is elementary, Theorem~\ref{quasiproduct:elementary} ensures that $\xi(G)=\xi(H)$, so $\xi(H) > \omega+1$. If $H$ is non-elementary, then plainly $\xi(H)>\omega+1$. We thus deduce that exactly one of (6) and (7) holds. 

Let us finally consider the case that $\xi(G)\leq \omega+1$. By Corollary~\ref{cor:rank_char_simple}, $\xi(G)$ is either equal to $\omega+1$ or $2$.  If $\xi(G) = \omega+1$, then (4) holds.  If $\xi(G)=2$, then Lemma~\ref{lem:regionally_SIN} implies $G$ is a regionally SIN group.   If $G$ additionally has a trivial quasi-center, then (3) holds; otherwise, since $G$ is topologically characteristically simple, the quasi-center is dense and (2) holds.

We thus deduce that for any topologically characteristically simple \lcsc group one of (1)-(7) holds. That the conditions are exclusive we leave as an exercise.
\end{proof}

All of the cases (1)-(7) can occur. In particular, Example~\ref{ex:rank 2} gives an example of (3), and Example~\ref{ex:stacking} gives an example of (7). The techniques sketched in Section~\ref{sec:examples} can be used to produce \textit{elementary} chief factors which satisfy (7); these examples will appear in a future work. Examples satisfying (6) are easy to produce, but we do not know if (6) can occur for elementary groups.

\begin{cor}	
	Chief factors of \lcsc groups have one of the forms given in Theorem~\ref{thm:chief:block_structure}.  Moreover, if $K_1/L_1$ and $K_2/L_2$ are associated chief factors of an \lcsc group $G$, then $K_1/L_1$ and  $K_2/L_2$ have the same form.
\end{cor}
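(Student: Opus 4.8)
The plan is to deduce both statements from Theorem~\ref{thm:chief:block_structure} together with the block-invariance results of this section. First I would note that a chief factor $K/L$ of $G$ is itself a topologically characteristically simple \lcsc group: it is a quotient of a closed subgroup of an \lcsc group, and any closed characteristic subgroup of $K/L$ is normal in $K/L\normal G/L$, so it corresponds to a closed normal subgroup of $G$ strictly between $L$ and $K$ unless it is trivial or all of $K/L$. Theorem~\ref{thm:chief:block_structure} then gives the first assertion directly.

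For the ``moreover'', let $K_1/L_1$ and $K_2/L_2$ be associated. If one of them, say $K_1/L_1$, is abelian, then so is the other: from $\ol{K_1L_2}=\ol{K_2L_1}$ we get $K_2 \le \ol{K_1L_2}$, and since $K_1,L_2 \normal G$ and $\ol{[K_1,K_1]} \le L_1$, the closed derived subgroup of $K_1L_2$ lies in $\ol{L_1L_2}$; hence $\ol{[K_2,K_2]} \le K_2 \cap \ol{L_1L_2}=L_2$, so both are of type (1). If instead both are non-abelian, they are topologically perfect and centerless and determine a single chief block $\mf{a}$ of $G$, and by Proposition~\ref{prop:upper-rep} the uppermost representative $M/C$ of $\mf{a}$ is a normal compression of each $K_i/L_i$. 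Corollary~\ref{cor:con_association} shows that $K_1/L_1$, $M/C$, $K_2/L_2$ are simultaneously connected or simultaneously totally disconnected, which already separates case (5) from cases (2)--(4),(6),(7); moreover in the connected case Corollary~\ref{cor:Zerling:perfect} shows the three are even isomorphic. In the totally disconnected case, Proposition~\ref{prop:compression:elementary_rank} shows that being elementary and, when elementary, the decomposition rank are shared by all three (so that the values $2$, $\omega+1$, and ``greater than $\omega+1$ or non-elementary'' are common, using Corollary~\ref{cor:rank_char_simple}), Corollary~\ref{cor:compression:quasidiscrete} shows that quasi-discreteness is shared, and for a topologically characteristically simple group the quasi-center is trivial or dense. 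Combined with Lemma~\ref{lem:regionally_SIN}, these data pin down cases (2), (3), and (4), leaving only the dichotomy between case (6) (semisimple type) and case (7) (stacking type); note that weak type does not arise, since these factors have a regionally robust block by Corollary~\ref{cor:nonlimits_are_small}, so Theorem~\ref{thm:trichotomy} applies.

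The heart of the argument is therefore to show that being of semisimple type is shared by $K_1/L_1$, $M/C$, and $K_2/L_2$. For this I would first establish the auxiliary fact that a non-abelian topologically characteristically simple \tdlcsc group is of semisimple type if and only if it possesses a non-trivial topologically simple closed normal subgroup: one direction is immediate from Theorem~\ref{thm:min_normal:simple}, and for the other, the $\Aut$-conjugates of such a subgroup pairwise commute, are components in the sense of Definition~\ref{defn:component}, and generate the group. This property then transfers in both directions across a normal compression $\psi\colon A \to B$ of such groups. If $S \normal A$ is closed and topologically simple, then by Proposition~\ref{prop:normal_compression}(1) the image $\psi(S)$ is a dense normal subgroup of the topologically simple closed normal subgroup $\ol{\psi(S)}$ of $B$. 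Conversely, if $T \normal B$ is closed and topologically simple, then $\psi\inv(T)$ is closed and normal in $A$ and, via $\psi$, is isomorphic to the dense normal subgroup $\psi(A) \cap T$ of $T$; a dense normal subgroup of a non-abelian topologically simple group is topologically perfect, so Proposition~\ref{prop:normal_compression}(2) forces every proper closed normal subgroup of $\psi\inv(T)$ to be trivial, that is, $\psi\inv(T)$ is topologically simple. Hence $K_1/L_1$, $M/C$, $K_2/L_2$ are either all of semisimple type --- in which case Theorem~\ref{thm:min_normal:simple} identifies each as a quasi-product of copies of a topologically simple \tdlcsc group of decomposition rank greater than $\omega+1$, namely case (6) --- or all of stacking type, namely case (7). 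The main obstacle is precisely this transfer of semisimple type across normal compressions; everything else is assembled from results already proved above.
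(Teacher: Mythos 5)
Your overall architecture matches the paper's: chief factors are topologically characteristically simple, so Theorem~\ref{thm:chief:block_structure} gives the first claim, and the ``moreover'' is reduced to showing each of the seven types is association-invariant via the uppermost representative and the compression-invariance results. Two steps differ from the paper. Your treatment of the abelian case by direct computation with the association equations (showing $\ol{[K_2,K_2]}\le K_2\cap\ol{L_1L_2}=L_2$) is correct and more self-contained than the paper, which instead invokes the existence of a common normal compression target from the companion paper. The decisive difference is the dichotomy between cases (6) and (7): the paper disposes of this in one line by citing the result of \cite{RW_P_15} that the weak/semisimple/stacking trichotomy is invariant under association, whereas you attempt to prove from scratch that ``possessing a non-trivial topologically simple closed normal subgroup'' transfers across a normal compression. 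Your auxiliary equivalence (semisimple type $\Leftrightarrow$ existence of such a subgroup, for a non-abelian topologically characteristically simple group) is fine, but the transfer argument has two genuine gaps.

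In the forward direction, you assert that $\ol{\psi(S)}$ is topologically simple. The natural argument shows only that every proper closed normal subgroup $N$ of $\ol{\psi(S)}$ satisfies $N\cap\psi(S)=\triv$ and hence $N\le \Z(\ol{\psi(S)})$; one still has to prove that $\Z(\ol{\psi(S)})$ is trivial. This is not automatic: $\Z(\ol{\psi(S)})$ is normal but not characteristic in $B$, so the topological characteristic simplicity of $B$ does not kill it, and a normal compression of a centerless group can a priori acquire a center. In the reverse direction, Proposition~\ref{prop:normal_compression}(2) applied to $\psi\colon\psi\inv(T)\to T$ only yields that every non-trivial closed normal subgroup of $\psi\inv(T)$ contains $\ol{[\psi\inv(T),\psi\inv(T)]}$; to conclude simplicity you need $\psi\inv(T)$ to be topologically perfect \emph{in its own topology}. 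The fact you invoke --- that the dense normal subgroup $\psi(A)\cap T$ of the non-abelian topologically simple group $T$ is topologically perfect --- only says that $[\psi(A)\cap T,\psi(A)\cap T]$ is dense in the \emph{subspace} topology inherited from $T$, which is coarser than the topology of $\psi\inv(T)$; it does not follow that $\ol{[\psi\inv(T),\psi\inv(T)]}=\psi\inv(T)$ inside $A$. Both points need further argument (or one should follow the paper and use the association-invariance of the trichotomy established in \cite{RW_P_15}); as written, the step separating (6) from (7) is not proved.
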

\begin{proof}
Since $G_1: = K_1/L_1$ and $G_2 := K_2/L_2$ are chief factors, they are topologically characteristically simple. They are thus each of one of the forms described in Theorem~\ref{thm:chief:block_structure}. 

Applying \cite[Lemma 6.6]{RW_P_15}, there is a third normal factor $K/L$ of $G$ such that $K/L$ is a normal compression of both $G_1$ and $G_2$. Hence, $G_1$ is abelian if and only if $G_2$ is abelian. Having dispensed with case (1), we may assume that both $G_1$ and $G_2$ are non-abelian. Cases (2), (3), (4), and (5) are now easy applications of Proposition~\ref{prop:block_properties}. For the cases (6) and (7), Proposition~\ref{prop:block_properties} ensures that $\xi(G_1) = \xi(G_2)$, and the trichotomy of Theorem~\ref{thm:trichotomy} is invariant under association via \cite[Proposition~9.19]{RW_P_15}. Therefore, $G_1$ satisfies (6) if and only if  $G_2$ satisfies (6), and $G_1$ satisfies (7) if and only if $G_2$ satisfies (7).
\end{proof}

The division into cases of Theorem~\ref{thm:chief:block_structure} is not exactly a refinement of Theorem~\ref{thm:trichotomy}, since elementary \tdlcsc groups of rank $\le \omega+1$ can be of semisimple or stacking type.  Here is an alternative version that emphasizes the role of weak type; after dividing into the three cases given by Theorem~\ref{thm:trichotomy}, the proof is essentially the same as that of Theorem~\ref{thm:chief:block_structure}.

\begin{thm}\label{thm:types}
Suppose that $G$ is an $A$-simple \lcsc group for some $A \le \Aut(G)$.  Then exactly one of the following holds:
\begin{enumerate}[(1)]
		\item The group $G$ is abelian.
		\item The group $G$ is of weak type and is a non-abelian \tdlcsc group that is quasi-discrete.
		\item The group $G$ is of weak type and is a regionally SIN \tdlcsc group with trivial quasi-center.
		\item The group $G$ is of weak type and is an elementary \tdlcsc group of decomposition rank $\omega+1$.
		\item The group $G$ is of semisimple type and $A$ acts transitively on $\mf{B}_G$.
		\item The group $G$ is a \tdlcsc group of stacking type and for all $\mf{a}, \mf{b} \in \mf{B}_G^{\min}$, there exists $\psi \in A$ such that $\psi.\mf{a} < \mf{b}$. 
\end{enumerate}
\end{thm}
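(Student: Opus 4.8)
The plan is to reduce to the trichotomy of Theorem~\ref{thm:trichotomy} and then refine each of its three outcomes using the connectedness dichotomy for topologically characteristically simple groups together with Proposition~\ref{prop:min_covered_exist}. First I would dispose of the abelian possibility: if $G$ is abelian, case~(1) holds, so from now on assume $G$ is non-abelian. Since every characteristic closed normal subgroup of $G$ is $A$-invariant, $A$-simplicity forces $G$ to be topologically characteristically simple; hence $G$ is topologically perfect and centerless, and $G^{\circ}$, being characteristic, is either trivial or all of $G$, so $G$ is either connected or totally disconnected.

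Next I would treat the connected case exactly as in the proof of Theorem~\ref{thm:chief:block_structure}: invoking \cite[Theorems~10.25 and~10.29]{HMProLie}, a connected, non-abelian, topologically characteristically simple \lcsc group is a direct product of copies of a simple Lie group, and such a group equals the closed subgroup generated by its components, so it is of semisimple type; thus case~(5) holds. In particular, a group of weak type or of stacking type must be totally disconnected. Now I would apply Theorem~\ref{thm:trichotomy}. If $G$ is of semisimple type, case~(5) holds; if $G$ is of stacking type, it is totally disconnected by the preceding remark and case~(6) holds.

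It remains to analyze the weak type case, where $\mf{B}_G^{\min}=\emptyset$ and $G$ is totally disconnected. Since every regionally robust block is minimally covered (Proposition~\ref{prop:min_covered} and Lemma~\ref{lem:extension:min_covered}), we get $\mf{B}_G^{rr}=\emptyset$, so Proposition~\ref{prop:min_covered_exist} applies; as $G^{\circ}=\triv$, this gives that $G$ is elementary with $\xi(G)\le\omega+1$. By Corollary~\ref{cor:rank_char_simple}, a non-trivial topologically characteristically simple \tdlcsc group of rank at most $\omega+1$ has $\xi(G)=\omega+1$, which is case~(4), or $\xi(G)=2$. In the latter case Lemma~\ref{lem:regionally_SIN} shows $G$ is regionally SIN; if $\QZ(G)=\triv$ this is case~(3), and otherwise $\QZ(G)$ is a non-trivial characteristic subgroup of a topologically characteristically simple group, hence dense, so $G$ is quasi-discrete, which is case~(2).

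Finally I would verify mutual exclusivity: weak, semisimple, and stacking type are mutually exclusive by Theorem~\ref{thm:trichotomy}, case~(1) is singled out by $G$ being abelian (whereas groups of semisimple or stacking type carry non-abelian chief factors), and among (2)--(4) the decomposition rank and the triviality of the quasi-center separate the three. I do not expect a substantive obstacle: the real content sits in Proposition~\ref{prop:min_covered_exist} and the rank computations for topologically characteristically simple groups, and the only points demanding care are the reduction to the totally disconnected case via the connectedness dichotomy and the weak-type rank analysis.
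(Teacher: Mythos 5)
Your proposal is correct and follows the same route the paper indicates for this theorem: the paper states that the proof proceeds by first dividing into the three cases of Theorem~\ref{thm:trichotomy} and then running the arguments of Theorem~\ref{thm:chief:block_structure}, which is exactly what you do (connected case via \cite{HMProLie}, weak type forcing $\mf{B}_G^{rr}=\emptyset$ and hence $\xi(G)\le\omega+1$ via Proposition~\ref{prop:min_covered_exist}, then the rank dichotomy of Corollary~\ref{cor:rank_char_simple}). No gaps.
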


There are weak type chief factors of rank exactly $\omega+1$; see Example~\ref{ex:weak}. Case (4) therefore cannot be omitted.

\begin{rmk}
Let us pause to consider the seven cases of Theorem~\ref{thm:chief:block_structure} with respect to the goal of decomposing chief factors into compact, discrete, or topologically simple groups.  This goal is clearly achieved in cases (5) and (6), and it is straightforward to show in case (1) that $G$ is discrete-by-compact in the connected case and compact-by-discrete in the totally disconnected case.  In cases (2) and (3), $G$ is a directed union of SIN \tdlcsc groups, so in particular a directed union of compact-by-discrete groups.  In case (4), $G$ is a directed union of compactly generated open subgroups, and each such subgroup admits a canonical finite series in which all factors are regionally SIN, via  Theorem~\ref{thm:lower_finite_rank}. Hence, $G$ is constructed from compact and discrete groups.

%For cases (2)-(4), we can still regard $G$ as being constructed from compact and discrete groups in a limited number of steps.  However, it should be remarked that there is at present no satisfactory \emph{global} structure theory of these groups, even in the regionally SIN case.  Such a theory is unlikely to emerge from an analysis of (sub-)normal subgroup structure, which is the focus of the present article.

This leaves case (7). Stacking type ensures a rich subnormal subgroup structure, but it is unclear exactly how rich this structure can be. The techniques of Section~\ref{sec:examples} show that there is a great deal of freedom in a stacking type group. The most natural decomposition property to hope for in general thus seems to be the following well-foundedness condition:
\end{rmk}
\begin{quest}\label{qu:well-foundedness} 
	Suppose that $G=:G_0$ is a topologically characteristically simple \lcsc group. If $G_0$ is abelian, elementary with rank at most $\omega+1$, or of semisimple type, we stop. Else, we find $G_1:=K/L$ a non-abelian chief factor of $G_0$ which is robust. Continuing in this fashion produces $G_0,G_1,\dots$ a sequence of \lcsc groups. Is it the case that any such sequence halts in finitely many steps? What about in the case that the group $G$ is also elementary?
\end{quest}

\section{Examples}\label{sec:examples}

\subsection{Preliminaries}
We here describe a technique which produces compactly generated \tdlcsc groups with interesting chief factors. We only sketch the construction and recall the relevant facts; the full details will appear in a later article. The construction is inspired by ideas from \cite{B05,LB16,S14}, and the reader familiar with \cite{LB16} can likely prove all claimed facts below.

Let $T$ be the countably regular tree and fix $\delta$ an end of $T$. We orient the edges of $T$ such that all edges point toward the end $\delta$. The resulting directed graph is denoted $\Td$; we call $\delta$ the \textbf{distinguished end} of $\Td$. Given a countable set $X$, a \textbf{coloring} of $\Td$ is a function $c:E\Td\rightarrow X$ such that for each $v\in V\Td$,
\[
c_v:=c\rest_{\inn(v)}:\inn(v)\rightarrow X
\] 
is a bijection. The set $\inn(v)$ is the collection of directed edges with terminal vertex $v$. The coloring allows us to define the \textbf{local action} of $g\in \Aut(\Td)$ at $v\in V\Td$: 
\[
\sigma(g,v):=c_{g(v)}\circ g\circ c_v^{-1}\in \sym(X).
\]
We call the coloring \textbf{ended} if there is a monochromatic directed ray which is a representative of the distinguished end $\delta$; \textit{we shall always assume our colorings are ended.} 

The tree $\Td$ is also equipped with horoballs and horospheres centered at $\delta$, defined as follows.  There is a function $\eta$ from $V\Td$ to $\Zb$ such that, for each of the oriented edges of $e$ of $\Td$, we have $\eta(t(e)) = \eta(o(e))+1$. One can see that the function $\eta$ is unique up to an additive constant: it is determined by choosing a base point $v_0$ such that $\eta(v_0) = 0$.  The set $\{\eta\inv(i) \mid i \in \Zb\}$ of fibers is therefore uniquely determined.  The fibers are the \defbold{horospheres} centered at $\delta$, and the sets $X_i := \{v \in V\Td \mid \eta(v) \ge i\}$ for $i \in \Zb$ are the \defbold{horoballs} centered at $\delta$; see Figure~$1$.

\begin{figure}[h]\label{fig:X_n}
	\centering
	\includegraphics[width=0.8\textwidth]{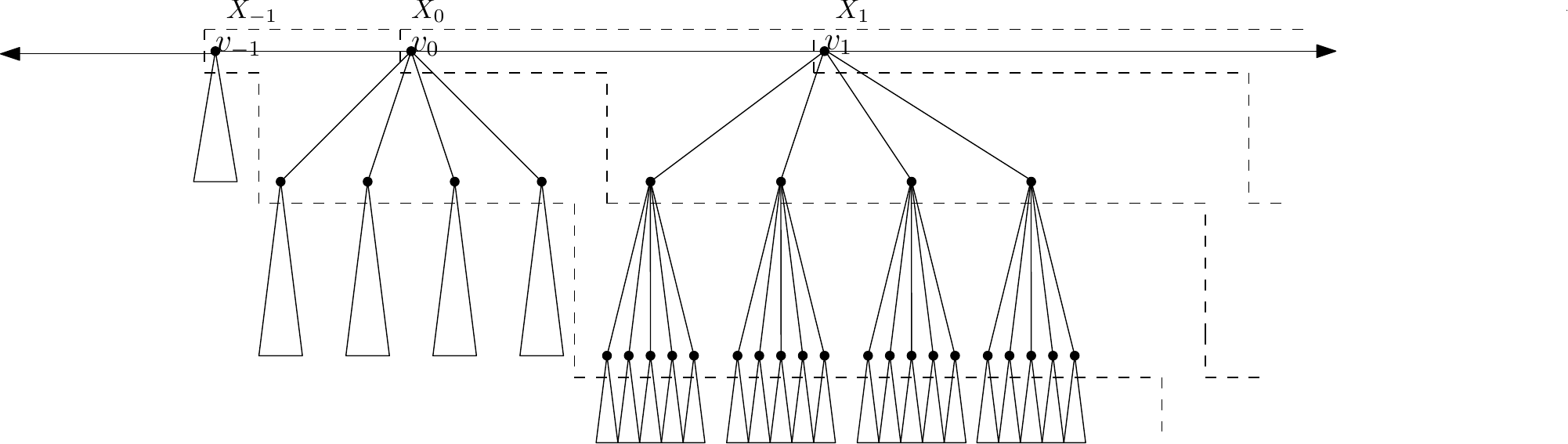}
	\caption{The horoballs $X_i$}
\end{figure}

To define the groups, it will be convenient to make a definition: A \textbf{\tdlcsc permutation group} is a pair $(G,X)$ where $G$ is a \tdlcsc group and $X$ is a countably infinite set on which $G$ acts faithfully with compact open point stabilizers. 

\begin{defn}
	Suppose that $(G,X)$ is a \tdlcsc permutation group with $U\in \U(G)$ and color the tree $\Td$ by $X$. We define the group $E_X(G,U)\leq \Aut(\Td)$ as follows: $E_X(G,U)$ is the set of $g\in \Aut(\Td)$ such that $\sigma(g,v)\in G$ for all $v\in V\Td$ and that $\sigma(g,v)\in U$ for all but finitely many $v\in V\Td$. 
\end{defn}

It is easy to verify that $E_X(G,U)$ is an abstract group. With more care, one can identify a natural \tdlcsc topology on $E_X(G,U)$. For a vertex $v\in V\Td$, it is an easy exercise to see that the point stabilizer $E_X(U,U)_{(v)}$ is compact in the topology inherited from $\Aut(\Td)$. The topology on $\Aut(\Td)$ is that of pointwise convergence; this topology is Polish but not locally compact.

\begin{prop}
	For $(G,X)$ a \tdlcsc permutation group and $U\in \U(G)$, there is a \tdlcsc group topology on $E_X(G,U)$ such that the inclusion $E_X(U,U)_{(v)}\injects E_X(G,U)$ is continuous and open for any $v\in V\Td$.
\end{prop}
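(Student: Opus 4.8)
The plan is to exhibit a countable basis of identity neighborhoods on $E_X(G,U)$ for which the multiplication and inversion maps are continuous, and then check that the stabilizers $E_X(U,U)_{(v)}$ sit inside this topology as compact open subgroups. First I would fix a base vertex $v_0$ and, for each $n\in\Nb$, let $B_n$ be the ball of radius $n$ about $v_0$ in $\Td$. For a finite set of vertices $F$ and an open normal subgroup $V\normal U$, define
\[
N(F,V):=\{g\in E_X(G,U)\mid g\text{ fixes }F\text{ pointwise and }\sigma(g,w)\in V\text{ for all }w\in V\Td\}.
\]
The key observation is that $N(B_n,V)$ is a subgroup: the cocycle identity $\sigma(gh,w)=\sigma(g,h(w))\sigma(h,w)$ together with $V$ being a subgroup of $G$ shows closure under multiplication, and $\sigma(g\inv,w)=\sigma(g,g\inv(w))\inv$ gives closure under inverses. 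Taking $(V_k)_{k\in\Nb}$ a countable basis of compact open normal subgroups of $U$ (which exists by van Dantzig, since $U$ is a compact open subgroup of the t.d.l.c.s.c.\ group $G$ once we pass to $U$ — more precisely, $U\in\U(G)$ is compact open in the totally disconnected setting here), the sets $\{N(B_n,V_k)\}_{n,k}$ form a countable filtering family of subgroups, and I would declare them a basis of identity neighborhoods. Conjugation invariance fails in general, so one must verify the standard criterion that for every basic $N(B_n,V_k)$ and every $g\in E_X(G,U)$ there is a basic $N(B_m,V_l)$ with $gN(B_m,V_l)g\inv\subseteq N(B_n,V_k)$; this follows because $g$ moves $B_n$ into some finite ball $B_m$ and the local action of $g$ conjugates $V_l$ into a fixed open subgroup of $U$, so choosing $l$ large enough works. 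By the Birkhoff–Kakutani / standard construction, this yields a (metrizable, separable) group topology on $E_X(G,U)$.

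Next I would identify $E_X(U,U)_{(v_0)}$ with $N(\{v_0\},U)$ — i.e.\ the subgroup of tree automorphisms fixing $v_0$, fixing $\Td$ setwise compatibly with orientation, with \emph{all} local actions in $U$ — and show it is open and compact in this topology. Openness is immediate since $N(\{v_0\},U)\supseteq N(B_1,V_1)$, a basic neighborhood (after noting $N(\{v_0\},U)$ itself is a union of cosets of small basic subgroups, or simply that it contains the basic set $N(B_1,V_1)$ and is a subgroup). For compactness, $E_X(U,U)_{(v_0)}$ is a closed subgroup of the full rooted-tree stabilizer $\Aut(\Td)_{(v_0)}$, which is a compact (profinite) group in the topology of pointwise convergence; the subspace topology induced from $E_X(G,U)$ on $N(\{v_0\},U)$ agrees with pointwise convergence because the basic neighborhoods $N(B_n,V_k)\cap E_X(U,U)_{(v_0)}$ refine, and are refined by, the congruence subgroups coming from pointwise convergence on finite balls together with the profinite structure of $U$. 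Hence $E_X(U,U)_{(v_0)}$ is a compact open subgroup, so $E_X(G,U)$ is locally compact; it is Hausdorff since $\bigcap_{n,k}N(B_n,V_k)=\triv$ (an automorphism fixing every ball is trivial), and second countable by construction, so $E_X(G,U)$ is \tdlcsc. Finally, the inclusion $E_X(U,U)_{(v)}\injects E_X(G,U)$ is continuous and open for every $v$ because $E_X(U,U)_{(v)}$ is conjugate in $E_X(G,U)$ to $E_X(U,U)_{(v_0)}$ by any automorphism sending $v_0$ to $v$ (such automorphisms exist in $E_X(G,U)$ since the coloring is ended and $U$ acts transitively enough on the relevant colors — or by a direct construction), and conjugation is a homeomorphism.

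The main obstacle I expect is verifying that the candidate topology is genuinely a \emph{group} topology despite the absence of conjugation-invariant neighborhoods: the local actions are attached to vertices and an element $g$ permutes the vertices, so controlling $\sigma(g h g\inv, w)$ requires simultaneously controlling the local action of $g$ at $g\inv(w)$ for $w$ ranging over a finite set, which is fine, but one must be careful that "all but finitely many local actions lie in $V$" is preserved — it is, because $g$ and $g\inv$ each have only finitely many local actions outside $U$, and $V\normal U$. A secondary technical point is matching the subspace topology on the stabilizer with pointwise convergence, which needs the remark that the pointwise-convergence topology on $\Aut(\Td)$ restricted to $E_X(U,U)_{(v_0)}$ is \emph{already} the profinite topology coming from congruence on balls twisted by the profinite topology on $U$ at each vertex — this is where the hypothesis $U\in\U(G)$ (equivalently, $U$ compact open in the totally disconnected case) is essential. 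Once these two points are settled, all the asserted properties follow routinely.
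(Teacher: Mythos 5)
The paper itself states this proposition without proof (all of Section~\ref{sec:examples} ``only sketch[es] the construction,'' deferring details to a later article), so I am comparing your argument against the construction that the surrounding text pins down: $E_X(U,U)_{(v)}$ is to carry the compact topology of pointwise convergence inherited from $\Aut(\Td)$, and ``continuous and open'' forces the subspace topology it receives from $E_X(G,U)$ to coincide with that topology. Your overall shape --- an explicit filtering family of subgroups as an identity-neighborhood basis, followed by exhibiting $E_X(U,U)_{(v_0)}$ as a compact open subgroup --- is the right one, but your choice of basis is wrong in a way that kills the proof.

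The problem is the parameter $V$. The condition ``$\sigma(g,w)\in V$ for \emph{all} $w\in V\Td$'' with $V\normal U$ proper has infinite index in $E_X(U,U)_{(v_0)}$ and is not open there. Concretely, fix $u\in U\setminus V$ and infinitely many pairwise incomparable vertices $w_1,w_2,\dots$ outside $B_n$ on the side away from $\delta$; let $g_{w_i}$ fix every vertex not strictly below $w_i$, permute the children of $w_i$ via $u$ through the coloring, and carry the subtrees below color-preservingly, so that $\sigma(g_{w_i},w_i)=u$ and every other local action is trivial. Each $g_{w_i}$ lies in $E_X(U,U)_{(v_0)}$ and fixes $B_n$, yet $\sigma(g_{w_i}^{-1}g_{w_j},w_i)=u^{-1}\notin V$, so the $g_{w_i}$ lie in pairwise distinct cosets of $N(B_n,V)$. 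Hence $N(B_n,V)$ is not open in pointwise convergence, the subspace topology your construction induces on $E_X(U,U)_{(v_0)}$ is strictly finer than pointwise convergence (so the inclusion from the compact group is open but not continuous), and $E_X(U,U)_{(v_0)}$ is not compact in your topology, being covered by infinitely many disjoint open cosets. The same argument applied to $N(B_n,V_l)<N(B_n,V_k)$ shows no basic neighborhood is compact, so your topology is never locally compact once $U$ has a proper open subgroup; and when $G$ is discrete --- the infinite dihedral case the paper actually uses in Example~\ref{ex:weak} --- your topology is the discrete topology on the uncountable group $E_X(G,U)$, hence not even second countable. Your conjugation check also breaks: for $h\in N(B_m,V_l)$ with $h(v)\neq v$ one gets $\sigma(ghg^{-1},g(v))\in\sigma(g,h(v))\sigma(g,v)^{-1}V_l$, and there is no reason for $\sigma(g,h(v))\sigma(g,v)^{-1}$ to lie in $V_k$. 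The repair is to delete $V$ entirely: take as basis only the groups $N(B_n,U)$, the pointwise stabilizers of finite balls inside $E_X(U,U)$. These are exactly the basic pointwise-convergence neighborhoods of the identity in $E_X(U,U)_{(v_0)}$, and the conjugation condition holds because the set $S$ of vertices where $\sigma(g,\cdot)\notin U$ is finite: any $h\in E_X(U,U)$ fixing $g^{-1}(B_n)\cup S$ together with the children of the vertices of $S$ satisfies $ghg^{-1}\in N(B_n,U)$. With that change, the rest of your outline (compactness of $E_X(U,U)_{(v_0)}$ from finiteness of $U$-orbits on $X$, Hausdorffness, second countability, and commensurability of the stabilizers of different vertices) does go through.
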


The \tdlcsc permutation group $(G,X)$ controls the \tdlcsc group $E_X(G,U)$ in several important ways.

\begin{prop}\label{prop:compact_generation} Suppose that $(G,X)$ is a \tdlcsc permutation group. Then,
	\begin{enumerate}[(1)]
		\item If $(G,X)$ is transitive and $G$ is compactly generated, then $E_X(G,U)$ is compactly generated for any $U\in \U(G)$.
		\item If $G$ is elementary, then $E_X(G,U)$ is elementary for any $U\in \U(G)$.
	\end{enumerate}
\end{prop}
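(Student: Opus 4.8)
The plan is to read everything off the action of $E_X(G,U)$ on the directed tree $\Td$, using the distinguished end $\delta$.  The level function $\eta\colon V\Td\to\Zb$ is preserved up to a global additive shift by every element of $E_X(G,U)$, so $\theta\colon E_X(G,U)\to\Zb$, $g\mapsto\eta(gv)-\eta(v)$ (independent of $v$), is a continuous homomorphism; set $E^{+}:=\ker\theta$, a closed normal subgroup with discrete, hence elementary, quotient.  Because the colouring is ended, shifting along a monochromatic representative of $\delta$ and using colour-preserving isomorphisms of the subtrees hanging off it — these isomorphisms have trivial local action everywhere — produces a $t\in E_X(G,U)$ with $\theta(t)=-1$ and $\sigma(t,w)=1$ for all $w$, so that $E_X(G,U)=E^{+}\rtimes\langle t\rangle$.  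Two elementary devices recur: (a) for every vertex $v$ and every $a\in G$ there is an element of $E_X(G,U)$ supported on the descendant subtree $T_v$ (the subtree on $v$ and the vertices whose geodesic toward $\delta$ meets $v$), with local action $a$ at $v$ and local action in $U$ at all other vertices — build it by performing $a$ at $v$ and canonical colour-preserving isomorphisms below $v$; and (b) $E_X(U,U)_{(v)}$ is a compact open subgroup of $E_X(G,U)$ (recalled just above) fixing $v$ and hence the whole geodesic ray from $v$ to $\delta$.

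For part~(1), fix a base vertex $v_0$ and a finite symmetric set $A\subseteq G$ with $\langle U\cup A\rangle=G$ (possible since $G$ is compactly generated and $U$ is compact open), and let $g_a$ be the element of (a) with local action $a$ at $v_0$.  I claim $E_X(G,U)$ is generated by the compact set $E_X(U,U)_{(v_0)}\cup\{t^{\pm1}\}\cup\{g_a^{\pm1}:a\in A\}$; call the generated subgroup $\Gamma$, which is open, hence closed.  Conjugating the data of (a) and the stabilisers $E_X(U,U)_{(v)}$ by powers of $t$ and using that $G$ acts transitively on $X$, one first shows that $\Gamma$ contains, for every vertex $z$ and every $c\in G$, an element supported on $T_z$ with local action $c$ at $z$ and local action in $U$ elsewhere, and then that $\Gamma$ acts transitively on $V\Td$: reduce to a fixed $\eta$-level by a power of $t$ and induct on distance from $v_0$, at each step performing at the youngest common ancestor a local action in $G$ sending the colour pointing back to $v_0$ to the colour pointing at the target, which strictly decreases the distance.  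For an arbitrary $h$, a power of $t$ moves it into $E^{+}$, an element of $\Gamma$ then moves it to fix $v_0$ and hence the ray to $\delta$, and finally the local-action realisers at $v_0$ together with $E_X(U,U)_{(v_0)}$ are used to clear the finite set $\{w:\sigma(h,w)\notin U\}$ one vertex at a time until $h\in E_X(U,U)_{(v_0)}\subseteq\Gamma$.  This last step is the main obstacle: as in the Burger--Mozes setting one must organise the clearing so as not to create new bad local actions, and the essential point is that the point stabilisers of $E_X(G,U)$ on $\Td$ are \emph{not} compactly generated, so the translation $t$ — hence the ended colouring — genuinely cannot be dispensed with.

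For part~(2), which uses neither transitivity nor ended-ness, the plan is a chain of reductions through the closure properties of $\Es$.  Since $E_X(G,U)/E^{+}$ is discrete, it suffices by closure under extensions to prove $E^{+}\in\Es$.  Writing $v_0=v_0^{(0)},v_0^{(1)},\dots$ for the geodesic ray from $v_0$ to $\delta$, each $g\in E^{+}$ fixes $v_0^{(k)}$ for all large $k$ (it fixes $\delta$ and $\theta(g)=0$), so $E^{+}=\bigcup_k\bigl(E^{+}\cap\mathrm{Stab}(v_0^{(k)})\bigr)$ is an increasing union of open subgroups, all conjugate under $\langle t\rangle$; by closure under countable increasing unions it suffices to show $E^{+}[v_0]:=E^{+}\cap\mathrm{Stab}(v_0)$ is elementary.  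Next, $E^{+}[v_0]=\bigcup_m O_m$ with $O_m:=\{g\in E^{+}[v_0]:\sigma(g,w)\in U\text{ whenever }d(v_0,w)\ge m\}$; each $O_m$ is a subgroup (elements of $E^{+}[v_0]$ fix $v_0$, so a product has bad local actions no farther from $v_0$ than its factors) and is open since it contains $E_X(U,U)_{(v_0)}$, so it suffices to show each $O_m$ is elementary.  Now $P_m:=\{g\in O_m:g\text{ fixes }B_m(v_0)\text{ pointwise}\}$ is normal in $O_m$, and any such $g$ has \emph{all} local actions in $U$, whence $P_m=\{g\in E_X(U,U)_{(v_0)}:g\text{ fixes }B_m(v_0)\text{ pointwise}\}$ is a closed subgroup of the compact group $E_X(U,U)_{(v_0)}$, hence profinite and elementary.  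The quotient $O_m/P_m$ is the depth-$m$ ``universal group'' $G_m$ of automorphisms of the ball $B_m(v_0)$ that fix $v_0$ and have local action in $G$ at every vertex of $B_{m-1}(v_0)$; it is elementary by a finite induction on $m$, with $G_1\cong G$ and, for $m\ge 1$, $G_m$ an extension of $G$ (elementary by hypothesis) by the kernel of the local action at $v_0$, which is a local direct product of countably many copies of $G_{m-1}$ with respect to their compact open subgroups.  Here one uses the auxiliary fact — immediate from Lemma~\ref{lem:product_elementary} and closure under countable increasing unions, since such a product is the increasing union of the open subgroups got by taking full factors in finitely many coordinates and the (profinite) distinguished open subgroups elsewhere — that a local direct product of countably many elementary \tdlcsc groups over compact open subgroups is elementary.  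Thus $G_m\in\Es$, so $O_m\in\Es$ by closure under extensions, and unwinding the reductions yields $E^{+}\in\Es$ and hence $E_X(G,U)\in\Es$.  Part~(2) is essentially routine once these nested exhaustions are in place; the only real content is identifying $O_m/P_m$ with the finite-depth universal group and running the induction on depth.
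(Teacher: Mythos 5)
The paper never proves this proposition: it appears in the examples section, where the authors state explicitly that the construction is only sketched and that ``the full details will appear in a later article.'' So there is no argument in the text to compare yours against, and your proposal has to be judged on its own. For part (1) your skeleton is the right one and is essentially the Le Boudec-style argument that this setting calls for: the level homomorphism $\theta$, the colour-preserving translation $t$ furnished by the ended colouring, the compact open stabiliser $E_X(U,U)_{(v_0)}$, rigid realisers supported on descendant subtrees, transitivity of the candidate subgroup $\Gamma$, and the clearing of the finitely many singular local actions. The step you rightly flag as the obstacle is also where a genuine subtlety hides that your sketch does not resolve: conjugating a realiser at $v$ by an arbitrary $\gamma\in\Gamma$ does \emph{not} produce a realiser at $\gamma v$, because the conjugate acquires singularities wherever $\gamma$ has them. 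One fixes this by building, by induction on the distance from the axis of $t$, movers $\gamma_z\in\Gamma$ carrying an axis vertex to $z$ with trivial local action on the relevant descendant subtree (using transitivity of $G$ on $X$ to choose the local action at the parent); with those, genuine realisers exist at every vertex, the clearing strictly decreases the number of singularities without creating new ones, and a singularity on the ray from $v_0$ to $\delta$ is cleared by a realiser whose local action fixes the colour pointing at $v_0$, so the reduction really does terminate in $E_X(U,U)_{(v_0)}$.

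In part (2) the nested exhaustions $E^{+}=\bigcup_k\Stab(v_0^{(k)})$ and $\Stab(v_0)=\bigcup_m O_m$, the identification of $P_m$ with a closed subgroup of the profinite group $E_X(U,U)_{(v_0)}$, and the auxiliary fact that a countable local direct product of elementary groups over compact open subgroups is elementary (correctly deduced from Lemma~\ref{lem:product_elementary} and closure under increasing unions) are all fine. The one concrete error is your recursion for $G_m$. The ball $B_m(v_0)$ contains the ancestors $v_0^{(1)},\dots,v_0^{(m)}$ together with their other in-neighbours and those vertices' descendants out to distance $m$, and an element of $O_m$ acts on all of this through the local actions $\sigma(g,v_0^{(j)})$, which range over point stabilisers in $G$; already for $m=2$ the kernel of the local action at $v_0$ permutes the siblings of $v_0$. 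So $O_m/P_m$ is \emph{not} an extension of $G$ by a local direct product of copies of $G_{m-1}$, and as written the induction on $m$ does not close. (Note also that you cannot trade the genuine ball for the descendant ball in the definition of $P_m$ without losing the claim that $P_m\subseteq E_X(U,U)_{(v_0)}$.) The repair is routine: define the depth-$m$ groups on descendant half-trees only, add finitely many further extension layers for the levels above $v_0$ --- each a point stabiliser in $G$ acting on a local direct product of descendant-ball groups --- and observe that by Theorem~\ref{thm:elementary_closure}(1) you only need $O_m/P_m$ to inject continuously into an elementary group. Alternatively, the machinery the paper itself sets up gives a cleaner route: each horoball stabiliser $H_i$ of Proposition~\ref{prop:horoball_stab} satisfies $\bigcap_{j\le i}H_j=\triv$ with each $H_i/H_j$ an iterated extension of finitely many local direct products $\bigoplus_{\partial X_k}(G,U)$, hence is elementary by Theorem~\ref{thm:elementary_closure}(2); granting the density of $\bigcup_i H_i$ in $P_X(G,U)$ used in \S\ref{ex:weak}, Proposition~\ref{prop:d-rank:ascending} then makes $P_X(G,U)$ elementary, and $E_X(G,U)$ is an extension of a discrete group by it.
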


The group $E_X(G,U)$ admits a canonical open normal subgroup. Let $P_X(G,U)$ denote the set of $g\in E_X(G,U)$ such that $g$ fixes a vertex; equivalently, $P_X(G,U)$ is the set of $g \in E_X(G,U)$ such that $\eta(gv) = \eta(v)$ for some (equivalently, any) $v \in V\Td$. It is easy to verify that $P_X(G,U)$ is an open, normal subgroup of $E_X(G,U)$ and that $E_X(G,U)/P_X(G,U)\simeq \Zb$. The subgroup $P_X(G,U)$ also gives a chief factor. Recall that a topological group is \textbf{monolithic} if the intersection of all non-trivial closed normal subgroups is non-trivial.

\begin{prop}\label{prop:elliptic_sgrp}
	Suppose that $(G,X)$ is a transitive \tdlcsc permutation group. Then, $E_X(G,U)$ is monolithic, and the monolith is
	\[
	M:=\ol{[P_X(G,U),P_X(G,U)]}.
	\]
	If $G$ is also topologically perfect, then $P_X(G,U)$ is the monolith.
\end{prop}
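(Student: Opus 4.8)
The plan is to show that $E_X(G,U)$ is monolithic with monolith $M:=\ol{[P_X(G,U),P_X(G,U)]}$, by exhibiting $M$ as a (topologically) simple group that is contained in every nontrivial closed normal subgroup of $E:=E_X(G,U)$. Write $P:=P_X(G,U)$ for brevity. The two main tasks are: (i) identify a nontrivial subgroup of $M$ that must lie inside every nontrivial closed normal subgroup of $E$, and (ii) show that this forces $M$ itself inside, i.e.\ that $M$ has no proper nontrivial $E$-invariant closed subgroups, equivalently that $M$ is topologically simple.

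First I would analyze the ``rigid stabilizer'' structure. For a vertex $v$, let $T_v$ be the subtree of $\Td$ consisting of $v$ together with everything lying in horoballs below $v$ (the descendants of $v$ away from $\delta$), and let $R_v\le E$ be the pointwise stabilizer of $\Td\setminus T_v$ acting nontrivially only on $T_v$. Because the coloring is ended and $(G,X)$ is transitive, one checks that $R_v$ is nontrivial and that the subgroups $R_v$ are permuted transitively by $E$ (indeed by $P$). The key local fact, proved as in the standard Burger--Mozes/Le~Boudec type argument, is that if $N\normal E$ is closed and nontrivial, then $N$ contains $R_v$ for some, hence every, $v$: one takes $1\neq n\in N$, uses the tree geometry to find a vertex $v$ so that a suitable conjugate/commutator of $n$ fixes $\Td\setminus T_v$ pointwise and acts nontrivially on $T_v$, and then uses transitivity of $E$ on the $R_v$ and the fact that commutators $[n, R_v]$ generate a large enough subgroup; invoking that point stabilizers are compact open and that $E$ is generated by the $R_v$ together with a vertex stabilizer, one upgrades to $R_v\le N$. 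From $R_v\le N$ for all $v$ one gets $\overline{\langle R_v : v\in V\Td\rangle}\le N$, and this closed subgroup contains $[P,P]$ (every commutator of elements of $P$ can be written using elements supported on finitely many disjoint subtrees $T_v$, since $P$ fixes horospheres setwise), whence $M\le N$. This shows $M$ is contained in every nontrivial closed normal subgroup, so if $M$ itself is nontrivial the group is monolithic with monolith contained in $M$.

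Next I would show the monolith equals $M$, i.e.\ that $M$ is itself a minimal nontrivial closed normal subgroup, equivalently topologically simple. Here I would apply the characterization of the monolith: since $M\normal E$ is closed and nontrivial and contained in every nontrivial closed normal subgroup, the monolith $\mathrm{Mon}(E)$ satisfies $\mathrm{Mon}(E)\le M$; conversely $\mathrm{Mon}(E)$ is itself a nontrivial closed normal subgroup, and being contained in $M$ it is normalized by $E$, so $\overline{[P,P]}=M\le \ldots$ — more directly: $\mathrm{Mon}(E)$ is a nontrivial closed normal subgroup of $E$, hence (by the paragraph above) contains $M$; combined with $\mathrm{Mon}(E)\le M$ this gives $\mathrm{Mon}(E)=M$. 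For this to make sense I must verify $M\neq\triv$, which follows because $P$ acts on the subtrees $T_v$ with enough room (transitivity of $(G,X)$ and the ended coloring guarantee $P$ does not act abelianly on any $T_v$) so $[P,P]$ is nontrivial; alternatively $M=\triv$ would make $P$ abelian, which is visibly false for a transitive $(G,X)$.

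Finally, for the last sentence: if $G$ is topologically perfect, I would show $P=M$, i.e.\ $P$ is topologically perfect. The horosphere-preserving group $P$ is built from local actions lying in $G$ (all but finitely many in $U$); since $G=\overline{[G,G]}$, each single-vertex local modification in $P$ is a limit of products of commutators of such modifications, and these commutators can be realized inside $P$ by choosing the ``conjugating'' tree automorphisms in $P$. More cleanly: $P/\overline{[P,P]}$ is a \tdlcsc abelian group that is a quotient of $P$, and one shows any continuous homomorphism from $P$ to an abelian group is trivial, by checking it kills each $R_v$ (since $R_v$ surjects onto a copy of something built from $G$, which is perfect) and that $P$ is topologically generated by $\bigcup_v R_v$ together with elements whose images one can also show are trivial; hence $P=\overline{[P,P]}=M$.

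The main obstacle I anticipate is step (i)–(ii): rigorously proving that any nontrivial closed normal subgroup $N$ of $E$ contains all the rigid stabilizers $R_v$. This ``commutator trick on the tree'' is where the combinatorics of the distinguished end, the ended coloring, and the local action group $G$ all have to be used simultaneously, and care is needed because $E$ is \emph{not} the full automorphism group of $\Td$ — it is cut out by the local-action constraint — so one cannot freely move pieces around and must stay inside $E$ throughout (and exploit compactness of point stabilizers to pass from finitely-supported manipulations to the closure). Everything else is either topological bookkeeping or a routine perfectness argument.
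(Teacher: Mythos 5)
The paper itself gives no proof of this proposition --- Section~\ref{sec:examples} explicitly states that the facts about $E_X(G,U)$ are recalled without proof, with details deferred to a later article --- so your proposal can only be judged on its own merits. The overall architecture (rigid stabilizers of the subtrees $T_v$, the double-commutator trick, and the density of branch-supported elements in $P:=P_X(G,U)$, exactly as in the proof of Lemma~\ref{lem:rank 2}) is the right one. But there is a genuine error in your step (i): you claim that every nontrivial closed normal subgroup $N$ of $E:=E_X(G,U)$ contains the \emph{full} rigid stabilizer $R_v$, "upgrading" from commutators to $R_v \le N$. The commutator trick only ever produces the elements $[g,h]$ for $g,h\in R_v$, i.e.\ it gives $\ol{[R_v,R_v]}\le N$; upgrading to $R_v\le N$ requires $R_v$ to be topologically perfect, which fails when $G$ is not. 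Indeed, your claim would contradict the proposition itself: since $P=\ol{\langle R_v : v\in V\Td\rangle}$ (every element of $P$ fixes a ray to $\delta$ and decomposes, up to a convergent product, into pieces supported on the subtrees hanging off that ray), your step (i) would force the monolith to be all of $P$, whereas the proposition asserts it is $\ol{[P,P]}$ and reserves the conclusion $M=P$ for the case where $G$ is topologically perfect. Concretely, whenever $G\neq\ol{[G,G]U}$ (e.g.\ $G=D_\infty$, $U=C$ as in \S\ref{ex:weak}), the map $\phi(g):=\prod_v\sigma(g,v) \bmod \ol{[G,G]U}$ is a well-defined continuous $E$-invariant homomorphism from $P$ to a nontrivial discrete abelian group (the cocycle identity $\sigma(gh,v)=\sigma(g,hv)\sigma(h,v)$ makes it multiplicative modulo commutators, and it vanishes on the open subgroup with all local actions in $U$); its kernel is a nontrivial closed normal subgroup of $E$ that does \emph{not} contain $R_v$. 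So the correct statement of your key lemma is $\ol{[R_v,R_v]}\le N$, from which $\ol{[P,P]}\le N$ follows by the branch decomposition; the containment $R_v\le N$ is simply false in general.

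A secondary soft spot is the final step: to show $P=\ol{[P,P]}$ when $G$ is topologically perfect, you argue that a continuous homomorphism to an abelian group kills $R_v$ because "$R_v$ surjects onto something perfect." Surjecting onto a perfect group says nothing about one's own abelianization. What is actually needed is that $R_v$ itself has trivial (Hausdorff) abelianization; since $R_v$ is an iterated restricted wreath-type product over $G$, this requires the coinvariants computation using transitivity of $(G,X)$ together with perfectness of $G$, plus a limiting argument pushing any putative abelian quotient down the levels of $T_v$ and into the compact part. This is fixable but is not a "routine perfectness argument" as written.
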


%The subgroup $P_X(G,U)$ also admits canonical normal factors.
%
%Set $L:=P_X(G,U)$, let $R$ be a monochromatic ray in $T$ giving the distinguished end $\delta$, and extend $R$ to a bi-infinite monochromatic ray $\wt{R}$. Fixing a base point $v=:v_0$ on $\wt{R}$, we enumerate $\wt{R}$ by $(v_i)_{i\in \Zb}$. For each $i\in \Zb$, we define 
%\[
%X_{i}:=
%\begin{cases}
%\bigcup_{k\geq |i|}B_{k}(v_{i+k}) & i<0 \\
%\bigcup_{k\geq 0}B_{k}(v_{i+k}) & i\geq 0
%\end{cases}
%\]
%where $B_{k}(v_n)$ is the $k$ ball around $v_n$ in $T$; see Figure~$1$. The $X_i$ are called \textbf{horoballs}.  The \textbf{boundary} of a horoball $X_i$, denoted by $\partial X_i$, is the collection of vertices $v\in X_i$ such that $v$ has a neighbor in $V\Td\setminus X_i$.

The pointwise stabilizer $H_i:=\Stab_L(X_i)$ of a horoball $X_i$, called a \textbf{horoball stabilizer}, is a closed normal subgroup of $L$, and $H_i\leq H_j$ for $i\leq j$. We thus obtain canonical normal factors $H_{i+1}/H_i$ of $P_X(G,U)$. We call these factors the \textbf{horoball factors} of $P_X(G,U)$. 

\begin{prop}\label{prop:horoball_stab} 
For $(G,X)$ a \tdlcsc permutation group, each horoball factor $K/L$ admits a $P_X(G,U)$-equivariant isomorphism $\psi:K/L\rightarrow \bigoplus_{\partial X}(G,U)$ where $X$ is the horoball corresponding to $K$.
\end{prop}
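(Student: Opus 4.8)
The plan is to realise the horoball factor explicitly as a tuple of local actions. Write $P := P_X(G,U)$, let $K = H_{i+1}$ and $L = H_i$ be the horoball factor in question, let $X = X_{i+1}$ be the horoball corresponding to $K$, so that $\partial X = \eta\inv(i+1)$ is its innermost horosphere, and consider
\[
\Psi\colon H_{i+1}\longrightarrow \bigoplus_{w\in\partial X}(G,U),\qquad \Psi(g):=\bigl(\sigma(g,w)\bigr)_{w\in\partial X}.
\]
First I would note that every $w\in\partial X$ has $\eta(w) = i+1$ and hence is fixed by each $g\in H_{i+1}$, so $\sigma(g,w)$ is defined; it lies in $G$ by the definition of $E_X(G,U)$, and it equals $1$ for all but finitely many $w$, so $\Psi$ is well defined. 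The cocycle identity $\sigma(gh,v) = \sigma(g,h(v))\,\sigma(h,v)$ together with $h(w) = w$ for $h\in H_{i+1}$ shows $\Psi$ is a homomorphism. For the kernel, $\sigma(g,w) = 1$ exactly when $g$ fixes every child of $w$, so $\Psi(g) = 1$ precisely when $g$ fixes every vertex at level $i$; as $g$ already fixes all of $X_{i+1}$, this is equivalent to $g$ fixing $X_i$, i.e.\ to $g\in H_i$. Thus $\ker\Psi = H_i$.

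Next I would establish surjectivity by the standard rigid construction. Given $f = (f_w)_{w\in\partial X}$ in the local direct product, define $g_f\in\Aut(\Td)$ to fix $X_{i+1}$ pointwise, to permute the children of each $w\in\partial X$ by transporting $f_w\in G\le\sym(X)$ through the colouring $c_w\colon\inn(w)\to X$, and to send the subtree hanging below each level-$i$ vertex $v$ onto the subtree hanging below $g_f(v)$ by the unique colour-preserving isomorphism. A direct check shows that $g_f$ preserves $\eta$ and satisfies $\sigma(g_f,w) = f_w$ for $w\in\partial X$ and $\sigma(g_f,u) = 1$ for every other vertex $u$; since $f_w\in G$ for all $w$ and $f_w\in U$ for cofinitely many $w$, we obtain $g_f\in E_X(G,U)$, and as $g_f$ fixes a vertex and fixes $X_{i+1}$, in fact $g_f\in H_{i+1}$ with $\Psi(g_f) = f$. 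Hence $\Psi$ induces an algebraic isomorphism $\overline\Psi\colon H_{i+1}/H_i\to\bigoplus_{\partial X}(G,U)$.

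It remains to handle the topology and the $P$-action. Continuity of $\Psi$ is routine: $g\mapsto\sigma(g,w)$ is continuous for each fixed $w$, and every sufficiently small identity neighbourhood of $E_X(G,U)$ is contained in some compact open subgroup $E_X(U,U)_{(v_0)}$, so all local actions of a small $g$ lie in $U$ and at most finitely many coordinates of $\Psi(g)$ leave $U$. Thus $\overline\Psi$ is a continuous bijective homomorphism between second countable locally compact (hence Polish) groups, so it is a topological isomorphism by the open mapping theorem. For equivariance, $P$ preserves $\eta$, hence permutes $\partial X$ and normalizes both $H_{i+1}$ and $H_i$; applying the cocycle identity to $pgp\inv$ for $p\in P$, $g\in H_{i+1}$, and using $g(p\inv w) = p\inv w$, one gets
\[
\sigma(pgp\inv,w) = \sigma(p,p\inv w)\,\sigma(g,p\inv w)\,\sigma(p,p\inv w)\inv,
\]
which says exactly that $\overline\Psi$ intertwines conjugation on $H_{i+1}/H_i$ with the natural action $(p\cdot f)_w = \sigma(p,p\inv w)\,f_{p\inv w}\,\sigma(p,p\inv w)\inv$ on $\bigoplus_{\partial X}(G,U)$. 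I expect the genuine work to lie in the surjectivity step — checking carefully that the rigid extension $g_f$ is a well-defined tree automorphism realising precisely the prescribed tuple of local actions — together with the point that one should not attempt to invert $\overline\Psi$ via an explicit section, since the rigid section is visibly not continuous, but instead appeal to the open mapping theorem.
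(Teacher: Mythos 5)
The paper does not actually prove this proposition: Section 9 states explicitly that the technical results there are given without proof, with full details deferred to a later article, so there is no argument of the authors' to compare yours against. Your proof is the natural one and I believe it is correct: reading off the tuple of local actions at the vertices of the boundary horosphere $\eta\inv(i+1)$, using the cocycle identity for the homomorphism property and for equivariance, identifying the kernel with $H_i$, and realizing surjectivity by the rigid (colour-preserving) extension of a prescribed tuple of local actions. Two small points deserve tightening. First, your claim that $\sigma(g,w)=1$ for all but finitely many $w\in\partial X$ is not what the definition of $E_X(G,U)$ gives you; it gives $\sigma(g,w)\in U$ for all but finitely many $w$, which is exactly what membership in the local direct product requires, so the conclusion stands but the stated reason should be corrected. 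Second, in the continuity step the coordinate maps $g\mapsto\sigma(g,w)$ are continuous into $\sym(X)$ with the permutation topology, whereas the local direct product carries the product of the topologies of $U$ as a subgroup of $G$; these agree on $U$ only because $U$ is compact and maps continuously and injectively into $\sym(X)$, so a continuous bijection onto its image is a homeomorphism. With that observation your appeal to the open mapping theorem for the induced map $\overline\Psi$ goes through, and the equivariance computation correctly identifies the transported $P_X(G,U)$-action as the natural twisted permutation action on $\bigoplus_{\partial X}(G,U)$.
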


\subsection{Stacking type chief factors}\label{ex:stacking}
We now exhibit an example of a non-elementary stacking type chief factor in a compactly generated \tdlcsc group. Again technical results are stated without proof.

Fix $G$ a non-discrete compactly generated simple \tdlcsc group and $U\in \U(G)$; for concreteness, one can take $PSL_2(\Qp)=G$ and $PSL_2(\Zb_p)=U$. Set $X:=G/U$ and let $G\acts X$ by left multiplication. The resulting permutation group $(G,X)$ is a transitive \tdlcsc permutation group, so via Proposition~\ref{prop:compact_generation}, the group $E_X(G,U)$ is a compactly generated \tdlcsc group. In view of Proposition~\ref{prop:elliptic_sgrp}, $P_X(G,U)$ is the unique chief factor of $E_X(G,U)$. In this setting, $P_X(G,U)$ is of stacking type via the following:

\begin{prop}\label{prop:stacking}
	Suppose that $(G,X)$ is a transitive permutation \tdlcsc group and $U\in \U(G)$ is non-trivial. If $G$ is topologically simple and compactly generated, then $P_X(G,U)$ is a stacking type chief factor of $E_X(G,U)$, and every chief factor of $P_X(G,U)$ is a horoball factor.
\end{prop}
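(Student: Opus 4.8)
The plan is to prove the two assertions — that $P_X(G,U)$ is of stacking type, and that every chief factor of $P_X(G,U)$ is a horoball factor — by first locating the minimally covered chief blocks of $P_X(G,U)$ inside the horoball series, and then using the shift automorphism of $E_X(G,U)$ to witness the stacking relation. First I would recall the structural facts already stated: $P := P_X(G,U)$ is the monolith of $E := E_X(G,U)$ by Proposition~\ref{prop:elliptic_sgrp} (using that $G$ is topologically simple hence topologically perfect), and the horoball stabilizers $H_i = \Stab_P(X_i)$ form an ascending chain of closed normal subgroups of $P$ with $H_{i+1}/H_i \simeq \bigoplus_{\partial X}(G,U)$ $P$-equivariantly by Proposition~\ref{prop:horoball_stab}. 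The first key step is to identify, for each $i$, a chief block $\mf{a}_i$ of $P$ covered ``at level $i$''. The local direct product $\bigoplus_{\partial X}(G,U)$ of copies of a topologically simple group is of strict semisimple type; by Theorem~\ref{thm:min_normal:simple} and Corollary~\ref{cor:strict_semisimple:local_model} its chief factors are determined, and since $G$ is non-elementary and non-compact, such a factor is robust. Transporting this through the $P$-equivariant isomorphism of Proposition~\ref{prop:horoball_stab} and using Proposition~\ref{prop:induced_block} (extending a minimally covered block of the closed normal subgroup $H_{i+1}$ to $P$), I obtain a minimally covered robust block $\mf{a}_i \in \mf{B}_P^{\min}$ with $H_{i+1}$ covering $\mf{a}_i$ but $H_i$ avoiding it. In particular $\mf{B}_P^{\min} \neq \emptyset$.

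The second key step is to show these $\mf{a}_i$ realize the stacking condition. The orientation-reversing-free structure of $\Td$ gives a ``translation toward $\delta$'' element $t \in E$ with $\eta(tv) = \eta(v)+1$, and conjugation by $t$ is an automorphism $\psi$ of $E$ restricting to an automorphism of the characteristic subgroup $P$ which shifts the horoball filtration: $\psi(H_i) = H_{i-1}$ (up to reindexing conventions). Hence $\psi$ carries the block covered at level $i$ to the block covered at level $i+1$, and because a block covered at a strictly lower horoball is centralized by the corresponding horoball stabilizer, comparing centralizers yields $\psi^{\pm 1}.\mf{a}_i < \mf{a}_j$ in the order on $\mf{B}_P$ whenever appropriate. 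To upgrade this to the full stacking statement — that for \emph{all} $\mf{a},\mf{b} \in \mf{B}_P^{\min}$ there is $\varphi \in \Aut(P)$ with $\varphi.\mf{a} < \mf{b}$ — I would invoke the trichotomy Theorem~\ref{thm:trichotomy} applied with $A = \Aut(E)|_P$ (note $P$ is $E$-simple, being the monolith, hence characteristically simple): since $\mf{B}_P^{\min} \neq \emptyset$, $P$ is either of semisimple type or of stacking type, and semisimple type is ruled out because by Theorem~\ref{thm:directproduct:quasiproduct} it would force $P$ to be (a quasi-product embedding into) a product of topologically simple groups, whereas $P$ has the properly ascending chain $H_0 < H_1 < \cdots$ of closed normal subgroups with non-central, non-simple factors. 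Therefore $P$ is of stacking type.

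For the claim that \emph{every} chief factor of $P$ is a horoball factor, the strategy is: any chief factor $K/L$ of $P$ covers some block $\mf{b} = [K/L]$; apply the Schreier-type refinement Theorem~\ref{thm:Schreier_refinement} to the horoball series (suitably truncated to a finite subseries between $H_m$ and $H_n$ containing $L$ and $K$, which exists since $\bigcup_i H_i$ is dense and $\bigcap_i H_i$ is trivial or central — I would need to check $\bigcup_i H_i = P$ and $\bigcap_i H_i = \{1\}$, both following from the tree structure) to locate a unique index $i$ with $H_i \le B < A \le H_{i+1}$ and $A/B \in \mf{b}$. Since $H_{i+1}/H_i \simeq \bigoplus_{\partial X}(G,U)$ is of strict semisimple type, its chief factors as a group — and a fortiori the $P$-chief factor $A/B$ lying inside it — are forced by Theorem~\ref{thm:min_normal:simple} to be associated to the ``diagonal'' simple factor, i.e. $K/L$ is associated to, and indeed must equal up to the association relation, the horoball factor $H_{i+1}/H_i$; one then argues $L = H_i$ and $K = H_{i+1}$ using that $P$ acts on $\partial X$ transitively enough to make $H_{i+1}/H_i$ itself a chief factor (this transitivity is where the hypothesis that $(G,X)$ is transitive enters). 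The main obstacle I anticipate is precisely this last point: controlling exactly which subgroups strictly between $H_i$ and $H_{i+1}$ are $P$-invariant, i.e.\ verifying that $H_{i+1}/H_i$ really is a single chief factor and not further decomposable under the $P$-action — this requires understanding the $P$-action on the index set $\partial X$ of the local direct product, which is governed by the action of horoball stabilizers at level $i+1$ permuting the descendant subtrees, and showing this action is sufficiently transitive (transitive on finite subsets, or at least giving no invariant subproduct). Everything else — the existence of the shift automorphism, the centralizer comparison, the application of the trichotomy — should be routine given the cited results.
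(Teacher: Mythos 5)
The paper offers no proof of Proposition~\ref{prop:stacking} to compare against: Section~\ref{sec:examples} states explicitly that the technical results there are ``stated without proof'' and that full details will appear in a later article. So your sketch can only be judged on its own terms. Its architecture --- realize minimally covered blocks of $P:=P_X(G,U)$ inside the horoball filtration $(H_i)_{i\in\Zb}$, shift them with the translation toward $\delta$, invoke Theorem~\ref{thm:trichotomy}, and run a Schreier-refinement argument (Theorem~\ref{thm:Schreier_refinement}) against the horoball series for the second claim --- is consistent with the one argument of this kind the paper does carry out, namely the proof that $\mf{B}_M^{\min}=\emptyset$ in \S\ref{ex:weak}.

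There is, however, a genuine gap in your exclusion of semisimple type. You claim it is ruled out because $P$ has a properly ascending chain $H_0<H_1<\cdots$ of closed normal subgroups with non-central, non-simple factors; but a local direct product of infinitely many copies of a topologically simple group is itself of strict semisimple type and possesses exactly such chains (take the closed subgroups generated by the factors indexed by an increasing chain of infinite subsets of the index set: the successive quotients are again infinite quasi-products, hence neither central nor topologically simple). What actually separates cases (2) and (3) of Theorem~\ref{thm:trichotomy} is the order structure on the blocks: by Theorem~\ref{thm:min_normal:simple}, in a quasi-product of non-abelian topologically simple groups the centralizers of distinct components are pairwise incomparable, so one must exhibit two distinct minimally covered blocks of $P$ with strictly nested centralizers, e.g.\ $\CC_P(\mf{a}_i)<\CC_P(\mf{a}_{i+1})$. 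You assert this strict inequality (``comparing centralizers yields $\psi^{\pm1}.\mf{a}_i<\mf{b}$'') but do not justify it: the observations $H_i\le\CC_P(\mf{a}_i)$ and $H_{i+1}\nleq\CC_P(\mf{a}_i)$ give $\CC_P(\mf{a}_{i+1})\nleq\CC_P(\mf{a}_i)$, not the needed inclusion $\CC_P(\mf{a}_i)\le\CC_P(\mf{a}_{i+1})$. Pinning down $\CC_P(\mf{a}_i)$ requires determining which elements outside $H_{i+1}$ act trivially on the factor $H_{i+1}/H_i$, and this is the same tree-combinatorial input as the issue you flag at the end --- that the $P$-action on the index set of $\bigoplus_{\partial X_i}(G,U)$ leaves no proper invariant subproduct, so that nothing strictly between $H_i$ and $H_{i+1}$ is normal in $P$. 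You are right that this is the crux, but it remains unproven in your sketch; together with the centralizer computation it is essentially all of the content the authors defer to their later article, so the proposal should be regarded as a plausible outline rather than a proof.
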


We can also describe the chief factors of $P_X(G,U)$ explicitly. Via Propositions~\ref{prop:horoball_stab} and \ref{prop:stacking}, every chief factor of $P_X(G,U)$ is isomorphic to $\bigoplus_{\Nb}(G,U)$.

The group $P_X(G,U)$ is thus a non-elementary stacking type chief factor, giving an example of Theorem~\ref{thm:chief:block_structure}(7).

\subsection{Chief factors of weak type}\label{ex:weak}
We apply our construction a second time to exhibit rank $\omega+1$ weak type chief factors of a compactly generated \tdlcsc group. Our construction requires an observation on the rank of the groups $E_X(G,U)$.

\begin{prop}\label{prop:rank}
	Suppose that $(G,X)$ is a compactly generated transitive \tdlcsc permutation group. If $G$ is elementary with finite rank, then $E_X(G,U)$ is elementary with 
	\[
	\xi(E_X(G,U))=\omega+2
	\]
	for any non-trivial $U\in \U(G)$.
\end{prop}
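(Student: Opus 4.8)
The plan is to trap $\xi(E_X(G,U))$ between $\omega+1$ and $\omega+2$ and then use that a compactly generated elementary group cannot have rank $\omega+1$.

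Write $L:=E_X(G,U)$. Since $(G,X)$ is transitive and $G$ is compactly generated, $L$ is compactly generated by Proposition~\ref{prop:compact_generation}(1), and since $G$ is elementary, $L$ is elementary by Proposition~\ref{prop:compact_generation}(2). Hence $\xi(L)=\beta+2$ for some at most countable ordinal $\beta$ by Lemma~\ref{lem:betaplus2}; in particular $\xi(L)\neq\omega+1$, so it is enough to show $\omega+1\le\xi(L)\le\omega+2$. All of the analysis takes place inside the open normal subgroup $P:=P_X(G,U)$, for which $L/P\cong\Zb$, and we use the ascending chain of horoball stabilizers $H_j\normal P$ ($j\in\Zb$) of Proposition~\ref{prop:horoball_stab} and the surrounding discussion: one has $\bigcap_{j\in\Zb}H_j=\{1\}$ (an automorphism fixing every horoball pointwise is trivial), $\overline{\bigcup_{j\in\Zb}H_j}=P$, and each horoball factor $H_{j+1}/H_j$ is $P$-equivariantly isomorphic to the local direct product $\bigoplus_{\partial X_{j+1}}(G,U)$ over the horosphere $\partial X_{j+1}$, a countably infinite set.

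For the \textbf{upper bound}, record first that for any countably infinite index set $I$ the group $\bigoplus_I(G,U)$ admits the quasi-direct factorization by its $|I|$ coordinate copies of the elementary group $G$, so $\xi\bigl(\bigoplus_I(G,U)\bigr)=\sup^+(\xi(G))=\xi(G)=:d$ by Theorem~\ref{quasiproduct:elementary}, a finite ordinal with $d\ge 2$. Since $\overline{\bigcup_j H_j}=P$ and each $H_j$ is normal in $P$ (hence of open normalizer), Proposition~\ref{prop:d-rank:ascending} gives $\xi(P)={\sup_j}^+\xi(H_j)$. For fixed $j$, the family $\{H_{j-k}\}_{k\ge 0}$ of closed normal subgroups of $H_j$ has trivial intersection, so $\xi(H_j)={\sup_{k\ge 0}}^+\xi(H_j/H_{j-k})$ by Proposition~\ref{prop:residual_rank}. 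Because $H_j/H_{j-k}$ admits a subnormal series of length $k$ whose successive quotients are horoball factors, each of rank $d$, iterating Lemma~\ref{lem:d-rank_extensions} yields $\xi(H_j/H_{j-k})\le(d-1)k+1<\omega$. Therefore $\sup_k\xi(H_j/H_{j-k})\le\omega$, so $\xi(H_j)\le\omega+1$, so $\xi(P)\le\omega+1$, and applying Lemma~\ref{lem:d-rank_extensions} to $\{1\}\to P\to L\to\Zb\to\{1\}$ gives $\xi(L)\le(\xi(P)-1)+\xi(\Zb)\le\omega+2$.

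For the \textbf{lower bound} it suffices, by the displays above together with Proposition~\ref{prop:xi_monotone}, to show that $\xi(H_j/H_{j-k})\to\infty$ as $k\to\infty$: then ${\sup_{k}}^+\xi(H_j/H_{j-k})=\omega+1$, whence $\xi(H_j)=\omega+1$ and $\xi(L)\ge\xi(P)\ge\xi(H_j)=\omega+1$. By the self-similarity of $\Td$, $H_j/H_{j-k}$ is isomorphic to a local direct product $\bigoplus_{v\in\partial X_j}(R_k,W_k)$, where $R_k$ is the group of automorphisms of the depth-$k$ rooted tree hanging below a single level-$j$ vertex whose local actions lie in $G$, and in $U$ for all but finitely many vertices; so $\xi(H_j/H_{j-k})=\xi(R_k)$ by the first observation of the previous paragraph. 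These groups satisfy $R_0=\{1\}$, $R_1\cong G$, and $R_k\cong\bigoplus_X(R_{k-1},W_{k-1})\rtimes G$, the top copy of $G$ permuting the $X$-many depth-$(k-1)$ subtrees below the root transitively with point stabilizer $U$. Each $R_k$ is compactly generated (like $L$, being generated by finitely many elements of $G$ together with a compact open subgroup), so $\xi(R_k)=\xi(\Res{}(R_k))+1$; by induction on $k$, analyzing the discrete residual of this semidirect product, one obtains $\xi(R_k)\ge\xi(R_{k-1})+1$, hence $\xi(R_k)\ge k+1$. (In fact $\xi(R_k)=(d-1)k+1$: the upper bound is the iterated Lemma~\ref{lem:d-rank_extensions} estimate already used, and for the lower bound one checks that $\Res{}(R_k)$ contains a copy of $\bigoplus_X\bigl(\Res{}(R_{k-1}),W_{k-1}\cap\Res{}(R_{k-1})\bigr)$, of rank $\xi(R_{k-1})-1$, together with a rank-$(d-1)$ contribution from $\Res{}(G)$.) Combining this with the upper bound and the first paragraph gives $\xi(L)=\omega+2$.

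The main obstacle is precisely the strict increase $\xi(R_k)>\xi(R_{k-1})$ — equivalently, that appending one more horoball factor never collapses the decomposition rank. This requires controlling the discrete residual of the wreath-type group $R_k\cong\bigoplus_X(R_{k-1},W_{k-1})\rtimes G$: one must exclude a large open normal subgroup absorbing the base group into a residually discrete quotient, and the transitivity of the $G$-action on $X$ is what prevents a proper coordinate-wise $G$-invariant open normal subgroup from being split off. Everything else — Lemmas~\ref{lem:betaplus2} and \ref{lem:d-rank_extensions}, Propositions~\ref{prop:d-rank:ascending}, \ref{prop:residual_rank} and \ref{prop:xi_monotone}, Theorem~\ref{quasiproduct:elementary}, and the structural facts about $E_X(G,U)$ and its horoball stabilizers — is assembled routinely.
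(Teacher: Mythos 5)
The paper itself gives no proof of this proposition --- Section~\ref{sec:examples} explicitly defers all such verifications to a later article --- so your proposal has to stand on its own. Your upper bound is complete and correct: trapping $\xi(H_j/H_{j-k})\le (d-1)k+1$ by iterating Lemma~\ref{lem:d-rank_extensions}, passing to $\xi(H_j)\le\omega+1$ via Proposition~\ref{prop:residual_rank}, to $\xi(P)\le\omega+1$ via Proposition~\ref{prop:d-rank:ascending}, and to $\xi(L)\le\omega+2$ via the $\Zb$-quotient is exactly the right assembly, and combining a lower bound of $\omega+1$ with Lemma~\ref{lem:betaplus2} to exclude $\omega+1$ itself is a clean finish.

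The gap is the one you flag yourself: the strict increase $\xi(R_k)>\xi(R_{k-1})$, without which the lower bound collapses, and your parenthetical sketch does not establish it. The containment $\Res{}(R_k)\supseteq\ol{\bigoplus_X\Res{}(R_{k-1})\cdot\prod_X W_{k-1}}$ only yields $\xi(\Res{}(R_k))\ge\xi(R_{k-1})-1$, hence $\xi(R_k)\ge\xi(R_{k-1})$ with no strictness; and the proposed extra ``rank-$(d-1)$ contribution from $\Res{}(G)$'' is vacuous whenever $\Res{}(G)=\triv$ --- in particular whenever $G$ is discrete, which is precisely the case $G=D_\infty$ needed for Example~\ref{ex:weak}, the paper's sole application of this proposition. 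The natural repair --- observing that every open normal subgroup of $R_k$ contains each coordinate copy of the normal closure $P_{k-1}:=\ngrp{W_{k-1}}_{R_{k-1}}$, so that $\Res{}(R_k)\supseteq\bigoplus_X(P_{k-1},W_{k-1})$ --- still only gives $\xi(P_{k-1})\ge\xi(R_{k-1})-1$ in general, since the normal closure of a compact open subgroup can have strictly smaller rank than the ambient group (compare $\Qp=\ngrp{\Zb_p}$ inside $\Qp\rtimes\Zb$, where the rank drops from $3$ to $2$). So one must additionally show that for these particular wreath-type groups the quotient $R_{k-1}/P_{k-1}$ cannot absorb a full level of the rank --- e.g.\ by showing $P_{k-1}$ is cocompact in $R_{k-1}$ and invoking Lemma~\ref{lem:compact_ext}, which does work in the dihedral example but requires an argument in general. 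Until that step is supplied, the lower bound, and hence the equality $\xi(E_X(G,U))=\omega+2$, is not proved.
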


Let $D$ be the infinite dihedral group and $C$ be the order two subgroup which reverses orientation. Setting $X:=D/C$, we obtain a \tdlcsc permutation group $(D,X)$. We show that $G:=E_X(D,C)$ has a chief factor of weak type with rank $\omega+1$. 

By Proposition~\ref{prop:rank}, $G$ has rank $\omega+2$. Applying Proposition~\ref{prop:elliptic_sgrp}, $G$ is monolithic, and the monolith $M$ is such that $G/M$ is metabelian. Since solvable groups are finite rank, Lemma~\ref{lem:d-rank_extensions} and Corollary~\ref{cor:rank_char_simple} ensure that $\xi(M)=\omega+1$. We now argue that $M$ is of weak type.

\begin{prop} 
	$\mf{B}_M^{\min}=\emptyset$.
\end{prop}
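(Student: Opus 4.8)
The plan is to suppose, for contradiction, that $\mf{B}_M^{\min} \neq \emptyset$ and then to trap a non-abelian chief factor of $M$ inside a single metabelian layer of the horoball filtration. First I would recall from the construction in §\ref{sec:examples} (with full proofs deferred to the companion article) the horoball stabilizers $H_i \normal P_X(D,C)$, $i \in \Zb$: they form an increasing chain, $\bigcap_{i\in\Zb}H_i = \triv$, and $\bigcup_{i\in\Zb}H_i$ is dense in $P_X(D,C)$. Putting $M_i := M \cap H_i$, these are closed normal subgroups of $M$, increasing, with $\bigcap_i M_i = \triv$ and $\bigcup_i M_i$ dense in $M$. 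Moreover each horoball factor satisfies $H_{i+1}/H_i \cong \bigoplus_{\Nb}(D,C)$ by Proposition~\ref{prop:horoball_stab}, and this local direct product is \emph{metabelian} because $D$ is metabelian; since $M_{i+1}/M_i \cong M_{i+1}H_i/H_i$ is a subgroup of $H_{i+1}/H_i$, each quotient $M_{i+1}/M_i$ is metabelian as well.

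Now suppose $\mf{a} \in \mf{B}_M^{\min}$, let $N := M_{\mf{a}}$ be the least closed normal subgroup of $M$ covering $\mf{a}$, and let $N/L$ with $L := \CC_N(\mf{a}) = N \cap \CC_M(\mf{a})$ be the lowermost representative, a \emph{non-abelian} chief factor of $M$. Consider $\mc{C} := \{i \in \Zb : M_i \not\le \CC_M(\mf{a})\}$, the set of $i$ for which $M_i$ covers $\mf{a}$; it is up-closed since the $M_i$ increase. It is non-empty, for otherwise $M_i \le \CC_M(\mf{a})$ for all $i$ and hence $M = \overline{\bigcup_i M_i} \le \CC_M(\mf{a})$, contradicting that $\mf{a}$ is a chief block. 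Its complement is non-empty too: if every $M_i$ covered $\mf{a}$ then $N = M_{\mf{a}} \le M_i$ for all $i$ (the least cover lies in every cover), so $N \le \bigcap_i M_i = \triv$, impossible. Thus $\mc{C} = \{i : i \ge j\}$ for some $j \in \Zb$, so $M_j$ covers $\mf{a}$ while $M_{j-1}$ does not, i.e.\ $M_{j-1} \le \CC_M(\mf{a})$.

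From $N \le M_j$ (least cover inside the cover $M_j$) we get $N = N \cap M_j$, and from $M_{j-1} \le \CC_M(\mf{a})$ we get $N \cap M_{j-1} \le N \cap \CC_M(\mf{a}) = L$. Hence the restriction to $N$ of the quotient map $M_j \to M_j/M_{j-1}$ has kernel $N \cap M_{j-1} \le L$, so $N/(N\cap M_{j-1}) \cong N M_{j-1}/M_{j-1}$ is a subgroup of $M_j/M_{j-1}$, and $N/L$ — a quotient of $N/(N\cap M_{j-1})$ — is a subquotient of the metabelian group $M_j/M_{j-1}$, hence metabelian. But $N/L$ is an $M$-chief factor, so $\overline{[N/L,N/L]}$, being characteristic and therefore $M$-invariant, is either $\triv$ or all of $N/L$; metabelianness excludes the latter, so $N/L$ is abelian, contradicting its non-abelianness. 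This contradiction shows $\mf{B}_M^{\min} = \emptyset$.

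The combinatorial argument above is essentially forced; the real work, and the main obstacle, lies in the structural facts invoked in the first paragraph — that $\bigcup_i H_i$ is dense in $P_X(D,C)$ (equivalently, $\overline{\bigcup_i M_i} = M$) and the precise identification of the horoball factors — which rest on a careful analysis of $E_X(D,C)$ inside $\Aut(\Td)$, exactly the material the paper defers. Should one wish to avoid the density statement, a fallback is to pass to $E = E_X(D,C)$: it is monolithic with monolith $M$ and $E/M$ metabelian, so $[M]$ is its unique non-abelian chief block; since $M \normal P_X(D,C) \normal E$, Proposition~\ref{prop:induced_block} produces the extension $\mf{a}^E$, which must therefore equal $[M]$, and the explicit lowermost representative in that proposition then forces $\bigcap_{e\in E}e\CC_M(\mf{a})e^{-1} = \triv$, a constraint one can again play off against the horoball filtration to reach the same contradiction.
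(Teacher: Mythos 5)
Your proof is correct and follows essentially the same route as the paper's: locate the threshold index $j$ in the horoball filtration where covering of the block begins, trap a representative of the block inside the single metabelian layer $H_j/H_{j-1}\simeq\bigoplus_{\Nb}(D,C)$, and contradict non-abelianness of the chief factor. The only substantive difference is cosmetic (you work with the lowermost representative $N/L$ where the paper intersects an arbitrary representative $A/B$ with the $H_i$ and then refines); note also that the density of $\bigcup_i M_i$ in $M$, which you list among the recalled facts, is not quite equivalent to the density of $\bigcup_i H_i$ in $P_X(D,C)$ but is deduced from it in the paper by a one-line argument using that $M$ is a non-abelian chief factor of $E_X(D,C)$.
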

\begin{proof}
%Let $A/B$ be a chief factor of $M$ and suppose for contradiction that $[A/B]\in \mf{B}_M^{\min}$. Let $\wt{R}$ be the bi-infinite monochromatic ray in $\Td$. Fixing a base point $v=:v_0$ on $\wt{R}$, we enumerate $\wt{R}$ by $(v_i)_{i\in \Zb}$. For each $i\in \Zb$, we form the horoballs $X_i$ and set $H_i$ to be the horoball stabilizer in $P_X(G,U)$. 
Let $A/B$ be a chief factor of $M$ and suppose for contradiction that $[A/B]\in \mf{B}_M^{\min}$. Fixing a base vertex $v=:v_0$, we form the horoballs $X_i$ and set $H_i$ to be the horoball stabilizer in $P_X(G,U)$. 
	
 The union $\bigcup_{i\in \Zb}H_i$ is a normal subgroup of $E_X(G,U)$, and it is dense in $P_X(G,U)$. Since $M$ is non-abelian and chief, it follows that $\bigcup_{i\in \Zb}H_i\cap M$ is dense and normal in $M$. If $A\cap H_i\leq B$ for all $i$, then $A/B$ commutes with $\bigcup_{i\in \Zb}H_i\cap M$ in $M/B$. It follows that $A/B$ is abelian, which contradicts our hypothesis. We thus deduce that $A\cap H_i\nleq B$ for some $i$, so $A\cap H_i$ covers the chief block $[A/B]$. The intersection of all $H_i$ is trivial, hence since $[A/B]$ is minimally covered, we may take $A\cap H_i$ covering $[A/B]$ such that $A/\cap H_{i-1}$ does not cover $[A/B]$. The normal factor $A\cap H_i/A\cap H_{i-1}$ thus covers the chief block $[A/B]$. 
	
The normal factor $A\cap H_i/A\cap H_{i-1}$ admits an injection into $H_i/H_{i-1}$. Proposition~\ref{prop:horoball_stab} implies that $H_i/H_{i-1}\simeq \bigoplus_{\Nb}(D,C)$, and therefore, $H_i/H_{i-1}$ is metabelian. The factor $A\cap H_i/A\cap H_{i-1}$ is then also metabelian. We may now find $K/L\in [A/B]$ such that $A\cap H_{i-1}\leq L<K\leq A\cap H_i$, and since $K/L$ is chief, it follows that $K/L$ is abelian. This is absurd as we assume $K/L$ is a non-abelian chief factor.
\end{proof}

The group $M$ is thus a rank $\omega+1$ chief factor of weak type, giving an example of Theorem~\ref{thm:types}(4). 

\subsection{Rank two chief factors}\label{ex:rank 2}
Let $U^+(A_6)$ be the Burger--Mozes universal simple group for the alternating group on $6$ elements with the usual permutation representation; see \cite{BM00,CdM11} for a discussion of these groups. The group $U^+(A_6)$ acts on the $6$-regular rooted tree $T_6$. Fixing $\delta$ an end of $T_6$, we consider the end stabilizer $G:=U^+(A_6)_{(\delta)}$.

Fix a base vertex $v\in T_6$ and let $v=v_0,\dots,v_n,\dots$ be a geodesic ray representing the end $\delta$. Fix $t\in G$ a translation down the geodesic ray and let $E\normal G$ be the collection of elliptic isometries --- i.e.\ the isometries which fix a vertex. It is an exercise to see that $E$ is a locally elliptic closed normal subgroup and $G/E\simeq \Zb$.

Take the edge $e=(v_0,v_1)$, where $v_0$ and $v_1$ are the first two elements of our distinguished ray, and define $T_{v_1}$ to be the half-tree determined by $e$ which contains $v_1$. It now follows that 
\[
E=\ol{\bigcup_{n\geq 1}t^n\Stab_{G}(T_{v_1})t^{-n}}.
\]
The stabilizer $\Stab_{G}(T_{v_1})$ is the profinite group given by taking the inverse limit of the iterated wreath products of $(A_5,[5])$. We thus deduce that $\Stab_{G}(T_{v_1})$ is a topologically perfect profinite group, and it has trivial quasi-center. It follows further that $E$ has trivial quasi-center.

\begin{lem}\label{lem:rank 2} 
	The subgroup $E$ is a non-negligible chief factor of $G$ with rank two.
\end{lem}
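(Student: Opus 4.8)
The plan is to verify the three claims in turn: that $E$ is a chief factor of $G$ (equivalently, that $E$ is a minimal closed normal subgroup of $G$, since we view $E$ as $E/\triv$), that $\xi(E) = 2$, and that $E$ is non-negligible. First I would establish the rank-$2$ statement, which is the easiest: the stabilizer $\Stab_G(T_{v_1})$ is profinite, hence elementary of rank $2$, and $E = \ol{\bigcup_{n \ge 1} t^n \Stab_G(T_{v_1}) t^{-n}}$ is a directed union (the conjugates are ascending since $t$ translates toward $\delta$) of these open profinite subgroups, so by Proposition~\ref{prop:d-rank:ascending} (each conjugate is open in $E$, hence has open normalizer), $E$ is elementary with $\xi(E) = {\sup}^+ 2 = 2$. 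Alternatively one could invoke \cite[Corollary 4.1]{CM11} to see $E$ is regionally SIN and apply Lemma~\ref{lem:regionally_SIN}.

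Next I would show $E$ is a chief factor, i.e. a minimal closed normal subgroup of $G$. Suppose $\triv \ne N \normal G$ is closed with $N \le E$. One wants to deduce $N = E$. The key mechanism: since $N$ is normalized by $t$ and by $\Stab_G(T_{v_1})$, and the half-tree stabilizers $\Stab_G(T_{v_n})$ (where $T_{v_n}$ is the half-tree determined by the edge $(v_{n-1},v_n)$ away from $\delta$) form a neighborhood basis in $E$ with $E$ acting on the Burger--Mozes-type tree, one should argue that $N$ must contain a conjugate of $\Stab_G(T_{v_1})$, using that the monolith-type structure of these universal groups forces $N \cap \Stab_G(T_{v_n})$ to be large. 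More precisely: $N$ is a nontrivial closed normal subgroup of the topologically simple-ish layer; one shows $N$ acts nontrivially near some vertex, invokes the fact that $\Stab_G(T_{v_1})$ (an iterated wreath product of $(A_5,[5])$) is topologically simple-by-nothing — actually it is a topologically perfect profinite group whose only closed normal subgroups accessible to $G$ are controlled by the wreath structure — and then uses $t$-conjugation to spread $N$ across all horoballs, concluding $N \supseteq \ol{\bigcup_n t^n \Stab_G(T_{v_1})t^{-n}} = E$. I expect \textbf{this minimality step to be the main obstacle}: it requires genuinely using the structure of the Burger--Mozes universal group $U^+(A_6)$ and its end-stabilizer, in particular that $\Stab_G(T_{v_1})$ has no $G$-invariant proper nontrivial closed normal subgroup visible after conjugating by $t$, so that $N$ cannot be ``trapped'' strictly between two horoball levels.

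Finally, non-negligibility: I would argue $E$ is neither compact nor quasi-discrete (the two obstructions in Theorem~\ref{thm:negligible_types}, noting $E$ is totally disconnected so ``connected compact'' is irrelevant). It is not compact since $G/E \simeq \Zb$ is noncompact but $G$ itself is — wait, more directly: $E$ contains the noncompact group, actually $E$ is not compact because $t^n \Stab_G(T_{v_1}) t^{-n}$ is a strictly ascending chain of open subgroups, so $E$ is a non-compact locally elliptic group. And $E$ is not quasi-discrete: the text has already observed $\Stab_G(T_{v_1})$ has trivial quasi-center, and it follows that $\QZ(E) = \triv$, so $E$ is certainly not quasi-discrete (as $E \ne \triv$). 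Hence by Theorem~\ref{thm:negligible_types} the block $[E]$ is robust, so $E$ is a non-negligible chief factor. Assembling these three pieces — minimality, rank $2$, and non-negligibility — completes the proof of the lemma.
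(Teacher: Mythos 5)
Your handling of the rank and of non-negligibility is essentially correct and matches the paper's route. For the rank, the paper simply notes that $E$ is locally elliptic, hence $\xi(E)=2$; your variant via Proposition~\ref{prop:d-rank:ascending} also works, since each fixator $t^n\Stab_G(T_{v_1})t^{-n}$ is compact with open normalizer (it is normalized by the open subgroup $\Stab_G(v_0)$, as every element of $G$ fixing a vertex fixes the whole ray from that vertex to $\delta$) -- though be careful that these fixators are \emph{not} open in $E$, so your parenthetical ``each conjugate is open in $E$'' is the wrong justification, and likewise ``strictly ascending chain of open subgroups'' in the non-compactness argument should be replaced by, e.g., the observation that $E=\bigcup_i\Stab_G(v_i)$ is a strictly increasing union of compact open subgroups. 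For non-negligibility, the paper argues exactly as you do: by Theorem~\ref{thm:negligible_types} the block $[E]$ is connected compact, quasi-discrete, or robust; total disconnectedness rules out the first, triviality of $\QZ(E)$ (established just before the lemma) rules out the second, so $[E]$ is robust and a fortiori non-negligible.

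The genuine gap is the one you flagged yourself: minimality of $E$ among non-trivial closed normal subgroups of $G$. Your sketch never produces the mechanism that upgrades ``$N$ contains one non-trivial element'' to ``$N$ contains $\Stab_G(T_{v_1})$'', and an appeal to closed normal subgroups being ``controlled by the wreath structure'' is not an argument. The missing idea is the standard double-commutator trick for groups acting on trees. Given $1\neq n\in N$, conjugate by powers of $t$ and by elements of $E$ (both operations preserve $N$) so that the nearest vertex of the distinguished ray moved by $n$ is $v_0$ while $n$ fixes $v_1$. Then $n$ carries the half-tree $T_{v_0}$ (the complement of $T_{v_1}$) entirely off itself, so for $g,h\in\Stab_G(T_{v_1})$ -- which are supported on $T_{v_0}$ -- the element $n^{-1}g^{-1}n$ has support disjoint from those of $g$ and $h$, whence $[[n^{-1},g^{-1}],h]=[g,h]$. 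The left-hand side lies in $N$ by normality, so $N$ contains all commutators of $\Stab_G(T_{v_1})$; topological perfectness of $\Stab_G(T_{v_1})$ (the inverse limit of iterated wreath products of $(A_5,[5])$) then yields $\Stab_G(T_{v_1})\leq N$, and conjugating by $t^n$ gives $N\supseteq E$, i.e.\ $N=E$. Without this step the central claim of the lemma is unproved.
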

\begin{proof} Suppose that $N\normal G$ is non-trivial, closed, and contained in $E$. Fixing $n\in N$ non-trivial, there is some nearest vertex $w\in T_6$ to the ray $v_0,v_1\dots $ that is moved by $n$. By conjugating $n$ with $t^{\pm 1}$ and elements of $E$, we may assume that $w=v_0$ and $n$ fixes $v_1$.
	
Let $e$ be the edge from $v_0$ to $v_1$ and let $T_{v_1}$ be the half-tree determined by $e$ that contains $v_1$. We now apply a familiar trick: for any $g,h\in \Stab_G(T_{v_1})$, the commutator $[[n^{-1},g^{-1}],h]$ equals $[g,h]\in \Stab_G(T_{v_1})$. The subgroup $N$ thus contains all commutators of $\Stab_G(T_{v_1})$. The stabilizer $\Stab_G(T_{v_1})$ is topologically perfect, so $N$ contains $\Stab_G(T_{v_1})$. Since $N$ is normal, $N$ indeed contains $t^n\Stab_G(T_{v_1})t^{-n}$ for all $n$. We thus deduce that $N=E$, so $E$ is a chief factor of $G$.
	
The group $E$ is locally elliptic, hence $\xi(E)=2$. Applying Theorem~\ref{thm:negligible_types}, the chief block $[E]$ is either connected compact, quasi-discrete, or robust. Since $G$ is totally disconnected, $[E]$ is not connected compact. The representative $E$ is not quasi-discrete, so $[E]$ is also not quasi-discrete. We thus deduce that $[E]$ is a robust chief block. A fortiori, $E$ is a non-negligible chief factor of $G$. 
\end{proof}

The group $E$ is thus a rank $2$ chief factor with trivial quasi-center, giving an example of Theorem~\ref{thm:chief:block_structure}(3). 

%=========================== The bibliography===================================
\printindex
\bibliographystyle{amsplain}
\bibliography{biblio}{}

\end{document}